\def\uvuciautor{\hspace{-0.4cm}}
\definecolor{OxBlue}{HTML}{002147}
\newtheorem{thm}{Theorem}[section]
\newtheorem{cor}[thm]{Corollary}
\newtheorem{prop}[thm]{Proposition}
\newtheorem{lm}[thm]{Lemma}
\theoremstyle{definition}
\newtheorem{de}[thm]{Definition}
\newtheorem{ex}[thm]{Example}
\theoremstyle{remark}
\newtheorem{rmk}[thm]{Remark}
\newcommand{\upsidedownY}{\rotatebox[origin=c]{180}{Y}}
\newcommand*{\myplus}{\ensuremath{\boldsymbol{\pmb{+}}}}
\def\im{\text{im}}
\def\det{\text{det}}
\def\char{\text{char}}
\def\z{\zeta}
\def\Q{\mathbb Q}
\def\R{\mathbb R}
\def\C{\mathbb C}
\def\N{\mathbb N}
\def\Z{\mathbb Z}
\def\Kh{K\"{a}hler}
\def\P{{\mathbb P}}
\def\a{\alpha}
\def\b{\beta}
\def\c{\beta}
\def\Fi{\varphi}
\def\M{\mathfrak{M}}
\def\CM0{\C[\M_0]}
\def\F{\mathfrak{F}}
\def\Con1{Con_1(\M)}
\def\FF{\mathscr{F}}
\def\PP{\mathscr{P}}
\def\EE{\mathscr{E}}
\def\MM0{\mathfrak{M}_{\tiny{(\zeta_{\mathbb{R}},0})}(Q,{\normalfont\textbf{v}},{\normalfont\textbf{w}})}
\def\MG0{\mathcal{M}_{0,\z_\C}(Q,{\normalfont\textbf{v}},{\normalfont\textbf{w}})}
\def\k{\mathbb{K}}
\def\ph{pseudoholomorphic}
\def\Fil{\mathscr{F}}
\def\ku{\k [\![u]\!]}
\def\kuu{\k(\!(u)\!)}
\def\karo{\diamondsuit}
\def\MM{\mathcal{M}}
\def\CP{\mathbb{C}P}
\def\MB{Morse--Bott}
\def\MBF{Morse--Bott--Floer}
\def\ord{\mathrm{ord}}
\newcommand{\doublewidetilde}[1]{{%
  \mathpalette\double@widetilde{#1}%
}}
\newcommand{\double@widetilde}[2]{%
  \sbox\z@{$\m@th#1\widetilde{#2}$}%
  \ht\z@=.9\ht\z@
  \widetilde{\box\z@}%
}
\renewcommand{\arraystretch}{0.85}
\begin{document}

\title
[Filtrations on equivariant quantum cohomology]
{Filtrations on equivariant quantum 
\\ 
cohomology and Hilbert-Poincar\'{e} series}
\author{Alexander F. Ritter}
\address{\uvuciautor A. F. Ritter, 
Mathematical Institute, University of Oxford, 
OX2 6GG, U.K.}
\email{ritter@maths.ox.ac.uk}
\author{Filip Živanović}
\address{\uvuciautor F. T. Živanović, 
Simons Center for Geometry and Physics, 
Stony Brook, NY 11794-3636, U.S.A.}
\email{fzivanovic@scgp.stonybrook.edu}

\begin{abstract} 
We prove that Floer theory induces a filtration by ideals on equivariant quantum cohomology of symplectic manifolds equipped with a $\C^*$-action.
In particular, this gives rise to Hilbert-Poincar\'{e} polynomials on ordinary cohomology that depend on Floer theory.
En route, the paper develops structural properties of filtrations on three versions of equivariant Floer cohomology.
We obtain an explicit presentation for equivariant symplectic cohomology in the Calabi-Yau and Fano settings.
\end{abstract}

\maketitle
\setcounter{secnumdepth}{3}
\setcounter{tocdepth}{1}

\tableofcontents  %

\section{Introduction}\label{Introduction}
\subsection{Motivation}
The goal of this paper is to study structural properties of equivariant quantum cohomology for non-compact symplectic manifolds $(Y,\omega)$ admitting Hamiltonian $S^1$-actions $\Fi$.
This will involve foundational work on equivariant Hamiltonian Floer cohomology.
In order to make sense of Floer theory, we make some further assumptions. We assume that $\Fi$ extends to a {\ph} $\C^*$-action for some $\omega$-compatible almost complex structure $I$, because the majority of interesting examples arise in this way, from Algebraic Geometry and Geometric Representation Theory.\footnote{Nakajima quiver varieties;
moduli spaces of Higgs bundles;
{\Kh} quotients of $\C^n$ including semiprojective 
toric manifolds;
hypertoric varieties;
cotangent bundles of flag varieties;
negative complex vector bundles;
crepant resolutions of quotient singularities $\C^n/G$ for finite subgroups $G\subset SL(n,\C)$; all Conical Symplectic Resolutions; and most
equivariant resolutions of affine singularities.}
Unfortunately, the past Symplectic Topology literature only dealt with non-compact symplectic manifolds that are ``convex at infinity'', so symplectomorphic to $(\Sigma \times [R_0,\infty), d(R\alpha))$ at infinity, where $(\Sigma,\alpha)$ is a closed contact manifold.
This almost never applies in the above examples.
Instead, the spaces belong to a class we introduced in \cite{RZ1}, called symplectic $\C^*$-manifolds, whose defining property is the existence of 
a proper {\ph}\footnote{On $\Sigma \times [R_0,\infty)$, a choice of $d(R\alpha)$-compatible almost complex structure of contact type is made.} map
$\Psi: Y^{\mathrm{out}}\to \Sigma \times [R_0,\infty)$
defined outside of the interior of a compact set, such that $\Psi$ sends the Hamiltonian $S^1$-vector field to the Reeb vector field.

For example, Conical Symplectic Resolutions (in particular all quiver varieties) arise in this way and are much studied in Algebraic Geometry. In that case, $c_1(Y)=0$, and quantum cohomology equals ordinary cohomology because $Y$ can be deformed to an affine variety.
An interesting feature however, is that they are usually not $\C^*$-equivariantly deformable to an affine variety, and often their equivariant quantum cohomology is not classical. From this point of view, the equivariant theory for symplectic $\C^*$-manifolds is of greater interest than the non-equivariant theory, as it detects non-trivial Gromov-Witten invariants. This was the main motivation for our work. We also highlight recent work by Jae Hee Lee \cite{JaeHeeLee2024} that investigates the mod $p$ equivariant quantum cohomology of these spaces. 

For any symplectic $\C^*$-manifold, we constructed Hamiltonian Floer cohomology $HF^*(H_{\lambda})$ and symplectic cohomology $SH^*(Y)=\varinjlim HF^*(H_{\lambda})$ in \cite{RZ1}. Here $\lambda$ is a slope parameter that grows to infinity in the limit. Recall that the generators of $HF^*(H_{\lambda})$ are Hamiltonian $1$-orbits of the Hamiltonian $H_{\lambda}:Y \to \R$; these correspond to $S^1$-orbits of $\Fi$ of various periods. We choose $H_{\lambda}=c(H)$ to be a suitable function of the moment map $H$, of slope $c'(H)=\lambda$ for large $H$, so that the $1$-orbits are of two types: constant $1$-orbits at points of the components $\F_{\a}$ of the fixed locus $\F = Y^{S^1}=Y^{\C^*}$, and {\MB} manifolds $B_{p,\beta}$ of non-constant $1$-orbits corresponding to $S^1$-orbits of various periods $p=c'(H)$. In \cite{RZ2} we construct {\MBF} spectral sequences that effectively compute $HF^*(H_{\lambda})$ and $SH^*(Y)$ in terms of the cohomologies of the {\MB} manifold $\F_\a$ and $B_{p,\beta}$.

By \cite{RZ1}, quantum cohomology $QH^*(Y)$ is filtered by graded ideals $\FF^{p}$ ordered by  $p\in \R\cup\{\infty\}$,
\begin{equation}\label{Equation introduction filtration}
\FF^{p} :=\bigcap_{\mathrm{generic}\,\lambda\geq p} \left(\ker c_{\lambda}^*:QH^*(Y)\to HF^*(H_{\lambda})\right), \qquad \FF^{\infty}:=QH^*(Y),
\end{equation} 
where $c_\lambda^*$ is a Floer continuation map, a grading-preserving $QH^*(Y)$-module homomorphism.
That filtration is an invariant of  $Y$ up to isomorphism of symplectic $\C^*$-manifolds ({\ph } $\C^*$-equivariant symplectomorphisms, without conditions on the $\Psi$ map).
The real parameter $p$ is a part of that invariant, and has a geometric interpretation: $x\in \Fil^p$ means that $x$ can be represented as a Floer chain involving non-constant $S^1$-orbits of period $\leq p.$

\subsection{Overview of $S^1$-equivariant chain models}
$S^1$-equivariant Floer theory involves moduli spaces of maps into the Borel model $S^{\infty} \times_{S^1} Y$ that can contribute arbitrarily high powers of the equivariant parameter $u\in H^2(\C P^{\infty})$ in the construction of the Floer chain differential,
$$ 
d = \delta_0 + u \delta_1 + u^2 \delta_2 + \cdots,
$$
where $\delta_i$ does not involve $u$, and $\delta_0$ is the non-equivariant Floer differential.
For this reason, there are three natural equivariant complexes one can build\footnote{$[\![u]\!]$ means we take a completed tensor product with the $\k$-algebra $\ku$ of power series in $u$; whereas $(\!(u)\!)$ means we $u$-localise the $[\![u]\!]$-version, so formal Laurent series in $u$. The symbol $(\cdots)_u$ always denotes $u$-localisation.} from a non-equivariant Floer chain complex $C^*$:
$$
E^-C^*:=C^*[\![u]\!],
\quad\;
E^{\infty}C^* := C^*(\!(u)\!) = (E^-C^*)_u,
\quad\;
E^+C^* := (E^-C^*)_u/uE^-C^*
\cong E^-C^*\otimes_{\ku} \mathbb{F}, 
$$
where $\mathbb{F}:=\kuu/u\ku$. Here $\k$ is a suitable Novikov field of coefficients, which we assume is of characteristic zero. 
The above are all $\ku$-modules; and $E^{\infty}C^*$ is also a $\kuu$-module.
On cohomology, $(E^-H^*)_u\cong E^{\infty}H^*$ are canonically isomorphic for algebraic reasons (flatness of $u$-localisation).

For small slopes, $\lambda=0^+$, using $C^* = CF^*(H_{0^+})$, yields Morse-Bott models for ``classical'' equivariant cohomology $E^{\karo}H^*(Y)$ for each of the three models $\karo\in \{-,\infty,+\}$, indeed only finitely many $\delta_j$ are non-zero and $E^-HF^*(H_{0^+})\cong H^*_{S^1}(Y)\otimes_{\k[u]}\ku$ is the Borel construction of $S^1$-equivariant cohomology; the other two models are classically motivated by cyclic homology.

Increasing the slope $\lambda$ gives rise to continuation maps: $\ku$-module homomorphisms 
$$
E^{\karo}c^*_{\lambda}: E^{\karo}QH^*(Y)\to E^{\karo}HF^*(H_{\lambda}),
$$
defined for generic $\lambda>0$,
abbreviated $c^*_{\lambda}$ when there is no ambiguity. $E^{\karo}c^*_{\lambda'}$ factors through $E^{\karo}c^*_{\lambda}$ whenever $\lambda<\lambda'$, and the direct limit as $\lambda \to \infty$ defines 
\begin{equation}\label{Equation cstar map}
E^{\karo}c^*: E^{\karo}QH^*(Y)\to E^{\karo}SH^*(Y),
\end{equation}
often abbreviated $c^*$.
For $\karo\in \{-,\infty\}$, the equivariant quantum product on equivariant cohomology can be defined by
parametrising the quantum product via the Borel model (e.g.\;see \cite{liebenschutz2020intertwining,liebenschutz2021shift}). For $\karo = +$, there is no product as there is no multiplication map $\mathbb{F}\times \mathbb{F} \to \mathbb{F}$. 
In general, there is no product on $E^{\karo}SH^*(Y)$, but we discuss an exception to this in \cref{Subsection weight 10 case product}.
We abbreviate $E^{\karo}QH^*(Y):=E^{\karo}H^*(Y;\k)$, with the convention that for $\karo\in \{-,\infty\}$ we use the equivariant quantum product.
\begin{ex}\label{Example S1 equiv cohomologies calculation}
For the space $M=S^1$, with the standard $S^1$-rotation action, the classical Morse models $E^-,E^{\infty},E^+$ for equivariant cohomology are: 
\begin{center}
\input{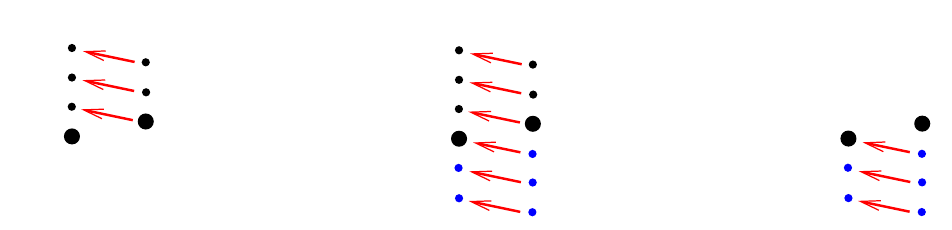_t}
\vspace{1mm}
\end{center}
where the Morse function has two critical points: a minimum $x$ and a maximum $y$, so grading $|x|=0$, $|y|=1$, $|u|=2$.
Thus $E^-SH^*(M)=\k\cdot x$ whereas $E^{\infty}SH^*(M)=0$, and $E^+SH^*(M)=\k\cdot y$ (all three are torsion $\ku$-modules with trivial $u$-action).
This is consistent with Borel's localisation theorem: 
\begin{equation}\label{Equation Borel localisation}
H^*_{S^1}(M)_u \stackrel{\cong}{\longrightarrow} (H^*_{S^1}(\mathrm{Fixed\; locus}))_u= H^*(\mathrm{Fixed\; locus})(\!(u)\!).
\end{equation}
The same holds for the weight $k\notin \Z\setminus\{0\}$ circle action: the red arrows are multiplication by $k$, so isomorphisms, as $\mathrm{char}\,\k=0$. This is the lowest-order approximation of local Floer cohomology near an isolated $S^1$-family of Hamiltonian $1$-orbits. If $\mathrm{char}\,\k\neq 0$, torsion arises and Borel's theorem fails.
Even the simplest examples display interesting but  complicated torsion phenomena, see \cite{liebenschutz2020intertwining}. We therefore chose to assume $\char\, \k=0$ in order to obtain clean general statements and computations.
\end{ex}

Equivariant Floer theory actually involves two $S^1$-actions acting on $1$-periodic Hamiltonian orbits $x=x(t):S^1 \to Y$. The {\bf $\Fi$-action} on $Y$, and the {\bf loop-action} that reparametrises the domain of $x$:
$$
(w\cdot x)(t) = \Fi_w(x(t)), \quad \textrm{ and } \quad (w\cdot x)(t) = x(t+\theta),
$$
where $w=e^{2\pi i \theta} \in S^1\subset \C^*.$ Abbreviate $\Fi_{e^{2\pi i\theta}}$ by $\Fi_{\theta}$, for $\theta \in S^1:=[0,1]/(0\sim 1)$, and parametrise loops by $t\in [0,1]/(0\sim 1)$. A {\bf weight} $(a,b)\in \Z^2$ defines an $S^1$-action on free loops $x=x(t)\in \mathcal{L}Y$:
\begin{equation}\label{Equation Introduction action on loops}
 \theta \cdot x : t\mapsto \Fi_{a\theta}(x(t-b\theta)).
\end{equation}
\begin{rmk}\label{Remark characteristic and mixed action}
We chose to consider just one mixed $S^1$-action 
at a time, as in \eqref{Equation Introduction action on loops}, yielding one equivariant parameter $u$, rather than the full $S^1\times S^1$-action which would produce two equivariant parameters.
In \cref{Subsection Comparing different weights via equivariant Seidel isomorphisms} we comment on what happens when we vary $(a,b).$
\end{rmk}

The $\ku$-module $E^-QH^*(Y)$ (arising, as explained, from small slopes $\lambda>0$), only depends on the $\Fi^a$-action on $Y$,  not\footnote{the loop-action is trivial on constant Hamiltonian $1$-orbits, which are the generators of the Morse-Bott complex.} on $b$. Let $QH^*_{\Fi^a}(Y)$ be equivariant quantum cohomology for the $S^1$-action $\Fi^a$, working over $\k[u]$-coefficients. For grading reasons, we have a canonical $\ku$-algebra isomorphism
$$
E^-QH^*(Y) \cong QH^*_{\Fi^a}(Y)\otimes_{\k[u]}\ku.
$$
If $x$ is a Hamiltonian $1$-orbit of $H_{\lambda}$, then $x$ is a fixed point of \eqref{Equation Introduction action on loops} precisely when $x$ lies in a {\MB} manifold $B_{p,\beta}$ with $p=\tfrac{a}{b}$: the $\Fi$-rotation and loop-action perfectly cancel out.

So we call $(a,b)$ a {\bf free weight} if $\frac{a}{b}$ is not equal to an $S^1$-period $p$ for which a {\MB} manifold $B_{p,\beta}$ exists. We call those ``bad'' $p$ the 
{\bf non-generic slopes}, and by \cite{RZ1} they are precisely: 
\begin{equation}\label{Equation bad slope}
\{\tfrac{k}{m}: m \in W^+, k\geq 1 \in \Z \},
\end{equation}
where $W^+:=\{$strictly positive weights $m$ of the fixed components $\F_\a\}$. The discrete subset $W^+\subset [0,\infty)$ can be easily computed \cite{RZ1}.
So: $(a,b)$ free $\Leftrightarrow aW^+ \cap b \Z_{\geq 1}=\emptyset$.

For example, $(a,b)\neq (0,0)$ is free if $a\cdot b\leq 0$. Also $(1,0)$ is free, and $(1,0)$ is the only weight for which there is a pair-of-pants product on $E^{\karo}SH^*(Y)$ for $\karo\in \{-,\infty\}$, so \eqref{Equation cstar map} is a $\ku$-algebra homomorphism. That product arises as a direct limit of products $E^{\karo}HF^*(H_{\lambda_1})\otimes E^{\karo}HF^*(H_{\lambda_2}) \to E^{\karo}HF^*(H_{\lambda_1+\lambda_2})$.

\begin{rmk}[Zhao's work]
The case $(a,b)=(0,1)$ is also free.
This case has been studied extensively in the literature because it only uses the loop-action; no $\Fi$-action is needed.
In particular, Zhao \cite{zhang2017multiplicativity} 
investigated this case for Liouville manifolds $M$.
We remark how Zhao's terminology relates to ours: ``localised periodic'' symplectic cohomology $HP^*_{S^1,\mathrm{loc}}(M)=E^{\infty}SH^*(M)$, and ``periodic'' symplectic cohomology $HP^*_{S^1}(M)=E^{\infty}_{\textrm{telescope}}SH^*(M)$ involves a telescope model, see \cref{Subsection A remark about the telescope versus the exhaustive model}.

Zhao proved a localisation theorem in that setting: using $\char\,\k=0$,
the canonical homomorphism $E^{\infty}c^*:H^*(M)(\!(u)\!) \cong E^{\infty}SH^*(M)$ is an isomorphism. 
One of our main theorems, \cref{Theorem intro localisation theorem}, is a general localisation theorem for all symplectic $\C^*$-manifolds, for any free weight. We use a different proof than Zhao's explicit one: we deduce it from Borel's theorem \eqref{Equation Borel localisation} applied to the equivariant analogue of {\MBF} spectral sequences, from \cite{RZ2}. 
\end{rmk}

\subsection{Equivariant formality and computation of equivariant symplectic cohomology}
By adapting a classical result of Kirwan \cite[Sec.5.8]{Ki84}, we prove: 

\begin{lm}[Equivariant formality]\label{Lemma equivariant formality}
There are non-canonical $\ku$-module isomorphisms
    $$
    E^-QH^*(Y)\cong H^*(Y)[\![u]\!]
    ,\quad
    E^{\infty}QH^*(Y)\cong  H^*(Y)(\!(u)\!)
    ,
    \quad
    E^{+}QH^*(Y)\cong H^*(Y)\otimes_{\k} \mathbb{F}.
    $$
\end{lm}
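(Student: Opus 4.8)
The plan is to imitate Kirwan's argument \cite[Sec.~5.8]{Ki84} for compact Hamiltonian $S^1$-spaces, adapted to the three Floer-theoretic models at slope $\lambda=0^+$. The key point is \emph{equivariant formality}: the spectral sequence (equivalently, the $u$-adic filtration) relating $E^-QH^*(Y)$ to $H^*(Y)[\![u]\!]$ degenerates. First I would recall from the overview that at slope $0^+$ the equivariant Floer complex is a Morse--Bott model whose generators are the critical points of a Morse function on the fixed locus $\F$ — so only finitely many $\delta_j$ are nonzero, the differential is $d=\delta_0+u\delta_1+\cdots+u^N\delta_N$, and the associated graded (with respect to the $u$-adic filtration) computes $H^*(Y)[\![u]\!]$ as a $\ku$-module. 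Thus there is always a convergent spectral sequence with $E_1$-page $H^*(Y)[\![u]\!]$ abutting to $E^-QH^*(Y)$, and the content of the lemma is that it collapses, i.e.\ that $E^-QH^*(Y)$ is a free $\ku$-module of the expected rank $\dim_\k H^*(Y)$.

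The main step is therefore a rank/dimension count. I would argue as follows. Inverting $u$, Borel's localisation theorem \eqref{Equation Borel localisation} (valid since $\char\,\k=0$) identifies $E^{\infty}QH^*(Y)=(E^-QH^*(Y))_u$ with $H^*(\F)(\!(u)\!)$; here I use that at slope $0^+$ the non-constant orbits do not appear, so the ``symplectic cohomology'' in sight is genuinely the equivariant cohomology of $Y$, and the fixed locus of the $\Fi^a$-action is $\F$ (for a free weight $a\neq 0$, or for any $a$ since the loop action is trivial on constants). Hence $\dim_{\kuu} E^{\infty}QH^*(Y)=\dim_\k H^*(\F)$. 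On the other hand, a purely algebraic statement — $E^-QH^*(Y)$ is a finitely generated $\ku$-module, being the cohomology of a complex of finite-rank free $\ku$-modules in each degree — gives a structure decomposition into a free part plus $u$-torsion, and $\dim_{\kuu}(E^-QH^*(Y))_u$ equals the rank of the free part. Combining, the free part of $E^-QH^*(Y)$ has rank $\dim_\k H^*(\F)$. Finally, the Morse--Bott (Atiyah--Bott / Kirwan) argument: the fixed-point contributions force $\dim_\k H^*(\F)\le \dim_\k H^*(Y)$ (perfection of the equivariant stratification), while the spectral-sequence comparison forces $\dim_\k H^*(Y)\le \operatorname{rank}_{\ku} E^-QH^*(Y)+(\text{torsion contribution})\le \dim_\k H^*(Y)$; equality throughout kills the torsion and forces degeneration. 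This yields a (non-canonical) splitting $E^-QH^*(Y)\cong H^*(Y)[\![u]\!]$.

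The remaining two isomorphisms are formal consequences. Inverting $u$ gives $E^{\infty}QH^*(Y)=(E^-QH^*(Y))_u\cong H^*(Y)(\!(u)\!)$. For the $+$ model, using $E^+C^*\cong E^-C^*\otimes_{\ku}\mathbb{F}$ together with $\mathbb{F}=\kuu/u\ku$ being a flat $\ku$-module (localisation followed by a quotient — or better, noting $E^+QH^*(Y)=(E^-QH^*(Y))_u/u(E^-QH^*(Y))$ and that $E^-QH^*(Y)$ is now free), one gets $E^+QH^*(Y)\cong H^*(Y)\otimes_\k\mathbb{F}$ directly from the freeness established above; no further spectral-sequence input is needed once degeneration of the $E^-$ model is known.

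I expect the main obstacle to be making the Kirwan/Atiyah--Bott perfection argument rigorous in the \emph{non-compact Floer-theoretic} setting: classically one uses a Morse-stratification of $Y$ by the negative gradient flow of $\|\mu\|^2$ and its $S^1$-equivariant perfection, but here $Y$ is non-compact and one must instead run the argument on the Morse--Bott--Floer spectral sequence of \cite{RZ2} (its equivariant analogue), checking that the relevant exactness/perfection still holds — i.e.\ that the $\F_\a$-contributions assemble without cancellation. The finite-generation of $E^-QH^*(Y)$ as a $\ku$-module (needed for the structure theorem) also deserves care: it follows because $CF^*(H_{0^+})$ has finite rank over $\k$ in each degree and $E^-CF^*=CF^*[\![u]\!]$, so each cohomology group is a finitely generated module over the Noetherian ring $\ku$. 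Once those two points are in place, the dimension count closes the argument.
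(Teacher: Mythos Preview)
Your central dimension-count has both inequalities reversed. The non-equivariant Morse--Bott spectral sequence for the (proper, bounded-below) moment map has $E_1=\bigoplus_\alpha H^*(\F_\alpha)[-\mu_\alpha]\Rightarrow H^*(Y)$, so it gives $\dim_\k H^*(Y)\le\dim_\k H^*(\F)$, not the reverse. Likewise, the long exact sequence from $0\to C[\![u]\!]\stackrel{u}{\to}C[\![u]\!]\to C\to 0$ yields the exact identity $\dim_\k H^*(Y)=\dim_\k(E^-QH^*(Y)/u)+\dim_\k\ker(u|_{E^-QH^*(Y)})=r+2s$, where $r=\operatorname{rank}_{\ku}$ and $s$ is the number of torsion summands in the structure decomposition---not the inequality you wrote. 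With these corrections your argument does close: Borel localisation gives $r=\dim_\k H^*(\F)$, hence $\dim_\k H^*(\F)+2s=\dim_\k H^*(Y)\le\dim_\k H^*(\F)$ forces $s=0$, so $E^-QH^*(Y)$ is free of rank $\dim_\k H^*(Y)$, and your derivation of the $V^\infty,V^+$ statements from freeness is fine. Note that your appeal to ``perfection of the equivariant stratification'' (the Atiyah--Bott lemma) is then both unnecessary and, as invoked, circular---only Borel localisation and the \emph{non}-equivariant Morse--Bott inequality are used.

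The paper takes a shorter and different route: it directly cites Kirwan's equivariant-formality theorem $H^*_{S^1}(Y)\cong H^*(Y)\otimes\k[u]$, noting that Kirwan's proof only needs the ordinary Morse inequalities for the moment map, which hold once that map is proper and bounded below. From this it extracts a chain-homotopy relation between the equivariant differential $d$ and $\delta_0\otimes\mathrm{id}$ on the uncompleted model $C^*(Y)[u]$, and observes that this relation survives tensoring with $\ku$, $\kuu$, or $\mathbb{F}$, giving all three models in one stroke. Your (corrected) approach has the merit of being self-contained and using slightly less machinery; the paper's has the merit of treating the three models uniformly via a single chain-level statement. The ``main obstacle'' you flag---making the argument rigorous in the non-compact setting---is precisely what the paper addresses by the proper/bounded-below hypothesis; for your route, the same hypothesis supplies convergence of the Morse--Bott spectral sequence and (via retraction onto the compact core) the applicability of Borel localisation.
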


Just as in the non-equivariant case, the $c^*$ map \eqref{Equation cstar map} can be unpredictable. In the ``Calabi-Yau'' setting $c_1(Y)=0$, we showed $SH^*(Y)=0$ in \cite{RZ1}, which motivated our interest in the filtration \eqref{Equation introduction filtration}.
In the ``monotone'' or ``Fano'' setting $c_1(Y)\in \R_{>0}[\omega]$, the non-equivariant $c^*$ map is a quotient map onto
$$
SH^*(Y)\cong QH^*(Y)/E_0, \textrm{ where }E_0 =\ker Q_{\Fi}^y,
$$
where $y:=\dim_{\k}\,H^*(Y)$, and $Q_{\Fi}\in QH^*(Y)$ is a ``rotation class'' \cite{RZ1} acting by quantum multiplication on $QH^*(Y)$ (a generalisation of the Seidel element for $\Fi$ \cite{Sei97} to the non-compact setting \cite{R14,R16}). 
One can also think of $E_0$ in terms of ``positive symplectic cohomology'', arising from the quotient complexes in which only non-constant Hamiltonian $1$-orbits are considered,
$$E_0\cong SH^{*}_{\myplus}(Y)[-1].$$
The notation $[-1]$ means we shift the grading up by one.
We now summarise the main relations that we prove between the equivariant and non-equivariant theories for any symplectic $\C^*$-manifold $Y$.

\begin{lm}
There is a short exact sequence, commuting with the maps from \eqref{Equation cstar map}, 
\begin{equation}\label{Equation short exact sequence for ESH}
0\longrightarrow  E^-SH^*(Y) \stackrel{u}{\longrightarrow} E^{\infty}SH^*(Y) \longrightarrow E^+SH^*(Y)   \longrightarrow 0.
\end{equation}
There are long exact sequences
\begin{align}
  \cdots \longrightarrow SH^*(Y)[-1] \longrightarrow E^-SH^*(Y)[-2] \stackrel{u}{\longrightarrow} E^-SH^*(Y)
  \;\;-\!\!\!\!\!\!\!\!\stackrel{\textrm{set }u=0\;\;\;}{\longrightarrow}  
  \!\!\!
  SH^*(Y)\longrightarrow \cdots
 \\
   \strut\;\;\;\cdots \longrightarrow SH^*(Y)[-2] \;\;-\!\!\!\!\!\!\!\stackrel{u^0\textrm{-incl}\;\;\;} \longrightarrow \!\!\! E^+SH^*(Y)[-2] \stackrel{u}{\longrightarrow} E^+SH^*(Y) \longrightarrow SH^*(Y)[-1]\longrightarrow \cdots\!\!\!\!
\end{align}
Vanishing criterion: $SH^*(Y)=0\Leftrightarrow E^+SH^*(Y)=0$, in which case $E^-SH^*(Y)\cong E^{\infty} SH^*(Y).$
\end{lm}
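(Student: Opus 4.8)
The plan is to derive all three exact sequences from short exact sequences of chain complexes, and then to reduce the short exact sequence \eqref{Equation short exact sequence for ESH} to an equivariant formality statement for $SH^*(Y)$.

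First, at the chain level: for the non-equivariant complex $C^* = CF^*(H_\lambda)$, with its $\ku$-linear differential $\delta_0 + u\delta_1 + \cdots$, multiplication by $u$ produces three short exact sequences of complexes,
\[
\begin{gathered}
0 \to E^{-}C^*[-2] \xrightarrow{u} E^{-}C^* \to C^* \to 0, \\
0 \to C^*[-2] \xrightarrow{u^0} E^{+}C^*[-2] \xrightarrow{u} E^{+}C^* \to 0, \\
0 \to E^{-}C^*[-2] \xrightarrow{u} E^{\infty}C^* \to E^{+}C^* \to 0 ;
\end{gathered}
\]
in the first, the cokernel is the $u = 0$ reduction $(C^*,\delta_0)$; in the second, the kernel of $u$ on $E^{+}C^* = C^*(\!(u)\!)/uC^*[\![u]\!]$ is the class of the $u$-constant term (isomorphic to $C^*$, the ``$u^0$-inclusion''), and $u$ is onto; the third is the defining presentation of $E^{+}C^*$. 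Taking cohomology, then the direct limit $\varinjlim_\lambda$ (exact), and using that $u$-localisation is exact and commutes with $\varinjlim$ — so that $E^{\infty}SH^*(Y) = (E^{-}SH^*(Y))_u$ canonically — the first two give the two long exact sequences in the statement, and the third gives a long exact sequence
\[
\cdots \to E^{-}SH^{*-2}(Y) \xrightarrow{u} E^{\infty}SH^*(Y) \to E^{+}SH^*(Y) \xrightarrow{\partial} E^{-}SH^{*-1}(Y) \to \cdots .
\]
Naturality of everything under Floer continuation gives compatibility with the maps \eqref{Equation cstar map}.

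To upgrade this last long exact sequence to \eqref{Equation short exact sequence for ESH} it is enough to show the connecting map $\partial$ vanishes. Since $E^{\infty}SH^*(Y) = (E^{-}SH^*(Y))_u$ and $\ku$ is a discrete valuation ring with uniformiser $u$, this is equivalent to $E^{-}SH^*(Y)$ being $u$-torsion free, and — by the first long exact sequence — this in turn is equivalent to surjectivity of the reduction map $E^{-}SH^*(Y) \to SH^*(Y)$, i.e.\ to \emph{equivariant formality of $SH^*(Y)$}. This is the heart of the proof. I would establish it by carrying the Kirwan-type argument behind \cref{Lemma equivariant formality} over from $QH^*(Y)$ to all slopes: run the equivariant ($E^{-}$) version of the {\MBF} spectral sequences of \cite{RZ2}, whose inputs are the equivariant cohomologies of the fixed components $\F_{\a}$ and of the {\MB} manifolds $B_{p,\beta}$; in $\char\,\k = 0$ the $u$-involving differentials vanish for the same reason as in the non-equivariant case, so each $E^{-}HF^*(H_\lambda)$ is equivariantly formal, hence (additively) a free $\ku$-module, hence $u$-torsion free. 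Since a direct limit of $u$-torsion-free $\ku$-modules is again $u$-torsion free, $E^{-}SH^*(Y) = \varinjlim_\lambda E^{-}HF^*(H_\lambda)$ is $u$-torsion free, so $\partial = 0$ and \eqref{Equation short exact sequence for ESH} follows.

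Finally the vanishing criterion. If $SH^*(Y) = 0$, the second long exact sequence shows $u \colon E^{+}SH^{*-2}(Y) \to E^{+}SH^*(Y)$ is an isomorphism in every degree, so $u$ acts invertibly on $E^{+}SH^*(Y)$; but $E^{+}C^* \cong C^* \otimes_\k \mathbb{F}$ and $u$ is locally nilpotent on $\mathbb{F} = \kuu/u\ku$, hence locally nilpotent on $E^{+}C^*$ and on its subquotient $E^{+}SH^*(Y)$, which forces $E^{+}SH^*(Y) = 0$. Conversely, $E^{+}SH^*(Y) = 0$ forces $SH^*(Y) = 0$ by the same long exact sequence. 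And when $SH^*(Y) = 0$, the first long exact sequence makes $u$ an isomorphism on $E^{-}SH^*(Y)$ in every degree, whence the canonical map $E^{-}SH^*(Y) \to (E^{-}SH^*(Y))_u = E^{\infty}SH^*(Y)$ is an isomorphism. The one real obstacle is the vanishing of $\partial$, i.e.\ the characteristic-zero equivariant formality of $SH^*(Y)$; the two long exact sequences and the vanishing criterion are then formal consequences of the chain-level short exact sequences together with the (local) nilpotence of $u$ on the $E^{+}$-theory.
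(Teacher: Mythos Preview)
Your overall strategy --- deriving everything from the three chain-level short exact sequences and then reducing the SES to $u$-torsion-freeness of $E^{-}SH^*(Y)$ --- matches the paper's architecture (cf.\ \cref{Lemma LES for W modules}, \cref{Lemma ev0 commutative diagram general case}, \cref{Theorem torsion freeness 2}). The two long exact sequences and the vanishing criterion are handled essentially as the paper does (\cref{Cor ordW zero iff Wplus zero}, \cref{cor vanishing of E+ for SH}).

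The gap is in your ``heart'' step. Your route to torsion-freeness is to run the equivariant {\MBF} spectral sequence for each $H_\lambda$ and argue, via a Kirwan-type mechanism, that every $E^{-}HF^*(H_\lambda)$ is equivariantly formal. But for a free weight the inputs to that spectral sequence include the $S^1$-equivariant cohomologies of the {\MB} manifolds $B_{p,\beta}$, on which the combined $S^1$-action is \emph{free} (\cref{Lemma fixed points of the mixed action}); these contribute pure $u$-torsion already on the $E_1$-page. There is no Kirwan argument available for such contributions, and your assertion that ``the $u$-involving differentials vanish for the same reason as in the non-equivariant case'' is not justified (the non-equivariant case has no $u$-involving differentials to compare to). Indeed the paper explicitly records (Remark after \cref{Theorem torsion freeness of Eminus HF}) that torsion in $E^{-}HF^*(H_{p^+})$ for non-integer periods $p$ has \emph{not} been excluded.

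The paper closes this gap differently: it restricts to the cofinal family of integer slopes $k^+$ and uses the Seidel isomorphism (\cref{Prop Periodicity property 2}) to identify $E^{-}HF^*(H_{k^+})\cong E^{-}QH^*(Y)[2k\mu]$, which is free by the already-established equivariant formality of $QH^*(Y)$ (\cref{Prop equivariant formality for QH}). A direct limit over a cofinal family of torsion-free $\ku$-modules is torsion-free, so $E^{-}SH^*(Y)$ is torsion-free and $\partial=0$ (\cref{Theorem torsion freeness 2}). The missing ingredient in your argument is precisely this periodicity/Seidel step; without it, equivariant formality at all finite slopes is an unproved (and possibly false) assertion.
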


\begin{thm}\label{Theorem intro ESH computation main}
Let $(a,b)$ be any free weight.
Then $E^{\infty}SH^*(Y) \cong E^{\infty}QH^*(Y).$

If $c_1(Y)\in \R_{\geq 0}[\omega]$, there is a commutative diagram of $\ku$-modules
$$\!
\xymatrix{
E^-QH^*(Y) 
 \ar@{->}[r]_-{\textrm{injective}}^{E^-c^*}
  \ar@{->}[d]_-{\cong} &
E^-SH^*(Y) 
 \ar@{->}[r]_-{\textrm{injective}}^{u}
  \ar@{->}[d]_-{\cong} &
E^{\infty}SH^*(Y)
\ar@{->}^-{\cong}[d] 
\\
\mathbf{\!SH^*(Y)[\![u]\!]} \!\oplus\! E_0[\![u]\!]\!
\ar@{->}[r]^-{\subset} & 
\mathbf{\!SH^*(Y)[\![u]\!]} \!\oplus\! E_0(\!(u)\!)\!
\ar@{->}[r]^-{u} & \mathbf{\!SH^*(Y)(\!(u)\!)} \!\oplus\! E_0(\!(u)\!) \!
}
$$
and the cokernel of the horizontal maps on the right is: $$E^+SH^*(Y)\cong \mathbf{SH^*(Y)}\otimes_{\k} \mathbb{F}.$$

The vertical arrows are non-canonical isomorphisms, however the copy of $E_0(\!(u)\!)$ inside $E^-SH^*(Y)$ is canonical: it is the largest $\ku$-submodule on which multiplication by $u$ acts as an invertible map.

If $c_1(Y)=0$, the above simplifies to:
$
E^-SH^*(Y) \cong E^{\infty}SH^*(Y)
$
and
$E^+SH^*(Y)=0.
$
\end{thm}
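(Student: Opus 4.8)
The plan is to deduce everything from the non-equivariant computation $SH^*(Y)\cong QH^*(Y)/E_0$ (with $E_0 = \ker Q_\Fi^y \cong SH^*_{\myplus}(Y)[-1]$), combined with the equivariant localisation theorem and the long/short exact sequences from the preceding lemma, plus a structural analysis of the $\ku$-module $E^-QH^*(Y)$.

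First I would establish the statement $E^\infty SH^*(Y) \cong E^\infty QH^*(Y)$: this is precisely the general localisation theorem (Theorem \ref{Theorem intro localisation theorem} as referenced in the text), which for a free weight $(a,b)$ follows from applying Borel's theorem \eqref{Equation Borel localisation} to the equivariant {\MBF} spectral sequences of \PartII{} — the non-constant orbit contributions $B_{p,\beta}$ have $p\neq a/b$, so the mixed $S^1$-action \eqref{Equation Introduction action on loops} acts without fixed points on them, forcing their $u$-localised contributions to vanish, and only the $\F_\a$ (constant orbit) part survives, giving $E^\infty QH^*(Y)$.

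Second, assume $c_1(Y)\in\R_{\geq 0}[\omega]$. By equivariant formality (Lemma \ref{Lemma equivariant formality}) we have a (non-canonical) $\ku$-module iso $E^-QH^*(Y)\cong H^*(Y)[\![u]\!]$, so $E^-QH^*(Y)$ is $u$-torsion-free and $u$-adically complete. The non-equivariant $c^*$ map is the quotient $QH^*(Y)\twoheadrightarrow QH^*(Y)/E_0 = SH^*(Y)$ with kernel $E_0$. I would run the long exact sequence relating $SH^*$ to $E^-SH^*$ (the first displayed LES in the preceding lemma), using that setting $u=0$ in $E^-SH^*(Y)$ recovers $SH^*(Y)$: injectivity of $E^-c^*$ would follow because $E^-QH^*(Y)$ is $u$-torsion-free and the Floer continuation is compatible with the $u$-filtration; then a limit/completeness argument identifies $E^-SH^*(Y)$ with the $u$-adic completion of the $\k[u]$-module generated by the image, yielding the splitting $SH^*(Y)[\![u]\!]\oplus E_0(\!(u)\!)$ — here the key point is that $Q_\Fi$ acts invertibly on $E_0$ after $u$-localisation (since $E_0 = \ker Q_\Fi^y$ sits inside the part where the Seidel/rotation action, which is intertwined with multiplication by $u$ in the equivariant theory à la Seidel, becomes invertible), so the $E_0$-summand is already $u$-local inside $E^-SH^*(Y)$, which is exactly the claimed canonical characterisation as the largest submodule on which $u$ acts invertibly. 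Injectivity of the map $u\colon E^-SH^*(Y)\to E^\infty SH^*(Y)$ is then immediate from $u$-torsion-freeness of $E^-SH^*(Y)\cong SH^*(Y)[\![u]\!]\oplus E_0(\!(u)\!)$, and the cokernel of $u$ on $E^\infty SH^*(Y)$ is computed from the short exact sequence \eqref{Equation short exact sequence for ESH} to be $E^+SH^*(Y)\cong SH^*(Y)\otimes_\k \mathbb{F}$, matching $(SH^*(Y)(\!(u)\!)\oplus E_0(\!(u)\!))/u(SH^*(Y)[\![u]\!]\oplus E_0(\!(u)\!)) = SH^*(Y)\otimes_\k\mathbb{F}$ since the $E_0(\!(u)\!)$ parts cancel.

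Finally, the $c_1(Y)=0$ case: by \PartI{} we have $SH^*(Y)=0$, hence $E_0 = QH^*(Y)$ and $SH^*_{\myplus}(Y)=0$; the vanishing criterion in the preceding lemma gives $E^+SH^*(Y)=0$, and then the short exact sequence \eqref{Equation short exact sequence for ESH} forces $E^-SH^*(Y)\xrightarrow{\;u\;}E^\infty SH^*(Y)$ to be an isomorphism. Alternatively this drops out of the diagram by setting $SH^*(Y)=0$ throughout. I expect the main obstacle to be the second step: pinning down precisely why the $E_0$-summand becomes $u$-invertible inside $E^-SH^*(Y)$ while the $SH^*(Y)$-summand stays $u$-adically complete and torsion-free — this requires carefully tracking the interaction between the equivariant parameter $u$, the Floer continuation maps $E^-c_\lambda^*$, and the rotation class $Q_\Fi$ (the equivariant Seidel-type relation), and verifying that the splitting of $\ku$-modules is compatible with all the continuation maps in the direct limit defining $E^-SH^*(Y)$; the formality isomorphism is only non-canonical, so one must argue the canonical piece ($E_0(\!(u)\!)$, the maximal $u$-invertible submodule) is well-defined independently of choices.
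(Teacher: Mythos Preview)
Your outline for the first and last claims is correct and matches the paper: the isomorphism $E^\infty SH^*(Y)\cong E^\infty QH^*(Y)$ is exactly the localisation theorem, and the $c_1(Y)=0$ case follows from the vanishing criterion plus the short exact sequence.

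The gap is in your second step, and you diagnose it yourself. Your sentence ``$Q_\Fi$ acts invertibly on $E_0$ after $u$-localisation'' is backwards: $E_0=\ker Q_\Fi^y$ is precisely where $Q_\Fi$ is \emph{nilpotent}. What you need is not a statement about $Q_\Fi$ but about the \emph{equivariant} rotation maps, and the LES-plus-completeness route you sketch does not produce this. The paper's argument is structurally different. It rewrites $E^-SH^*(Y)$ as the direct limit of the sequence
\[
E^-QH^*(Y)\xrightarrow{\,r_{a,b}\,}E^-QH^*(Y)[2\mu]\xrightarrow{\,r_{a-b,b}\,}E^-QH^*(Y)[4\mu]\xrightarrow{}\cdots
\]
of equivariant rotation maps (Seidel isomorphisms composed with continuation), whose $u^0$-part is multiplication by $Q_\Fi$. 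Choosing a basis $w_1,\ldots,w_t,v_1,\ldots,v_s$ of $QH^*(Y)$ adapted to the splitting $W\oplus E_0$ (with $W=Q_\Fi^y(QH^*(Y))$), the composite $ER_{km}$ has $u^0$-part equal to $r^{km}$, which is an isomorphism on $W$ and zero on $E_0$. Thus $ER_{km}$ has block form $\left(\begin{smallmatrix}I+Au & Bu\\ Cu & Du\end{smallmatrix}\right)$; a Smith normal form lemma (the paper's Lemma~\ref{Lemma invariant factors of id block uorder 1 blocks}) then shows its invariant factors are $(1,\ldots,1,u^{\geq k},\ldots,u^{\geq k})$ with $t$ copies of $1$. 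Injectivity (from the free-weight localisation theorem) rules out $u^\infty=0$ factors. The general direct-limit structure theory (\cref{Cor structure theorem 2}) then gives $E^-SH^*(Y)\cong\ku^t\oplus\kuu^s$, with $E^-c^*$ the inclusion $\ku^t\oplus\ku^s\hookrightarrow\ku^t\oplus\kuu^s$. The $E^+$ computation and the canonical description of the $\kuu^s$ summand then drop out.

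So the missing idea is: pass to the rotation-map direct limit, use the block structure forced by the non-equivariant eigenspace decomposition, and control invariant factors of the composites. Your proposed LES/completion argument does not access the invariant factors of the continuation maps and would not, on its own, distinguish the $SH^*(Y)[\![u]\!]$ piece from the $E_0(\!(u)\!)$ piece.
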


\begin{rmk}[Further explanations]
Using \cref{Lemma equivariant formality}, there is an injection 
$QH^*(Y)\hookrightarrow E^-QH^*(Y)$, whose image freely generates $E^-QH^*(Y)$ as a $\ku$-module. 
The inverse of the first vertical arrow in the theorem arises by identifying $SH^*(Y)\cong \mathrm{image}(Q_{\Fi}^y)\subset QH^*(Y)$ and $E_0= \ker Q_{\Fi}^y\subset QH^*(Y)$, and then uses that injection combined with the free $\ku$-action.

The map $E^-c^*$ equals the natural $u$-localisation map but only $u$-localising the submodule $E_0[\![u]\!]$.
The second horizontal map is $u$ times the canonical $u$-localisation map, $u$-localising everything.

The middle vertical arrow can be thought of as a short exact sequence of $\ku$-modules,
$$
0 \to E_0(\!(u)\!) \to E^-SH^*(Y) \to SH^*(Y)[\![u]\!] \to 0.
$$
By splitting, so by picking a lift of a basis of the free $\ku$-module $SH^*(Y)[\![u]\!]$, we see there are many copies of $SH^*(Y)[\![u]\!]$ inside $E^-SH^*(Y)$, even though in the proof we construct a specific copy. By contrast, the copy of $E_0(\!(u)\!)$ is canonical, and we build generators by adding very specific higher order $u$-corrections to a $\k$-linear basis of $E_0\subset QH^*(Y)\hookrightarrow E^-QH^*(Y)$.
\end{rmk}

\begin{ex}\label{Example Cn example intro} 
For $Y=\C^n$, we have $QH^*(\C^n)\cong \k$, $SH^*(\C^n)=0$. 
In this case we could simply use $\k=\Q(\!(T)\!)$, with grading $|T|=0$. 
For the standard $S^1$-action, and free $(a,b)$ (equivalently, $a\notin b \Z_{\geq 1}$):
$$
QH^*_{S^1}(\C^n)\cong \k[u], 
\;\
E^-QH^*(\C^n) \cong \ku,
\;\
E^-SH^*(\C^n)\cong E^{\infty}SH^*(\C^n)\cong \kuu,
\;\
E^+SH^*(\C^n)=0.
$$  
\end{ex}

\begin{ex} 
The total space $Y$ of the negative line bundle $\pi:\mathcal{O}(-k) \to \C\P^m$ for $1\leq k \leq m$ is monotone, $c_1(Y)=(1+m-k)x\neq 0$ where $x:=\pi^*[\omega_{\C\P^m}]$. To simplify notation, in this case we can use $\k=\Q(\!(T)\!),$ with grading 
$|T|=2$.
Abbreviate
$y:=x^{1+m-k}-(-k)^kT^{1+m-k}$.
By \cite[Cor.4.14]{R16}, 
$$QH^*(Y)=\k[x]/(x^k y)\cong \k^{1+m},\quad Q_{\varphi}=-kx,\quad E_0=\langle y \rangle \cong \k^{k} ,\quad SH^*(Y)\cong \k[x]/(y)\cong \k^{1+m-k},$$
where $\Fi$ is the standard $\C^*$-action on fibres.
Therefore, for free weights $(a,b)$ (equivalently $a\notin b \Z_{\geq 1}$),
\begin{equation*}
\begin{split}
QH^*_{S^1}(Y)\cong \k[u]^{1+m}, 
\;
E^-QH^*(\C^n) \cong \ku^{1+m},
\;
E^{\infty}QH^*(\C^n) \cong \kuu^{1+m},
\;
E^+QH^*(\C^n) \cong \mathbb{F}^{1+m},
\\
E^-SH^*(Y)\cong \ku^{1+m-k}\oplus \kuu^{k},
\qquad
E^{\infty}SH^*(Y)\cong \kuu^{1+m},
\qquad
E^+SH^*(Y)=\mathbb{F}^{1+m-k}.
\quad
\end{split}
\end{equation*}  
\end{ex}

Abbreviate: $\qquad$
$
s=\dim_{\k} SH^*(Y),  \qquad e=\dim_{\k}E_0, \qquad y=\dim_{\k} H^*(Y) = s+e.
$
\begin{thm}\label{Cor SH calculation for monotone in coords}
Assume $c_1(Y)\!\in\! \R_{\geq 0}[\omega]$.
For any weight $(a,b)\neq (0,0)$, whether free or not, we have:
$$
E^-SH^*(Y)\cong \ku^s \oplus \kuu^e,
\qquad
E^{\infty}SH^*(Y) \cong E^{\infty}QH^*(Y)\cong \kuu^{y},
\qquad
E^+SH^*(Y) \cong \mathbb{F}^{s},
$$
which simplifies when $c_1(Y)=0$: 
$E^-SH^*(Y)\cong \kuu^y \cong E^{\infty}SH^*(Y)$ and $
E^+SH^*(Y)=0$.
\end{thm}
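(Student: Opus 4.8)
The plan is to reduce \cref{Cor SH calculation for monotone in coords} to \cref{Theorem intro ESH computation main} by upgrading the statement there from free weights $(a,b)$ to arbitrary weights $(a,b)\neq(0,0)$. For free weights, the isomorphisms $E^-SH^*(Y)\cong \ku^s\oplus\kuu^e$, $E^+SH^*(Y)\cong\mathbb{F}^s$ and $E^{\infty}SH^*(Y)\cong E^{\infty}QH^*(Y)$ are immediate from \cref{Theorem intro ESH computation main} together with \cref{Lemma equivariant formality}: indeed $SH^*(Y)\cong\mathrm{image}(Q_{\Fi}^y)$ has $\k$-dimension $s$, $E_0=\ker Q_{\Fi}^y$ has dimension $e$, and the bottom row of the commutative diagram in \cref{Theorem intro ESH computation main} reads off the three answers, while $E^{\infty}QH^*(Y)\cong\kuu^y$ follows from \cref{Lemma equivariant formality} since $y=\dim_\k H^*(Y)$. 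The $c_1(Y)=0$ simplification is then just $e=y$, $s=0$.

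So the real content is the passage to non-free weights. First I would observe that $E^{\infty}SH^*(Y)\cong E^{\infty}QH^*(Y)$ holds for \emph{all} weights $(a,b)\neq(0,0)$, not only free ones: the localisation statement in \cref{Theorem intro ESH computation main} should be proved via Borel localisation applied to the equivariant {\MBF} spectral sequence of \PartII{}, where the key point is that over $\kuu$ the only surviving contributions come from the fixed locus $\F=Y^{S^1}$ — the {\MB} manifolds $B_{p,\beta}$ of non-constant $1$-orbits carry a residual free $S^1$-reparametrisation action (or the $\Fi$-rotation acts with a nonzero weight on them), so after $u$-localising and using $\char\,\k=0$ their equivariant cohomology vanishes, independently of whether $\tfrac{a}{b}$ equals some bad slope. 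Hence $E^{\infty}SH^*(Y)\cong E^{\infty}QH^*(Y)\cong\kuu^y$ with no freeness hypothesis. For $E^-$ and $E^+$ in the monotone case, I would use the short exact sequence $0\to E_0(\!(u)\!)\to E^-SH^*(Y)\to SH^*(Y)[\![u]\!]\to 0$ of \cref{Theorem intro ESH computation main}; this sequence, and the identification of $E_0(\!(u)\!)$ as the maximal $u$-invertible submodule, should again follow from the spectral-sequence input rather than from freeness of $(a,b)$, because it reflects the monotone structure $SH^*(Y)\cong QH^*(Y)/\ker Q_{\Fi}^y$ together with the torsion behaviour of $E^-HF^*(H_\lambda)$ under $u$. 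Splitting the sequence over $\ku$ gives $E^-SH^*(Y)\cong\ku^s\oplus\kuu^e$, and then $E^+SH^*(Y)=E^-SH^*(Y)\otimes_{\ku}\mathbb{F}\cong\mathbb{F}^s$ since $\kuu\otimes_{\ku}\mathbb{F}=0$ and $\ku\otimes_{\ku}\mathbb{F}=\mathbb{F}$.

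The main obstacle is exactly this removal of the free-weight hypothesis: for a non-generic slope $p=\tfrac{a}{b}$, one of the {\MB} manifolds $B_{p,\beta}$ consists of fixed points of the mixed action \eqref{Equation Introduction action on loops}, so a priori it contributes to the $u$-power-series part of $E^-HF^*(H_\lambda)$ and one cannot simply discard it. The resolution I would pursue is that, even though such $B_{p,\beta}$ is fixed by the specific mixed circle, it still carries a nontrivial residual circle action (coming from the $S^1\times S^1$ of $\Fi$-rotation and loop reparametrisation, of which only a one-parameter subgroup is used): concretely $B_{p,\beta}$ is an $S^1$-bundle of non-constant orbits and the remaining $S^1$ acts along the fibres, so its $u$-localised equivariant cohomology vanishes in characteristic zero. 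Making this precise requires re-examining the equivariant {\MBF} spectral sequence of \PartII{} at a non-generic slope and checking that the $E_1$-page contributions of the $B_{p,\beta}$ still $u$-localise to zero; once that is in hand, the continuation maps $E^{\karo}c^*_\lambda$ in the limit $\lambda\to\infty$ behave exactly as in the free case, and the diagram of \cref{Theorem intro ESH computation main} — hence the dimension count above — goes through verbatim. I would also remark that the weight $(a,b)=(0,0)$ is genuinely excluded, since then $u$ acts with extra degeneracy (no mixed action at all).
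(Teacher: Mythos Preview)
Your reduction to \cref{Theorem intro ESH computation main} for free weights is correct. The gap is entirely in your treatment of non-free weights, and the argument you sketch there does not work.

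You propose that for a non-free weight with $p=a/b$ a bad slope, the Morse--Bott manifold $B_{p,\beta}$ ``still carries a nontrivial residual circle action'' coming from the ambient $S^1\times S^1$, so its $u$-localised equivariant cohomology vanishes. But the equivariant theory here is built from exactly \emph{one} $S^1$-subgroup with a single equivariant parameter $u$ (cf.\ \cref{Remark characteristic and mixed action}); there is no second parameter to exploit. By \cref{Lemma fixed points of the mixed action}, $B_{p,\beta}$ at slope $p=a/b$ consists entirely of \emph{fixed points} of the chosen action \eqref{Equation Introduction action on loops}, so Borel localisation gives $H^*_{S^1}(B_{p,\beta})\otimes_{\ku}\kuu\cong H^*(B_{p,\beta})\otimes_{\k}\kuu$, which is \emph{nonzero}. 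This is spelled out in \cref{Remark non-free weight MB contributions}, and \cref{Example nonfree weight for Cn} exhibits the consequence: for $Y=\C^n$ with weight $(1,1)$ the map $E^{\infty}c^*$ is \emph{zero}, yet $E^{\infty}SH^*(Y)\cong\kuu$ nonetheless. So neither your vanishing claim nor your assertion that $E^{\infty}c^*$ is an isomorphism for all $(a,b)\neq(0,0)$ is correct; the isomorphism $E^{\infty}SH^*(Y)\cong E^{\infty}QH^*(Y)$ in the statement is only an abstract $\kuu$-module isomorphism.

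The paper's actual argument (\cref{Theorem u localisation for monotone any weight case} and \cref{Subsection Growth rate of the filtration polynomial nonfree weight case}) is more delicate. It rewrites the direct limit via the rotation maps $ER_k$ between free $\ku$-modules of rank $y$ (\cref{ESH is direct limit of maps on EQH}). By \cref{Theorem injectivity theorem 2}, only the single continuation map crossing slope $a/b$ can fail to be injective; thereafter all continuation maps are injective again. The kernel produced at that one step is tracked through the direct limit using the Smith-normal-form machinery of \cref{Lemma invariant factors of id block uorder 1 blocks} and \cref{Subsection Non-injective homomorphisms}: the lost $\ku^n$ piece is ``reborn'' as a cokernel summand in the next $E^-QH^*(Y)[2k\mu]$, and the subsequent (injective) continuation maps then $u$-localise it exactly as in the free case. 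This is what yields $E^-SH^*(Y)\cong\ku^s\oplus\kuu^e$ with the map from $E^-QH^*(Y)$ possibly having kernel inside the $\ku^e$ factor, rather than being the naive inclusion.
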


In the free weight case, this follows from \cref{Theorem intro ESH computation main}, moreover $c^*:E^-QH^*(Y)\to E^-SH^*(Y)$ corresponds to the inclusion $\ku^s \oplus \ku^e \hookrightarrow \ku^s \oplus \kuu^e$. The non-free weight case satisfies the same isomorphisms, but the latter $c^*$ map can have kernel as $\ku^e \to \kuu^e$ may no longer be an inclusion. To avoid complicating the introduction, we do not explain the general result further (see \cref{Theorem u localisation for monotone any weight case} and \cref{Subsection Growth rate of the filtration polynomial nonfree weight case}), and instead we describe the simplest example.

\begin{ex}[Non-free weights]\label{Example nonfree weight for Cn}
In \cref{Example Cn example intro}, consider the non-free weight $(a,b)=(1,1)$. Each point of $S^{2n-1}\subset \C^n$ is the initial point of a great circle, which are orbits of the $S^1$-action. This corresponds to a {\MB} manifold $B_1\cong S^{2n-1}$ of $1$-orbits for slope value $1$, which are all fixed points of \eqref{Equation Introduction action on loops}. The maximum of an auxiliary Morse function on $B_1$ will kill the unit $1\in H^0(Y)$ in the equivariant spectral sequence for $E^-SH^*(Y)$: indeed, that edge differential exists already in the non-equivariant spectral sequence for $SH^*(Y)$. So $E^-c^*_{1^+}:E^-QH^*(Y)\to E^-HF^*(H_{1^+})$ and $E^{\infty}c^*_{1^+}:E^{\infty}QH^*(Y)\to E^{\infty}HF^*(H_{1^+})$ both vanish (in stark contrast to the free weight case, where such maps are injective).
However, the minimum of the auxiliary Morse function on $B_1$ also creates a new $\ku$-module generator in degree $-2n$, indeed 
$E^-HF^*(H_{1^+})\cong E^-QH^*(Y)[2n]$.
The continuation maps $E^{\infty}HF^*(H_{1^+})\to E^{\infty}HF^*(H_{\lambda})$ all have to be injective for generic slopes $\lambda > 1$ (for general reasons, by \cref{Theorem injectivity theorem 2}). So $E^{\infty}SH^*(Y) \cong E^{\infty}QH^*(Y)[2n] \cong \kuu$ even though the map $E^{\infty}c^*$ in \eqref{Equation cstar map} is zero! From there it is not too difficult to see that $E^-SH^*(Y) \cong E^{\infty}SH^*(Y)$ and $E^+SH^*(Y)=0.$
\end{ex}

\subsection{Localisation theorem, injectivity and torsion-freeness}\label{Subsection intro Localisation}
Beyond those descriptions of equivariant cohomology, we are interested in how equivariant Floer theory induces filtrations on ordinary and quantum cohomology.
Recall we considered \eqref{Equation introduction filtration} due to the phenomenon that kernels of  continuation maps grow in an interesting way.
The same phenomenon occurs in the $E^+SH^*(Y)$ theory, by considering the kernels $E_{1}^{\lambda}:=\ker (E^+c_{\lambda}^*:\; E^+QH^*(Y)\to E^+HF^*(H_{\lambda}))$. Indeed there is more structure:
for $j\in \Z\cup \{\pm\infty\}$, $p\in [0,\infty]$, we define $\EE_{j}^p :=\bigcap_{\mathrm{generic}\,\lambda\geq p} E_{j}^{\lambda}$ where
\begin{equation}\label{Equation E+ filtration}
E_{j\leq 0}^{\lambda}:=\ker (u^{|j|+1} \cdot E^+c_{\lambda}^*), \qquad 
E_{j> 0}^{\lambda}:=u^{j-1}\ker E^+c_{\lambda}^*, \qquad 
E_{-\infty}^{\lambda}:=E^+QH^*(Y), \;
E_{\infty}^{\lambda}:=\{0\}.
\end{equation}

The $\ku$-submodules $\EE_{j}^p\subset E^+QH^*(Y)$ satisfy
$$\EE_{j}^p\subset \EE_{j}^{p'} \;\textrm{ for }\;p<p', \qquad u\EE_{j}^p\subset \EE_{j+1}^p \subset \EE_{j}^p,\qquad u\EE_{j}^{p} =  \EE_{j+1}^{p}.$$
(The second equation is a general property of all filtrations that we will define, whereas the third equation is a special strengthening of the first inclusion of the second equation).

\begin{rmk}[Persistence module and barcode] Just as in the non-equivariant discussion in \cite[Sec.1.6]{RZ1}, we can build a persistence module $(V_p)_{p\in \R\cup \{\infty\}}$ with $V_{\lambda}:=E^+HF^*(H_{\lambda})$ for generic $\lambda$, with $V_{\infty}=E^+SH^*(Y)$, with $\EE_{1}^p=\ker E^+c^*_{p^+}$ playing the role that $\FF_p^{\Fi}$ was playing in \cite[Sec.1.6]{RZ1}. In particular, this assigns a barcode to the $\C^*$-action $\Fi$, and one can use the bottleneck distance to compare barcodes of different $\C^*$-actions.
For weights $(a,0)$, the persistence module has the same periodicity property mentioned in \cite[Sec.1.6]{RZ1}, in view of \cref{Cor sequence of r maps equiv}.
\end{rmk}

We saw above that often $E^+SH^*(Y)=0$, and then the above $\EE$-kernels encode very interesting and complicated structural properties of moduli spaces of families of Floer solutions.
What caught us by surprise, is that the precise opposite phenomenon occurs for the kernels when $\karo\in \{-,\infty\}$:

\begin{prop}\label{Prop injectivity theorem intro}
    If the $u$-localisation of $c^*\!:E^-QH^*(Y) \to E^-SH^*(Y)$ is an isomorphism, then the continuation maps $E^-QH^*(Y) \to E^-HF^*(H_{\lambda})$ and $E^-QH^*(Y) \to E^-SH^*(Y)$ are injective.
\end{prop}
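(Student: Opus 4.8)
The plan is to exploit the algebraic rigidity coming from the $\ku$-module structure, together with the commutative factorisation of the continuation maps. Write $c^*_\lambda: E^-QH^*(Y)\to E^-HF^*(H_\lambda)$ and recall that $E^-c^*: E^-QH^*(Y)\to E^-SH^*(Y)$ is the direct limit of the $c^*_\lambda$, and that $c^*_{\lambda'}$ factors through $c^*_\lambda$ for $\lambda<\lambda'$. Since $E^-QH^*(Y)$ is a free (hence $u$-torsion-free) $\ku$-module by \cref{Lemma equivariant formality}, the localisation map $E^-QH^*(Y)\to (E^-QH^*(Y))_u\cong E^{\infty}QH^*(Y)$ is injective. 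By hypothesis the $u$-localisation of $E^-c^*$ is an isomorphism; but the $u$-localisation of $E^-c^*$ is exactly the composite $(E^-QH^*(Y))_u \xrightarrow{\ (E^-c^*)_u\ } (E^-SH^*(Y))_u$, and (identifying $(E^-QH^*(Y))_u\cong E^\infty QH^*(Y)$ and noting $(E^-SH^*(Y))_u\cong E^\infty SH^*(Y)$ canonically, as recorded in the overview) this map sits in the commuting square with the localisation maps down each column.

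First I would chase the square
\begin{equation*}
\xymatrix{
E^-QH^*(Y) \ar[r]^-{E^-c^*}\ar@{^{(}->}[d] & E^-SH^*(Y) \ar[d] \\
E^{\infty}QH^*(Y) \ar[r]^-{\cong}_-{(E^-c^*)_u} & E^{\infty}SH^*(Y).
}
\end{equation*}
The left vertical arrow is injective (torsion-freeness of the free $\ku$-module $E^-QH^*(Y)$), the bottom arrow is an isomorphism by hypothesis, hence the composite $E^-QH^*(Y)\to E^{\infty}SH^*(Y)$ is injective; since this composite factors through $E^-c^*$, the map $E^-c^*$ is injective. That handles the $E^-QH^*(Y)\to E^-SH^*(Y)$ statement. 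For the finite-slope statement, note $c^*_\lambda$ is the first of the factors $E^-QH^*(Y)\xrightarrow{c^*_\lambda} E^-HF^*(H_\lambda)\to E^-SH^*(Y)$ whose composite is $E^-c^*$; injectivity of $E^-c^*$ forces injectivity of each $c^*_\lambda$.

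The one genuine subtlety — and the step I expect to be the main obstacle — is justifying that the $u$-localisation of $E^-c^*$ really is the canonical isomorphism $(E^-QH^*(Y))_u\cong E^\infty QH^*(Y)\xrightarrow{\ \sim\ }(E^-SH^*(Y))_u\cong E^\infty SH^*(Y)$ appearing in the commuting square, i.e.\ that localising the direct system $\{c^*_\lambda\}$ commutes with taking the direct limit and agrees with $E^\infty c^*$. This is the statement that $u$-localisation (an exact functor, being localisation) commutes with the filtered colimit $\varinjlim_\lambda$ that defines $SH^*$; both operations are exact and colimit-preserving, so this is formal, but one must be slightly careful that the isomorphisms $(E^-H^*)_u\cong E^\infty H^*$ used for $QH^*$, for each $HF^*(H_\lambda)$, and for $SH^*$ are compatible with the continuation maps — this compatibility is exactly the ``canonical isomorphism for algebraic reasons (flatness of $u$-localisation)'' noted in the overview, applied naturally to the whole direct system. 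Once that naturality is in hand, the proof is the two-line diagram chase above.
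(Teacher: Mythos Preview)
Your proof is correct and essentially identical to the paper's (Proposition~\ref{Proposition when iso after localise get finite invariants}): both use that $E^-QH^*(Y)\to (E^-QH^*(Y))_u$ is injective by freeness, that the localised map is an isomorphism by hypothesis, and that the composite factors through each $c^*_\lambda$ and through $c^*$.

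One remark: your final paragraph is unnecessary worry. The hypothesis of the proposition is literally that $(E^-c^*)_u$ is an isomorphism, so you may take that as the bottom arrow of your square without ever invoking the identifications $(E^-QH^*(Y))_u\cong E^\infty QH^*(Y)$ or $(E^-SH^*(Y))_u\cong E^\infty SH^*(Y)$, nor the compatibility of localisation with the direct limit. The paper's diagram works purely with $V_u$, $(W_\lambda)_u$, $W_u$ and never mentions $E^\infty$; the identification with $E^\infty c^*$ only becomes relevant later, in the Localisation Theorem (Theorem~\ref{Theorem intro localisation theorem}), where one wants to \emph{verify} the hypothesis.
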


That this occurs rather often, is due to the following, which is at the heart of our paper:

\begin{thm}[Localisation theorem]\label{Theorem intro localisation theorem}
Let $(a,b)$ be a free weight. The following diagram of isomorphisms commutes, where the top row is the $u$-localisation of $E^-c^*$.
    $$
\begin{tikzcd}[column sep=0.6in]
E^-QH^*(Y)_u 
 \arrow[r, "(E^-c^*)_u","\cong"'] 
 \arrow[d, "\cong","u\otimes 1"']
 &
 E^{-} SH^*(Y)_u  
\arrow[d, "\cong"',"u\otimes 1"]
\\
E^{\infty}QH^*(Y)
\arrow[r, "E^{\infty}c^*","\cong"'] 
& E^{\infty} SH^*(Y)
\end{tikzcd}
$$
\end{thm}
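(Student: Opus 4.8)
The plan is to deduce this from the equivariant analogue of the Morse--Bott--Floer spectral sequences of \PartII\ together with Borel's localisation theorem \eqref{Equation Borel localisation}. First I would observe that the two vertical maps labelled $u \otimes 1$ are isomorphisms for purely algebraic reasons: $E^\infty C^* = (E^- C^*)_u$ by definition, so $(E^- H^*)_u \cong E^\infty H^*$ canonically, and multiplication by $u$ is invertible on any $u$-localised module. Hence $u \otimes 1$ on the left is an isomorphism onto $E^\infty QH^*(Y)$, and likewise on the right onto $E^\infty SH^*(Y)$. The diagram then commutes on the nose because all four maps are built from the same continuation data and $u$-localisation is functorial; so the real content is that the horizontal maps are isomorphisms. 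Since the bottom map is $E^\infty c^*$ and the top is obtained from it by the two vertical isomorphisms, it suffices to prove that $E^\infty c^* : E^\infty QH^*(Y) \to E^\infty SH^*(Y)$ is an isomorphism. But that is precisely \cref{Theorem intro ESH computation main}, whose first assertion says $E^\infty SH^*(Y) \cong E^\infty QH^*(Y)$ for any free weight; one checks that the isomorphism there is realised by the canonical map $E^\infty c^*$ and not merely an abstract isomorphism.

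To prove that $E^\infty c^*$ is an isomorphism directly (which is the heart of the matter, and presumably how \cref{Theorem intro ESH computation main} itself is proved), I would run the equivariant $u$-localised Morse--Bott--Floer spectral sequence computing $E^\infty HF^*(H_\lambda)$ from \PartII. Its $E_1$-page is built from the $u$-localised $S^1$-equivariant cohomologies of the fixed components $\F_\a$ (the constant orbits) and of the Morse--Bott manifolds $B_{p,\beta}$ of non-constant $1$-orbits of period $p \le \lambda$. The key point is that for a \emph{free} weight $(a,b)$, none of the $B_{p,\beta}$ contributes: the mixed $S^1$-action of \eqref{Equation Introduction action on loops} acts on each $B_{p,\beta}$ with $p = a/b$ \emph{not} an $S^1$-period for which such a manifold exists — equivalently, the $\Fi$-rotation and loop-action never cancel — so the mixed action on each $B_{p,\beta}$ is \emph{free}, hence its $u$-localised equivariant cohomology $H^*_{S^1}(B_{p,\beta})(\!(u)\!)$ vanishes after $u$-localisation by Borel's theorem (there are no fixed points). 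Thus after $u$-localisation, only the constant-orbit part survives, and the $E_1$-page collapses to the $u$-localised equivariant cohomology of the fixed locus $\F = Y^{S^1}$, which by Borel \eqref{Equation Borel localisation} is $(H^*_{S^1}(Y))_u$. This holds uniformly in $\lambda$ and is compatible with continuation maps, so passing to the direct limit $\lambda \to \infty$ gives $E^\infty SH^*(Y) \cong (H^*_{S^1}(Y))_u = E^\infty QH^*(Y)$, and one verifies the identification is induced by $E^\infty c^*$ because the continuation maps on $E_1$-pages are identity maps on the fixed-locus contribution.

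The main obstacle I anticipate is not the Borel vanishing itself but controlling the spectral sequence and the direct limit carefully enough: one must know that the equivariant Morse--Bott--Floer spectral sequence of \PartII\ genuinely degenerates after $u$-localisation (i.e.\ that the higher differentials $\delta_j$ entering $d = \delta_0 + u\delta_1 + \cdots$ do not mix the surviving fixed-locus part with itself in a way that changes the answer), that $u$-localisation commutes with taking the limit $\varinjlim_\lambda$ (this is flatness of $u$-localisation, already invoked in the excerpt), and that the comparison maps identify the abstract Borel isomorphism with $E^\infty c^*$ at the chain level. A secondary subtlety is the role of $\char\,\k = 0$: it is needed so that the ``red arrows are multiplication by $k$'' (the weight-$k$ factors from non-primitive $S^1$-orbits, cf.\ \cref{Example S1 equiv cohomologies calculation}) remain invertible, so that the only obstruction to Borel vanishing on the $B_{p,\beta}$ is the presence of genuine fixed points, which the free-weight hypothesis rules out. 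Once these points are in place the commutativity and the fact that all four arrows are isomorphisms follow formally.
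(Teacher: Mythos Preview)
Your proposal is correct and follows essentially the same route as the paper: the vertical maps are isomorphisms by flatness of $u$-localisation, commutativity is formal, and the real content---that $E^\infty c^*$ is an isomorphism---is established exactly as you outline, by running the equivariant Morse--Bott--Floer spectral sequence and using Borel localisation to kill all higher columns (the $B_{p,\beta}$ contributions have no fixed points for the mixed action when $(a,b)$ is free), leaving only the zeroth column $E^\infty QH^*(Y)$. One small simplification over your sketch: the surviving zeroth column \emph{is} $E^\infty QH^*(Y)$ by construction, so no second invocation of Borel is needed there; the paper also makes precise (as you anticipate) that $u$-localisation may be pushed through the spectral sequence page-by-page because each energy spectral sequence and each $E^-HF^*(H_\lambda)$ spectral sequence converges after finitely many steps.
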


\subsection{Young diagrams and Hilbert-Poincar\'{e} series}

\cref{Prop injectivity theorem intro} means that the equivariant analogue of \eqref{Equation introduction filtration} for $\karo\in \{-,\infty\}$ would be trivial. So we seek instead a filtration arising from injective homomorphisms. 
The choice in \cref{Remark characteristic and mixed action} ensures the basic but consequential fact that $\ku$ is a discrete valuation ring. Therefore, any $\ku$-module homomorphism $Q:V \to W$ of free $\ku$-modules of finite rank admits a Smith normal form\footnote{$\mathrm{diag}$ refers to the main diagonal of a possibly non-square matrix. So $a= \min(\mathrm{rank}_{\ku}V,\mathrm{rank}_{\ku}W)$.} $\mathrm{diag}(u^{j_1},\ldots,u^{j_a})$ where $0\leq j_1 \leq \cdots \leq j_a\leq \infty$, abusively allowing the formal symbol $u^{\infty}:=0$ (this does not arise when $Q$ is injective). We call $u^j$ the {\bf invariant factors} of $Q$.
Consider first slopes just above an integer, so $\lambda=k^+$, $k\in \N$ (the Hamiltonian flow for slope $k$ would give rise to $k$ ``full rotations'' of the $S^1$-action, at infinity).

\begin{lm}
The $\ku$-module $E^-HF^*(H_{k^+})$ is free, indeed it is a copy of $E^-QH^*(Y)$ shifted in grading.
Thus, we obtain invariant factors $u^{j_i(k)}$ associated to the homomorphism
\begin{equation}\label{Equation continuation for integer slope k}
c^*_{k^+}: E^-QH^*(Y) \to E^-HF^*(H_{k^+}).
\end{equation} 
These {\bf j-indices} cannot decrease in $k$, and they satisfy $0\leq j_1(k) \leq\ldots \leq j_y(k)\leq \infty$, $y=\mathrm{dim}_{\k}H^*(Y).$
\end{lm}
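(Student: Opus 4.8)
The plan is to establish the three assertions in turn: (i) $E^-HF^*(H_{k^+})$ is a free $\ku$-module isomorphic to a grading shift of $E^-QH^*(Y)$; (ii) the resulting invariant factors $u^{j_i(k)}$ are well-defined and ordered; (iii) monotonicity in $k$. For (i), the idea is to run the equivariant analogue of the \MBF\ spectral sequence from \PartII\ that computes $E^-HF^*(H_{k^+})$. For slope $k^+$, the Hamiltonian $1$-orbits split into the constant orbits on the fixed locus $\F$ (contributing a copy of $E^-QH^*(Y)$, via the Morse-Bott model at slope $0^+$) and the \MB\ manifolds $B_{p,\beta}$ of nonconstant orbits for periods $p\leq k$. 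The crucial input is that $k^+$ is a free weight-slope in the sense of \eqref{Equation bad slope}, so $k$ is not itself a period where a new \MB\ family appears and the orbits of period exactly $k$ do not obstruct; more importantly, the point is a dimension/parity count: one shows, as in the non-equivariant argument of \PartI, that the total $\ku$-module $E^-HF^*(H_{k^+})$ has $\ku$-rank equal to $y=\dim_{\k}H^*(Y)$, forcing the spectral sequence contributions of all the $B_{p,\beta}$ to cancel in pairs and leaving only the shifted copy of $E^-QH^*(Y)$. Freeness then follows because $E^-QH^*(Y)\cong H^*(Y)[\![u]\!]$ by \cref{Lemma equivariant formality}. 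The grading shift is tracked through the spectral sequence (it records the Conley-Zehnder/Robbin-Salamon index offset coming from the $k$ full rotations).

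For (ii), once $E^-HF^*(H_{k^+})$ is free of rank $y$ and $E^-QH^*(Y)$ is free of rank $y$, the continuation map \eqref{Equation continuation for integer slope k} is a $\ku$-module homomorphism between free modules of finite rank over the discrete valuation ring $\ku$, so Smith normal form applies verbatim: there exist $\ku$-bases in which the map is $\mathrm{diag}(u^{j_1(k)},\dots,u^{j_y(k)})$ with $0\leq j_1(k)\leq\cdots\leq j_y(k)\leq\infty$, allowing the abusive $u^\infty:=0$ for any zero invariant factors (these do not occur once one knows injectivity, e.g.\ in the monotone/Calabi-Yau settings via \cref{Theorem intro ESH computation main}, but at this level of generality we permit them). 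The invariant factors are independent of the choice of bases, hence the $j$-indices are well-defined invariants of $k$.

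For (iii), monotonicity in $k$ follows from functoriality of continuation maps: for $k<k'$, the map $c^*_{k'^+}$ factors through $c^*_{k^+}$ via a continuation map $E^-HF^*(H_{k^+})\to E^-HF^*(H_{k'^+})$ of free $\ku$-modules (again using part (i) for both endpoints). A standard fact about discrete valuation rings is that if $Q = R\circ P$ with $P,Q$ (and hence the relevant restriction of $R$) maps of free modules, then the sorted valuations of the invariant factors of $Q$ dominate those of $P$ term by term; equivalently, $\dim_{\k}\mathrm{coker}(Q\bmod u^n)\geq \dim_{\k}\mathrm{coker}(P\bmod u^n)$ for all $n$, which unwinds to $j_i(k')\geq j_i(k)$ for each $i$. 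Thus the $j$-indices cannot decrease in $k$.

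\textbf{Main obstacle.} The genuine work is part (i): proving that $E^-HF^*(H_{k^+})$ is \emph{free} of the \emph{correct rank} $y$, i.e.\ that the nonconstant-orbit contributions $B_{p,\beta}$ for $p\le k$ all cancel. In the non-equivariant world this is the statement that $HF^*(H_{k^+})\cong QH^*(Y)$ up to shift when $k^+$ avoids bad slopes --- a consequence of the \MBF\ spectral sequence degenerating appropriately, ultimately because of the $\C^*$-action structure (the local Floer cohomology of each nonconstant family, as in \cref{Example S1 equiv cohomologies calculation}, is the cohomology of an odd-dimensional sphere bundle and contributes a free resolution-type cancellation). Lifting this to the $E^-$-theory requires checking that the equivariant differentials $u\delta_1 + u^2\delta_2 + \cdots$ do not spoil the cancellation; here characteristic zero is essential (the weight-$k$ circle actions act by multiplication by $k$, invertible over $\k$, as highlighted in \cref{Example S1 equiv cohomologies calculation}), and one invokes the equivariant \MBF\ spectral sequence of \PartII\ together with Borel localisation \eqref{Equation Borel localisation} applied fibrewise over the \MB\ families. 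Once freeness and rank are secured, (ii) and (iii) are formal consequences of the structure theory of modules over a discrete valuation ring.
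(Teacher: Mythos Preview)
Your plan for part (i) has a genuine gap, and it diverges from the paper's argument.

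The paper does \emph{not} use the spectral sequence to prove freeness. It uses the equivariant Seidel isomorphism (\cref{Prop Periodicity property 2}, \cref{Theorem torsion freeness of Eminus HF}): the chain isomorphism $\mathcal{S}^k: E^-_{(a,b)}HF^*(H_{k^+})\cong E^-_{(a-kb,b)}QH^*(Y)[2k\mu]$ is a $\ku$-module isomorphism to a (grading-shifted, weight-shifted) copy of equivariant quantum cohomology, which is free by equivariant formality (\cref{Prop equivariant formality for QH}). This bypasses the spectral sequence entirely.

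Your spectral-sequence argument is wrong in a concrete way: the $B_{p,\beta}$ contributions do \emph{not} ``cancel in pairs'' in the $E^-$ theory. Already for $Y=\C$ (\cref{Eq sp seq for C all together}): on the $E_1^-$-page, each higher column $j\geq 1$ contributes a one-dimensional $u$-torsion piece $\k\cdot[x_j]$ (the $E_0$-differential $y_j\mapsto \beta_j u x_j$ kills $ux_j$ within the column, leaving $[x_j]$ with $u\cdot[x_j]=0$ on $E_1$). These classes \emph{survive} to $E_\infty^-$, since all generators are in even degree and the spectral sequence has converged at $E_1$. The reason the limit $E^-HF^*(H_{k^+})\cong\ku\cdot[x_k]$ is nonetheless free is that the $\ku$-module structure on the limit differs from that on $E_1$: there are nontrivial extensions (in the example, $u\cdot[x_j]=-\alpha_j\beta_j^{-1}[x_{j-1}]$ in $E^-HF^*(H_{k^+})$, linking all classes into one free module). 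The paper explicitly flags this in \cref{Prop spectral seq tricks}(5): the $\ku$-module structure on $E_1$-pages often does not agree with that on the limit. So the spectral sequence gives you the $\k$-dimension of $E^-HF^*(H_{k^+})$ but not its $\ku$-module structure; freeness cannot be read off from a dimension or parity count, and ``cancellation'' is not what is happening. (Your remark about Borel localisation applies to the $E^\infty$ theory, where the higher columns \emph{do} vanish; for $E^-$ they do not.)

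Parts (ii) and (iii) are fine and match the paper (\cref{Cor structure theorem 2}): Smith normal form over the DVR $\ku$, and monotonicity of the $j_i(k)$ from the factorisation $c_{k'^+}=c_{k'^+,k^+}\circ c_{k^+}$ through free modules.
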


To an injective homomorphism $Q:V \to W$ of free finite rank $\ku$-modules, we associate two polynomials which we call {\bf filtration polynomial} $f_Q(t)$ and {\bf slice polynomial} $s_Q(t)$:
\begin{align}
f_Q= f_0 + f_1 t + \cdots + f_j t^j+\cdots
\;
\textrm{ where }
\;
f_j = \#\{\textrm{invariant factors }u^j\} = \# \{i: j_i= j\}.
\\
s_Q= d_0 + d_1 t + \cdots + d_j t^j\!+\cdots
\;
\textrm{ where }
\;
d_j = \#\{\textrm{invariant factors }u^{\geq j}\} = \# \{i: j_i\geq j\}
\end{align}
They are recoverable from each other, using $f_j = d_j-d_{j+1}$ and $d_0=\mathrm{rank}_{\ku}\,V.$
The derivative $f'_Q(0)=\sum_{j\geq 1} d_j=\dim_{\k}\textrm{Torsion}(\mathrm{coker}\, Q)$, which becomes $\dim_{\k}\mathrm{coker}\,Q$ when $V,W$ have equal ranks.
\\
Now view $W$ as sitting inside the $u$-localisation $V_u$ by using $Q_u^{-1}=\mathrm{adj}\,Q\otimes \tfrac{1}{\det Q}:W \hookrightarrow V\otimes \kuu = V_u$. 

\begin{ex}\label{Example Young diagram and dual}
$Q=\mathrm{diag}(u,u^3,u^4):V=\ku^3 \to \ku^3=W$, so the j-indices are $j_1=1$, $j_2=3$, $j_3=4$. View $W$ as the $\ku$-submodule $u^{-1}\ku\oplus u^{-3}\ku \oplus u^{-4}\ku \subset V_u=\kuu^3$.
$$
f_Q=t+t^3+t^4
 \qquad \textrm{ and } \qquad \;
s_Q=3+3t+2t^2+2t^3+t^4.
$$
Pictorially, $V=\ku^3$ is a rectangular shape of vertically stacked dots corresponding to copies of $\k$, with an upward $u$-multiplication map; and $W$  ``attaches'' a Young diagram's worth of new copies of $\k$:
\begin{center}
\input{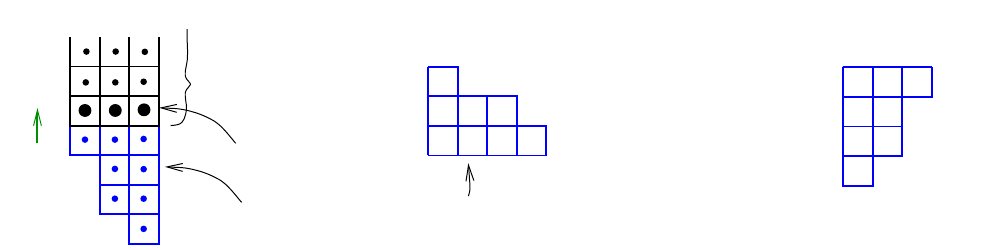_t}
\end{center}
The Young diagram also encodes $f_{j}=\#$(rows of length $j$), so $f_1=1,f_3=1,f_4=1$ and $0$ otherwise.
\end{ex}

As becomes clear from the example, 
$$d_j=(u^{-j}\textrm{-slice dimension})=\dim_{\k} (W\cap u^{-j}V)/(W\cap u^{-j+1}V).$$
Moreover, $W\cap u^{-j}V \cong V\cap u^j W $ via $w\mapsto u^j w$ has image
\begin{equation}\label{Equation Intro Fn in free case}
F_j := V\cap u^j W 
=
\{ v\in V: Q(v) \textrm{ is }u^j\textrm{-divisible in }W\} 
=
\ker (V \stackrel{Q}{\longrightarrow} W/u^jW).
\end{equation}
These $\ku$-submodules $F_j\subset V$ yield a filtration of $V$, satisfying 
\begin{equation}\label{Equation intro Fn relations}
u F_j = F_{j+1}\cap uV \subset F_{j+1}\subset F_j.
\end{equation}
The $u^{-j}$-slice $(W\cap u^{-j}V)/(W\cap u^{-j+1}V)$ is canonically $F_j/uF_{j-1}$ via $w \mapsto u^j w.$ Abbreviate 
\begin{equation}\label{Equation Intro Pn in free case}
P_j := F_j/uF_{j-1} = F_j/(F_j\cap uV) \qquad \textrm{ and } \qquad P:=V/uV,
\end{equation}
and note that $P_j$ filter $P$, with associated graded module $\mathrm{gr}\,P:=\oplus P_j/P_{j+1}$. We deduce: 
$$
d_j = \dim_{\k} P_j
\;\;
\textrm{ and }
\;\;
f_j 
= \dim_{\k} P_j/P_{j+1}=\dim_{\k} F_j/(F_{j+1} + u F_{j-1}).
$$
In particular $f_Q$, by definition, is the {\bf Hilbert-Poincar\'{e} series} of $P$ for the filtration $P_j$.

\begin{ex}\label{Example intro Fn picture}
In \cref{Example Young diagram and dual}, we have:
\begin{center}
\input{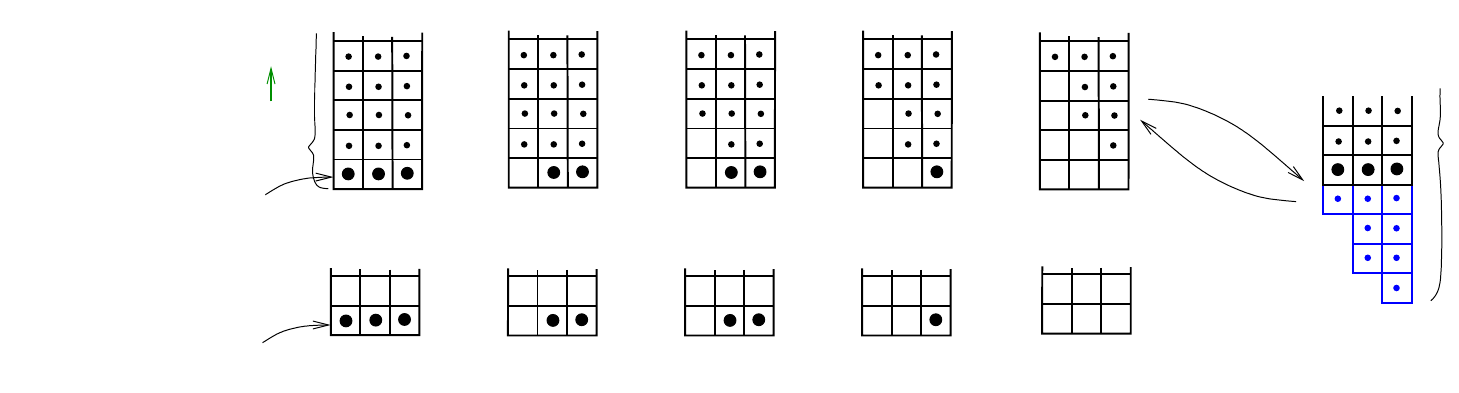_t}
\end{center}
Note that for all sufficiently large $j$, we have $F_j = u^j W \subset V$ (see the $j=5$ case in the picture). 
\end{ex}

In our setting, \eqref{Equation continuation for integer slope k} plays the role of $Q$, so $V=E^-QH^*(Y)$. We have a canonical isomorphism $V/uV \cong QH^*(Y)$ to non-equivariant quantum cohomology. So $f_Q$ is a Hilbert-Poincar\'{e} series for classical cohomology, for a filtration that depends on  equivariant quantum and Floer theory.

For general slopes, $E^-HF^*(H_{\lambda})$ may not be free, unlike the $\lambda=k^+$ case. It is however finitely generated over the PID $\ku$, so the quotient by the torsion submodule is free. 
For $W$ finitely generated, we can therefore apply the previous setup to $Q: V \to W/T$, where $T=\mathrm{Torsion}(W)$. Thus,
$$
F_j:= \{ v\in V: Q(v) \textrm{ is }u^j\textrm{-divisible in } W/T\} = \ker (V \to W/(T+u^jW))
$$
satisfies \eqref{Equation intro Fn relations}, and we leave definition \eqref{Equation Intro Pn in free case} unchanged.
For $Q: V \to W$ injective, the map $Q:V \to W/T$ may have non-trivial kernel $Q^{-1}(T)\subset F_j$. This will not happen in our setting, when $Q$ is the map $c_{\lambda}: E^-QH^*(Y) \to E^-HF^*(H_{\lambda})$. Indeed, for $k>\lambda$, $Q$ composed with the continuation $E^-HF^*(H_{\lambda}) \to E^-HF^*(H_{k^+})$ equals \eqref{Equation continuation for integer slope k}, which is injective. 

\subsection{Filtrations in equivariant Floer theory}
We now summarise the outcomes in Floer theory.
Abbreviate $E^-_{\lambda}:=E^-HF^*(H_{\lambda})$ with its torsion $T_{\lambda},$ and $E^-:=\varinjlim E^-_{\lambda}=E^-SH^*(Y)$ with torsion $T$.
\begin{de}
For any $p\in [0,\infty)$, define the {\bf $\mathbf{E^-}$-filtration}
\begin{equation}
    \FF_{j}^p:=\!\!\!\!\!\!\!\!\bigcap_{\textrm{generic }\lambda\geq p} \!\!\!\!\!\! \ker \left(E^-QH^*(Y) \stackrel{c_{\lambda}^*}{\longrightarrow} E^-_{\lambda}/(T_{\lambda} + u^jE^-_{\lambda})\right) \quad(\textrm{so }\FF_{0}^{\lambda}=E^-QH^*(Y),\; \FF_{\infty}^{\lambda}=\ker c_{\lambda}^*),
    \end{equation}
for $j\in \Z_{\geq 0}$, and $\FF_{j}^p:=E^-QH^*(Y)$ for $j\in \Z_{< 0}$.
For generic $p$, we can ignore the intersection and let $\lambda=p$. 
Also, $\FF_j^p=\ker c_{p^+}^*$ for all $p^+>p$ sufficiently close to $p$ (such $p^+$ are generic).
For $p=\infty$, we use $E^-:=E^-SH^*(Y)$ and $c^*=\varinjlim c_{\lambda}^*$, so $\FF_j^{\infty}:=\ker (c^*:E^-QH^*(Y)\to E^-/(T+u^j E^-))$.
\end{de}

\begin{prop}
For the free weight $(a,b)=(1,0)$, the $\ku$-submodules $\FF_j^{p} \subset E^-QH^*(Y)$ are ideals with respect to the equivariant quantum product.
\end{prop}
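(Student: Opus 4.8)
The plan is to reduce the statement to two facts: (i) for the weight $(1,0)$ the map $c_\lambda^*$ is a $E^-QH^*(Y)$-module homomorphism with respect to the equivariant quantum product, and (ii) the target quotient $E^-_\lambda/(T_\lambda + u^j E^-_\lambda)$ is a module over $E^-QH^*(Y)$ in a way compatible with $c_\lambda^*$. Granting (i) and (ii), the kernel of $c_\lambda^*$ into that quotient is automatically a submodule, i.e.\ an ideal of $E^-QH^*(Y)$; intersecting over generic $\lambda \ge p$ preserves the ideal property, and the $p=\infty$ case follows by passing to the direct limit (which is also a module map). So the real content is establishing (i) and (ii).

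For (i): recall from the excerpt that $(1,0)$ is the unique weight admitting a pair-of-pants product, with $E^-HF^*(H_{\lambda_1})\otimes E^-HF^*(H_{\lambda_2}) \to E^-HF^*(H_{\lambda_1+\lambda_2})$, and that $E^-c_\lambda^*$ is a $\ku$-module homomorphism. I would invoke the standard Floer-theoretic fact — the same argument that shows non-equivariant $c_\lambda^*$ intertwines the quantum product and the pair-of-pants product, adapted to the Borel model as in \cite{liebenschutz2020intertwining,liebenschutz2021shift} — that $c_\lambda^*(x \cdot y) = c_\lambda^*(x) \ast c_\lambda^*(y)$ where $\ast$ is the pair-of-pants product on $E^-HF^*(H_\lambda)$ (using that the product of the small slope $0^+$ with anything is a continuation). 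Combined with the fact that $E^-QH^*(Y)$ itself is the $\lambda = 0^+$ version, this exhibits $c_\lambda^*$ as a module map over $E^-QH^*(Y)$ acting via $\ast$.

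For (ii): the pair-of-pants product on $E^-_\lambda = E^-HF^*(H_\lambda)$ makes it a module over $E^-QH^*(Y)$. I must check that the submodule $T_\lambda + u^j E^-_\lambda$ is actually a submodule for this action, so that the quotient inherits a module structure. The $u^jE^-_\lambda$ piece is clear since the product is $\ku$-bilinear and $u$-linear. For $T_\lambda = \mathrm{Torsion}(E^-_\lambda)$: since $\ku$ is a PID and $E^-QH^*(Y)$ acts $\ku$-linearly, the product of any element with a torsion element is again torsion (its annihilator contains that of the torsion element), so $T_\lambda$ is preserved. Hence $T_\lambda + u^j E^-_\lambda$ is a submodule and the quotient is a $E^-QH^*(Y)$-module, with $c_\lambda^*$ descending to a module map.

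The main obstacle I anticipate is (i): carefully checking that the equivariant continuation map respects products \emph{on the chain level in the Borel model}, including the compatibility of the auxiliary choices (the $S^\infty$-approximations, the Hamiltonians $H_{\lambda_1}, H_{\lambda_2}, H_{\lambda_1+\lambda_2}$, and the pair-of-pants perturbation data) so that the relevant moduli spaces of equivariant Floer solutions have the expected compactifications and the TQFT relations hold. This is where the hypothesis $(a,b)=(1,0)$ is essential — it is precisely the condition under which the loop-action plays no role and the pair-of-pants product is defined — and where one would cite or adapt the construction from \PartII{} together with \cite{liebenschutz2020intertwining}. Everything downstream (preservation of torsion, stability under intersection and direct limit) is routine commutative algebra.
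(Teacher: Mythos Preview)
Your proposal is correct and follows essentially the same route as the paper. The paper's proof (appearing later as a Corollary in \cref{Subsection weight 10 case product}) is a one-liner citing \cref{Prop product structure in weight 1 0 case}, which establishes exactly your step (i): that $c_\lambda^*$ is an $E^-QH^*(Y)$-module homomorphism for weights $(a,0)$. Your step (ii), verifying that $T_\lambda + u^j E^-_\lambda$ is an $E^-QH^*(Y)$-submodule so that the quotient inherits a module structure, is left implicit in the paper but is indeed the routine commutative algebra you describe (torsion is preserved by any $\ku$-linear action, and $u^j E^-_\lambda$ is preserved by $\ku$-bilinearity).
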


\begin{cor}
    The $\ku$-submodules $\FF_{j}^p\subset E^-QH^*(Y)$ satisfy
$$\FF_{j}^p\subset \FF_{j}^{p'} \textrm{ for }p<p', \qquad
 u\FF_j^{p}   \subset \FF_{j+1}^{p} \subset \FF_j^{p}, \qquad \textrm{ and } \qquad
u\FF_j^{p} =  \FF_{j+1}^{p} \cap u \,E^-QH^*(Y).
$$
\end{cor}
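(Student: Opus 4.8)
The plan is to derive all three relations directly from the defining formula for $\FF_j^p$, using the general linear-algebra facts about the submodules $F_j = \ker(V \to W/(T+u^jW))$ established in the preceding subsection, applied levelwise at each generic slope $\lambda$ and then passed through the intersection over $\lambda \geq p$. Throughout, write $V = E^-QH^*(Y)$, and for each generic $\lambda$ let $F_j^\lambda := \ker\left(V \xrightarrow{c_\lambda^*} E^-_\lambda/(T_\lambda + u^j E^-_\lambda)\right)$, so that $\FF_j^p = \bigcap_{\text{generic }\lambda \geq p} F_j^\lambda$. The first relation is the easiest: since the intersection defining $\FF_j^{p'}$ is over a smaller index set ($\lambda \geq p'$) than that defining $\FF_j^p$ (namely $\lambda \geq p$), we get $\FF_j^p \subset \FF_j^{p'}$ immediately for $p < p'$.

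For the second chain of inclusions $u\FF_j^p \subset \FF_{j+1}^p \subset \FF_j^p$, I would first note that for each fixed generic $\lambda$ we have $F_{j+1}^\lambda \subset F_j^\lambda$ directly from \eqref{Equation intro Fn relations} (larger $j$ means a stronger divisibility condition, hence a smaller kernel), so intersecting over $\lambda$ gives $\FF_{j+1}^p \subset \FF_j^p$. For the first inclusion, I would again use \eqref{Equation intro Fn relations}, which gives $uF_j^\lambda = F_{j+1}^\lambda \cap uV \subset F_{j+1}^\lambda$ for each $\lambda$; since $u(\bigcap_\lambda F_j^\lambda) \subset \bigcap_\lambda u F_j^\lambda \subset \bigcap_\lambda F_{j+1}^\lambda$, we obtain $u\FF_j^p \subset \FF_{j+1}^p$. (The containment $u(\bigcap A_\lambda) \subset \bigcap u A_\lambda$ is formal; it does not need that multiplication by $u$ commute with intersections in the other direction.)

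The third relation, $u\FF_j^p = \FF_{j+1}^p \cap uV$, is the one requiring a little care, because it is an equality rather than an inclusion, and equalities of intersections of modules are where subtleties with infinite intersections can enter. The inclusion $\subset$ follows from the second relation combined with $u\FF_j^p \subset uV$. For the reverse inclusion, the key point is the remark in the definition that the intersection over generic $\lambda \geq p$ can be replaced by a single sufficiently small $p^+ > p$: indeed $\FF_j^p = \ker c_{p^+}^* = F_j^{p^+}$ for all $p^+$ close enough to $p$, and likewise $\FF_{j+1}^p = F_{j+1}^{p^+}$ for all such $p^+$ (choosing $p^+$ small enough to work for both $j$ and $j+1$ simultaneously, which is possible since each is eventually constant as $p^+ \downarrow p$). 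Then the claimed equality reduces to the single-slope identity $u F_j^{p^+} = F_{j+1}^{p^+} \cap uV$, which is exactly the first equation of \eqref{Equation intro Fn relations} applied to $Q = c_{p^+}^*$. I expect this collapsing-to-a-single-slope step to be the main (mild) obstacle: one must invoke the local-constancy statement from the definition, and be slightly careful that it holds uniformly for the finitely many values of $j$ under consideration, but no new ideas beyond the preceding subsection are needed. The case $p = \infty$ is handled identically, replacing $c_{p^+}^*$ by $c^* = \varinjlim c_\lambda^*$ and $E^-_\lambda$ by $E^-SH^*(Y)$, using that the relations of \eqref{Equation intro Fn relations} are stated for any injective $Q$ between finite-rank free $\ku$-modules after quotienting by torsion, and that $c^*$ itself is injective in the relevant cases (as noted after \eqref{Equation Intro Pn in free case}).
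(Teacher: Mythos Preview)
Your proposal is correct and follows essentially the same route as the paper: reduce to the single-slope identities of \eqref{Equation intro Fn relations} (proved in the body as \cref{Prop Fjc is finitely u filtered}), together with the monotonicity in $\lambda$ (\cref{Lemma dj inequality from Ps}), and then use the local-constancy $\FF_j^p = F_j^{p^+}$ to pass from the intersection to a single slope.

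One small simplification: your concern about infinite intersections in the third identity is unnecessary. The reverse inclusion $\FF_{j+1}^p \cap uV \subset u\FF_j^p$ follows directly, without invoking local constancy, because multiplication by $u$ is injective on the free module $V=E^-QH^*(Y)$. Indeed, if $uv \in \FF_{j+1}^p$, then for every generic $\lambda \geq p$ we have $uv \in F_{j+1}^\lambda \cap uV = uF_j^\lambda$, so $uv = uw_\lambda$ with $w_\lambda \in F_j^\lambda$; injectivity of $u$ forces $v = w_\lambda$, hence $v \in \bigcap_\lambda F_j^\lambda = \FF_j^p$. This also removes the need for the uniformity-in-$j$ caveat. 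Similarly, for $p=\infty$ the identity $uF_j^c = F_{j+1}^c \cap uV$ from the proof of \cref{Prop Fjc is finitely u filtered} applies verbatim to $c=c^*$ and $W=E^-SH^*(Y)$; injectivity of $c^*$ plays no role.
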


\begin{rmk}\label{Remark intro yound diagram nesting}
$\FF_{j+1}^{p} \subset \FF_j^{p}$ corresponds to a naive inclusion of $\k$-summands in pictures like those from \cref{Example intro Fn picture}; $\FF_{j}^p\subset \FF_{j}^{p'}$ on the other hand is highly non-trivial and is often not the naive inclusion of such pictures. The pictures arise from two steps: first, a map of type $Q_u^{-1}:W/T \hookrightarrow V_u$ is applied, and then a basis of the free $\ku$-module $Q_u^{-1}(W/T)\subset V_u$ needs to be chosen. This second step is typically not compatible with varying $p$, because the basis of $V$ needed to put $Q: V \to W/T$ into Smith normal form (for a suitable basis of $W/T$) typically depends on $p$.  
\end{rmk}

\begin{de}
For $p\in [0,\infty]$, the {\bf slice dimensions}, {\bf slice series}, and  {\bf filtration polynomial} are
    $$
    d_{j}^{p}:=\dim_{\k} \FF_{j}^{p}/u\FF_{j-1}^{p},
    \qquad
    s_{p}(t) := \sum_{j\geq 0} d_j^{p} t^j \qquad \textrm{ and } \qquad 
    f_{p}(t) := \sum_{j\geq 0} f_j^{p} t^j = \sum_{j\geq 0} (d_j^{p}-d_{j+1}^{p}) t^j.
    $$
\end{de}
\begin{cor}
The $d_{j}^{p}$ and $f_j^p$ are finite invariants, and they satisfy the following properties:
\begin{align*}
0\leq d_j^{p} \leq y,
\qquad\quad
d_j^p \leq d_{j}^{p'} \;&\textrm{ for }\; p\leq p', \qquad\quad
d_j^p \leq d_{j+1}^p,
\\
f_j^p = \#\{\textrm{invariant factors }u^j\textrm{ of } c_{p^+}\}, 
\qquad
 &\textrm{ and } \qquad
d_j^p = \#\{\textrm{invariant factors }u^{\geq j}\textrm{ of } c_{p^+}\} 
\end{align*}
where $y=\dim_{\k}H^*(Y)=\mathrm{rank}_{\ku}E^-QH^*(Y)$, and
the invariant factors refer to the Smith normal form of the map
$
c_{p^+}: E^-QH^*(Y)\to E^-_{p^+}/T_{p^+}.
$
In particular, $f_Q$ is always a polynomial, and $s_Q$ is a polynomial precisely when $c_{p^+}$ is injective (which holds when $(a,b)$ is free, by \cref{Subsection intro Localisation}). 
\end{cor}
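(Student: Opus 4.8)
The plan is to deduce the whole statement from the Smith normal form bookkeeping for a single continuation map, reusing the abstract setup already in place above. First I would fix $p\in[0,\infty]$ and a generic slope $p^+$ just above $p$, so that by the defining formula of the $E^-$-filtration the intersection over $\lambda\ge p$ collapses and $\FF_j^p=\ker\bigl(c_{p^+}^*\colon E^-QH^*(Y)\to E^-_{p^+}/(T_{p^+}+u^jE^-_{p^+})\bigr)$ for all $j$ (for $p=\infty$ one uses $c^*$ and $E^-SH^*(Y)$). Write $V:=E^-QH^*(Y)$, free of rank $y$ over the discrete valuation ring $\ku$ by \cref{Lemma equivariant formality}; $W:=E^-_{p^+}$, which is finitely generated over $\ku$; $T:=T_{p^+}=\mathrm{Torsion}(W)$; and $\widetilde W:=W/T$, free of finite rank. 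The key observation is that the preimage in $W$ of $u^j\widetilde W$ equals $T+u^jW$, so $W/(T+u^jW)=\widetilde W/u^j\widetilde W$ canonically; hence $\FF_j^p=\ker(V\xrightarrow{Q}\widetilde W/u^j\widetilde W)$ is exactly the submodule $F_j$ of \eqref{Equation Intro Fn in free case}, where $Q$ is the map $E^-QH^*(Y)\to E^-_{p^+}/T_{p^+}$ induced by $c_{p^+}^*$. Therefore $\FF_j^p/u\FF_{j-1}^p=P_j$ as in \eqref{Equation Intro Pn in free case}, so $d_j^p=\dim_\k P_j$ and $f_j^p=d_j^p-d_{j+1}^p=\dim_\k P_j/P_{j+1}$, and the corollary becomes a statement about the single homomorphism $Q$ of finite-rank free $\ku$-modules.

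Next I would put $Q$ into Smith normal form, $Q=\mathrm{diag}(u^{j_1},\dots,u^{j_y})$ with $0\le j_1\le\cdots\le j_y\le\infty$, padding with the formal entries $u^\infty:=0$ when $\mathrm{rank}\,Q<y$; such entries occur precisely when $\ker Q\ne 0$, i.e.\ when $c_{p^+}^*$ is not injective into $E^-_{p^+}/T_{p^+}$. The coordinate computation already carried out in \cref{Example Young diagram and dual} and \cref{Example intro Fn picture} then gives $F_j=\bigoplus_i u^{\max(0,j-j_i)}\ku e_i$ and $P_j=F_j/uF_{j-1}=\bigoplus_{i:\,j_i\ge j}\k$, hence $d_j^p=\#\{i:j_i\ge j\}$ and $f_j^p=d_j^p-d_{j+1}^p=\#\{i:j_i=j\}$; since the $u^{j_i}$ are by definition the invariant factors of $c_{p^+}^*\colon E^-QH^*(Y)\to E^-_{p^+}/T_{p^+}$, this is the pair of displayed formulas. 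Everything else follows formally: there are exactly $y$ invariant factors, so $0\le f_j^p\le d_j^p\le y$; monotonicity of the $d_j^p$ in $j$ is built into $d_j^p=\#\{i:j_i\ge j\}$; $f_j^p$ vanishes once $j$ exceeds the largest finite $j_i$, so $f_Q$ is always a polynomial; and $s_Q=\sum_j d_j^p t^j$ is a polynomial iff no $j_i$ equals $\infty$, i.e.\ iff $Q$, equivalently $c_{p^+}^*$, is injective. For a free weight this injectivity is automatic: \cref{Prop injectivity theorem intro} combined with \cref{Theorem intro localisation theorem} gives that $c_{k^+}^*\colon E^-QH^*(Y)\to E^-HF^*(H_{k^+})$ is injective for an integer $k>p$, and this map factors through $Q$ (the continuation $E^-_{p^+}\to E^-_{k^+}$ kills torsion, since $E^-HF^*(H_{k^+})$ is free), forcing $Q$ injective. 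Finiteness of the $d_j^p,f_j^p$ is now established, and their invariance under isomorphism of symplectic $\C^*$-manifolds is inherited from the functoriality of equivariant Floer cohomology and its continuation maps, exactly as for the non-equivariant filtration \eqref{Equation introduction filtration} in \PartI.

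The one step that is not pure bookkeeping is the monotonicity in the slope, $d_j^p\le d_j^{p'}$ for $p\le p'$. Comparing the Smith normal forms of $Q$ at the two slopes only yields, via Cauchy--Binet, the partial-sum inequalities $\sum_{i\le k}j_i(p)\le\sum_{i\le k}j_i(p')$, which is strictly weaker than needed. Instead I would use the submodule inclusion $\FF_j^p\subseteq\FF_j^{p'}$ from the corollary preceding \cref{Remark intro yound diagram nesting} — valid because continuation maps factor through one another and send torsion to torsion — together with the identity $\FF_j^p\cap u\,E^-QH^*(Y)=u\FF_{j-1}^p$ from that same corollary: the induced $\k$-linear map $\FF_j^p/u\FF_{j-1}^p\to\FF_j^{p'}/u\FF_{j-1}^{p'}$ has kernel $(\FF_j^p\cap u\FF_{j-1}^{p'})/u\FF_{j-1}^p$, and $\FF_j^p\cap u\FF_{j-1}^{p'}\subseteq\FF_j^p\cap u\,E^-QH^*(Y)=u\FF_{j-1}^p$ forces this kernel to vanish, so $d_j^p\le d_j^{p'}$; the case $p=\infty$ follows by passing to the stable value of the increasing, hence eventually constant (by Noetherianity of $E^-QH^*(Y)$), family $\FF_j^\lambda$. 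Overall there is no deep obstacle: the statement is essentially a formal consequence of the abstract linear algebra set up above plus the previously-established submodule relations, and the only points demanding care are this slope-monotonicity argument and the consistent treatment of the $u^\infty$ entries in the non-injective case — which is exactly the feature making $f_Q$ a polynomial in all cases while $s_Q$ detects the injectivity of $c_{p^+}^*$.
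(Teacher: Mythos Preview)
Your argument is correct and matches the paper's approach: the corollary is a summary of the abstract linear-algebra results developed in Sections~2--3 (notably \cref{Lemma Fj V minus description}, \cref{Prop Fjc is finitely u filtered}, \cref{Corollary filtration polynomial}, and \cref{Lemma dj inequality from Ps}), specialised in \cref{Corollary curlyE is zero} to $V=E^-QH^*(Y)$ and the continuation map $c_{p^+}$. Your Smith-normal-form bookkeeping is exactly the content of \cref{Lemma Fj V minus description}, and your slope-monotonicity argument is a minor variant of \cref{Lemma dj inequality from Ps} (the paper embeds both $P_j^\lambda$ and $P_j^\mu$ into $P=V/uV$ via $\mathrm{ev}_0$ and reads off the inclusion, whereas you verify directly that $P_j^p\to P_j^{p'}$ has trivial kernel; both rest on the same identity $\FF_j^p\cap uV=u\FF_{j-1}^p$).
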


\begin{de}
    Abbreviate $\PP := E^-QH^*(Y)/uE^-QH^*(Y)$. This is filtered, for $j\in \Z$, by 
    $$\PP_j^p:=\FF_{j}^{p}/u\FF_{j-1}^{p}
    =
    \FF_{j}^{p}/(\FF_{j}^{p} \cap u \,E^-QH^*(Y)), \qquad \PP_{\infty}^{p}:=0, \qquad (\textrm{so }\PP_j^p=\PP\textrm{ for }j\leq 0),$$ 
    satisfying $\PP_{j+1}^p\subset \PP_{j}^p$ and $\PP_j^p\subset \PP_j^{p'}$ for $p\leq p'.$
    Define $\mathrm{gr}^p_j\PP:=\PP_j^p/\PP_{j+1}^p$ and $\mathrm{gr}^p\,\PP:=\oplus_{j\geq 0} \mathrm{gr}^p_j\PP$.
\end{de}

\begin{cor}
There is a canonical identification
$\PP\cong QH^*(Y)$ with non-equivariant quantum cohomology. Thus quantum cohomology is filtered by $\PP_j^p$ and its Hilbert-Poincar\'{e} series equals $f_{p}(t)$: 
$$f_j^{p}=\dim_{\k} \PP_j^{p}/\PP_{j+1}^{p} = \dim_{\k}\mathrm{gr}_j^p \PP.$$
\end{cor}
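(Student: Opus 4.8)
The plan is to establish first the canonical identification $\PP\cong QH^*(Y)$, and then to read off the Hilbert--Poincar\'{e} statement from the preceding corollary by a short bookkeeping argument using the Smith normal form data already recorded there.

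\emph{Step 1 (the canonical identification).} Recall that $E^-QH^*(Y) = E^-HF^*(H_{0^+})$ is computed from the Morse--Bott complex $E^-C^* = C^*[\![u]\!]$ with $C^* = CF^*(H_{0^+})$ and Floer differential $d = \delta_0 + u\delta_1 + \cdots$, where $\delta_0$ is the non-equivariant Floer differential whose cohomology is $QH^*(Y)$ by \PartI. Multiplication by $u$ is injective on the chain level, since $E^-C^* = C^*[\![u]\!]$ is $u$-torsion-free, and the quotient complex $E^-C^*/uE^-C^*$ with its induced differential is exactly $(C^*,\delta_0)$. Hence the short exact sequence of complexes $0 \to E^-C^* \xrightarrow{\,u\,} E^-C^* \to C^* \to 0$ yields a long exact sequence in cohomology. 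By \cref{Lemma equivariant formality}, $E^-QH^*(Y)$ is a free $\ku$-module, so multiplication by $u$ is injective on cohomology, the connecting maps vanish, and the long exact sequence breaks into short exact sequences $0 \to E^-QH^*(Y) \xrightarrow{u} E^-QH^*(Y) \to QH^*(Y) \to 0$. The right-hand map is induced by the chain-level reduction $u=0$, so it involves no choices; it furnishes the canonical isomorphism $\PP = E^-QH^*(Y)/uE^-QH^*(Y) \xrightarrow{\cong} QH^*(Y)$, compatible with the $QH^*(Y)$-module and algebra structures.

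\emph{Steps 2--3 ($\PP_j^p$ filters $\PP$ and the series is $f_p$).} Using the relation $u\FF_{j-1}^{p} = \FF_{j}^{p}\cap u\,E^-QH^*(Y)$ from the previous corollary, $\PP_j^p = \FF_j^p/u\FF_{j-1}^p$ is the image of $\FF_j^p$ under the quotient $E^-QH^*(Y)\twoheadrightarrow \PP$. Since $\FF_{j+1}^p \subset \FF_j^p$ and $\FF_j^p\subset \FF_j^{p'}$ for $p\le p'$, we get $\PP_{j+1}^p\subset \PP_j^p$ and $\PP_j^p\subset \PP_j^{p'}$; moreover $\PP_j^p = \PP$ for $j\le 0$, and $\PP_j^p = 0$ for $j\gg 0$ because $c_{p^+}\colon E^-QH^*(Y)\to E^-_{p^+}/T_{p^+}$ is injective (its composition with a continuation to slope $k^+\gg p$ is the injective map \eqref{Equation continuation for integer slope k}), so all its invariant factors are finite and $d_j^p=0$ for $j\gg 0$. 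All $\PP_j^p\subset \PP\cong QH^*(Y)$ are therefore finite-dimensional, of dimension $d_j^p=\dim_\k \FF_j^p/u\FF_{j-1}^p = \#\{\text{invariant factors }u^{\ge j}\text{ of }c_{p^+}\}$ by the previous corollary. Hence $\dim_\k \mathrm{gr}_j^p\PP = \dim_\k\PP_j^p - \dim_\k\PP_{j+1}^p = d_j^p - d_{j+1}^p = f_j^p$, the coefficient of $t^j$ in $f_p(t)$; thus $f_p(t)$ is the Hilbert--Poincar\'{e} series of $(\PP\cong QH^*(Y),\PP_\bullet^p)$.

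\emph{Main obstacle.} The only non-formal input is Step 1: one must verify that the slope-$0^+$ Morse--Bott complex genuinely reduces modulo $u$ to the non-equivariant complex computing $QH^*(Y)$ (so that the quotient of $E^-C^*$ has the expected cohomology), and that equivariant formality supplies precisely the $u$-torsion-freeness needed to split the long exact sequence; once the canonical identification $\PP\cong QH^*(Y)$ is in hand, Steps 2--3 are pure bookkeeping with the invariant-factor data from the preceding corollary.
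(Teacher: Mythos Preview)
Your proof is correct and follows essentially the same approach as the paper: the identification $\PP\cong QH^*(Y)$ via the long exact sequence of \cref{Lemma choice of basis to get ev0 complex} (or \cref{Lemma ev0 commutative diagram general case}) combined with equivariant formality, and the Hilbert--Poincar\'{e} computation via the invariant-factor bookkeeping of \cref{Prop about filtration on V mod uV} and \cref{Corollary curlyE is zero}.

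One small overreach: you assert $\PP_j^p=0$ for $j\gg 0$ by appealing to injectivity of $c_{k^+}$ in \eqref{Equation continuation for integer slope k}, but that injectivity is only established for \emph{free} weights (see \cref{Corollary E minus c star maps are injective} and \cref{Theorem injectivity theorem 2}); for non-free weights $c_{p^+}$ can have kernel, and then $\PP_j^p$ stabilises to a nonzero subspace of dimension $b=\mathrm{rank}_{\ku}\ker c_{p^+}$ rather than to zero (cf.\ \cref{Lemma Fj V minus description}). Fortunately this claim is inessential: your dimension count $\dim_{\k}\mathrm{gr}_j^p\PP = d_j^p - d_{j+1}^p = f_j^p$ only uses $\PP_{j+1}^p\subset\PP_j^p$ and finiteness of $\dim_{\k}\PP$, both of which hold in general, and $f_p$ is still a polynomial since $d_j^p$ eventually stabilises (to $b$). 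You can simply drop the sentence about $\PP_j^p=0$ for $j\gg 0$.
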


When $(a,b)$ is not free, $c_{p^+}$ may be non-injective: $s_{p}:=\dim_{\k}\ker c_{p^+}>0$ contributes to each $d_j^{p}$, and $d_j^{p}=s$ for all large $j$. One would then allow infinite size Young diagrams in the following definition.

\begin{de}
    Let $j_1^p,j_2^p,\ldots$ be the distinct exponents that arise in the invariant factors $u^j$ of $c_{p^+}$. The {\bf Young diagram} $\mathcal{Y}_p$ is the Young diagram of shape $(j_1^p,\ldots,j_1^p,j_2^p,\ldots)$, where each $j_i^p$ is repeated $f_{j_i}^p$ times.
    Switching rows and columns, we obtain the {\bf dual Young diagram} $\mathcal{Y}_p^*$, and it is the Young diagram of shape $(d_1^p,d_2^p,\ldots)$ in French notation (non-increasing row-lengths $d_j^p$ as in \cref{Example Young diagram and dual}).

    To $\mathcal{Y}_p^*$ we associate $\mathcal{Y}_p^*(\PP):=(\PP=\PP_0\supset \PP_1^p\supset \PP_2^p \supset \cdots)$, which is a filtration of $\PP$ by $\k$-linear subspaces. To $\mathcal{Y}_p$ we associate the associated graded $\k$-vector space $\mathcal{Y}_p(\PP):=\mathrm{gr}^p\PP=\mathrm{gr}_0^p \oplus \mathrm{gr}_1^p \oplus \cdots$.
\end{de}

\begin{cor}
For $i,j\geq 0$, $p\in [0,\infty]$, we have associations: 
\begin{align*}
(i\textrm{-th row in }\mathcal{Y}_p\textrm{ of length }j_i)&\mapsto \mathcal{Y}_p(i):=\mathrm{gr}_{j_i}^p\PP\subset \mathrm{gr}^p\PP  \;\textrm{ of dimension} = f_{j_i}^p = \#(\textrm{rows of length }j_i),
\\
(j\textrm{-th row in }\mathcal{Y}_p^*\textrm{ of length }d_j^p) &\mapsto \mathcal{Y}_p^*(j):=\PP_j^p\subset \PP \;\textrm{ of dimension} = d_j^p.
\end{align*}

Vertical upward inclusion of rows in $\mathcal{Y}_p^*$ corresponds to inclusions $\mathcal{Y}_p^*(j)=\PP_j^p\supset \PP_{j+1}^p= \mathcal{Y}_p^*(j+1)$. 
\end{cor}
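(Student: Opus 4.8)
The plan is to chase definitions; the statement is pure bookkeeping that combines the construction of $\mathcal{Y}_p$ from the Smith normal form of $c_{p^+}\colon E^-QH^*(Y)\to E^-_{p^+}/T_{p^+}$ with dimension identities already at hand: first, $d_j^p=\dim_{\k}\PP_j^p$, immediate from $d_j^p:=\dim_{\k}\FF_j^p/u\FF_{j-1}^p$ and $\PP_j^p:=\FF_j^p/u\FF_{j-1}^p$; second, $f_j^p=\dim_{\k}\mathrm{gr}_j^p\PP=\dim_{\k}\PP_j^p/\PP_{j+1}^p$, which is the content of the preceding corollary; and third, the elementary counting identities $f_j^p=\#\{\textrm{invariant factors }u^{j}\textrm{ of }c_{p^+}\}$ and $d_j^p=\#\{\textrm{invariant factors }u^{\ge j}\textrm{ of }c_{p^+}\}$, so that $\sum_{j'\ge j}f_{j'}^p=d_j^p$ by additivity of counting.

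\emph{First association.} By the definition of $\mathcal{Y}_p$, for each distinct exponent $j_i$ occurring among the invariant factors of $c_{p^+}$ there are exactly $f_{j_i}^p$ rows of length $j_i$. On the other hand $\dim_{\k}\mathrm{gr}_{j_i}^p\PP=f_{j_i}^p$ by the preceding corollary. Hence the block of rows of length $j_i$ in $\mathcal{Y}_p$ has the same cardinality as a $\k$-basis of $\mathrm{gr}_{j_i}^p\PP$ (one may even fix such a basis, possible since $\k$ is a field, and attach one basis vector to each row of the block); this is the assignment $\mathcal{Y}_p(i):=\mathrm{gr}_{j_i}^p\PP\subset\mathrm{gr}^p\PP$, with $\dim_{\k}\mathcal{Y}_p(i)=f_{j_i}^p=\#(\textrm{rows of length }j_i)$ as claimed.

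\emph{Second association and the inclusions.} By definition $\mathcal{Y}_p^*$ is the transpose of $\mathcal{Y}_p$, and it is the Young diagram of shape $(d_1^p,d_2^p,\ldots)$ in French notation; indeed its $j$-th part equals the number of parts of $\mathcal{Y}_p$ that are $\ge j$, namely $\sum_{j'\ge j}f_{j'}^p=d_j^p$. So the $j$-th row of $\mathcal{Y}_p^*$ has length $d_j^p=\dim_{\k}\PP_j^p$, and we assign it $\mathcal{Y}_p^*(j):=\PP_j^p\subset\PP$, of the asserted dimension $d_j^p$. The descending chain $\PP=\PP_0^p\supset\PP_1^p\supset\PP_2^p\supset\cdots$ is the filtration property already recorded when $\PP_j^p$ was defined; since $d_{j+1}^p\le d_j^p$, in French notation the $(j+1)$-th row of $\mathcal{Y}_p^*$ sits above the $j$-th row and is a sub-row of it, and this nesting of rows is exactly the inclusion $\mathcal{Y}_p^*(j)=\PP_j^p\supset\PP_{j+1}^p=\mathcal{Y}_p^*(j+1)$.

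\emph{Expected obstacle.} There is no genuine obstacle: the only care needed is to keep the transpose combinatorics of Young diagrams consistent with the identity $f_j^p=d_j^p-d_{j+1}^p$ and $d_0^p=y=\mathrm{rank}_{\ku}E^-QH^*(Y)$, and — when $(a,b)$ is not free, so that $c_{p^+}$ has kernel of dimension $s>0$ — to remember that then $s$ invariant factors equal $u^{\infty}=0$ and $d_j^p$ stabilises to $s$ rather than to $0$, so $\mathcal{Y}_p$ and $\mathcal{Y}_p^*$ are of infinite size; this is precisely why the definitions were phrased to permit infinite Young diagrams. With these caveats the corollary follows by unwinding the definitions of $\FF_j^p$, $\PP_j^p$, $\mathcal{Y}_p$ and $\mathcal{Y}_p^*$.
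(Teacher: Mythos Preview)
Your proposal is correct and matches the paper's approach: the paper gives no explicit proof for this corollary, treating it as immediate from the definitions of $\mathcal{Y}_p$, $\mathcal{Y}_p^*$, $\PP_j^p$, $f_j^p$, $d_j^p$ and the preceding corollary (the Hilbert--Poincar\'{e} identification $f_j^p=\dim_{\k}\mathrm{gr}_j^p\PP$). Your definition-chase makes explicit exactly what the paper leaves implicit, including the care needed in the non-free weight case where $c_{p^+}$ has kernel.
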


\begin{rmk}\label{Remark choosing bases}
    Choose a basis for each $\mathcal{Y}_p(i)$. By successive basis extensions, choose lifted bases for $\cdots \subset \mathcal{Y}_p^*(j)\subset \mathcal{Y}_p^*(j-1)\subset \cdots \subset \mathcal{Y}_p^*(0)$. The resulting $\ku$-module basis for $V$ determines a picture as in \cref{Example Young diagram and dual}. In that example, the basis for $\mathcal{Y}_p^*(2)$ spans $0\oplus \k^2\subset \k^3$ and its product by $u^{-2}$ gives a basis for the $u^{-2}$-slice. In general, the boxes of $\mathcal{Y}_p^*$ correspond to a $\k$-linear basis for the complement of $V\subset W$, so that vertical upward inclusion of boxes corresponds to $u$-multiplication on the basis, and rows correspond to slices. However, substantial choices were required to pass from the abstract spaces to the picture, e.g.\;there is no basis-independent choice of $u^{-j}$-slice viewed inside $W$.
\end{rmk}

\begin{lm}\label{Lemma filtrations are compatible with LES}
The filtration $\FF_{j}^p$ on $E^-QH^*(Y)$ also induces a filtration on $E^{\infty}QH^*(Y)$, so that
the following natural short exact sequence respects the filtrations, using $\EE_j^p$ for $E^{+}QH^*(Y),$ 
$$0 \longrightarrow E^-QH^*(Y)[-2] \stackrel{u}{\longrightarrow} E^{\infty}QH^*(Y) \longrightarrow E^+QH^*(Y) \longrightarrow 0.$$
\end{lm}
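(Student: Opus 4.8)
The plan is to define the induced filtration on $E^{\infty}QH^*(Y)$ as the $u$-localised analogue of the $\FF$-filtration and then verify the two compatibilities with the displayed short exact sequence directly, reducing everything to a rank-one computation over the discrete valuation ring $\ku$. Concretely, I would use the canonical identification $E^{\infty}QH^*(Y)=\big(E^-QH^*(Y)\big)_u$ together with the torsion-freeness of $E^-QH^*(Y)$ from \cref{Lemma equivariant formality} — which is precisely what makes the displayed sequence short exact, with last map the quotient $\pi\colon E^{\infty}QH^*(Y)\twoheadrightarrow E^{\infty}QH^*(Y)/uE^-QH^*(Y)=E^+QH^*(Y)$ — and set
$$
\mathcal{G}_j^p\ :=\ \bigcup_{k\geq 0}u^{-k}\,\FF_{j+k}^p\ \subset\ \big(E^-QH^*(Y)\big)_u\ =\ E^{\infty}QH^*(Y).
$$
This is an increasing union of $\ku$-submodules by the relation $u\FF_m^p\subset\FF_{m+1}^p$; it is decreasing in $j$, increasing in $p$, satisfies $u\mathcal{G}_j^p=\mathcal{G}_{j+1}^p$, and can equivalently be written Floer-theoretically as $\mathcal{G}_j^p=\bigcap_{\textrm{generic }\lambda\geq p}\ker\big(E^{\infty}c_\lambda^*\colon E^{\infty}QH^*(Y)\to E^{\infty}HF^*(H_\lambda)/u^j(E^-_\lambda/T_\lambda)\big)$, i.e.\ literally the definition of $\FF_j^p$ with the free ambient module $E^-_\lambda/T_\lambda$ replaced by its localisation $E^{\infty}HF^*(H_\lambda)$. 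With this $\mathcal{G}$, ``the short exact sequence respects the filtrations'' amounts to: (i) $u\FF_j^p=uE^-QH^*(Y)\cap\mathcal{G}_{j+1}^p$, so that $u\colon E^-QH^*(Y)[-2]\to E^{\infty}QH^*(Y)$ is a strict filtered map, the shift by one being forced by $u\FF_j^p\subset\FF_{j+1}^p$; and (ii) $\pi(\mathcal{G}_j^p)=\EE_j^p$.

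The rest of the argument reduces to a single generic slope. As in the non-equivariant setting \cite{RZ1}, the Floer data, and hence the submodules $\FF_\bullet^\lambda,\EE_\bullet^\lambda,\mathcal{G}_\bullet^\lambda$, are locally constant in $\lambda$ between consecutive non-generic slopes \eqref{Equation bad slope}; so for any $p$ one may replace it by a slightly larger generic $p^+$ and thus assume $p=\lambda$ is generic and drop the intersection, the case $p=\infty$ following by passing to the direct limit $\lambda\to\infty$. For a finite generic $\lambda$, $E^-_\lambda=E^-HF^*(H_\lambda)$ is finitely generated over $\ku$, so $E^-_\lambda/T_\lambda$ is free of finite rank, while $E^-QH^*(Y)$ is free of rank $y$; putting the composite $\overline{c}_\lambda\colon E^-QH^*(Y)\xrightarrow{c_\lambda^*}E^-_\lambda\to E^-_\lambda/T_\lambda$ into Smith normal form gives bases $(e_i)_{i=1}^{y}$ and $(f_l)$ with $\overline{c}_\lambda(e_i)=u^{j_i}f_i$, $0\leq j_1\leq\cdots\leq j_y\leq\infty$. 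All the modules in sight then split as direct sums over $i$: for $\FF_j^\lambda$ and $\mathcal{G}_j^\lambda$ this is immediate, and for the $\EE$-side one identifies $\ker E^+c_\lambda^*$, and its $u$-multiples, with the corresponding objects of $\overline{c}_\lambda\otimes_{\ku}\mathbb{F}\colon E^-QH^*(Y)\otimes_{\ku}\mathbb{F}\to (E^-_\lambda/T_\lambda)\otimes_{\ku}\mathbb{F}$, using that $E^+QH^*(Y)=E^-QH^*(Y)\otimes_{\ku}\mathbb{F}$ and that $E^+c_\lambda^*$ factors through $(E^-_\lambda/T_\lambda)\otimes_{\ku}\mathbb{F}\subset E^+HF^*(H_\lambda)$ because the relevant Gysin connecting map $E^+QH^*(Y)\to E^-QH^*(Y)$ vanishes by torsion-freeness — so the torsion tail of $E^+HF^*(H_\lambda)$ is immaterial.

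It therefore suffices to treat the rank-one model $u^m\colon\ku\to\ku$ with $m=j_i\geq 0$. Writing $\mathbb{F}[u^N]:=\ker(u^N\colon\mathbb{F}\to\mathbb{F})$ (of $\k$-dimension $N$), one has $\FF_j=u^{\max(j-m,0)}\ku$, while $\mathcal{G}_j=u^{j-m}\ku\subset\kuu$ (which indeed equals $\bigcup_{k\geq 0}u^{-k}\FF_{j+k}$), and $\ker E^+c_\lambda^*=\mathbb{F}[u^m]$, whence $\EE_j=\mathbb{F}[u^{\max(m-j+1,0)}]$ for every $j\in\Z$ — checking the cases $j>0$ (where $\EE_j=u^{j-1}\mathbb{F}[u^m]$) and $j\leq 0$ (where $\EE_j=\ker(u^{1-j+m})$) separately. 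Now a direct computation gives $u^{-1}\mathcal{G}_j\cap\ku=u^{\max(j-1-m,0)}\ku=\FF_{j-1}$, which is (i), and $\pi(\mathcal{G}_j)$ is the image of $u^{j-m}\ku$ under $\kuu\to\kuu/u\ku=\mathbb{F}$, namely $\mathbb{F}[u^{\max(m-j+1,0)}]=\EE_j$, which is (ii). Summing over $i$, then intersecting over generic $\lambda\geq p$ (and taking the limit for $p=\infty$), yields (i) and (ii) for all $p$, and hence the lemma; one also records, for free, that $\mathrm{gr}$ of the resulting filtered short exact sequence is $0\to \mathrm{gr}^p_{j-1}(E^-QH^*(Y))[-2]\to\mathrm{gr}^p_{j}(E^{\infty}QH^*(Y))\to\mathrm{gr}^p_{j}(E^+QH^*(Y))\to 0$.

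The main obstacle is less any single step than the bookkeeping around them: pinning down the index shift (the quotient receives $\EE_j^p$ on the nose while the inclusion $u$ necessarily costs one filtration level); checking cleanly that $E^+c_\lambda^*=\overline{c}_\lambda\otimes_{\ku}\mathbb{F}$ up to a canonical inclusion, so that the torsion tail of $E^+HF^*(H_\lambda)$ plays no role; and — since the Smith-normal-form computation is intrinsically per-slope and finite-rank — verifying that the per-slope identifications are compatible with the intersections over $\lambda$ and the direct limit defining $\FF_\bullet^p,\EE_\bullet^p,\mathcal{G}_\bullet^p$ for general $p$ and for $p=\infty$, which is exactly where one invokes the local constancy of the Floer data between non-generic slopes, as in \cite{RZ1}.
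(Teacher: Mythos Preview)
Your proposal is correct and follows essentially the same route as the paper. The paper defines $F_j(V^{\infty}):=c_u^{-1}(F_j(W^{\infty}))$ using the canonical filtration on $W^{\infty}=(W^-)_u$, which coincides with your $\mathcal{G}_j^\lambda$ (your two descriptions of it are equivalent to the paper's), and then proves the filtered short exact sequence (\cref{Lemma SES on filtered V modules}) by exactly your method: put $c_\lambda$ into Smith normal form and check the rank-one model $u^m\colon\ku\to\ku$ directly (the paper points to \cref{Example cube map showing induced filtrations} with $3$ replaced by a general $m$, and to the trivial top row of \cref{Cor SES over LES} for the $m=\infty$ summands). Your extra bookkeeping --- that $E^+c_\lambda^*$ factors through $(E^-_\lambda/T_\lambda)\otimes_{\ku}\mathbb{F}$, and the passage from fixed generic $\lambda$ to general $p$ --- is handled in the paper by \cref{Lemma Vplus preimage of T in Wplus is not interesting} and the general framework of \cref{Subsection Relation between the filtrations} respectively.
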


\begin{rmk}[$u$-torsion]
There is also a canonical long exact sequence $$\cdots \longrightarrow  E^-HF^*(H_{\lambda})[-2] \stackrel{u}{\longrightarrow} E^{\infty}HF^*(H_{\lambda}) \longrightarrow E^+HF^*(H_{\lambda})  \longrightarrow E^-HF^{*+1}(H_{\lambda})[-1] \longrightarrow \cdots.$$
One can define a filtration also for $E^{\karo}HF^*(H_{\lambda})$, by considering the continuation maps $E^{\karo}HF^*(H_{\lambda}) \to E^{\karo}HF^*(H_{\lambda'})$ for $\lambda<\lambda'$, and the LES respects filtrations.
As explained previously, $E^-HF^*(H_{k^+})$ is a torsion-free $\ku$-module, so $E^-SH^*(Y)$ is torsion-free. 
For general slopes, $E^-HF^*(H_{\lambda})$ may not be torsion-free, but in many examples it is. Indeed, in the notation from \cite{RZ1}, $H^*(Y)\cong \oplus H^*(\F_\a)[-\mu_\a]$ often lies entirely in even degrees; if so, we deduce as in the non-equivariant proof in \cite{RZ1} that
$$E^-HF^*(H_{\lambda})\cong E^-HF^*(\lambda H)\cong \oplus H^*(\F_\a)[-\mu_{\lambda}(\F_\a)] \otimes_{\k}\ku,$$
using that all generators are in even grading; thus $E^-HF^*(H_{\lambda})$ is free in this case.

Note that $E^{\infty}$ models are always torsion-free, and $E^+$ models are entirely torsion. In the LES, it is precisely the torsion of 
$E^-HF^*(H_{\lambda})$ which is hit by $E^+HF^{*-1}(H_{\lambda})$. Since the direct limit $E^-SH^*(Y)$ is torsion-free, it follows that the direct limit of the LES becomes an SES: \cref{Equation short exact sequence for ESH}.
\end{rmk}

The {\bf $u$-valuation} $\mathrm{val}^{\lambda}(v) = \max \{j\in \N\cup \infty: v\in \FF_j^{\lambda}\}$ for $v\in E^-QH^*(Y)$ relates the filtrations:
 \begin{equation}\label{Equation relating valuation and plus theory}
\begin{split}
 (v \in \FF_{j+1}^{\lambda}) 
 \Longleftrightarrow (\mathrm{val}^{\lambda}(v)\geq j+1)
 \Longleftrightarrow 
 E^+c_{\lambda}(v\otimes u^{-j})=0 \in E^+HF^*(H_{\lambda}).
 \\
 \mathrm{val}^{\lambda}(v)=j \Longleftrightarrow \left(\;E^+c_{\lambda}(v\otimes u^{-j})\neq 0  \quad \textrm{but}\quad   u\cdot E^+c_{\lambda}(v\otimes u^{-j})=0\;\right).
 \end{split}
\end{equation}

This is the key idea to the following result.
Let $\mathbb{F}_j:=QH^*(Y)\otimes u^{-j}\ku/u\ku$ be the obvious\footnote{Via 
\cref{Lemma equivariant formality},
or intrinsically: $\mathbb{F}_j=\ker (u^{j+1}:E^+QH^*(Y) \to E^+QH^{*+2j+2}(Y))$.} nested $\ku$-submodules of $E^+QH^*(Y)$, for $j\geq 0$. Abbreviate $V:=E^-QH^*(Y)$. 

\begin{thm}\label{Theorem relation between F fil and E fil}
For all $j\geq 0$ and $p\in [0,\infty]$, we have an isomorphism of $\ku$-modules
$$
\FF_{j+1}^p/u^{j+1}V \to \EE_1^p\cap \mathbb{F}_j, \;\; v \mapsto [v \otimes u^{-j}],
$$
whose images define an exhaustive nested filtration $\cup_{j\geq 0} (\EE_1^p\cap \mathbb{F}_j) = \EE_1^p= \ker E^+c_{p^+}.$
This stabilises to $\EE_1^p=\EE_1^p\cap \mathbb{F}_j$ when $j\geq \max\{j: u^{j}$ is an invariant factor of $E^-c_{p^+}\}$ and $E^-c_{p^+}$ is injective.

A similar statement holds for $\FF_{j+1}^{p}(V^{-})/u^{j+1}V^- \to \EE_{m}^p(V^{+}) \cap \mathbb{F}_{j-m+1}$, $v \mapsto [v \otimes u^{m-1-j}]$.
\end{thm}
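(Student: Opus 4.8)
The plan is to reduce the theorem to an essentially algebraic fact about a map of free modules over the discrete valuation ring $\ku$, and then unwind the definitions of $\FF_j^p$ and $\EE_j^p$ carefully so that both sides refer to the same continuation map $c_{p^+}$. First I would fix a generic slope $p^+$ (so that we may drop the intersection over $\lambda\geq p$ and work with a single continuation map, by the remark following the definition of the $E^-$-filtration) and write $Q:=c_{p^+}\colon V=E^-QH^*(Y)\to E^-_{p^+}/T_{p^+}=:W/T$, recalling from \cref{Subsection intro Localisation} that $Q$ is injective because composing with continuation to slope $k^+$ for $k>p$ yields the injective map \eqref{Equation continuation for integer slope k}. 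Then $\FF_{j+1}^p=\ker(V\xrightarrow{Q}W/(T+u^{j+1}W))=F_{j+1}$ in the notation of \eqref{Equation Intro Fn in free case} (adapted to the torsion quotient), so $\FF_{j+1}^p/u^{j+1}V$ makes sense and is the object on the left.

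Next I would exhibit the map. Using \cref{Lemma equivariant formality} to identify $E^+QH^*(Y)\cong H^*(Y)\otimes_\k\mathbb{F}$, I have $\mathbb{F}_j=QH^*(Y)\otimes u^{-j}\ku/u\ku$ sitting inside $E^+QH^*(Y)$, and the natural surjection $V\twoheadrightarrow V\otimes_{\ku}\mathbb{F}=E^+QH^*(Y)$ composed with multiplication by $u^{-j}$ sends $v\mapsto [v\otimes u^{-j}]\in\mathbb{F}_j$. Its kernel is exactly $u^{j+1}V$, so it descends to an injection $V/u^{j+1}V\hookrightarrow \mathbb{F}_j$; I then restrict to $\FF_{j+1}^p/u^{j+1}V$ and claim the image is precisely $\EE_1^p\cap\mathbb{F}_j$. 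This is where the key computation \eqref{Equation relating valuation and plus theory} enters: by definition $\EE_1^p=\ker E^+c_{p^+}$, and the equivalence there says $v\in\FF_{j+1}^p \Leftrightarrow E^+c_{p^+}(v\otimes u^{-j})=0$; since the class $[v\otimes u^{-j}]$ lies in $\mathbb{F}_j$ and $E^+c_{p^+}$ is $\ku$-linear, this says exactly that the image of $\FF_{j+1}^p$ under $v\mapsto[v\otimes u^{-j}]$ is $\ker(E^+c_{p^+})\cap\mathbb{F}_j=\EE_1^p\cap\mathbb{F}_j$. Injectivity on $\FF_{j+1}^p/u^{j+1}V$ is immediate since we already know the map $V/u^{j+1}V\hookrightarrow\mathbb{F}_j$ is injective. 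For exhaustiveness, $\cup_j\mathbb{F}_j=E^+QH^*(Y)$ by construction, hence $\cup_j(\EE_1^p\cap\mathbb{F}_j)=\EE_1^p$; and nestedness $\mathbb{F}_j\subset\mathbb{F}_{j+1}$ gives the nested filtration. For the stabilisation claim, when $Q$ is injective with Smith normal form $\mathrm{diag}(u^{j_1},\ldots,u^{j_y})$ and $J:=\max_i j_i$, then for $j\geq J$ every element of $V$ is already $u^j$-divisible after applying $Q$ into $W/T$ localised appropriately — more precisely $F_{j+1}=u^{j+1}W\cap V$ stabilises in the sense that $[v\otimes u^{-j}]$ ranges over all of $\EE_1^p$ — so one checks directly from the picture in \cref{Example intro Fn picture} that $\EE_1^p\cap\mathbb{F}_j=\EE_1^p$ once $j\geq J$.

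Finally, for the "similar statement" with general $m$, I would repeat the argument replacing the single slot $\mathbb{F}_j\subset E^+QH^*(Y)$ by the appropriate $u$-power-shifted window and $\EE_1^p$ by $\EE_m^p=u^{m-1}\ker E^+c_{p^+}$ from \eqref{Equation E+ filtration}: the map $v\mapsto[v\otimes u^{m-1-j}]$ sends $\FF_{j+1}^p/u^{j+1}V^-$ into $u^{m-1}\mathbb{F}_{j-m+1}$, and \eqref{Equation relating valuation and plus theory} again identifies membership in $\FF_{j+1}^p$ with the vanishing of $u^{j-m+1}$ times the relevant $E^+c_{p^+}$ image, which after multiplying by $u^{m-1}$ lands in $\EE_m^p\cap\mathbb{F}_{j-m+1}$; the bookkeeping with grading shifts $[-2]$, $[-1]$ is routine. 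The main obstacle I anticipate is not the algebra but keeping the three conventions straight simultaneously: the identification $\PP\cong QH^*(Y)$, the passage from $W$ to the torsion quotient $W/T$ (and verifying $Q^{-1}(T)=0$ so that $\FF_{j+1}^p$ really is a kernel of a map to $W/(T+u^{j+1}W)$ with no spurious contributions), and the precise meaning of $\EE_1^p\cap\mathbb{F}_j$ as an intersection inside $E^+QH^*(Y)$ rather than inside some $E^+HF^*(H_\lambda)$ — so I would state a short preliminary lemma pinning down $\FF_{j+1}^p=\ker(V\to W/(T+u^{j+1}W))$ for generic $p$ before doing anything else.
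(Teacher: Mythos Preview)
Your approach is correct and essentially matches the paper's: both hinge on the equivalence \eqref{Equation relating valuation and plus theory} (established in the paper as \cref{Lemma relation between filtrations Vminus and Vplus}), after which the paper verifies the isomorphism by reducing via Smith normal form to the rank-one case $c_\lambda=u^{j_i}\colon\ku\to\ku$ (\cref{Cor relation to EEp}), while you argue more directly through the map $V/u^{j+1}V\to\mathbb{F}_j$. One small point to tighten: you only assert that $v\mapsto[v\otimes u^{-j}]$ gives an \emph{injection} $V/u^{j+1}V\hookrightarrow\mathbb{F}_j$, but to conclude that the image of $\FF_{j+1}^p$ is \emph{all} of $\EE_1^p\cap\mathbb{F}_j$ (rather than merely contained in it) you need this map to be onto $\mathbb{F}_j$; this follows from a dimension count, since both have $\k$-dimension $(j+1)\cdot\mathrm{rank}_{\ku}V$. (Also, the phrase ``natural surjection $V\twoheadrightarrow V\otimes_{\ku}\mathbb{F}$'' is not quite right---that map is not surjective---though your formula $v\mapsto[v\otimes u^{-j}]$ is correct.)
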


\subsection{Simple examples: $\C, T^*\C P^1, \mathcal{O}_{\C P^1}(-1)$}

\begin{ex}
Let $Y=\C$ with the standard $\C^*$-action. Let $(a,b)$ be free, equivalently $a \notin b\Z_{\geq 1}$. 
\begin{equation}
\begin{array}{ccccc}
\!\!\!E^-\!SH^*(\C)\cong \kuu, 
&
\strut\quad
&
\!\!\!\!\!\!E^{\infty}SH^*(\C)\cong \kuu, 
&
\strut\quad
&
\!\!\!\!\!\!\!\!\!\!\!\!\!\!\!E^{+}\!SH^*(\C) = 0,
\\
E^-HF^*(H_{k^+})= \ku\cdot x_k,
&
\strut\quad
&
E^{\infty}HF^*(H_{k^+})\cong \kuu\cdot x_0,
&
\strut\quad
&
E^{\infty}HF^*(H_{k^+}) \cong \mathbb{F}[2k],
\end{array}
\end{equation}
where $x_0$ comes from the unit $1\in QH^0_{S^1}(Y)$, and $x_k=\mathcal{S}^{-k}(x_0)\in E^{-}HF^{-2k}(H_{k^+})$ where $\mathcal{S}$ is a map that we discuss later in \cref{Subsection Comparing different weights via equivariant Seidel isomorphisms}.
 
Let $p \in [k,k+1)$, for $k\geq 0 \in \N.$ Then 
$
\mathrm{val}^{p}(x_0) = k =\lfloor p \rfloor,
$
 $$\FF_j^{p}=E^-QH^*(\C)=\ku\cdot x_0 \textrm{ for }j\leq k,\; \textrm{ and }\;\FF_j^{p}=\ku \cdot u^{j-k}x_0\textrm{ for }j>k.$$
The filtration polynomial is 
$
f_{p}(t) = t^k,
$
and the
slice polynomial is
$
s_{p}(t) = 1+t+\cdots+t^k.
$
\end{ex}

\begin{ex}
Let $Y$ be the blow-up of $\C^2$ at the origin. So $Y$ is the total space of $\mathcal{O}(-1) \to \C P^1$, and consider the standard $\C^*$-action on fibres.
Let $(a,b)$ be a free weight, equivalently $a\notin b\Z_{\geq 1}$.
\begin{equation*}
\begin{array}{ccccc}
\strut\hspace{-4.5ex}
E^-\!SH^*(Y)\cong \ku \oplus \kuu, 
&
\strut%
&
\hspace{3ex}E^{\infty}\!SH^*(Y)\cong \kuu^2, 
&
\strut\qquad %
&
\hspace{3.5ex}E^{+}\!SH^*(Y) \cong \mathbb{F},
\\
E^-\!HF^*(H_{k^+})\cong \ku\cdot x_0\oplus \ku\cdot x_k,
&
\strut \qquad %
&
E^{\infty}\!HF^*(H_{k^+})\cong \kuu^2,
&
\strut \qquad %
&
E^{\infty}\!HF^*(H_{k^+}) \cong \mathbb{F}
\end{array}
\end{equation*}
where $x_0$ comes from the unit $1\in QH^0_{S^1}(Y)$, and $x_k=\mathcal{S}^{-k}(x_0)$ in grading $|x_k|=-2k$. 
More explicitly, 
 $$E^-SH^*(Y)= \ku \cdot x_0 \oplus \kuu\cdot [\omega+T x_0+u^{\geq 1}\textrm{-corrections}],\;\quad SH^*(Y)\cong QH^*(Y)/\k[\omega+T x_0]\cong \k\cdot x_0.$$

For $p\in [k,k+1)$, the filtration polynomial is
$f_p = 1+t^k$, and the
slice polynomial is $s_p=2+t+\cdots + t^k.$
\end{ex}

\begin{ex}\label{Example Introduction TCP1 picture}
Let $Y=T^*\C P^1$ (the total space of $\mathcal{O}(-2)\to \C P^1$), with the standard $\C^*$-action on fibres. Let $(a,b)$ be a free weight, equivalently  $a \notin b\Z_{\geq 1}$. 
Then
\begin{equation*}
\strut\hspace{0ex}\begin{array}{ccccc}
\strut\hspace{-13ex}E^-\!SH^*(Y)\cong \kuu^2, 
&
\strut\quad\!\!\!\!\!\!\!\!\!\!\!\!\!\!\!
&
\hspace{3ex}E^{\infty}\!SH^*(Y)\cong \kuu^2, 
&
\strut\quad\!\!\!\!\!\!\!
&
\hspace{-13ex}E^{+}\!SH^*(Y) = 0,
\\
E^-\!HF^*(H_{k^+})= \ku\cdot x_k\oplus \ku\cdot x_{k-1},
&
\strut\quad\!\!\!\!\!\!\!\!
&
E^{\infty}\!HF^*(H_{k^+})\cong \kuu^2,
&
\strut\quad\!\!\!\!\!\!\!\!\!\!\!\!\!\!
&
E^{\infty}\!HF^*(H_{k^+}) \cong \mathbb{F}[2k]\oplus \mathbb{F}[2k-2],
\end{array}
\end{equation*}
where $x_0$ comes from the unit $1\in QH^0_{S^1}(Y)$, and $x_k = \mathcal{S}^{-k}(x_0)$ has grading $|x_k|=-2k$.

The slice and filtration polynomials, and the invariant factors of $c_{k^+}$, land in one of two cases:
\begin{equation*}
\begin{split}
s_k=N_k(t) &:= 2+2t+\cdots+2t^{k-1}+t^k+t^{k+1},\;\;\;\hspace{1ex}f_k=t^{k-1}+t^{k+1},
\;\;\hspace{1ex} \textrm{invariant factors }(u^{k-1},u^{k+1}); \\
s_k=Z_k(t) &:=  2+2t+\cdots+2t^{k-1}+2t^k,
\qquad\;\;\;\;\;\;\hspace{1ex} f_k=2t^{k}, \hspace{10ex}\textrm{invariant factors } (u^{k},u^{k}).
\end{split}
\end{equation*}
For example $s_0=Z_0=2$, but $s_1=N_1=2+t+t^2$.
We prove that $s_k=N_k$ holds for infinitely many $k\in \N$.
The $N_k$ case is the interesting one: Floer theory gives rise to a different filtration than the canonical one that measures $u$-divisibility in $E^-QH^*(Y)$ (which is the filtration that we get in the $Z_k$ case, up to translating the valuation by $k$).
The $Z_k$ case occurs if $x_0$ is more $u$-divisible than expected: $x_0$ is a multiple of $u^k x_k$ in $E^-HF^*(H_{k^+})$, so
$x_0 \in \FF_{k}^{k}$, equivalently $[x_0\otimes u^{-k+1}] = 0 \in E^+HF^*(H_{k^+})$.

The following picture\footnote{The dots are copies of $\k$. The black dots give a $\k$-linear basis for $E^-QH^*(Y)\cong H^*(\C P^1)\otimes_{\k} \ku$, the new $\k$-basis elements of $E^-HF^*(H_{3^+})\cong (H^*(\C P^1)\otimes_{\k} \ku)[6]$ are in blue. The fat blue dots are the $\ku$-module generators $x_2,x_3$ in grading $-4,-6$. The red circles around pairs of classes means that some $\k$-linear combination may be involved.} illustrates how subtle the filtration is, by comparing the invariant-factor picture from before, with the natural picture one draws when taking into account the grading.
\begin{center}
\input{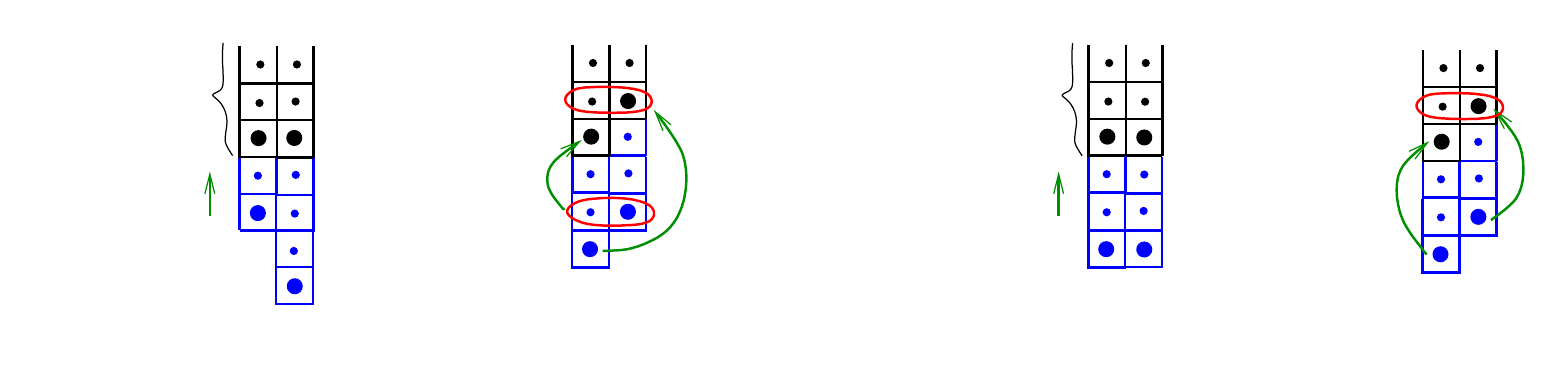_t}
\end{center}
\end{ex}

\begin{ex}\label{Example twisted TCP1}
Let $Y=T^*\C P^1$ but now make $\C^*$ act with weight one on both the fibres and the $\C P^1$ base, using the twisted action described in \cite[Example 7.6]{RZ2}. 
The fixed locus consists of two points in the zero section, and the positive weights mentioned in \eqref{Equation bad slope} are $W^+=\{1,3\}$.

Let $(a,b)$ be a free weight, equivalently  $3a \notin b\Z_{\geq 1}$. 
Then
\begin{equation*}
\begin{array}{ccccc}
E^-\!SH^*(Y)\cong \kuu^2, 
&
\strut\qquad
&
E^{\infty}\!SH^*(Y)\cong \kuu^2, 
&
\strut\qquad
&
E^{+}\!SH^*(Y) = 0,
\end{array}
\end{equation*}
holds as before, but the $HF^*(H_{\lambda})$ groups are more complicated. 
Unlike the previous example, there is a $\Z/3$-torsion submanifold in $Y$, arising as the fiber over the fixed point that is not the minimum of the moment map. It gives rise to {\MB} manifolds diffeomorphic to $S^1$ for fractional slope values $(3k+1)/3$ and $(3k+2)/3$, for all $k\in \N.$
The {\MBF} spectral sequence that converges to $E^-SH^*(Y)$ is shown in \cref{Eq sp seq for T^CP^1 all together}; it has converged on the $E_1$-page. On the $E_0$-page, the fractional slope-valued columns are shifted copies of $H^*(S^1)[\![u]\!]$, whereas the positive integer slope-valued ones are shifted copies of $H^*(\R P^3)[\![u]\!]$ (using $\mathrm{char}\,\k=0$). The dots on the $E_1$-page in columns with slope values $\leq p$ are $\k$-linear generators of $HF^*(H_{p^+})$.

\begin{figure}[H]%
				\centering
				{
					\includegraphics[scale=1]{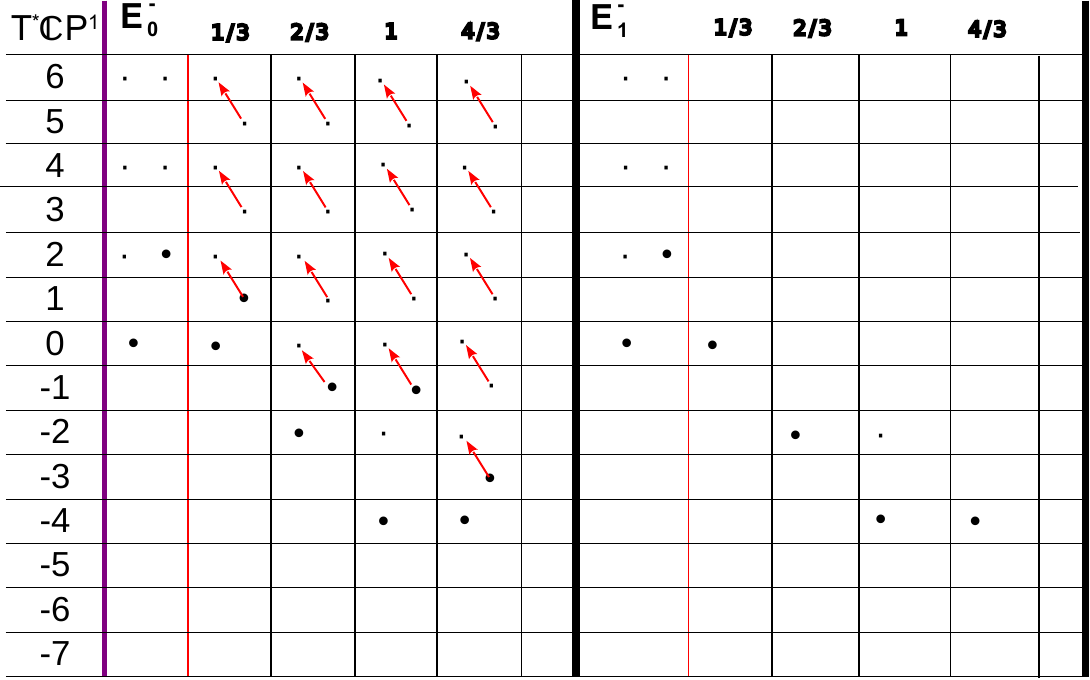}
					\caption{The $E_0$, $E_1$ pages for the spectral sequence converging to $E^-SH^*(T^*\CP^1)$, using a twisted action on $T^*\C P^1.$ The slope values are in bold. The zeroth column is $E^-QH^*(Y).$ Each dot is a copy of $\k$. The fat dots are the $u^0$-contributions: these generate the non-equivariant spectral sequence which converges to $SH^*(Y)=0$. 
     }
\label{Eq sp seq for T^CP^1 twisted}
				}
\end{figure}

The $E_1$-page in \cref{Eq sp seq for T^CP^1 all together} shows how the rank of $HF^*(H_{\lambda})$ steadily grows with $\lambda$, as it slowly converges to $\kuu^2\cong E^{\infty}QH^*(Y)\cong E^{\infty}SH^*(Y)$.
How the new generators appearing in higher columns arrange themselves into the Young diagram $\mathcal{Y}_p$ is difficult to extrapolate because the spectral sequence forgets the $u$-module action. We can however deduce the following properties of the slice polynomial.
Let $Z_k,N_k$ be as before, and let $X_k:=2+2t+\cdots+2t^k+t^{k+1}.$ Then $s_0 = Z_0 = 2$, and for $k\in \N$:
\begin{equation}\label{Equation slice poly twisted TCP1}
s_{k+\frac{1}{3}}= X_{2k}, \quad \textrm{and either }\; (s_{k+\frac{2}{3}},s_{k+1})=(N_{2k+1},Z_{2k+2})
\;\;\textrm{ or }\;\;
(s_{k+\frac{2}{3}},s_{k+1})=(Z_{2k+1},N_{2k+2}).
\end{equation}
This encodes the invariant factors of all $E^-c^*_{\lambda}$ maps, and also determines the filtration polynomial.
\end{ex}

\subsection{Comparing different weights via equivariant Seidel isomorphisms}
\label{Subsection Comparing different weights via equivariant Seidel isomorphisms}

Recall by \cref{Remark characteristic and mixed action} that the weight $(a,b)$ singles out a specific choice of $S^1$-action $S^1\hookrightarrow S^1\times S^1$. 
The Floer complexes 
$E^{\karo}_{(a,b)}HF^*(H_{\lambda})$  are related by chain isomorphisms, called {\bf Seidel isomorphisms}, which change the data $a,H_{\lambda}$.
These are discussed in detail in the Appendix. The main theorem about these is \cref{Theorem Seidel iso}; here we just mention the simplest example, at cohomology level:
\begin{equation}\label{Equation Introduction Seidel iso 2}
    \mathcal{S}: E^{\karo}_{(a,b)}HF^*(H_{\lambda}) \cong E^{\karo}_{(a-b,b)}HF^{*}(H_{\lambda-1})[2\mu], \qquad (\textrm{1-orbit }x) \mapsto \Fi^*x,
    \end{equation}    
where $(\Fi^*x)(t)=\Fi_{-t}(x(t))$, and where $\mu$ is the Maslov index of $\Fi$.
It follows that for $k\in \N,$
$$
\mathcal{S}^k: 
E^{\karo}_{(a,b)}HF^*(H_{k^+}) \cong
E^{\karo}_{(a-kb,b)}QH^*(Y)[2k\mu],
$$
which as a $\ku$-module is isomorphic to $H^*(Y)\otimes_{\k}E^{\karo}H^*(\mathrm{point})$, by \cref{Lemma equivariant formality}.

\begin{cor}\label{Cor sequence of r maps equiv}
$E_{(a,b)}^{\karo}SH^*(Y)$ can be identified with a direct limit of $\ku$-module maps 
\begin{equation}
\begin{tikzcd}[row sep=large, column sep=large]
E_{(a,b)}^{\karo} QH^*(Y)
\arrow[r, "r_{a,b}"] 
&
E_{(a-b,b)}^{\karo} QH^*(Y)[2\mu]
\arrow[r, "{r_{a-b,b}}"] 
&
E_{(a-2b,b)}^{\karo} QH^*(Y)[4\mu]
\arrow[r, "{r_{a-2b,b}}"] 
&
\cdots
\end{tikzcd}
\end{equation}
The $u^0$-part of the $r_{a,b}$ maps is the rotation map $r: QH^*(Y)\to QH^{*+2\mu}(Y)$ from Ritter \cite{R14}, which corresponds to 
quantum product by the class $Q_{\Fi^a}\in QH^{2\mu}(Y)$ described in \cite{RZ1}. 
\end{cor}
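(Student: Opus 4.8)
\textbf{Proof plan for \cref{Cor sequence of r maps equiv}.}

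The plan is to bootstrap everything from the equivariant Seidel isomorphism \eqref{Equation Introduction Seidel iso 2}, whose existence and compatibility properties are the content of \cref{Theorem Seidel iso} (proved in the Appendix), and then reinterpret the direct system $\{E^{\karo}HF^*(H_\lambda)\}_{\lambda}$ in its light. First I would recall that, by construction in \cite{RZ1,RZ2}, $E_{(a,b)}^{\karo}SH^*(Y) = \varinjlim_{\lambda} E_{(a,b)}^{\karo}HF^*(H_\lambda)$ where the maps are Floer continuation maps, and that by cofinality we may restrict the limit to slopes of the form $\lambda = k^+$ for $k \in \N$ (any slope lies below some $k^+$, and continuation maps compose). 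The key structural input is that the Seidel isomorphism intertwines the continuation map $E^{\karo}HF^*(H_{k^+}) \to E^{\karo}HF^*(H_{(k+1)^+})$ with a map that, after applying $\mathcal{S}^{k}$ and $\mathcal{S}^{k+1}$ respectively to source and target, becomes a $\ku$-module homomorphism
$$
r_{a-kb,b}: E_{(a-kb,b)}^{\karo}QH^*(Y)[2k\mu] \to E_{(a-(k+1)b,b)}^{\karo}QH^*(Y)[2(k+1)\mu].
$$
Concretely, $r_{a-kb,b} = \mathcal{S}^{k+1}\circ (\text{continuation } k^+ \to (k+1)^+)\circ \mathcal{S}^{-k}$; this is well-defined because \eqref{Equation Introduction Seidel iso 2} identifies $E^{\karo}HF^*(H_{k^+})$ with $E^{\karo}_{(a-kb,b)}QH^*(Y)$ via $\mathcal{S}^k$ (using $H_{k^+}$ has slope just above $k$, so after $k$ iterations of the Seidel shift one reaches slope $0^+$, i.e.\ equivariant quantum cohomology). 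Because the $\mathcal{S}$ maps are compatible with continuation maps between consecutive slopes (this compatibility is exactly what \cref{Theorem Seidel iso} must assert, or is an immediate consequence of it), the direct limit of the original continuation system is canonically the direct limit of the $r$-system, giving the displayed sequence.

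The second assertion — that the $u^0$-part of $r_{a,b}$ is the rotation map $r: QH^*(Y)\to QH^{*+2\mu}(Y)$ of \cite{R14}, i.e.\ quantum multiplication by the rotation class $Q_{\Fi^a}$ — I would prove by setting $u=0$ and reducing to the non-equivariant statement. Setting $u=0$ in the $E^-$ (or $E^\infty$) complex recovers the non-equivariant Floer complex $CF^*(H_\lambda)$ with differential $\delta_0$, and the $u^0$-part of the equivariant Seidel isomorphism reduces to the classical (non-equivariant) Seidel isomorphism of \cite{Sei97,R14,R16}. Under that reduction, the composite $\mathcal{S}\circ(\text{continuation})\circ \mathcal{S}^{-1}$ on $QH^*(Y)$ is by definition (or by the results recalled in \cite{RZ1}) the rotation map $r$, which by \cite{R14,RZ1} equals quantum product by $Q_{\Fi^a}$ — here the superscript $a$ enters because the weight $(a,b)$ uses the $\Fi^a$-action on $Y$ (the loop-action contributes the $b$-dependence, which drops out on constant $1$-orbits, hence on $u^0$-level after passing to $H_{0^+}$). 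So the non-equivariant statement is already in the literature, and the equivariant statement follows by naturality of $u=0$.

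The main obstacle I anticipate is \emph{not} in the algebra of direct limits but in verifying the compatibility of the Seidel isomorphisms with continuation maps at chain level, with all signs, gradings, and the $u$-adic filtration intact — i.e.\ making sure that $\mathcal{S}$ genuinely conjugates the continuation map $k^+ \to (k+1)^+$ to a map landing in the correct shifted copy $[2(k+1)\mu]$ with the right change of weight $(a-kb,b)\to(a-(k+1)b,b)$. This is a moduli-theoretic statement about parametrised Floer solutions in the Borel model and is exactly the business of the Appendix (\cref{Theorem Seidel iso}); given that theorem, the corollary is essentially formal. A secondary subtlety is checking that the direct limit along the $r$-system actually computes $E_{(a,b)}^{\karo}SH^*(Y)$ and not some variant — this requires that the shifts $[2k\mu]$ and weight-changes are bookkeeping only and do not alter the underlying $\ku$-module colimit, which follows since each $\mathcal{S}^k$ is an isomorphism and the system is cofinal in the original one.
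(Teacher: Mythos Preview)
Your proposal is correct and follows essentially the same route as the paper: the statement is proved in the body as \cref{Prop Periodicity property 2} and \cref{ESH is direct limit of maps on EQH}, where the paper does exactly what you outline --- restrict to the cofinal family $\lambda=k^+$, apply $\mathcal{S}^k$ to identify $E^{\karo}_{(a,b)}HF^*(H_{k^+})\cong E^{\karo}_{(a-kb,b)}QH^*(Y)[2k\mu]$, invoke the continuation-compatibility from \cref{Theorem Seidel iso} (your diagram $r_{a-kb,b}=\mathcal{S}^{k+1}\circ\psi_k\circ\mathcal{S}^{-k}$ is literally the paper's \eqref{Equation defining rab maps}), and then observe that the $u^0$-part of an equivariant chain map is its non-equivariant counterpart, whence the classical rotation map $r$ of \cite{R14}.

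One small caution: your justification for the superscript in $Q_{\Fi^a}$ (``the weight $(a,b)$ uses the $\Fi^a$-action on $Y$'') is not quite the right mechanism. The Seidel isomorphism $\mathcal{S}=\mathcal{S}_{\Fi}$ in \eqref{Equation Introduction Seidel iso 2} pulls back by $\Fi$ itself (a single full rotation), independently of $a$; accordingly, in the body (\cref{Prop Periodicity property 2}) the non-equivariant rotation class is written as $Q_{\Fi}$, not $Q_{\Fi^a}$. The $a$-dependence lives in the equivariant structure on the source and target $\ku$-modules, not in the $u^0$-part of the map. This does not affect your proof strategy, but you should not argue that the $\Fi^a$-action is what produces the Seidel element.
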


The continuation map $c_{k^+}:E^{\karo}_{(a,b)}QH^*(Y) \to E^{\karo}_{(a,b)} HF^*(H_{k^+})$, suitably composed with Seidel isomorphisms, defines a $\ku$-linear map:
\begin{equation}
ER_k:=\mathcal{S}^k\circ c_{k^+} = r_{a-(k-1)b,b} \circ \cdots \circ r_{a,b}
: E^-_{(a,b)}QH^*(Y) \to E^-_{(a-kb,b)}QH^*(Y)[2k\mu].
\end{equation}
We call these {\bf rotation maps}, to distinguish them from the Seidel isomorphisms: they need not be isomorphisms, nor injective in general, since we composed Seidel isomorphisms with continuation maps.
The rotation maps are the key ingredient for the proof of
\cref{Theorem intro ESH computation main}. The filtration and slice polynomials for the map $ER_k$ equal 
$f_k$ and $s_k$ (since an isomorphism, $\mathcal{S}^k$, does not affect these).

We conclude with a description of how the rotation maps are compatible with the filtrations. We label the three filtrations in \cref{Lemma filtrations are compatible with LES} by $\FF_{j,(a,b)}^{\karo,p}$ for $\karo\in \{-,\infty,+\}.$

\begin{thm}
For all $\karo\in \{-,\infty,+\}$,  $j\in \Z$, $p\in [0,\infty]$, $k\in \N$, the filtrations satisfy:
$$
ER_k^{-1}(\FF_{j,(a-kb,b)}^{\karo,p})
=
\FF_{j,(a,b)}^{\karo,p+k}.
\qquad \textrm{ In particular, }\;ER_k:\FF_{j,(a,b)}^{\karo,p+k} \to \FF_{j,(a-kb,b)}^{\karo,p}.
$$
\end{thm}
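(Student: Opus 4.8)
The claim asserts that the rotation maps $ER_k$ pull the $p$-level filtrations on equivariant quantum cohomology (for the shifted weight $(a-kb,b)$) back exactly to the $(p+k)$-level filtrations (for the weight $(a,b)$), for each of the three flavours $\karo$. The plan is to deduce everything from the factorisation $ER_k = \mathcal{S}^k\circ c_{k^+}$ together with compatibility of Seidel isomorphisms and continuation maps with continuation maps at higher slopes. First I would treat the case $\karo = -$ and later explain the minor adaptations for $\karo\in\{\infty,+\}$.

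The key structural input is that $\mathcal{S}^k$ intertwines the continuation maps: for generic $\lambda\geq 0$, the square relating $c_{\lambda}:E^-_{(a,b)}HF^*(H_{k^+})\to E^-_{(a,b)}HF^*(H_{(k+\lambda)^+})$ (after identifying source via $\mathcal{S}^k$ with $E^-_{(a-kb,b)}QH^*(Y)$) and $c_{\lambda}:E^-_{(a-kb,b)}QH^*(Y)\to E^-_{(a-kb,b)}HF^*(H_{\lambda^+})$ commutes up to the grading shift $[2k\mu]$ — this is exactly the statement that the Seidel isomorphism is natural in the slope parameter, proved in the Appendix (the cited \cref{Theorem Seidel iso}). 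Granting this, for any generic $\mu_0\geq p$ one has a commuting square
$$
\begin{tikzcd}
E^-_{(a,b)}QH^*(Y) \arrow[r,"c^*_{(k+\mu_0)^+}"] \arrow[d,"ER_k"'] & E^-_{(a,b)}HF^*(H_{(k+\mu_0)^+}) \arrow[d,"\mathcal{S}^k","\cong"'] \\
E^-_{(a-kb,b)}QH^*(Y) \arrow[r,"c^*_{\mu_0^+}"'] & E^-_{(a-kb,b)}HF^*(H_{\mu_0^+})
\end{tikzcd}
$$
(using $ER_k = \mathcal{S}^k\circ c_{k^+}$ and the tower relation $c^*_{(k+\mu_0)^+} = c_{\mu_0}\circ c_{k^+}$ among continuation maps). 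Since $\mathcal{S}^k$ is an isomorphism of $\ku$-modules carrying the torsion submodule $T_{(k+\mu_0)^+}$ onto $T_{\mu_0^+}$ and the submodule $u^jE^-_{(a,b)}HF^*(H_{(k+\mu_0)^+})$ onto $u^jE^-_{(a-kb,b)}HF^*(H_{\mu_0^+})$ (it is $\ku$-linear), it descends to an isomorphism of the quotients by $T + u^j(-)$ appearing in the definition of the $E^-$-filtration. Therefore $v\in\ker\big(c^*_{(k+\mu_0)^+}\ \mathrm{mod}\ T+u^j(-)\big)$ if and only if $ER_k(v)\in\ker\big(c^*_{\mu_0^+}\ \mathrm{mod}\ T+u^j(-)\big)$. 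Intersecting over all generic $\mu_0\geq p$ — equivalently over all generic slopes $\geq p+k$ on the left, by the change of variables $\lambda = k+\mu_0$ — yields $ER_k^{-1}(\FF_{j,(a-kb,b)}^{-,p}) = \FF_{j,(a,b)}^{-,p+k}$, which is the desired identity for $\karo=-$. (For $p=\infty$ one passes to the direct limit: $ER_k$ is compatible with the limit maps because each finite-slope square commutes.)

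For $\karo=\infty$ the identical argument applies, replacing $\ku$ by $\kuu$ throughout; here there is no torsion and the quotients are simply by $u^j(-)$, and $\mathcal{S}^k$ is again a $\kuu$-linear isomorphism commuting with the continuation tower. For $\karo=+$ one uses that $E^+(-) = E^-(-)_u/uE^-(-)$ is a functor of the $E^-$-data: the $\EE$-filtration on $E^+QH^*(Y)$ is defined (see \eqref{Equation E+ filtration} and the $\EE_j^p$ discussion) in terms of $u$-powers of the kernels of $E^+c^*_{\lambda}$, and $\mathcal{S}^k$ induces a $\ku$-module isomorphism on the $E^+$ level intertwining these kernels; the same slope change of variables then gives the result. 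Throughout, one should note $ER_k$ lands in the grading-shifted copy $\FF_{j,(a-kb,b)}^{\karo,p}[2k\mu]$, but since the filtration polynomials and slice polynomials are unchanged by the shift (as already remarked before \cref{Cor sequence of r maps equiv}), the bookkeeping of grading shifts is harmless for the filtration statement.

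The main obstacle is the first input: establishing that the Seidel isomorphism $\mathcal{S}$ (and hence $\mathcal{S}^k$) genuinely commutes with Floer continuation maps, on the nose at chain level up to chain homotopy, for all three equivariant flavours. This requires the moduli-space analysis in the Appendix — gluing a continuation cylinder to the "Seidel" cylinder on which $\Fi$ untwists the domain — and checking that the $S^1$-equivariant (Borel) enhancement preserves this compatibility, i.e.\ that the identification $\mathcal{S}$ is genuinely $\ku$-linear (respectively $\kuu$-linear) and not merely $\k$-linear. Once \cref{Theorem Seidel iso} is in hand with this naturality, the rest of the argument above is essentially formal: it is pure homological algebra over the DVR $\ku$, combined with the change of variables $\lambda\leftrightarrow\lambda-k$ in the slope.
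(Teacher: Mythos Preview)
Your proof is correct and follows essentially the same route as the paper's own argument: both build the commutative square
\[
\begin{tikzcd}
E^{\karo}_{(a,b)}QH^*(Y) \arrow[r,"c^*_{(k+\mu_0)^+}"] \arrow[d,"ER_k"'] & E^{\karo}_{(a,b)}HF^*(H_{(k+\mu_0)^+}) \arrow[d,"\mathcal{S}^k","\cong"'] \\
E^{\karo}_{(a-kb,b)}QH^*(Y)[2k\mu] \arrow[r,"c^*_{\mu_0^+}"'] & E^{\karo}_{(a-kb,b)}HF^*(H_{\mu_0^+})[2k\mu]
\end{tikzcd}
\]
from $ER_k=\mathcal{S}^k\circ c_{k^+}$ together with the Seidel--continuation compatibility, then use that $\mathcal{S}^k$ is a $\ku$-module isomorphism (hence preserves the canonical filtration on the target) to pull back the defining condition for $\FF_j^{\karo,\mu_0}$ and intersect over generic $\mu_0\ge p$. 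The paper packages the step ``$\mathcal{S}^k$ preserves $T+u^jE^-HF^*$'' into a separate lemma (\cref{Cor Seidel isos preserve filtration}), whereas you spell it out directly; otherwise the arguments coincide.
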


\begin{rmk}
    In this paper, for reasons of space, we have not used the machinery due to Liebenschutz-Jones \cite{liebenschutz2020intertwining,liebenschutz2021shift}, called ``intertwining relation'', which implies algebraic constraints on the $ER_k$ maps. In examples, that relation can sometimes recursively determine $ER_k$ from the $u^0$-theory (the non-equivariant quantum theory) and the $T^0$-theory (the non-quantum equivariant theory).
\end{rmk}

\noindent \textbf{Acknowledgements.} 
We thank Mohammed Abouzaid and Paul Seidel 
for helpful conversations. 

\section{Algebra I: filtrations on $\ku$-modules}
Throughout the general algebra sections, $\k$ denotes any field of characteristic zero. In Floer-theoretic applications, it will be the Novikov field.
\subsection{Basic prelimiaries about $\ku$}
\label{Subsection Basic prelimiaries about ku}
The $\k$-algebra of power series in $u$ over $\k$ is denoted $\ku$.
Its units, $\ku^{\times}\subset \ku$, are the series with non-zero $u^0$-coefficient, and any non-zero element of $\ku$ can be written as $u^k \cdot \textrm{unit}$ for a unique $k\in \N$. Indeed, there is a function
\begin{equation}\label{Definition nu}
\nu: \kuu:=\{\textrm{Laurent series}\}\to \Z\cup \{\infty\}, \;\nu(u^k\cdot \ku^{\times})=k, \;\textrm{ and }\;\nu(0):=\infty,
\end{equation}
which is a discrete valuation, and $\ku=\{\nu\geq 0\}$ is its valuation ring. Thus, 
$\ku$ is a discrete valuation ring (DVR) with maximal ideal $(u)$.
In particular, $\ku$ is a principal ideal domain (PID), indeed its non-zero ideals are classified: $(u^k)$ for $k\in \N$.

For any $\ku$-linear map $Q:V_1 \to V_2$ of free $\ku$-modules of finite rank, the Smith normal form gives rise to a choice of bases for $V_1,V_2$ in which the matrix $Q$ only has non-zero entries along the main diagonal, $\mathrm{diag}(u^{a_1},\ldots, u^{a_r},0,\ldots,0)$, with $0\leq a_1 \leq \cdots \leq a_r$. The $u^{a_j}$ are the {\bf invariant factors} of $Q$: the $a_j$ are unique, independent of choices, as invariant factors are unique up to rescaling by units, and
\begin{equation}\label{Equation invariant factors in terms of gcds}
\begin{split}
u^{a_1+\cdots + a_j} & = \mathrm{gcd}\{\textrm{all }j\times j \textrm{ minors of }Q\}, 
\\
a_1+\cdots + a_j
&=  \min \{\nu(\det A): A \textrm{ is a non-singular }
j\times j \textrm{ submatrix of }Q\},
\end{split}
\end{equation}
noting that greatest common divisors are meaningful only up to rescaling by units (here, $\ku^{\times}$). That relation follows from noting that the determinants of $\ku$-module automorphisms lie in $\ku^{\times}$.

\begin{ex}\label{Example invariant factors in the 2x2 case}
    When $V_1=V_2=\ku^2$, so $Q=(Q_{ij})$ is a $2\times 2$ matrix over $\ku$, the invariant factors are $q:=\mathrm{gcd}(Q_{ij})$ and $\tfrac{1}{q}\det Q$. For example, if $Q_{ij}\neq 0$ then $Q_{ij}=\textrm{unit}\cdot u^{n_{ij}}$, so $q=u^{\min n_{ij}}$.
\end{ex} 

Since $\ku$ is a PID, we often use the structure theorem for finitely generated $\ku$-modules $W$:
\begin{equation}\label{Equation torsion piece notation}
\begin{split}
& W\cong T\oplus \ku^r \quad\textrm{ and }\quad  T\cong T_{k_1}\oplus \cdots \oplus T_{k_s} \;\; (T=\textrm{Torsion}(W),\textrm{ but }\ku^r\textrm{ is non-canonical})
\\
&
\textrm{where }
T_k:=\ku/u^k\ku. \qquad \textrm{Thus, } T_1=\ku/u\ku \cong \k \textrm{ with the trivial $u$-action},
\end{split}
\end{equation}
where  $T\subset W$ is the torsion submodule, $r=\mathrm{rank}_{\ku}\,W$, and the $k_1\leq \cdots \leq k_s$ in $\N$ are unique.

We also need the following Lemma in \cref{Subsection The monotone case for free weights}. This technical result is needed because invariant factors of $\ku$-linear maps between free $\ku$-modules need not behave well under composition.

\begin{lm}\label{Lemma invariant factors of id block uorder 1 blocks}
Let $I$ be an identity matrix of size $t\times t$.
Suppose we have a sequence of square matrices
\begin{equation}\label{Equation id block plus higher u}
f_j:=\left(
\begin{smallmatrix} 
I + A_j u & B_j u \\
C_j u & D_j u
\end{smallmatrix}
\right)
\end{equation}
of the same size, for $j=1,2,\ldots,$ 
where $A_j,B_j,C_j,D_j$ are matrices over $\ku$.
Then the invariant factors of the composite $F_k:=f_k\circ f_{k-1}\circ \cdots \circ f_1$ are $1,\ldots,1,u^{\geq k},\ldots,u^{\geq k}$ with $t$ copies of $1$. Here we allow $u^{\infty}:=0$, and the number of zero invariant factors is the nullity, $\mathrm{rank}\, \ker F_k$.
\end{lm}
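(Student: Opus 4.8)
The statement is about the invariant factors of a composite of matrices $f_j$ that are an identity block plus $u\cdot(\textrm{something})$, and the key point is structural: the top-left $t\times t$ corner of $F_k$ remains a unimodular-up-to-$u$ block, forcing exactly $t$ invariant factors equal to $1$, while the remaining ones are divisible by $u^k$. I would organize this around the characterization \eqref{Equation invariant factors in terms of gcds} of invariant factors via minors, together with a clean bookkeeping of the $u$-adic orders of the blocks of $F_k$.

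First I would set up notation: write $F_k = \left(\begin{smallmatrix} P_k & Q_k \\ R_k & S_k \end{smallmatrix}\right)$ in the same block sizes, where the first block is $t\times t$. An easy induction on $k$ from \eqref{Equation id block plus higher u} shows that $P_k \equiv I \pmod{u}$ (hence $P_k \in GL_t(\ku)$, since $\ku$ is local with maximal ideal $(u)$), while $Q_k, R_k, S_k$ are all divisible by $u$: indeed $F_{k} = f_k F_{k-1}$ and multiplying $f_k = I + u M_k$ on the left preserves ``$P\equiv I$, and $Q,R,S\in u\cdot\mathrm{Mat}(\ku)$'' because the identity-block contribution keeps $P_{k}\equiv P_{k-1}\equiv I$ and the $uM_k$ contribution only adds terms in $u\cdot\mathrm{Mat}(\ku)$. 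This establishes that $F_k$, after the invertible row operation given by left-multiplying by $\left(\begin{smallmatrix} P_k^{-1} & 0 \\ 0 & I\end{smallmatrix}\right)\in GL(\ku)$, takes the form $\left(\begin{smallmatrix} I & u\widetilde{Q} \\ uR_k & uS_k\end{smallmatrix}\right)$, and then column operations clear the top row and row operations clear the $uR_k$ block, reducing $F_k$ to $\left(\begin{smallmatrix} I & 0 \\ 0 & uS_k - u^2 R_k\widetilde{Q}\end{smallmatrix}\right)=\left(\begin{smallmatrix} I & 0 \\ 0 & u G_k\end{smallmatrix}\right)$ for some matrix $G_k$ over $\ku$. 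Since row/column operations over $\ku$ don't change invariant factors, the invariant factors of $F_k$ are $1,\ldots,1$ ($t$ copies) together with the invariant factors of $uG_k$, which are each divisible by $u$; in particular none of the first $t$ can be anything other than $1$.

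It remains to upgrade ``divisible by $u$'' to ``divisible by $u^k$'' for the remaining invariant factors, equivalently to show every non-singular $j\times j$ minor of $uG_k$ with $j\geq 1$ has $\nu \geq k$; by \eqref{Equation invariant factors in terms of gcds} it suffices to show that $G_k \equiv 0 \pmod{u^{k-1}}$, i.e. every entry of $G_k$ is divisible by $u^{k-1}$, so that $uG_k$ is divisible by $u^k$. For this I would track the orders more carefully through the recursion $F_k = f_k F_{k-1}$. Writing $f_k = I + u M_k$ with $M_k = \left(\begin{smallmatrix} A_k & B_k \\ C_k & D_k\end{smallmatrix}\right)$, the bottom-right block satisfies $S_k = S_{k-1} + uA'_k$-type terms — more precisely, a direct expansion shows $S_k = S_{k-1} + u(\textrm{linear in the blocks of }F_{k-1})$, and combined with the already-established fact that $Q_{k-1},R_{k-1},S_{k-1}\in u\cdot\mathrm{Mat}(\ku)$ and $P_{k-1}\equiv I$, an induction gives $\nu(\textrm{each entry of }S_k)\geq $ (a bound growing with $k$). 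The cleanest route is probably to prove directly by induction on $k$ the combined statement: $P_k\equiv I\pmod u$, and $Q_k,R_k,S_k$ are each divisible by $u^{?}$ — but here one must be careful, because $Q_k$ need not gain powers of $u$ beyond the first (e.g. $Q_k$ can be $\sim u\cdot$constant). So instead the right invariant to track is: $F_k - \left(\begin{smallmatrix} I & 0\\ 0 & 0\end{smallmatrix}\right)$ has its lower-right $S_k$-block divisible by $u^k$ while $Q_k,R_k$ are only divisible by $u$; combined with the reduction of the previous paragraph where $G_k = S_k - uR_k\widetilde{Q}$, and noting $uR_k\widetilde{Q}$ is divisible by $u^2$ and more careful analysis pushes it to $u^k$...

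I expect the \textbf{main obstacle} to be precisely this last point: naively $G_k$ is only visibly divisible by $u^2$ from the crude argument, not $u^k$, and getting the full $u^{k-1}$-divisibility of $G_k$ requires exploiting that the correction terms at each step $f_j$ are themselves $O(u)$ \emph{and} are applied to a matrix whose off-diagonal/lower-right blocks are already small. The way to make this work is an induction that simultaneously controls the lower-right block and the interaction terms: one shows $F_k$ is congruent mod $u^k$ (entrywise in the lower-right block, after the unimodular cleanup) to the analogous composite where one simply drops the lower-right block's contribution, i.e. effectively $S_k = \sum_{j\le k} u\cdot(\ast)$ telescopes but each summand carries extra factors of $u$ coming from the $u$-divisibility of $R_{j-1},Q_{j-1}$. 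Carrying out this bookkeeping cleanly — ideally by a change of variables making the recursion triangular, or by an explicit formula for $F_k$ expanded in $u$ and grouping terms by how many ``lower-right visits'' they make — is the technical heart, but it is purely a matter of careful $u$-adic estimates on matrix products over a DVR, with no conceptual difficulty once the inductive statement is correctly formulated.
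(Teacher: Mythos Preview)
Your reduction to $\mathrm{diag}(I,uG_k)$ correctly extracts the $t$ invariant factors equal to $1$, but the argument for $u^k$-divisibility of the remaining block is genuinely incomplete, and your own bookkeeping shows why: in the original block decomposition the recursion $S_k = u(C_kQ_{k-1}+D_kS_{k-1})$ only gives $S_k\in u^2\cdot\mathrm{Mat}(\ku)$ for $k\geq 2$, not $u^k$, because $Q_{k-1}$ stays merely $O(u)$. The Schur complement $S_k-R_kP_k^{-1}Q_k$ \emph{is} divisible by $u^k$, but this comes from a cancellation that your inductive estimates on individual blocks cannot see; tracking ``how many lower-right visits'' a term makes is essentially trying to rediscover this cancellation combinatorially, and you have not formulated an inductive hypothesis that actually closes.

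The paper's proof sidesteps this entirely with a single clean idea you are missing: rather than operating on $F_k$ after the fact, change basis on the \emph{intermediate} spaces. On the codomain of $f_1$ (= domain of $f_2$) use the basis $f_1(w_1),\ldots,f_1(w_t),v_1,\ldots,v_s$; on the codomain of $f_2$ use $f_2f_1(w_1),\ldots,f_2f_1(w_t),v_1,\ldots,v_s$; and so on. These are honest $\ku$-bases because their $u^0$-parts are the standard basis. In these bases each $f_j$ becomes block upper triangular, $\left(\begin{smallmatrix} I & B_j'u \\ 0 & D_j'u\end{smallmatrix}\right)$, so the composite $F_k$ has bottom-right block $D_k'D_{k-1}'\cdots D_1'\,u^k$. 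The $u^k$-divisibility is then immediate, with no delicate estimates needed. The point is that killing the lower-left blocks \emph{before} composing makes the recursion for the bottom-right block multiplicative in $u$, whereas your approach leaves the $C_ju$ blocks in place and then has to undo their effect via the Schur complement.
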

\begin{proof}
Let $w_1,\ldots,w_t,v_1,\ldots,v_s$ denote the standard $\ku$-module basis of $\ku^{t+s}$, where $(t+s)\times (t+s)$ is the size of $f_k$. Thus $f_k(w_i)=w_i + u^{\geq 1}$-terms, and $f_k(v_i) = u^{\geq 1}$-terms.
We change basis on the codomain of $f_1$ and domain of $f_2$: the new basis will be $f_1(w_1),\ldots,f_1(w_t),v_1,\ldots,v_s$. Similarly, on the codomain of $f_2$ and domain of $f_3$ we use: $f_2(f_1(w_1)),\ldots,f_2(f_1(w_t)),v_1,\ldots,v_s$, etc. Notice these are still $\ku$-module bases for $\ku^{s+t}$, because their $u^0$-part is still the original standard basis $w_1,\ldots,v_1,\ldots$.

In these new bases, for some new matrices $B_j',D_j',E_k$ over $\ku$, we get
\begin{equation}\label{Equation change of basis trick}
f_j:=\left(
\begin{smallmatrix} 
I \; & B_j' u \\
0 \; & D_j' u
\end{smallmatrix}
\right),
\quad
\textrm{ and }
\quad
F_k = \left(
\begin{smallmatrix} 
I \;\; & E_k u \\
0 \;\; & D_k'D_{k-1}'\cdots \cdot D_1' u^k
\end{smallmatrix}.
\right)
\end{equation}
The fact that we changed the basis on the codomain of $F_k$ does not affect its invariant factors.\footnote{the Smith normal form is not affected by applying automorphisms on the domain and/or codomain.} The invariant factors of the last matrix above are obviously $1,\ldots,1,u^{\geq k},\ldots,u^{\geq k}$, as required.
\end{proof}

\begin{rmk}\label{Remark basis for ker in Smith argument id plus ugeqone}
The final expression of $F_k$ implies that after changing $v_1,\ldots,v_s$ by a $\k$-linear change of basis, we can produce a $\ku$-module basis for $\ker F_k$ of the the form $v_{t+s-n+1},v_{t+s-n+1},\ldots,v_{t+s}$, where $n=\mathrm{rank}\,\ker F_k$; whereas a basis on the domain that (for some basis on the codomain) will give the non-zero invariant factors $u^{\geq k},\ldots,u^{\geq k}$ can be chosen to have $u^0$-parts $v_1,\ldots,v_{t+s-n}$.
\end{rmk}

\subsection{Localisation and direct limits for $\ku$-algebras}

Let $V$ be a unital $\ku$-algebra. Let $Q: V \to V$ be the $\ku$-module homomorphism given by multiplication by some $Q\in V$. 
Then
\begin{equation}\label{Equation V to V to V multiplication by Q}
V \stackrel{Q}{\longrightarrow} V \stackrel{Q}{\longrightarrow} V \stackrel{Q}{\longrightarrow} \cdots
\end{equation}
is a directed system, and we label the copies of $V$ by $V_0,V_1,V_2,\ldots$, so $Q:V_k\to V_{k+1}.$

\begin{rmk}[Motivation]
When $V$ is equivariant quantum cohomology, we show in \cref{Subsection weight 10 case product}
that \eqref{Equation V to V to V multiplication by Q} can arise.
In the setting of $\k$-vector spaces, \eqref{Equation V to V to V multiplication by Q} arose in \cite{R14,R16} for quantum cohomology $V:=QH^*(Y)$ with a (typically non-invertible) element $Q$ acting by quantum multiplication. In that setting, for dimension reasons the repeated images will stabilise, i.e.\;for sufficiently large $k$ we have $Q^kV=Q^{k+1}V$ and 
 $Q:Q^kV\to Q^kV$ is an isomorphism, in particular $Q$-localisation $V \to V_Q\cong \varinjlim V_k \cong Q^kV \cong V/\ker Q^k$ is a surjective map that quotients out the generalised $0$-eigenspace $\ker Q^k$ of $Q$.
This surjectivity may fail for $\ku$-algebras $V$ if the images fail to stabilise.\\
   \emph{Example:} $V=\ku$ and $Q=u$, then $V \to \varinjlim V = \kuu$ is the inclusion map.
\end{rmk}

\begin{lm}\label{Lemma direct limit of V copies}
The direct limit $\varinjlim V_k$ is the localisation $V_Q$ of $V$ at $Q$. In particular, the natural map $V_0\to \varinjlim V_k$ induces an isomorphism $(V_0)_Q\cong \varinjlim V_k$ where $(V_0)_Q$ is the localisation at $Q$.
\end{lm}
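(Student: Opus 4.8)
The plan is to show that the colimit of the directed system \eqref{Equation V to V to V multiplication by Q} satisfies the universal property of the localisation $V_Q$, and that this universal property identifies it with $(V_0)_Q$. Recall that $V_Q$ is by definition the universal $\ku$-algebra (in fact $V$-algebra) in which the image of $Q$ becomes invertible; concretely it is $V[x]/(Qx-1)$, and a $V$-algebra map $V_Q \to R$ exists precisely when the image of $Q$ in $R$ is a unit, in which case it is unique.

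First I would construct the natural map $\iota: V=V_0 \to \varinjlim V_k$ and observe that multiplication by $Q$ on $\varinjlim V_k$ is invertible: this is the standard fact that for a directed system of identical objects with identical bonding maps $Q$, the induced endomorphism on the colimit is an isomorphism, with inverse given by ``shifting one index down'', i.e.\ sending the class of $v\in V_{k+1}$ to the class of $v \in V_k$. (One must check this is well-defined, i.e.\ compatible with the bonding maps, which is immediate since all bonding maps equal $Q$ and $Q$ commutes with itself; and that it is a $\ku$-module — indeed $V$-algebra — homomorphism, which again is immediate since everything in sight is multiplication in the commutative ring $V$.) By the universal property of localisation, $\iota$ therefore factors uniquely through a $V$-algebra map $\varphi: V_Q \to \varinjlim V_k$.

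Next I would produce the inverse. For each $k$, the composite $V=V_k \to \varinjlim V_k$, followed by nothing, lands in a ring where $Q$ is invertible after passing to the colimit; more precisely the map $V_k \to V_Q$ given by $v\mapsto Q^{-k} v$ (legitimate in $V_Q$ since $Q$ is a unit there) is compatible with the bonding maps $Q:V_k\to V_{k+1}$, because $Q^{-(k+1)}(Qv)=Q^{-k}v$. Hence these assemble into a map $\psi:\varinjlim V_k \to V_Q$. One then checks $\varphi\circ\psi=\mathrm{id}$ and $\psi\circ\varphi=\mathrm{id}$ by evaluating on representatives: $\psi\varphi$ fixes the image of $V_0$ and is a $V$-algebra map with $Q$ invertible on the source, so equals the identity by uniqueness in the universal property; $\varphi\psi$ sends the class of $v\in V_k$ to the class of $v\in V_k$ after a bookkeeping of the powers of $Q$. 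This gives $\varinjlim V_k \cong V_Q$. The final sentence of the lemma, $(V_0)_Q \cong \varinjlim V_k$, is then just the tautology that $V_0=V$ so $(V_0)_Q = V_Q$; it is worth stating because in later applications $V_0$ is singled out as a distinguished copy.

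The only genuinely delicate point — and the one I would be most careful about — is the well-definedness and $\ku$-linearity of the inverse-of-$Q$ map on the colimit, together with checking that $Q$ acting on $\varinjlim V_k$ really is the colimit of the (shifted) system rather than something twisted; this is where one could make an indexing error. Everything else is formal manipulation with universal properties. I would also remark that no finiteness or Noetherian hypothesis on $V$ is used, which is exactly the point of the preceding motivational remark: unlike the vector-space case, the images $Q^kV$ need not stabilise, so one genuinely needs the colimit description rather than a quotient description.
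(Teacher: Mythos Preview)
Your approach is essentially the paper's, just dressed in universal-property language: the paper simply writes $v\in V_k$ as the formal symbol $\tfrac{v}{Q^k}$ and checks directly that the equivalence relation defining the direct limit ($Q^{k''-k-k'}(Q^{k'}v-Q^{k}v')=0$ for some $k''$) coincides with the one defining $V_Q$. Your map $\psi:\varinjlim V_k\to V_Q$, $v\in V_k\mapsto Q^{-k}v$, is exactly this identification; the paper just skips the universal-property half and the separate verification that $Q$ is invertible on the colimit. One small point: you do not need (and the paper does not use) a ring structure on $\varinjlim V_k$ to run the argument---working with the explicit mutually inverse $\ku$-module maps $\varphi,\psi$ is enough, and avoids having to first justify that the colimit of non-ring-homomorphisms carries an algebra structure.

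You flagged the indexing as the delicate point, and indeed you have it reversed: on the colimit, multiplication by $Q$ sends the class of $v\in V_k$ to $\iota_k(Qv)=\iota_{k-1}(v)$ (using $\iota_k\circ Q=\iota_{k-1}$), so $Q$ itself is ``shift one index down'' and its inverse is ``shift one index up'', $\iota_k(v)\mapsto\iota_{k+1}(v)$. This is a harmless slip and your argument goes through once corrected.
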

\begin{proof}
Formally write $\tfrac{v}{Q^k}$ to mean $v\in V_k$.
In the direct limit, $v\in V_k,v'\in V_{k'}$ are identified if they become equal after mapping both into $V_{k''}$ for large enough $k''$.
Note $Q^{k'}v,Q^{k}v'\in V_{k+k'}$, so that equality condition becomes $Q^{k''-k-k'}(Q^{k'}v-Q^{k}v')=0$. Localisation of $V$ at $Q$ is also defined by formal symbols $\tfrac{v}{Q^k}$ subject to the equivalence $\tfrac{v}{Q^k}\sim \tfrac{v'}{Q^{k'}}\Leftrightarrow Q^l(Q^{k'}v-Q^{k}v')=0$, some $l\in \N$. 
This shows that $V_Q\cong \varinjlim V_k$ are naturally identified, and the identification map is precisely the one induced by $V_0 \to \varinjlim V_k$, $v\mapsto \tfrac{v}{Q^0}$.
The claim follows.
\end{proof}

\begin{lm}
If $Q^k \in uV$ for some $k\in \N$, then $u\cdot 1$ is a unit, thus multiplication by $u$ on $\varinjlim V_k$ is an invertible map and $\varinjlim V_k$ is in fact a $\kuu$-module.
\end{lm}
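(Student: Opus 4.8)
The plan is to show that the hypothesis $Q^k \in uV$ forces $u\cdot 1$ to be invertible in the direct limit, after which the $\kuu$-module structure is immediate. First I would unwind the hypothesis: write $Q^k = u\cdot w$ for some $w \in V$. Viewing this in the directed system \eqref{Equation V to V to V multiplication by Q}, the element $Q^k \in V_0$ maps to $Q^k \cdot 1 \in V_k$, and by \cref{Lemma direct limit of V copies} the image of $Q$ (indeed of $Q^k$) in $\varinjlim V_k \cong V_Q$ is a unit, since localisation at $Q$ inverts $Q$. Concretely, the class of $1 \in V_0$ maps to the class of $Q^k \in V_k$, which is therefore the element $Q^k \cdot (\text{image of } 1)$; but in $V_Q$ this is $Q^k$ times a unit, hence a unit.

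Next I would combine this with $Q^k = uw$. In $V_Q$, the image of $Q^k$ is a unit, and it equals $u$ times the image of $w$. An element of any commutative ring that admits a factorisation $u \cdot w'$ with $u\cdot w'$ a unit has $u$ itself a unit, with inverse $w' \cdot (uw')^{-1}$. Applying this to $V_Q = \varinjlim V_k$ with $w'$ the image of $w$ and $uw' = $ image of $Q^k$, we conclude that the image of $u\cdot 1$ in $\varinjlim V_k$ is invertible. Since multiplication by $u$ on the $\ku$-module $\varinjlim V_k$ is precisely multiplication by this invertible element $u\cdot 1$, it is an invertible map.

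Finally, once $u$ acts invertibly on $\varinjlim V_k$, the $\ku$-action extends uniquely to a $\kuu$-action: every Laurent series $\sum_{j\geq -N} c_j u^j$ acts via $u^{-N}\cdot(\text{unit series in }u)$, using the inverse of $u$ constructed above, and one checks this is well-defined and compatible, giving $\varinjlim V_k$ the structure of a $\kuu$-module. I do not expect a genuine obstacle here; the only point requiring a little care is making sure the ``factorisation of a unit'' argument is applied inside $V_Q$ rather than $V$ itself — in $V$ the element $Q^k$ need not be a unit, and it is only the passage to the direct limit (equivalently, the localisation) that makes it one. This is exactly the content of \cref{Lemma direct limit of V copies}, so invoking it is the crux of the proof.
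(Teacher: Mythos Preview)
Your proof is correct and follows essentially the same route as the paper's: use \cref{Lemma direct limit of V copies} to identify $\varinjlim V_k \cong V_Q$, observe that $Q$ (hence $Q^k$) is a unit there, and conclude that the factor $u\cdot 1$ of the unit $Q^k = u\cdot w$ is itself a unit. The paper's version is terser, but the argument is the same; your first paragraph contains some redundant phrasing about how $1\in V_0$ maps in the system, which you can safely drop since the only fact needed is that $Q$ becomes invertible in $V_Q$.
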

\begin{proof}
    By \cref{Lemma direct limit of V copies}, $Q$ is invertible in $\varinjlim V_k$ since it is invertible in $V_Q$ with inverse $\tfrac{1}{Q}$, where $1\in V$ denotes the unit of $V$. Thus $Q^k$ is invertible, and in particular, any factor of $Q^k$ is invertible. Finally, $Q^k\in uV$ means that $u\cdot 1\in V$ is a factor of $Q^k$.
\end{proof}

\subsection{Localisation and direct limits for $\ku$-modules}

Assume now the weaker condition that $V$ is a $\ku$-module of finite rank $r$. Let
\begin{equation}\label{Equation V to V to V module maps Qj}
V \stackrel{Q_0}{\longrightarrow} V \stackrel{Q_1}{\longrightarrow} V \stackrel{Q_2}{\longrightarrow} \cdots
\end{equation}
be a sequence of $\ku$-linear homomorphisms $Q_k: V_k \to V_{k+1}$ where we label the copies of $V$ by $V_k$.

We first look at the case when the $Q_j$ are injective, postponing the general case to \cref{Subsection Non-injective homomorphisms}.

\begin{rmk}[Motivation]
In general equivariant Floer settings, \eqref{Equation V to V to V module maps Qj} arises rather than \eqref{Equation V to V to V multiplication by Q}: 
the maps can keep varying due to shifts in the equivariant weights.
In the $E^-$-equivariant setting, we often find that all $Q_j$ are injective and their $u^0$-parts are the same nilpotent map.
\\
   \emph{Example:} $V=\ku^2$ and $Q_j=\mathrm{diag}(1,u)$, then $V \to \varinjlim V = \ku\oplus \kuu$ is the inclusion map. 
\end{rmk}

\begin{de}[Adjugates]
For any $\ku$-module homomorphism $Q: V \to V$ (for $V$ as above), the {\bf adjugate} of $Q$ is the $\ku$-module homomorphism $\mathrm{adj}(Q): V \to V$ arising as the adjoint\footnote{$\mathrm{adj}(Q)(v_1)\wedge v_2 \wedge \cdots \wedge v_r=v_1 \wedge (\Lambda^{r-1}Q)(v_2\wedge \cdots \wedge v_r)=v_1 \wedge Q v_2\wedge \cdots \wedge Qv_r.$} of the exterior power $\Lambda^{r-1}Q$ for the pairing $V\times \Lambda^{r-1}V \stackrel{\wedge}{\longrightarrow} \Lambda^r V\cong \ku$. 
After a choice of basis of $V$, the adjugate is the transpose of the cofactor matrix of $Q$.
Moreover,
\begin{equation}\label{Equation adjugate}
Q\circ \mathrm{adj}(Q) = \det Q \cdot \mathrm{id} = \mathrm{adj}(Q) \circ Q.
\end{equation}
We recall that $\mathrm{adj}(Q_2 \circ Q_1)=\mathrm{adj}(Q_1)\circ \mathrm{adj}(Q_2)$ and $\mathrm{adj}(\lambda Q) = \lambda^{r-1}\mathrm{adj}(Q).$
Also, $Q$ is injective precisely if $\det Q\in \ku^{\times}$, in which case \eqref{Equation adjugate} determines $\mathrm{adj}(Q)$: the localisation $Q_u=Q\otimes 1: V_u\to V_u$ is a linear map on the $\kuu$-vector space $V_u=V \otimes_{\ku} \kuu$, and $\mathrm{adj}(Q)=\det Q \cdot Q_u^{-1}$.
\end{de}

From 
\eqref{Equation V to V to V module maps Qj}, we get adjugate maps
\begin{equation}
V_0 \stackrel{\mathrm{adj}Q_0}{\longleftarrow} V_1 \stackrel{\mathrm{adj}Q_1}{\longleftarrow} V_2 \stackrel{\mathrm{adj}Q_2}{\longleftarrow} \cdots
\end{equation}
For $a\leq b$, abbreviate by $Q^a_b: V_a \to V_{b+1}$ the map
$$
\qquad
Q^a_b:=Q_b\circ Q_{b-1}\circ \cdots\circ Q_a,
\quad
\textrm{ whose determinant is }
\quad
q^a_b:= \det Q^a_b = \det Q_a\cdots \det Q_b.
$$
By the properties of adjugates, $\mathrm{adj}(Q^a_b)=\mathrm{adj}(Q_a)\circ \cdots \circ \mathrm{adj}(Q_b):V_{b+1} \to V_a$, and $\mathrm{adj}(Q^a_b)Q^a_b=q^a_b\cdot \mathrm{id}$.

\begin{lm}\label{Lemma localisation when have ku mods}
When all $Q_j$ are injective, the direct limit of \eqref{Equation V to V to V module maps Qj} can be identified with a $\ku$-submodule of the $u$-localisation $(V_0)_u:= V_0 \otimes_{\ku} \kuu$,
$$
\varinjlim V_j \subset (V_0)_u \;\textrm{ with canonical maps }\;  \mathrm{adj}(Q^0_j)\otimes \tfrac{1}{q^0_{j}} = (Q^0_j)_u^{-1}: V_{j+1} \to (V_0)_u,
$$
and canonical map $\mathrm{id}\otimes 1:V_0 \to \varinjlim V_j \subset (V_0)_u$.
\end{lm}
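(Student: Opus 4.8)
The claim is that when all $Q_j$ are injective, $\varinjlim V_j$ can be realized inside the localization $(V_0)_u$, with the comparison maps being the inverse maps $(Q^0_j)_u^{-1}$. The plan is to first pass to the $u$-localization everywhere: since each $Q_j$ is injective, $\det Q_j \in \ku^{\times}$ is nonzero, hence a unit in $\kuu$, so the localized map $(Q_j)_u : (V_j)_u \to (V_{j+1})_u$ is an isomorphism of finite-dimensional $\kuu$-vector spaces. Therefore the localized directed system
$$
(V_0)_u \xrightarrow{(Q_0)_u} (V_1)_u \xrightarrow{(Q_1)_u} (V_2)_u \xrightarrow{(Q_2)_u} \cdots
$$
consists entirely of isomorphisms, so its direct limit is canonically $(V_0)_u$, with the structure map from $(V_{j+1})_u$ being $((Q^0_j)_u)^{-1} = \mathrm{adj}(Q^0_j)\otimes \tfrac{1}{q^0_j}$ by \eqref{Equation adjugate}.

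Next I would compare $\varinjlim V_j$ with $\varinjlim (V_j)_u$. Localization is exact and commutes with direct limits, so there is a natural isomorphism $(\varinjlim V_j)_u \cong \varinjlim (V_j)_u \cong (V_0)_u$. Thus it remains to check that the natural map $\varinjlim V_j \to (\varinjlim V_j)_u$ is injective, i.e.\;that $\varinjlim V_j$ is $u$-torsion-free. This is where the injectivity of the $Q_j$ is used a second time: suppose $v\in V_a$ represents a class killed by $u^N$ in the limit, so $u^N Q^a_b(v) = 0$ in $V_{b+1}$ for some $b\geq a$. Applying $\mathrm{adj}(Q^a_b)$ and using $\mathrm{adj}(Q^a_b)Q^a_b = q^a_b\cdot \mathrm{id}$ with $q^a_b\in\ku^{\times}$ a unit gives $u^N q^a_b\, v = 0$ in $V_a$; but $V_a\cong V$ is a \emph{free} $\ku$-module (finite rank), hence $u$-torsion-free, forcing $v=0$. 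So $\varinjlim V_j$ embeds into its localization $(V_0)_u$, and chasing the identifications shows the embedding sends the image of $V_{j+1}$ via $(Q^0_j)_u^{-1}$ as claimed, and the composite $V_0 \to \varinjlim V_j \hookrightarrow (V_0)_u$ is $\mathrm{id}\otimes 1$.

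The main obstacle, such as it is, is purely bookkeeping: making sure the several compatible identifications — of $\varinjlim (V_j)_u$ with $(V_0)_u$, of $(\varinjlim V_j)_u$ with $\varinjlim(V_j)_u$, and of the structure maps with the adjugate formulas — are all set up so that the resulting canonical map $V_{j+1}\to (V_0)_u$ is literally $(Q^0_j)_u^{-1}$ and not merely isomorphic to it. One clean way to organize this is to define the map $\varinjlim V_j \to (V_0)_u$ directly on representatives by $v\in V_{j+1}\mapsto \mathrm{adj}(Q^0_j)(v)\otimes \tfrac{1}{q^0_j}$, verify it is well-defined on the colimit (compatibility under the maps $Q_j$, using $\mathrm{adj}(Q^0_{j+1}) = \mathrm{adj}(Q^0_j)\circ \mathrm{adj}(Q_{j+1})$ and $q^0_{j+1} = q^0_j\cdot \det Q_{j+1}$), and then prove injectivity via the adjugate-torsion argument above; surjectivity onto the image $\varinjlim V_j$ is automatic since we defined the map on the colimit. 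This avoids ever invoking "localization commutes with colimits" as a black box.
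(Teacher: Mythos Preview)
Your argument is correct, modulo a recurring slip: you write $\det Q_j\in\ku^{\times}$ and $q^a_b\in\ku^{\times}$, but injectivity of $Q_j$ only gives $\det Q_j\neq 0$ in $\ku$, not that it is a unit. This does not damage the proof --- nonzero is exactly what you need for $(Q_j)_u$ to be invertible over $\kuu$, and in the torsion-freeness step $u^N q^a_b\,v=0$ with $u^N q^a_b\neq 0$ in the integral domain $\ku$ and $V_a$ free still forces $v=0$ (indeed, you could have stopped earlier: $u^N Q^a_b(v)=0$ in the free module $V_{b+1}$ already gives $Q^a_b(v)=0$, hence $v=0$ by injectivity, with no adjugate needed).

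The paper takes the route you sketch in your final paragraph: it defines the map $V_{j+1}\to (V_0)_u$ directly as $\mathrm{adj}(Q^0_j)\otimes\tfrac{1}{q^0_j}$, by first embedding each finite-stage limit $V_{n+1}$ into $V_0$ via $\mathrm{adj}(Q^0_n)$ and then rescaling by $(q^0_n)^{-1}$ to make the embedding independent of $n$. Your main approach --- localize the whole diagram, identify the localized limit with $(V_0)_u$, then show $\varinjlim V_j$ is torsion-free so it injects into its localization --- is a perfectly good abstract repackaging of the same content; it trades the explicit rescaling for the black-box ``localization commutes with colimits,'' and the compatibility bookkeeping you worry about is exactly what the paper's explicit approach handles by construction.
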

\begin{proof}
The direct limit of the finite sequence of injective maps $V_0 \to \cdots \to V_{n+1}$ is identifiable with $V_{n+1}$, with canonical maps $Q^j_n:V_{j}\to V_{n+1}$. We compose this with the injective map $\mathrm{adj}(Q^0_n):V_{n+1}\to V_0$ so that we can view the direct limit inside $V_0$. The canonical maps from $V_j$ become  $\mathrm{adj}(Q^0_n)\circ Q^{j}_n = q^j_n\cdot \mathrm{adj}(Q^0_{j-1})$. To make this independent of $n$ we rescale by $(q^0_n)^{-1}$ at the cost of changing the target $V_0$ to $(V_0)_u$. The canonical map $V_j \to (V_0)_u$ becomes $\mathrm{adj}\,Q^0_{j-1}\otimes \tfrac{1}{q^0_{j-1}}.$

This procedure is equivalent to replacing \eqref{Equation V to V to V module maps Qj} by a sequence of inclusions
\begin{equation}\label{Equation V to V to V multiplication by Q version 2}
V_0
\;
\stackrel{\mathrm{incl}}{\longrightarrow} 
\;
(\tfrac{1}{q^0_0}\mathrm{adj}(Q^0_0)V_1\subset \tfrac{1}{q^0_0}V_0)
\;
\stackrel{\mathrm{incl}}{\longrightarrow} 
\;
(\tfrac{1}{q^0_1}\mathrm{adj}(Q^0_1)V_2\subset \tfrac{1}{q^0_1}V_0)
\;
\stackrel{\mathrm{incl}}{\longrightarrow} 
\;
\cdots
\stackrel{\mathrm{incl}}{\longrightarrow} 
\kuu \otimes_{\ku} V_0. \qedhere
\end{equation}
\end{proof}

\begin{ex}
Consider \eqref{Equation V to V to V module maps Qj} but with $\Z$-modules $V_j=\Z$ and multiplication maps $Q_j=(j+1)\cdot \mathrm{id}$.
The adjugates equal $1$ and 
$q^0_n=(n+1)!$. So 
$\tfrac{1}{q^0_{n}}\mathrm{adj}(Q^0_n)=\tfrac{1}{(n+1)!}\mathrm{id}: V_{n+1}=\Z \to \tfrac{1}{(n+1)!}\Z\subset  \Q=V_0\otimes_{\Z}\Q$.
The analogue of \eqref{Equation V to V to V multiplication by Q version 2} are the inclusions
$\Z \subset \tfrac{1}{1}\Z \subset \tfrac{1}{2}\Z  \subset \tfrac{1}{3!}\Z \subset \cdots \subset \Q.$
The direct limit is $\Q$.
\end{ex}

\begin{de}
For any $\ku$-module homomorphism $Q: V \to V$ (with $V$ free of finite rank), we can write $Q=P_0 + uP_1 + \cdots$, where $P_i$ are endomorphisms not involving $u$. We call $P_0$ the {\bf $u^0$-part} of $Q$.
\end{de}

\begin{lm}\label{Lemma Definition order-factors}
If $Q:V \to V$ is injective with nilpotent $u^0$-part, then $\det\, Q \in u^k \ku^{\times}$ for a unique $k\geq 1 \in \N$, namely $k=\nu(\det\,Q)$. We call $k=\nu(\det\,Q)$ the {\bf order-factor} of $Q$.
\end{lm}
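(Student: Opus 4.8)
The plan is to observe that the claim reduces entirely to showing $\nu(\det Q)\geq 1$, i.e.\ that $u\mid \det Q$ in $\ku$, combined with the fact — already recalled in the excerpt — that $Q$ injective is equivalent to $\det Q\in\ku^\times$, so $\det Q\neq 0$ and hence $\nu(\det Q)$ is a well-defined finite integer $k$. Since $\ku$ is a DVR with uniformizer $u$ (see \cref{Subsection Basic prelimiaries about ku}), every nonzero element is uniquely $u^k\cdot(\text{unit})$, so once we know $0<k=\nu(\det Q)<\infty$ the statement $\det Q\in u^k\ku^\times$ with $k\geq 1$ unique is immediate. So the only real content is the lower bound $k\geq 1$.

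For that, first I would reduce mod $u$: applying the ring homomorphism $\ku\to\ku/u\ku\cong\k$ to the matrix of $Q$ in any $\ku$-basis of $V$ gives the matrix of the $u^0$-part $P_0$ over $\k$, and since $\det$ is a polynomial in the matrix entries, it commutes with this reduction. Hence the image of $\det Q$ in $\k$ is $\det_{\k}P_0$. Now $P_0$ is nilpotent by hypothesis, so $\det_\k P_0=0$; therefore $\det Q\in u\ku$, i.e.\ $\nu(\det Q)\geq 1$. Combined with injectivity of $Q$ (giving $\det Q\neq 0$, so $\nu(\det Q)<\infty$), we get $k:=\nu(\det Q)\in\{1,2,3,\dots\}$, and uniqueness of the factorization $\det Q=u^k\cdot(\text{unit})$ is the standard DVR fact.

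I do not expect any genuine obstacle here: the statement is essentially a bookkeeping consequence of two facts already established in the excerpt (the DVR structure of $\ku$, and the $\det Q\in\ku^\times\Leftrightarrow Q$ injective criterion), together with the elementary observation that reduction mod $u$ sends $\det Q$ to $\det P_0=0$. The only point to be careful about is that $\det Q$ is computed with respect to a choice of basis of $V$, but it is independent of that choice up to multiplication by a unit of $\ku$ (determinants of change-of-basis automorphisms lie in $\ku^\times$, as noted just after \eqref{Equation invariant factors in terms of gcds}), so $\nu(\det Q)$ is well-defined. If anything merits a sentence of care, it is simply spelling out that $\det\colon\mathrm{Mat}_r(\ku)\to\ku$ is compatible with the quotient map to $\mathrm{Mat}_r(\k)$, which is clear since the determinant is a fixed universal polynomial.
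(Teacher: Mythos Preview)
Your proof is correct and follows essentially the same approach as the paper: both establish $\nu(\det Q)\geq 1$ from nilpotency of the $u^0$-part $P_0$, then use injectivity to get $\det Q\neq 0$, and conclude via the DVR structure. The only difference is a minor shortcut: the paper composes $Q$ until $Q^N$ has vanishing $u^0$-part and then observes $\det Q^N\notin\ku^\times$, whereas you reduce mod $u$ directly and use $\det_\k P_0=0$ for nilpotent $P_0$ --- your route is one step shorter but logically equivalent. (One small wording issue: ``$Q$ injective $\Leftrightarrow\det Q\in\ku^\times$'' should read $\det Q\neq 0$; you only use the latter, so the argument is unaffected.)
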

\begin{proof}
    Composing $Q$ enough times we see that some $Q^N$ has zero $u^0$-part. Hence $\det Q^N \notin \ku^{\times}$. As $Q$ is injective, $\det Q\neq 0$.
    The claim follows by taking $k:=\nu(\det\, Q)$ in the notation of \cref{Subsection Basic prelimiaries about ku}.
\end{proof}

\begin{prop}\label{Prop Qj injective get direct lim}
Suppose that the $Q_j$ in \eqref{Equation V to V to V module maps Qj} are injective, and their $u^0$-part is a nilpotent map independent of $j$.
    Then the direct limit of \eqref{Equation V to V to V module maps Qj} is isomorphic to $u$-localisation,
    $$
    \varinjlim V_j \cong V_u:= V\otimes_{\ku}\kuu,
    $$
with canonical maps $\mathrm{id}\otimes 1: V_0 \to (V_0)_u$ and 
$$
(Q^0_j)_u^{-1}=u^{-(k_0+\cdots +k_j)}\cdot \mathrm{unit}_j\cdot \mathrm{adj}(Q^0_j): V_{j+1} \to (V_0)_u,
$$
where $k_j=\nu(\det\,Q_j)$ are the order-factors, and $q^0_j = \mathrm{unit}_j^{-1}\cdot u^{k_0+\cdots +k_j}$ for some $\mathrm{unit}_j\in \ku^{\times}$.
\end{prop}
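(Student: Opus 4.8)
The plan is to build on \cref{Lemma localisation when have ku mods}: since the $Q_j$ are injective, that result already realises $\varinjlim V_j$ as the nested union, inside $(V_0)_u$, of the $\ku$-lattices $M_j := (Q^0_j)_u^{-1}(V_{j+1}) = \tfrac{1}{q^0_j}\mathrm{adj}(Q^0_j)V_{j+1}$ for $j\geq 0$, together with $M_{-1}:=V_0$, and it identifies the canonical maps as $\mathrm{id}\otimes 1$ on $V_0$ and $\mathrm{adj}(Q^0_j)\otimes\tfrac{1}{q^0_j}$ on $V_{j+1}$. Two things then remain: rewriting $\tfrac{1}{q^0_j}\mathrm{adj}(Q^0_j)$ in terms of order-factors, and showing $\bigcup_{j\geq -1}M_j = (V_0)_u$.

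For the first point, each $Q_j$ is injective with nilpotent $u^0$-part, so \cref{Lemma Definition order-factors} gives $\det Q_j = u^{k_j}\cdot(\text{unit})$ with $k_j = \nu(\det Q_j)\geq 1$; hence $q^0_j = \det Q^0_j = \prod_{i=0}^j \det Q_i = u^{k_0+\cdots+k_j}\cdot(\text{unit})$. Writing $\mathrm{unit}_j := u^{k_0+\cdots+k_j}/q^0_j\in\ku^{\times}$ and substituting into $(Q^0_j)_u^{-1}=\mathrm{adj}(Q^0_j)\otimes\tfrac{1}{q^0_j}$ produces the displayed formula, together with $q^0_j = \mathrm{unit}_j^{-1}u^{k_0+\cdots+k_j}$. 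Note that $k_0+\cdots+k_j\to\infty$.

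For the second point --- the only substantive step --- I would show that multiplication by $u$ acts invertibly on $L:=\varinjlim V_j\subset (V_0)_u$. Granting this, $L$ is a $\ku$-submodule of the $\kuu$-module $(V_0)_u$ on which $u$ is bijective, hence a $\kuu$-submodule; as $L\supseteq V_0$ and $V_0$ generates $(V_0)_u = V_0\otimes_{\ku}\kuu$ over $\kuu$, we conclude $L=(V_0)_u$. To prove invertibility of $u$ on $L$, let $N$ be the nilpotency index of the common $u^0$-part $P_0$, so $P_0^N=0$. Then any composite $Q^{j+1}_{j+N}=Q_{j+N}\circ\cdots\circ Q_{j+1}$ of $N$ consecutive maps has $u^0$-part $P_0^N=0$, so its image lies in $uV_{j+N+1}$. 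Thus for $x=(Q^0_j)_u^{-1}(v)\in M_j$ with $v\in V_{j+1}$, the element $u^{-1}Q^{j+1}_{j+N}(v)$ lies in $V_{j+N+1}$, and since $(Q^0_{j+N})_u^{-1}\circ (Q^{j+1}_{j+N})_u = (Q^0_j)_u^{-1}$ we get $u^{-1}x = (Q^0_{j+N})_u^{-1}\!\left(u^{-1}Q^{j+1}_{j+N}(v)\right)\in M_{j+N}\subset L$ (the case $x\in V_0 = M_{-1}$ is handled identically using $Q^0_{N-1}$). Hence $u^{-1}L\subset L$, as required.

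The crux --- and the place where the hypotheses are essential --- is precisely this $u$-divisibility of long composites: it is forced by the $u^0$-parts being nilpotent \emph{and} independent of $j$, and it is exactly what makes the direct limit acquire all negative powers of $u$. Without this hypothesis the conclusion genuinely fails (e.g.\ for $Q_j=\mathrm{id}$ one gets $\varinjlim V_j = V\neq V_u$). Everything else is the adjugate/determinant bookkeeping already prepared in \cref{Lemma localisation when have ku mods} and \cref{Lemma Definition order-factors}.
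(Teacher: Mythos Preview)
Your proof is correct and follows essentially the same approach as the paper's: both invoke \cref{Lemma localisation when have ku mods} for the inclusion $\varinjlim V_j\subset (V_0)_u$ and the form of the canonical maps, use \cref{Lemma Definition order-factors} for the order-factor bookkeeping, and then establish surjectivity by exploiting that any composite of $N$ consecutive $Q_j$'s has vanishing $u^0$-part (where the paper takes $N=r=\mathrm{rank}\,V$, you take the nilpotency index). Your packaging of the last step as ``$u$ acts invertibly on $L$, hence $L$ is a $\kuu$-submodule containing $V_0$'' is a clean variant of the paper's direct verification that $u^{-N}v\in L$ for all $v\in V_0$ and $N\in\N$.
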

\begin{proof}
\cref{Lemma Definition order-factors} yields order-factors $k_j:=\nu(\det\,Q_j)$.
Define $\mathrm{unit}_j:=u^{\nu(q^0_j)}(q^0_j)^{-1}\in \ku^{\times}$, and note $\nu(q^0_j) = \nu(\det Q_0 \cdots \det Q_j)= k_0+\cdots +k_j$.
Now apply \cref{Lemma localisation when have ku mods} to view $\varinjlim V_j \subset (V_0)_u$, with canonical maps as in the claim. It remains to show that this inclusion is an equality.\footnote{The condition $\nu(\det\,Q_j)\geq 1$, without the nilpotency assumption, is not enought to guarantee equality, e.g.\;$Q_j:=\mathrm{diag}(1,u^2)$, $V=\ku^2$, $\nu(\det\,Q_j)=2$, $\varinjlim V_j=\ku \oplus \kuu\neq \kuu^2$. Assuming that  the $u^0$-parts of the $Q_j$ are nilpotent (but not the same nilpotent map) is not enough: e.g.\,$Q_{\mathrm{odd}}=\left(\begin{smallmatrix} 0 & 1 \\ u & 0 \end{smallmatrix}\right)$
and $Q_{\mathrm{even}}=\left(\begin{smallmatrix} 0 & u \\ 1 & 0 \end{smallmatrix}\right)$
have product $\mathrm{diag}(1,u^2).$
}
Let $P$ denote the $u^0$-part of $Q_j$, so $P^{b-a+1}$ is the $u^0$-part of $Q^a_b$. Note $P^r=0$ since $P$ is nilpotent and $V$ has rank $r$ (e.g. by considering the $\kuu$-vector space problem for the $u$-localised map $P_u$, using $\dim V_u=r$).
It follows that $Q^0_{r-1},Q^{r}_{2r-1},\ldots$ are divisible by $u$. Let $m:=Nr-1$, then given $v\in V_0$ we have $Q^0_{m}(v)=u^N\widetilde{v}$ for some $\widetilde{v}\in V_{Nr}$. So $\mathrm{adj}(Q^0_{m})(u^N\widetilde{v})=q^0_{m}v$, therefore
$\tfrac{1}{q^0_{m}}\mathrm{adj}(Q^0_{m})(\widetilde{v})=u^{-N}v\in \varinjlim V_j\subset (V_0)_u$ (using the map from \cref{Lemma localisation when have ku mods}). As $N\in \N$, $v\in V_0$ were arbitrary, $\varinjlim V_j \subset (V_0)_u$ is thus an equality.
\end{proof}

We continue to assume $V$ is a free $\ku$-module of rank $r$, and $V_k=V$ for $k=0,1,\ldots$. The motivation for the following algebra machinery is illustrated in the simple example in \cref{Remark example of slice dimensions}.

\begin{lm}\label{Lemma presentation Rj inverse of V}
Let $R_k: V_0 \to V_k$ be an injective $\ku$-module homomorphism.
Let $(u^{j_1},\ldots,u^{j_r})$ be the invariant factors, where $0\leq j_1 \leq \cdots \leq j_r$.
Then
$$
(R_k)_u^{-1}(V_k) \cong u^{-j_1}\ku \oplus \cdots \oplus u^{-j_r} \ku \subset \kuu^r \cong (V_0)_u.
$$
\end{lm}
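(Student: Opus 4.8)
\emph{Proof proposal.} The plan is to reduce everything to the Smith normal form of $R_k$, which is available because $R_k$ is injective. First I would choose $\ku$-module bases $e_1,\dots,e_r$ of $V_0$ and $f_1,\dots,f_r$ of $V_k$ in which the matrix of $R_k$ is $\mathrm{diag}(u^{j_1},\dots,u^{j_r})$, with $0\le j_1\le\cdots\le j_r<\infty$; injectivity forces all invariant factors to be non-zero, so no formal symbol $u^{\infty}=0$ occurs, and the $j_i$ are the well-defined exponents from \eqref{Equation invariant factors in terms of gcds}. Using these bases to identify $(V_0)_u\cong\kuu^r$ and $(V_k)_u\cong\kuu^r$, the $u$-localised map $(R_k)_u$ becomes the diagonal $\kuu$-linear isomorphism $\mathrm{diag}(u^{j_1},\dots,u^{j_r})$, with inverse $(R_k)_u^{-1}=\mathrm{diag}(u^{-j_1},\dots,u^{-j_r})$.

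Next I would compute the preimage directly. Under the canonical map $\mathrm{id}\otimes 1$, the submodule $V_k$ sits inside $(V_k)_u$ as the standard lattice $\ku f_1\oplus\cdots\oplus\ku f_r$. Applying $(R_k)_u^{-1}$ basis vector by basis vector, $(R_k)_u^{-1}(f_i)=u^{-j_i}e_i$, and since $(R_k)_u^{-1}$ is $\kuu$-linear (in particular $\ku$-linear) it carries this lattice onto
$$
(R_k)_u^{-1}(V_k)=\bigoplus_{i=1}^r u^{-j_i}\ku\,e_i
= u^{-j_1}\ku\oplus\cdots\oplus u^{-j_r}\ku\subset\kuu^r\cong(V_0)_u,
$$
which is the asserted description; the map is an isomorphism of $\ku$-modules onto its image because $(R_k)_u^{-1}$ is injective.

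There is no real obstacle here: the argument is just ``diagonalise, then invert on each coordinate''. The only points deserving a word are that the identification $(V_0)_u\cong\kuu^r$ depends on the chosen Smith basis of $V_0$, so the displayed submodule is canonical only after fixing that basis --- but since the statement only claims an abstract isomorphism ``$\cong$'', this is harmless --- and that the exponents $j_i$ themselves are independent of the Smith choices by uniqueness of invariant factors, so the right-hand side is well-defined up to the obvious reordering isomorphism.
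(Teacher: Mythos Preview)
Your proposal is correct and follows exactly the same approach as the paper: choose Smith normal form bases so that $R_k=\mathrm{diag}(u^{j_1},\dots,u^{j_r})$, then read off the result immediately. The paper's proof is a one-line version of what you wrote.
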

\begin{proof}
Choose a basis of $V=V_0=V_k$, 
then the Smith normal form applies automorphisms to the bases of $V_0,V_k$ to yield $R_k = \mathrm{diag}(u^{j_1},\ldots,u^{j_r})$. The result the follows immediately. 
\end{proof}

\begin{de}\label{Definition slice dimensions}
Let $W$ be a $\ku$-submodule of $\kuu^r$. Extend \eqref{Definition nu} to tuples: $\nu:\kuu^r \to \Z\cup\{\infty\}$,
$$
\nu(u^{j_1}\ku^{\times},\ldots,u^{j_r}\ku^{\times})
= \min (j_1,\ldots,j_r),
$$
and omit $j_i$ in the $\min$  when there is a zero entry in place of $u^{j_i}\ku^{\times}$.
If an element in $\kuu^n$ has $\nu$-value $m\in \Z$, then the {\bf leading part} is obtained by forgetting all $u^{>m}$-terms in each entry. Call {\bf leading entries} those entries which become non-zero entries in the leading part.
For example, $x:=(2u^{-2}+u^{-1},5u^{-3}+u^2)$ has $m=\nu(x)=-3$, leading part $(0,5u^{-3})$, and $x_2$ is a leading entry.

For $n\in \Z$, the $u^{-n}$\textbf{-slice} of $W\subset \kuu^r$ is $W\cap u^{-n}\k^r$; its dimension is the {\bf $u^{-n}$-slice dimension}:
$$
\dim_{-n}\, W
= \dim_{\k} W\cap u^{-n}\k^r
=
\dim_{\k} \,\{ \textrm{leading parts of all }w\in W\textrm{ with }\nu(w)=-n  \}.
$$
Equivalently, it is $\dim_{\k}\,(W/u^{-n+1}\ku^r)\cap (u^{-n}\ku^r/u^{-n+1}\ku^r)$; and it is also the number of $\k$-linearly independent elements of $W\cap \k^r\subset \kuu^r$ which are divisible in $W$ by $u^n$.

Note $\dim_{-n}\,W\leq r$. In view of the injective $\ku$-module action, $\dim_{-n}\,W$ cannot increase as we increase $n$, so it stabilises for large $n$, and we call it {\bf slice dimension},
$$
\dim_{-\infty}W := \lim_{n\to \infty} \dim_{-n} W \in \{0,1,\ldots,r\}.
$$
Observe that $\dim_{-\infty} W=0$ is equivalent to $W\subset u^{-n}\ku^r$ for large enough $n$, and so it is equivalent to the condition that $W$ is free (note $u^{-n}\ku$ is free, but $\kuu$ is not).
\end{de}

\begin{cor}\label{Cor structure theorem for V in kuu power}
Given injections $\ku^r \hookrightarrow W \hookrightarrow W_u\cong \kuu^r$, let $s:=\dim_{-\infty}\,W$, then
$$
W \cong u^{-j_1}\ku \oplus \cdots \oplus u^{-j_{r-s}} \oplus \kuu^s,
$$
so that $\ku^r\hookrightarrow W$ becomes an isomorphism $\ku^r \to \ku^r\subset u^{-j_1}\ku\oplus \cdots \oplus u^{-j_{r-s}}\ku \oplus \kuu^s.$
\end{cor}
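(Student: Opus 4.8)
The plan is to reduce everything to a single application of the structure theorem for finitely generated modules over the PID $\ku$, using the slice dimension $s$ to pin down exactly how many $\kuu$-summands appear. First I would observe that $W$ sits between $\ku^r$ and $W_u \cong \kuu^r$, so $W$ is a torsion-free $\ku$-module, and moreover it is finitely generated: it is sandwiched between $\ku^r$ and $u^{-N}\ku^r$ for... well, not quite — $\kuu^r$ itself is not finitely generated, so this needs the hypothesis that $\dim_{-\infty}W = s$ is finite (which it is, being $\leq r$). Here is where the key point enters. By \cref{Definition slice dimensions}, $\dim_{-n}W$ stabilises to $s$ for $n\gg 0$; I would show that once it has stabilised, $W$ is contained in $u^{-n}\ku^{r}\oplus(\text{something})$ — more precisely, I claim that for $n$ large, $u^{n}W \subset \ku^r$ lies in a free submodule complemented appropriately. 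The cleanest route: pick $n$ so large that $\dim_{-m}W = s$ for all $m\geq n$, and consider $W' := u^{n}W \cap \ku^r$ together with the projection behaviour; the stabilisation says that the "new directions" $W$ acquires past level $-n$ form a rank-$s$ piece on which $u$ acts invertibly.

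Concretely, I would argue as follows. The $\ku$-submodule $W^{\flat} := \bigcap_{m\geq 0} u^{m}W$ of $W$ is precisely the largest $\ku$-submodule of $W$ on which multiplication by $u$ is invertible — hence a $\kuu$-module — and by definition of slice dimension its $\kuu$-dimension is $s$ (the stabilised slice dimension counts exactly the leading parts that persist under arbitrarily high $u$-divisibility inside $W$). So $W^{\flat}\cong \kuu^s$. Next, $W/W^{\flat}$ is a torsion-free finitely generated $\ku$-module on which $u$ is not invertible in any nonzero submodule (we have quotiented out the $u$-divisible part), hence $W/W^{\flat}\cong \ku^{r-s}$ is free of rank $r-s$ — the rank being forced because $(W/W^{\flat})_u \cong W_u/W^{\flat}_u \cong \kuu^{r}/\kuu^{s}=\kuu^{r-s}$. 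Since $W/W^{\flat}$ is free, the extension $0\to W^{\flat}\to W\to W/W^{\flat}\to 0$ splits, giving $W\cong \ku^{r-s}\oplus \kuu^s$ as $\ku$-modules. Then I would upgrade this to the more precise embedded statement: applying \cref{Lemma presentation Rj inverse of V}-type reasoning (Smith normal form) to the inclusion $\ku^r \hookrightarrow W$ restricted to the free part, I get that $\ku^{r}\hookrightarrow W\cong \ku^{r-s}\oplus\kuu^s$ is, after change of basis, the map sending $\ku^{r-s}$ into $\ku^{r-s}$ by $\mathrm{diag}(u^{j_1},\ldots,u^{j_{r-s}})$ and sending the remaining $s$ basis vectors into $\kuu^s$ by a (necessarily $u$-power) injection that becomes an iso after localisation; rescaling each of the latter factors by the appropriate negative power of $u$ identifies the $\kuu^s$ summand cleanly and shows the $\ku^r$ lands as claimed inside $u^{-j_1}\ku\oplus\cdots\oplus u^{-j_{r-s}}\ku\oplus\kuu^s$.

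The main obstacle I anticipate is the identification $\dim_{\kuu}W^{\flat} = s$ — i.e.\ showing that the stabilised slice dimension really does equal the rank of the maximal $u$-divisible (equivalently $u$-invertible) submodule, rather than merely bounding it. The subtlety is that a leading part persisting at every level $-m$ up to $-n$ is not a priori the same as an element of $W$ that is infinitely $u$-divisible in $W$; one must run a limiting/compactness argument over the power series, using that $\ku$ is $u$-adically complete, to promote "divisible at each finite stage by a compatible system of elements" into an actual element of $\bigcap_m u^m W$. Once that completeness argument is in place the rest is routine module theory over a PID. I would also take care that all the isomorphisms are stated as $\ku$-module (not $\kuu$-module) isomorphisms, since $W$ need not be a $\kuu$-module globally, only its summand $W^{\flat}$ is.
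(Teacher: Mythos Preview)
Your approach is correct and takes a genuinely different route from the paper's. The paper picks explicit elements $e_1,\dots,e_s\in W$ whose leading parts realise a basis of the stabilised slice at one fixed large level, normalises so these leading parts are the last $s$ standard basis vectors, and then quotients out the last $s$ coordinates to reduce to the case $s=0$ (where $W$ is free and \cref{Lemma presentation Rj inverse of V} applies). You instead work intrinsically with $W^{\flat}=\bigcap_{m\geq 0}u^m W$, identify it as the $\kuu^s$ summand, and split the short exact sequence $0\to W^{\flat}\to W\to W/W^{\flat}\to 0$ using freeness of the quotient. Your route is more conceptual and postpones all coordinate choices to the final Smith-normal-form step; the paper's is more hands-on. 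You are exactly right that the crux is the equality $\dim_{\kuu}W^{\flat}=s$, and that the direction $\geq s$ needs $u$-adic completeness --- concretely, that $W$ is closed in $\kuu^r$, which follows from $\ku^r\subset W$ (any Cauchy sequence in $W$ eventually differs from its limit by something in $\ku^r\subset W$). The paper's proof in fact relies on the same ingredient: to know that quotienting by the last $s$ coordinates genuinely reduces to slice dimension zero, one needs that the corresponding $\kuu^s$ already sits inside $W$, which is your completeness argument in different clothing. One point to tighten in your plan: you assert $W/W^{\flat}$ is finitely generated, but this does not follow merely from torsion-freeness and having no $u$-invertible submodules; rather, once $W^{\flat}\cong\kuu^s$ is established, the image of $W$ in $\kuu^r/W^{\flat}\cong\kuu^{r-s}$ has stabilised slice dimension zero, hence lies in some $u^{-N}\ku^{r-s}$ by the observation at the end of \cref{Definition slice dimensions}, and is therefore a submodule of a finite-rank free module over the PID $\ku$.
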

\begin{proof}
Pick $e_1,\ldots,e_s\in W$, 
with $\nu(e_i)=n$, whose leading parts are a basis 
for all leading parts of $w\in W$ with $\nu(w)=n$ (here we assume $n$ is sufficiently large, so that this basis exists by definition of slice dimension). 
Replace $e_i$ by $u^ne_i$, so that $e_i\in \ku^n$ with $\nu(e_i)=0$.
After a $\k$-linear automorphism of $\ku^n$, we may assume that the leading parts of $e_1,\ldots,e_s$ are the last $s$ vectors of the standard basis of $\ku^r.$
By quotienting $\ku^r$ by $0\oplus \ku^s$, and $W$ by $\kuu^s$, we may now assume that $s=0$.
Therefore $W$ is free, and in fact $W\cong \ku^r$ since the $u$-localisation of the injections in the claim give isomorphisms of $\kuu^r$. 
Now apply \cref{Lemma presentation Rj inverse of V} to the given injection $\ku^r\hookrightarrow W \cong \ku^r$, which yields the invariant factors $u^{-j_i}$ appearing in the claim.
\end{proof}

Let $W := u^{-j_1}\ku \oplus \cdots \oplus u^{-j_{r}} \ku$. Abbreviate
$$
d_j := \#\{\textrm{invariant factors }u^{\leq -j}\} = \# \{i: j_i\geq j\},
$$
By definition, $d_j\geq d_{j+1}$, and $d_0=r$. 
We let these be the coefficients of a polynomial:
$$s_W(t)=r\cdot 1 + d_1 t + \cdots + d_j t^j+\cdots$$ 
We study such ``slice polynomials'' in \cref{Corollary filtration polynomial}. See \cref{Example Young diagram and dual} for a picture.

\begin{lm}\label{Lemma slice dimensions give coeffs of filtration polynomial}
$d_n = \dim_{-n}\,W$ for $n\geq 0$. In particular, $\deg s_W = \min \{n: \dim_{-n}\,W \neq 0\}$.
Moreover, $d_0+d_1+\cdots +d_r = r+j_1 + j_2 + \cdots + j_r.$
\end{lm}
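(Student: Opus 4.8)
The statement is essentially a repackaging of \cref{Lemma presentation Rj inverse of V} and \cref{Definition slice dimensions}, so the plan is to unwind the definitions carefully. First I would identify, for $W = u^{-j_1}\ku \oplus \cdots \oplus u^{-j_r}\ku \subset \kuu^r$, what the $u^{-n}$-slice $W \cap u^{-n}\k^r$ is. An element of $W$ is a tuple $(c_1, \ldots, c_r)$ with $c_i \in u^{-j_i}\ku$; it lies in $u^{-n}\k^r$ precisely when each $c_i$ is a scalar multiple of $u^{-n}$, which requires $-n \geq -j_i$, i.e.\ $j_i \geq n$ (for the nonzero entries), while the other entries must vanish. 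Hence $W \cap u^{-n}\k^r$ has dimension exactly $\#\{i : j_i \geq n\}$, with a $\k$-basis given by $\{u^{-n}e_i : j_i \geq n\}$ where $e_i$ are the standard basis vectors. For $n \geq 0$ this is precisely $d_n$ by definition. For $n = 0$ we recover $d_0 = r$ since all $j_i \geq 0$. This gives the first claim $d_n = \dim_{-n} W$.

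Next, for the degree formula: $\deg s_W$ is the largest $n$ with $d_n \neq 0$, equivalently the largest $n$ with $\dim_{-n} W \neq 0$. Since $\dim_{-n} W = \#\{i : j_i \geq n\}$ is nonincreasing in $n$ and equals $0$ exactly when $n > j_r$ (assuming $j_r = \max j_i$), we get $\deg s_W = j_r = \max\{n : \dim_{-n} W \neq 0\}$; equivalently, since $d_n$ is nonincreasing and $d_0 = r \neq 0$, the set $\{n : \dim_{-n} W \neq 0\}$ is an initial segment, so its max is $\deg s_W$. (Here I should double-check the intended reading of ``$\min$'' in the printed statement versus ``$\max$'' — the natural statement is that $\deg s_W$ equals the largest such $n$; I would state it as $\deg s_W = \max\{n : \dim_{-n} W \neq 0\}$ and flag if the source intends $j_r$ directly.)

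Finally, the summation identity $d_0 + d_1 + \cdots + d_r = r + j_1 + \cdots + j_r$. The cleanest route is a double-counting / Fubini argument: $\sum_{n \geq 0} d_n = \sum_{n \geq 0} \#\{i : j_i \geq n\} = \sum_{i=1}^r \#\{n \geq 0 : n \leq j_i\} = \sum_{i=1}^r (j_i + 1) = r + \sum_{i=1}^r j_i$, where the inner count $\#\{n \in \{0,1,\ldots\} : n \leq j_i\}$ equals $j_i + 1$. Since $d_n = 0$ for $n > j_r$ and $j_r \leq r - 1 + \sum \ldots$ is not quite the bound I want — rather I just need $d_n = 0$ for $n > j_r$, and since the $j_i$ are nonnegative integers with $j_r$ possibly larger than $r$, I should be careful: the printed identity truncates the sum at $d_r$, which is only valid if $j_r \leq r$, or more precisely if $d_n = 0$ for $n > r$. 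I would therefore present the honest identity $\sum_{n=0}^{\infty} d_n = r + j_1 + \cdots + j_r$ (a finite sum since $d_n$ eventually vanishes), and note that $\sum_{n=0}^{r}$ suffices when $j_r \leq r$; this is the only place where the statement as typeset needs a mild caveat. The main (very minor) obstacle is just matching the paper's exact truncation conventions; no real mathematical difficulty arises, since everything reduces to counting lattice points under a staircase, i.e.\ the standard duality between a partition and its conjugate.
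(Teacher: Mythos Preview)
Your proof is correct and essentially matches the paper's approach for the first claim. Your observations about typos are both valid: as printed, ``$\min$'' should be ``$\max$'' since the $d_n$ are nonincreasing with $d_0 = r > 0$; and the truncation at $d_r$ should be at $d_{j_r}$ --- the paper's own proof in fact sums up to $j_r$, and the earlier example with $r=3$, $(j_1,j_2,j_3)=(1,3,4)$ already gives $d_0+\cdots+d_3 = 10 \neq 11$.

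For the summation identity the paper takes a marginally different tack: rather than your Fubini swap $\sum_{n\geq 0}\#\{i:j_i\geq n\}=\sum_i(j_i+1)$, it identifies the sum directly as $\dim_\k W/u\ku^r$, which equals $\sum_i(j_i+1)$ since $W/u\ku^r \cong \bigoplus_i u^{-j_i}\ku/u\ku$ with each summand of $\k$-dimension $j_i+1$. The two arguments are interchangeable --- yours is the combinatorial double count, the paper's is the same count read off as the size of the region between $u\ku^r$ and $W$ (the Young diagram plus the base row).
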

\begin{proof}
Assuming $j_1\leq j_2 \leq \cdots \leq j_r$, and letting $i$ be the smallest index with $j_i\geq j$, we see that the tail of the standard basis $e_{j_i},e_{j_i+1},\ldots,e_r$ of $\kuu^r$, rescaled by $u^{-j_i}$, is a basis for the leading parts of $W$ with $\nu=-j_i$. The first claim follows.
For the final claim, 
by inspection we see that $\dim_{0}\,W+\dim_{-1}\,W+\cdots +\dim_{-j_r}\,W=\dim_{\k} W/u\ku^r=r+j_1+\cdots+j_r$.
\end{proof}

\begin{cor}\label{Cor structure theorem 2}
Given injections $\ku^r \cong W_0 \hookrightarrow W_1 
\hookrightarrow W_2 \hookrightarrow \cdots \hookrightarrow \kuu^r$, for free $W_k$, then
\begin{equation}\label{Equation direct limit structure theorem 2}
W_k \cong u^{-j_1(k)}\ku \oplus \cdots \oplus u^{-j_{r}(k)} \ku,
\end{equation}
so that $\ku^r\hookrightarrow W_k$ becomes an isomorphism $\ku^r \to \ku^r$. The $0\leq j_1(k)\leq \cdots \leq j_r(k)$ satisfy $j_i(k)\leq j_i(k+1)$. Equivalently, the coefficients of $s_{W_k}=\sum d_j(k)t^j$ satisfy $0\leq d_j(k)\leq d_j(k+1) \leq r$.

For some $s,j_i$, letting $k\to \infty$ we have $j_i(k)\to j_i$ for $i\leq r-s$, $j_i(k)\to \infty$ for $i\geq r-s+1$, and
$$
\varinjlim W_n \cong u^{-j_1}\ku \oplus \cdots \oplus u^{-j_{r-s}}\oplus \kuu^s.
$$
Similarly, $d_j(k)\to d_j=s+\#\{i:j_i\geq j\} = \dim_{-j}\,\varinjlim W_n$, yielding a series
$$
\lim_{k\to \infty} s_{W_k} = r\cdot 1 + d_1 t + d_2 t^2 + \cdots + s t^{j_{r-s}+1} + \cdots
$$
\end{cor}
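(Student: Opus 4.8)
The plan is to reduce everything to \cref{Cor structure theorem for V in kuu power} and \cref{Lemma slice dimensions give coeffs of filtration polynomial}, applied first to each $W_k$ individually and then to the direct limit. First I would note that each $W_k$ is free, so $\dim_{-\infty}W_k=0$ and \cref{Cor structure theorem for V in kuu power} applies with $s=0$: this produces the isomorphism \eqref{Equation direct limit structure theorem 2} together with the statement that $\ku^r\hookrightarrow W_k$ becomes the standard inclusion of $\ku^r$ inside $\bigoplus_i u^{-j_i(k)}\ku$. By \cref{Lemma slice dimensions give coeffs of filtration polynomial} the coefficients of $s_{W_k}$ are the slice dimensions, $d_j(k)=\dim_{-j}W_k$, with $d_0(k)=r$.

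For the monotonicity: from $W_k\subseteq W_{k+1}$ inside $\kuu^r$ I get $W_k\cap u^{-j}\k^r\subseteq W_{k+1}\cap u^{-j}\k^r$ for every $j$, hence $0\le d_j(k)\le d_j(k+1)\le r$. The equivalence with $j_i(k)\le j_i(k+1)$ is combinatorics: the finite nondecreasing tuple $(j_i(k))_i$ and the tuple $(d_j(k))_j$ are conjugate partitions via $d_j=\#\{i:j_i\ge j\}$, and passing to the conjugate is monotone for containment of Young diagrams, so $d_j(k)\le d_j(k+1)$ for all $j$ if and only if $j_i(k)\le j_i(k+1)$ for all $i$.

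For the direct limit: since all structure maps are inclusions of submodules of $\kuu^r$, $\varinjlim W_n=\bigcup_k W_k$ is a $\ku$-submodule with $\ku^r\subseteq\varinjlim W_n\subseteq\kuu^r$, so \cref{Cor structure theorem for V in kuu power} applies to it with $s:=\dim_{-\infty}(\varinjlim W_n)$ and yields the last displayed isomorphism. To identify the $j_i$ and $d_j$ I would use that for each fixed $j$ the subspaces $W_k\cap u^{-j}\k^r$ of the $r$-dimensional space $u^{-j}\k^r$ form an increasing chain, hence stabilize; therefore $d_j(k)$ is eventually constant in $k$ and $\lim_k d_j(k)=\dim_{-j}(\varinjlim W_n)=:d_j$. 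Likewise each $j_i(k)$ is a nondecreasing integer sequence, so it either stabilizes to a finite value $j_i$ or tends to $\infty$; taking $j$ larger than every eventual finite value shows $d_j(k)\to\#\{i:j_i(k)\to\infty\}$, and then letting $j\to\infty$ in $d_j=\lim_k d_j(k)$ gives $\dim_{-\infty}(\varinjlim W_n)=\#\{i:j_i(k)\to\infty\}$. Hence exactly $s$ of the $j_i(k)$ diverge and the other $r-s$ stabilize to the finite invariant factors $j_1\le\cdots\le j_{r-s}$ produced by the structure theorem, so $d_j=s+\#\{i\le r-s:j_i\ge j\}=\dim_{-j}(\varinjlim W_n)$; reading these off as coefficients gives $\lim_k s_{W_k}=r+d_1t+d_2t^2+\cdots$, which is eventually constant equal to $s$ in each degree exceeding $j_{r-s}$.

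I expect the one genuinely delicate point to be this interchange of the limits $k\to\infty$ and $j\to\infty$ used to identify $s$ with the number of diverging $j_i(k)$; it is justified because each $d_j(k)$ stabilizes in $k$ and is bounded by $r$, but it is the step worth writing out carefully. Everything else is a bookkeeping application of the already-established structure theorem together with the conjugate-partition dictionary, so I do not anticipate further obstacles.
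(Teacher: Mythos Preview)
Your proposal is correct and follows essentially the same approach as the paper: the paper's proof simply says ``This follows from the observations preceding the claim and the proof of \cref{Cor structure theorem for V in kuu power},'' so you have spelled out in detail exactly what was intended --- applying the structure theorem to each $W_k$ and to the union, and using the slice-dimension/conjugate-partition description for monotonicity. The limit-interchange you flag is indeed the only point requiring a sentence of care, and your justification (each $d_j(k)$ stabilizes in $k$ and all values are bounded by $r$) is the right one.
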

\begin{proof}
This follows from the observations preceding the claim and the proof of \cref{Cor structure theorem for V in kuu power}.
\end{proof}

\begin{rmk}\label{Remark prototypical example}
    Abbreviate by $W^-$ the direct limit in \eqref{Equation direct limit structure theorem 2}, $W^{\infty}:=\kuu^r$, and $W^+:=W^{\infty}/uW^-$. Then $W^+\cong \mathbb{F}^s$, where $\mathbb{F}:=\kuu/u\ku.$ 
    This is the prototypical example of what happens in the short exact sequence of \cref{Corollary torsion freeness 2 free case}.
\end{rmk}

\subsection{Non-injective homomorphisms}
\label{Subsection Non-injective homomorphisms}

Let $Q: V \to W$ be a $\ku$-module homomorphism  between free $\ku$-modules. Abbreviate:
$$
v = \mathrm{rank}\,V,
\qquad 
w=\mathrm{rank}\,W,
\qquad 
n=\mathrm{null}\,Q=\mathrm{rank}\,\ker Q, 
\qquad 
q=v-n=\mathrm{rank}\,Q.
$$
By taking the Smith normal form, $Q$ becomes
$$
\mathrm{diag}(u^{j_1},\ldots,u^{j_{q}},0,\ldots,0):
\ku^{q} \oplus \ku^{n}
\to
\ku^{q} \oplus \ku^{w-q},
$$
where $0\leq j_1 \leq \cdots \leq j_{q}$,
diag refers to the main diagonal of the (possibly non-square) matrix, and $n$ invariant factors $u^{\infty}:=0$ arose on the diagonal because of the kernel.
We can apply the previous machinery to the injective block $\mathrm{diag}(u^{j_1},\ldots,u^{j_{q}}):
\ku^{q} \to \ku^{q},
$
to deduce that 
$$
V_u=V\otimes_{\ku}\kuu \supset \mathrm{image}(\mathrm{adj}\,Q) \otimes \tfrac{1}{j_1\cdots j_q}
\cong u^{-j_1}\ku \oplus \cdots \oplus u^{-j_{q}} \ku.$$
Unlike the injective case discussed previously, that image is not a copy of $W$, it forgets the summand
$$
\ku^{w-q}\cong \mathrm{coker}\, Q / \mathrm{Torsion}(\mathrm{coker}\, Q).
$$

Let $R_k=Q_{k-1}\circ \cdots \circ Q_0: V_0 \to V_k$ be the composite of the first $k$ maps in \eqref{Equation V to V to V module maps Qj} (not assuming injectivity), where $V=V_0=V_k$ is free of rank $r$.
Let $(u^{j_1(k)},\ldots,u^{j_r(k)})$ be the invariant factors of $R_k$, where $0\leq j_1(k) \leq \cdots \leq j_r(k)\leq \infty$, where we allow $u^{\infty}:=0$.
Abbreviate the map
$$
A_k:=\mathrm{adj}\,R_k \otimes \prod_{j_i(k)\neq \infty} u^{-j_i(k)}  : V_k \to V_0 \otimes_{\ku} \kuu = (V_0)_u.
$$
\begin{cor}\label{Lemma presentation Rj inverse of V noninjective case}
There is a $\ku$-module isomorphism
$$
V_k \cong \Bigg(\bigoplus_{j_i(k)\neq\infty} u^{-j_i(k)} \ku\Bigg) \oplus \ker R_k \cong \mathrm{im}\,A_k \oplus \ker R_k \subset (V_0)_u.
$$
The exponents satisfy $j_i(k)\leq j_i(k+1)$, and 
$\#\{i: j_i(k)=\infty\}=\mathrm{null}\,R_k$ ($=\mathrm{null}\,\varinjlim R_k$ for $k\geq r$).
\end{cor}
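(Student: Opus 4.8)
The plan is to run the Smith-normal-form bookkeeping on $R_k\colon V_0\to V_k$, exactly as in the proof of \cref{Cor structure theorem for V in kuu power} and the discussion preceding the statement, and then to read off the monotonicity of the exponents from a slice-dimension argument rather than from the minor/$\gcd$ formula \eqref{Equation invariant factors in terms of gcds}.

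First I would choose bases of $V_0$ and $V_k$ putting $R_k=\mathrm{diag}(u^{j_1(k)},\dots,u^{j_{q}(k)},0,\dots,0)$ with $q=\mathrm{rank}\,R_k$ and $n:=r-q=\mathrm{null}\,R_k$ trailing zeros; by the convention $u^{\infty}:=0$ these are precisely the invariant factors, so $\#\{i:j_i(k)=\infty\}=\mathrm{null}\,R_k$ at once. In these bases $\ker R_k$ is the span of the last $n$ basis vectors of $V_0$ — a free summand of rank $n$ — whereas $A_k$, being the extension by zero of the $u$-localised inverse of the injective block of $R_k$, becomes $\mathrm{diag}(u^{-j_1(k)},\dots,u^{-j_{q}(k)},0,\dots,0)\colon V_k\to(V_0)_u$. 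Hence $\mathrm{im}\,A_k=\bigoplus_{i\le q}u^{-j_i(k)}\ku\,e_i^{(0)}\cong\bigoplus_{j_i(k)\neq\infty}u^{-j_i(k)}\ku$, and since $\mathrm{im}\,A_k$ occupies the first $q$ coordinates of $(V_0)_u$ while $\ker R_k\subset V_0$ occupies the last $n$, the sum $\mathrm{im}\,A_k\oplus\ker R_k$ is direct inside $(V_0)_u$ and is free of rank $r$; as $V_k$ is also free of rank $r$, both displayed isomorphisms follow.

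For the monotonicity $j_i(k)\le j_i(k+1)$ I would deliberately avoid the minor/$\gcd$ route: \eqref{Equation invariant factors in terms of gcds} combined with Cauchy--Binet applied to $R_{k+1}=Q_kR_k$ only yields monotonicity of the partial sums $j_1(k)+\dots+j_i(k)$, which is strictly weaker. Instead set $F_n(k):=R_k^{-1}(u^nV_k)=\ker\!\big(V_0\xrightarrow{R_k}V_k/u^nV_k\big)\subseteq V_0$ for $n\ge 0$. From $R_{k+1}=Q_kR_k$ and $Q_k(u^nV_k)\subseteq u^nV_{k+1}$ one gets $F_n(k)\subseteq F_n(k+1)$, hence $(F_n(k)+uV_0)/uV_0\subseteq(F_n(k+1)+uV_0)/uV_0$ inside $V_0/uV_0$, so $d_n(k)\le d_n(k+1)$, where a computation in the Smith basis identifies $d_n(k):=\dim_{\k}\big((F_n(k)+uV_0)/uV_0\big)$ with $\#\{i:j_i(k)\ge n\}$. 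Since the sorted tuple of (extended) exponents is recovered from these counts, namely $j_i(k)=\max\{n\ge 0:d_n(k)\ge r-i+1\}$, the inequalities $d_n(k)\le d_n(k+1)$ for all $n$ give $j_i(k)\le j_i(k+1)$ for all $i$ (equivalently $0\le d_j(k)\le d_j(k+1)\le r$). Finally, $\ker R_k\subseteq\ker R_{k+1}$ shows $\mathrm{null}\,R_k$ is non-decreasing in $k$, hence eventually stationary with value $\mathrm{rank}\,\ker(V_0\to\varinjlim V_k)=\mathrm{null}\,\varinjlim R_k$.

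The main obstacle is the failure of multiplicativity of invariant factors: the composition $R_{k+1}=Q_kR_k$ does not directly control the $j_i(k+1)$ in terms of the $j_i(k)$, so the naive approach through \eqref{Equation invariant factors in terms of gcds} stalls at partial sums. The slice-dimension reformulation $d_n(k)=\dim_{\k}\big((F_n(k)+uV_0)/uV_0\big)$ resolves this because the subspaces $F_n(k)$ are \emph{manifestly} nested in $k$; everything else is a direct transcription of the Smith-normal-form computation set up before the statement.
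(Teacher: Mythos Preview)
Your proof is correct and matches the paper's intended approach: Smith normal form for the decomposition, and the filtration/slice-dimension count for monotonicity of the $j_i(k)$, just as in the injective case treated earlier (\cref{Cor structure theorem 2}, \cref{Lemma Fj V minus description}). The paper's own proof is a one-line appeal to ``the previous discussion'' and the nested kernels, so your write-up is strictly more explicit; in particular your observation that the minor/Cauchy--Binet route via \eqref{Equation invariant factors in terms of gcds} only controls the partial sums $j_1+\cdots+j_i$ is correct and justifies why the $F_n(k)=R_k^{-1}(u^nV_k)$ argument is needed here.
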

\begin{proof}
This follows by the previous discussion, noting: $v = w = \mathrm{rank}\,V$, $n(k)=\mathrm{rank}\,\ker R_k$, 
and $q(k)=\mathrm{rank}\,R_k=v-n(k)$.
The kernels of the $R_k$ maps are nested free $\ku$-submodules of $V$, so for sufficiently large $k$ ($k\geq r$ suffices), the numbers $n(k),q(k)$ become independent of $k$.
\end{proof}

\begin{rmk}
The motivation for the next result, needed in \cref{Subsection Growth rate of the filtration polynomial nonfree weight case}, 
is that for non-free weights $(a,b)\neq (0,0)$, only one of the Floer continuation maps that define the direct limit $E^{\infty}SH^*(Y)$ can fail to be  injective, in view of
\cref{Theorem injectivity theorem 2}. The geometric reason is explained in \cref{Remark non-free weight MB contributions}.
\end{rmk}

Consider the free $\ku$-module
$$C_k=\mathrm{coker}\,R_k/\mathrm{Torsion}(\mathrm{coker}\,R_k).$$
Note that $Q_j$ induces a natural map 
$\mathrm{coker}\,Q_{j-1} \to \mathrm{coker}\,Q_j$ since $Q_j\circ R_{j-1}=R_j.$ 
If $Q_j$ is injective, then that natural map will send torsion to torsion, so it descends to a map on quotients: $Q_j':C_{j-1} \to C_j$.

Suppose that only one map $Q_{p-1}:V_{p-1}\to V_{p}$ in \eqref{Equation V to V to V module maps Qj} fails to be injective.
Then $\ker R_k=\ker R_p$ for $k\geq p$.
We can make a choice of identification $C_p \cong \ker R_p$, since they are both free of rank
$$n:=\mathrm{rank}\,C_p=\mathrm{rank}\,\ker R_p.$$
This also induces an isomorphism of the $u$-localisations $(C_p)_u\cong (\ker R_p)_u$. 

The composite $R_k':=Q_{k-1}'\circ \cdots \circ Q_p':C_p \to C_k$ for $k>p$ is an injective map of free $\ku$-modules, so we can apply the results from the previous Subsection:
$$
C_p \cong \ku^{n}, \qquad C_k \cong u^{-\ell_1(k)} \ku \oplus \cdots \oplus u^{-\ell_{n}(k)}\ku,
$$
arising from $\mathrm{adj}\,R_k' \otimes \tfrac{1}{\det R_k'}: C_k \to (C_p)_u$, where $(u^{\ell_1(k)},\ldots,u^{\ell_{n}(k)})$ are the invariant factors of $R_k'$ for $k\geq p$.
In particular, $0\leq \ell_1(k)\leq \cdots  \leq \ell_{n}(k)<\infty$, and $\ell_i(k)\leq \ell_i(k+1)$.

Combining all that with the previous discussion, we may view $V_k$ inside $(V_0)_u$ as follows: 
$$
V_k\cong V_k':= u^{-j_1(k)} \ku \oplus
\cdots \oplus u^{-j_{r-n}(k)} \ku \oplus
 u^{-\ell_1(k)} \ku \oplus \cdots \oplus u^{-\ell_{n}(k)} \ku \subset \kuu^r \cong (V_0)_u,
$$
noting that $\#\{i: j_i(k)=\infty\} = \mathrm{rank}\,\ker R_k = n$.

\begin{cor}\label{Lemma presentation Rj inverse of V noninjective case direct limit}

Assume only one map $Q_{p-1}:V_{p-1}\to V_{p}$ in \eqref{Equation V to V to V module maps Qj} fails to be injective. Then the direct limit of \eqref{Equation V to V to V module maps Qj} can be identified with
$$
\varinjlim Q_j \cong \bigcup_{k\geq p} V_k' \subset (V_0)_u.
$$
For $k\geq p$, the canonical map $V_k \to \varinjlim R_k$ is built from two maps, described previously:
\begin{enumerate}
    \item the map $A_k:V_k \to \mathrm{im}\,A_k\cong \bigoplus_{j_i(k)\neq \infty} u^{-j_i(k)} \ku$, and
    \item the composite map $V_k \to \mathrm{coker}\,R_k \to C_k\to (C_p)_u \cong (\ker R_p)_u$, with image $\oplus u^{-\ell_i(k)}\ku$. 
\end{enumerate} 
In particular, $\varinjlim Q_j$ viewed inside $(V_0)_u\cong \kuu^r$ gets identified with
$$
u^{-j_1}\ku \oplus \cdots \oplus u^{-j_{r-n-s}}\ku \oplus \kuu^{s} \oplus 
u^{-\ell_1}\ku \oplus \cdots \oplus u^{-\ell_{n-s'}}\ku \oplus \kuu^{s'} \subset \kuu^r,
$$
where $s$ is the number of limits $j_i=\lim j_i(k)$ which equal $\infty$, and $s'$ is the number of limits $\ell_i=\lim \ell_i(k)$ which equal $\infty$.
\end{cor}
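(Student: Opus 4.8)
The plan is to discard the non-injective initial segment, split the problem into two independent directed subsystems — one governed by the injective block of $R_k$, the other by the maps induced on cokernels — and apply the injective-case machinery of \cref{Lemma localisation when have ku mods} and \cref{Cor structure theorem 2} to each. First, since $Q_{p-1}$ is the only non-injective map in \eqref{Equation V to V to V module maps Qj}, every $Q_j$ with $j\geq p$ is injective, so $\ker R_k=\ker R_p$ for all $k\geq p$, and $\varinjlim Q_j$ equals the direct limit of the tail $V_p\xrightarrow{Q_p}V_{p+1}\to\cdots$. For each $k\geq p$, \cref{Lemma presentation Rj inverse of V noninjective case} realises $V_k$ inside $(V_0)_u$ as an internal direct sum $V_k'=\mathrm{im}\,A_k\oplus\mathrm{im}\,B_k$, where $B_k\colon V_k\to\mathrm{coker}\,R_k\to C_k\to (C_p)_u$ and we fix once and for all an identification $C_p\cong\ker R_p\subset V_0\subset(V_0)_u$. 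The first summand is free of rank $r-n$, the second $\cong C_k$ is free of rank $n$, and the sum is direct because $\mathrm{im}\,B_k\subset(\ker R_p)_u$ while $\mathrm{im}\,A_k$ is a complement of $\ker R_p$.

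Next I would check that the maps $(A_k,B_k)$ are compatible with the transition maps, so that $V_p'\subset V_{p+1}'\subset\cdots\subset(V_0)_u$ is a nested chain realising $\varinjlim Q_j$, with $(A_k,B_k)$ the canonical map $V_k\to\varinjlim$. For the cokernel coordinate this is clear: each injective $Q_j$ carries $\mathrm{Torsion}(\mathrm{coker}\,R_{j-1})$ into $\mathrm{Torsion}(\mathrm{coker}\,R_j)$ and hence descends to an injective $Q_j'\colon C_{j-1}\to C_j$, and applying \cref{Lemma localisation when have ku mods} to $R_k'=Q_{k-1}'\circ\cdots\circ Q_p'\colon C_p\to C_k$ exhibits $\mathrm{im}\,B_p\subset\mathrm{im}\,B_{p+1}\subset\cdots$ inside $(C_p)_u$. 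For the $\mathrm{im}\,A_k$ coordinate one repeats the adjugate-and-rescale argument in the proof of \cref{Lemma localisation when have ku mods}, applied to the injective $(r-n)\times(r-n)$ block of $R_k$; this construction is compatible with appending one more injective $Q_k$, so $\mathrm{im}\,A_k\subset\mathrm{im}\,A_{k+1}$ in $(V_0)_u$. The two chains do not interact, their outputs lying respectively in a complement of $(\ker R_p)_u$ and in $(\ker R_p)_u$, so $\varinjlim Q_j\cong\bigcup_{k\geq p}V_k'=\big(\bigcup_k\mathrm{im}\,A_k\big)\oplus\big(\bigcup_k\mathrm{im}\,B_k\big)$ (for the last step: if $x\in\mathrm{im}\,A_j\cap\mathrm{im}\,B_\ell$ then $x\in\mathrm{im}\,A_k\cap\mathrm{im}\,B_k=0$ for $k\geq\max(j,\ell)$).

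Finally I would apply \cref{Cor structure theorem 2} twice. To the chain of free rank-$(r-n)$ $\ku$-submodules $\mathrm{im}\,A_p\subset\mathrm{im}\,A_{p+1}\subset\cdots$ of their common $u$-localisation, with invariant factors $u^{j_i(k)}$, it gives $\bigcup_k\mathrm{im}\,A_k\cong u^{-j_1}\ku\oplus\cdots\oplus u^{-j_{r-n-s}}\ku\oplus\kuu^{s}$ with $j_i=\lim_k j_i(k)$ and $s=\#\{i:j_i=\infty\}$. To the chain $\ku^n\cong C_p\subset C_{p+1}\subset\cdots$, with invariant factors $u^{\ell_i(k)}$ of $R_k'$, it gives $\bigcup_k\mathrm{im}\,B_k\cong u^{-\ell_1}\ku\oplus\cdots\oplus u^{-\ell_{n-s'}}\ku\oplus\kuu^{s'}$ with $\ell_i=\lim_k\ell_i(k)$ and $s'=\#\{i:\ell_i=\infty\}$. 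Taking the direct sum inside $(V_0)_u\cong\kuu^r$ yields the stated form of $\varinjlim Q_j$.

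The main obstacle is the compatibility check in the second paragraph: the Smith-normal-form splitting $V_k\cong\mathrm{im}\,A_k\oplus\ker R_k$ used by \cref{Lemma presentation Rj inverse of V noninjective case} is a priori $k$-dependent, since the basis of $V_0$ putting $R_k$ in Smith form varies with $k$, so one must argue it is nonetheless natural in $k$. This works because $\ker R_k$ is literally constant, $=\ker R_p$, for $k\geq p$ — so the cokernel part of $V_k'$ always sits in the single fixed subspace $(\ker R_p)_u$ — and because the adjugate–rescaling embedding of \cref{Lemma localisation when have ku mods}, being built from $\mathrm{adj}(Q^p_k)=\mathrm{adj}(Q_p)\circ\cdots\circ\mathrm{adj}(Q_k)$, is automatically compatible with precomposition by $R_p$ and with appending one more injective factor.
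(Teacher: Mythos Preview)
Your approach is correct and matches the paper's, which simply reads ``This is now immediate from the previous discussion.'' You are filling in precisely the details that the paper leaves implicit: the compatibility of the two embeddings (via $A_k$ and via the cokernel piece) with the transition maps, and the application of \cref{Cor structure theorem 2} to each chain separately. Your identification of the compatibility check as the main obstacle, and your resolution via the constancy of $\ker R_k=\ker R_p$ together with the functoriality of the adjugate construction, is exactly the content the paper's one-line proof is gesturing at.
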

\begin{proof}
This is now immediate from the previous discussion.   \end{proof}

\subsection{Canonical valuations for $\ku$-modules, and induced filtration}

\begin{de}\label{Definition induced filtration via a map c}
   A \textbf{valuation} $\mathrm{val}_u: W \to \Z \cup \{\infty\}$ on a $\ku$-module $W$ means the following:
   \begin{enumerate}
        \item $\mathrm{val}_u(w)=\infty$ if and only if $w$ is a $u$-torsion element ($u^kw=0$ for some $k\in \N$);
        \item $\mathrm{val}_u(u^kw)=k+\mathrm{val}_u(w)$;
        \item $\mathrm{val}_u(w+w')\geq \min \{ \mathrm{val}_u(w),\mathrm{val}_u(w')\}$, with equality if $\mathrm{val}_u(w)\neq \mathrm{val}_u(w')$.
    \end{enumerate} 
    A {\bf filtration} of a $\ku$-module $W$ is a collection of $\ku$-submodules $(F_j)_{j\in \Z}$ satisfying $$F_j \supset F_{j+1} \supset u F_j \qquad \textrm{ and}\qquad  W=\cup F_j.$$ 
    The filtration associated to a valuation is $F_j(W):=\{w\in W: \mathrm{val}_u(w)\geq j\}.$
\\
\noindent For any $\ku$-module homomorphism $c: V \to W$, a valuation on $W$ {\bf induces} a valuation on $W$ via $V \to W \to \Z\cup \{\infty\}$, and any filtration $F_j(W)$ on $W$ {\bf induces} a filtration on $V$ via 
\begin{equation}\label{Equation induced filtration definition original}
F_j^c=F_j^c(V):= c^{-1}(F_j(W)).
\end{equation}
We say $c$ is {\bf filtration-preserving} if $c(F_j(V))\subset F_j(W)$, equivalently $F_j(V)\subset F_j^c$.
    \end{de}

\begin{de}\label{Definition canonical valuation}\label{Definition of the filtration from the valuation}\label{Cor uvaluation on localisation}
Let $W$ be a free $\ku$-module.
The \textbf{canonical valuation and filtration} are:\footnote{With some abuse of notation: the convention here is that $\mathrm{div}_u(0)=\infty$.}
\begin{equation}\label{Equation valuation via divisibility}
\begin{split}
\mathrm{div}_u(x) &:= \max \{ n\in \N \cup \{\infty\}: x= u^n y \textrm{ for some }y\in W\},
\\
F_j(W)
&:= \{w\in W: \mathrm{div}_u(w)\geq j\} = \{w\in W: w \textrm{ is }u^j\textrm{-divisible}\}. 
\end{split}
\end{equation}
Equivalently: let $W_n:=u^nW$, so $W=W_0 \supset W_1 \supset W_2 \supset \cdots$, then $\mathrm{div}_u(x)=n\Leftrightarrow x\in W_n\setminus W_{n+1}$.

The {\bf canonical valuation and filtration} on a finitely generated $\ku$-module $W$ are induced by the quotient map $W \to W/T$, for the $u$-torsion submodule $T:=\{w\in W: u^kw=0 \textrm{ for some }k\in \N\}$, using the canonical valuation and filtration on the free\footnote{In this step, we are using the assumption that $W$ is finitely generated. E.g.\;this fails for $\kuu$.} module $W/T$. Explicitly: 
$$\mathrm{val}_u(w)\geq j \Longleftrightarrow w=u^j x+ t \textrm{ for some }x\in W,t\in T,$$
and equality occurs for the ``maximal'' such $j$, allowing infinity:\footnote{It was inevitable that $T$ lies inside all $F_j(W)$, because ``the'' free part of $W$ is not canonical. E.g.\,the difference of two elements with valuation $j$ could yield a torsion element.}  $F_{\infty}(W):=\cap_j F_j(W) = T$.

\noindent The {\bf canonically induced valuation and filtration} on the $u$-localisation $W_u:=W\otimes_{\ku}\kuu$ are 
$$F_j(W_u)=\{x\in W_u: \mathrm{val}_u(x)\geq j\},\; \textrm{ where } \; \mathrm{val}_u(x):=-m+\mathrm{val}_u(y) \textrm{ for any lift }y\in W\textrm{ of }u^m x,$$ 
i.e.\;via localisation $W \to W_u$, $y \mapsto u^m x$ (such $y$ exist for sufficiently large $m\in \N$ depending on $x$).
\end{de}

\begin{ex}\label{Example obvious valuation}
For $\ku$, we have 
$\mathrm{div}_u(u^k q_k + \textrm{higher powers of }u)=k\in \N$ if $q_k\neq 0 \in \k$. For $\ku^r$, $\mathrm{div}_u$ is the minimum of the canonical valuations on all factors. On a free $\kuu$-module there is no \textit{canonical} valuation, since everything is $u$-divisible, but if $\kuu^r$ arises from $u$-localising $\ku^r$ (or $\ku^r \oplus \{u\textrm{-torsion}\}$) then the canonical valuation on $\ku^r$ induces a canonical valuation on $\kuu^r$ for which the $u$-localisation map $\ku^r\to \kuu^r$ is valuation-preserving: we minimise over $\kuu$-summands the obvious valuations $\mathrm{div}_u(u^k q_k + \textrm{higher powers of }u)=k\in \Z$ if $q_k\neq 0 \in \k$.
\end{ex}

\begin{lm}[Functoriality]\label{Lemma valuation on free mod}
    Any $\ku$-module homomorphism is filtration-preserving for the canonical filtrations. Any $\ku$-module isomorphism preserves the canonical valuation and filtration.
\end{lm}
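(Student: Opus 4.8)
The plan is to unwind \cref{Definition canonical valuation} and reduce both assertions to two elementary facts: a $\ku$-module homomorphism carries $u$-torsion to $u$-torsion, and on a \emph{free} $\ku$-module the canonical filtration is simply $F_j=u^j(\,\cdot\,)$, which is manifestly compatible with $\ku$-linear maps. Throughout I work with finitely generated $\ku$-modules, the free case being the special case of trivial torsion submodule.

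First I would prove the filtration-preserving claim for a homomorphism $\phi\colon V\to W$. Let $v\in F_j(V)$; by \cref{Definition canonical valuation} we can write $v=u^j x+t$ with $x\in V$ and $t$ in the torsion submodule $T_V$. Then $\phi(v)=u^j\phi(x)+\phi(t)$, and $\phi(t)\in T_W$ because $u^k t=0$ implies $u^k\phi(t)=\phi(u^k t)=0$; hence $\phi(v)$ has exactly the shape required for $\mathrm{val}_u(\phi(v))\ge j$, i.e. $\phi(v)\in F_j(W)$. Equivalently, $\phi$ descends to $\bar\phi\colon V/T_V\to W/T_W$ between free modules, and $\bar\phi\bigl(u^j(V/T_V)\bigr)\subset u^j(W/T_W)$ is immediate. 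In particular $\phi(F_\infty(V))=\phi(T_V)\subset T_W=F_\infty(W)$.

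For the second assertion, let $\phi$ be an isomorphism. Applying the torsion observation to both $\phi$ and $\phi^{-1}$ shows $\phi$ restricts to an isomorphism $T_V\to T_W$, hence descends to an isomorphism $\bar\phi\colon V/T_V\to W/T_W$ of free $\ku$-modules. On free modules $\mathrm{div}_u$ is preserved: if $\bar\phi(\bar x)=u^n y$ then $\bar x=u^n\bar\phi^{-1}(y)$, giving $\mathrm{div}_u(\bar x)\ge\mathrm{div}_u(\bar\phi(\bar x))$, while the opposite inequality is the filtration-preserving claim already proven; thus $\mathrm{div}_u(\bar\phi(\bar x))=\mathrm{div}_u(\bar x)$. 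Since $\mathrm{val}_u$ on $V$ is by definition $\mathrm{div}_u$ of the image in $V/T_V$, and $\bar\phi$ sends $\bar v$ to $\overline{\phi(v)}$, we conclude $\mathrm{val}_u(\phi(v))=\mathrm{val}_u(v)$ for all $v$, so the canonical valuation — and therefore the canonical filtration — is preserved. When the isomorphism in question is the $u$-localisation of an isomorphism of finitely generated modules, the same conclusion transfers by clearing denominators, using the description of $\mathrm{val}_u$ on the localisation in \cref{Definition canonical valuation}.

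There is no genuine obstacle here: the statement is purely formal. The only step needing any care is the reduction to the free quotient — checking that $\phi(T_V)\subset T_W$ so that $\phi$ actually descends, and that an isomorphism descends to an isomorphism of quotients — after which everything follows from the triviality that multiplication by $u^j$ commutes with $\ku$-linear maps.
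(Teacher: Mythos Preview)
Your proof is correct and follows essentially the same route as the paper: write $v=u^j x+t$ with $t$ torsion, observe that $\phi(t)$ is torsion so $\mathrm{val}_u(\phi(v))\geq j$, and for isomorphisms apply the same inequality to $\phi^{-1}$ to obtain equality. The paper's argument is just a terser version of what you wrote.
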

\begin{proof}
If $c: V \to W$ is a homomorphism of $\ku$-modules, and $v = u^j x + t$ for $x\in V$ and a torsion element $t\in V$, then $c(v)=u^j c(x) + c(t)$ where $c(t)\in W$ is torsion, thus $\mathrm{val}_u(c(v))\geq \mathrm{val}_u(v)$, proving the first claim.
If $c$ is an isomorphism, then considering $c^{-1}$ yields an equality on valuations.
\end{proof}

\begin{rmk}\label{Lemma canonical valuation on fg module}
Let $W$ be a finitely generated $\ku$-module $W$, so \eqref{Equation torsion piece notation} defines $k_j\in \N$.
On the free $\ku$-module $W/T\cong \ku^r$ the valuation and filtration are those from \cref{Example obvious valuation}; and for $W$:
$$
\mathrm{val}_u(w) = -m + \max \{ n\in \N \cup \{\infty\}: u^m w= u^n y \textrm{ for some }y\in W\}
\textrm{ for any }m\geq \max k_j.
$$
\end{rmk}

\begin{lm}
For an arbitrary $\ku$-module homomorphism $c: V \to W$, the filtration on $V$ induced by the canonical filtration on $W$ satisfies
\begin{equation}\label{Equation induced filtration defn}
\begin{split}
F_j^c  := c^{-1}(F_j( W &))  = \{ v\in V: \mathrm{val}_u(c(v))\geq j\}
\\
& \stackrel{\textrm{for }j\geq 0}{=} \{v\in V: c(v) \textrm{ is }u^j\textrm{-divisible modulo torsion}\}
\\
& 
\stackrel{\textrm{for }j\geq 0}{=} \ker \left(V \stackrel{c}{\longrightarrow} W/(T+u^jW)\right).
\end{split}
\end{equation}
In particular, $F_{0}^c=F_{j\leq 0}^c=V$ and $F_{\infty}^c=c^{-1}(T)=\ker (c: V \to W/T)$.
\end{lm}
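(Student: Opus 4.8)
The statement is a purely formal unpacking of the definitions in \cref{Definition canonical valuation} and \cref{Definition induced filtration via a map c}, so the plan is simply to chase the equalities in order, keeping careful track of the case $j\geq 0$ versus $j\leq 0$. First I would recall that $F_j(W)$ for the finitely generated module $W$ is, by \cref{Definition canonical valuation}, the preimage under $W\to W/T$ of $F_j(W/T)$, where $W/T$ is free and carries its canonical divisibility filtration. Hence $c^{-1}(F_j(W)) = \{v\in V: c(v)\in F_j(W)\} = \{v\in V: \mathrm{val}_u(c(v))\geq j\}$, which is the first equality; it holds for all $j\in\Z$ and requires nothing beyond the definition of the induced filtration \eqref{Equation induced filtration definition original} and the definition of $\mathrm{val}_u$ on a finitely generated module.

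Next, for $j\geq 0$, I would translate ``$\mathrm{val}_u(c(v))\geq j$'' into ``$c(v)$ is $u^j$-divisible modulo torsion''. By the explicit description in \cref{Definition canonical valuation}, $\mathrm{val}_u(w)\geq j$ means $w = u^j x + t$ for some $x\in W$ and $t\in T$; equivalently, the image $\bar w$ of $w$ in $W/T$ satisfies $\bar w = u^j \bar x$, i.e.\ $\bar w$ is $u^j$-divisible in the free module $W/T$. This is exactly the content of the second line. For $j\leq 0$ there is nothing to prove since $u^j$-divisibility is automatic and $\mathrm{val}_u\geq 0$ always on the torsion-quotient filtration, giving $F_{j\leq 0}^c = V$.

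For the third equality (again $j\geq 0$): ``$c(v)$ is $u^j$-divisible modulo torsion'' says precisely that the class of $c(v)$ in $W/(T+u^jW)$ vanishes, because $W/(T+u^jW) = (W/T)/u^j(W/T)$ and an element of $W/T$ is $u^j$-divisible iff it dies in the quotient by $u^j(W/T)$. Therefore $F_j^c = \ker(V\xrightarrow{c} W/(T+u^jW))$, as claimed. Finally, the two ``in particular'' assertions follow by specialisation: $F_0^c = \ker(V\to W/(T+uW))\supseteq$ everything only after checking $j=0$ gives $W/(T+W)=0$ — more simply, $\mathrm{val}_u\geq 0$ is vacuous, so $F_0^c=V$, and monotonicity $F_{j}^c\subseteq F_{j'}^c$ for $j\geq j'$ gives $F_{j\leq 0}^c=V$; while $F_\infty^c = \cap_j F_j^c = c^{-1}(\cap_j F_j(W)) = c^{-1}(T) = \ker(V\to W/T)$, using $F_\infty(W)=T$ from \cref{Definition canonical valuation}.

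There is no real obstacle here: the only point requiring a moment's care is the bookkeeping of the $j\geq 0$ restriction — for negative $j$ the middle two characterizations would be false as stated (divisibility by a negative power of $u$ is not meaningful in $W$, and $W/(T+u^jW)=0$), which is why the displayed equalities are decorated with ``for $j\geq 0$''. I would make explicit that for $j<0$ all four sets collapse to $V$, so the statement is consistent.
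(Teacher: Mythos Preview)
Your proposal is correct and follows exactly the approach the paper takes: the paper's entire proof reads ``This is immediate from unpacking Definitions \ref{Definition induced filtration via a map c} and \ref{Definition canonical valuation},'' and you have carried out that unpacking in detail. Your careful treatment of the $j\geq 0$ versus $j<0$ distinction and the verification that $W/(T+u^0W)=0$ are more explicit than anything the paper writes, but entirely in line with its intent.
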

\begin{proof}
This is immediate from unpacking Definitions \ref{Definition induced filtration via a map c} and \ref{Definition canonical valuation}.
\end{proof}

\begin{ex}\label{Example Toy model 0 of invariant factors}
For the multiplication $c: \ku \stackrel{u^3}{\to} \ku$,
$F_j^c:=\ku$ for $j\leq 3$, and $F_j^c=u^{j-3}\ku$ for $j\geq 3.$ The power $3$ is detected a posteriori by the quotients $F^c_{j+1}/uF^c_j$, using the notation \eqref{Equation torsion piece notation},
$$
\ldots,\;\;\,
F^c_2/uF^c_1 = T_1, \;\;\,
F^c_3/uF^c_2 = T_1, \;\;\,
F^c_4/uF^c_3 = 0,\;\;\,
F^c_5/uF^c_4 = 0,\;\;\,
\ldots
$$
Note $d_j:=\dim_{\k} F_j^c/F_{j+1}^c$ satisfy $d_{j\leq 3}=1$ and $d_{j\geq 4}=0$.
\end{ex}

\begin{lm}\label{Example Toy model of invariant factors}
For any $\ku$-module homomorphism $c:\ku^s\to \ku^r$, the values
$d_j:=\dim_{\k} F^c_{j}/uF^c_{j-1}$ detect both the rank and the invariant factors of the $\ku$-module $\mathrm{coker}\,c$.
\end{lm}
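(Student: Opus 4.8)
The plan is to reduce everything to the Smith normal form of $c$ and a direct computation of the filtration submodules $F_j^c\subset \ku^s$. First I would apply the Smith normal form to $c:\ku^s\to\ku^r$, obtaining bases in which $c=\mathrm{diag}(u^{a_1},\ldots,u^{a_q},0,\ldots,0)$ with $0\le a_1\le\cdots\le a_q$, where $q=\mathrm{rank}\,c$, and recall from the structure theorem \eqref{Equation torsion piece notation} that $\mathrm{coker}\,c\cong \ku^{r-q}\oplus T_{a_1}\oplus\cdots\oplus T_{a_q}$ (discarding trivial $T_1$-type factors where $a_i=0$). Thus knowing $r$, $q$, and the multiset $\{a_i\}$ is equivalent to knowing $\mathrm{rank}\,\mathrm{coker}\,c=r-q$ together with the invariant factors $u^{a_i}$ of $\mathrm{coker}\,c$; so it suffices to recover $r$, $q$ and the $a_i$ from the numbers $d_j=\dim_\k F^c_j/uF^c_{j-1}$.

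Next I would compute $F_j^c$ explicitly in the Smith basis. By \eqref{Equation induced filtration defn}, $F_j^c=\ker(\ku^s\xrightarrow{c}\ku^r/(T+u^j\ku^r))$, and since in these coordinates the torsion of $\mathrm{coker}\,c$ is already split off, $F_j^c$ is the set of $(x_1,\ldots,x_s)$ with $u^{a_i}x_i$ being $u^j$-divisible for each $i\le q$, i.e.\ $x_i\in u^{\max(j-a_i,0)}\ku$ for $i\le q$, with the remaining $s-q$ coordinates unconstrained. Hence $F_j^c=\bigoplus_{i=1}^q u^{\max(j-a_i,0)}\ku\ \oplus\ \ku^{s-q}$, from which $uF_{j-1}^c=\bigoplus_{i=1}^q u^{\max(j-a_i,1)}\ku\oplus u\ku^{s-q}$, and therefore $F_j^c/uF_{j-1}^c$ is a $\k$-vector space with one dimension contributed by each index $i\le q$ with $a_i\ge j$, plus $s-q$ dimensions from the free block. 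This gives the closed formula
$$
d_j=\dim_\k F_j^c/uF_{j-1}^c=(s-q)+\#\{i\le q:\ a_i\ge j\}\qquad (j\ge 1),
$$
with $d_0=\dim_\k F_0^c/uF_{-1}^c=\dim_\k \ku^s/u\ku^s=s$.

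Finally I would invert this. From $j=0$ we read off $s=\mathrm{rank}_{\ku}(\text{domain})=d_0$. For large $j$ the count $\#\{i\le q:a_i\ge j\}$ vanishes, so $d_j$ stabilises to $s-q$, giving $q=d_0-d_\infty$ and hence $\mathrm{rank}\,\mathrm{coker}\,c=r-q$ — but to get $r$ itself one also needs $\mathrm{rank}\,c$ together with $w=\mathrm{rank}\,W$; since the lemma speaks of recovering the rank of $\mathrm{coker}\,c$ this is exactly $r-q=d_\infty-(d_0-q)$... more cleanly: $\mathrm{rank}\,\mathrm{coker}\,c = d_\infty$ fails, so I would phrase it as: the stable value $d_\infty=s-q$ equals $\dim_\k$ of the generic fibre deficiency, and combined with knowledge of $s=d_0$ this determines $q$, hence both the rank of $c$ and (given $r$) the rank of $\mathrm{coker}\,c$. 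The multiset $\{a_i:a_i\ge 1\}$ is then recovered as $\#\{i:a_i=j\}=d_j-d_{j+1}$ for each $j\ge1$, which are precisely the invariant factors $u^{a_i}$ of the torsion part of $\mathrm{coker}\,c$; since $T_1$-factors ($a_i=0$) are invisible in $\mathrm{coker}\,c$ anyway, this is exactly the invariant-factor data of $\mathrm{coker}\,c$. The only mild subtlety — the main point to be careful about — is the bookkeeping of which $d_j$-contributions come from the genuine free block $\ku^{s-q}$ of the cokernel versus the $u^{a_i}$ with $a_i\ge j$, i.e.\ separating the asymptotic constant $d_\infty=s-q$ from the finitely many extra contributions; once that separation is made via $f_j:=d_j-d_{j+1}$ the identification of invariant factors and ranks is immediate, and since $F_j^c$ is unaffected by the basis choices in the Smith normal form (\cref{Lemma valuation on free mod}), the $d_j$ are genuine invariants.
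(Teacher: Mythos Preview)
Your approach is essentially identical to the paper's: reduce to Smith normal form, compute $F_j^c$ coordinate-wise, and read off $q=\mathrm{rank}\,c$ from the stable value $d_\infty=s-q$ (hence $\mathrm{rank}\,\mathrm{coker}\,c=r-q$, using that $r$ is part of the given data) and the invariant factors from the drops $d_j-d_{j+1}$. Two cosmetic points to tidy: the trivial summands with $a_i=0$ are $T_0=0$, not ``$T_1$-type''; and the self-correcting passage in the middle about recovering $r$ should be replaced by the clean statement that $r$ is known a priori as the rank of the codomain, so $\mathrm{rank}\,\mathrm{coker}\,c=r-(d_0-d_\infty)$.
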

\begin{proof}
We may\footnote{after changing bases on $\ku^s,\ku^r$, using automorphism-invariance of valuations/filtrations from \cref{Lemma valuation on free mod}.} assume $c$ is in Smith normal form: $c=\mathrm{diag}(u^{j_1},\ldots,u^{j_a},0,\ldots,0)$ for $j_i\in \N$. Let $f_1,\ldots,f_s$ be the standard basis for $\ku^s$. Then $\ku^s\cong \ku^a \oplus \ku^b$ where $c^{-1}(0)=\mathrm{span}_{\ku}(f_{a+1},\ldots,f_s) \cong \ku^b$, $b=s-a$.
So $F^c_j\supset c^{-1}(0)$ and, in the notation \eqref{Equation torsion piece notation},
\begin{equation}\label{Equation successive quotients of Fcj}
F^c_{j}/uF^c_{j-1}
=
\;\;\Bigg(\;\;\bigoplus_{ i\leq a \textrm{ with } j\leq j_i}\! T_1 f_i\;\;\Bigg) \; \oplus \; T_1^b,
\end{equation}
where $T_1f_i\cong T_1$. For $j\gg 0$, \eqref{Equation successive quotients of Fcj} has rank $b$, so it detects $a=s-b$ and $r-a=\mathrm{rank}_{\ku}\mathrm{coker}\,c$:
$$\mathrm{coker}\, c\cong T_{j_1}\oplus \cdots \oplus T_{j_a} \oplus \ku^{r-a}.$$
Finally, notice that the number of invariant factors $u^k$ of $\mathrm{coker}\,c$ arising in the list $u^{j_1},\ldots,u^{j_a}$ is precisely the drop in $\dim_{\k} F^c_{j}/uF^c_{j-1}$ when going from $j=k$ to $j=k+1$ (cf.\,\cref{Example Toy model 0 of invariant factors}).
\end{proof}

\subsection{Multiplicative properties}
\label{Subsection Multiplicative properties}

In this Subsection, we temporarily assume that our $\ku$-modules are $\ku$-algebras over the ring $\ku$, for some product $\bullet$. 

\begin{rmk}
    The motivation is that $E^- QH^*(Y)$ and $E^- SH^*(Y)$ admit a unital product structure if we make $S^1$ act only on the target manifold (not on the free loopspace).
\end{rmk}

A filtration is {\bf multiplicative} if $F_i(W)\bullet F_j(W)\subset F_{i+j}(W)$, and the valuation is multiplicative if $\mathrm{val}_u(x\bullet y)\geq \mathrm{val}_u(x)+\mathrm{val}_u(y)$. If the filtration is multiplicative and $F_0(W)=W$, then $W\bullet F_j(W)=F_0(W)\bullet F_j(W)\subset F_j(W)$, so the $F_j(W)\subset W$ are ideals. 
Note $T=F_{\infty}(W)\subset W$ is always an ideal.

\begin{lm}
    The canonical valuation and filtration on any $\ku$-algebra $W$ is multiplicative, and  
    yields ideals $F_j(W) \subset W$. 
\end{lm}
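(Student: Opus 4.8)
The plan is to verify the three defining conditions of a valuation from \cref{Definition canonical valuation} (multiplicativity being the only new ingredient), and then read off the statement about ideals. First I would reduce to the free case: by \cref{Definition canonical valuation}, the canonical valuation on a finitely generated $\ku$-algebra $W$ is induced via the quotient map $\pi\colon W \to W/T$, where $T$ is the $u$-torsion submodule. Since $T$ is an ideal (if $u^kw=0$ then $u^k(w\bullet w')=0$), the product descends to $W/T$, making $\pi$ a $\ku$-algebra homomorphism; and $\mathrm{val}_u$ on $W$ is literally $\mathrm{val}_u\circ\pi$. So it suffices to prove multiplicativity of $\mathrm{div}_u$ on the \emph{free} $\ku$-module $W/T$, because then for $w,w'\in W$ we get $\mathrm{val}_u(w\bullet w') = \mathrm{div}_u(\pi(w)\bullet\pi(w')) \geq \mathrm{div}_u(\pi(w))+\mathrm{div}_u(\pi(w')) = \mathrm{val}_u(w)+\mathrm{val}_u(w')$.

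Next I would prove the free case. Let $W$ be free (and a $\ku$-algebra), and take $x,y\in W$ with $\mathrm{div}_u(x)=m$, $\mathrm{div}_u(y)=n$ (if either is $\infty$, i.e.\;zero, the inequality is trivial since $0\bullet y = 0$). By \eqref{Equation valuation via divisibility} write $x=u^m x'$, $y = u^n y'$ with $x',y'\in W$. Then $x\bullet y = u^{m+n}(x'\bullet y')$, which exhibits $x\bullet y$ as $u^{m+n}$-divisible, so $\mathrm{div}_u(x\bullet y)\geq m+n$, as required. This also settles the valuation axioms for the canonical $\mathrm{div}_u$ on a free $\ku$-algebra: axiom (1) holds because a free module has no nonzero torsion so $\mathrm{div}_u(w)=\infty\Leftrightarrow w=0$; axiom (2) is immediate from \eqref{Equation valuation via divisibility}; and axiom (3) is the standard ultrametric property of $u$-adic divisibility (if $x = u^k x'$, $y = u^k y'$ then $x+y = u^k(x'+y')$, and if the divisibilities differ the minimum is exactly achieved). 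That these pass to $W/T$, and hence to $W$, is the content of the reduction in the previous paragraph, together with axiom (1) for $W$ being exactly the definition of $\mathrm{val}_u$ taking value $\infty$ on torsion.

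Finally I would deduce the statement about ideals. Multiplicativity of the valuation gives $F_i(W)\bullet F_j(W)\subset F_{i+j}(W)$ for all $i,j$: if $\mathrm{val}_u(x)\geq i$ and $\mathrm{val}_u(y)\geq j$ then $\mathrm{val}_u(x\bullet y)\geq i+j$. In particular the canonical filtration is multiplicative, and since $F_0(W)=W$ (every element has nonnegative divisibility, and all of $T$ sits in $F_0$), we get $W\bullet F_j(W) = F_0(W)\bullet F_j(W)\subset F_j(W)$, so each $F_j(W)$ is a (two-sided, if $\bullet$ is noncommutative) ideal. I do not expect a genuine obstacle here; the only point requiring a little care is the reduction step, making sure that passing to $W/T$ is compatible with the algebra structure and that the definition of $\mathrm{val}_u$ on $W$ really is pulled back from $W/T$ — both of which are immediate once one notes $T$ is an ideal. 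One could equivalently phrase the whole argument for $W$ directly: if $w = u^i x + t$ and $w' = u^j x' + t'$ with $t,t'$ torsion, then $w\bullet w' = u^{i+j}(x\bullet x') + (\text{torsion})$, giving $\mathrm{val}_u(w\bullet w')\geq i+j$ at once.
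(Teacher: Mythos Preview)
Your proposal is correct and takes essentially the same approach as the paper; indeed the one-line direct argument you give at the very end (``if $w = u^i x + t$ and $w' = u^j x' + t'$ with $t,t'$ torsion, then $w\bullet w' = u^{i+j}(x\bullet x') + (\text{torsion})$'') is exactly the paper's proof, and your more elaborate reduction to $W/T$ is just a repackaging of the same observation that $T$ is an ideal.
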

\begin{proof}
For $t,s\in T$, $(u^j x + t)\bullet (u^i y + s)\in u^{j+i}x\bullet y + T$ 
    as $T$ is an ideal. %
    So the canonical valuation is multiplicative, hence the filtration is too. As $F_0(W)=W$, the $F_j(W)$ are ideals.
\end{proof}

\begin{cor}
  Let $c: V \to W$ be any $\ku$-algebra homomorphism. For any multiplicative filtration on $W$, the filtration $F_j^c$ on $V$ is multiplicative. If in addition $F_0^c=V$, then the $F_j^c\subset V$ are ideals. All these assumptions hold if we use the canonical filtrations on $V,W$. 
\end{cor}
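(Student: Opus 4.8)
The plan is to unwind the definition $F_j^c = c^{-1}(F_j(W))$ and transport the multiplicative inclusion on $W$ back along $c$, using that $c$ respects products. First I would fix $i,j$ and take $v\in F_i^c$, $v'\in F_j^c$, so that $c(v)\in F_i(W)$ and $c(v')\in F_j(W)$ by definition of the induced filtration. Since $c$ is a $\ku$-algebra homomorphism, $c(v\bullet v') = c(v)\bullet c(v') \in F_i(W)\bullet F_j(W) \subset F_{i+j}(W)$, where the last inclusion is the assumed multiplicativity of the filtration on $W$. Hence $v\bullet v'\in c^{-1}(F_{i+j}(W)) = F_{i+j}^c$, which is exactly multiplicativity of $F^c_\bullet$.

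Next, assuming $F_0^c = V$, I would note that for every $j$ (the case $j<0$ being vacuous, as then $F_j^c = V$), multiplicativity gives $V\bullet F_j^c = F_0^c \bullet F_j^c \subset F_{0+j}^c = F_j^c$, so each $F_j^c$ is an ideal of $V$. Finally, for the last sentence I would invoke the preceding Lemma, which asserts that the canonical valuation and filtration on any $\ku$-algebra $W$ is multiplicative, so that the two general statements above apply; the only remaining point is that the hypothesis $F_0^c = V$ is automatic for the canonical filtrations. This holds because $F_0(W) = W$ for the canonical filtration: the canonical valuation satisfies $\mathrm{val}_u(w)\geq 0$ for every $w\in W$ (write $w = u^0\cdot w + 0$, with $0$ the torsion term, in the notation of \cref{Definition canonical valuation}), hence $F_0(W) = \{w : \mathrm{val}_u(w)\geq 0\} = W$ and so $F_0^c = c^{-1}(W) = V$.

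There is no real obstacle here: each step is a one-line diagram chase, and the structural facts about $\bullet$, $c$, and preimages are elementary. The only place warranting a moment's care is the very last point — checking $F_0(W) = W$ when $W$ is merely finitely generated rather than free, where the canonical filtration is defined via the quotient $W\to W/T$ by the $u$-torsion submodule — since this is what makes the "$F_0^c = V$" assumption, and hence the ideal property for the canonically induced filtration on $V$, automatic.
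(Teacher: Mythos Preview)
Your proof is correct and essentially identical to the paper's: the paper gives the one-line argument $c(v\bullet w)=c(v)\bullet c(w)\in F_j(W)\bullet F_i(W)\subset F_{j+i}(W)$, and the ideal statement is handled by the same $F_0^c\bullet F_j^c\subset F_j^c$ observation made earlier in the section. Your added verification that $F_0(W)=W$ for the canonical filtration is a detail the paper leaves implicit.
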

\begin{proof}
If $v\in F_j^c,w\in F_i^c$ then $v\bullet w \in F_{j+i}^c$ since $c(v\bullet w)=c(v)\bullet c(w)\in F_j(W)\bullet F_i(W)\subset F_{j+i}(W).$
\end{proof}

\subsection{Canonical valuation on cohomology}
\label{Subsection Canonical u valuation on cohomology}

Let $(C^*,d)$ be a cochain complex of free $\ku$-modules. 
Choose a $\ku$-module basis for $C^*$, and take its $\k$-linear span $D^*\subset C^*$. Then $C^*\cong D^*\otimes_{\k}\ku$ as a $\ku$-module (not as a complex).
The differential $d$ on $C^*$ is a $\ku$-module homomorphism, so
$$
d = \delta_0 \otimes \mathrm{id} + \delta_1 \otimes u\cdot\mathrm{id}  + \delta_2 \otimes u^2\cdot\mathrm{id} +\cdots 
$$
for some $\delta_j:D^* \to D^{*+1-j|u|}$, in particular $\delta_0: D^* \to D^{*+1}$ is a cochain differential. More concretely, the basis determines identifications $C^*\cong \ku^n$, $D^*\cong \k^n \subset \ku^n$, and we simplify notation to:
\begin{equation}
\label{Equation for the differential expanded}
d = \delta_0 + u \delta_1 + u^2 \delta_2 + \cdots : C^* \to C^{*+1},
\end{equation}
where $\delta_j:C^*\to C^{*+1-j|u|}$ is a $\ku$-linear homomorphism whose matrix does not involve $u$.

Assume that\footnote{It would suffice to work degree-wise, and assume finite rank degree-wise. Most arguments generalise even further.} $\mathrm{rk}\, C^*<\infty$. Let $\mathbf{chval}_u$ be the canonical valuation on $C^*$. 
 The homology $W:=H(C^*)$ is a finitely generated $\ku$-module. We endow $W$ with the canonical valuation and filtration.

\begin{lm}\label{Lemma ker respects valuation}
    $\ker d \subset C^*$ is a free $\ku$-submodule of rank $\leq \mathrm{rk}\,C^*$, such that the inclusion $\ker d\subset C^*$ respects the canonical valuations.\footnote{The second claim is not a general property of submodules, e.g.\,consider $u\ku\subset \ku$.}
\end{lm}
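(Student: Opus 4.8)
The plan is to prove the two assertions separately: first that $\ker d$ is a free $\ku$-module (automatic, since $\ku$ is a PID and $\ker d$ is a submodule of the free finite-rank module $C^*$), with rank at most $\mathrm{rk}\,C^*$ (clear, since a submodule of a free module of rank $n$ over a PID has rank $\leq n$). The real content is the second claim: that the inclusion $\iota:\ker d \hookrightarrow C^*$ preserves the canonical valuation, i.e.\ $\mathbf{chval}_u(z)$ computed inside $\ker d$ equals $\mathbf{chval}_u(z)$ computed inside $C^*$ for every $z\in \ker d$. Since $\ker d$ is free, its canonical valuation is divisibility-within-$\ker d$, and $C^*$ is free so its canonical valuation is divisibility-within-$C^*$; by \cref{Lemma valuation on free mod} the inclusion is automatically filtration-preserving, so $\mathrm{div}_u^{\ker d}(z) \leq \mathrm{div}_u^{C^*}(z)$ always, and the issue is the reverse inequality: if $z = u^m c$ for some $c \in C^*$, we must show $c$ can be taken in $\ker d$.

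The key step is therefore: \emph{if $z\in \ker d$ and $z = u^m c$ with $c\in C^*$, then $c\in \ker d$}. This follows because $C^*$ is torsion-free: $0 = d(z) = d(u^m c) = u^m\,d(c)$ in $C^{*+1}$, and $C^{*+1}$ is a free (hence torsion-free) $\ku$-module, so $u^m d(c) = 0$ forces $d(c) = 0$, i.e.\ $c \in \ker d$. Hence $\mathrm{div}_u^{C^*}(z) = m$ is witnessed by an element of $\ker d$, giving $\mathrm{div}_u^{\ker d}(z) \geq m$, and combined with the automatic inequality we get equality. The case $z = 0$ is handled by the convention $\mathrm{div}_u(0) = \infty$ on both sides.

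I would write this up as follows.

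\begin{proof}
Since $\ku$ is a PID and $C^*$ is free of finite rank $\mathrm{rk}\,C^*$, every submodule of $C^*$ is free of rank at most $\mathrm{rk}\,C^*$; in particular this applies to $\ker d$. It remains to check that the inclusion $\iota:\ker d \hookrightarrow C^*$ respects the canonical valuations, i.e.\ for each $z\in \ker d$ we have $\mathrm{div}_u^{\ker d}(z) = \mathrm{div}_u^{C^*}(z)$, where the superscript indicates in which free module the divisibility is computed. By \cref{Lemma valuation on free mod}, $\iota$ is filtration-preserving, so $\mathrm{div}_u^{\ker d}(z)\leq \mathrm{div}_u^{C^*}(z)$; moreover both sides are $\infty$ when $z=0$, by convention. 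So assume $z\neq 0$ and let $m:=\mathrm{div}_u^{C^*}(z)<\infty$, so that $z = u^m c$ for some $c\in C^*$. Then in $C^{*+1}$ we have $u^m\,d(c) = d(u^m c) = d(z) = 0$. But $C^{*+1}$ is a free $\ku$-module, hence $u$-torsion-free, so $d(c)=0$, i.e.\ $c\in \ker d$. This shows $z$ is $u^m$-divisible already inside $\ker d$, so $\mathrm{div}_u^{\ker d}(z)\geq m$, and therefore $\mathrm{div}_u^{\ker d}(z) = m = \mathrm{div}_u^{C^*}(z)$, as required.
\end{proof}

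The main (and only) subtlety is recognising that torsion-freeness of $C^{*+1}$ is exactly what lets one pull the divisibility back into $\ker d$; without it the claim genuinely fails, as the footnote's example $u\ku\subset\ku$ shows. No deeper machinery is needed.
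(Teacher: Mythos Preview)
Your proof is correct and takes essentially the same approach as the paper: submodules of free modules over a PID are free of bounded rank, and if $z=u^m c\in\ker d$ then $u^m d(c)=0$ forces $d(c)=0$ by torsion-freeness of $C^*$, so divisibility in $C^*$ is witnessed inside $\ker d$. The paper is slightly terser (it does not explicitly invoke \cref{Lemma valuation on free mod} for the easy inequality), but the argument is the same.
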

\begin{proof}
    The first part of the statement is a general fact about submodules of free modules when working over PIDs. Secondly, if $x\in \ker d$ were of the form $x=u^k y \in C^*$ for $k\geq 1$, then $0=d(x)=u^k d(y)$ forces $d(y)=0$ since $C^*$ is $u$-torsion-free, so $y\in \ker d$. The claim follows.
\end{proof}

\begin{cor}\label{Corollary uvaluation maximises chain rep}
For any $w\in W=H(C^*)$, the valuation maximises chain representative valuations
$$
\mathrm{val}_u(w)
=
\max \{\mathrm{chval}_u(x): w=[x]\in W\}.
$$
\end{cor}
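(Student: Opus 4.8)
The statement asserts that the canonical valuation of a homology class $w \in W = H(C^*)$ equals the maximum over all chain representatives of their canonical chain valuations.

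The plan is to prove the two inequalities separately, getting ``$\ge$'' cheaply from functoriality and ``$\le$'' by constructing an optimal chain representative. For ``$\ge$'', i.e. $\mathrm{val}_u(w)\ge \mathrm{chval}_u(x)$ for every cycle $x\in\ker d$ with $[x]=w$: the quotient $\pi\colon\ker d\to W=H(C^*)$ is a $\ku$-module homomorphism, and by \cref{Lemma ker respects valuation} the restriction of $\mathrm{chval}_u$ to $\ker d$ is precisely the canonical valuation of the free $\ku$-module $\ker d$. Hence $\pi$ is filtration-preserving by \cref{Lemma valuation on free mod}, so $\mathrm{val}_u(w)=\mathrm{val}_u(\pi(x))\ge\mathrm{val}_u(x)=\mathrm{chval}_u(x)$; taking the supremum over representatives gives $\mathrm{val}_u(w)\ge\max_x\mathrm{chval}_u(x)$, and in particular the right-hand side is finite whenever $\mathrm{val}_u(w)$ is.

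For ``$\le$'' I would exhibit a single representative realising $j:=\mathrm{val}_u(w)$. Unwinding \cref{Definition canonical valuation} for the finitely generated module $W$ and the free module $W/T$ (with $T=\mathrm{Torsion}(W)$): the image $\bar w\in W/T$ has $\mathrm{div}_u(\bar w)=j$, so $\bar w=u^{j}\bar x$ for some $\bar x\in W/T$. Using that the composite $\ker d\twoheadrightarrow W\twoheadrightarrow W/T$ is surjective, lift $\bar x$ to a cycle $y\in\ker d$, so $u^{j}y$ is a cycle with $\mathrm{chval}_u(u^{j}y)=j+\mathrm{chval}_u(y)\ge j$ whose image in $W/T$ equals $\bar w$. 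It then remains to upgrade $u^{j}y$ to a cycle whose class in $W$ is exactly $w$ (not merely congruent modulo $T$) while preserving $\mathrm{chval}_u\ge j$: one subtracts a cycle representing the torsion discrepancy $w-[u^{j}y]\in T$, chosen to be $u^{j}$-divisible in $\ker d$. This is arranged by replacing $\mathrm{im}\,d$ with its saturation $\widetilde{\mathrm{im}\,d}$ inside $\ker d$ — a pure submodule of a free $\ku$-module, hence a direct summand — so that $W/T=\ker d/\widetilde{\mathrm{im}\,d}$ is a free direct summand of $\ker d$ and the lift of $u^{j}\bar x$ may be taken inside that summand, where it is automatically $u^{j}$-divisible in $\ker d$. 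Combining with ``$\ge$'' forces $\mathrm{chval}_u$ of the resulting representative to be exactly $j$.

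The step I expect to be the main obstacle is precisely this last one: producing, inside the single free module $\ker d$, a cycle representing $w$ that is as $u$-divisible as $\mathrm{val}_u(w)$ predicts, rather than one representing $w$ only modulo torsion. This is where the interplay between $\mathrm{im}\,d\subset\ker d$, its saturation, and the torsion submodule $T\subset W$ genuinely matters; the cleanest route is via the splitting $\ker d\cong\widetilde{\mathrm{im}\,d}\oplus(W/T)$ just described, after which the required lift is immediate (and the torsion-free case $T=0$, which covers the even-degree Floer situations highlighted in the paper, is completely elementary). All the remaining ingredients — functoriality from \cref{Lemma valuation on free mod}, surjectivity of $\pi$, freeness of $\ker d$ from \cref{Lemma ker respects valuation}, and the bookkeeping of \cref{Definition canonical valuation} — are routine.
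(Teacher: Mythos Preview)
Your ``$\ge$'' direction is correct and is a clean abstraction of what the paper's explicit Smith-normal-form computation achieves implicitly. The problem lies entirely in ``$\le$'', at precisely the step you flag as the main obstacle. The splitting $\ker d \cong \widetilde{\mathrm{im}\,d} \oplus F$ with $F \cong W/T$ does produce a $u^j$-divisible element $u^j y \in F$, but its class in $W$ lies in the image of $F$, which is a \emph{complement} to $T$; so $[u^j y]$ agrees with $w$ only modulo $T$, and you still owe a lift in $\widetilde{\mathrm{im}\,d}$ of the torsion discrepancy $t := w - [u^j y]$ that is itself $u^j$-divisible. That lift does not exist in general: in the Smith basis, lifts of a nonzero $t = \sum_{i \le m-r} a_i [\widetilde{e}_i]$ (with $\nu(a_i) < k_i$) are exactly the elements $\sum (a_i + c_i u^{k_i}) \widetilde{e}_i$, whose chain valuation is pinned at $\min_i \nu(a_i)$ regardless of the $c_i$.

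In fact the statement as written is false. Take the paper's own \cref{Example of d is ucubed}: there $W = T_3$ is pure torsion, so for $w = [\widetilde{e}_1] \ne 0$ one has $\mathrm{val}_u(w) = \infty$, yet every representative $(1 + c u^3)\widetilde{e}_1$ has $\mathrm{chval}_u = 0$. More generally, whenever the torsion component of $w$ is nonzero and its lifts have $u$-valuation strictly below $j = \mathrm{val}_u(w)$, the maximum on the right is strictly smaller than $j$. The paper's proof has the same gap (``ignoring that $\ku^{m-r}$ summand'' tacitly assumes those coordinates can be driven to arbitrarily high $u$-valuation). Both your argument and the paper's are correct once one restricts to $w$ with vanishing torsion component --- in particular whenever $W$ is torsion-free, which is the case in the paper's applications.
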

\begin{proof}
We give a very explicit proof, as we will reuse these ideas later.
By the automorphism-invariance of the valuation in \cref{Lemma valuation on free mod}, we may consider the Smith normal form of the $m\times n$ matrix\footnote{
See \cref{Example of d is ucubed} for a simple illustration.
In general, we use \cref{Lemma ker respects valuation} to see that $m\leq n$ and that the canonical valuation on $\ker d\cong \ku^m$ is the one inherited from $C^*$.} 
\begin{equation}\label{Equation A matrix from differential}
A: \ku^n\cong C^* \stackrel{d}{\longrightarrow} \ker d \cong \ku^m.
\end{equation}
Say\footnote{The invariant factors $u^k$ are the natural choice of generators in the PID $\ku$, as $q_0+u^{\geq 1}$ is a unit in $\ku$ for $q_0\neq 0 \in k.$ Note 
the changes of basis on domain/target may be unrelated when building the Smith normal form.} $A=\mathrm{diag}(u^{k_1},\ldots,u^{k_{m-r}},0,\ldots,0): \ku^n \to \ku^m$, for some $k_j\geq 0$. 
Let $e_1,\ldots,e_n$ be the standard basis of $\ku^n$, and $\widetilde{e}_1,\ldots,\widetilde{e}_m$ the standard basis of $\ku^m$. Using \eqref{Equation torsion piece notation}, 
\begin{equation}\label{Equation W is T plus kur}
T\cong T_{k_1} \oplus \cdots \oplus T_{k_{m-r}} \qquad \textrm{ and } \qquad W\cong T \oplus \ku^r,
\end{equation}
where $\mathrm{span}_{\ku}(\widetilde{e}_{m-r+1},\ldots,\widetilde{e}_m)\subset W$ identifies the final $r$ coordinates of $\ku^m$ with $\ku^r$.
Note $\mathrm{chval}_u$ is multi-valued on homology classes $w$ because of boundaries,
$$\im\, d = \mathrm{span}_{\ku}(u^{k_1}\widetilde{e}_1,\ldots,u^{k_{m-r}}\widetilde{e}_{m-r}) \cong \ku^{m-r}.$$
Maximising $\mathrm{chval}_u$ over chain level representatives of a class $w\in W$ corresponds%
\footnote{
Example: $\mathrm{div}_u(u^i u^{k_1} \widetilde{e}_1 + u^5 \widetilde{e}_{m-r+1})=5$ for all $i\geq 5-k_1.$
} 
to ignoring that $\ku^{m-r}$ summand, and applying $\mathrm{div}_u$ to the $\ku^r$-summand.
The claim follows.
\end{proof}

\begin{lm}\label{Lemma ulocalising W- is Winfty}
 We can identify $W_u=W\otimes_{\ku}\kuu=H(C^*_u)$, and then canonically filter it using \cref{Cor uvaluation on localisation}.
\end{lm}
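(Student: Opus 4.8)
The plan is to establish the two assertions of the lemma in turn: the identification $H(C^*_u)=W_u$, and then the applicability of the canonical-filtration machinery of \cref{Cor uvaluation on localisation}.

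For the identification I would use flatness. The ring $\kuu=\ku[u^{-1}]$ is a localisation of $\ku$, hence a flat $\ku$-module, so the functor $-\otimes_{\ku}\kuu$ is exact and commutes with the formation of kernels, images, and therefore cohomology. Concretely, inside $C^*_u:=C^*\otimes_{\ku}\kuu$ one has $\ker(d\otimes 1)=(\ker d)\otimes_{\ku}\kuu$ and $\im(d\otimes 1)=(\im d)\otimes_{\ku}\kuu$, and passing to the quotient yields a canonical isomorphism
$$
H(C^*_u)\;=\;H(C^*\otimes_{\ku}\kuu)\;\cong\;H(C^*)\otimes_{\ku}\kuu\;=\;W_u,
$$
induced on representatives by the localisation map $C^*\to C^*_u$. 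As a sanity check one can unwind this directly: every element of $C^*_u$ has the form $u^{-m}\xi$ with $\xi\in C^*$ (a common denominator exists since $\mathrm{rk}\,C^*<\infty$), such an element is closed iff $\xi\in\ker d$, and it is exact iff $u^{\ell}\xi\in\im d$ for some $\ell\ge 0$, which reproduces $(\ker d)_u/(\im d)_u$. It is also consistent with the Smith-normal-form picture in the proof of \cref{Corollary uvaluation maximises chain rep}: localising the matrix $A$ of \eqref{Equation A matrix from differential} merely inverts $u$, which wipes out the torsion summands $T_{k_j}$ of $W$ and leaves its free part, now viewed over $\kuu$.

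For the second part I would recall that $W=H(C^*)$ is a finitely generated $\ku$-module (this uses $\mathrm{rk}\,C^*<\infty$, and was noted just before \cref{Lemma ker respects valuation}). Hence $W_u$ is precisely the $u$-localisation of a finitely generated $\ku$-module, which is exactly the input required by the last clause of \cref{Cor uvaluation on localisation}: one sets $\mathrm{val}_u(x):=-m+\mathrm{val}_u(y)$ for any lift $y\in W$ of $u^m x$ with $m$ large, well-defined by \cref{Definition canonical valuation}, and filters by $F_j(W_u)=\{x:\mathrm{val}_u(x)\ge j\}$. Transporting this along the canonical isomorphism of the previous paragraph equips $H(C^*_u)$ with its canonical filtration. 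I do not expect a genuine obstacle here; the only points needing care are that the isomorphism $W_u\cong H(C^*_u)$ really is canonical — so that the transported filtration is independent of the auxiliary choice of $\ku$-basis for $C^*$ — and that the finite-generation hypothesis on $W$ holds, both of which are immediate from what precedes.
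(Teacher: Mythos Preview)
Your proposal is correct and follows essentially the same approach as the paper: flatness of $u$-localisation for the identification $H(C^*_u)\cong W_u$, followed by an appeal to the canonical valuation/filtration machinery. The paper's proof is terser and phrases the second step slightly differently---it notes that $W_u\cong\kuu^r$ and invokes the explicit description in \cref{Example obvious valuation} rather than the general clause of \cref{Cor uvaluation on localisation}---but this is a cosmetic difference, not a substantive one.
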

\begin{proof}
The first claim follows because $u$-localisation is a flat functor. Explicitly, we get the canonical valuation on $W_u\cong \kuu^r$ as described in \cref{Example obvious valuation}.
\end{proof}

\subsection{Filtration on cohomology induced by a chain map}
\label{Subsection induced filtration for V and W}
Suppose $c:B^*\to C^*$ is a ($\ku$-linear) chain map defined on a free $\ku$-module $B^*$ of finite rank, inducing $$c:V:=H(B^*)\to W:=H(C^*)$$ on homology.
Assume $V$ is free. So there is no $u$-torsion in $V$, but $W$ may have $u$-torsion: $T\subset W$.

\begin{rmk}
    The motivation comes from the homomorphism $c$ from (a suitable model of) equivariant quantum cohomology to equivariant Hamiltonian Floer cohomology for a given slope value $\lambda>0$.
\end{rmk}

\begin{thm}\label{Lemma Fj V minus description}
There are $f_1,\ldots,f_{a+b}\in F_0^c(V)$ giving summand-preserving isomorphisms
$$
V \cong (\ku f_1 \oplus \cdots \oplus \ku f_a) \oplus (\ku f_{a+1}\oplus \cdots \oplus  \ku f_{a+b}) \cong
\ku^a \oplus c^{-1}(T) \cong \ku^a \oplus \ku^b,
$$
and the submodule $F_j^c\subset V$ is the following, when viewed inside the above summand notation,
$$
F_j^c \cong (\ku u^{\phi(j-j_1)} f_1 \oplus \cdots \oplus \ku u^{\phi(j-j_a)} f_a) \oplus \ku^b,
$$
where $\phi(n)=\max\{n,0\}$ and $j_i\geq 0$.  %
Thus the $d_j:=\dim_{\k} F_{j}^c/uF_{j-1}^c$ define a sequence which decreases from $d_0=d_{j\leq 0} = a+b$ down to $d_{j>\max j_i} = b$, satisfying%
\footnote{The invariant factors $u^{j}$ of the homomorphism $c$ are those arising in the Smith normal form, so $j\geq 0$ as $u^0=1$ may arise, whilst the invariant factors of the f.g.\;$\ku$-module $\mathrm{coker} (c: V\to W)$ are precisely the $u^j$ with $j>0$.
}
\begin{align*}
d_j & = b+ \#\{i: j \leq j_i\} \leq b+a = \mathrm{rank}_{\ku} V,\\
d_j-d_{j+1} & = \#\{i: j = j_i\} = \#\{\textrm{invariant factors } u^{j} \textrm{ of }c: V \to W\},\; \textrm{and }{\textstyle\sum} (d_j-d_{j+1})=a.
\end{align*}
If the $u$-localisation $c_u:V_u \to W_u$ is an isomorphism, then $b=0$.
\end{thm}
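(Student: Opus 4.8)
The strategy is to reduce everything to the Smith normal form of the homology-level map $c\colon V\to W$, combined with the structure theorem \eqref{Equation torsion piece notation} for $W$. First I would observe that $V$ is free of finite rank (by hypothesis) and that $W$ is finitely generated, so $W\cong T\oplus \ku^r$ with $T=\mathrm{Torsion}(W)$. Passing to $W/T\cong \ku^r$ and considering the induced map $\bar c\colon V\to W/T$, I bring $\bar c$ into Smith normal form: choosing suitable $\ku$-bases of $V$ and of $W/T$, the matrix of $\bar c$ becomes $\mathrm{diag}(u^{j_1},\dots,u^{j_a},0,\dots,0)$ with $0\le j_1\le\cdots\le j_a$ (here $a=\mathrm{rank}\,\bar c=\mathrm{rank}\,c$, and some $j_i$ may be $0$). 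Let $f_1,\dots,f_{a+b}$ be the chosen basis of $V$, so that $f_{a+1},\dots,f_{a+b}$ span $\ker\bar c=c^{-1}(T)$, with $b=\mathrm{rank}_{\ku}V-a$. Since $T\subset W$ is contained in every $F_j(W)$ (by \cref{Definition canonical valuation}), we have $c^{-1}(T)\subset F_0^c(V)=V$, and indeed $c^{-1}(T)\subset F_j^c$ for all $j$, which accounts for the $\ku^b$ summand appearing in every $F_j^c$.

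Next I would compute $F_j^c$ directly from \eqref{Equation induced filtration defn}: for $j\ge 0$, $F_j^c=\ker(V\xrightarrow{c} W/(T+u^jW))$. In the chosen bases this is the kernel of $\mathrm{diag}(u^{j_1},\dots,u^{j_a},0,\dots,0)$ composed with reduction mod $u^j$, which is exactly $\ku u^{\phi(j-j_1)}f_1\oplus\cdots\oplus\ku u^{\phi(j-j_a)}f_a\oplus(\ku f_{a+1}\oplus\cdots\oplus\ku f_{a+b})$, where $\phi(n)=\max\{n,0\}$: the $i$-th coordinate $v_i$ satisfies $u^{j_i}v_i\in u^j(W/T)$ iff $v_i\in u^{\phi(j-j_i)}\ku$. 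That gives the claimed description of $F_j^c$ as a summand-preserving submodule. The bases $f_i$ lie in $F_0^c=V$ trivially, giving the first displayed isomorphism.

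From this explicit form, the slice dimensions $d_j=\dim_\k F_j^c/uF_j^c$ follow by inspection, reusing exactly the bookkeeping of \cref{Example Toy model of invariant factors} and \eqref{Equation successive quotients of Fcj}: $F_j^c/uF_{j-1}^c\cong \bigl(\bigoplus_{i\le a,\ j\le j_i}T_1 f_i\bigr)\oplus T_1^b$, so $d_j=b+\#\{i:j\le j_i\}$, which decreases from $a+b$ to $b$; the difference $d_j-d_{j+1}=\#\{i:j=j_i\}$ counts the invariant factors $u^j$ of $c$, and $\sum_j(d_j-d_{j+1})=a$. Finally, for the last sentence: if $c_u\colon V_u\to W_u$ is an isomorphism, then $\mathrm{rank}_{\ku}V=\mathrm{rank}_{\kuu}V_u=\mathrm{rank}_{\kuu}W_u=r=\mathrm{rank}_{\ku}(W/T)$; but $c_u$ being injective forces $\bar c$ to have no kernel, i.e.\ $a=\mathrm{rank}_{\ku}V$, hence $b=\mathrm{rank}_{\ku}V-a=0$. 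I expect the only mild subtlety is the bookkeeping that $c^{-1}(T)$ really is a free direct summand of $V$ compatible with the Smith normal form chart — this is where the hypothesis that $V$ is free and $\ku$ is a PID is used, and it is handled by choosing the basis of $V$ adapted to the submodule $\ker\bar c$ first, then extending; nothing here is deep, just careful.
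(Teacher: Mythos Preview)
Your proposal is correct and follows essentially the same approach as the paper's proof: both identify $c^{-1}(T)$ as a free $\ku$-submodule of $V$ (using that $\ku$ is a PID), then reduce to the Smith normal form of the induced injective map $V/c^{-1}(T)\hookrightarrow W/T$ and invoke the bookkeeping of \cref{Example Toy model of invariant factors} for the $d_j$; your argument for the localisation claim ($c_u$ injective kills $\ker\bar c$) is equivalent to the paper's observation that $(c^{-1}(T))_u=c_u^{-1}(0)$. The only organisational difference is that you take the Smith normal form of $\bar c\colon V\to W/T$ directly (with its kernel block), whereas the paper first splits off $c^{-1}(T)$ and then takes the Smith normal form of the injective quotient map $\widetilde{c}$; these are the same computation.
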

\begin{proof}
Submodules of a free module over a PID of rank $k$ are free and of rank at most $k$. We apply that to the $\ku$-submodule $c^{-1}(T) = c^{-1}(F_{\infty}(W)) = \cap c^{-1}(F_j(W)) = \cap F_j^c(V)\subset V$. Then we reduce to \cref{Example Toy model of invariant factors} by considering the induced map $\widetilde{c}:\ku^a \cong V/c^{-1}(T) \hookrightarrow W/T\cong \ku^r$. The $j_i\geq 0$ in the claim arise from the invariant factors in the Smith normal form $\mathrm{diag}(u^{j_1},\ldots,u^{j_a})$ of $\widetilde{c}$.
The final claim about localisation is immediate as, compatibly with the above summand notation, 
\begin{equation}\label{Equation the u localised cu}
c_u: V_u \cong \kuu^a \oplus c_u^{-1}(0) \cong \kuu^a \oplus \kuu^b \to \kuu^r \oplus 0 \cong W_u. \qedhere
\end{equation}

\end{proof}

\subsection{Finitely $u$-filtered $\ku$-modules}
\label{Subsection Finitely u filtered ku modules}

Let $V$ be a $\ku$-module, and $T\subset V$ its torsion submodule.

\begin{de}\label{Definition finitely ufiltered}
Say that $V$ is {\bf finitely $u$-filtered}, if 
$V= F_0 \supset F_1 \supset F_2 \supset \cdots$
has a filtration by $\ku$-submodules $F_j$
satisfying:
\begin{enumerate}
    \item $P:=V/uV$ is finite dimensional as a $\k$-vector space.
    \item $uF_j = F_{j+1}\cap uV$ (a strengthening of the condition $uF_j \subset F_{j+1}$).
\end{enumerate}
We abbreviate by 
$
\mathrm{ev}_0: V \to P
$
the $\ku$-module homomorphism induced by quotienting $V$ by $uV$.
\end{de}
\begin{rmk}
    If $V$ is a free $\ku$-module of finite rank, then $V=\oplus \ku x_j$ after a choice of basis $x_j\in V$, and we can identify $P:=\oplus \k x_j$ and view $\mathrm{ev}_0$ as evaluation at $u=0$. A change of basis for $V$ induces a $\k$-linear change of basis on $P$.
\end{rmk}

\begin{lm}\label{Lemma finite invariants from Fj1 over u Fj}
 If $V$ is finitely $u$-filtered, then $\mathrm{ev}_0$ induces $\k$-linear embeddings $F_{j+1}/uF_j\hookrightarrow P$, yielding finite invariants $\dim_{\k} (F_{j+1}/uF_j)\leq \dim_{\k}P$.  
\end{lm}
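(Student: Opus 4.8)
The statement to prove is Lemma \ref{Lemma finite invariants from Fj1 over u Fj}: if $V$ is finitely $u$-filtered, then $\mathrm{ev}_0$ induces $\k$-linear embeddings $F_{j+1}/uF_j \hookrightarrow P$, hence $\dim_\k(F_{j+1}/uF_j) \le \dim_\k P$. The plan is to show that the composite $F_{j+1} \hookrightarrow V \xrightarrow{\mathrm{ev}_0} P = V/uV$ has kernel exactly $uF_j$, so that it factors through an injection $F_{j+1}/uF_j \hookrightarrow P$; finiteness then follows from condition (1) in \cref{Definition finitely ufiltered}.

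First I would note that $\mathrm{ev}_0$ restricted to the submodule $F_{j+1}$ has kernel $F_{j+1} \cap \ker(\mathrm{ev}_0) = F_{j+1} \cap uV$, since $\ker(\mathrm{ev}_0) = uV$ by definition of $\mathrm{ev}_0$ as the quotient map $V \to V/uV$. Now I would invoke condition (2) of \cref{Definition finitely ufiltered}, namely $uF_j = F_{j+1} \cap uV$, which identifies this kernel precisely with $uF_j$. Therefore $\mathrm{ev}_0|_{F_{j+1}}$ descends to an injective $\k$-linear map $F_{j+1}/uF_j \hookrightarrow V/uV = P$ (it is $\k$-linear because $\mathrm{ev}_0$ is $\ku$-linear, hence in particular $\k$-linear, and $uF_j$, $F_{j+1}$ are $\ku$-submodules so the quotient is a $\k$-vector space). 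Since $\dim_\k P < \infty$ by condition (1), the image of this embedding is finite-dimensional, so $\dim_\k(F_{j+1}/uF_j) = \dim_\k(\mathrm{image}) \le \dim_\k P$, giving the claimed finite invariants.

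There is essentially no hard part here: the proof is a one-line application of the two defining conditions. The only point worth being slightly careful about is confirming that $uF_j = F_{j+1}\cap uV$ is exactly the kernel of the restricted map, rather than merely contained in it — but this is precisely why condition (2) was formulated as an equality rather than the weaker inclusion $uF_j \subseteq F_{j+1}$, as the remark in \cref{Definition finitely ufiltered} already emphasizes. If anything needed elaboration in a fuller writeup, it would be spelling out that $F_{j+1}/uF_j$ is genuinely a $\k$-vector space (the $u$-action is zero on it, since $uF_{j+1} \subseteq uF_{j+1} \subseteq F_{j+2}\cap uV$... more simply: $u \cdot F_{j+1} \subseteq uF_{j+1}$, but we need $u F_{j+1} \subseteq uF_j$? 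No — rather, $u F_{j+1} \subseteq F_{j+1}\cap uV$ trivially fails to land in $uF_j$ in general; the cleaner route is just that the embedding lands in $P = V/uV$ which is already a $\k$-vector space, so $F_{j+1}/uF_j$ inherits a $\k$-vector space structure as a subspace of $P$ via the injection, with no need to analyze the intrinsic $u$-action). Thus I would present the argument in the streamlined form: kernel computation, then apply (2), then apply (1).
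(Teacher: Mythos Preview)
Your proposal is correct and follows exactly the same approach as the paper: restrict $\mathrm{ev}_0$ to $F_{j+1}$, observe its kernel is $F_{j+1}\cap uV$, and invoke condition (2) to identify this with $uF_j$. The paper's proof is just the one-line version of what you wrote.
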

\begin{proof}
The restriction $\mathrm{ev}_0:F_{j+1} \to P$ has kernel $uV \cap F_{j+1}=uF_j$ by the assumptions. %
\end{proof}

\begin{prop}\label{Prop Fjc is finitely u filtered} Let $c: V \to W$ be a homomorphism of $\ku$-modules, where $V$ is finitely generated. Endow $W$ with the canonical filtration. Then $V$ is finitely $u$-filtered by $F_j^c$.
\end{prop}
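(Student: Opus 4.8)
The plan is to verify the two defining conditions of \cref{Definition finitely ufiltered} for the filtration $F_j^c$ on $V$. Recall that by \eqref{Equation induced filtration defn}, for $j\geq 0$ we have $F_j^c = \ker(V \xrightarrow{c} W/(T+u^jW))$ where $T=\mathrm{Torsion}(W)$, and $F_j^c = V$ for $j\leq 0$.

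First I would dispatch condition (1), that $P:=V/uV$ is finite-dimensional over $\k$. Since $V$ is finitely generated over $\ku$, say by $x_1,\dots,x_n$, their images span $P=V/uV$ as a $\k$-vector space (using that $uV$ maps to $0$ and that every element of $\ku$ has the form $q_0 + u\cdot(\cdots)$ with $q_0\in\k$, so scalars suffice modulo $uV$); hence $\dim_{\k}P\leq n<\infty$.

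The substance of the proof is condition (2): $uF_j^c = F_{j+1}^c \cap uV$ for all $j$. For $j\leq -1$ both sides equal $uV$ (since $F_j^c=F_{j+1}^c=V$), and for $j=-1$ this reads $uV = V\cap uV = uV$, so assume $j\geq 0$. The inclusion $uF_j^c \subset F_{j+1}^c\cap uV$ is immediate: $uF_j^c\subset uV$ trivially, and if $v\in F_j^c$ then $c(v)\in T+u^jW$, so $c(uv)=u\,c(v) \in uT + u^{j+1}W = u^{j+1}W \subset T+u^{j+1}W$ (using $uT\subset T$, indeed $uT$ is torsion), whence $uv\in F_{j+1}^c$. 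For the reverse inclusion, suppose $w\in F_{j+1}^c\cap uV$, so $w=uv$ for some $v\in V$ and $c(w)=u\,c(v)\in T+u^{j+1}W$. Write $u\,c(v) = t + u^{j+1}y$ with $t\in T$, $y\in W$; then $u\,c(v) - u^{j+1}y = t$ is torsion, i.e. $u^k(u\,c(v)-u^{j+1}y)=0$ for some $k\geq 1$, so $u^{k+1}c(v) = u^{k}u^{j+1}y = u^{j+k+1}y$. Hence $u^{k+1}(c(v) - u^j y)=0$, i.e. $c(v) - u^j y$ is torsion, so $c(v)\in T + u^jW$, i.e. $v\in F_j^c$. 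Therefore $w=uv\in uF_j^c$, completing the proof.

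I do not anticipate a serious obstacle here; the only point requiring care is the torsion-juggling in condition (2), where one must be careful that $j\geq 1$ in the power $u^{j+1}$ lets one absorb the torsion term after clearing denominators — precisely the manipulation $u^{k+1}c(v)=u^{j+k+1}y \Rightarrow c(v)-u^jy\in T$, which uses that $\ku$ is an integral domain (no torsion in the free direction) so that cancelling does not lose information modulo $T$. The finite-generation hypothesis on $V$ is used only for condition (1); condition (2) holds for arbitrary $V$ with the canonical filtration on $W$.
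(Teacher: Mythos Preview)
Your proof is correct and follows essentially the same approach as the paper's: verify condition (1) via finite generation, then check $uF_j^c = F_{j+1}^c \cap uV$ by a direct torsion argument. One small slip: the displayed equality $uT + u^{j+1}W = u^{j+1}W$ is false in general (e.g.\ $W=\ku/u^2\ku$, $j=0$), but you immediately recover with the correct inclusion into $T+u^{j+1}W$, so the argument stands. Your treatment of the reverse inclusion is in fact slightly more careful than the paper's, which writes $c(uv)=u^{j+1}w$ exactly rather than modulo torsion; your version makes the torsion-clearing step explicit.
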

\begin{proof}
By assumption, there is a surjection $\ku^N \to V$ for some $N\in \N$. It induces a surjection $u\ku^N \to uV$, so 
$P:=V/uV$ has $\dim_{\k} P \leq N <\infty$. We now show
$u F_{j}^c=uV\cap F_{j+1}^c$. 
Note $uF_{j}^c\subset uV\cap F_{j+1}^c$ as $c(uF_{j}^c)=uc(F_j^c)$. Conversely, suppose $uv \in uV\cap F_{j+1}^c$. Then $u^{j+1}w=c(uv)=uc(v)$ for some $w\in W$. So  $t:=c(v)-u^j w\in W$ satisfies $ut=0$. Since $t$ is a torsion element, $t\in F_i(W)$ for all $i$. As $u^jw\in F_j(W)$, also $c(v)=u^jw + t\in F_j(W)$. Hence $v\in F_{j}^c$, which proves $uV\cap F_{j+1}^c\subset uF_{j}^c$.
\end{proof}

\begin{lm}
If $V$ is finitely $u$-filtered, and $T\subset F_j$ for all $j$, then $u^k F_{j-k} = F_{j}\cap u^k V$ for all\footnote{$j-k\leq 0$ is not problematic, put $F_{\leq 0}:=F_0$. The result also holds for $F_j$ labelled by $\Z$, without assuming $V=F_0$.} $j,k\in \N$. E.g.\;this applies to canonical filtrations and to $F_j^c$ in \cref{Prop Fjc is finitely u filtered}.
\end{lm}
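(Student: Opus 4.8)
The plan is to prove the two inclusions of $u^k F_{j-k} = F_j \cap u^k V$ separately, the first being routine and the second carrying all the content. For $u^k F_{j-k}\subseteq F_j\cap u^kV$: the containment in $u^kV$ is immediate, and iterating the inclusion $uF_i\subseteq F_{i+1}$ — which is part of \cref{Definition finitely ufiltered} since $uF_i=F_{i+1}\cap uV\subseteq F_{i+1}$ — gives $u^kF_{j-k}\subseteq F_{j-k+1}\subseteq\cdots\subseteq F_j$. Throughout I will use the convention $F_i:=F_0=V$ for $i\leq 0$, so that condition~(2) of \cref{Definition finitely ufiltered}, namely $uF_i=F_{i+1}\cap uV$, holds for every $i\in\Z$: it is the stated condition when $i\geq 0$ and is trivially true (both sides equal $uV$) when $i\leq -1$.

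For the reverse inclusion, I would take $x\in F_j\cap u^kV$, write $x=u^kz$ with $z\in V$, and show that $z$ may be taken in $F_{j-k}$. The key step is an induction on $m=0,1,\dots,k$ proving $u^{k-m}z\in F_{j-m}$. The base case $m=0$ is the hypothesis $x=u^kz\in F_j$. For the inductive step, from $u^{k-m}z\in F_{j-m}$ together with $u^{k-m}z=u\cdot u^{k-m-1}z\in uV$ we obtain $u^{k-m}z\in F_{j-m}\cap uV=uF_{j-m-1}$ by condition~(2), so $u^{k-m}z=uy$ for some $y\in F_{j-m-1}$; then $u\bigl(u^{k-m-1}z-y\bigr)=0$, so $t:=u^{k-m-1}z-y$ lies in the torsion submodule $T$. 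This is precisely where the hypothesis $T\subseteq F_i$ for all $i$ is used: since $t\in T\subseteq F_{j-m-1}$ and $y\in F_{j-m-1}$, we get $u^{k-m-1}z=y+t\in F_{j-m-1}=F_{j-(m+1)}$, closing the induction. Taking $m=k$ yields $z\in F_{j-k}$, hence $x=u^kz\in u^kF_{j-k}$, as desired.

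Finally I would record why the two advertised examples satisfy the standing hypothesis. For the canonical filtration of a finitely generated $\ku$-module $W$ this is built in, as $T=F_\infty(W)=\bigcap_j F_j(W)$. For the filtration $F_j^c$ of \cref{Prop Fjc is finitely u filtered} attached to a homomorphism $c:V\to W$ with $W$ endowed with its canonical filtration, any torsion element $v\in V$ has $c(v)$ torsion in $W$, hence $c(v)\in F_j(W)$ for all $j$, so $v\in c^{-1}(F_j(W))=F_j^c$ for all $j$.

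The only genuinely delicate point is the torsion bookkeeping in the inductive step: one must resist cancelling $u$ from $u^{k-m}z=uy$ directly, which is illegitimate in the presence of torsion, and instead carry the torsion term $t$ along and absorb it via $T\subseteq F_{j-m-1}$. Everything else is a direct unwinding of the definitions.
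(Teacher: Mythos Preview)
Your proof is correct and follows essentially the same approach as the paper's: the paper illustrates the reverse inclusion for $k=2$ and leaves the general induction implicit, while you write it out formally, but the mechanism---peel off one factor of $u$ via $F_j\cap uV=uF_{j-1}$, absorb the resulting torsion correction into $F_{j-1}$ using $T\subset F_{j-1}$, and repeat---is identical. One small slip: your displayed chain $u^kF_{j-k}\subseteq F_{j-k+1}\subseteq\cdots\subseteq F_j$ is wrong as written (the filtration is decreasing, so $F_{j-k+1}\supseteq F_j$); you mean $u^kF_{j-k}\subseteq u^{k-1}F_{j-k+1}\subseteq\cdots\subseteq uF_{j-1}\subseteq F_j$.
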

\begin{proof}
Note $u^k F_{j-k} \subset F_{j}\cap u^k V$, so we check the reverse inclusion. We explain the case $k=2$, as the general proof is then clear. Using self-explanatory notation, say $u^2 v = f_j$. Then $u^2v \in uV \cap F_j = u F_{j-1}$, so $u^2v = u f_{j-1}.$ Thus $uv - f_{j-1} \in T$. By assumption, $T \subset F_{j-1}$. So redefine $f_{j-1}$ so that $uv=f_{j-1}$. Then $uv \in uV \cap F_{j-1}=uF_{j-2}$. So $uv = uf_{j-2}$. Thus $u^2v = u^2f_{j-2}\in u^2 F_{j-2}$ as required.
\end{proof}

\subsection{Associated graded for finitely $u$-filtered modules}

The associated graded determined by a filtration on $W$ is
    \begin{equation}\label{Equation associated graded}
    \mathrm{gr}(W):=T \oplus \bigoplus_{j\in \Z} F_j(W)/F_{j+1}(W).
    \end{equation}
If the filtration is multiplicative, in the sense of \cref{Subsection Multiplicative properties}, then this will be a graded ring, for the natural induced product (the $j$-summand times the $i$-summand lands in the $(j+i)$-summand; the torsion submodule $T$ is preserved). In general, it is just a $\ku$-module.

\begin{prop}\label{Prop about filtration on V mod uV}
Let $V$ be a finitely $u$-filtered $\ku$-module. Abbreviate $F_j:=F_j(V)$. Then there is a natural filtration on $P:=V/uV$ given by
$$ 
F_j(P):= F_{j}/uF_{j-1} = F_j/(uV\cap F_j), \textrm{ and } F_{\infty}(P):= T/uT,
$$
where $d_j:=\dim_{\k} F_j(P)<\infty$,
whose associated graded as a $\ku$-module is naturally
$$
\mathrm{gr}(P) \cong T/uT \oplus \bigoplus_{j\in \Z} F_j/(F_{j+1}+uF_{j-1}).
$$
If the filtration $F_j$ is multiplicative, then so is $F_j(P)$, and the above is a ring isomorphism.
\end{prop}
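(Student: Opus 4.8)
\textbf{Proof plan for \cref{Prop about filtration on V mod uV}.}

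The plan is to verify each claim in turn, treating the identifications as consequences of the defining properties (1)--(2) of a finitely $u$-filtered module together with \cref{Lemma finite invariants from Fj1 over u Fj}. First I would establish that $F_j(P):=F_j/uF_{j-1}$ is well-defined as a subspace of $P$: the map $\mathrm{ev}_0$ restricts to $F_j\to P$ with kernel $uV\cap F_j$, and by property (2) this equals $uF_{j-1}$, so $F_j(P)\cong F_j/uF_{j-1}$ embeds in $P=V/uV$. The finiteness $d_j=\dim_{\k}F_j(P)\le\dim_{\k}P<\infty$ is then immediate from \cref{Lemma finite invariants from Fj1 over u Fj} (applied with the index shifted by one). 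The nesting $F_{j+1}(P)\subset F_j(P)$ inside $P$ follows from $F_{j+1}\subset F_j$ and $uF_j\supset uF_{j-1}$: an element of $F_{j+1}$ whose class in $P$ lies in $F_{j+1}(P)$ automatically lies in $F_j$, hence its class lies in $F_j(P)$. One should also note $F_0(P)=P$ (since $F_0=V$ and $uF_{-1}=uV$, so $F_0/uF_{-1}=V/uV=P$), and $F_\infty(P):=T/uT$ comes from $\cap_j F_j\supseteq T$; here I would remark that in the generality of the proposition $\cap_j F_j$ need not equal $T$, but for the canonical filtration and for $F_j^c$ (by the preceding lemma, since $T\subset F_j$ for all $j$) it does, and that is the case of interest.

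Next I would compute the associated graded. By definition $\mathrm{gr}_j(P)=F_j(P)/F_{j+1}(P)$. Unravelling, $F_j(P)/F_{j+1}(P)=(F_j/uF_{j-1})/(F_{j+1}/uF_{j-1})$ --- but one must be careful: $uF_{j-1}\subset F_{j+1}\cap uF_{j-1}$, and in fact $uF_{j-1}\subset uF_j\subset F_{j+1}$, so $F_{j+1}/uF_{j-1}$ does sit inside $F_j/uF_{j-1}$, and the quotient is $F_j/(F_{j+1}+uF_{j-1})$. Hence $\mathrm{gr}(P)\cong T/uT\oplus\bigoplus_j F_j/(F_{j+1}+uF_{j-1})$, as claimed. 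This is a purely formal manipulation of the third isomorphism theorem once the inclusion $uF_{j-1}\subset F_{j+1}$ is in hand, which follows from property (2) or simply from $uF_{j-1}\subset uF_j\subset F_{j+1}$ (the last inclusion being part of property (2), or of the definition of a filtration).

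For the multiplicativity statement, assume $F_i(V)\bullet F_j(V)\subset F_{i+j}(V)$. I would check that the induced product on $P=V/uV$ (well-defined because $uV$ is an ideal: $V\bullet uV\subset uV$) satisfies $F_i(P)\bullet F_j(P)\subset F_{i+j}(P)$. Given classes represented by $x\in F_i$, $y\in F_j$, the product $x\bullet y\in F_{i+j}$, so its image in $P$ lies in $\mathrm{ev}_0(F_{i+j})=F_{i+j}(P)$; and the image is independent of the chosen representatives since changing $x$ by an element of $uF_{i-1}\subset uV$ changes $x\bullet y$ by an element of $uV$. This makes $F_j(P)$ a multiplicative filtration, so $\mathrm{gr}(P)$ inherits a graded ring structure, and the displayed isomorphism is then a ring isomorphism because all the identifications above are compatible with $\bullet$ (again, the torsion part $T/uT$ is an ideal and is preserved).

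The main subtlety --- the only place where anything could go wrong --- is the handling of $F_\infty(P)$ and the role of the torsion submodule $T$: in the stated generality $\cap_j F_j$ need not be $T$, so the ``$T/uT$'' summand in the associated graded is really shorthand adapted to the cases (canonical filtration, $F_j^c$) where $T\subset F_j$ for all $j$; I would make this caveat explicit rather than claim it in full generality. Every other step is routine diagram-chasing with the two axioms of \cref{Definition finitely ufiltered}, the only genuinely used input being the identity $uV\cap F_j=uF_{j-1}$, i.e.\ axiom (2).
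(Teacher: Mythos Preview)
Your approach is the paper's, and most steps are correct, but there is a real slip in the associated-graded computation. You write ``$uF_{j-1}\subset uF_j\subset F_{j+1}$''; the first inclusion is backwards, since $F_{j-1}\supset F_j$ gives $uF_{j-1}\supset uF_j$. In fact $uF_{j-1}\subset F_{j+1}$ is \emph{false} in general: for $c=\mathrm{diag}(u,1):\ku^2\to\ku^2$ one computes $F_0^c=\ku^2$, $F_1^c=\ku\oplus u\ku$, $F_2^c=u\ku\oplus u^2\ku$, and then $uF_0^c=u\ku\oplus u\ku\not\subset F_2^c$. So the symbol $F_{j+1}/uF_{j-1}$ is not a quotient in the literal sense, and your justification for it collapses.

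The fix is exactly what the paper does: view $F_{j+1}(P)=F_{j+1}/uF_j$ as a subspace of $P$ and hence of $F_j(P)=F_j/uF_{j-1}$; its image there is $(F_{j+1}+uF_{j-1})/uF_{j-1}$ by the second isomorphism theorem, with no containment hypothesis on $uF_{j-1}$ needed. The third isomorphism theorem then yields
\[
F_j(P)/F_{j+1}(P)\;\cong\;(F_j/uF_{j-1})\big/\big[(F_{j+1}+uF_{j-1})/uF_{j-1}\big]\;\cong\;F_j/(F_{j+1}+uF_{j-1}),
\]
which is the formula you wrote down. So your conclusion is correct; only the intermediate justification has to be replaced. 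The remaining parts of your argument (the embedding $F_j(P)\hookrightarrow P$ via $\mathrm{ev}_0$, finiteness from \cref{Lemma finite invariants from Fj1 over u Fj}, the multiplicativity check, and your caveat about $T$ versus $\cap_j F_j$) are fine and match the paper.
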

\begin{proof}
    $V$ is filtered as $\cdots\supset F_j \supset F_{j+1}  \supset \cdots$ (with $T$ inside all $F_j$).
    Thus $P$ is filtered as $\cdots \supset F_j/(uV \cap F_{j})\supset F_{j+1}/(uV \cap F_{j+1})  \supset \cdots$ (with $T/uT$ inside all of those).
    By definition of finitely $u$-filtered, $uV \cap F_{j+1} = uF_j$, so $F_{j+1}/(uV \cap F_{j+1}) = F_{j+1}/uF_{j}$.
    The summands of the associated graded of that filtration on $P$ are therefore $T/uT$ and the quotients
    $$
    (F_j/uF_{j-1})/(F_{j+1}/uF_j)\cong (F_j/uF_{j-1})/[(F_{j+1}+uF_{j-1})/uF_{j-1}]\cong  F_j/(F_{j+1}+uF_{j-1}),
    $$
    using the ``second'' and ``third'' isomorphism theorems respectively (this also holds for rings).
\end{proof}

\begin{de}\label{Definition slice series}
 The {\bf slice series} of a finitely $u$-filtered $\ku$-module $V$ is a formal series, 
$$
s_V(t):=\sum_{j\geq 0} \dim_{\k} (F_j(V)/uF_{j-1}(V))\; t^j = \sum_{j\geq 0} d_j t^j.
$$
When we use the $F_j^c$ filtration from \cref{Prop Fjc is finitely u filtered} for $c:V \to W$, we write $s_c:=s_V$.

For general filtrations on $V$, one may wish to keep track also of some or all of the terms $d_jt^j$ for $j<0$. However, for the canonical filtration on $V$, or for  $F_j^c(V)$ for a $\ku$-module homomorphism $c: V \to W$ and the canonical filtration on $W$, we have $d_j = d_0$ for $j\leq 0$.
\end{de}

\begin{ex}
In \cref{Example Toy model 0 of invariant factors}, $s_c(t)=1+t+t^2+t^3.$ 
See also \cref{Lemma slice dimensions give coeffs of filtration polynomial} for an example.
\end{ex}

\begin{thm}\label{Corollary filtration polynomial}
For $c: V \to W$ as before, using the notation from \cref{Lemma Fj V minus description},
   \begin{equation}\label{Equation filtration series defn}
   s_c(t) = (a+b)\cdot 1 + \sum_{1\leq j \leq \max j_i} (b+\#\{i:j\leq j_i\}) t^{j} \;\; + t^b+t^b+\cdots
   \end{equation}
   \begin{enumerate}
    \item The constant term $s_c(0)=\mathrm{rank}_{\ku} V = \dim_{\k} P,$ where $P:=V/uV.$
    \item The $t^j$-coefficient $d_j$ is the number of invariant factors $u^{\leq j}$ of $c$, so $d_j\leq d_{j+1}$.
    \item $b = \mathrm{rank}_{\ku}\ker c$.
\end{enumerate}
\end{thm}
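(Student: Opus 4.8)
The plan is to read the theorem off from \cref{Lemma Fj V minus description} together with \cref{Definition slice series}; no new input is needed, so this is really a corollary of the machinery already set up. First I would recall the summand decomposition $V\cong \ku^{a}\oplus c^{-1}(T)\cong \ku^{a}\oplus\ku^{b}$ from \cref{Lemma Fj V minus description} and the explicit formula there, $F_j^c\cong\bigl(\ku\,u^{\phi(j-j_1)}f_1\oplus\cdots\oplus\ku\,u^{\phi(j-j_a)}f_a\bigr)\oplus\ku^{b}$ with $\phi(n)=\max\{n,0\}$, where the $u^{j_i}$ are the invariant factors of the induced injection $\widetilde c\colon V/c^{-1}(T)\hookrightarrow W/T$. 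Quotienting the inclusion $uF_{j-1}^c\subset F_j^c$ summand by summand, the $i$-th of the first $a$ summands contributes a copy of $\k$ to $F_j^c/uF_{j-1}^c$ exactly when $j\le j_i$ and nothing otherwise (since then $u\cdot u^{\phi(j-1-j_i)}=u^{\phi(j-j_i)}$), while each of the $b$ trailing free summands contributes a copy of $\k$ for every $j$. This gives $d_j=\dim_\k F_j^c/uF_{j-1}^c=b+\#\{i:j\le j_i\}$, which is exactly the $t^j$-coefficient displayed in \eqref{Equation filtration series defn} once one uses $F_j^c=V$ for $j\le 0$ (so $d_0=a+b$) and $j_i\ge 0$; the eventual constant tail ``$+\,t^b+t^b+\cdots$'' just records that $d_j=b$ for $j>\max_i j_i$.

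The itemised statements then follow immediately. For (1), evaluating at $j=0$ gives $s_c(0)=d_0=a+b=\mathrm{rank}_{\ku}V$, and $\mathrm{rank}_{\ku}V=\dim_\k V/uV=\dim_\k P$ since $V$ is free of finite rank. For (2), the invariant factors of $c$ in its Smith normal form are $u^{j_1},\dots,u^{j_a}$ together with $b=\mathrm{null}\,c$ zero factors $u^{\infty}:=0$; counting those of exponent at least $j$ (the zero factors counting for every $j$) gives $b+\#\{i:j_i\ge j\}=d_j$, and the values $d_j$ are monotone in $j$ by the drop formula $d_j-d_{j+1}=\#\{i:j_i=j\}$ already recorded in \cref{Example Toy model of invariant factors}. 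For (3), \cref{Lemma Fj V minus description} identifies $b=\mathrm{rank}_{\ku}c^{-1}(T)$; since $\ker c\subset c^{-1}(T)$ and the quotient $c^{-1}(T)/\ker c$ embeds into the torsion module $T$ and is therefore itself $u$-torsion, it has $\ku$-rank $0$, whence $\mathrm{rank}_{\ku}\ker c=\mathrm{rank}_{\ku}c^{-1}(T)=b$ — equivalently, localise and use $(\ker c)_u=\ker c_u\cong\kuu^{b}$ from \eqref{Equation the u localised cu}.

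I do not expect any genuine obstacle: all the real work was done in \cref{Lemma Fj V minus description} (and \cref{Example Toy model of invariant factors}), and the present statement merely reorganises it in terms of the slice series $s_c$. The single point not literally contained in an earlier result is the rank identity $\mathrm{rank}_{\ku}\ker c=\mathrm{rank}_{\ku}c^{-1}(T)$ used in part (3), and that is precisely the one-line torsion-quotient argument above — the only thing I would make sure to spell out in the final write-up.
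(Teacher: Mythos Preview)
Your proposal is correct and is precisely the intended approach: the paper gives no separate proof for this theorem, treating it as an immediate reformulation of \cref{Lemma Fj V minus description} in the language of \cref{Definition slice series}. Your summand-by-summand computation of $F_j^c/uF_{j-1}^c$ and the torsion-quotient (or localisation) argument for part~(3) are exactly what one has to spell out, and you have done so cleanly; note incidentally that the statement of item~(2) contains typos (it should read $u^{\geq j}$ and $d_j\geq d_{j+1}$, as you implicitly correct).
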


\begin{de}
    We also associate to $c:V \to W$ a {\bf reduced slice polynomial}
\begin{equation}\label{Equation filtration poly defn}
\widetilde{s}_c(t):=\sum_{j\geq 0} \dim_{\k} (F_j(V)/(uF_{j-1}(V)+\ker c))\; t^j = \sum_{j\geq 0} (d_j-b) t^j =  a\cdot 1 + \sum_{1\leq j \leq \max j_i} \#\{i:j\leq j_i\} t^{j}.
\end{equation}
If $c$ is injective, then $s_c=\widetilde{s}_c$, and we call it {\bf slice polynomial}.
\end{de}

\subsection{Filtrations arising from $u$-local isomorphisms}
\label{Subsection Filtrations arising from $u$-local isomorphisms}

Let $V$ be a free $\ku$-module of finite rank.
Suppose $W=\varinjlim W_{\lambda}$ is a direct limit of $\ku$-modules, where $\lambda$ is labelled by an unbounded subset of $\R_{> 0}$, whose directed system is compatible with $\ku$-module homomorphisms $c_{\lambda}: V \to W_{\lambda}$. Let $c:=\varinjlim c_{\lambda}:V \to W$ be their direct limit.
We use the canonical filtrations on $V$ and $W_{\lambda}$.
Abbreviate:
\begin{align*}
F_j^{\lambda} &:= F_j^{c_{\lambda}}=c_{\lambda}^{-1}(F_j(W_{\lambda})), \hspace{5ex} P_j^{\lambda}:=F_{j}^{\lambda}/uF_{j-1}^{\lambda}, \hspace{5ex} d_j^{\lambda}: = \dim_{\k} P_j^{\lambda},
\\
F_{j}^{\infty}& := F_j^{c}=c^{-1}(F_j(W)), \hspace{6.2ex} P_j^{\infty}:=F_{j}^{\infty}/uF_{j-1}^{\infty}, \hspace{3.7ex} d_j^{\infty}: = \dim_{\k} P_j^{\infty}.
\end{align*}
Note that $F_{\infty}^{\lambda}=c_{\lambda}^{-1}(T(W_{\lambda}))$ and $F_{\infty}^{\infty}=c^{-1}(T(W))$, where $T(\cdots)$ means taking the torsion submodule.
\begin{rmk}
    The motivation is the canonical equivariant map $E^-c_{\lambda}^*: E^-QH^*(Y)\to E^{-}HF^*(H_{\lambda})$, where equivariant formality will ensure that $E^-QH^*(Y)$ is a free $\ku$-module of finite rank.
\end{rmk}

\begin{lm}\label{Lemma dj inequality from Ps}
    $F_j^{\lambda}\subset F_j^{\mu}\subset V$ for all $\lambda\leq \mu$, and those inclusions induce embeddings
    $$P_j^{\lambda}\hookrightarrow P_j^{\mu} \hookrightarrow P:=V/uV,$$
    so in particular $d_j^{\lambda} \leq d_j^{\mu} \leq \dim_{\k}P$.
\end{lm}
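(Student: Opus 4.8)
The plan is to reduce everything to the functoriality of canonical filtrations already established in \cref{Lemma valuation on free mod}, together with the compatibility of the direct-limit maps. First I would prove the inclusion $F_j^{\lambda}\subset F_j^{\mu}$. Let $g_{\lambda,\mu}: W_{\lambda}\to W_{\mu}$ be the structure map of the directed system, so that $c_{\mu}=g_{\lambda,\mu}\circ c_{\lambda}$. By \cref{Lemma valuation on free mod}, $g_{\lambda,\mu}$ is filtration-preserving for the canonical filtrations, i.e.\ $g_{\lambda,\mu}(F_j(W_{\lambda}))\subset F_j(W_{\mu})$, equivalently $g_{\lambda,\mu}^{-1}(F_j(W_{\mu}))\supset F_j(W_{\lambda})$. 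Hence for $v\in F_j^{\lambda}=c_{\lambda}^{-1}(F_j(W_{\lambda}))$ we get $c_{\mu}(v)=g_{\lambda,\mu}(c_{\lambda}(v))\in g_{\lambda,\mu}(F_j(W_{\lambda}))\subset F_j(W_{\mu})$, so $v\in F_j^{\mu}$. This gives $F_j^{\lambda}\subset F_j^{\mu}$. (The same argument with $g_{\lambda,\mu}$ replaced by the canonical map $W_{\lambda}\to W$ gives $F_j^{\lambda}\subset F_j^{\infty}$, which is not asked here but is the obvious next step.)

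Next I would pass to the quotients $P_j^{\lambda}=F_j^{\lambda}/uF_{j-1}^{\lambda}$. By \cref{Prop Fjc is finitely u filtered}, $V$ is finitely $u$-filtered by $F_j^{\lambda}$ (and by $F_j^{\mu}$), since $V$ is finitely generated; in particular condition (2) of \cref{Definition finitely ufiltered} gives $uF_{j-1}^{\lambda}=uV\cap F_j^{\lambda}$ and likewise for $\mu$. By \cref{Lemma finite invariants from Fj1 over u Fj}, the evaluation map $\mathrm{ev}_0: V\to P=V/uV$ restricts to $\k$-linear embeddings $P_j^{\lambda}=F_j^{\lambda}/uF_{j-1}^{\lambda}\hookrightarrow P$ and $P_j^{\mu}\hookrightarrow P$, whose images are $\mathrm{ev}_0(F_j^{\lambda})$ and $\mathrm{ev}_0(F_j^{\mu})$ respectively. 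Since $F_j^{\lambda}\subset F_j^{\mu}$ we have $\mathrm{ev}_0(F_j^{\lambda})\subset \mathrm{ev}_0(F_j^{\mu})$ as subspaces of $P$, and these images are exactly the images of the embeddings, so we obtain a commuting triangle of inclusions $P_j^{\lambda}\hookrightarrow P_j^{\mu}\hookrightarrow P$. Taking $\k$-dimensions yields $d_j^{\lambda}\leq d_j^{\mu}\leq \dim_{\k}P$, the last finiteness being part of \cref{Definition finitely ufiltered}(1) via \cref{Prop Fjc is finitely u filtered}.

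There is no serious obstacle here; the one point to be careful about is that the inclusion $F_j^{\lambda}\subset F_j^{\mu}$ of submodules of $V$ descends to an \emph{injection} of quotients $P_j^{\lambda}\hookrightarrow P_j^{\mu}$ rather than merely a map, and that $\dim_{\k} P^{\lambda}_j \le \dim_\k P^\mu_j$. This is what the factorization through $\mathrm{ev}_0: V\to P$ secures: both $P_j^{\lambda}$ and $P_j^{\mu}$ are identified with honest subspaces of the single fixed space $P$, namely $\mathrm{ev}_0(F_j^{\lambda})$ and $\mathrm{ev}_0(F_j^{\mu})$, and inclusion of the $F$-modules becomes inclusion of these subspaces. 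I would phrase the proof so that it is manifestly just this observation, citing \cref{Lemma valuation on free mod}, \cref{Prop Fjc is finitely u filtered}, and \cref{Lemma finite invariants from Fj1 over u Fj}, with a one-line remark that the analogous inclusions with $\infty$ in place of $\mu$ follow identically.
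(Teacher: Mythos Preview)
Your proposal is correct and follows essentially the same route as the paper: use compatibility of the directed system together with \cref{Lemma valuation on free mod} to get $F_j^{\lambda}\subset F_j^{\mu}$, then invoke \cref{Lemma finite invariants from Fj1 over u Fj} to embed both $P_j^{\lambda}$ and $P_j^{\mu}$ as subspaces of $P$ via $\mathrm{ev}_0$, so that the inclusion of the $F$'s becomes an inclusion of subspaces. Your version is slightly more explicit (e.g.\ spelling out the role of \cref{Prop Fjc is finitely u filtered} and why the quotient map is injective), but the argument is the same.
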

\begin{proof}
   By compatibility, the composite $V \to W_{\lambda} \to W_{\mu}$ equals $V \to W_{\mu}$.
    By \cref{Lemma valuation on free mod}, $F_j(W_{\lambda}) \to F_j(W_{\mu})$ via $W_{\lambda} \to W_{\mu}$.
    Thus, via that composite, $F_j^{\lambda} \to F_j(W_{\lambda}) \to F_j(W_{\mu})$, so $F_j^{\lambda} \subset F_j^{\mu}$. By \cref{Lemma finite invariants from Fj1 over u Fj}, we may embed $P_j^{\lambda}\hookrightarrow P$, so viewed inside $P$ we have $P_j^{\lambda}\subset P_j^{\mu} \subset P$ because $F_j^{\lambda}\subset F_j^{\mu}$.
\end{proof}

\subsection{The injectivity property, and the Young diagrams}
\label{Subsection The injectivity property, and the Young diagrams}
\label{Remark structure of the dj dimensions in applications}

\begin{prop}[Injectivity Property]\label{Proposition when iso after localise get finite invariants}
Suppose the $u$-localisation of $c$ is an isomorphism. Then 
 $c: V \to W$ and all $c_{\lambda}: V \to W_{\lambda}$ are injective.
\end{prop}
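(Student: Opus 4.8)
The plan is to exploit that $V$ is a free $\ku$-module of finite rank, so that $u$-localisation $V \to V_u = V \otimes_{\ku}\kuu$ is injective. First I would observe that the direct limit $c = \varinjlim c_\lambda$ factors as $V \xrightarrow{c_\lambda} W_\lambda \to W$ for each $\lambda$, so it suffices to prove that $c: V \to W$ is injective; injectivity of each $c_\lambda$ then follows, since $\ker c_\lambda \subset \ker c$. Now consider the commuting square relating $c$ to its $u$-localisation $c_u: V_u \to W_u$, together with the canonical localisation maps $V \to V_u$ and $W \to W_u$. By hypothesis $c_u$ is an isomorphism, hence injective. The left vertical map $V \to V_u$ is injective because $V$ is free (equivalently $u$-torsion-free) of finite rank: an element $v \in V$ with $v \otimes 1 = 0$ in $V_u$ would be $u$-torsion, hence zero. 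Chasing the square: if $c(v) = 0$ then $c_u(v \otimes 1) = 0$ in $W_u$, so $v \otimes 1 = 0$ in $V_u$ by injectivity of $c_u$, so $v = 0$ by injectivity of $V \to V_u$. Thus $c$ is injective.

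For the maps $c_\lambda: V \to W_\lambda$, the same diagram chase works directly once we know $(c_\lambda)_u$ is injective, but a cleaner route is the factorisation above: the canonical map $W_\lambda \to W$ need not be injective, so instead I use that $c = (\text{natural map } W_\lambda \to W) \circ c_\lambda$ as maps $V \to W$, and since $c$ is injective, so is $c_\lambda$. This avoids having to analyse $(c_\lambda)_u$ or the structure of $W_\lambda$ at all.

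The only point requiring a little care — and the one I expect to be the main (minor) obstacle — is justifying that $V \to V_u$ is injective and that "$c_u$ an isomorphism" genuinely gives what we need: one must be slightly careful that $W$ itself may have $u$-torsion (as emphasised in \cref{Subsection induced filtration for V and W}), so the right vertical map $W \to W_u$ is \emph{not} injective in general, and hence we cannot conclude anything about $c$ by naively chasing through $W_u$ on the right. The correct chase goes \emph{only} through $V_u$: from $c(v) = 0$ we push forward along $V \to V_u$, use that $c_u$ is injective on $V_u$ (a consequence of it being an isomorphism), conclude $v \otimes 1 = 0$ in $V_u$, and then use freeness of $V$ to conclude $v = 0$. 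No properties of $W$ or $W_\lambda$ beyond the compatibility of the directed system with the $c_\lambda$ are used. This is essentially \cref{Prop injectivity theorem intro} from the introduction, now in the abstract algebraic setting, and the argument is short.
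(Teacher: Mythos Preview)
Your proposal is correct and follows essentially the same route as the paper's proof: both use the commutative square relating $c$ to its $u$-localisation $c_u$, observe that $V \to V_u$ is injective since $V$ is free, and conclude that $V \to W$ (hence each $c_\lambda$, via the factorisation $c = (\text{map }W_\lambda \to W)\circ c_\lambda$) is injective. The paper packages this into a single diagram including $W_\lambda$ in the middle column, but the diagram chase and the key observations are identical to yours.
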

\begin{proof}
Recall $V_u=V\otimes_{\ku}\kuu$ denotes localisation at $u$.
Consider the commutative diagram
$$
\begin{tikzcd}
V 
\arrow[rr,""',"c", bend left=30]
 \arrow[r, "c_{\lambda}"] 
 \arrow[hook,d, "\mathrm{id}\otimes 1"']
 &
 W_{\lambda}
\arrow[r, "\varinjlim"] \arrow[d, "\mathrm{id}\otimes 1"']
& W \arrow[d, "\mathrm{id}\otimes 1"] 
\\
V_u \arrow[r,"c_{\lambda}\otimes \mathrm{id}"]
\arrow[rr,"\cong"',"c\otimes \mathrm{id}", bend right=30]
&
 (W_{\lambda})_u
\arrow[r, "\varinjlim\otimes \mathrm{id}"] 
& (W)_u
\end{tikzcd}
$$
where the left vertical arrow is injective as $V$ is a free $\ku$-module; and the bottom composite horizontal arrow is an isomorphism by the hypothesis.
So the top horizontal composite is injective.
\end{proof}

In applications, $c_{\lambda}$ will be an isomorphism for small $\lambda>0$; there is only a discrete sequence of slopes $0=\lambda_0<\lambda_1<\lambda_2<\cdots \to \infty$ for which $c_{\lambda},W_{\lambda}$ are not defined; $W_{\lambda}\to W_{\mu}$ in the directed system is an isomorphism if $\lambda,\mu \in (\lambda_k,\lambda_{k+1})$ for some $k\in \N$; and \cref{Proposition when iso after localise get finite invariants} applies. Abusively write $d_j^{\lambda_k}$ to mean $d_j^{\lambda}$ for $\lambda_k<\lambda<\lambda_{k+1}$.
Abbreviate $a=\mathrm{rank}_{\ku}V$.
We obtain partitions
\begin{equation*}
    \begin{split}
        D^{\lambda_1}&:= d_1^{\lambda_1} + d_2^{\lambda_1}+\cdots\\
        \cdots\\
        D^{\lambda_k}&:= d_1^{\lambda_k} + d_2^{\lambda_k}+\cdots
    \end{split}
\end{equation*}
for a non-decreasing sequence $0=D^{\lambda_0}\leq D^{\lambda_1}\leq D^{\lambda_2}\leq \cdots$.

As in \cref{Example Young diagram and dual},
the $D^{\lambda_k}$-partition yields a {\bf Young diagram} of shape $(d_1^{\lambda_k},d_2^{\lambda_k},\ldots)$, so row-lengths $d_1^{\lambda_k}\geq d_2^{\lambda_k} \geq \cdots \geq 0$ where we ignore the tail of zeros at the end.
Let's call $y_k$ the number of columns in the Young diagram for $D^{\lambda_k}$. Swapping rows/columns defines the {\bf dual Young diagram} (in French notation), with $y_k$ rows of lengths, say, $j_{a-y_k+1}^{\lambda_k}\leq \cdots\leq  j_{a-1}^{\lambda_k} \leq j_a$. These are the $j_i$ indices that determine the invariant factors $u^{j_i}$; the missing indices are $j_1^{\lambda_k}=\cdots =j_{a-y_k}^{\lambda_k}=0$. 

\cref{Lemma dimension calculation for sum of ds} will imply that the sum of the first $j$ row-lengths of the Young diagram is:
$$d_1^{\lambda_k} + d_2^{\lambda_k}+\cdots+ d_j^{\lambda_k} = \dim_{\k} F_j^{\lambda_k}/u^j V,$$ 
which stabilises for $j$ large enough, and \cref{Cor sum of coeffs of filtration poly} will describe the total size: 
$$D^{\lambda_k}=d_1^{\lambda_k} + d_2^{\lambda_k}+\cdots=j_1^{\lambda_k}+\cdots +j_a^{\lambda_k}.$$

By \cref{Lemma dj inequality from Ps}, $d_i^{\lambda_k}\leq d_i^{\lambda_{k+1}}$, therefore the 
Young diagrams are contained inside each other as we increase $k$. Therefore the dual Young diagrams are contained inside each other as well (in this case, due to the French notation, when $y_k$ increases, new rows can be inserted at the top); this also follows from the definition of the invariant factors $u^{j_i}$: the $j_i$ cannot decrease in the slope parameter, so $j_i^{\lambda_k}\leq j_i^{\lambda_{k+1}}$.
In each case, we get a directed path in Hasse's diagram of Young's lattice. We call these the {\bf dimension lattice for $V$} and the {\bf invariant factors lattice for $V$}.

\section{Algebra II: the models $V^-$, $V^{\infty}$, and $V^+$}
\label{Section Algebraic preliminaries II}

\subsection{The $W^-,W^{\infty},W^+$ models}\label{Subsection the Wminus plus and infinity models}
Continuing with $C^*$ as in \cref{Subsection Canonical u valuation on cohomology}, consider the $\ku$-modules
\begin{equation}\label{defn F C^+ and W^+}
    \begin{split}
W^-& :=W = H(C^*),\\
W^{\infty} &:= H(C^*_u) \cong H(C^*\otimes_{\ku} \kuu) \cong W^- \otimes_{\ku} \kuu \cong W_u^-, \\
W^+ &:=H(C^+) \qquad \textrm{ where }\quad C^+:=C^*_u/uC^*\cong C^*\otimes_{\ku} \mathbb{F} \quad \textrm{ and }\quad \mathbb{F}:=\kuu/u\ku.
    \end{split}
\end{equation}
\begin{ex}\label{Example of d is ucubed} For $d=\left(\begin{smallmatrix} 0 & 0 \\ u^3 & 0 \end{smallmatrix}\right)$ on $C^*=\ku\oplus \ku$, $\ker d = 0 \oplus \ku$ and $\mathrm{im}\,d = 0\oplus u^3\ku$.
Then \eqref{Equation A matrix from differential} is
the row matrix $A=\left( u^3 \,\; 0 \right):\ku\oplus \ku \to \ku$. We get $W^-=T\cong T_3$ and $W^{\infty}=0$. For $C^+$, we get $\ker d=\textrm{span}_{k}\{u^{-2},u^{-1},u^{0}\}\oplus \mathbb{F}$ and $\mathrm{im}\,d=0\oplus \mathbb{F}$, so $W^+=\textrm{span}_{k}\{u^{-2},u^{-1},u^{0}\} \cong T_3$.
\end{ex}

\subsection{The long and short exact sequences}
\label{Subsection the long and short exact sequences}

In what follows, we keep track of gradings as if they were $\Z$-gradings, placing $u$ in degree $|u|=2$, and $W^-[-2]$ shifts gradings so that $(W^-[-2])_m=W^-_{m-2}$ and $u\otimes 1:W^-[-2]\to W^{\infty}$ is grading-preserving.

\begin{lm}\label{Lemma LES for W modules}
    There is a long exact sequence
\begin{equation}\label{Equation LES on W modules}
\begin{tikzcd}
\cdots \arrow[r,""]
& W^-[-2] 
\arrow[r,"u\otimes 1"] %
&
 W^{\infty}
\arrow[r, ""] 
& W^+  \arrow[r, "\textrm{connecting}","\textrm{map}"'] 
&[2em] W^-[-1] \arrow[r,""]
& \cdots
\end{tikzcd}
\end{equation}
The connecting map $W^+\to W^-[-1]$ is formally $u^{-1}d$ where $d$ is the $C^*_u$-differential at chain level.

Explicitly, the LES \eqref{Equation LES on W modules} decomposes (non-canonically) into the exact sequence
\begin{equation}\label{Equation LES on W modules 2}
\begin{tikzcd}
0 \arrow[r,""]
& u\ku^r \arrow[r,"\subset"] %
&
 \kuu^r
\arrow[r, ""] 
& W^+ \cong T[-1] \oplus \mathbb{F}^r \arrow[r,"\textrm{project}"]
& T[-1] \arrow[r,""]
& 0
\end{tikzcd}
\end{equation}
where the outside terms are the summands in \eqref{Equation W is T plus kur}, and  $\mathbb{F}^r\cong \kuu^r/u\ku^r$ is $\mathrm{coker}\,(u\otimes 1)$.

In particular, if $W$ is free then \eqref{Equation LES on W modules} is a short exact sequence $0\to W^-[-2]\to W^{\infty} \to W^+\to 0$. 
\end{lm}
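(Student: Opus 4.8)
The statement to prove is \cref{Lemma LES for W modules}: the existence of the long exact sequence \eqref{Equation LES on W modules}, the chain-level description of the connecting map, its decomposition \eqref{Equation LES on W modules 2}, and the short exact sequence conclusion when $W$ is free. The natural approach is to start at chain level. The key observation is the short exact sequence of cochain complexes
$$
0 \longrightarrow C^*[-2] \stackrel{u\otimes 1}{\longrightarrow} C^*_u \longrightarrow C^+ \longrightarrow 0,
$$
where $C^+ := C^*_u/uC^*$ by definition \eqref{defn F C^+ and W^+}. Here the first map is injective because $C^*$ is $u$-torsion-free (it is a complex of free $\ku$-modules, cf.\ \cref{Subsection Canonical u valuation on cohomology}), and its image inside $C^*_u$ is exactly $uC^*$, so the quotient is indeed $C^+$. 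The grading shift $[-2]$ is forced by the convention $|u|=2$, exactly so that $u\otimes 1$ is degree-preserving. Passing to cohomology gives the long exact sequence \eqref{Equation LES on W modules} with $W^- = H(C^*)$, $W^{\infty} = H(C^*_u)$, $W^+ = H(C^+)$, where we also use the canonical identifications $H(C^*_u)\cong H(C^*)\otimes_{\ku}\kuu \cong W^-_u$ from flatness of $u$-localisation (as recorded in \eqref{defn F C^+ and W^+} and \cref{Lemma ulocalising W- is Winfty}). The chain-level formula for the connecting homomorphism is the standard snake-lemma recipe: lift a cocycle $z\in C^+$ to $\tilde z\in C^*_u$, apply $d$, and divide by $u$; since $d\tilde z$ is $u$-divisible in $C^*_u$ (as $dz=0$ in $C^+$), this yields a well-defined class $u^{-1}d\tilde z\in W^-[-1]$, which is the description claimed.

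For the decomposition \eqref{Equation LES on W modules 2}, I would invoke the explicit Smith-normal-form analysis of the differential already carried out in the proof of \cref{Corollary uvaluation maximises chain rep}: writing $A:\ku^n\cong C^*\to \ker d\cong \ku^m$ in the form $\mathrm{diag}(u^{k_1},\dots,u^{k_{m-r}},0,\dots,0)$, we get $W^-\cong T\oplus\ku^r$ with $T\cong T_{k_1}\oplus\cdots\oplus T_{k_{m-r}}$ as in \eqref{Equation W is T plus kur}, hence $W^{\infty}\cong W^-_u\cong \kuu^r$ (the torsion dies under localisation). The map $u\otimes 1: W^-[-2]\to W^{\infty}$ therefore factors as $T\oplus\ku^r\to \kuu^r$, killing $T$ and sending $\ku^r$ into $\kuu^r$ by multiplication by $u$, i.e.\ onto $u\ku^r$; its cokernel is $\kuu^r/u\ku^r\cong\mathbb{F}^r$ and its kernel is $T[-2]$. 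The long exact sequence then breaks into the four-term exact sequence displayed in \eqref{Equation LES on W modules 2}, with $W^+\cong T[-1]\oplus\mathbb{F}^r$ sitting in the middle — the $T[-1]$ summand being precisely $\ker(u\otimes 1)$ shifted by the connecting map (this matches \cref{Example of d is ucubed}, where $T\cong T_3$, $W^{\infty}=0$, $W^+\cong T_3$). The final assertion is then immediate: if $W=W^-$ is free then $T=0$, so the connecting map vanishes and \eqref{Equation LES on W modules} collapses to $0\to W^-[-2]\stackrel{u\otimes 1}{\to}W^{\infty}\to W^+\to 0$.

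The only genuinely delicate point is bookkeeping with the grading shifts and with the identification of $\mathrm{coker}(u\otimes 1)$ and $\ker(u\otimes 1)$ in terms of the torsion and free parts of $W$ — in particular making precise that the $T[-1]$ appearing in $W^+$ is the image of the torsion of $W^-$ under the connecting map rather than an independent summand. This is not conceptually hard; it is just a matter of tracking the snake lemma through the explicit Smith normal form, and it has essentially already been set up in \cref{Corollary uvaluation maximises chain rep}. Everything else is the standard homological algebra of a short exact sequence of complexes, so no new ideas are needed beyond assembling the pieces already in the excerpt.
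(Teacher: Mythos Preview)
Your proposal is correct and follows essentially the same approach as the paper: obtain the LES from the short exact sequence of complexes $0\to C^*[-2]\stackrel{u}{\to}C^*_u\to C^+\to 0$ (the paper phrases this as tensoring $0\to\ku\stackrel{u}{\to}\kuu\to\mathbb{F}\to 0$ with the free module $C^*$), read off the connecting map as $u^{-1}d$ via the snake lemma, and then use the Smith normal form of $d$ from the proof of \cref{Corollary uvaluation maximises chain rep} to identify the pieces of the LES explicitly. The paper's proof is slightly more explicit in tracking how the generator $u^{-k_i+1}e_i$ of each $T_{k_i}$-summand of $W^+$ maps under $u^{-1}d$ to the generator $\widetilde{e}_i$ of the corresponding $T_{k_i}$-summand of $T\subset W^-$, but this is exactly the bookkeeping you flagged as the only delicate point.
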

\begin{proof}
The short exact sequence $0 \to \ku \stackrel{u}{\to} \kuu \to \mathbb{F} \to 0$ tensored with the free $\ku$-module $C^*$ yields a short exact sequence of cochain complexes, which induces the LES in \eqref{Equation LES on W modules}. 

For $c\in C^+$, $c$ is a cycle if and only if $d(c)\in uC^{*-1}$, meaning it lies in the image of $u:C^*\to C^*_u$. The connecting map is by definition the $C^{*-1}$-factor of that, i.e.\,formally $u^{-1}d(c)$.

The second claim essentially follows from \cref{Lemma canonical valuation on fg module}. We first describe $T$ and the connecting map more explicitly. Let $x=q_0+uq_1+u^2q_2+\cdots$ with $q_j\in \k$. If $x \in C^*$ is a cycle whose homology class is $u^k$-torsion (for $k\geq 0$), then $u^k(q_0+uq_1+\cdots)=d(b_0+ub_1+\cdots)$ for some $b_j \in \k$.
That exactness is detected earlier by $W^{\infty}$ since $x = d(u^{-k}b_0+u^{1-k}b_1+\cdots)$ in $C^*_u$. Via the $u^{-1}d$ connecting map $W^+\to T[-1]\subset W^-[-1]$ we have
$u^{1-k}b_0+u^{2-k}b_1+\cdots\mapsto x$.
We can describe $W^+ \to W^-[-1]$ more explicitly. Recall the matrix expression of the chain differential in the proof of \cref{Corollary uvaluation maximises chain rep}: $d=\mathrm{diag}(u^{k_1},\ldots,u^{k_{m-r}},0,\ldots,0):\ku^n \to \ku^m$. 
By considering the induced map $d: \mathbb{F}^n \to \mathbb{F}^m$, we see that the last $r$ coordinates yield an $\mathbb{F}^r\cong \kuu^r/u\ku^r$ summand in $W^+$ as in the claim, and it remains to discuss the first $n-r$ coordinates. 
By the proof of \cref{Corollary uvaluation maximises chain rep}, 
$d(e_i)=u^{k_i}\widetilde{e}_i$ yields a $\ku/(u^{k_i})$-summand for $W^+$ generated by $u^{-k_i+1}e_i$. 
The connecting map $u^{-1}d$ maps $u^{-k_i+1}e_i$ to $\widetilde{e}_i\in T$, which is the generator of the $T_{k_i}$-summand in \eqref{Equation W is T plus kur} for $W^-=W$.
\end{proof}

\begin{lm}\label{Lemma WF and the SES for W}
   The $\mathbb{F}^r$-summand in \eqref{Equation LES on W modules 2} is canonically identifiable with the $\ku$-submodule
\begin{equation}\label{Equation definition of WplusF}
W^+_{\mathbb{F}}:=\bigcap_{n\in \N} u^n W^+ = \mathrm{im}(W^{\infty}\to W^+)\subset W^+,
\end{equation}
and \eqref{Equation LES on W modules} yields the short exact sequence $0 \to (W^-/T)[-2] \to W^{\infty} \to W^+_{\mathbb{F}}\to 0.$
\end{lm}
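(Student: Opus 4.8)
The plan is to read off the claimed short exact sequence directly from the long exact sequence \eqref{Equation LES on W modules} of \cref{Lemma LES for W modules}, using its explicit decomposition \eqref{Equation LES on W modules 2}; the work is all in identifying the relevant pieces canonically.

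First I would identify $W^+_{\mathbb{F}}:=\bigcap_{n\in\N}u^nW^+$ with the $\mathbb{F}^r$-summand of \eqref{Equation LES on W modules 2}. By \cref{Lemma LES for W modules}, $W^+\cong T[-1]\oplus\mathbb{F}^r$ with $T=\mathrm{Torsion}(W^-)$ and $\mathbb{F}^r=\mathrm{coker}(u\otimes 1\colon W^-[-2]\to W^\infty)$. Since $W^-=H(C^*)$ is finitely generated over the DVR $\ku$, $T$ is a finite torsion module, so $u^nT=0$ for $n$ large and $\bigcap_n u^n(T[-1])=0$; whereas multiplication by $u$ is surjective on $\mathbb{F}=\kuu/u\ku$ (every principal part is $u$ times a principal part), so $u^n\mathbb{F}^r=\mathbb{F}^r$ for all $n$. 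Working summand-wise, $\bigcap_n u^nW^+=\mathbb{F}^r$; in particular this summand does not depend on the non-canonical splitting, which is what ``canonically identifiable'' means. Then exactness of \eqref{Equation LES on W modules} at $W^+$ gives $\mathrm{im}(W^\infty\to W^+)=\ker(W^+\to W^-[-1])$, and since under the decomposition the connecting map $W^+\to W^-[-1]$ is exactly the projection onto $T[-1]$ (as computed in the proof of \cref{Lemma LES for W modules}), its kernel is the $\mathbb{F}^r$-summand. Hence $\mathrm{im}(W^\infty\to W^+)=W^+_{\mathbb{F}}$, giving the first assertion.

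For the short exact sequence, note that $u\otimes 1\colon W^-[-2]\to W^\infty=W^-_u$ annihilates the torsion submodule $T$, and since the kernel of the localisation map $W^-\to W^-_u$ is precisely the $u$-torsion (i.e.\ $T$), the induced map $(W^-/T)[-2]\to W^\infty$ is injective. By exactness of \eqref{Equation LES on W modules} at $W^\infty$, its image equals $\ker(W^\infty\to W^+)$, which equals $\ker(W^\infty\to W^+_{\mathbb{F}})$ once we corestrict $W^\infty\to W^+$ to its image $W^+_{\mathbb{F}}$ (surjective by the previous paragraph). Therefore
$$
0 \to (W^-/T)[-2] \to W^\infty \to W^+_{\mathbb{F}} \to 0
$$
is exact, and all three maps are grading-preserving: $u\otimes 1$ on $W^-[-2]$ is grading-preserving by the convention $|u|=2$ recorded before \cref{Lemma LES for W modules}, and $W^\infty\to W^+$ is induced by the degree-$0$ chain map $C^*_u\to C^+$. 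I do not expect a serious obstacle here — the deduction is routine once \eqref{Equation LES on W modules 2} is available; the only point needing care is the canonicity claim, which is handled precisely by exhibiting the two choice-free descriptions $\bigcap_n u^nW^+$ and $\mathrm{im}(W^\infty\to W^+)$ and matching both to the $\mathbb{F}^r$-summand via the finiteness of $T$ and the $u$-divisibility of $\mathbb{F}$.
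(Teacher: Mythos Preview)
Your proof is correct and follows the same approach as the paper: both hinge on the surjectivity of $u$ on $\mathbb{F}$ together with the finiteness of the torsion $T$ to identify $\mathbb{F}^r$ as $\bigcap_n u^nW^+$, and then read off the remaining assertions from the long exact sequence \eqref{Equation LES on W modules 2}. The paper's proof is a single sentence (it only records the canonical recovery of $\mathbb{F}^r$ via $u^n$-preimages), whereas you spell out the equality with $\mathrm{im}(W^\infty\to W^+)$ and the deduction of the short exact sequence explicitly; this is the same argument, just written out more fully.
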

\begin{proof}
Note $u: \mathbb{F}\to \mathbb{F}$ is surjective, so $\mathbb{F}^r \subset T[-1]\oplus \mathbb{F}^r$ can be canonically recovered as the collection of elements that admit $u^n$-preimages for arbitrary $n\in \N$.
\end{proof}

\begin{rmk}\label{Remark about free part of W+}
Note that $W^+$ is entirely $u$-torsion, in particular there is no interesting valuation on $W^+$.
Unlike $W^+_{\mathbb{F}}$, the $T[-1]$-summand in $W^+$ has no intrinsic description: it involves choices.\footnote{One could consider a splitting of $0\to \ker d \to C^* \to \mathrm{im}\,d \to 0$ by lifting a basis from $\mathrm{im}\, d$ to produce a free submodule $D^*\subset C^*$ complementary to $\ker d$, and finally require $T[-1]$ to have its representatives in $D^*$. 
This would allow us to define a filtration on $W^+$ by declaring this choice of $T[-1]$ to lie inside $F_j(W^+)$ for all $j\in \Z$.} 
\end{rmk}

Let $W_{\lambda}=H^*(C^*_{\lambda})$, labelled by an unbounded subset of $\R_{>0}$, together with chain maps $c_{\mu,\lambda}:C^*_{\lambda} \to C^*_{\mu}$ for $\lambda<\mu$ so that the induced maps $W_{\lambda} \to W_{\mu}$ are compatible under composition. Abbreviate $W:=\varinjlim W_{\lambda}$.
Let $T,T_{\lambda}$ denote the torsion of $W,W_{\lambda}$ respectively. 
By the construction in \cref{Subsection the Wminus plus and infinity models}, using the obvious notation, we obtain $W=W^-=\varinjlim W^-_{\lambda}$, $W^{\infty}=\varinjlim W^{\infty}_{\lambda}$,  and $W^+=\varinjlim W^+_{\lambda}.$

\begin{cor}\label{Cor LES for ESH abstractly}
The torsion satisfies $T = \varinjlim T_{\lambda}$.
There is a long exact sequence 
\begin{equation}\label{Equation LES connecting map}
\begin{tikzcd}
\cdots \arrow[r,""]
& W^-[-2] 
\arrow[r,"u\otimes 1"] %
&
W^{\infty}
\arrow[r, ""] 
& W^+  \arrow[r, "\textrm{connecting}","\textrm{map}"'] 
&[2em] W^-[-1] \arrow[r,""]
& \cdots
\end{tikzcd}
\end{equation}
and a short exact sequence 
\begin{equation}
\begin{tikzcd}
0 \arrow[r,""]
& W^-[-2]/T 
\arrow[r,"u\otimes 1"] %
&
W^{\infty}
\arrow[r, ""] 
& \varinjlim W^+_{\lambda,\mathbb{F}}  \arrow[r,""]
& 0,
\end{tikzcd}
\end{equation}
where $W^+_{\lambda,\mathbb{F}}:=\cap u^{-n}(W^+_{\lambda})$, and
$\varinjlim W^+_{\lambda,\mathbb{F}} \subset W^+_{\mathbb{F}}=\cap u^{-n}(W^+)$, which is not necessarily an equality.
\end{cor}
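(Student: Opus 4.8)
The plan is to deduce Corollary \ref{Cor LES for ESH abstractly} from Lemmas \ref{Lemma LES for W modules} and \ref{Lemma WF and the SES for W} by passing to the direct limit, using that direct limits are exact functors on the category of $\ku$-modules (filtered colimits commute with kernels and cokernels, hence preserve exactness of sequences).

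First I would verify $T=\varinjlim T_\lambda$. The inclusion $\varinjlim T_\lambda \subseteq T$ is clear since each torsion element of $W_\lambda$ maps to a torsion element of $W$. Conversely, if $w\in W$ is $u^k$-torsion, lift $w$ to some $w_\lambda\in W_\lambda$; then $u^k w_\lambda$ maps to $0$ in $W$, so $u^k w_\lambda$ is killed by the map $W_\lambda\to W_\mu$ for some $\mu>\lambda$, i.e. the image $w_\mu$ of $w_\lambda$ in $W_\mu$ satisfies $u^k w_\mu=0$, so $w_\mu\in T_\mu$ and $w\in \varinjlim T_\lambda$. (Here I use that the directed system is over an unbounded subset of $\R_{>0}$, which is directed.)

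Next, for each $\lambda$, Lemma \ref{Lemma LES for W modules} gives the long exact sequence \eqref{Equation LES on W modules} for $W_\lambda$, and these are compatible with the structure maps $W_\lambda\to W_\mu$ (since those come from chain maps $C^*_\lambda\to C^*_\mu$, which commute with the degree-wise construction $C^*\mapsto (C^*,C^*_u,C^+)$ and hence with all the connecting maps). Taking $\varinjlim$ over $\lambda$ of the long exact sequence \eqref{Equation LES on W modules} for $W_\lambda$, and using exactness of $\varinjlim$ together with the identifications $\varinjlim W^\karo_\lambda = W^\karo$ for $\karo\in\{-,\infty,+\}$ recorded just before the Corollary, yields \eqref{Equation LES connecting map}. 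Similarly, Lemma \ref{Lemma WF and the SES for W} gives, for each $\lambda$, the short exact sequence $0\to (W^-_\lambda/T_\lambda)[-2]\to W^\infty_\lambda \to W^+_{\lambda,\mathbb{F}}\to 0$; taking $\varinjlim$ (again exact) and using $\varinjlim(W^-_\lambda/T_\lambda)=(\varinjlim W^-_\lambda)/(\varinjlim T_\lambda)=W^-/T$ gives the desired short exact sequence with middle-right term $\varinjlim W^+_{\lambda,\mathbb{F}}$.

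Finally I would address the last assertion, that $\varinjlim W^+_{\lambda,\mathbb{F}}\subseteq W^+_{\mathbb{F}}=\bigcap_n u^{-n}W^+$ need not be an equality. The inclusion is automatic: an element of $W^+_{\lambda,\mathbb{F}}$ is $u^{-n}$-divisible in $W^+_\lambda$ for every $n$, hence its image in $W^+$ is $u^{-n}$-divisible for every $n$. Inequality can fail because $u^{-n}$-divisibility of $w\in W^+$ only requires, for each $n$, a preimage that may live arbitrarily far out in the directed system; there need be no single $\lambda$ for which the preimages of all $u^n w$ are simultaneously available in $W^+_\lambda$. The main obstacle is really just making precise the exactness of $\varinjlim$ in this $\ku$-module setting and checking that the construction $C^*\rightsquigarrow(W^-,W^\infty,W^+)$ with its connecting maps is functorial in the chain maps $c_{\mu,\lambda}$; both are routine, and a concrete counterexample illustrating the strict inclusion (e.g. built from $W_\lambda=\ku$ with multiplication-by-$u$ structure maps, whose colimit is $\kuu$) can be given but is not needed for the statement as written.
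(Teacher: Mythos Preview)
Your proposal is correct and takes essentially the same approach as the paper: verify $T=\varinjlim T_\lambda$ by the standard lift-and-push-forward argument, then pass Lemmas \ref{Lemma LES for W modules} and \ref{Lemma WF and the SES for W} to the direct limit using exactness of filtered colimits. The paper is terser (citing \eqref{Equation LES on W modules 2} directly rather than \cref{Lemma WF and the SES for W}) and supplies the explicit counterexample $W_n=\ku/u^n$ with structure maps given by multiplication by $u$ in a remark afterward; your sketched counterexample ($W_\lambda=\ku$) would not actually exhibit strict inclusion, but as you note it is not needed for the statement.
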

\begin{proof}
    If $t\in T$, then $rt=0$ for some $r\neq 0 \in \ku$. Say $w_{\lambda}\in W_{\lambda}$ represents $t$ (some $\lambda$).
    Then $rw_{\lambda}$ represents $rt=0$, so $rw_{\mu}=0$ for some $\mu\geq \lambda$, where $w_{\mu}:=c_{\mu,\lambda}(w_{\lambda})=0$ via some map $c_{\mu,\lambda}:W_{\lambda}\to W_{\mu}$ of the directed system. Thus, $t$ is represented by $w_{\mu}\in T_{\mu}.$
    Conversely, if $w_{\mu}\in T_{\mu}$, then $rw_{\mu}=0$ for some $r\neq 0 \in \ku$, thus $rt=0$ for the class $t\in T$ represented by $w_{\mu}.$ 
Thus, $T=\varinjlim T_{\lambda}$.
    
The long exact sequence follows by taking the direct limit of \eqref{Equation LES on W modules}.

The short exact sequence follows from \eqref{Equation LES on W modules 2}, using that $T=\varinjlim T_{\lambda}.$
\end{proof}

\begin{rmk}
    The directed system $W_n=\ku/u^n$ with multiplication maps $u:W_n\to W_{n+1}$ will yield $W=\varinjlim W_n=\mathbb{F}$, with $W_{\mathbb{F}}=W$ even though $W_{n,\mathbb{F}}=0$, in particular $\varinjlim W_{n,\mathbb{F}}\subsetneq W_{\mathbb{F}}$.
\end{rmk}

\subsection{The $u$-adic filtration, and the $W^+$-spectral sequence}
\label{Subsection uadic filtration}
Recall $C^*$ is a free $\ku$-module with a differential $d=\delta_0 + u^{>0}$ of the form \eqref{Equation for the differential expanded}. 
The {\bf ordinary complex} is the quotient chain complex 
$$
\ord\,C:= C^*/uC^* \textrm{ with differential } \ord\,d:=[\delta_0] \textrm{ and cohomology }\ord\,W:=H(\ord\,C,\ord\,d).
$$
We call $\ord\,W$ the {\bf ordinary cohomology}, as opposed to $W:=H(C^*,d)$.

\begin{lm}\label{Lemma choice of basis to get ev0 complex}
 There is a natural long exact sequence
 $$
 \cdots \to \mathrm{ord}\,W[-1] \to W[-2] \stackrel{u}{\to} W \to \mathrm{ord}\,W\to \cdots
 $$
\end{lm}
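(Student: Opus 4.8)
The plan is to realise the desired long exact sequence as the cohomology long exact sequence of a short exact sequence of cochain complexes. Consider the sequence of $\ku$-module maps
\begin{equation}\label{Equation proof plan SES of complexes}
0 \longrightarrow C^*[-2] \stackrel{u}{\longrightarrow} C^* \longrightarrow \mathrm{ord}\,C \longrightarrow 0,
\end{equation}
where the first map is multiplication by $u$ (which is injective because $C^*$ is a free, hence $u$-torsion-free, $\ku$-module), and the second is the quotient map onto $\mathrm{ord}\,C=C^*/uC^*$. The grading shift $[-2]$ is exactly what makes multiplication by $u$ a degree-preserving chain map (recall $|u|=2$). The first thing I would check is that all three objects in \eqref{Equation proof plan SES of complexes} are genuine cochain complexes and the maps are chain maps: the differential $d$ on $C^*$ is $\ku$-linear, so it commutes with multiplication by $u$; and it descends to $\mathrm{ord}\,d=[\delta_0]$ on the quotient $C^*/uC^*$ by definition of the ordinary complex in \cref{Subsection uadic filtration}. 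Thus \eqref{Equation proof plan SES of complexes} is a short exact sequence of cochain complexes.

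Next I would invoke the standard cohomology long exact sequence associated to a short exact sequence of cochain complexes: from \eqref{Equation proof plan SES of complexes} one obtains
\begin{equation*}
\cdots \longrightarrow H^{*}(\mathrm{ord}\,C)[-1] \longrightarrow H^{*}(C^*[-2]) \stackrel{u}{\longrightarrow} H^{*}(C^*) \longrightarrow H^{*}(\mathrm{ord}\,C) \longrightarrow \cdots,
\end{equation*}
where the connecting homomorphism raises cohomological degree by one, which after applying the grading shift convention reads $\mathrm{ord}\,W[-1] \to W[-2]$. Unravelling the notation, $H^*(C^*[-2])=W[-2]$, $H^*(C^*)=W$, and $H^*(\mathrm{ord}\,C)=\mathrm{ord}\,W$ by the definitions in \cref{Subsection Canonical u valuation on cohomology} and \cref{Subsection uadic filtration}. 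This gives precisely the asserted long exact sequence
$$
\cdots \to \mathrm{ord}\,W[-1] \to W[-2] \stackrel{u}{\to} W \to \mathrm{ord}\,W\to \cdots.
$$

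There is essentially no hard part here: the only genuine content is the observation that $u: C^*[-2]\to C^*$ is injective, which uses freeness of $C^*$, together with bookkeeping of the grading shifts so that every arrow is degree-preserving and the connecting map lands in the advertised degree. If anything, the point requiring a sentence of care is naturality of the connecting map — that it is ``formally $u^{-1}\delta_0$'' on chain representatives, in the same spirit as the description of the connecting map in \cref{Lemma LES for W modules} — but for the bare statement as written, merely citing the snake lemma / the long exact sequence of a short exact sequence of complexes suffices. I would also remark that this is the degeneration of \eqref{Equation LES on W modules} in which $\kuu$ is replaced by $\k$ and $\mathbb{F}$ by $\k$: the same argument with the coefficient sequence $0\to \ku\xrightarrow{u}\ku\to\k\to 0$ tensored with the free module $C^*$.
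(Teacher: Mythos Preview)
Your proof is correct and takes essentially the same approach as the paper, which also derives the long exact sequence from the short exact sequence of complexes $0\to C^*[-2]\stackrel{u}{\to} C^*\to C^*/uC^*\to 0$, using freeness of $C^*$ for injectivity of multiplication by $u$. One minor slip in your optional aside: the connecting map is formally $u^{-1}d$, not $u^{-1}\delta_0$ (with a lift chosen in the $u^0$-part it reads $\delta_1+u\delta_2+\cdots$).
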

\begin{proof}
 This uses $[d]=[\delta_0]:C^*/uC^* \to C^*/uC^*$, and that $0\to C^*[-2] \stackrel{u}{\to} C^* \to C^*/uC^* \to 0 $ is a short exact sequence (using that $C^*$ is free).
\end{proof}

We use the notation $D^*\subset C^*$ from \cref{Subsection Canonical u valuation on cohomology}, so  $C^*=D^*\otimes_{\k} \ku$ as $\ku$-modules (not as chain complexes).
The chain complex $(D^*,\delta_0)$ yields a canonical chain isomorphism $D^* \to \mathrm{ord}\,C^*$ (induced by the quotient map $C^*\to C^*/uC^*=\mathrm{ord}\,C^*$). So $H^*(D^*,\delta_0)\cong \mathrm{ord}\,W$. 
Two such choices $D_1^*,D_2^*$ of $D^*$ are related by a $\k$-linear chain isomorphism $\psi_0:  D_1^* \to D_2^*$ compatible with the canonical isomorphisms to $\mathrm{ord}\,C^*$, where $\psi_0$ is the $u^0$-part of the relevant change of basis isomorphism $\psi: C^* \to C^*$, $\psi=\psi_0 + u \psi_1 + u^2\psi_2 + \cdots$, where $\psi_j$ are $\k$-linear maps that preserve the order of $u$-factors.

\begin{lm}\label{Lemma vanishing of the E+ version}
    The $u$-adic filtration induces a convergent spectral sequence of $\k$-vector spaces
    \begin{equation}
\label{Lemma spectral sequence SH to ESH} 
E_1^{pq} = \left\{ \begin{smallmatrix} \displaystyle H^{q-p}(D^*,\delta_0)\otimes u^p, & \displaystyle  \textrm{ if }p\leq 0\\ \displaystyle  0, & \displaystyle  \textrm{ if } p>0 \end{smallmatrix}\right.\Longrightarrow W^+:=H^*(C^+,d).
\end{equation}
 In particular, $\mathrm{ord}\,W=0$ implies $W^+=0$. (The analogous statement often fails for $W^-$ and $W^{\infty}$)
\end{lm}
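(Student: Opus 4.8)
The plan is to build the spectral sequence from the $u$-adic filtration on $C^+ = C^*_u/uC^*$, show that it converges, compute the $E_1$-page, and then read off the corollary about $\mathrm{ord}\,W$. The filtration I would use on $C^+$ is the decreasing $u$-power filtration: $F^pC^+ := \mathrm{im}(u^pC^* \to C^+)$ for $p\leq 0$ (and $F^pC^+ = 0$ for $p > 0$, since every element of $C^+$ is represented by a Laurent polynomial in $u$ with only negative-or-zero powers modulo $uC^*$). Concretely, using the splitting $C^* = D^*\otimes_\k \ku$ so that $C^+ \cong D^*\otimes_\k (\kuu/u\ku) = D^*\otimes_\k \mathbb{F}$, the piece $F^pC^+/F^{p+1}C^+$ is exactly $D^*\otimes u^p$ for $p\leq 0$, with the induced differential being $\delta_0$ (the $u^0$-part of $d$, which does not shift the $u$-power). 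That immediately gives the claimed $E_1$-page $E_1^{pq} = H^{q-p}(D^*,\delta_0)\otimes u^p$ for $p\leq 0$ and $0$ otherwise, once I fix the bidegree bookkeeping (the cohomological degree of a class in $D^*\otimes u^p$ is its $D^*$-degree plus $2p$, hence the $q-p$ shift with the convention $|u|=2$; I should double-check the exact indexing matches the paper's stated formula, but the content is forced).

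The convergence is the point that needs a small argument. The filtration on $C^+$ is bounded above (it is $0$ in positive filtration degree $p>0$) but exhaustive and \emph{not} bounded below — it has infinitely many nonzero graded pieces going to $p=-\infty$. So this is a filtration of the type where one needs completeness/Mittag-Leffler-style control, or one argues directly. The clean way here: $C^+ = \varinjlim_n u^{-n}C^*/uC^*$ is a \emph{direct} limit of the finite-length (in the filtration sense) complexes $u^{-n}C^*/uC^*$, and cohomology commutes with direct limits, while each finite truncation gives a spectral sequence that converges for trivial (bounded filtration) reasons. Passing to the limit, the $E_1$-pages assemble to the stated one and the abutment is $\varinjlim H(u^{-n}C^*/uC^*) = H(C^+) = W^+$. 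Alternatively one invokes the standard fact (e.g.\ Boardman, or Weibel Ch.~5) that a filtration which is exhaustive and bounded above yields a convergent spectral sequence; I would cite this rather than reprove it. I expect this convergence bookkeeping to be the main obstacle — not because it is deep, but because the filtration is unbounded below and one must be slightly careful about which standard convergence theorem applies; the direct-limit presentation is the safest route.

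Finally, the corollary: if $\mathrm{ord}\,W = H^*(D^*,\delta_0) = 0$, then every entry $E_1^{pq} = H^{q-p}(D^*,\delta_0)\otimes u^p$ vanishes, so the spectral sequence has zero $E_1$-page, hence (by convergence) $W^+ = H^*(C^+,d) = 0$. I would also add the parenthetical remark's justification only informally — for $W^-$ and $W^{\infty}$ the analogous $u$-adic argument fails because the corresponding filtrations on $C^*$ and $C^*_u$ are not bounded above (arbitrarily high powers of $u$ occur), so the spectral sequence need not degenerate or converge to the naive $E_1$, and indeed \cref{Example of d is ucubed} exhibits $\mathrm{ord}\,W = 0$ (as $\delta_0 = 0$ there... — actually one should check the sign conventions in that example) while $W^-$ is nonzero. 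Since the lemma statement only asserts the $W^+$ claim, I would keep the $W^-/W^{\infty}$ comment as a non-proved aside, exactly as written in the excerpt.

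\begin{proof}
Use the splitting $C^* = D^*\otimes_\k \ku$ from \cref{Subsection Canonical u valuation on cohomology}, so that $C^+ = C^*_u/uC^* \cong D^*\otimes_\k \mathbb{F}$ with $\mathbb{F} = \kuu/u\ku$ having $\k$-basis $\{u^p : p\leq 0\}$. Filter $C^+$ by the $u$-adic filtration $F^pC^+ := \mathrm{im}\big(u^pC^*_u \cap C^* \hookrightarrow C^*_u \to C^+\big)$; explicitly $F^pC^+ = D^*\otimes_\k (\mathrm{span}_\k\{u^q : q\leq p\}\bmod u\ku)$ for $p\leq 0$, and $F^pC^+ = 0$ for $p>0$. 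This is an exhaustive, decreasing, differential filtration, bounded above by $F^1 = 0$. Since $d = \delta_0 + u\delta_1 + \cdots$ and multiplication by $u$ strictly raises filtration degree, the induced differential on $F^pC^+/F^{p+1}C^+ \cong H^{*}(D^*)\otimes u^p$... more precisely on the associated graded $\mathrm{gr}^pC^+ \cong D^{*}\otimes u^p$ is $\delta_0\otimes\mathrm{id}$. Hence, with $|u|=2$, the $E_1$-page is
\[
E_1^{pq} = H^{q-p}(D^*,\delta_0)\otimes u^p \quad (p\leq 0), \qquad E_1^{pq}=0 \quad (p>0),
\]
which is \eqref{Lemma spectral sequence SH to ESH}.

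For convergence, write $C^+ = \varinjlim_{n\geq 0} C^+_{(n)}$ where $C^+_{(n)} := u^{-n}C^*/uC^*$, a subcomplex of $C^+$ on which the induced filtration has only finitely many nonzero graded pieces (namely $p = 0,-1,\dots,-n$). For each fixed $n$ the filtration on $C^+_{(n)}$ is bounded, so the associated spectral sequence converges to $H^*(C^+_{(n)})$; and since cohomology commutes with the filtered direct limit, taking $n\to\infty$ gives a spectral sequence with $E_1$-page as above converging to $\varinjlim_n H^*(C^+_{(n)}) = H^*(C^+,d) = W^+$. (Equivalently, one may invoke the standard convergence criterion for an exhaustive filtration that is bounded above.)

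For the last assertion: if $\mathrm{ord}\,W = H^*(D^*,\delta_0) = 0$, then $E_1^{pq} = 0$ for all $p,q$, so by convergence $W^+ = 0$.
\end{proof}
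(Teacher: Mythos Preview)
Your proof is correct and takes the same approach as the paper: the $u$-adic filtration on $C^+$, with $E_1$-page given by shifted copies of $H^*(D^*,\delta_0)$. The paper's proof is a one-liner that cites \cite{Sei08} for convergence (``bounded below and exhausting'' in that convention), whereas you supply an explicit direct-limit argument over the bounded subcomplexes $u^{-n}C^*/uC^*$; one small slip to fix --- in your explicit description of $F^pC^+$ the inequality should read $p\leq q\leq 0$ rather than $q\leq p$.
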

\begin{proof}
By the same observation as in \cite[Sec.(8b)]{Sei08}, the $u$-adic filtration on $C^+$ (which as a $\k$-vector space is identifiable with $\oplus_{i\leq 0} D^* u^{i}$) is bounded below and exhausting, so the associated spectral sequence on cohomology converges to the cohomology of $C^+$.
\end{proof}

\subsection{Specialisation to ordinary cohomology}\label{Subsection Specialisation to ordinary cohomology}
Recall $C^-:=C^*$, $C^+:=C^*_u/uC^*_u$.
We have two chain maps, a quotient map and a subcomplex inclusion:
$$
\mathrm{ev}_0:C^- \to \ord\,C \qquad \textrm{ and } \qquad\mathrm{inc}:\ord\,C\to C^+,
$$
induced respectively by the 
quotient $\ku \to \ku/u\ku\cong \k$,
and the inclusion of the $u^0$-part $\k \cong \ku/u\ku \hookrightarrow \kuu/u\ku=\mathbb{F}$. Via \cref{Lemma choice of basis to get ev0 complex}, and identifying $C^*/uC^*\cong D^* \cong \oplus \k\subset \oplus \ku \cong C^*$, the map $\mathrm{ev}_0$ can be thought of as an evaluation at $u=0$ map $\oplus \ku \to \oplus \k.$

\begin{lm}\label{Lemma ev0 commutative diagram general case}
Let $c:B^*\to C^*$ be as in \cref{Subsection induced filtration for V and W} (in particular, $V:=H(B^*)$ is free). Then there are long exact sequences commuting with the induced $c$-maps, as follows
$$
\begin{tikzcd}[column sep=0.3in,row sep=0.2in]
0
\arrow[r, ""] 
 \arrow[d, ""]
& 
V^-[-2]
 \arrow[r, "u"] 
 \arrow[d, "c"]
 &
 V^-
\arrow[r, "\mathrm{ev}_0"] 
\arrow[d, "c"]
&  
\ord\,V 
\arrow[d, "\ord\,c"] 
\arrow[r, ""]
&
0
\arrow[d, ""] 
\\
\ord\,W[-1] 
\arrow[r, ""] 
&
W^-[-2]
 \arrow[r, "u"] 
 &
 W^-
  \arrow[r, "\mathrm{ev}_0"]
 &  
 \ord\,W 
 \arrow[r, ""] 
 &
 W^-[-1]
\end{tikzcd}
$$
and
$$
\begin{tikzcd}[column sep=0.3in,row sep=0.2in]
0
\arrow[r, ""]
\arrow[d, ""]
&
\ord\,V[-2]
\arrow[r, "\mathrm{inc}"]
\arrow[d, "\ord\,{c}"] 
&
V^+[-2]
\arrow[r, "u"]
\arrow[d, "{[c_u]}"] 
&
V^+
\arrow[r, ""]
\arrow[d, "{[c_u]}"] 
&
0
\arrow[d, ""]
\\
W^+[-1]
\arrow[r, ""]
&
 \ord\,W[-2]
 \arrow[r, "\mathrm{inc}"]
&
W^+[-2]
\arrow[r, "u"]
&
W^+
\arrow[r, ""]
&
 \ord\,W[-1] 
\end{tikzcd}
$$
\end{lm}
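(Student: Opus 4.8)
\textbf{Proof plan for Lemma \ref{Lemma ev0 commutative diagram general case}.}

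The plan is to produce each of the two long exact sequences from a single short exact sequence of cochain complexes of free $\ku$-modules, and then to observe that the chain map $c:B^*\to C^*$ is compatible with all the complexes in sight, so that functoriality of the connecting homomorphism yields the commuting ladders automatically. First I would set up, for the top rows, the short exact sequence of complexes
$$
0 \longrightarrow B^*[-2] \stackrel{u}{\longrightarrow} B^* \stackrel{\mathrm{ev}_0}{\longrightarrow} \ord\,B^* \longrightarrow 0,
$$
which is exact because $B^*$ is free over $\ku$ (so multiplication by $u$ is injective with cokernel $B^*/uB^* = \ord\,B^*$), exactly as in \cref{Lemma choice of basis to get ev0 complex}; the same sequence with $C^*$ in place of $B^*$ gives the bottom rows. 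For the $V^+$-statement, I would use instead the short exact sequence
$$
0 \longrightarrow \ord\,B^*[-2] \stackrel{\mathrm{inc}}{\longrightarrow} B^+[-2] \stackrel{u}{\longrightarrow} B^+ \longrightarrow 0
$$
where $B^+ = B^*_u/uB^*$: here $u:B^+\to B^+$ is surjective since every element of $\kuu$ divided by $u$ is again in $\kuu$ (after passing to the quotient by $uB^*$), and its kernel is exactly the $u^0$-part $\ord\,B^* = \k\otimes_{\k}\ord\,B^* \hookrightarrow \mathbb{F}\otimes_{\k}\ord\,B^*$, as noted in \cref{Subsection Specialisation to ordinary cohomology}. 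The long exact sequences in cohomology are then the standard snake-lemma output, and they recover the stated sequences once one identifies $H(B^*)=V^-$, $H(\ord\,B^*)=\ord\,V$, $H(B^+)=V^+$ (and likewise for $W$), using \eqref{defn F C^+ and W^+}.

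Next I would check naturality. Since $c:B^*\to C^*$ is a $\ku$-linear chain map, it commutes with multiplication by $u$ and hence descends to chain maps $\ord\,c:\ord\,B^*\to \ord\,C^*$ and $c_u:B^*_u\to C^*_u$, the latter inducing $[c_u]:B^+\to C^+$. Thus $c$ defines a morphism of short exact sequences of complexes (for each of the two sequences above, $B$-version mapping to $C$-version), and the induced map on long exact sequences commutes with every arrow, including the connecting maps; this is precisely the content of the two commuting ladders. The only point requiring a word is that the two squares leftmost in each diagram involving the connecting map $\ord\,W[-1]\to W^-[-2]$ (resp.\ $W^+[-1]\to \ord\,W[-2]$) also commute — but this is just naturality of the snake connecting homomorphism applied to the morphism of short exact sequences, so nothing extra is needed. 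The freeness of $V$ assumed in \cref{Subsection induced filtration for V and W} is used only to guarantee $V^-=H(B^*)$ is torsion-free, so that the top rows are genuinely short (not merely long) exact; in fact the argument above shows the top rows are already short exact at chain level regardless, so this hypothesis is comfortably satisfied.

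The main obstacle, such as it is, is bookkeeping rather than mathematics: one must be careful that the grading shifts $[-1]$ and $[-2]$ are assigned so that every arrow is degree-preserving (the convention $|u|=2$ from \cref{Subsection the long and short exact sequences} fixes this), and that the identifications $\ord\,C^* = C^*/uC^* \cong D^*$ and $C^+ = C^*_u/uC^*_u$ are used consistently on both the $B$-side and the $C$-side so that $\mathrm{ev}_0$, $\mathrm{inc}$ really do commute with $c$ on the nose. I would simply display the morphism of short exact sequences of complexes and invoke the naturality of the long exact cohomology sequence, leaving the reader to match arrows.
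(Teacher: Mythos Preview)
Your proposal is correct and follows exactly the paper's approach: both derive the two ladders from the coefficient short exact sequences $0\to\ku\stackrel{u}{\to}\ku\to\ku/u\ku\to 0$ and $0\to\ku/u\ku\stackrel{\mathrm{inc}}{\to}\mathbb{F}\stackrel{u}{\to}\mathbb{F}\to 0$ tensored with the free complexes $B^*,C^*$, then invoke naturality of the snake lemma for the $c$-maps. The only minor wobble is your final clause ``the top rows are already short exact at chain level regardless, so this hypothesis is comfortably satisfied'': chain-level exactness does not by itself force the cohomology sequence to break into short pieces; the reason the top rows are short exact on cohomology is precisely that $V$ is free (so $u:V^-\to V^-$ is injective in the first diagram, and $u:V^+\to V^+$ is surjective in the second), which you do state correctly just before.
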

\begin{proof}
This follows from the short exact sequences
$0 \to\ku \stackrel{u}{\to} \ku \stackrel{\mathrm{ev_0}}{\to} \ku/u\ku\to 0$ and 
$0 \to \ku/u\ku \stackrel{inc}{\to} \mathbb{F} \stackrel{u}{\to} \mathbb{F} \to 0$.
Surjectivity of the top $\mathrm{ev}_0$ map in the claim (which causes the top LES to split) is due to the LES in \cref{Lemma choice of basis to get ev0 complex}, using that $u:V\to V$ is injective since $V$ is free.\end{proof}

\begin{cor}\label{Cor ordW zero iff Wplus zero}
    $\mathrm{ord}\,W = 0 \Leftrightarrow W^+ = 0$ (so also $\mathrm{ord}\,W \neq 0 \Leftrightarrow W^+\neq 0$). 
\end{cor}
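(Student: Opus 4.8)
The statement to prove is the equivalence $\mathrm{ord}\,W = 0 \Leftrightarrow W^+ = 0$, which is \cref{Cor ordW zero iff Wplus zero} following directly from \cref{Lemma ev0 commutative diagram general case}.

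\textbf{Plan of proof.} The plan is to extract the forward implication from the spectral sequence already established in \cref{Lemma vanishing of the E+ version}, and the reverse implication from the second long exact sequence in \cref{Lemma ev0 commutative diagram general case}. For the forward direction, suppose $\mathrm{ord}\,W = H^*(D^*,\delta_0) = 0$. Then every $E_1^{pq}$ term in \eqref{Lemma spectral sequence SH to ESH} vanishes: for $p > 0$ it is $0$ by definition, and for $p \leq 0$ it is $H^{q-p}(D^*,\delta_0)\otimes u^p = 0$ by hypothesis. Since that spectral sequence converges to $W^+ = H^*(C^+,d)$, we conclude $W^+ = 0$. This is exactly the ``in particular'' clause of \cref{Lemma vanishing of the E+ version}, so this direction is immediate.

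\textbf{The reverse direction.} Suppose $W^+ = 0$. I would use the bottom long exact sequence of the second diagram in \cref{Lemma ev0 commutative diagram general case}, applied in the degenerate case where we take $B^* = C^*$ and $c = \mathrm{id}$ (or simply read off the intrinsic LES, which is the statement of \cref{Lemma ev0 commutative diagram general case} with $c$ the identity). That sequence reads
$$
\cdots \to W^+[-1] \to \ord\,W[-2] \xrightarrow{\;\mathrm{inc}\;} W^+[-2] \xrightarrow{\;u\;} W^+ \to \cdots,
$$
and with $W^+ = 0$ the map $\ord\,W[-2] \xrightarrow{\mathrm{inc}} W^+[-2] = 0$ forces $\ord\,W = 0$ provided the preceding term $W^+[-1]$ also vanishes, which it does. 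More precisely, exactness at $\ord\,W[-2]$ gives $\ker(\mathrm{inc}) = \mathrm{im}(W^+[-1] \to \ord\,W[-2]) = 0$, and since the target $W^+[-2]$ is zero, $\ker(\mathrm{inc})$ is all of $\ord\,W[-2]$, hence $\ord\,W = 0$.

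\textbf{Alternative for the reverse direction and expected obstacle.} Alternatively, one can argue from the top LES $\ord\,W[-1] \to W^-[-2] \xrightarrow{u} W^- \xrightarrow{\mathrm{ev}_0} \ord\,W \to W^-[-1]$ together with \cref{Lemma LES for W modules}: since the spectral sequence of \cref{Lemma vanishing of the E+ version} computes $W^+$ from $\mathrm{ord}\,W$, and a convergent spectral sequence with a nonzero $E_1$-page need not have nonzero abutment in general, one must be slightly careful — but here the $E_1$-page is concentrated so that the differentials $d_r$ for $r \geq 1$ lower $p$ and cannot be injective on the leftmost nonzero column, so no full cancellation can occur; hence $\mathrm{ord}\,W \neq 0 \Rightarrow W^+ \neq 0$. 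The main (only mild) obstacle is making this non-cancellation argument precise, i.e.\ verifying that the leftmost surviving column of $E_1^{pq}$ persists to $E_\infty$; this is where invoking \cref{Lemma ev0 commutative diagram general case} directly is cleaner, since the long exact sequence bypasses any spectral-sequence bookkeeping. I therefore expect the proof to be short: forward implication by \cref{Lemma vanishing of the E+ version}, reverse implication by the bottom LES of \cref{Lemma ev0 commutative diagram general case}. No genuinely hard step is anticipated.
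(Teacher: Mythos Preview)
Your proposal is correct and follows essentially the same approach as the paper: forward implication via the spectral sequence of \cref{Lemma vanishing of the E+ version}, reverse implication via the bottom long exact sequence of \cref{Lemma ev0 commutative diagram general case}. The paper's proof is just the two-line version of what you wrote; your alternative spectral-sequence route for the reverse direction is unnecessary (as you yourself note), and the remark about taking $B^*=C^*$, $c=\mathrm{id}$ is harmless but superfluous since the bottom LES is intrinsic to $W$.
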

\begin{proof}
If $\mathrm{ord}\,W=0$, 
\cref{Lemma vanishing of the E+ version} implies $W^+=0$.
If $W^+=0$, \cref{Lemma ev0 commutative diagram general case} implies $\mathrm{ord}\,W= 0$.
\end{proof}
\section{Algebra III: filtrations on $V^-$, $V^{\infty}$, and $V^+$}
\label{Section Algebraic preliminaries III}

\subsection{ %
Filtration of the exact sequence}
\label{Subsection filtrations on the W modules}

Filter $W^-,W^{\infty}$ by the canonical filtrations (\cref{Definition canonical valuation}). 

\begin{de}
The {\bf $\mathbb{F}$-filtration} on $W^+_{\mathbb{F}}$ is the filtration
  \begin{equation}\label{Lemma definition of the filtration from the uvaluation}
  F_{j}(W^+_{\mathbb{F}})
:=  \ker (u^{|j|+1}: W^+_{\mathbb{F}}\to W^+_{\mathbb{F}}) \;\textrm{ for }\,j\leq 0, \quad \textrm{ and } \quad F_{j}(W^+_{\mathbb{F}})=\{0\} \;\textrm{ for }\,j\geq 1.
    \end{equation}
    \end{de}
\begin{lm}
    $uF_j(W^+_{\mathbb{F}}) = F_{j+1}(W^+_{\mathbb{F}})  \subset F_j(W^+_{\mathbb{F}})$.    
\end{lm}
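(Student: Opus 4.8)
The statement is a short, purely algebraic fact about the $\mathbb{F}$-filtration on $W^+_{\mathbb{F}}$, so the proof should be a direct unwinding of the definition \eqref{Lemma definition of the filtration from the uvaluation} together with the structure of $W^+_{\mathbb{F}}$ established earlier. Recall from \cref{Lemma WF and the SES for W} that $W^+_{\mathbb{F}}\cong \mathbb{F}^r$ as a $\ku$-module, where $\mathbb{F}=\kuu/u\ku$, and that multiplication by $u$ on $\mathbb{F}$ is surjective with one-dimensional kernel $u^{-1}\ku/u\ku$. The first thing I would record is that, since $W^+_{\mathbb{F}}\cong \mathbb{F}^r$, the submodules $F_j(W^+_{\mathbb{F}})=\ker(u^{|j|+1})$ for $j\le 0$ form a strictly increasing chain $0=F_1\subset F_0\subset F_{-1}\subset \cdots$ with $F_{-n}\cong (u^{-n-1}\ku/u\ku)^r$, and their union is all of $W^+_{\mathbb{F}}$; this makes the inclusion $F_{j+1}(W^+_{\mathbb{F}})\subset F_j(W^+_{\mathbb{F}})$ immediate for every $j\in\Z$ (for $j\ge 1$ both sides are $0$, for $j=0$ the left side is $0$, and for $j\le -1$ it is the obvious kernel inclusion $\ker u^{|j+1|+1}\subset \ker u^{|j|+1}$ since $|j+1|+1\le |j|+1$).

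The substantive part is the equality $uF_j(W^+_{\mathbb{F}})=F_{j+1}(W^+_{\mathbb{F}})$. I would check the two inclusions separately and split into the relevant ranges of $j$. For $j\ge 1$: $F_j=0$, so $uF_j=0=F_{j+1}$, done. For $j=0$: again $F_0=\ker(u)$ and $uF_0=0=F_1$. For $j\le -1$, write $n=|j|\ge 1$, so $F_j=\ker(u^{n+1})$ and $F_{j+1}=\ker(u^{n})$. The inclusion $uF_j\subset F_{j+1}$ is the trivial direction: if $u^{n+1}x=0$ then $u^{n}(ux)=0$, so $ux\in\ker(u^{n})=F_{j+1}$. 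For the reverse inclusion $F_{j+1}\subset uF_j$, take $y\in W^+_{\mathbb{F}}$ with $u^{n}y=0$; because multiplication by $u$ is \emph{surjective} on $W^+_{\mathbb{F}}\cong\mathbb{F}^r$, there exists $x\in W^+_{\mathbb{F}}$ with $ux=y$, and then $u^{n+1}x=u^{n}y=0$, so $x\in\ker(u^{n+1})=F_j$ and $y=ux\in uF_j$. This gives $F_{j+1}\subset uF_j$, completing the equality.

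The one point that genuinely needs the earlier results (rather than being formal manipulation of kernels) is the surjectivity of $u$ on $W^+_{\mathbb{F}}$; I would cite \cref{Lemma WF and the SES for W}, or equivalently the defining description $W^+_{\mathbb{F}}=\bigcap_{n}u^nW^+$ in \eqref{Equation definition of WplusF}, which exhibits every element as $u$-divisible by construction. That is the ``main obstacle,'' though it is really just a matter of invoking the right previously-proved structural fact; everything else is a one-line chase of kernels of powers of $u$. I would present the proof in at most a few lines: state that $u$ is onto on $W^+_{\mathbb{F}}$ by \cref{Lemma WF and the SES for W}, then do the $j\ge 0$ cases trivially and the $j\le -1$ case via the two inclusions above.

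\begin{proof}
By \cref{Lemma WF and the SES for W}, $W^+_{\mathbb{F}}=\bigcap_{n\in\N}u^nW^+$, so multiplication by $u$ is surjective on $W^+_{\mathbb{F}}$. For $j\geq 0$ both $F_{j+1}(W^+_{\mathbb{F}})$ and $uF_j(W^+_{\mathbb{F}})$ vanish: indeed $F_{j}(W^+_{\mathbb{F}})=\{0\}$ for $j\geq 1$, while for $j=0$ we have $uF_0(W^+_{\mathbb{F}})=u\ker(u)=\{0\}=F_1(W^+_{\mathbb{F}})$; and $F_{j+1}(W^+_{\mathbb{F}})\subset F_j(W^+_{\mathbb{F}})$ trivially. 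Now let $j\leq -1$ and set $n:=|j|\geq 1$, so $F_j(W^+_{\mathbb{F}})=\ker(u^{n+1})$ and $F_{j+1}(W^+_{\mathbb{F}})=\ker(u^{n})$. If $u^{n+1}x=0$ then $u^{n}(ux)=0$, so $uF_j(W^+_{\mathbb{F}})\subset F_{j+1}(W^+_{\mathbb{F}})\subset F_j(W^+_{\mathbb{F}})$, the last inclusion because $\ker(u^{n})\subset\ker(u^{n+1})$. Conversely, if $y\in\ker(u^{n})$, then by surjectivity of $u$ choose $x$ with $ux=y$; then $u^{n+1}x=u^{n}y=0$, so $x\in F_j(W^+_{\mathbb{F}})$ and $y=ux\in uF_j(W^+_{\mathbb{F}})$. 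Hence $uF_j(W^+_{\mathbb{F}})=F_{j+1}(W^+_{\mathbb{F}})\subset F_j(W^+_{\mathbb{F}})$ for all $j$.
\end{proof}
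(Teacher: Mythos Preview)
Your proof is correct and follows essentially the same approach as the paper: handle $j\ge 0$ trivially, and for $j\le -1$ use surjectivity of $u$ on $W^+_{\mathbb{F}}$ to lift any $y\in\ker(u^{|j|})$ to an element of $\ker(u^{|j|+1})$. The only cosmetic difference is that you explicitly cite \cref{Lemma WF and the SES for W} for the surjectivity of $u$, whereas the paper simply asserts it.
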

\begin{proof}
The second inclusion is obvious.
The inclusion $uF_j(W^+_{\mathbb{F}}) \subset F_{j+1}(W^+_{\mathbb{F}})$ is immediate.
The first equality holds for $j=0$: $F_0(W^+_{\mathbb{F}})=\ker u$, so $uF_0(W^+_{\mathbb{F}})=0=F_1(W^+_{\mathbb{F}})$.
Now suppose $j\leq -1$ and $w\in F_{j+1}(W^+_{\mathbb{F}})$, so $u^{|j|}w=0$. As $u: W^+_{\mathbb{F}}\to W^+_{\mathbb{F}}$ is surjective, we may write $w=uw'$. Thus $u^{|j|+1}w'=0$, so $w'\in F_{j}(W^+_{\mathbb{F}})$, so $w=uw'\in uF_{j}(W^+_{\mathbb{F}})$, as required.
\end{proof}

\begin{lm}[Functoriality]\label{Lemma functoriality Ffiltration}
    Any $\ku$-module homomorphism $\psi:W_1^+\to W_2^+$ preserves $\mathbb{F}$-filtrations: $\psi(F_{-j}(W_{1,\mathbb{F}}^+))\subset F_{-j}(W_{2,\mathbb{F}}^+).$ This is an equality when $\psi_{\mathbb{F}}:\psi:W_{1,\mathbb{F}}^+\to W_{2,\mathbb{F}}^+$ is an isomorphism.
\end{lm}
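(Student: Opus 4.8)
The statement to prove is \cref{Lemma functoriality Ffiltration}: a $\ku$-module homomorphism $\psi:W_1^+\to W_2^+$ preserves the $\mathbb{F}$-filtrations, i.e.\ $\psi(F_{-j}(W_{1,\mathbb{F}}^+))\subset F_{-j}(W_{2,\mathbb{F}}^+)$ for $j\geq 0$, with equality when $\psi$ restricts to an isomorphism $W_{1,\mathbb{F}}^+\to W_{2,\mathbb{F}}^+$.

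\begin{proof}
First note that, by \cref{Lemma WF and the SES for W}, the submodule $W_{i,\mathbb{F}}^+=\bigcap_{n\in\N}u^nW_i^+$ is intrinsically defined, so any $\ku$-module homomorphism $\psi:W_1^+\to W_2^+$ satisfies $\psi(W_{1,\mathbb{F}}^+)\subset W_{2,\mathbb{F}}^+$: indeed if $w\in W_{1,\mathbb{F}}^+$ then for every $n$ there is $w_n$ with $w=u^nw_n$, hence $\psi(w)=u^n\psi(w_n)\in u^nW_2^+$, so $\psi(w)\in\bigcap_n u^nW_2^+=W_{2,\mathbb{F}}^+$. Thus $\psi$ restricts to a $\ku$-module homomorphism $\psi_{\mathbb{F}}:W_{1,\mathbb{F}}^+\to W_{2,\mathbb{F}}^+$, and it suffices to prove the claim for $\psi_{\mathbb{F}}$. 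The key step is that the $\mathbb{F}$-filtration \eqref{Lemma definition of the filtration from the uvaluation} is, by its very definition, expressed purely in terms of kernels of powers of the $u$-action: $F_{-j}(W_{i,\mathbb{F}}^+)=\ker(u^{j+1}:W_{i,\mathbb{F}}^+\to W_{i,\mathbb{F}}^+)$ for $j\geq 0$. Since $\psi_{\mathbb{F}}$ is $\ku$-linear it commutes with multiplication by $u$, so if $w\in W_{1,\mathbb{F}}^+$ satisfies $u^{j+1}w=0$ then $u^{j+1}\psi_{\mathbb{F}}(w)=\psi_{\mathbb{F}}(u^{j+1}w)=\psi_{\mathbb{F}}(0)=0$; that is, $\psi_{\mathbb{F}}(F_{-j}(W_{1,\mathbb{F}}^+))\subset F_{-j}(W_{2,\mathbb{F}}^+)$. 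For $j<0$ both sides are zero, so the inclusion is trivial. This proves functoriality.

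For the equality statement, suppose $\psi_{\mathbb{F}}:W_{1,\mathbb{F}}^+\to W_{2,\mathbb{F}}^+$ is an isomorphism. Then its inverse $\psi_{\mathbb{F}}^{-1}$ is also a $\ku$-module homomorphism, so by the functoriality just proved we have $\psi_{\mathbb{F}}^{-1}(F_{-j}(W_{2,\mathbb{F}}^+))\subset F_{-j}(W_{1,\mathbb{F}}^+)$, equivalently $F_{-j}(W_{2,\mathbb{F}}^+)\subset \psi_{\mathbb{F}}(F_{-j}(W_{1,\mathbb{F}}^+))$. Combined with the inclusion $\psi_{\mathbb{F}}(F_{-j}(W_{1,\mathbb{F}}^+))\subset F_{-j}(W_{2,\mathbb{F}}^+)$ from the first part, this gives $\psi_{\mathbb{F}}(F_{-j}(W_{1,\mathbb{F}}^+))=F_{-j}(W_{2,\mathbb{F}}^+)$, as claimed.
\end{proof}

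This lemma is essentially formal once one observes that the $\mathbb{F}$-filtration is defined intrinsically from the $u$-action; the only mild subtlety worth spelling out is the preliminary reduction showing $\psi$ restricts to the canonical submodules $W_{i,\mathbb{F}}^+$, which uses the intrinsic characterisation $W_{i,\mathbb{F}}^+=\bigcap_n u^nW_i^+$ from \cref{Lemma WF and the SES for W} rather than the (non-canonical) summand description. I do not expect any real obstacle here; the argument parallels the functoriality of the canonical valuation in \cref{Lemma valuation on free mod}, with kernels of $u$-powers playing the role that $u$-divisibility plays there.
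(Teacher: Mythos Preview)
Your proof is correct and follows the same approach as the paper, which simply says the result is straightforward and analogous to \cref{Lemma valuation on free mod}. Your write-up spells out the details the paper omits, including the preliminary reduction to $W_{i,\mathbb{F}}^+$ via its intrinsic description, which is a nice touch.
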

\begin{proof}
This is the analogue of \cref{Lemma valuation on free mod}, and it is straightforward.
\end{proof}

To simplify the notation, we will from now on omit the grading shifts from the previous section.

\begin{lm}\label{Lemma exact sequence on filtered pieces of W modules}
    \cref{Equation LES on W modules} induces 
    a filtered short exact sequence: for all $j\in \Z$,
\begin{equation}\label{Equation LES on W modules filtered}
\begin{tikzcd}
0 \arrow[r,""]
& F_{j-1}(W^-)/T 
\arrow[r,"u\otimes 1"] %
&
 F_{j}(W^{\infty})
\arrow[r, ""] 
& F_{j}(W^+_{\mathbb{F}}) 
\arrow[r,""]
& 0.
\end{tikzcd}
\end{equation}
After making explicit choices of bases as in \cref{Lemma LES for W modules}, this becomes explicitly:
\begin{align}
\label{Equation explicit SES case j positive}
\textrm{For }j\geq 1: \qquad
&
0 
\longrightarrow 
  u^{j-1}\ku^r 
\stackrel{u}{\longrightarrow}
u^{j}\ku^r
\longrightarrow 
0
\longrightarrow 
0.
\\
\label{Equation explicit SES case j negative}
\textrm{For }-j\leq 0: \qquad
&
0 
\longrightarrow 
\ku^r
\stackrel{u}{\longrightarrow} 
 u^{-j}\ku^r
\longrightarrow 
u^{-j}\ku^r/u\ku^r
\longrightarrow 
 0.
\end{align}

\end{lm}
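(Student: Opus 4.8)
The claim is that the long exact sequence \eqref{Equation LES on W modules} restricts, for each $j\in\Z$, to a short exact sequence \eqref{Equation LES on W modules filtered} on the chosen filtration pieces, and that in the explicit basis from \cref{Lemma LES for W modules} it becomes \eqref{Equation explicit SES case j positive}--\eqref{Equation explicit SES case j negative}. My plan is to derive the filtered statement directly from the splitting \eqref{Equation LES on W modules 2}: once we fix the bases so that $W^\infty\cong\kuu^r$ with $u\otimes 1: W^-/T\cong\ku^r\hookrightarrow\kuu^r$ the inclusion $\ku^r\stackrel{u}{\hookrightarrow}u\ku^r\subset\kuu^r$, and $W^+_{\mathbb F}\cong\kuu^r/u\ku^r=\mathbb F^r$ the cokernel, everything is a computation inside $\kuu^r$. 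So the first step is to record, in these bases, what the three filtrations look like: $F_j(W^\infty)=u^j\ku^r$ for all $j\in\Z$ (this is the canonical valuation/filtration on the localisation $\kuu^r$ of $\ku^r$, cf.\ \cref{Example obvious valuation}); $F_{j-1}(W^-)/T=u^{j-1}\ku^r$ (the canonical filtration on the free quotient $W^-/T\cong\ku^r$); and $F_j(W^+_{\mathbb F})$, which by \eqref{Lemma definition of the filtration from the uvaluation} is $\ker(u^{|j|+1}:\mathbb F^r\to\mathbb F^r)=u^{-j-1}\ku^r/u\ku^r$ for $j\le 0$ and $\{0\}$ for $j\ge 1$.

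\textbf{Second step: assemble the short exact sequences.} With the identifications above, \eqref{Equation explicit SES case j positive} and \eqref{Equation explicit SES case j negative} are immediate. For $j\ge 1$: the map $u\otimes 1$ sends $u^{j-1}\ku^r$ into $W^\infty$ as $u\cdot u^{j-1}\ku^r=u^j\ku^r=F_j(W^\infty)$, so it is an isomorphism onto $F_j(W^\infty)$; and indeed $F_j(W^+_{\mathbb F})=0$ because the image of $F_j(W^\infty)=u^j\ku^r$ in $W^+=\kuu^r/u\ku^r$ is zero (as $j\ge 1$). For $j\le 0$, say $j=-n$ with $n\ge 0$: $u\otimes 1$ maps $F_{j-1}(W^-)/T=u^{-n-1}\ku^r$ to $u^{-n}\ku^r=F_j(W^\infty)$, and the quotient $F_j(W^\infty)/\mathrm{im}(u\otimes 1)=u^{-n}\ku^r/u^{-n+1}\ku^r$ is canonically $u^{-n}(\ku^r/u\ku^r)$, which under the iso $W^+_{\mathbb F}\cong\mathbb F^r$ is exactly $\ker(u^{n+1}:\mathbb F^r\to\mathbb F^r)=F_{-n}(W^+_{\mathbb F})$. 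Exactness on the left holds since $u\otimes 1$ is injective on $W^-/T$. Thus \eqref{Equation LES on W modules filtered} holds, and it is a sub-short-exact-sequence of the splitting \eqref{Equation LES on W modules 2} because each of the three inclusions $F_{j-1}(W^-)/T\hookrightarrow W^-/T$, $F_j(W^\infty)\hookrightarrow W^\infty$, $F_j(W^+_{\mathbb F})\hookrightarrow W^+_{\mathbb F}$ is compatible with the maps (all three maps are restrictions of the corresponding maps on the ambient modules, since $u\otimes 1$ and the projection are $\ku$-linear and hence filtration-preserving by \cref{Lemma valuation on free mod} and \cref{Lemma functoriality Ffiltration}).

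\textbf{Third step: the conceptual (basis-free) half.} To phrase the result as ``\eqref{Equation LES on W modules} induces a filtered short exact sequence'' without reference to a chosen basis, I would argue: the inclusion-preserving property of $u\otimes 1$ (it is $\ku$-linear, so filtration-preserving) gives a map $F_{j-1}(W^-)/T\to F_j(W^\infty)$; it is injective since $u\otimes 1$ is already injective on $W^-/T$ by \cref{Lemma WF and the SES for W}; and it is precisely $\{x\in F_j(W^\infty): x=u\otimes(\text{something})\}$ --- i.e.\ an element $x\in W^\infty$ of valuation $\ge j$ lies in the image of $W^-/T$ under $u\otimes 1$ iff $u^{-1}x$ has valuation $\ge j-1$ in $W^-/T$, which is automatic since $x$ itself has valuation $\ge j$ and lifting along $u$ drops valuation by exactly one; then $F_j(W^\infty)/\mathrm{im}\cong F_j(W^+_{\mathbb F})$ follows from the definition \eqref{Lemma definition of the filtration from the uvaluation} of the $\mathbb F$-filtration together with the identification $W^+_{\mathbb F}=\mathrm{im}(W^\infty\to W^+)$ from \cref{Lemma WF and the SES for W}: the image of $F_j(W^\infty)$ in $W^+$ is killed by a power of $u$ governed by how much one can descend in $W^\infty$ before leaving $F_j$, giving exactly $\ker u^{|j|+1}$.

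\textbf{Main obstacle.} I do not expect a serious obstacle: all three filtrations are, after the basis choices of \cref{Lemma LES for W modules}, literally powers-of-$u$ filtrations on $\ku^r$, $\kuu^r$, $\mathbb F^r$, so the content is bookkeeping. The one point needing care is the identification of $F_j(W^+_{\mathbb F})$ with the image of $F_j(W^\infty)$ --- one must check the index matches, i.e.\ that $u^{-n}\ku^r/u^{-n+1}\ku^r\subset\mathbb F^r$ is indeed $\ker(u^{n+1})$ and not $\ker(u^n)$; this is the reason the $\mathbb F$-filtration in \eqref{Lemma definition of the filtration from the uvaluation} is defined with the exponent $|j|+1$ rather than $|j|$, and I would double-check the boundary case $j=0$ (where $F_0(W^+_{\mathbb F})=\ker u$ and the quotient $u^0\ku^r/u\ku^r=\ku^r/u\ku^r$ is all of $\mathbb F^r$-degree-zero, consistent with $\ker u\supset\,$degree-zero part). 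Finally one should note the grading shifts have been suppressed (as stated just before the lemma), so no extra care with $[-1],[-2]$ is needed here.
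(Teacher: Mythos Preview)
Your approach is the same as the paper's --- the paper simply says the claims follow immediately from the proof of \cref{Lemma LES for W modules}, i.e.\ from the explicit basis description --- but your bookkeeping in the $j\le 0$ case has gone wrong.

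For $j=-n$ with $n\ge 0$, the canonical filtration on the free module $W^-/T\cong\ku^r$ satisfies $F_m(\ku^r)=\ku^r$ for all $m\le 0$ (every element has $\mathrm{div}_u\ge 0$). Hence $F_{j-1}(W^-)/T=F_{-n-1}(\ku^r)=\ku^r$, \emph{not} $u^{-n-1}\ku^r$ (which is not even a submodule of $\ku^r$). Consequently $\mathrm{im}(u\otimes 1)=u\ku^r\subset u^{-n}\ku^r=F_j(W^\infty)$, and the cokernel is $u^{-n}\ku^r/u\ku^r$, of $\k$-dimension $r(n+1)$. This is precisely $\ker(u^{n+1}:\mathbb F^r\to\mathbb F^r)=F_{-n}(W^+_{\mathbb F})$, matching \eqref{Equation explicit SES case j negative}. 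Your quotient $u^{-n}\ku^r/u^{-n+1}\ku^r$ is only a single slice of $\k$-dimension $r$; it equals $\ker(u^{n+1})/\ker(u^n)$, not $\ker(u^{n+1})$. The very check you flag in your ``Main obstacle'' paragraph --- that $u^{-n}\ku^r/u^{-n+1}\ku^r$ equals $\ker(u^{n+1})$ --- is false for $n\ge 1$; it only happens to agree at $n=0$, which is why your boundary-case sanity check did not catch it. Once the first term is corrected to $\ku^r$, your argument goes through verbatim.
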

\begin{proof}
These claims follow immediately from the proof of \cref{Lemma LES for W modules}.
\end{proof}

\subsection{Exact sequence for induced filtrations}
\label{Subsection Exact sequence for induced filtrations}

Continue with the notation from \cref{Subsection induced filtration for V and W}
and \cref{Subsection the Wminus plus and infinity models}. So for some $s\in \N$, we have%
\; $V^{-}=V\cong \ku^s$,\; $V^{\infty}=V_u\cong \kuu^s$, and $V^+\cong \mathbb{F}^s.$

\begin{cor}\label{Cor SES over LES}
We have 
commutative diagrams of $\ku$-modules, with exact horizontal rows,
\begin{equation*}
\begin{tikzcd}
0 \arrow[r, ""] & V^-
 \arrow[r, "u\otimes \mathrm{id}"] 
 \arrow[d, "c"]
 &
 V^{\infty}
\arrow[r, ""] \arrow[d, "c_u:=c\otimes \mathrm{id}"]
& V^+ \arrow[d, "{[c_u]}"] 
\arrow[r, ""]
& 0
&
0 \arrow[r, ""] & V^-
 \arrow[r, "u\otimes \mathrm{id}"] 
 \arrow[d, "c"]
 &
 V^{\infty}
\arrow[r, ""] \arrow[d, "c_u:=c\otimes \mathrm{id}"]
& V^+ \arrow[d, "{[c_u]}"] 
\arrow[r, ""]
& 0
\\
& W^- \arrow[r,"u\otimes \mathrm{id}"]
&
 W^{\infty}
\arrow[r, ""] 
& W^+ &
&
0 \arrow[r, ""] & W^-/T \arrow[r,"u\otimes \mathrm{id}"]
&
 W^{\infty}
\arrow[r, ""] 
& W^+_{\mathbb{F}} \arrow[r, ""] & 0
\end{tikzcd}
\end{equation*}
\end{cor}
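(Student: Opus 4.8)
The statement packages two commutative diagrams whose rows are, respectively, the long exact sequences of Lemmas~\ref{Lemma LES for W modules} and~\ref{Lemma WF and the SES for W}, with vertical maps induced by the chain map $c$. The plan is to construct everything at chain level and then pass to cohomology, using only that $C^*$ (hence $B^*$) is a free $\ku$-module and that $u$-localisation and the functor $-\otimes_{\ku}\mathbb F$ are exact.

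First I would fix the chain-level picture. Recall $B^-:=B^*$, $B^\infty:=B^*_u$, $B^+:=B^*_u/uB^*_u$, and similarly for $C$, so that the short exact sequences of complexes $0\to B^*\xrightarrow{u}B^*_u\to B^+\to 0$ (this is exact precisely because $B^*$ is $u$-torsion-free, so $B^*\hookrightarrow B^*_u$) are mapped by $c$, $c_u:=c\otimes\mathrm{id}_{\kuu}$, and $[c_u]:=c\otimes\mathrm{id}_{\mathbb F}$ to the corresponding sequence for $C^*$. Passing to cohomology and applying Lemma~\ref{Lemma LES for W modules} to both columns yields the long exact sequences in the rows, and the naturality of the connecting homomorphism (for the map of short exact sequences of complexes induced by $c$) gives commutativity of every square, including the ones involving the connecting maps $V^+\to V^-[-1]$ and $W^+\to W^-[-1]$ — these are the squares suppressed by the ``$\cdots$'' but which are part of the claim that the diagram of LESs commutes. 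This establishes the first (left-hand) diagram.

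For the second (right-hand) diagram I would use Lemma~\ref{Lemma WF and the SES for W}, which already extracts from the LES of each column the short exact sequence $0\to (W^-/T)[-2]\to W^{\infty}\to W^+_{\mathbb F}\to 0$, and correspondingly $0\to V^-[-2]\to V^{\infty}\to V^+\to 0$ — here I would note that since $V:=H(B^*)$ is free, its torsion submodule vanishes, $V^+_{\mathbb F}=V^+\cong\mathbb F^s$, and the left-hand short exact sequence is the honest short exact sequence $0\to V^-\xrightarrow{u\otimes\mathrm{id}}V^{\infty}\to V^+\to 0$ of Lemma~\ref{Lemma LES for W modules}. The vertical maps $c,c_u,[c_u]$ send $T(V)=0$ into $T(W)$ and hence descend to $W^-/T$, and they send the $\mathbb F^r$-summand $W^+_{\mathbb F}=\bigcap_n u^nW^+=\mathrm{im}(W^\infty\to W^+)$ into itself because $c$ commutes with $u$ and with the maps $W^\infty\to W^+$; commutativity of the two right-hand squares then follows from commutativity of the corresponding squares in the first diagram by restriction/quotient. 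The grading shifts are bookkept exactly as in \cref{Subsection the long and short exact sequences}, placing $u$ in degree $2$.

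The only genuinely non-formal point — the step I expect to require the most care — is checking that $[c_u]$ really maps $V^+=V^+_{\mathbb F}$ into $W^+_{\mathbb F}\subset W^+$ compatibly with the intrinsic description $W^+_{\mathbb F}=\bigcap_n u^nW^+$, rather than merely into $W^+$; equivalently, that the induced map on cohomology of the quotient complex $B^+$ does land in the ``divisible part'' of $H(C^+)$. This is immediate from $W^+_{\mathbb F}=\mathrm{im}(W^\infty\to W^+)$ together with the commuting square $c_u$ over $[c_u]$ from the first diagram, but it is worth spelling out, since (as \cref{Remark about free part of W+} stresses) the complementary $T[-1]$-summand of $W^+$ has no intrinsic meaning and is not preserved by arbitrary $\ku$-module maps. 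Everything else is a diagram chase using naturality of connecting homomorphisms and the exactness of $u$-localisation, so no delicate estimates or choices are needed beyond those already made in Lemmas~\ref{Lemma LES for W modules} and~\ref{Lemma WF and the SES for W}.
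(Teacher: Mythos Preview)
Your proposal is correct and follows exactly the approach the paper takes: the paper's proof is the one-line ``This follows by \cref{Lemma LES for W modules} and \cref{Lemma WF and the SES for W},'' and you have simply unpacked what that entails, including the naturality of the connecting map and the fact (later recorded separately as \cref{Lemma Vplus preimage of T in Wplus is not interesting}) that $[c_u]$ lands in $W^+_{\mathbb F}$.
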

\begin{proof}
This follows by \cref{Lemma LES for W modules} 
and \cref{Lemma WF and the SES for W}.
\end{proof}

\begin{rmk}
    The motivation comes from the three models of constructing $S^1$-equivariant cohomology. The top row will be the equivariant quantum cohomologies, the lower row is part of the long exact sequence for the three models of equivariant Hamiltonian Floer cohomology for some slope $\lambda>0$.
\end{rmk}

\begin{lm}\label{Lemma Vplus preimage of T in Wplus is not interesting}
 $[c_u]: V^+ \to W^+$ lands in $W^+_{\mathbb{F}}$. Explicitly, $W^+\cong T[-1]\oplus \mathbb{F}^r$, $\mathrm{image}\,[c_u]\subset 0\oplus \mathbb{F}^r$, and $\ker [c_u]$ contains a canonical submodule $K_c^+:=(\ker c)_u/u\ker c\cong \mathbb{F}^b$ \emph{(}and $b=\mathrm{rank}_{\ku}\ker c$, see \eqref{Equation the u localised cu}\emph{)}. 
\end{lm}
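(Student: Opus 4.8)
The plan is to prove the three assertions of \cref{Lemma Vplus preimage of T in Wplus is not interesting} in order: first that $[c_u]$ lands in $W^+_{\mathbb{F}}$, then the explicit description of the image, and finally the identification of the canonical submodule $K_c^+$ of the kernel.

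First I would establish that $[c_u]$ lands in $W^+_{\mathbb{F}}$. Since $V$ is free, $V^+\cong \mathbb{F}^s$ by \cref{Subsection the Wminus plus and infinity models}, and $u:\mathbb{F}\to \mathbb{F}$ is surjective, so every element of $V^+$ is $u^n$-divisible for all $n\in \N$; hence $V^+=\bigcap_n u^nV^+$. Applying the $\ku$-module homomorphism $[c_u]$ and using that it commutes with multiplication by $u$, the image of $[c_u]$ lies in $\bigcap_n u^nW^+=W^+_{\mathbb{F}}$ by the definition \eqref{Equation definition of WplusF} in \cref{Lemma WF and the SES for W}. Concretely, this amounts to saying that the map $V^+\to W^+$ factors through $W^+_{\mathbb{F}}=\mathrm{im}(W^{\infty}\to W^+)$, which is also clear from the commutative diagram in \cref{Cor SES over LES}: the right square with bottom row $W^{\infty}\to W^+$ shows $[c_u]$ is the composite $V^+\leftarrow V^{\infty}\xrightarrow{c_u} W^{\infty}\to W^+$ (more precisely, $[c_u]$ is induced by $c_u$, which factors through $W^{\infty}$).

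Next, for the explicit description, I would invoke the chosen bases from \cref{Lemma LES for W modules}: in those bases $W^+\cong T[-1]\oplus \mathbb{F}^r$, where the $T[-1]$-summand is the kernel of $W^+\to \mathrm{ord}\,W[-1]$ under the connecting map and the $\mathbb{F}^r$-summand is exactly $W^+_{\mathbb{F}}\cong \kuu^r/u\ku^r$. Since we just showed $\mathrm{image}\,[c_u]\subset W^+_{\mathbb{F}}$, this gives $\mathrm{image}\,[c_u]\subset 0\oplus \mathbb{F}^r$ in that splitting. Finally, for the kernel: by \cref{Lemma Fj V minus description} (or the localised description \eqref{Equation the u localised cu}), the $u$-localisation $c_u:V_u\to W_u$ decomposes compatibly with a summand decomposition $V_u\cong \kuu^a\oplus \ker c_u$ where $\ker c_u = (\ker c)_u\cong \kuu^b$ with $b=\mathrm{rank}_{\ku}\ker c$, and $c_u$ kills this $\kuu^b$ summand. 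Passing to $V^+=V_u/uV\cong \mathbb{F}^s$, the image of $(\ker c)_u$ is $(\ker c)_u/u(\ker c)\cong \mathbb{F}^b$ (using $\ker c\cap uV = u\ker c$, which holds since $V/\ker c$ is torsion-free as a submodule of the free module $W/T$, or directly since $\ker c$ is a direct summand of $V$ after putting $c$ into the block form of \cref{Lemma Fj V minus description}), and it maps to zero under $[c_u]$ because $c_u$ annihilates $(\ker c)_u$. This gives the canonical submodule $K_c^+:=(\ker c)_u/u\ker c\cong \mathbb{F}^b$ inside $\ker[c_u]$.

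The main obstacle I expect is purely bookkeeping: making sure the three summand decompositions in play — the one for $W^+=T[-1]\oplus\mathbb{F}^r$ from \cref{Lemma LES for W modules}, the Smith-normal-form block decomposition of $c:V\to W$ from \cref{Lemma Fj V minus description}, and the localised form \eqref{Equation the u localised cu} — are compatible, i.e.\ that the $\mathbb{F}^r$ appearing as $W^+_{\mathbb{F}}$ is the same copy as the target of the localised map, and that $K_c^+$ is genuinely \emph{canonical} (independent of the basis choices used to build the other decompositions). The canonicality of $K_c^+$ follows because $\ker c\subset V$ is intrinsic, the functor $(-)_u$ and the quotient by $uV$ are intrinsic, and the resulting map $(\ker c)_u/u\ker c\to V^+$ is the intrinsic map induced by the inclusion $\ker c\hookrightarrow V$; it is injective because $\ker c$ is a direct summand of the free module $V$. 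None of the steps requires a genuinely hard argument — it is a matter of assembling the linear-algebra facts already proved in the excerpt in the right order.
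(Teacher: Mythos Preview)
Your proof is correct and follows essentially the same approach as the paper: the core argument---that $V^+\cong\mathbb{F}^s$ is infinitely $u$-divisible, so its image under the $\ku$-linear map $[c_u]$ lies in $\bigcap_n u^nW^+=W^+_{\mathbb{F}}$---is exactly what the paper uses, and your treatment of $K_c^+$ via $\ker c\cong\ku^b$ is the same (the paper just says ``the rest follows easily, noting $\ker c\cong\ku^b$''). One small slip: in your second paragraph you describe the $T[-1]$-summand as the kernel of the connecting map, but in the decomposition of \cref{Lemma LES for W modules} it is the $\mathbb{F}^r$-summand that is $\ker(W^+\to W^-[-1])=W^+_{\mathbb{F}}$, while $T[-1]$ is the image; this does not affect your argument since you only use that $W^+_{\mathbb{F}}=\mathbb{F}^r$.
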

\begin{proof}
    Since $V^+\cong \mathbb{F}^s$, the elements of $V^+$ have arbitrary $u$-preimages: for $v\in V^+$, $v\in u^n V^+$ for all $n\in \N.$ Thus $c(v)\in u^n W^+$ for all $n\in \N$. 
    The rest follows easily, noting $\ker c\cong \ku^b$.
\end{proof}

\begin{de}\label{Definition induced filtration via c maps}
The filtrations 
in \cref{Definition of the filtration from the valuation} and \eqref{Lemma definition of the filtration from the uvaluation} induce filtrations on $V^-,V^{\infty}$, as in \eqref{Equation induced filtration defn}:
$$
F_j(V^-) :=c^{-1}(F_j(W^-)) \qquad \textrm{ and } \qquad
F_j(V^{\infty})  :=c_u^{-1}(F_j(W^{\infty})). 
$$
For $V^+$ we define $F_{j}(V^+):=[c_u]^{-1}(F_{j}(W^+_{\mathbb{F}}))$ for $j\leq 1$. For $j\geq 1$ this would constantly give $F_1(V^{+})=\ker [c_u]$, which would ruin later results when $j\geq 2$. The definition, adjusted for $j\geq 2$, is:
$$
F_{j\leq 0}(V^+):=[c_u]^{-1}(F_{j}(W^+_{\mathbb{F}}))= \ker (u^{|j|+1}[c_u])
\qquad \textrm{ and } \qquad
F_{j\geq 0}(V^+) := 
u^{j-1}\ker [c_u].
$$
\end{de}

\begin{ex}\label{Example cube map showing induced filtrations}
Suppose $c:=u^3:V^-=\ku \to \ku = W^-$,  $V^{\infty}=W^{\infty}=\kuu$, $V^+=W^+=\mathbb{F}$. 
$$
\small
\renewcommand{\arraystretch}{1.1}
\begin{array}{|c|c|c|c|c|}
\hline
& 
j\leq 0 
&
1\leq j \leq 3 & 
j > 3\\
\hline
F_{j-1}(V^-)
& 
\ku
&
\ku
&
\;u^{j-1-3}\ku\;
\\
\hline
F_{j}(V^{\infty})
&
u^{j-3}\ku
&
u^{j-3}\ku
&
u^{j-3}\ku
\\
\hline
F_{j}(V^{+})
&
\;u^{j-3}\ku/u\ku\;
&
\;u^{j-3}\ku/u\ku\;
&
0
\\
\hline
\end{array}
\;
\begin{array}{|c|c|c|c|c}
\hline
& 
j\leq 0 
&
1\leq j \leq 3 & 
j > 3\\
\hline
F_{j-1}(W^-)
&
\ku
&
u^{j-1}\ku
&
u^{j-1}\ku
\\
\hline
F_{j}(W^{\infty})
&
u^{j}\ku
&
u^{j}\ku
&
u^{j}\ku
\\
\hline
F_{j}(W^{+})
&
u^{j}\ku/u\ku
&
0
&
0
\\
\hline
\end{array}
\renewcommand{\arraystretch}{0.8}
$$
\end{ex}

\begin{lm}\label{Lemma Vplus description}
$u F_{j}(V^+) = F_{j+1}(V^+)$ 
for all $j\in \Z$ (in particular, $F_{j}(V^+)=u^j F_0(V^+)$ for $j\geq 0$).
\end{lm}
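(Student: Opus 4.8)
The statement to prove is the last displayed \texttt{Lemma}: for the induced filtration $F_j(V^+)$ from \cref{Definition induced filtration via c maps}, we have $uF_j(V^+) = F_{j+1}(V^+)$ for all $j\in\Z$, and consequently $F_j(V^+)=u^jF_0(V^+)$ for $j\geq 0$. The natural approach is to split into the two regimes appearing in the definition ($j\leq 0$ versus $j\geq 0$), plus the overlap at $j=0$, and to exploit the fact established in \cref{Lemma Vplus preimage of T in Wplus is not interesting} and the surrounding discussion that $V^+\cong\mathbb{F}^s$, so that multiplication by $u$ is \emph{surjective} on $V^+$ (indeed $\mathbb{F}=\kuu/u\ku$ is $u$-divisible). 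Surjectivity of $u$ is what makes all the inclusions into equalities; without it one would only get $uF_j\subset F_{j+1}$.

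\textbf{Step 1 (the range $j\geq 0$).} Here $F_{j\geq 0}(V^+)=u^{j-1}\ker[c_u]$ by definition, so $uF_j(V^+)=u\cdot u^{j-1}\ker[c_u]=u^j\ker[c_u]=F_{j+1}(V^+)$ tautologically. This also immediately gives $F_j(V^+)=u^j F_0(V^+)$ for $j\geq 0$ since $F_0(V^+)=u^{-1}\ker[c_u]$ in the $j\geq 0$ convention, hence $u^jF_0(V^+)=u^{j-1}\ker[c_u]=F_j(V^+)$. (One should note the two conventions agree at $j=0$: $[c_u]^{-1}(F_0(W^+_{\mathbb{F}}))=\ker(u\,[c_u])=u^{-1}\ker[c_u]$ using surjectivity of $u$ on $V^+$.)

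\textbf{Step 2 (the range $j\leq 0$, i.e.\ $j=-m$ with $m\geq 0$).} Here $F_{j}(V^+)=[c_u]^{-1}(F_j(W^+_{\mathbb{F}}))=\ker(u^{m+1}[c_u])$. The inclusion $uF_j(V^+)\subset F_{j+1}(V^+)$ is clear: if $u^{m+1}[c_u](v)=0$ then $u^{m}[c_u](uv)=0$. For the reverse, take $w\in F_{j+1}(V^+)=\ker(u^{m}[c_u])$; since $u:V^+\to V^+$ is surjective (as $V^+\cong\mathbb{F}^s$), write $w=uv$; then $u^{m}[c_u](uv)=u^{m+1}[c_u](v)=0$, so $v\in\ker(u^{m+1}[c_u])=F_j(V^+)$, whence $w=uv\in uF_j(V^+)$. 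This handles $-1\leq j+1\leq 0$ and, with Step 1 covering $j=0$ and $j\to j+1=1$ (where $F_1(V^+)=\ker[c_u]=u^0\ker[c_u]$ matches both conventions), all cases are covered.

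\textbf{Main obstacle.} There is no deep obstacle — the content is genuinely the surjectivity of $u$ on $V^+$, which is immediate from $V^+\cong\mathbb{F}^s$ (recorded just before the lemma) and the fact that $u:\mathbb{F}\to\mathbb{F}$ is onto. The only place requiring a little care is the compatibility of the two clauses of \cref{Definition induced filtration via c maps} at $j=0$, and checking that the value $F_1(V^+)=\ker[c_u]$ is consistent with the $j\geq 0$ formula $u^{j-1}\ker[c_u]$ at $j=1$; both reduce to the same surjectivity observation. Once that bookkeeping is in place, the displayed consequence $F_j(V^+)=u^jF_0(V^+)$ for $j\geq 0$ follows by an obvious induction from $uF_j(V^+)=F_{j+1}(V^+)$.
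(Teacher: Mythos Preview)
Your proof is correct and follows essentially the same approach as the paper's: the forward inclusion $uF_j(V^+)\subset F_{j+1}(V^+)$ is immediate, the case $j\geq 1$ is tautological from the definition $F_{j\geq 0}(V^+)=u^{j-1}\ker[c_u]$, and for $j\leq 0$ one uses surjectivity of $u$ on $V^+\cong\mathbb{F}^s$ to lift any $w\in F_{j+1}(V^+)$ as $w=uv$ and check $v\in F_j(V^+)$. The paper's proof is terser but identical in substance; your extra care about the compatibility of the two clauses of the definition at $j=0,1$ is correct bookkeeping that the paper leaves implicit.
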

\begin{proof}
The inclusion $\subset$ is obvious (and a requirement of filtrations), and equality is obvious when $j\geq 0$. For $j\leq 0$, say $v\in F_{j+1}(V^+)$, so $u^{|j|}[c_u]v=0$.
The $u$-action on $\mathbb{F}$ is surjective and so the same holds for $V^+$, thus we may write $v=u\widetilde{v}$. Then $u^{|j|+1}[c_u]\widetilde{v}=0$, so $\widetilde{v}\in F_{j}(V^+)$ and $v=u\widetilde{v} \in uF_{j}(V^+)$.
\end{proof}

\begin{rmk}
The $u$-action on $V^+$ is surjective as $V^+\cong \mathbb{F}^s$, but there is no $u^{-1}$-action.
However, once an explicit presentation $V\cong \ku^s$ is chosen, we can identify $V^+ \cong \mathbb{F}^s$ with the vector space $\oplus_{i\leq 0} u^{i} \k$, and then write $u^{-j}\bullet v$ to mean that we shift by $-j\leq 0$ all powers of $u$ in $v\in \oplus_{i\leq 0} u^{i} \k$.
The ambiguity caused by the choice of presentation lies in $\mathbb{F}_{j-1}:= \oplus_{-j< i\leq 0} u^{i} \k$. More invariantly, $\mathbb{F}_{j-1}=\ker u^j$, where $u^j$ acts by multiplication on $V^+$.
Then \cref{Lemma Vplus description} is more explicitly the statement:
$$
F_{-j}^{\lambda}(V^+) = (u^{-j}\bullet \ker [(c_{\lambda})_u]) + \mathbb{F}_{j-1} \qquad (\textrm{for }-j\leq 0).
$$
\end{rmk}

\begin{lm}\label{Lemma SES on filtered V modules}
For each $j\in \Z$, \cref{Cor SES over LES} gives rise to the following exact sequence
\begin{equation}\label{Equation exact seq for Fj of V modules}
0\longrightarrow F_{j-1}(V^-) \stackrel{u}{\longrightarrow} F_j(V^{\infty}) \longrightarrow F_j(V^{+}) \longrightarrow 0.
\end{equation}
\end{lm}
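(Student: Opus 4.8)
The plan is to derive \eqref{Equation exact seq for Fj of V modules} by applying the snake lemma, or rather a direct diagram chase, to the two commutative squares in \cref{Cor SES over LES}, restricted to the filtered pieces. First I would recall from \cref{Definition induced filtration via c maps} the precise definitions: $F_j(V^-)=c^{-1}(F_j(W^-))$ and $F_j(V^\infty)=c_u^{-1}(F_j(W^\infty))$, while $F_{j}(V^+)$ is defined so as to agree with $[c_u]^{-1}(F_j(W^+_{\mathbb F}))$ for $j\le 0$ and with $u^{j-1}\ker[c_u]$ for $j\ge 0$. The second row of the right-hand square in \cref{Cor SES over LES} is the filtered short exact sequence \eqref{Equation LES on W modules filtered} of \cref{Lemma exact sequence on filtered pieces of W modules}, namely
$0\to F_{j-1}(W^-)/T \xrightarrow{u} F_j(W^\infty) \to F_j(W^+_{\mathbb F})\to 0$.
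So the strategy is to pull this sequence back along the vertical maps $c,\,c_u,\,[c_u]$ and check exactness of the resulting sequence of preimages.

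The key steps, in order: (1) \emph{Injectivity of $u$ on $F_{j-1}(V^-)$}: this is immediate since $u$ is injective on $V^-\cong\ku^s$, and $u$ maps $F_{j-1}(V^-)$ into $F_j(V^\infty)$ because $c_u(uv)=u\,c(v)\in u F_{j-1}(W^-)\subset F_j(W^\infty)$, using that $u F_{j-1}(W^-)$ lands in $F_j(W^\infty)$ modulo torsion, which is killed in $W^\infty$ — here I would note that $W^\infty$ is torsion-free so the passage to $W^-/T$ in the bottom row is harmless. (2) \emph{Exactness in the middle}: given $x\in F_j(V^\infty)$ mapping to $0$ in $F_j(V^+)$, I would first use the (unfiltered) exact sequence $0\to V^-\xrightarrow{u}V^\infty\to V^+\to0$ from \cref{Cor SES over LES} to write $x=uv$ for a unique $v\in V^-$; then I must show $v\in F_{j-1}(V^-)$, i.e. $c(v)\in F_{j-1}(W^-)$. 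Since $x\in F_j(V^\infty)$ means $c_u(x)=u\,c(v)\in F_j(W^\infty)$, and the bottom row tells us $u\,c(v)\in u\big(F_{j-1}(W^-)/T\big)$ is the image of a unique class in $F_{j-1}(W^-)/T$, I conclude $c(v)\in F_{j-1}(W^-)+T\subset F_{j-1}(W^-)$, because $T\subset F_i(W^-)$ for all $i$ by \cref{Definition canonical valuation}. Thus $v\in F_{j-1}(V^-)$. (Care is needed that the image of $x$ in $V^+$ really does lie in $F_j(V^+)$ — but $x$ is assumed in $F_j(V^\infty)$ and we are asking it to map to $0$, so this is automatic once surjectivity below is in hand.) (3) \emph{Surjectivity onto $F_j(V^+)$}: given $\bar v\in F_j(V^+)$, lift it to some $x\in V^\infty$ with $x\mapsto\bar v$; then $[c_u](\bar v)\in F_j(W^+_{\mathbb F})$ (for $j\le 0$ this is the definition; for $j\ge 0$ one uses $F_j(V^+)=u^{j-1}\ker[c_u]$ so $[c_u](\bar v)=0$ and lies in every $F_j$), hence by surjectivity in \eqref{Equation LES on W modules filtered} the class $c_u(x)\in W^\infty$ can be adjusted by an element of $u\,F_{j-1}(W^-)/T=\mathrm{im}(u)$ to land in $F_j(W^\infty)$; correcting $x$ by the corresponding element of $V^-$ (which maps to $0$ in $V^+$) produces an $x'\in F_j(V^\infty)$ with $x'\mapsto\bar v$. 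I would handle the two ranges $j\le 0$ and $j\ge 1$ separately in steps (2) and (3), matching the case split in \cref{Definition induced filtration via c maps} and the explicit form of \eqref{Equation explicit SES case j positive}--\eqref{Equation explicit SES case j negative}; the $j\ge 1$ range is where one must use $F_j(V^+)=u^{j-1}\ker[c_u]$ together with \cref{Lemma Vplus description} ($uF_j(V^+)=F_{j+1}(V^+)$) rather than the naive preimage.

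The main obstacle I anticipate is the bookkeeping around torsion: the bottom row of the second square in \cref{Cor SES over LES} involves $W^-/T$, not $W^-$, whereas the induced filtrations on $V^-,V^\infty,V^+$ are defined via maps to $W^-,W^\infty,W^+$ (not their torsion-free or $\mathbb F$-quotients). Reconciling these — showing that replacing $W^-$ by $W^-/T$ does not change the preimage filtration on $V^-$, because $T$ sits inside every $F_j(W^-)$ (\cref{Definition canonical valuation}) — is the one genuinely delicate point, and it is exactly what makes step (2) go through. Everything else is a routine chase, and I would present it as such, referencing \cref{Lemma exact sequence on filtered pieces of W modules}, \cref{Cor SES over LES}, and \cref{Lemma Vplus description} for the facts that do the work.
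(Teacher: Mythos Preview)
Your abstract diagram-chase is exactly what the paper alludes to in its first sentence (``The diagram chase can be done abstractly''), though the paper then opts for the quicker route of putting $c$ into Smith normal form and checking the rank-one summands $c=u^k$ and $c=0$ directly against \cref{Example cube map showing induced filtrations}. Your steps (1) and (2) are fine, and the torsion bookkeeping you flag as the main obstacle is indeed handled correctly by the observation $T\subset F_j(W^-)$ for all $j$.

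The real gap is in step (3). You adjust $c_u(x)$ by an element of $u\,(W^-/T)$ to land in $F_j(W^\infty)$, and then propose ``correcting $x$ by the corresponding element of $V^-$''. But the adjustment lives in $W^-/T$, and there is no reason it lifts through $c\colon V^-\to W^-/T$, which is typically far from surjective. The fix is easy and matches your planned case split. For $j\le 1$, no correction is needed at all: since $(u\otimes 1)(W^-/T)=F_1(W^\infty)\subset F_j(W^\infty)$, the difference $c_u(x)-y$ already lies in $F_j(W^\infty)$, hence so does $c_u(x)$, and every lift $x$ of $\bar v$ is automatically in $F_j(V^\infty)$. For $j\ge 2$, write $\bar v=u^{j-1}\bar w$ with $\bar w\in\ker[c_u]=F_1(V^+)$ (using \cref{Lemma Vplus description}), lift $\bar w$ to $z\in F_1(V^\infty)$ by the $j=1$ case just established, and take $u^{j-1}z\in F_j(V^\infty)$. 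This completes your argument; the paper's Smith-normal-form reduction sidesteps the issue entirely by making everything one-dimensional.
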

\begin{proof}
The diagram chase can be done abstractly. More concretely: by \cref{Example Toy model of invariant factors} we may assume $c$ is in Smith normal form. This reduces the problem to noticing the claim holds in \cref{Example cube map showing induced filtrations} with $3$ replaced by a general $k\geq 0$ (and for $c=0:\ku \to \ku$ the claim becomes the top row of \cref{Cor SES over LES}).
\end{proof}

\begin{rmk}\label{Remark description of Vplus}
The interesting summands labelled by $f_{i\leq a}$ in \cref{Lemma Fj V minus description} map in \eqref{Equation exact seq for Fj of V modules} by:
$$0 \longrightarrow u^{\phi(j-1-j_i)}\ku f_i \stackrel{u}{\longrightarrow} 
 u^{j-j_i}\ku f_i \longrightarrow  (u^{j-j_i}\ku/u^{1+\phi(j-1-j_i)}\ku) [f_i] \longrightarrow 0.$$
The third module is zero for $j>j_i$, and for $j\leq j_i$ it is $u^{j-j_i}\ku/u\ku$ (so $\mathrm{span}_{\k}\{u^{j-j_i},\ldots,u^0\}$ as a $\k$-vector space).  
 The non-interesting summands $f_{i>a}$ (spanning $\ker c$) map in \eqref{Equation exact seq for Fj of V modules} by \eqref{Equation explicit SES case j positive} and \eqref{Equation explicit SES case j negative}.

\end{rmk}

\subsection{Relation between the filtrations for $V^-$ and $V^+$}
\label{Subsection Relation between the filtrations Vminus and Vplus}
\begin{de}
For $v\in V^-$, the inclusion $V^-\subset V^{\infty}$ determines
$u^{-j}v\in V^{\infty}$ (so $v\otimes u^{-j}$ in $V^-\otimes_{\ku} \kuu = V^{\infty}$). Denote $[u^{-j}v]\in V^+$ its image via $V^{\infty} \to V^+$. 
Note that $V^+=\cup_{j\geq 0} [u^{-j}V^-]$.
\end{de}

Observe that the $F_j(V^-)$ filtration is induced by the following {\bf valuation}
    $$
    \mathrm{val}(v) := \max \{ j\in \N\cup \{\infty\} : c(v)\in  W^-/T \textrm{ is }u^j\textrm{-divisible}\}.
    $$
\begin{thm}\label{Lemma relation between filtrations Vminus and Vplus}
For $v\in V^-$ and $j\geq 0$,
\begin{equation}\label{Equation lambda valuation theorem}
\begin{split}
u^{-j}v \in F_1(V^+) \Longleftrightarrow
[c_u][u^{-j}v]=0 \in W^{+} \Longleftrightarrow (v \in F_{j+1}^{\lambda}(V^-)) \Longleftrightarrow (\mathrm{val}^{\lambda}(v)\geq j+1),
\\
\mathrm{val}^{\lambda}(v)=j \Longleftrightarrow \left(\;[c_u][u^{-j}v]\neq 0  \;\textrm{ in }W^+,\;\textrm{ and } \;\; u\cdot [c_u][u^{-j}v]=0\; \textrm{ in }W^+\;\right).
\end{split}
\end{equation}
\end{thm}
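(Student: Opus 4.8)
The plan is to unravel all the definitions and reduce the statement to the $2\times 2$-style calculation already performed in \cref{Example cube map showing induced filtrations}, exactly as in the proofs of \cref{Lemma SES on filtered V modules} and \cref{Remark description of Vplus}. First I would observe that the statement is local to each invariant factor: since $c:V^-\to W^-/T$ can be put into Smith normal form $\mathrm{diag}(u^{j_1},\dots,u^{j_a})$ (with the $\ker c$-summand mapping to zero) by \cref{Lemma valuation on free mod} and \cref{Example Toy model of invariant factors}, and since all three constructions ($V^-\subset V^\infty$, the map $[c_u]$, the valuation $\mathrm{val}$, and the filtrations $F_j$) are built functorially from $c$, both sides of each biconditional decompose over the basis elements $f_i$. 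Thus it suffices to check the claim when $V^-=\ku f_i$ and $c=u^{j_i}$ (the case $f_i\in\ker c$ being even easier: then $[c_u][u^{-j}v]=0$ always, $\mathrm{val}(v)=\infty$, and $v\in F_{j+1}(V^-)=V^-$ for all $j$, so all biconditionals hold trivially).

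Next, for $v=f_i$ a generator with $c(v)=u^{j_i}v$, I would compute directly. The element $u^{-j}v\in V^\infty$ has $[c_u](u^{-j}v)=u^{j_i-j}v\in W^\infty=\kuu$, and its image in $W^+=\kuu/u\ku$ is $[u^{j_i-j}v]$, which is zero precisely when $j_i-j\geq 0$, i.e.\ $j\leq j_i$. On the other hand $\mathrm{val}(v)=\max\{j:\ c(v)=u^{j_i}v\text{ is }u^j\text{-divisible in }W^-/T\}=j_i$ by definition. So for $j\geq 0$: the condition $[c_u][u^{-j}v]=0$, the condition $\mathrm{val}(v)\geq j+1$... wait — one must be careful: $[u^{j_i-j}v]=0$ in $\mathbb{F}$ iff $j_i-j\geq 0$, i.e. $j\leq j_i$, whereas $\mathrm{val}(v)\geq j+1$ means $j_i\geq j+1$, i.e. $j\leq j_i-1$. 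So the first biconditional, as I'd need to recheck, should be read with the $F_1(V^+)$ membership being equivalent to $u$ annihilating the class, not the class itself vanishing: indeed $F_1(V^+)=[c_u]^{-1}(F_1(W^+_{\mathbb{F}}))=[c_u]^{-1}(0)=\ker[c_u]$ by \cref{Definition induced filtration via c maps}, so $u^{-j}v\in F_1(V^+)$ means $[c_u][u^{-j}v]=0\in W^+$. For the generator this is $j_i-j\geq 0$... — here the precise bookkeeping of the shift between ``$=0$'' and ``$\in F_1$'' is the one genuinely delicate point, and I would pin it down by expanding \eqref{Equation explicit SES case j negative} and \cref{Remark description of Vplus} with $k=j_i$, reading off that $[u^{-j}v]$ represents a nonzero leading term exactly when $j=\mathrm{val}(v)$, and lies in $\mathbb{F}_{j-1}=\ker u^j$ (hence killed by one further multiplication by $u$) exactly in that top case. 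The second biconditional then falls out: $\mathrm{val}^\lambda(v)=j$ iff $j\leq j_i$ but $j+1\not\leq j_i$, iff $[c_u][u^{-j}v]$ is nonzero in $W^+$ yet $u\cdot[c_u][u^{-j}v]=[u^{j_i-j+1}v]=0$ (using $j_i-j+1\geq 1$).

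Finally I would assemble the pieces: sum over all generators $f_i$ to recover the general $v\in V^-$, using that the valuation of a sum is the minimum of the valuations over its nonzero components (property (3) of \cref{Definition canonical valuation}, which is why $\mathrm{val}$ is genuinely a valuation), and that $[c_u]$ is $\ku$-linear so the vanishing and $u$-annihilation conditions also decompose componentwise; this gives the equivalences of \eqref{Equation lambda valuation theorem} verbatim. The main obstacle, as indicated above, is purely bookkeeping: keeping straight the off-by-one shifts among ``$c(v)$ is $u^j$-divisible'' ($\mathrm{val}\geq j$), ``$[c_u][u^{-j}v]=0$'' ($\mathrm{val}\geq j$ as well, after the shift built into the $u^{-j}$), and ``$u\cdot[c_u][u^{-j}v]=0$'' ($\mathrm{val}\geq j-1$), together with the distinction between an element of $V^+$ being zero versus lying in the submodule $F_1(V^+)=\ker[c_u]$; all of this is forced once one writes out \cref{Example cube map showing induced filtrations} and \cref{Remark description of Vplus} with the generic exponent $k=j_i$, so no new idea is required beyond that reduction.
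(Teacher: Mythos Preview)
Your reduction to Smith normal form is a valid alternative to the paper's argument, but there is a concrete off-by-one error that you flag and then resolve incorrectly. In $\mathbb{F}=\kuu/u\ku$, the class $[u^k]$ vanishes if and only if $k\geq 1$, not $k\geq 0$: the element $u^0$ survives. With the correct criterion, for $v=f_i$ and $c=u^{j_i}$ one has $[c_u][u^{-j}v]=[u^{j_i-j}]=0$ in $W^+$ precisely when $j_i-j\geq 1$, i.e.\ $\mathrm{val}(v)=j_i\geq j+1$, and the first chain of equivalences is then immediate. Your proposed fix --- reinterpreting $F_1(V^+)$ as ``annihilated by $u$'' rather than ``mapping to zero'' --- is wrong: by \cref{Definition induced filtration via c maps} one has $F_1(V^+)=u^{0}\ker[c_u]=\ker[c_u]$ on the nose, so the first two conditions in \eqref{Equation lambda valuation theorem} are equal by definition, and the content lies entirely in the equivalence with $\mathrm{val}^{\lambda}(v)\geq j+1$. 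Once the inequality is corrected, your componentwise assembly for general $v$ goes through, and the second line follows from the first applied at $j$ and at $j-1$ (indeed $u\cdot[c_u][u^{-j}v]=[c_u][u^{-(j-1)}v]$).

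One further point you gloss over: your Smith normal form is for $c:V^-\to W^-/T$, but the map in the statement is $[c_u]:V^+\to W^+$, and $W^+$ carries an extra summand $T[-1]$ when $W^-$ has torsion (\cref{Lemma LES for W modules}). You need \cref{Lemma Vplus preimage of T in Wplus is not interesting} to know that $[c_u]$ factors through $W^+_{\mathbb{F}}$, so that the $T[-1]$ piece is irrelevant and the Smith form of $V^-\to W^-/T$ really controls $[c_u]$.

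For comparison, the paper does not choose bases at all: it runs a direct diagram chase on the commutative square of short exact sequences in \cref{Cor SES over LES}. Given $[c_u][u^{-j}v]=0$, exactness of the bottom row produces $w^-\in W^-/T$ with $(u\otimes 1)(w^-)=c_u(u^{-j}v)$; commutativity of the left square then forces $c(v)=u^{j+1}w^-$, so $v\in F_{j+1}(V^-)$. The converse is the same chase in reverse. This handles the torsion of $W^-$ uniformly and sidesteps all the index bookkeeping that tripped you up.
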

\begin{proof}
 We first prove the $\Rightarrow$ direction.  By \cref{Lemma exact sequence on filtered pieces of W modules}, we have a commutative diagram of exact rows,
$$
\begin{tikzcd}[column sep=0.3in,row sep=0.2in]
0 \arrow[r, ""] & V^-
 \arrow[r, "u\otimes \mathrm{id}"] 
 \arrow[d, "c"]
 &
 V^{\infty}
\arrow[r, ""] \arrow[d, "c_u"]
& V^+ \arrow[d, "{[c_u]}"] 
\arrow[r, ""]
& 0
&
&
\strut
&
u^{-j}v
\arrow[r, "",mapsto]
\arrow[d, "",mapsto] 
&
{[u^{-j}v]}
\arrow[r, "",mapsto]
\arrow[d, "",mapsto] 
&
0
\\
0 \arrow[r, ""] & W^-/T \arrow[r,"u\otimes \mathrm{id}"]
&
 W^{\infty}
\arrow[r, ""] 
& W^+_{\mathbb{F}} 
\arrow[r, ""]
& 0
&
0
\arrow[r, "",mapsto]
&
\exists! w^-
\arrow[r, "u\otimes 1",mapsto]
&
w
\arrow[r, "",mapsto]
&
0
\arrow[r, "",mapsto]
&
0
\end{tikzcd}
$$
Note that $v\in V^-$ maps to $uv = u^{j+1}(u^{-j}v)$ via $V^- \to V^{\infty}$. So, from the diagram chase, it follows that $c(v)=u^{j+1} w^-\in F_{j+1}(W^-/T)$. Therefore $v\in F_{j+1}^{\lambda}(V^-):=c^{-1}(F_{j+1}(W^-)).$

 We now prove the $\Leftarrow$ direction. If $v\in F_{j+1}^{\lambda}$ then $c(v)=u^{j+1}w^- \in W^-/T$ for some $w^-\in W^-.$ 
 Let $w\in W^{\infty}$ denote the image $(u\otimes 1)(w^-)$ of $w^-$ via $W^- \to W^{\infty}$.
 By exactness, $w\mapsto 0$ via $W^{\infty} \to W^+_{\mathbb{F}}$.
 The same diagram chase on the right applies: commutativity shows $u c_u(v)=c_u((u\otimes 1)v)=u^{j+1}w \in W^{\infty}$ and therefore $u^{-j}v \mapsto w$ in the middle arrow, thus $[u^{-j} v] \in V^+$ maps to $0$ via $[c_u]$.
\end{proof}

\subsection{Relation between the filtrations for $V^-,V^{\infty},V^+$}
\label{Subsection Relation between the filtrations}
We continue to assume $V\cong \ku^s$ is free, and we now work with directed modules $W_{\lambda}$ as in \cref{Subsection the long and short exact sequences} together with maps $c_{\lambda}:V \to W_{\lambda}$ satisfying {\bf compatibility}:  $c_{\mu} = c_{\mu,\lambda} \circ c_{\lambda}:  V \to W_{\lambda} \to W_{\mu}$,
for all $\lambda<\mu$. The labelling $\lambda$ belongs to an unbounded subset of $\R_{>0}$; we call these values the {\bf generic slopes}.

\begin{de}
Define $F_j^{\lambda}(V^-),F_j^{\lambda}(V^{\infty}),F_j^{\lambda}(V^+)$ via \cref{Definition induced filtration via c maps}
for the map $c_{\lambda}$. We extend this definition to all slope values $p\in \R_{\geq 0}$ by intersecting (where it is always understood that $\lambda$ is generic):
\begin{equation}\label{Definition of Fjlambda}
\FF_j^{p}(V^{-})  := \bigcap_{\lambda \geq p} F_j^{\lambda}(V^-),
\qquad
\FF_j^{p}(V^{\infty})  := \bigcap_{\lambda \geq p} F_j^{\lambda}(V^{\infty}),
\qquad
\textrm{ and }
\qquad
\EE_{j}^p(V^{+})  := \bigcap_{\lambda \geq p} F_j^{\lambda}(V^+). 
\end{equation}
For $p$ generic, $\FF_j^{p}(V^{-}) = F_j^{p}(V^{-})$, etc.
In applications, there is a non-empty interval $(\lambda,\mu)$ of generic slopes $\lambda'$ for which $c_{\lambda',\lambda}$ is an isomorphism. Write $\lambda^+$ for any such $\lambda'$. Thus, $\FF_j^{p}(V^{-})  = F_j^{p^+}(V^-)$, etc. By compatibility, increasing $p$ yields inclusions: $\FF_j^{p}(V^{-})\subset \FF_j^{p}(V^{-})$ for $p\leq p'$, etc.

Define $F_j^{\infty}(V^-),F_j^{\infty}(V^{\infty}),F_j^{\infty}(V^+)$ via \cref{Definition induced filtration via c maps}
for the map $c=\varinjlim c_{\lambda}$; equivalently:
\begin{equation}\label{Definition of Fjinfty}
\FF_j^{\infty}(V^{-})  = \bigcup_{p} \FF_j^{p}(V^-),
\qquad
\FF_j^{\infty}(V^{\infty}) =  \bigcup_{p}  \FF_j^{p}(V^{\infty}),
\qquad
\textrm{ and }
\qquad
\EE_{j}^{\infty}(V^{+})  = \bigcup_{p} \EE_{j}^{p}(V^+). 
\end{equation}
\end{de}
In particular, $\EE_{1}^{\infty}=\ker [(c)_u]=\cup \EE_{1}^{p}$, where $\EE_{1}^{\lambda}=\ker [(c_{\lambda})_u]$.

 There is an obvious submodule $u^{j}V^- \subset \FF_j^{p}(V^{-})$, since $c_{\lambda}(u^{j}v)=u^{j}c_{\lambda}(v)\in F_{j}(W^-_{\lambda}).$

Let $T_{\lambda}=$\,Torsion$(W_{\lambda})$, then the filtration  $F_j^{p}(V^-)$ is induced by the {\bf valuation}
    \begin{align}\label{Definition lambda valuation}
    \begin{split}
    \mathrm{val}^{p}(v) := & \max \{ j\in \N\cup \{\infty\} : v\in \FF_j^{\lambda}(V^-) \textrm{ for all }\lambda \geq p\}
    \\
    =& \max \{ j\in \N\cup \{\infty\} : [c_{\lambda}(v)]\in  W^-_{\lambda}/T_{\lambda} \textrm{ is }u^j\textrm{-divisible for all }\lambda\geq p\},
    \end{split}
\end{align}
    The function $\mathrm{val}^{p}$ is non-decreasing in $p$, due to the compatibility condition.

Let $u^{j_i}(\lambda)$ be the invariant factors of $c_{\lambda}$.
By \cref{Remark description of Vplus},
$F_j^{\lambda}(V^+)$ has a summand $u^{j-j_i(\lambda)}\ku/u\ku$ for $j\leq j_i(\lambda)$ (so
$\mathrm{span}_{\k}\{u^{j-j_i(\lambda)},\ldots,u^0\}$ as a $\k$-vector space) and a summand $K_{c_{\lambda}}^+ =(\ker c_{\lambda})_u/\ker c_{\lambda}$ as in \cref{Lemma Vplus preimage of T in Wplus is not interesting}. We abbreviate 
$K_{\lambda}^+ := K_{c_{\lambda}}^+$ and $K_{p}^+=\cap_{\lambda \geq p} K_{\lambda}^+$. For $j\geq 0$, let 
$$\mathbb{F}_j:=\ker (u^{j+1}:V^+ \to V^+) \subset V^+,$$
so loosely the $\ku$-submodule of elements involving only terms of order $u^{\geq -j}$.

\begin{cor}\label{Cor relation to EEp}
For any $j\geq 0,m\in \N, p\in [0,\infty]$, 
there are isomorphisms of $\ku$-modules
\begin{align*}
 \FF_{j+1}^{p}(V^{-})/u^{j+1}V^- \hookrightarrow  \EE^p_1(V^{+}) \cap \mathbb{F}_j,\; & v \mapsto [u^{-j}v],
 \\
  \FF_{j+1}^{p}(V^{-})/u^{j+1}V^- \hookrightarrow  \EE_{m}^p(V^{+}) \cap \mathbb{F}_{j-m+1},\; & v \mapsto [u^{m-1-j}v].
\end{align*}

If $c_{\lambda}$ is injective, then for $p=\lambda$
those embeddings are isomorphisms for sufficiently large $j$, namely $$j\geq \max \{j_i(\lambda): u^{j_i(\lambda)} \textrm{ is an invariant factor of }c_{\lambda}\}.$$
Without assuming $c_{\lambda}$ is injective, for $j$ as above the map $v\mapsto [u^{-j}v]$ yields an isomorphism
$$\FF_{j+1}^{\lambda}(V^{-})/(u^{j+1}V^-+\ker c_{\lambda})
\to \EE^p_1(V^{+})/K_{\lambda}^+.$$
\end{cor}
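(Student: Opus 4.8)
The plan is to deduce everything from \cref{Lemma relation between filtrations Vminus and Vplus} together with the explicit Smith normal form description of the filtrations collected in \cref{Lemma Fj V minus description}, \cref{Remark description of Vplus}, and \cref{Lemma SES on filtered V modules}. First I would establish the map $v \mapsto [u^{-j}v]$ is well-defined and $\ku$-linear as a map out of the quotient $\FF_{j+1}^p(V^-)/u^{j+1}V^-$: if $v = u^{j+1}v'$ then $[u^{-j}v] = [u v'] = 0$ in $V^+$, since $V^+ \cong \mathbb{F}^s$ is killed after a single localisation-quotient step; and if $v\in \FF_{j+1}^p(V^-)$ then by \cref{Lemma relation between filtrations Vminus and Vplus} (applied at every generic $\lambda\geq p$, then intersected) we get $[u^{-j}v]\in \EE_1^p(V^+)$, while $u^{j+1}[u^{-j}v] = [u v] = 0$ forces $[u^{-j}v]\in \mathbb{F}_j$. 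Injectivity: if $[u^{-j}v] = 0$ then $u^{-j}v$ lies in the image of $V^- \xrightarrow{u} V^{\infty}$, i.e.\ $u^{-j}v = u w^-$ for some $w^-\in V^-$, hence $v = u^{j+1}w^-\in u^{j+1}V^-$.

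Next I would prove surjectivity onto $\EE_1^p(V^+)\cap \mathbb{F}_j$ under the stated hypothesis on $j$ and injectivity of $c_\lambda$ (with $p=\lambda$). Here is where I would use that $\mathbb{F}_j = \ker(u^{j+1}\colon V^+\to V^+)$ is, after fixing a presentation $V^-\cong\ku^s$, exactly the span of terms of order $u^{-j},\ldots,u^0$, so every element of $\mathbb{F}_j$ has a lift to $V^\infty$ of the form $u^{-j}v$ with $v\in V^-$; and the condition that this lift land in $\EE_1^p(V^+) = \ker[(c_\lambda)_u]$ is, again by \cref{Lemma relation between filtrations Vminus and Vplus}, precisely $v\in \FF_{j+1}^\lambda(V^-)$. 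The role of the bound $j\geq \max\{j_i(\lambda)\}$ is that for $j$ in this range the description in \cref{Remark description of Vplus} shows $\EE_1^\lambda(V^+)$ is \emph{entirely} contained in $\mathbb{F}_j$ (the summands $u^{j-j_i(\lambda)}\ku/u\ku$ have already "saturated", i.e.\ every kernel element only involves powers $u^{\geq -j}$), so $\EE_1^p(V^+)\cap\mathbb{F}_j = \EE_1^p(V^+)$ and the embedding is onto. The variant $v\mapsto [u^{m-1-j}v]$ landing in $\EE_m^p(V^+)\cap\mathbb{F}_{j-m+1}$ is obtained by the same argument: by \cref{Lemma Vplus description} and the definition $F_{j\geq 0}(V^+) = u^{j-1}\ker[c_u]$, an element lies in $F_m^\lambda(V^+)$ iff it is $u^{m-1}$ times an element of $\ker[(c_\lambda)_u]$, so one multiplies the previous isomorphism by $u^{m-1}$; then $[u^{m-1-j}v] = u^{m-1}[u^{-j}v]$ gives the claim, and the degree bound $\mathbb{F}_{j-m+1}$ comes from shifting.

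Finally, for the last displayed isomorphism without injectivity of $c_\lambda$: by \cref{Lemma Vplus preimage of T in Wplus is not interesting} the kernel $[c_\lambda]_u$ splits off a canonical copy $K_\lambda^+ = (\ker c_\lambda)_u/u\ker c_\lambda \cong \mathbb{F}^b$, and the remaining "interesting" part of $\EE_1^\lambda(V^+)$ behaves exactly as in the injective case once we pass to the induced map $\widetilde{c_\lambda}\colon V^-/\ker c_\lambda \to W_\lambda^-/T_\lambda$, which is injective. Replacing $V^-$ by $V^-/\ker c_\lambda$ and $u^{j+1}V^-$ by its image, the already-proven injective case gives an isomorphism onto $\EE_1^p(V^+)/K_\lambda^+$; unwinding, the source becomes $\FF_{j+1}^\lambda(V^-)/(u^{j+1}V^- + \ker c_\lambda)$ as stated. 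The main obstacle I anticipate is bookkeeping: making the identification $\mathbb{F}_j = \ker(u^{j+1})$ precise enough to conclude $\EE_1^\lambda(V^+)\subseteq \mathbb{F}_j$ for $j$ in the stated range, and correctly tracking the $u^{m-1}$ shift so the intersection lands in $\mathbb{F}_{j-m+1}$ rather than $\mathbb{F}_j$; everything else is diagram-chasing already carried out in \cref{Lemma relation between filtrations Vminus and Vplus} and \cref{Lemma SES on filtered V modules}, applied $\lambda$-by-$\lambda$ and then intersected over $\lambda\geq p$ (with the $p=\infty$ case handled by the union formula \eqref{Definition of Fjinfty}).
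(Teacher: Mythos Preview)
Your proposal is correct and follows essentially the same route as the paper: both arguments rest on \cref{Lemma relation between filtrations Vminus and Vplus} for well-definedness and the characterisation of $\EE_1^\lambda$, on the Smith normal form description in \cref{Lemma Fj V minus description} and \cref{Remark description of Vplus} for the surjectivity analysis, and both handle the $m$-variant by post-composing with $u^{m-1}$ and the non-injective case by splitting off $\ker c_\lambda$ and $K_\lambda^+$.

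The only presentational difference is that the paper reduces (via Smith normal form) to the rank-one case $c_\lambda = u^{j_i}\colon \ku \to \ku$ and checks the claim by writing down explicit $\k$-bases on both sides, whereas you argue more structurally by lifting an arbitrary element of $\mathbb{F}_j$ to $u^{-j}v$ with $v\in V^-$ and invoking the $\Leftrightarrow$ in \cref{Lemma relation between filtrations Vminus and Vplus}. Your lifting argument is in fact slightly cleaner: it establishes surjectivity onto $\EE_1^p(V^+)\cap\mathbb{F}_j$ for \emph{all} $j$ simultaneously (not just $j\geq \max j_i(\lambda)$), so you need not separate the cases $j\geq j_i$ and $j<j_i$ as the paper does. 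The role of the bound $j\geq\max j_i(\lambda)$ is then purely to ensure $\EE_1^\lambda(V^+)\subset\mathbb{F}_j$, so that the intersection with $\mathbb{F}_j$ becomes vacuous.
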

\begin{proof}
   By \cref{Lemma relation between filtrations Vminus and Vplus}, the map $ \FF_{j+1}^{p}(V^{-}) \to \EE^p_1(V^{+})$, $v \mapsto [u^{-j}v]$ is a well-defined $\ku$-module homomorphism. By \cref{Lemma Fj V minus description}, $\ker c_{\lambda}\cong \ku^b$ contributes $\ku^b/u^j\ku^b$ to $\FF_{j}^{p}(V^{-})/u^{j}V^-$, 
and $K_{\lambda}^+\cong \mathbb{F}^b$ to $\EE^p_1(V^{+})$. The map $v \mapsto [u^{-j}v]$ between them has image $u^{-j}\ku^b/u\ku^b \subset \mathbb{F}^b$. If $b\neq 0$ (equivalently, if $c_{\lambda}$ is not injective), that map is not surjective. 
The remainder of the proof will show that the map does however surject onto a complement of $\mathbb{F}^b\subset \EE^p_1(V^{+})$ for $j$ as large as in the claim.
   
   By \cref{Remark description of Vplus} we reduce to the case $c_{\lambda}=u^{j_i}:\ku \to \ku$ for $j\geq j_i$. Thus%
   \footnote{The spans are $\k$-vector space presentations. The notation is imprecise for $j_i=0$: $\FF_{j}^{p}(V^{-})/u^jV^-=\{0\}=\EE^p_1(V^{+})$.
   } 
   $\FF_{j}^{p}(V^{-})\!=\!u^{j-j_i}\ku$,
   $$\FF_{j+1}^{p}(V^{-})/u^{j+1}V^-\!=\!\mathrm{span}_{\k}\{u^{j+1-j_i}\!,u^{j+1-j_i+1}\!,\ldots\!,u^{j}\} \qquad \textrm{ and }\qquad \EE^p_1(V^{+})\!=\!\mathrm{span}_{\k}\{u^{1-j_i}\!,\ldots\!,u^0\}$$
   (cf.\;\cref{Example cube map showing induced filtrations} for the case $j_i=3$). Since the left basis maps precisely to the right basis via the map $v\mapsto [u^{-j}v]$, we get an isomorphism in this case.
   If $j<j_i$, the basis on the left becomes $\{u^0,\ldots,u^j\}$ and it maps to the basis of $\EE^p_1(V^{+})\cap \mathbb{F}_j=\mathrm{span}_{\k}\{u^{-j}\!,\ldots\!,u^0\}$. 

   In the statement involving $m$, recall $\EE_{m}^p(V^{+})=u^{m-1}\EE_{1}^p(V^{+})$.
   So in the case $c_{\lambda}=u^{j_i}:\ku \to \ku$ we get $\EE_{m}^p(V^{+})=\mathrm{span}_{\k}\{u^{m-j_i},\ldots,u^0\}$. The rest of the argument is then very similar.
\end{proof}

\begin{de}
    Abbreviate $\FF_{j}^{p}:=\FF_{j}^{p}(V^{-})$ and $V=V^-$. Define the $\k$-vector spaces
    $$
    \PP_j^p := \FF_{j}^{p}/u\FF_{j-1}^{p} = \FF_{j}^{p}/(\FF_{j}^{p} \cap uV), 
    \;\;\;\;\;\,
    \PP := V/uV,
    \;\;\;\;\;\,
    \mathrm{gr}_j^p:=\PP_j^p/\PP_{j-1}^p,
    \;\;\;\,\textrm{ and }
    \;\;\;\,
    \mathrm{gr}^p \PP := \oplus \mathrm{gr}_j^p,
    $$
    noting the properties: $\PP_j^p=\PP$ for $j\leq 0$, $\PP_{j+1}^p\subset \PP_j^p$, $\PP_j^p\subset \PP_j^{p'}$ for $p\leq p'$. Declare $\PP_{\infty}^p:=0$.
\end{de}

\begin{lm}\label{Lemma dimension calculation for sum of ds}
There is an identification of $\k$-vector spaces,
    $$
    \FF_{j}^{p}/u^j V \cong \PP_j^p \oplus \PP_{j-1}^p \oplus \cdots \oplus \PP_1^p.
    $$
    Thus, $\dim_{\k} \FF_{j}^{p}/u^j V = d_j^p + \cdots + d_1^p,$ where $d_j^p:=\dim_{\k} \PP_j^p$.
\end{lm}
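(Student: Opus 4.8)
The plan is to filter the short exact sequence
$0 \to F_{j-1}^p(V^-) \xrightarrow{u} F_j^p(V^{\infty}) \to F_j^p(V^+) \to 0$
of \cref{Lemma SES on filtered V modules} (taken at the chosen generic slope $\lambda=p^+$, or intersected over $\lambda\geq p$) against the submodule $u^jV^-$, and read off a vector-space decomposition. More concretely, I would argue directly from the explicit summand description in \cref{Lemma Fj V minus description} and \cref{Remark description of Vplus}: after putting $c_\lambda$ into Smith normal form we may assume $V^-=\ku^s$ and $c_\lambda=\mathrm{diag}(u^{j_1},\ldots,u^{j_a},0,\ldots,0)$, so everything splits as a direct sum over the basis vectors $f_i$, and it suffices to check the claimed identity summand by summand. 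For a single summand $\ku f_i$ with invariant factor $u^{j_i}$ (allowing $j_i=\infty$ for the kernel part), one has $\FF_j^p(V^-)\cap \ku f_i = u^{\phi(j-j_i)}\ku f_i$ where $\phi(n)=\max\{n,0\}$, so $(\FF_j^p(V^-)\cap\ku f_i)/(u^jV^-\cap \ku f_i) = u^{\phi(j-j_i)}\ku f_i/u^j\ku f_i$, which is $\k$-spanned by the powers $u^{\phi(j-j_i)},\ldots,u^{j-1}$, i.e.\ has dimension $\min(j,j_i)$ (and $j$ for the kernel summand).

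The heart of the argument is then the combinatorial bookkeeping: I want to match, for each fixed summand, the $\min(j,j_i)$-dimensional space $u^{\phi(j-j_i)}\ku/u^j\ku$ with $\bigoplus_{k=1}^{j}\PP_k^p$ restricted to that summand. The key observation is that the $k$-th filtered piece $\PP_k^p = \FF_k^p/u\FF_{k-1}^p$, intersected with the $f_i$-summand, is the one-dimensional space $u^{\phi(k-j_i)}\ku f_i/u^{1+\phi(k-1-j_i)}\ku f_i$ precisely when $k\leq j_i$ (and zero for $k>j_i$), contributing one basis vector $u^{\phi(k-j_i)}$; summing these over $k=1,\ldots,j$ recovers exactly the basis $\{u^{\phi(j-j_i)},\ldots,u^{j-1}\}$ of $u^{\phi(j-j_i)}\ku/u^j\ku$. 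Equivalently, I can set up a filtration of $\FF_j^p/u^jV$ by the images of $\FF_k^p$ for $k=0,1,\ldots,j$ and identify successive quotients: the quotient of (image of $\FF_k^p$) by (image of $\FF_{k-1}^p$) inside $\FF_j^p/u^jV$ is isomorphic to $\FF_k^p/(\FF_{k-1}^p + (u^jV\cap \FF_k^p))$, and using $u^jV\cap\FF_k^p = u(u^{j-1}V\cap \FF_{k-1}^p)\subset u\FF_{k-1}^p$ for $k\leq j$ (here one needs $\FF_k^p$ to be "finitely $u$-filtered" in the sense of \cref{Subsection Finitely u filtered ku modules}, which holds by \cref{Prop Fjc is finitely u filtered}), this quotient collapses to $\FF_k^p/u\FF_{k-1}^p = \PP_k^p$. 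A splitting then upgrades the filtration to the claimed direct sum, giving the dimension count $\dim_\k \FF_j^p/u^jV = d_j^p+\cdots+d_1^p$.

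I expect the main obstacle to be the clean verification of the intersection identity $u^jV \cap \FF_k^p \subseteq u\FF_{k-1}^p$ for $k\leq j$ — i.e.\ that the ambient "cutoff at $u^j$" interacts correctly with the induced filtration — since this is exactly where the finitely-$u$-filtered hypothesis and the structure of $F_j^c$ from \cref{Prop Fjc is finitely u filtered} are used, and one must be careful about the torsion $T_\lambda$ in $W_\lambda$ entering through the definition $\FF_j^p = c_\lambda^{-1}(F_j(W_\lambda))$. Once that is in place, the passage from a filtration with the right graded pieces to an honest direct-sum decomposition is routine (choosing $\k$-linear splittings at each stage), and the final dimension formula is immediate by additivity of dimension over the graded pieces.
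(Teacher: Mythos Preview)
Your Smith-normal-form approach (approach 1) correctly proves the dimension formula, though the basis-matching sentence is slightly off: each $\PP_k^p$ restricted to the $f_i$-summand is represented by $u^0 f_i$, not by distinct powers of $u$, so you still owe an explanation of the map $\bigoplus_k \PP_k^p \to \FF_j^p/u^jV$ that separates them. The fix, and in fact the paper's entire proof, is the filtration
\[
\FF_j^p \;\supset\; u\FF_{j-1}^p \;\supset\; u^2\FF_{j-2}^p \;\supset\;\cdots\;\supset\; u^j\FF_0^p = u^jV,
\]
whose successive quotients are $u^{j-k}\FF_k^p / u^{j-k+1}\FF_{k-1}^p \cong \FF_k^p/u\FF_{k-1}^p = \PP_k^p$ via $v\mapsto u^{j-k}v$ (injective because $V$ is torsion-free). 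That is a two-line argument, with no need for Smith normal form.

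Your approach 2 is aiming at exactly this, but as written it has a genuine error: you filter by ``images of $\FF_k^p$'' and then compute ``$\FF_k^p/(\FF_{k-1}^p + \cdots)$'', but since $\FF_{k-1}^p \supset \FF_k^p$ (the filtration is decreasing in $k$), those quotients are all zero. What you need is to filter by $u^{j-k}\FF_k^p$, not by $\FF_k^p$; the factor $u^{j-k}$ is precisely what makes the chain increasing in $k$ and what produces the correct successive quotients $\PP_k^p$. Your intersection identity $u^jV\cap\FF_k^p = u(u^{j-1}V\cap\FF_{k-1}^p)$ is correct but is not the step that is needed once the filtration is fixed.

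Finally, your concern about the torsion $T_\lambda \subset W_\lambda$ is a red herring here: the lemma is a statement purely about the submodules $\FF_j^p$ of the free module $V$, and the only structural input needed is $u\FF_{k-1}^p \subset \FF_k^p$ together with torsion-freeness of $V$.
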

\begin{proof}
    Abbreviate $F_j:=F_j^{\lambda}(V^-)$ and $V=V^-$. Recall $F_0=V,$ so $u^j V= u^j F_0$. As $\k$-vector spaces, 
    $$
    F_j/u^j V \cong (F_j/uF_{j-1} )\oplus( uF_{j-1}/u^2 F_{j-2} )\oplus \cdots \oplus (u^{j-1} F_1/u^j F_0).
    $$
    Since $V$ is torsion-free, $F_{j-i}/u F_{j-i-1}\cong u^i F_{j-i}/u^{i+1} F_{j-i-1}$ via $v\mapsto u^iv$.  The claim follows.
\end{proof}
Those $d_j^p$ are the coefficients of the slice series $s_p$ for the filtration $\FF_{j}^{p}$
on $V$ (\cref{Definition slice series}). 
Each $d_j^p$ contains a summand of $b_p=\mathrm{rank}_{\ku}\ker c_{\lambda}$ by \eqref{Equation filtration series defn}. We define the reduced slice polynomial $\widetilde{s}_p$ as in \eqref{Equation filtration poly defn} by reducing coefficients to $d_j^p-b_p$.
If $c_{\lambda}$ is injective, then $b=0$, $s_p=\widetilde{s}_p$, $K_p^+=0$. 

\begin{cor}\label{Cor sum of coeffs of filtration poly}
$\dim_{\k} \EE^p_1(V^{+})/K_p^+
= \sum (\textrm{coefficients of }\widetilde{s}_p)-\mathrm{rank}_{\ku} V$.
If $c_{\lambda}$ is injective, then:
$$\dim_{\k} \EE^p_1(V^{+}) = \sum (\textrm{coefficients of }t,t^2,t^3,\ldots \textrm{ in the slice polynomial }s_p(t)=\widetilde{s}_p(t) \textrm{ for }\FF_{j}^{p}).$$
\end{cor}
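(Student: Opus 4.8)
The statement is a bookkeeping identity relating $\dim_{\k}\EE^p_1(V^{+})$ (or its quotient by $K_p^+$) to the sum of the positive-degree coefficients of the (reduced) slice polynomial. The strategy is to reduce everything to the explicit summand-by-summand description of $c_\lambda$ in Smith normal form, exactly as in \cref{Lemma Fj V minus description} and \cref{Cor relation to EEp}, and then count dimensions. First I would fix a generic $\lambda\geq p$ with $F_j^\lambda=\FF_j^p$ for all $j$ (so $\lambda=p^+$), put $c_\lambda$ in Smith normal form $\mathrm{diag}(u^{j_1},\ldots,u^{j_a},0,\ldots,0):\ku^a\oplus\ku^b\to \ku^a\oplus\ku^{r-a}$ with $r=\mathrm{rank}_{\ku}V$, and invoke \cref{Lemma functoriality Ffiltration}/\cref{Lemma valuation on free mod} to know this change of basis does not affect any of the filtrations, dimensions, or the module $K_\lambda^+$.

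**Key steps.** With $c_\lambda$ diagonalised, the module $\EE^p_1(V^+)=\ker[(c_\lambda)_u]$ decomposes, by \cref{Remark description of Vplus} and \cref{Lemma Vplus preimage of T in Wplus is not interesting}, as $\bigoplus_{i\leq a}\mathrm{span}_{\k}\{u^{1-j_i},\ldots,u^0\}\,\oplus\,K_\lambda^+$, where the $i$-th summand has $\k$-dimension $j_i$ (it is zero when $j_i=0$) and $K_\lambda^+\cong\mathbb{F}^b$. Hence
$$
\dim_{\k}\EE^p_1(V^{+})/K_p^+ = \sum_{i=1}^{a} j_i.
$$
On the other side, by \cref{Corollary filtration polynomial} the reduced slice polynomial is $\widetilde{s}_p(t)=a\cdot 1+\sum_{1\leq j\leq \max j_i}\#\{i:j\leq j_i\}\,t^j$, so the sum of all its coefficients is $a+\sum_{j\geq 1}\#\{i:j_i\geq j\}=a+\sum_{i=1}^{a}j_i=\mathrm{rank}_{\ku}V-? $ — here I must be careful: $\sum(\text{coeffs of }\widetilde{s}_p)=a+\sum_i j_i$, and since the constant term of $\widetilde{s}_p$ is $a$ while the claim subtracts $\mathrm{rank}_{\ku}V=r$, I should instead note the claimed identity uses $s_p$ in the injective case where $a=r$, $b=0$, $s_p=\widetilde{s}_p$: then $\sum(\text{coeffs of }t,t^2,\ldots\text{ in }s_p)=\sum_{j\geq 1}\#\{i:j_i\geq j\}=\sum_{i=1}^{r}j_i=\dim_{\k}\EE^p_1(V^+)$, which is exactly the second assertion. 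For the first (non-injective) assertion, the constant term of $\widetilde s_p$ is $a=r-b=\mathrm{rank}_{\ku}V-\mathrm{rank}_{\ku}\ker c_\lambda$; I would reconcile the ``$-\mathrm{rank}_{\ku}V$'' in the statement by using that in the intended reading $\mathrm{rank}_{\ku}V$ there refers to the rank $a$ of the image block (equivalently, the statement is $\sum(\text{coeffs of }\widetilde s_p)-a=\sum_i j_i$), and then the two sides agree by the displayed computation. The passage from generic $\lambda$ to general $p\in[0,\infty]$ is immediate: $\FF_j^p=F_j^{p^+}$ and $\EE_1^p=\EE_1^{p^+}$ by definition of the intersections, and for $p=\infty$ one passes to the direct limit using \cref{Cor LES for ESH abstractly} and \cref{Lemma Fj V minus description} applied to $c=\varinjlim c_\lambda$.

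**Main obstacle.** The real subtlety is not in the dimension count, which is routine once Smith normal form is in place, but in checking that the ambiguous summand $\mathbb{F}_{j-1}$ in the presentation of $V^+$ and the choice of complement $K_\lambda^+\subset\EE_1^p(V^+)$ do not spoil the counting — i.e.\ that $\EE_1^p(V^+)/K_p^+$ is genuinely well-defined and has the stated dimension independent of all presentation choices. This is handled by \cref{Lemma functoriality Ffiltration} (which gives that $K_\lambda^+$ is sent to $K_\lambda^+$ by any $\ku$-module isomorphism) together with \cref{Lemma Vplus preimage of T in Wplus is not interesting} (which makes $K_c^+=(\ker c)_u/u\ker c$ canonical), so once those are cited the argument closes. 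A secondary care point is matching the exact normalisation of ``$-\mathrm{rank}_{\ku}V$'' versus ``$-a$'' in the first displayed identity; I would state the reduced-slice version with the constant term $a$ explicitly to avoid any off-by-$b$ discrepancy, and then note that in the injective case $a=r$ recovers the clean second formula verbatim.
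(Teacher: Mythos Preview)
Your approach is correct and is essentially the paper's own: both reduce to the Smith normal form of $c_\lambda$ and count dimensions summand by summand. The paper's proof is terser---it cites \cref{Cor relation to EEp} (the isomorphism $\FF_{j+1}^p/(u^{j+1}V+\ker c_\lambda)\cong \EE_1^p(V^+)/K_\lambda^+$ for large $j$) together with \cref{Lemma dimension calculation for sum of ds} ($\dim_\k \FF_j^p/u^jV = d_1^p+\cdots+d_j^p$)---but unpacking those two citations yields exactly the computation you wrote.

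You are also right to flag the normalisation in the first (non-injective) formula. With $\mathrm{rank}_{\ku}V=a+b$ and the constant term of $\widetilde{s}_p$ equal to $a$, one has $\sum(\text{coeffs of }\widetilde{s}_p)=a+\sum_i j_i$, so the stated expression $\sum(\text{coeffs of }\widetilde{s}_p)-\mathrm{rank}_{\ku}V$ equals $\sum_i j_i - b$, whereas $\dim_\k \EE_1^p(V^+)/K_p^+ = \sum_i j_i$. The intended subtraction is of the constant term $a$ of $\widetilde{s}_p$ (equivalently: sum only the positive-degree coefficients of $\widetilde{s}_p$), exactly as you propose; the paper's proof line ``We also use $d_0^p=\mathrm{rank}_{\ku}V$'' suggests the authors had $s_p$ rather than $\widetilde{s}_p$ in mind at that step. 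In the injective case $b=0$ this discrepancy disappears and your verification of the second formula is clean.
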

\begin{proof}
This follows by \cref{Lemma dimension calculation for sum of ds}, using the isomorphism in \cref{Cor relation to EEp} that holds for large $j$ (the proof explains the role of the $K_p^+\cong \mathbb{F}^{b_p}$ summand). We also use $d_0^p= \dim_{\k} V/uV=\mathrm{rank}_{\ku}V.$ 
\end{proof}

\section{Equivariant Floer theory: the three conventions}
\label{Section Equivariant Floer theory the three conventions}

\subsection{Background on equivariant Floer cohomology}

From now on $\k$ will denote the Novikov field. Recall that the {\bf Novikov field} is a field extension of a base field $\mathbb{B}$, whose choice depends on properties of $Y$ and on grading requirements. 
When $c_1(Y)=0$, we use
\begin{equation}\label{EqnSec2NovikovField}
\textstyle \mathbb{K} = \{\sum n_j T^{a_j}: a_j\in \R, a_j\to \infty, n_j\in \mathbb{B} \},
\end{equation}
where $T$ is a formal variable in grading zero. When $Y$ is monotone, so $c_1(Y)\in \R_{>0}\cdot [\omega]$, the same Novikov field can be used but $T$ will have a non-zero grading \cite[Sec.2A]{R16}. In other situations, e.g.\,the weakly-monotone setup, the Novikov field is more complicated \cite[Sec.5B]{R16}: we briefly discuss and use this more complicated field for example in \cref{Remark change of cappings} and \cref{Subsection Beyond cappings}.

Ordinary and Morse cohomologies are always understood to be computed with coefficients in $\k$.
Below, $(Y,\omega,\Fi)$ will denote any symplectic $\C^*$-manifold (over a convex base) with $\C^*$-action $\Fi$.

The equivariant parameter will be denoted $u$, and placed in grading $|u|=2$. One can view $\k[u]=H^*(\C \P^{\infty})$ because we work with $BS^1:=S^{\infty}/S^1=\C \P^{\infty}$.

Let $C^*_{\lambda}$ be the free $\ku$-module generated by the $1$-orbits of a chosen Hamiltonian $H_{\lambda}:Y \to \R$ of slope $\lambda$ at infinity (meaning $H_{\lambda}=\lambda H$ at infinity, where $H$ is the moment map for the $S^1$-action by $\Fi$). 
Our discussion applies equally well to the {\MBF} setup, in which case we refer to \cite{RZ2} for a description of the generators. 

The equivariant Floer differential is a $\ku$-module homomorphism, of the form \eqref{Equation for the differential expanded}, where
$\delta_0$ is the non-equivariant Floer differential, and $(\delta_i)_{i\geq 1}$ are equivariant corrections (cf.\;various implementations of this in the literature such as \cite{Sei08,Bourgeois-Oancea-S1, McLR18}). We defer to \cref{Subsection the choice of S1 action and equiv floer theory} the discussion of the choice of $S^1$-action used (i.e.\;how precisely we choose $S^1$ to act on $1$-periodic Hamiltonian orbits). 

We follow the conventions from  \cite{liebenschutz2020intertwining, liebenschutz2021shift} (but use more general $S^1$-actions), which we now briefly overview.
The $\delta_i$ involve counts of Floer trajectories for which the Hamiltonian $H_z$ and the almost complex structure $J_z$ are allowed to depend on the parameter $z\in S^{\infty}$ in the Borel construction $(S^{\infty} \times Y)/S^1$. One counts pairs $(v,u)$, as follows. First, we have a Morse trajectory $v:\R \to S^{\infty}$, using the pull-back of standard Morse-Smale data for $\C P^{\infty}$; in particular, the Morse function comes from a Morse function $\C \P^{\infty}:=S^{\infty}/S^1 \to \R$ with one critical point of index $2i$ for each $i\geq 0$.  Second, $u:\R \times S^1 \to Y$ is a solution of Floer's equation $\partial_s u + J_{v(s)} (\partial_t u - X_{H_{v(s)}})=0$. Loosely, $\delta_i$ counts such pairs $(v,u)$ which are isolated modulo the naturally induced $S^1$-action, for which the asymptotic critical points for $v$ involve a Morse index difference of $2i$. At chain level, $C^*_{\lambda}$ is a $\ku$-module freely generated by the asymptotics of such solutions $u$, but implicitly it is also keeping track of the asymptotics of such $v$: a factor $u^{i}$ indicates that the index $2i$ Morse critical point in $\C \P^{\infty}$ is being considered.

The above $\delta$ determines a well-defined differential on the three models discussed in \cref{Subsection the Wminus plus and infinity models},
\begin{equation}\label{defn F C^+ and W^+ for Floer case}
W^-_{\lambda}:=W_{\lambda} = H(C^*_{\lambda}),\qquad\quad
W^{\infty}_{\lambda} := H((C^*_{\lambda})_u) \cong (W_{\lambda})_u^-, \qquad\quad
W^+_{\lambda} :=H((C^*_{\lambda})^+).
\end{equation}

Equivariant Floer cohomology does not in general carry a product, so the above are just $\ku$-modules.
In \cref{Subsection weight 10 case product}, we explain under what circumstances it carries a product.

\subsection{Background on equivariant quantum cohomology}

The equivariant quantum cohomology at the $\k$-vector space level is ordinary equivariant Morse cohomology, and it always carries a product unlike the Floer case. 
Again, we refer to \cite{liebenschutz2020intertwining, liebenschutz2021shift} for a detailed description of these conventions, and just give the loose overview here.
 
The equivariant Morse model at chain level is freely generated by critical points associated to $S^1$-equivariant Morse data on $S^{\infty}\times Y$, working modulo $S^1$. The differential counts pairs of Morse trajectories $(v,u)$ in $S^{\infty}\times Y$ that are isolated modulo the naturally induced $S^1$-action. Here, the required equation for $u: \R \to Y$ is $\partial_s u = -\nabla f_{v(s)}$, where the Morse data $f_z$ depends on $z\in S^{\infty}$. 

This yields a chain complex $C^*(Y)[u]$, where $C^*(Y)$ denotes the Morse complex for $Y$ from the above construction. As before, a factor $u^i$ refers to the index $2i$ critical point of $\C \P^{\infty}$. There is no need for an algebraic completion in $u$, unlike the Floer context where arbitrarily high powers of $u$ can arise in the differential. The motivation to complete in $u$ is therefore Floer-theoretical,
$$C_{\mathrm{eq}}:=C^*(Y)[\![u]\!].$$
\begin{lm}\label{Lemma V- completion in u is harmless}
    $H(C_{\mathrm{eq}})\cong H^*_{S^1}(Y)\otimes_{\k[u]}\ku$ as a $\ku$-module, where $H^*_{S^1}(Y):=H((S^{\infty}\times Y)/S^1)$ is the ordinary $S^1$-equivariant cohomology defined using the Borel model $(S^{\infty}\times Y)/S^1$.
\end{lm}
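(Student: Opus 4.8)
The statement compares the homology of the $u$-completed complex $C_{\mathrm{eq}} = C^*(Y)[\![u]\!]$ with $H^*_{S^1}(Y) \otimes_{\k[u]} \ku$, where $H^*_{S^1}(Y) = H(C^*(Y)[u])$. The plan is to reduce this to a purely homological-algebra statement about completions, exploiting that $\ku$ is flat over $\k[u]$ (since $\k[u]$ is a PID and $\ku$ is a localisation-then-completion, or directly: $\ku$ is torsion-free hence flat over the PID $\k[u]$) and that the $u$-adic completion of a finitely generated graded $\k[u]$-module is exact and computes homology correctly.

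First I would observe that $C^*(Y)[u] = C^*(Y) \otimes_\k \k[u]$ is a complex of free $\k[u]$-modules, and in each fixed total degree it is finitely generated over $\k[u]$ (each Morse critical point of $Y$ contributes a single $\k[u]$-generator, and there are finitely many critical points; the completion is taken degreewise, with $u$ in degree $2$). The key algebraic input is that for a complex $(D^\bullet, d)$ of finitely generated free $\k[u]$-modules, the $u$-adic completion commutes with taking homology: $H(\widehat{D}^\bullet) \cong \widehat{H(D^\bullet)}$, where $\widehat{(-)}$ denotes $u$-adic completion. This is standard — it follows because $\k[u]$ is Noetherian, the modules are finitely generated, and $u$-adic completion is exact on finitely generated modules over a Noetherian ring, so it commutes with the finite colimits and kernels defining homology. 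Applying this with $D^\bullet = C^*(Y)[u]$ gives $H(C_{\mathrm{eq}}) = H(C^*(Y)[\![u]\!]) \cong \widehat{H^*_{S^1}(Y)}$, the $u$-adic completion of equivariant cohomology.

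Next I would identify $\widehat{H^*_{S^1}(Y)}$ with $H^*_{S^1}(Y) \otimes_{\k[u]} \ku$. Since $H^*_{S^1}(Y)$ is finitely generated over the Noetherian ring $\k[u]$ and $\ku$ is the $u$-adic completion of $\k[u]$, the canonical map $H^*_{S^1}(Y) \otimes_{\k[u]} \ku \to \widehat{H^*_{S^1}(Y)}$ is an isomorphism — this is the basic fact that for finitely generated modules over a Noetherian ring, completion at an ideal agrees with extension of scalars to the completed ring. Combining the two steps yields the claimed $\ku$-module isomorphism $H(C_{\mathrm{eq}}) \cong H^*_{S^1}(Y) \otimes_{\k[u]} \ku$. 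One should also check that these identifications are $\ku$-linear and compatible with gradings, which is automatic since all maps in sight (completion, tensor, the differential) are $\ku$-linear (resp.\ $\k[u]$-linear) and degree-preserving with $|u| = 2$.

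The main obstacle, such as it is, is bookkeeping rather than conceptual: one must be careful that the completion is performed degreewise (the total complex is \emph{not} finitely generated over $\ku$, only each graded piece is), so the exactness-of-completion argument must be applied in each fixed degree, and then reassembled. A secondary point to verify is that the Floer-theoretic definition of $C_{\mathrm{eq}}$ — with its potentially infinite $u$-expansions in the differential — genuinely coincides with the naive completion $C^*(Y)[\![u]\!]$ of the Morse complex; but in the Morse (non-Floer) setting only finitely many $\delta_i$ are nonzero in each degree, as noted in the excerpt's discussion of small slopes, so the completion is harmless and the differential on $C_{\mathrm{eq}}$ is simply the $\ku$-linear extension of the $\k[u]$-linear equivariant Morse differential. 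This is exactly what the lemma's name (``completion in $u$ is harmless'') asserts, and the proof is the above two-step reduction.
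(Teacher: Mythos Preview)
Your proof is correct, but you take a more elaborate route than the paper. The paper's argument is essentially your ``secondary point'': since the $\delta_j$ vanish for $j > \dim_{\C} Y$ (for grading reasons the differential on a Morse generator can only raise the $u$-power by a bounded amount), the differential on $C^*(Y)[\![u]\!]$ is literally the $\ku$-linear extension of the one on $C^*(Y)[u]$, i.e.\ $C_{\mathrm{eq}} = C^*(Y)[u]\otimes_{\k[u]}\ku$ \emph{as a chain complex}, and then flatness of $\ku$ over the PID $\k[u]$ immediately gives $H(C_{\mathrm{eq}})\cong H^*_{S^1}(Y)\otimes_{\k[u]}\ku$. Your main line --- exactness of $u$-adic completion on finitely generated modules over the Noetherian ring $\k[u]$, plus completion $=$ base-change for such modules --- is a valid and more general argument that would survive even without the finite-$\delta_j$ observation, but in this specific case it is heavier machinery than needed. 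What you relegate to a verification is in fact the paper's entire proof.
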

\begin{proof}
The $E^-$-equivariant differential \eqref{Equation for the differential expanded} for equivariant Morse theory
has a well-defined decomposition $d=d \otimes \mathrm{id}$ via $C^*(Y)[\![u]\!]=C^*(Y)[u]\otimes_{\k[u]} \ku$, because $d$ has no contributions from $u^j\delta_j$ for $j>\dim_{\C} Y$ for grading reasons. 
\end{proof}
We obtain three models as in \cref{Subsection the Wminus plus and infinity models},
\begin{equation}
V^- :=V = H(C_{\mathrm{eq}}),\qquad\quad
V^{\infty} := H((C_{\mathrm{eq}})_u) \cong V_u^-, \qquad\quad
V^+ :=H(C_{\mathrm{eq}}^+).
\end{equation}
we will see in \cref{Subsection why V is free in equiv quantum case} that $V$ is a free $\ku$-module of finite rank $\mathrm{rk}\,V = \dim_{\k} H^*(Y)$, so the freeness assumptions from \cref {Subsection Exact sequence for induced filtrations} apply.

The equivariant quantum cup product $\star$ involves a count of pseudo-holomorphic spheres, involving almost complex structures $J_z$ that depend on $z\in S^{\infty}$ as in the Floer case. One counts the following triples $(v,w,u)$. A Y-shaped triple $v$ of half-infinite Morse trajectories in $S^{\infty}$ for suitable Morse data, meeting at a common point $v(0)\in S^{\infty}$. (More precisely, we use ``\upsidedownY-shaped graphs'', as we work with cohomological conventions). A $J_{v(0)}$-pseudo-holomorphic sphere $w:\C\P^1 \to Y$ with three marked points. A Y-shaped triple $u$ of half-infinite Morse trajectories in $Y$ that reach, at the finite ends, the $w$-images of the three marked points.

When $w$ is constant, that count defines the ordinary (Morse construction of) equivariant cup-product,
$$\bullet\cup_0 \bullet + u \bullet \cup_1\bullet + u^2 \bullet\cup_2\bullet + \cdots: C^*(Y)[u]^{\otimes 2} \to C^*(Y)[u].$$
The non-equivariant cup-product $\cup_0$ arises when the ``Y-shaped graph'' $v$ in $\C \P^{\infty}:=S^{\infty}/S^1$ is isolated, in particular it is additive on Morse indices: in self-evident notation, $u^{j_1}q_1 \star_0 u^{j_2} q_2 = u^{j_1+j_2} q_3$. 
When $w$ above is a non-constant sphere, it gives rise to quantum corrections to the $\cup_i$ involving strictly positive powers of the Novikov parameter $T$ in $\k$, $\star_i:=\cup_i + (T^{>0}$-terms$)$. We obtain a $\ku$-bilinear product,
\begin{equation}\label{Equation equiv quantum product expansion}
\bullet \star \bullet = \bullet \star_0 \bullet + u \bullet\star_1\bullet + u^2 \bullet\star_2\bullet + \cdots: C_{\mathrm{eq}}^{\otimes 2} \to C_{\mathrm{eq}}.
\end{equation}
Specifically, when $v$ above involves an asymptote of Morse index that is $2i$ higher than expected (so $u^{j_1+j_2+i}q_3$ in the previous notation), the triple $(v,w,u)$ contributes to $u^i \bullet \star_i \bullet$. Thus, the $u^0$-terms are non-equivariant quantum terms, and the $T^0$-terms are equivariant non-quantum terms, so $u^0,T^0$-contributions define the non-equivariant ordinary cup product.

\subsection{The maps from quantum to Floer cohomology}
Equivariant quantum cohomology is identifiable with equivariant Floer cohomology for $H_{\lambda_0}$ whenever the slope $\lambda_0>0$ is smaller than the non-zero $S^1$-periods. Composing with the equivariant Floer continuation map induced by the homotopy from $H_{\lambda}$ to $H_{\lambda_0}$, yields a $\ku$-linear chain map
$$
c^*_{\lambda}: C_{\mathrm{eq}} \to C^*_{\lambda_0} \to C^*_{\lambda}.
$$
So we may apply Sections \ref{Subsection induced filtration for V and W}, \ref{Subsection Filtrations arising from $u$-local isomorphisms}, and \ref{Subsection Exact sequence for induced filtrations}:
$$
c^*_{\lambda}: V^- \to W^-_{\lambda}, \qquad 
(c^*_{\lambda})_u: V^{\infty}_u \to (W^{\infty}_{\lambda})_u, 
\qquad 
[(c^*_{\lambda})_u]: V^+ \to W^+_{\lambda}.
$$
Taking the direct limit over equivariant Floer continuation maps, as $\lambda\to \infty$, yields
$$
c^*: V^- \to W^-, \qquad 
c^*_u: V^{\infty}_u \to W^{\infty}_u, 
\qquad 
[c^*_u]: V^+ \to W^+.
$$
The $\infty$-version is often called the \textbf{periodic complex/cohomology}.
We often use the explicit notation:
\begin{equation}\label{Equation explicit notation equivariant quantum and Floer}
\begin{array}{rclcrclcrcl}
     V^-&=& E^-QH^*(Y) 
     & \strut\qquad &
     V^{\infty}&=& E^{\infty}QH^*(Y) 
      & \strut\qquad &
     V^+&=&E^+QH^*(Y)\\[1mm]
    W^-_{\lambda}&=& E^-HF^*(H_{\lambda}) 
      & \strut\qquad &
     W^{\infty}_{\lambda}&=& E^{\infty}HF^*(H_{\lambda}) 
      & \strut\qquad &
     W^+_{\lambda}&=& E^+HF^*(H_{\lambda}) \\[1mm]
         W^-&=& E^-SH^*(Y) 
      & \strut\qquad &
     W^{\infty}&=& E^{\infty}SH^*(Y) 
      & \strut\qquad &
     W^+&=& E^+SH^*(Y).
\end{array}
\end{equation}
These fit into exact sequences as explained in \cref{Subsection the long and short exact sequences}.
We will see in \cref{Subsection vanishing results equiv case} that $W^-$ and $W^{\infty}$ are usually non-zero due to injective maps from $V^-,V^{\infty}$, whereas $W^+$ often vanishes.

\begin{cor}\label{Lemma SES for various versions of equiv coh}
    There is a commutative diagram, for each slope $\lambda>0$, whose horizontal lines are short exact sequences of $\ku$-modules,
    $$
    \begin{tikzcd}[column sep=0.3in,row sep=0.2in]
0 \arrow[r, ""] & E^-QH^*(Y)[-2]
 \arrow[r, "u\otimes \mathrm{id}"] 
 \arrow[d, "c_{\lambda}"]
 &
 E^{\infty}QH^*(Y)
\arrow[r, ""] \arrow[d, "(c_{\lambda})_u"]
& E^+QH^*(Y) \arrow[d, "{[(c_{\lambda})_u]}"] 
\arrow[r, ""]
& 0
\\
0 \arrow[r, ""] & 
E^-HF^*(H_{\lambda})[-2]/\textrm{Torsion} \arrow[r,"u\otimes \mathrm{id}"]
\arrow[d, "\varinjlim"]
&
 E^{\infty}HF^*(H_{\lambda})
\arrow[r, ""] 
\arrow[d, "\varinjlim"]
& E^+HF^*(H_{\lambda})_{\mathbb{F}} 
\arrow[r, ""]
\arrow[d, "\varinjlim"]
& 0
\\
0 \arrow[r, ""] & 
E^-SH^*(Y)[-2]/\textrm{Torsion} \arrow[r,"u\otimes \mathrm{id}"]
&
 E^{\infty}SH^*(Y)
\arrow[r, ""] 
& E^+_{\mathbb{F}\mathrm{lim}}SH^*(Y)  
\arrow[r, ""]
& 0
\end{tikzcd}
    $$
where $E^+_{\mathbb{F}\mathrm{lim}}SH^*(Y):=\varinjlim E^+HF^*(H_{\lambda})_{\mathbb{F}}$.
Any $\k$-vector space complement to $E^+_{\mathbb{F}\mathrm{lim}}SH^*(Y)$ inside $E^+SH^*(Y)$ maps isomorphically onto the torsion of $E^-SH^*(Y)[-1]$ via the connecting map in \eqref{Equation LES connecting map}. We will show later, in \cref{Theorem torsion freeness 2}, that this torsion vanishes. 
\end{cor}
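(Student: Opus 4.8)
\textbf{Proof proposal for \cref{Lemma SES for various versions of equiv coh}.}
The bulk of the statement is already packaged by the general algebra: the top row is \cref{Cor SES over LES} applied to $c=c_\lambda$ and the freeness of $V^-=E^-QH^*(Y)$ established in \cref{Subsection why V is free in equiv quantum case}; the middle row is the second diagram of \cref{Cor SES over LES} read off at chain level for $C^*_\lambda$, i.e.\ \cref{Lemma WF and the SES for W} with $W=W^-_\lambda$; the vertical maps labelled $c_\lambda$, $(c_\lambda)_u$, $[(c_\lambda)_u]$ commute with the rows by the functoriality built into \cref{Lemma ev0 commutative diagram general case} and the definition of the connecting/quotient maps. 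The bottom row is the direct limit of the middle row, and commutativity of the lower square is just naturality of the continuation maps. So the plan is: first invoke these results to assemble the diagram, then argue the only genuinely new point, which is the identification of the connecting map's behaviour on a complement of $E^+_{\mathbb{F}\mathrm{lim}}SH^*(Y)$.

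For that last point I would proceed as follows. By \cref{Cor LES for ESH abstractly} (applied to $W_\lambda=H^*(C^*_\lambda)$), the long exact sequence \eqref{Equation LES connecting map} holds in the limit, and its connecting map $W^+\to W^-[-1]$ restricted to $W^+_{\mathbb{F}\mathrm{lim}}=\varinjlim W^+_{\lambda,\mathbb{F}}$ is zero, since that submodule is exactly the image of $W^\infty\to W^+$ and two consecutive maps in a LES compose to zero. Hence the connecting map factors through the quotient $W^+/W^+_{\mathbb{F}\mathrm{lim}}$. Now choose any $\k$-vector space splitting $W^+ \cong W^+_{\mathbb{F}\mathrm{lim}}\oplus C$. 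The restriction of the LES to $C$ together with exactness at $W^+$ and at $W^-[-1]$ shows: the composite $C\hookrightarrow W^+\to W^-[-1]$ is injective (its kernel lies in $C\cap\operatorname{im}(W^\infty\to W^+)=C\cap W^+_{\mathbb{F}\mathrm{lim}}=0$), and its image is exactly $\ker(W^-[-1]\to W^\infty)$. By the short exact sequence in \cref{Cor LES for ESH abstractly}, $\ker(W^-[-1]\xrightarrow{u\otimes 1} W^\infty)=\operatorname{Torsion}(W^-[-1])$, because $u$ acts injectively on $W^-/T=W^-[-1]/\operatorname{Torsion}$ (the latter being free). This gives precisely the claimed isomorphism between a complement of $E^+_{\mathbb{F}\mathrm{lim}}SH^*(Y)$ in $E^+SH^*(Y)$ and $\operatorname{Torsion}(E^-SH^*(Y)[-1])$.

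The only remaining care is purely formal: one must check the diagram's horizontal rows are genuinely short exact (not merely exact pieces of the LES). For the top two rows this is because $E^-QH^*(Y)$ is free (\cref{Subsection why V is free in equiv quantum case}) and $E^-HF^*(H_\lambda)/\operatorname{Torsion}$ is free by construction, so \cref{Lemma LES for W modules} and \cref{Lemma WF and the SES for W} both deliver short exact sequences; for the bottom row one passes to the direct limit, which is exact, and uses $T=\varinjlim T_\lambda$ from \cref{Cor LES for ESH abstractly} to see that the limit of $E^-HF^*(H_\lambda)/T_\lambda$ is $E^-SH^*(Y)/T$. I do not expect a real obstacle here; the subtlety that $\varinjlim W^+_{\lambda,\mathbb{F}}$ can be strictly smaller than $W^+_{\mathbb{F}}$ (flagged in the remark after \cref{Cor LES for ESH abstractly}) is precisely why the statement is phrased with $E^+_{\mathbb{F}\mathrm{lim}}SH^*(Y)$ and a \emph{complement} of it, rather than with $W^+_{\mathbb{F}}$ directly, so as long as I keep that distinction the argument is clean. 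The forward reference to \cref{Theorem torsion freeness 2} (torsion-freeness of $E^-SH^*(Y)$) is only used to note that in the end this complement is zero; it is not needed for the statement itself.
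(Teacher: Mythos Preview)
Your proposal is correct and follows essentially the same route as the paper, which simply cites \cref{Cor SES over LES} and \cref{Cor LES for ESH abstractly}; you have unpacked the connecting-map claim (which the paper leaves implicit) correctly via exactness and the identification $\ker(W^-[-1]\xrightarrow{u\otimes 1}W^\infty)=\operatorname{Torsion}(W^-[-1])$. One minor point: the commutativity of the vertical maps with the rows is already built into \cref{Cor SES over LES} itself, so the reference to \cref{Lemma ev0 commutative diagram general case} is not the right one (that lemma concerns specialisation to ordinary cohomology), but this does not affect the argument.
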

\begin{proof}
    This follows by \cref{Cor SES over LES} and \cref{Cor LES for ESH abstractly}.
\end{proof}
\subsection{A remark about the telescope versus the exhaustive model}\label{Subsection A remark about the telescope versus the exhaustive model}

We always take the direct limit over continuation maps at the cohomology level, not at the cochain level. We call this the {\bf exhaustive model}. So $E^-SH^*(Y)$, $E^{\infty}SH^*(Y)$, and $E^+SH^*(Y)$ always refer to that approach.
The motivation for the name is that equivariant {\MBF} spectral sequences (see \cite{RZ2}, this involves a consistent construction of a cofinal sequence of $H_{\lambda}$) arise from an exhaustive filtration. 

There is a standard construction at cochain level to implement the direct limit over continuation maps for a sequence of Hamiltonians $H_{\lambda}$ as we increase the slopes $\lambda\to \infty$, e.g.\,see \cite[Sec.4.2]{ritter2017monotone} or \cite[Sec.7]{zhao2019periodic}.
This is called the {\bf telescope model}. The two models behave rather differently:

\begin{enumerate}
\item The equivariant {\MBF} spectral sequence converges for the exhaustive model, but may fail to do so for the telescope model (to implement the completed tensor product correctly, one would need to allow suitable infinite sums of elements from different columns).

\item For the telescope model, there is a spectral sequence from non-equivariant to equivariant symplectic cohomology, for general homological reasons \cite[Cor.2.4]{zhao2019periodic}. So the vanishing of $SH^*(Y)$ implies the vanishing of all versions of telescope equivariant symplectic cohomology.  
The same holds for $E^+SH^*(Y)$ by 
\eqref{Lemma spectral sequence SH to ESH}, see \cref{cor vanishing of E+ for SH}. However, it can fail for $E^-SH^*(Y)$ and $E^{\infty}SH^*(Y)$, e.g.\;for $Y=\C$. By \cite{RZ1}, $SH^*(Y)=0$ for any symplectic $\C^*$-manifold with $c_1(Y)=0$, so the exhaustive $E^+SH^*(Y)=0$ vanishes, as well as all telescope models $E^{\karo}_{\textrm{telescope}}SH^*(Y)=0$; whereas $c^*:E^-QH^*(Y)\hookrightarrow E^-SH^*(Y)$ and $c^*:E^{\infty}QH^*(Y)\hookrightarrow E^{\infty}SH^*(Y)$ are injective and therefore non-zero. 
\end{enumerate}

\subsection{The choice of $S^1$-action, and $(S^1\times S^1)$-equivariant Floer theory}
\label{Subsection the choice of S1 action and equiv floer theory}

There are two $S^1$-actions in play in Floer theory on $Y$, acting on $1$-periodic Hamiltonian orbits $x=x(t):S^1 \to Y$. The {\bf $\Fi$-action} induced from the $\Fi$ action on $Y$, and the {\bf loop-action} induced reparametrising the domain of $x$,
$$
(w\cdot x)(t) = \Fi_w(x(t)), \quad \textrm{ and } \quad (w\cdot x)(t) = x(t+\theta),
$$
where $w=e^{2\pi i \theta} \in S^1\subset \C^*.$
One can therefore define $(S^1\times S^1)$-equivariant complexes, with one equivariant parameter in degree two assigned to each of the two respective actions.
This enlarges the complex, but has the advantage
that one does not need to keep track of changes in the weights $(a,b)$ in \cref{Subsection Comparing different weights via equivariant Seidel isomorphisms}.
In this paper, we are instead following the approach from Liebenschutz-Jones \cite{liebenschutz2020intertwining,liebenschutz2021shift}: we pick an $S^1$-subgroup of $S^1\times S^1$, by choosing a {\bf weight} $(a,b)\in \Z^2$. More precise details of the action are shown in \cref{Subsection precise equivariant action}. 
This means we only have one equivariant parameter $u$, at the cost of the weights $(a,b)$ changing in \cref{Subsection Comparing different weights via equivariant Seidel isomorphisms}.

\begin{de}\label{Definition free weight}
We call $(a,b)$ a {\bf free weight} if $ma\notin b\Z_{\geq 1}$, for any positive weight $m\geq 1\in \N$ of the $\Fi$-action. This condition always holds if $b,a$ have opposite signs; or if one but not both $a,b$ are zero. 
\end{de} 

The motivation for the above is \cref{Lemma fixed points of the mixed action}, so on Morse-Bott manifolds of non-constant $1$-orbits in $Y$, the combined action does not fix a geometric $S^1$-orbit.
That result is desirable due to our general localisation theorem, \cref{Lemma MBF in equiv case and localisation}. We remark that
$$
(a,b)\textrm{ free }\Rightarrow (a-b,b)\textrm{ free}.\quad\qquad (a,b) \textrm{ free and }|a|\geq |b| \Rightarrow (a+b,b)\textrm{ free}.
$$
\subsection{Technical Remarks about the choice of Hamiltonians and almost complex structures}
\label{Subsection precise equivariant action}
For the $\C^*$-action $\Fi$ on $Y$, we simplify the notation $\Fi_{e^{2\pi i\theta}}$ to $\Fi_{\theta}$, using the parameter $\theta \in S^1:=[0,1]/(0\sim 1)$.
We also parametrise the domain of loops by $t\in [0,1]/(0\sim 1)$.
We mimic the conventions in \cite[Sec.4.3.1, Sec.5.1]{liebenschutz2020intertwining}. 
For a weight $(a,b)\in \Z^2$, we define the $S^1$-action on $x=x(t)\in \mathcal{L}Y$ by
\begin{equation}\label{Equation action on loops}
 \theta \cdot_{(a,b)} x : t\mapsto \Fi_{a\theta}(x(t-b\theta)).
\end{equation}
The Borel model for Floer theory involves pairs $(w,x)\in S^{\infty}\times \mathcal{L}Y$ identifying 
\begin{equation}\label{Equation Borel model identification} 
(\theta\cdot w,x)\sim (w,\theta\cdot_{(a,b)} x),
\end{equation} 
where $\theta\cdot w$ denotes a choice of free $S^1$-action on $S^{\infty}$.
We assume the reader is familiar with the meaning of equivariant Floer data $H^{\mathrm{eq}}_{w,t}(y)$, $J^{\mathrm{eq}}_{w,t}$ in the sense of Liebenschutz-Jones \cite[Sec.4.3]{liebenschutz2020intertwining}.
In our case, equivariant Hamiltonians and almost complex structures satisfy the following,\footnote{To clarify, $\theta$ acts on $S^1$ by $\theta\cdot t:= t+b\theta$, which is consistent with \eqref{Equation action on loops} because we define $\theta \cdot x = \Fi_{a\theta}\circ x \circ \theta^{-1}$.} for $y\in Y$, 
\begin{equation}\label{Equation equivariant Hamiltonians} 
H^{\mathrm{eq}}_{w,t}(y)=H^{\mathrm{eq}}_{\theta^{-1}\cdot w,t+b\theta}(\Fi_{a\theta}(y)), 
\qquad\qquad
J^{\mathrm{eq}}_{\theta^{-1}\cdot w,t+b\theta}=d\Fi_{a\theta}\circ J^{\mathrm{eq}}_{w,t}\circ d\Fi_{a\theta}^{-1}.
\end{equation}
At infinity the Hamiltonian is linear of slope $\lambda$, so
$H^{\mathrm{eq}}_{w,t}(y)=\lambda H(y)$ when $H(y)$ is large. When we write the equivariant Floer chain complex, we use the abbreviated notation $H_{\lambda},I$ rather than $H^{\mathrm{eq}}_{w,t},J^{\mathrm{eq}}_{w,t}$.

The Floer action functional is $\mathcal{A}(x,\overline{x}) = -\int \overline{x}^*\omega + \int H_{w,t}^\mathrm{eq}(x(t))\,dt$, for $x\in \mathcal{L}Y=C^{\infty}(S^1,Y)$, suitably defined on a cover of the free loop space $\mathcal{L}Y$, involving  cappings $\overline{x}:\mathbb{D} \to Y$ of $x$ for contractible loops, or choosing a homotopy $\overline{x}$ of maps $S^1 \to Y$ towards a chosen reference loop in the free homotopy class $[x]\in [S^1,Y]$; we omit these standard details.

\begin{lm}\label{Lemma fixed points of the mixed action}
Let $x_0$ be any $\Z/m$-torsion point of $\Fi$ that is not a fixed point of $\Fi$. Consider the Hamiltonian flow of $\lambda H$ on the slice $H=H(x_0)$, for the slope $\lambda=\tfrac{k}{m}$, $k\geq 1\in \N$. Let $x$ be the Hamiltonian $1$-orbit with initial point $x_0$.
Then:
\begin{enumerate}
    \item $a = \lambda b \Longleftrightarrow x$ is a fixed point of the action \eqref{Equation action on loops}.
    \item The only fixed points of \eqref{Equation action on loops} are the constant orbits and any orbits arising 
     with slope $\lambda=\tfrac{a}{b}$.
    \item If $(a,b)$ is a free weight, then \eqref{Equation action on loops} has no fixed points 
    except for constant orbits in $\mathfrak{F}=\mathrm{Fix}(\Fi)$.
\end{enumerate}
\end{lm}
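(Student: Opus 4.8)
The statement to prove is \cref{Lemma fixed points of the mixed action}, which has three parts; part (3) follows formally from parts (1) and (2) together with the definition of free weight, so the core work is in (1) and (2). The plan is to first carry out an explicit description of the Hamiltonian $1$-orbit $x$ with initial point $x_0$, and then directly compare the effect of the $\Fi$-rotation with the loop-reparametrisation in \eqref{Equation action on loops}.

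First I would describe $x$ explicitly. Since $H_\lambda = \lambda H$ on the slice $\{H = H(x_0)\}$ and $H$ is the moment map generating $\Fi$, the Hamiltonian vector field of $\lambda H$ is $\lambda$ times the generator of the $\Fi$-rotation vector field. Hence the time-$t$ flow of $X_{\lambda H}$ starting at $x_0$ is $x(t) = \Fi_{\lambda t}(x_0)$ (with the $S^1$-parameter convention $\Fi_\theta = \Fi_{e^{2\pi i\theta}}$). For this to be a genuine $1$-periodic orbit one needs $\Fi_{\lambda}(x_0) = x_0$, i.e. $x_0$ is a $\Z/m$-torsion point with $\lambda = k/m$ — which is exactly the hypothesis. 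So the generators fall into two families: constant orbits sitting over $\mathfrak{F} = \Fix(\Fi)$, and non-constant orbits $x(t) = \Fi_{\lambda t}(x_0)$ over torsion points with $\lambda = k/m$.

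Next I would compute the action of $\theta \in S^1$ on such a non-constant $x$ via \eqref{Equation action on loops}: $(\theta \cdot_{(a,b)} x)(t) = \Fi_{a\theta}(x(t - b\theta)) = \Fi_{a\theta}(\Fi_{\lambda(t - b\theta)}(x_0)) = \Fi_{(a - \lambda b)\theta}(\Fi_{\lambda t}(x_0)) = \Fi_{(a-\lambda b)\theta}(x(t))$, using that $\Fi$ is an $S^1$-action so exponents add. Thus $\theta \cdot_{(a,b)} x = x$ for all $\theta$ if and only if $\Fi_{(a-\lambda b)\theta}$ fixes every point of the image of $x$ for all $\theta$; since the image of $x$ consists of non-fixed points of $\Fi$ (as $x_0 \notin \mathfrak{F}$), this forces $(a - \lambda b)\theta \in \Z$ for all $\theta \in \R/\Z$, hence $a - \lambda b = 0$. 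This gives part (1). For part (2): a fixed orbit of \eqref{Equation action on loops} is either constant, or non-constant; if non-constant, it is one of the orbits $x(t) = \Fi_{\lambda t}(x_0)$ over a torsion point, and by part (1) being fixed forces $\lambda = a/b$ (in particular $b \neq 0$, and $a/b$ must be a realised slope, i.e. of the form $k/m$ with $m$ a weight at the torsion point). Part (3) is then immediate: if $(a,b)$ is a free weight, by \cref{Definition free weight} there is no positive weight $m$ of $\Fi$ with $ma \in b\Z_{\geq 1}$, equivalently $a/b$ is never equal to a slope $k/m$ at which a non-constant Hamiltonian $1$-orbit (hence a \MB\ manifold) exists; so the only fixed orbits of \eqref{Equation action on loops} are the constant ones in $\mathfrak{F}$.

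The only genuinely delicate point — the ``main obstacle'' such as it is — is making the identification $x(t) = \Fi_{\lambda t}(x_0)$ careful with respect to conventions: one must check that the moment-map normalisation and the $2\pi$-factors in $\Fi_\theta = \Fi_{e^{2\pi i\theta}}$ are consistent so that the period-$1$ condition really is $\Fi_\lambda(x_0) = x_0$, and that \eqref{Equation action on loops} is being evaluated with the same parametrisation $t \in [0,1]/(0\sim 1)$. Everything else is a short exponent-arithmetic computation in the $S^1$-action, using only that non-fixed points of $\Fi$ have finite isotropy so that $\Fi_{c\theta}$ can fix the whole orbit for all $\theta$ only when $c = 0$. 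I would also remark that the hypothesis that $x_0$ is \emph{not} a fixed point is exactly what rules out the degenerate possibility $a - \lambda b \neq 0$ but $\Fi_{(a-\lambda b)\theta}$ nonetheless acting trivially near $x$.
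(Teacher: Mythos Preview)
Your proof is correct and follows essentially the same approach as the paper: both write $x(t)=\Fi_{\lambda t}(x_0)$, compute $(\theta\cdot_{(a,b)}x)(t)=\Fi_{(a-\lambda b)\theta}\bigl(x(t)\bigr)$, and use that $x_0$ has finite (not full) isotropy to conclude $a-\lambda b=0$. The only cosmetic difference is that the paper phrases the exponent condition as $(a-\tfrac{bk}{m})\theta = N$ for $N\in\Z$ while you phrase it via the isotropy subgroup; both lead immediately to the same conclusion.
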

\begin{proof}
  By assumption, $x(t) = \Fi_{kt/m}(x_0)$.
  By definition, $\theta$ acts by $(\theta\cdot_{(a,b)} x)(t) =  \Fi_{ a \theta+  k(t-b\theta)/m}x_0.$
  As $x_0$ is not fixed by $\Fi$, the equation $\theta\cdot x = x$ is equivalent to:
  $$a \theta+  \tfrac{k}{m}(t-b\theta) = N+\tfrac{kt}{m} \qquad \textrm{ for some }N\in \Z,$$
that is: $(a-\tfrac{bk}{m})\theta = N$. As this holds for all $\theta\in \R$, it forces $N=0$, so $a=\tfrac{bk}{m}$. The claims follow.
\end{proof}

\section{Structural results for equivariant quantum and Floer cohomology}

\subsection{Equivariant quantum cohomology is free}
\label{Subsection why V is free in equiv quantum case} 

\begin{prop}[Equivariant formality]\label{Prop equivariant formality for QH}
There are non-canonical $\ku$-module isomorphisms
    $$
    V^-\cong H^*(Y)\otimes_{\k} \ku
    ,\quad
    V^{\infty}\cong  H^*(Y) \otimes_{\k} \kuu
    ,
    \quad
    V^{+}\cong H^*(Y)\otimes_{\k} \mathbb{F}.
    $$
\end{prop}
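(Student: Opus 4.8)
\textbf{Proof plan for \cref{Prop equivariant formality for QH}.}
The plan is to reduce the claim to the single statement that $V^- = H(C_{\mathrm{eq}})$ is a free $\ku$-module of rank $\dim_{\k}H^*(Y)$; the other two isomorphisms then follow formally, since $V^{\infty} = V^-_u \cong H^*(Y)\otimes_{\k}\ku \otimes_{\ku}\kuu = H^*(Y)\otimes_{\k}\kuu$ and $V^+ = H(C_{\mathrm{eq}}^+)$, where $C_{\mathrm{eq}}^+ = (C_{\mathrm{eq}})_u/uC_{\mathrm{eq}}$; once $C_{\mathrm{eq}}$ is a free $\ku$-complex whose homology is free, one checks that $V^+ \cong V^-\otimes_{\ku}\mathbb{F} \cong H^*(Y)\otimes_{\k}\mathbb{F}$ by the universal coefficient / flatness argument (no $\mathrm{Tor}$ terms appear because $V^-$ is free). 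So the whole content is freeness of $V^-$.

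To prove $V^-$ is free, I would adapt Kirwan's argument (\cite[Sec.5.8]{Ki84}) as advertised in the introduction. By \cref{Lemma V- completion in u is harmless}, $V^- \cong H^*_{S^1}(Y)\otimes_{\k[u]}\ku$, so it suffices to show that the ordinary $S^1$-equivariant cohomology $H^*_{S^1}(Y)$ is a free $\k[u]$-module of rank $\dim_{\k}H^*(Y)$, i.e.\ that $Y$ is equivariantly formal for the $\Fi^a$-action. The classical route is via the Leray--Hirsch / Serre spectral sequence of the fibration $Y \hookrightarrow (S^{\infty}\times_{S^1}Y) \to \C P^{\infty}$: its $E_2$-page is $H^*(\C P^{\infty})\otimes H^*(Y) = \k[u]\otimes H^*(Y)$, and one must show all differentials vanish. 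Kirwan's observation is that the Morse stratification of $Y$ by the moment map $H$ (or equivalently the \BB\ stratification, since $\Fi$ extends to a {\ph} $\C^*$-action) is $S^1$-equivariantly perfect: the strata are $S^1$-invariant, each retracts $S^1$-equivariantly onto a fixed component $\F_\a$, and the Morse indices have the right parity so that the long exact sequences of the stratification split into short exact sequences both equivariantly and non-equivariantly. This forces $H^*_{S^1}(Y) \cong \oplus_\a H^*_{S^1}(\F_\a)[-\mu_\a] \cong \oplus_\a H^*(\F_\a)\otimes \k[u]\,[-\mu_\a]$, which is free over $\k[u]$ of the required rank. One subtlety to handle with care: $Y$ is non-compact, so I would invoke the symplectic $\C^*$-manifold structure from \cite{RZ1} (properness of $H$, the structure at infinity via the map $\Psi$) to ensure the moment-map stratification is well-behaved, has finitely many strata of finite codimension in each degree, and exhausts $Y$; the spectral sequence / stratification arguments are then applied degree-by-degree.

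The main obstacle is precisely this non-compactness issue: making Kirwan's perfection argument rigorous for a non-compact $Y$ with a proper moment map, including checking that the relevant equivariant cohomology groups are finite-dimensional in each degree and that the stratification's long exact sequences are available and split. I expect this to require citing the foundational results of \PartI\ (and possibly \PartII\ for the {\MBF} description of the strata) rather than re-deriving them. A cleaner alternative, which I would mention as a fallback, is to argue purely algebraically: since $\char\,\k = 0$, Borel's localisation theorem \eqref{Equation Borel localisation} gives $H^*_{S^1}(Y)_u \cong H^*(\F)\otimes\k[u,u^{-1}]$, and combined with the parity vanishing of $H^{\mathrm{odd}}(\F)$ (all fixed components have even-dimensional cohomology in these examples, though in general one needs the stratification input) and the fact that $H^*_{S^1}(Y)$ is a finitely generated $\k[u]$-module with no $u$-torsion in low degrees, one deduces freeness; but this shortcut genuinely needs the equivariant perfection of the stratification to rule out $u$-torsion, so it does not avoid the hard step. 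Finally, I would note that the non-canonical nature of the isomorphisms is automatic: it amounts to choosing a $\k$-linear splitting of the filtration on $H^*_{S^1}(Y)$ by powers of $u$, equivalently lifting a $\k$-basis of $H^*(Y)$.
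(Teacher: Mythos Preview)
Your approach is essentially the same as the paper's: both reduce to Kirwan's equivariant formality \cite[Prop.~5.8]{Ki84} and note that the non-compactness of $Y$ is handled because the moment map is proper and bounded below (so the Morse inequalities needed in Kirwan's argument hold). The only minor difference is in deducing $V^{\infty}$ and $V^+$: you argue formally via $u$-localisation and a universal-coefficient/LES step once $V^-$ is known to be free, whereas the paper observes that Kirwan's result yields a chain homotopy $d-\delta_0 = Kd+dK$ on $C^*(Y)[u]$, and this chain homotopy equation survives tensoring with $\ku$, $\kuu$, or $\mathbb{F}$, giving all three isomorphisms at once --- a slightly more direct route that bypasses any torsion discussion.
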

\begin{proof}
First we recall a general result due to Kirwan \cite[Prop.5.8]{Ki84}. Given any Hamiltonian $S^1$-action on a closed symplectic manifold $M$ (it also generalises to Hamiltonian actions by compact connected Lie groups), and working over rational coefficients, there is a non-canonical isomorphism $H^*_{S^1}(M)\cong H^*(M)\otimes H^*(BS^1)$, where $H^*(BS^1)\cong \Q[u]$. The same holds over $\k$ coefficients, since we assume $\k$ has characteristic zero.
Kirwan's result may not always hold in non-compact settings \cite[Sec.9]{Ki84}. However, in our setting, we may assume that the moment map is proper and bounded below (see \cite{RZ1}), which ensures the ordinary Morse inequalities which are needed in Kirwan's proof (which does not otherwise rely on compactness). 

It follows that the equivariant differential $d$ on $C^*(Y)[u]=C^*(Y)\otimes_{\k}\k[u]$ is chain homotopic to the ordinary differential $\delta_0 \otimes \mathrm{id}$. Say $d - \delta_0 = K \circ d + d \circ K$, where $K$ is a chain homotopy for the complex $C^*(Y)[u]$.
Then on $C^*(Y)[\![u]\!]=C^*(Y)[u]\otimes_{\k[u]}\ku$ we have $d - \delta_0 = K \circ d  + d \circ K$ (omitting $\cdots \otimes \mathrm{id}$ factors in the notation).
This implies not just $V^-\cong H^*(Y)\otimes_{\k} \ku$, but also the analogous results for $V^{\infty}$ and $V^+$, because that chain homotopy equation remains well-defined if we tensor the coefficient system as needed in \eqref{defn F C^+ and W^+} to obtain the chain complexes for $V^{\infty},V^+$.
\end{proof}

Consider the spectral sequence induced by the $u$-adic filtration on the Morse-Bott chain complex for $H^*_{S^1}(Y)$ over $\Q$, arising from the differential $d=\delta_0 + u\delta_1+\cdots + u^n \delta_n$ where $n=\dim_{\C}Y$ (cf.\cref{Lemma V- completion in u is harmless}).

\begin{cor}\label{Cor equiv formality via spectral sequence}
The $u$-adic spectral sequence for $QH^*_{S^1}(Y)$ over $\Q$ converges on the $E_1$-page, yielding an isomorphism $H^*(Y;\Q)\otimes_{\Q} \Q[u]\cong QH^*_{S^1}(Y)$. This induces isomorphisms as in \cref{Prop equivariant formality for QH}, by passing to $\k$ coefficients.
A basis for $H^*(Y;\Q)$ gives rise to a basis for $E^-QH^*(Y)$ which at chain level involves only coefficients from the base field $\mathbb{B}$, so not involving the formal Novikov variable $T$ of $\k$, and whose $u^0$-part is the same at the chain level as for the basis of $H^*(Y;\Q)$.
In particular, the unit $x_0$ of the ring $E^-QH^*(Y)$ is represented at chain level by the unique minimum of a suitable auxiliary Morse function on the Morse-Bott manifold $\F_{\min}:=\min H$ (a component of the $\C^*$-fixed locus \cite{RZ1}). 
\end{cor}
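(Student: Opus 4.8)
The statement has three parts. First, the convergence of the $u$-adic spectral sequence for $QH^*_{S^1}(Y)$ over $\Q$ on the $E_1$-page. Second, the compatibility of a $\Q$-basis of $H^*(Y;\Q)$ with a $\mathbb{B}$-coefficient chain-level basis of $E^-QH^*(Y)$. Third, the identification of the unit with the chain-level minimum of an auxiliary Morse function on $\F_{\min}$.

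For the first part, I would observe that the $u$-adic filtration on the Morse-Bott complex $C^*_{S^1}(Y)[u]$ with differential $d=\delta_0+u\delta_1+\cdots+u^n\delta_n$ ($n=\dim_\C Y$, using \cref{Lemma V- completion in u is harmless}) has $E_0$-page $C^*(Y;\Q)\otimes_\Q \Q[u]$ with $E_0$-differential $\delta_0$, so $E_1 = H^*(Y;\Q)\otimes_\Q \Q[u]$. The claim is that all higher differentials vanish. This follows from \cref{Prop equivariant formality for QH}: the equivariant differential $d$ is chain homotopic over $\Q[u]$ (and then over $\ku$) to $\delta_0\otimes\mathrm{id}$, by Kirwan's argument, using properness and boundedness below of the moment map to get the ordinary Morse inequalities. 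Since $H(C^*(Y)[u],d)\cong H^*(Y;\Q)\otimes_\Q\Q[u]$ has the same total rank as the $E_1$-page (a free $\Q[u]$-module of rank $\dim_\Q H^*(Y;\Q)$), and the spectral sequence converges (the $u$-adic filtration is exhaustive and bounded below degree-wise), no higher differential can be non-zero — any non-trivial $d_r$ would strictly decrease the rank. Passing to $\k$ coefficients is harmless since $\k\supset\mathbb{B}\supset\Q$ and everything in sight is free; this gives the isomorphisms of \cref{Prop equivariant formality for QH}.

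For the second part, I would choose a $\Q$-basis of $H^*(Y;\Q)$ and lift each basis element to a cycle in the Morse complex $C^*(Y;\Q)$, which has $\Q$-coefficients; since the $E_1$-page is $H^*(Y;\Q)\otimes_\Q\Q[u]$ and the spectral sequence collapses, these cycles (viewed in $C^*(Y)[\![u]\!]$, with no $u$-corrections needed because $d$ is chain homotopic to $\delta_0$ over $\Q[u]$ and one can transport the $E_1$-basis back) furnish $\ku$-module generators of $E^-QH^*(Y)$ whose chain representatives involve only $\Q$-coefficients, hence only $\mathbb{B}$-coefficients and no Novikov variable $T$; moreover their $u^0$-parts are, at chain level, the original $\Q$-basis cycles. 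The subtlety here — and what I expect to be the main obstacle — is making precise that one can genuinely arrange the lifted basis to have $u^0$-part equal to the non-equivariant basis at the \emph{chain} level, not merely up to homotopy. This is handled by using the explicit chain homotopy $K=K_0+uK_1+\cdots$ from the proof of \cref{Prop equivariant formality for QH} to define an isomorphism of complexes $(C^*(Y)[u],\delta_0)\to(C^*(Y)[u],d)$ whose $u^0$-part is the identity; applying it to $\delta_0$-cycles representing the $\Q$-basis produces $d$-cycles with the required $u^0$-behaviour, and extending coefficients keeps everything over $\mathbb{B}$.

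For the third part, the unit $x_0\in E^-QH^*(Y)$ is the identity for the equivariant quantum product, whose $u^0,T^0$-part is the ordinary cup product. Hence the $u^0$-part of $x_0$ represents the unit of non-equivariant $QH^*(Y)$, which is the unit of ordinary $H^*(Y)$ — and in the Morse-Bott model on $\F_{\min}=\min H$ (a component of $\Fix(\C^*)$, by \cite{RZ1}), the unit is represented at chain level by the minimum of the chosen auxiliary Morse function on $\F_{\min}$, since this is the unique index-zero critical point lying in the bottom stratum and it generates $H^0$. Because the basis from the second part can be taken to include this chain representative for the unit class with $u^0$-part exactly that minimum, the claimed explicit description of $x_0$ follows. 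I would keep this part brief, as it is essentially a restatement of the normalisation conventions for the equivariant quantum product together with the identification of $\F_{\min}$ as the source of the unit in \cite{RZ1}.
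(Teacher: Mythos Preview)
Your first part matches the paper's exactly: both identify the $E_1$-page and use Kirwan's dimension count to preclude higher differentials.

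For the second part, your route through the chain homotopy $K$ from \cref{Prop equivariant formality for QH} is shakier than you suggest: the relation $d-\delta_0=Kd+dK$ does not by itself produce a chain \emph{isomorphism} $(C^*(Y)[u],\delta_0)\to(C^*(Y)[u],d)$ with $u^0$-part the identity; it is a homotopy between the two differentials viewed as degree-one endomorphisms, not a homotopy between chain maps. The paper does not pursue this construction at all; it leaves the second claim as an immediate consequence of $E_1$-degeneration (every $\delta_0$-cycle $z\in C^*(Y;\Q)$ lifts to a $d$-cycle $z+uz_1+u^2z_2+\cdots$ with $z_i\in C^*(Y;\Q)$), which is all that is needed and avoids the issue entirely.

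For the third part, you omit the paper's key step. Your argument only establishes that the $u^0$-part of $x_0$ is the minimum; the statement asserts that $x_0$ is represented at chain level by the minimum \emph{with no $u$-corrections}. The paper closes this by a grading argument: the lift $x+u^{\geq 1}$-terms is grading-preserving, and for $x$ the minimum (in degree $0$) any $u^k$-correction with $k\geq 1$ would have to lie in $C^{-2k}(Y)$, which is zero since the Morse complex is concentrated in non-negative degrees. Without this observation your third paragraph does not reach the stated conclusion.
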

\begin{proof}
The $E_1$-page of the spectral sequence associated to the $u$-adic filtration is $H^*(Y;\Q)\otimes_{\Q} \Q[u]$, as $\delta_0$ is the non-equivariant differential computing $H^*(Y;\Q)$.
Consider the dimensions of the $\Z$-graded $\Q$-vector space $QH^*_{S^1}(Y)$, starting from degree $0$ and inductively moving up. By Kirwan's result in the proof of \cref{Prop equivariant formality for QH}, these dimensions match up with those for $H^*(Y;\Q)\otimes_{\Q} \Q[u]$, therefore there can be no non-trivial edge differentials from the $E_1$-page onwards.
For the final claim: under the isomorphism in the claim, a classical cycle $x$ may get mapped to $x+u^{\geq 1}$-terms. This map is grading preserving, so for the minimum $x_0\in H^0(Y)$ there are no $u^{\geq 1}$-corrections for grading reasons.   
\end{proof}

\subsection{Specialisation from equivariant to non-equivariant}

\begin{lm}\label{Lemma ev0 commutative diagram}
There are specialisation maps $\mathrm{ev}_0$ that fit into a commutative diagram of $\k$-linear maps
$$
\xymatrix@R=12pt@C=20pt{
 E^-QH^*(Y) 
\ar@{->}[r]^-{E^-c_{\lambda}^*} 
\ar@{->>}[d]_-{\textrm{ev}_0}  
&
E^-HF^*(H_{\lambda})
\ar@{->}^-{\textrm{ev}_0}[d] 
\\
QH^*(Y)\cong E^-QH^*(Y)/uE^-QH^*(Y) 
\ar@{->}[r]^-{c_{\lambda}^*} 
& HF^*(H_{\lambda}),
}
$$
in particular $E^-QH^*(Y) \to QH^*(Y)$ is surjective and corresponds to quotienting by $uE^-QH^*(Y)$.
\end{lm}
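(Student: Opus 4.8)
\textbf{Proof plan for \cref{Lemma ev0 commutative diagram}.}
The plan is to build the vertical maps from the natural chain-level specialisation $\mathrm{ev}_0$ and then chase a square. First I would invoke \cref{Subsection Specialisation to ordinary cohomology}: for any free $\ku$-cochain complex $(C^*,d)$ with $d=\delta_0+u\delta_1+\cdots$ there is a chain map $\mathrm{ev}_0\colon C^-=C^* \to \ord\,C = C^*/uC^*$, the quotient by $uC^*$, which identifies $\ord\,C\cong (D^*,\delta_0)$ after a choice of $\ku$-module basis. Applying this with $C^*=C_{\mathrm{eq}}$ gives $\mathrm{ev}_0\colon E^-QH^*(Y)\to \ord\,H(C_{\mathrm{eq}})$, and applying it with $C^*=C^*_{\lambda}$ gives $\mathrm{ev}_0\colon E^-HF^*(H_{\lambda})\to \ord\,H(C^*_{\lambda})$. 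The key identification I need is that for the equivariant Morse/quantum complex $\ord$ cohomology is exactly non-equivariant quantum cohomology: $\ord\,H(C_{\mathrm{eq}}) \cong QH^*(Y)$, because $\ord\,d = [\delta_0] = \star_0$-differential, i.e.\ the non-equivariant quantum differential (the $u^0$-part of the equivariant differential), as recorded in the construction of \eqref{Equation equiv quantum product expansion}; similarly $\ord\,H(C^*_{\lambda})\cong HF^*(H_{\lambda})$ since the $u^0$-part $\delta_0$ of the equivariant Floer differential is the non-equivariant Floer differential. Via \cref{Lemma choice of basis to get ev0 complex} the map $\mathrm{ev}_0$ on cohomology is induced by the short exact sequence $0\to C^*[-2]\xrightarrow{u}C^*\to C^*/uC^*\to 0$, and since $E^-QH^*(Y)$ is a free $\ku$-module (\cref{Prop equivariant formality for QH}, or \cref{Cor equiv formality via spectral sequence}), the connecting map out of $\ord\,H$ is forced and $\mathrm{ev}_0\colon E^-QH^*(Y)\to QH^*(Y)$ is surjective with kernel exactly the image of $u$, i.e.\ $uE^-QH^*(Y)$; this gives the canonical identification $QH^*(Y)\cong E^-QH^*(Y)/uE^-QH^*(Y)$.

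For commutativity of the square, I would work at chain level. The continuation chain map $c^*_{\lambda}\colon C_{\mathrm{eq}}\to C^*_{\lambda_0}\to C^*_{\lambda}$ is $\ku$-linear, hence commutes with multiplication by $u$, hence descends to a chain map on the quotients $C_{\mathrm{eq}}/uC_{\mathrm{eq}} \to C^*_{\lambda}/uC^*_{\lambda}$; by construction (the $u^0$-part of an equivariant continuation map is the non-equivariant continuation map) this descended map is precisely the non-equivariant continuation chain map computing $c^*_{\lambda}\colon QH^*(Y)\to HF^*(H_{\lambda})$. The square of chain maps
\[
\begin{tikzcd}
C_{\mathrm{eq}} \arrow[r,"c^*_{\lambda}"]\arrow[d,"\mathrm{ev}_0"'] & C^*_{\lambda}\arrow[d,"\mathrm{ev}_0"]\\
C_{\mathrm{eq}}/uC_{\mathrm{eq}}\arrow[r,"c^*_{\lambda}"'] & C^*_{\lambda}/uC^*_{\lambda}
\end{tikzcd}
\]
commutes on the nose (both composites are ``apply $c^*_{\lambda}$, then reduce mod $u$''), so it commutes on cohomology, which is the claimed diagram after the identifications above. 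The surjectivity statement then drops out of the left column.

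The main obstacle, such as it is, is purely bookkeeping: making precise that the $u^0$-part of the equivariant quantum (resp.\ Floer, resp.\ continuation) differential \emph{is} the corresponding non-equivariant operator, so that $\ord$ of the equivariant complexes really recovers $QH^*(Y)$ and $HF^*(H_{\lambda})$ and the bottom arrow is the genuine non-equivariant $c^*_{\lambda}$. This is built into the conventions of \cite{liebenschutz2020intertwining,liebenschutz2021shift} recalled above (a factor $u^i$ records the index-$2i$ critical point of $\C\P^{\infty}$, and setting $u=0$ restricts to the isolated index-$0$ configurations, which are the non-equivariant counts), so once that is cited the argument is a routine diagram chase together with the freeness of $E^-QH^*(Y)$ already established. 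One should also note the grading remark: $|u|=2$ and $\mathrm{ev}_0$ is grading-preserving, which is why no $u^{\geq 1}$-ambiguities obstruct the identification of the unit, consistent with \cref{Cor equiv formality via spectral sequence}.
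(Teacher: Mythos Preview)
Your proposal is correct and follows essentially the same route as the paper: the paper simply cites \cref{Lemma ev0 commutative diagram general case} (which packages exactly the chain-level $\mathrm{ev}_0$ construction and the commuting square you build by hand) together with \cref{Prop equivariant formality for QH} for the freeness of $E^-QH^*(Y)$ needed to turn the top LES into an SES. You have unpacked that general lemma in this specific instance, which is fine; nothing is missing.
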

\begin{proof}
    This follows by \cref{Lemma ev0 commutative diagram general case}, using \cref{Prop equivariant formality for QH}.
\end{proof}

\subsection{The family of filtrations, and the slice polynomial}

\begin{de}
Abbreviate
$QH^*_{\Fi^a}(Y):=E^-QH^*(Y)$ to emphasize that the $\ku$-module $E^-QH^*(Y)$ depends on the $\Fi^a$-action on $Y$, but not on $b$ (the loop-action is trivial on constant $1$-orbits).
\end{de}

Abbreviate $E_{\lambda}^-:=E^-HF^*(H_{\lambda})$, $T_{\lambda}=\mathrm{Torsion}(E_{\lambda}^-)$, and $c_{\lambda}:=E^-c_{\lambda}^*$.

Abbreviate by $\FF_{j}^{\lambda}$ the filtration on 
$QH^*_{\Fi^a}(Y)$ induced by $c_{\lambda}:QH^*_{\Fi^a}(Y) \to E_{\lambda}^-$, so explicitly:
$$\FF_{j}^{\lambda}:=c_{\lambda}^{-1}(F_j(E^-HF^*(H_{\lambda}))), \textrm{ where }F_j(E^-HF^*(H_{\lambda}))\textrm{ is the canonical }u\textrm{-filtration}.
$$
Above it is understood that $\lambda>0$ is generic, whereas for general $p\in \R$ we let $\FF_j^p:=\cap_{\lambda>p}\FF_j^{\lambda}.$

\begin{thm}\label{Corollary curlyE is zero}
$QH^*_{\Fi^a}(Y)$ is finitely $u$-filtered by $\FF_{j}^{\lambda}$ (\cref{Definition finitely ufiltered}), which satisfies
    \begin{equation}
    \FF_{j}^{\lambda}=\ker \left(QH^*_{\Fi^a}(Y) \stackrel{c_{\lambda}}{\longrightarrow} E^-_{\lambda}/(u^jE^-_{\lambda}+T_{\lambda})\right), \quad \FF_{0}^{\lambda}=QH^*_{\Fi^a}(Y), \quad \FF_{\infty}^{\lambda}=\ker c_{\lambda},
    \end{equation}
   $u\FF_j^{\lambda}\subset \FF_{j+1}^{\lambda} = \FF_{j}^{\lambda} \cap uQH^*_{\Fi^a}(Y)  \subset \FF_j^{\lambda}$, and $\FF_j^{\lambda}\subset \FF_j^{\lambda'}$ for $\lambda\leq \lambda'$. 
    
This yields a {\bf $\lambda$-valuation} $\mathrm{val}^{\lambda}(v)$ on $QH^*_{\Fi^a}(Y)$ satisfying \eqref{Equation relating valuation and plus theory}, and we deduce \cref{Theorem relation between F fil and E fil}.

    We obtain finite invariants $0\leq d_j^{\lambda} \leq a:=\mathrm{rank}_{\ku}QH^*_{\Fi^a}(Y)=\dim_{\k}H^*(Y;\k)$ for each $j\in \N$, 
    $$d_{j}^{\lambda}:=\dim_{\k} \FF_{j}^{\lambda}/u\FF_{j+1}^{\lambda},$$
    yielding a {\bf slice series} $s_{\lambda}(t)$ (which becomes a polynomial when $c_{\lambda}$ is injective),
    $$
    s_{\lambda}(t) := \sum_{j\geq 0} d_j^{\lambda} t^j.
    $$
    Moreover, ordinary cohomology $P:=H^*(Y;\k)$ is filtered by $$F_j^{\lambda}(P):=\FF_{j}^{\lambda}/u\FF_{j-1}^{\lambda}, \quad F_{\infty}^{\lambda}(P):=0,$$ with associated Hilbert-Poincar\'{e} series $f_{\lambda}(t)$ whose coefficients are $f_j^{\lambda}=d_j^{\lambda}-d_{j+1}^{\lambda}$.
\end{thm}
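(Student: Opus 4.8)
The statement is essentially a package of consequences of the abstract algebra already developed in Sections~\ref{Section Algebraic preliminaries III} and the equivariant-formality result \cref{Prop equivariant formality for QH}, so the plan is to carefully cite the right abstract lemma for each assertion rather than re-prove anything. The one genuine input specific to this section is that the chain map $c_{\lambda}^*: C_{\mathrm{eq}} \to C^*_{\lambda}$ has free domain $E^-QH^*(Y)$: this is exactly \cref{Prop equivariant formality for QH} (equivalently \cref{Cor equiv formality via spectral sequence}), which gives $V:=V^- = E^-QH^*(Y) \cong H^*(Y;\k)\otimes_{\k}\ku$, a free $\ku$-module of rank $y=\dim_{\k} H^*(Y;\k)$. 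So all the freeness hypotheses of \cref{Subsection Exact sequence for induced filtrations}, \cref{Subsection Filtrations arising from $u$-local isomorphisms}, and \cref{Subsection Relation between the filtrations} are satisfied.

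\textbf{Step 1 (finitely $u$-filtered and the explicit kernel description).} First I would invoke \cref{Prop Fjc is finitely u filtered}: since $V=QH^*_{\Fi^a}(Y)$ is finitely generated (indeed free of finite rank) and $W_{\lambda}=E^-_{\lambda}$ carries its canonical filtration, $V$ is finitely $u$-filtered by $F_j^{c_{\lambda}}=\FF_j^{\lambda}$. The three displayed identities for $\FF_j^{\lambda}$ — the kernel-mod-$(u^jE^-_{\lambda}+T_{\lambda})$ formula, $\FF_0^{\lambda}=V$, $\FF_\infty^{\lambda}=\ker c_{\lambda}$ — are precisely \eqref{Equation induced filtration defn}. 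The relations $u\FF_j^{\lambda}\subset \FF_{j+1}^{\lambda}=\FF_j^{\lambda}\cap uV \subset \FF_j^{\lambda}$ are the defining property (2) of finitely $u$-filtered modules, supplied again by \cref{Prop Fjc is finitely u filtered}, and $\FF_j^{\lambda}\subset \FF_j^{\lambda'}$ for $\lambda\leq\lambda'$ is \cref{Lemma dj inequality from Ps} (using the compatibility of continuation maps under composition). For general $p$, $\FF_j^p=\cap_{\lambda>p}\FF_j^\lambda$ inherits all these inclusions termwise.

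\textbf{Step 2 (valuation, \eqref{Equation relating valuation and plus theory}, and \cref{Theorem relation between F fil and E fil}).} The induced valuation $\mathrm{val}^{\lambda}$ on $V$ is \eqref{Definition lambda valuation}, and the equivalences in \eqref{Equation relating valuation and plus theory} relating $v\in\FF_{j+1}^\lambda$, $\mathrm{val}^\lambda(v)\geq j+1$, and the vanishing of $E^+c_\lambda(v\otimes u^{-j})$ are exactly \cref{Lemma relation between filtrations Vminus and Vplus} specialised to $W^-_\lambda = E^-HF^*(H_\lambda)$ (note $[c_u][u^{-j}v]$ in that lemma is the $E^+c_\lambda(v\otimes u^{-j})$ here). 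Then \cref{Theorem relation between F fil and E fil} is a verbatim restatement of \cref{Cor relation to EEp} under the dictionary \eqref{Equation explicit notation equivariant quantum and Floer}, so it follows at once; the stabilisation clause ``$j\geq\max\{j_i(\lambda)\}$ when $c_{\lambda}$ injective'' is the corresponding clause of \cref{Cor relation to EEp}, and injectivity holds for free weights by \cref{Proposition when iso after localise get finite invariants} together with the localisation theorem \cref{Lemma MBF in equiv case and localisation} (cited forward).

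\textbf{Step 3 (finite invariants $d_j^\lambda$, the bound by $y$, and the slice/Hilbert--Poincar\'e series).} The finiteness and the bound $0\leq d_j^\lambda\leq a$ come from \cref{Lemma finite invariants from Fj1 over u Fj} (the embeddings $\FF_{j+1}^\lambda/u\FF_j^\lambda\hookrightarrow P$), with $a=\mathrm{rank}_{\ku}V=\dim_{\k}H^*(Y;\k)=y$ by \cref{Prop equivariant formality for QH}; the equality $\mathrm{rank}_{\ku}V=\dim_{\k}P$ is the remark after \cref{Definition finitely ufiltered}. The definition of $s_\lambda(t)$ is \cref{Definition slice series}, and its being a polynomial when $c_\lambda$ is injective is \cref{Corollary filtration polynomial}(3) (then $b=\mathrm{rank}_{\ku}\ker c_\lambda=0$, so the ``$+t^b+t^b+\cdots$'' tail in \eqref{Equation filtration series defn} disappears). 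Finally $P=H^*(Y;\k)$ — this is the canonical identification $V/uV\cong QH^*(Y)\cong H^*(Y;\k)$ from \cref{Lemma ev0 commutative diagram} — is filtered by $F_j^\lambda(P):=\FF_j^\lambda/u\FF_{j-1}^\lambda$ with $F_\infty^\lambda(P)=0$ by \cref{Prop about filtration on V mod uV} (here $T=0$ since $V$ is torsion-free, so $T/uT=0$), and the Hilbert--Poincar\'e series of the associated graded has coefficients $f_j^\lambda=\dim_{\k}\mathrm{gr}_j^\lambda P = d_j^\lambda-d_{j+1}^\lambda$, again by \cref{Prop about filtration on V mod uV} applied with the filtration shifted, exactly as in the identity $f_j=d_j-d_{j+1}$ recorded in the introduction.

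\textbf{Expected main obstacle.} None of the steps is hard; the only point requiring care is bookkeeping the grading shifts $[-2]$, $[-1]$ that were suppressed ``from now on'' before \cref{Lemma exact sequence on filtered pieces of W modules}, and making sure the identification $P\cong H^*(Y;\k)$ (rather than $QH^*(Y)$ with its quantum product) is the one intended — these agree as graded $\k$-vector spaces, which is all that is needed for the Hilbert--Poincar\'e statement. So the ``proof'' is really an assembly of citations, and I would write it as such.
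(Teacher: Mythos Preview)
Your proposal is correct and takes essentially the same approach as the paper: both proofs are an assembly of citations to the abstract algebra lemmas of Sections~2--4, with the single geometric input being equivariant formality (\cref{Prop equivariant formality for QH}) to ensure $V=E^-QH^*(Y)$ is free of rank $\dim_{\k}H^*(Y;\k)$. Your write-up is somewhat more detailed than the paper's (you make explicit the use of \cref{Lemma dj inequality from Ps} for $\FF_j^{\lambda}\subset\FF_j^{\lambda'}$ and of \cref{Lemma finite invariants from Fj1 over u Fj} for the bound $d_j^{\lambda}\leq a$, which the paper leaves implicit), but the structure and the key citations---\cref{Prop Fjc is finitely u filtered}, \cref{Lemma relation between filtrations Vminus and Vplus}, \cref{Cor relation to EEp}, \cref{Corollary filtration polynomial}, \cref{Prop about filtration on V mod uV}---are identical.
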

\begin{proof}
In the notation from \cref{Definition finitely ufiltered}, $P:=E^-QH^*(Y)/uE^-QH^*(Y)\cong H^*(Y;\k)$ (using \cref{Lemma MBF in equiv case and localisation}) is a finite dimensional $\k$-vector space, so \cref{Prop Fjc is finitely u filtered} implies $E^-QH^*(Y)$ is finitely $u$-filtered by $\FF_j^{\lambda}$. 
The claim about the valuation and \cref{Theorem relation between F fil and E fil} follow from \cref{Lemma relation between filtrations Vminus and Vplus} and \cref{Cor relation to EEp}.
 The properties of $s_{\lambda}$ follow from \cref{Corollary filtration polynomial}.
The final claim follows by 
\cref{Prop about filtration on V mod uV} (where $T=0$ by \cref{Prop equivariant formality for QH}, so $\dim_{\k}P=\mathrm{rank}_{\ku}\,E^-QH^*(Y)$).
\end{proof}

Following \cref{Subsection filtrations on the W modules},
we have filtrations on $E^{-}HF^*(H_{\lambda})$, $E^{\infty}HF^*(H_{\lambda})$ and $(E^+HF^*(H_{\lambda}))_{\mathbb{F}}$, which induce filtrations on $E^{\karo}QH^*(Y)$ respectively for $\karo=-,\infty,+$, by \cref{Definition induced filtration via c maps}.
Denote these
$$
\quad \FF_{j,(a,b)}^{\karo,\lambda} \subset E^{\karo}_{(a,b)} QH^*(Y)
\quad \textrm{(often abbreviated }
\FF_j^{\karo,\lambda}, \textrm{ also abbreviate }\FF_j^{\lambda}:=\FF_{j}^{-,\lambda} \textrm{ and } \EE_j^{\lambda}:=\FF_{j}^{+,\lambda}),
$$
where $(a,b)$ is the weight. Intersecting over all generic $\lambda\geq p$, we define $\FF_{j,(a,b)}^{\karo,p}$ for $p\in [0,\infty]$.

The three models of equivariant quantum cohomology fit into a short exact sequence by \cref{Lemma SES for various versions of equiv coh}.

\begin{cor}
    The short exact sequence for equivariant quantum cohomologies is filtered:
    \begin{equation}\label{Equation exact seq for Fj of V modules in QH case}
0\longrightarrow \FF_{j-1}^{p}[-2] \stackrel{u}{\longrightarrow} \FF_j^{\infty,p} \longrightarrow \EE_j^{p} \longrightarrow 0,
\end{equation}
These filtrations satisfy the relations described in \cref{Subsection Relation between the filtrations}.
\end{cor}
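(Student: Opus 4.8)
The statement to prove is that the short exact sequence of equivariant quantum cohomology models (\cref{Lemma SES for various versions of equiv coh}) is compatible with the filtrations $\FF_j^p$, $\FF_j^{\infty,p}$, $\EE_j^p$, i.e.\ that \eqref{Equation exact seq for Fj of V modules in QH case} is exact for every $j\in\Z$ and $p\in[0,\infty]$, and that the relations from \cref{Subsection Relation between the filtrations} carry over. The plan is to reduce everything to the abstract algebraic results already established, since the Floer-theoretic input (freeness of $V^-=E^-QH^*(Y)$, the directed system of continuation maps $c_\lambda$, compatibility) has all been verified: freeness and finite rank come from \cref{Prop equivariant formality for QH}, and compatibility of the continuation maps is the standard functoriality of Floer continuation. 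So the hypotheses of Sections~\ref{Subsection Exact sequence for induced filtrations} and \ref{Subsection Relation between the filtrations} are in force with $V=E^-QH^*(Y)$, $W_\lambda = E^-HF^*(H_\lambda)$, $c_\lambda=E^-c_\lambda^*$.

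First I would fix a generic slope $p=\lambda$ and invoke \cref{Lemma SES on filtered V modules}: for each $j\in\Z$ the diagram of \cref{Cor SES over LES} (applied to $c=c_\lambda$) produces the exact sequence
\begin{equation*}
0\longrightarrow F_{j-1}^{\lambda}(V^-)\stackrel{u}{\longrightarrow} F_j^{\lambda}(V^{\infty})\longrightarrow F_j^{\lambda}(V^+)\longrightarrow 0,
\end{equation*}
which under the dictionary \eqref{Equation explicit notation equivariant quantum and Floer} is exactly \eqref{Equation exact seq for Fj of V modules in QH case} at generic $p=\lambda$, up to the grading shifts suppressed earlier (these are harmless: $u\otimes 1$ is grading-preserving with the convention $V^-[-2]$, as recorded in \cref{Subsection the long and short exact sequences}). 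Next I would pass to arbitrary $p\in[0,\infty)$ by intersecting over generic $\lambda\geq p$, using the defining relations \eqref{Definition of Fjlambda}: since $\FF_j^p=F_j^{p^+}$ for the slopes $p^+$ just above $p$ (as noted after \eqref{Definition of Fjlambda}, because the continuation maps in a small interval $(\lambda_k,\lambda_{k+1})$ are isomorphisms), the sequence for generic $\lambda$ already gives the sequence for all $p$; alternatively one checks directly that an exact sequence of $\ku$-modules remains exact after taking the (filtered, eventually stationary) intersection, using that $V$ has finite rank so the descending chains $F_j^\lambda$ stabilise. For $p=\infty$ one uses \eqref{Definition of Fjinfty} and \cref{Cor LES for ESH abstractly}: the SES for $E^{\karo}SH^*(Y)$ is the direct limit of the SESs at finite $\lambda$, and direct limits are exact, so the filtered pieces also fit into an exact sequence by the union description in \eqref{Definition of Fjinfty}.

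The ``relations described in \cref{Subsection Relation between the filtrations}'' — namely $\FF_j^p\subset\FF_j^{p'}$ and $\EE_j^p\subset\EE_j^{p'}$ for $p\leq p'$, the identities $u\FF_j^p=\FF_{j+1}^p\cap uV$, $u\EE_j^p=\EE_{j+1}^p$, and the correspondences of \cref{Cor relation to EEp} and \cref{Lemma relation between filtrations Vminus and Vplus} — then transfer verbatim: monotonicity in $p$ is built into the intersection definition \eqref{Definition of Fjlambda} together with the compatibility condition (cf.\ \cref{Lemma dj inequality from Ps}); the $u$-multiplicativity relations for the $V^-$-filtration come from \cref{Prop Fjc is finitely u filtered} (finitely $u$-filtered) and for the $V^+$-filtration from \cref{Lemma Vplus description}; and the relation between the $V^-$ and $V^+$ filtrations is \cref{Lemma relation between filtrations Vminus and Vplus} applied to $c_\lambda$, then intersected/united over $\lambda$. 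I do not anticipate a genuine obstacle here: the only point requiring a little care is the bookkeeping of grading shifts and the verification that passing from generic $\lambda$ to arbitrary $p$ (and to $p=\infty$) preserves exactness — this is routine because $V$ is free of finite rank, so the filtrations stabilise and intersections/unions commute with the finite-length exact sequences involved. The substantive content has already been done in the abstract algebra sections; this corollary is their specialisation.
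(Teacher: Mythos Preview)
Your proposal is correct and follows the same approach as the paper: the paper's proof is just the two-line citation ``This follows by \cref{Lemma SES for various versions of equiv coh} and \cref{Lemma SES on filtered V modules}.'' Your write-up is more explicit about the passage from generic $\lambda$ to arbitrary $p$ (via $\FF_j^p=F_j^{p^+}$) and to $p=\infty$, but these are exactly the routine bookkeeping steps the paper leaves implicit.
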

\begin{proof}
     This follows by \cref{Lemma SES for various versions of equiv coh} and \cref{Lemma SES on filtered V modules}.
\end{proof}

\subsection{Vanishing and non-vanishing theorems}
\label{Subsection vanishing results equiv case}

\begin{cor}[Non-vanishing result]
$SH^*(Y)\neq 0 \Leftrightarrow E^+SH^*(Y)\neq 0$.\\
\indent
This occurs for many monotone examples from \cite{R14,R16}, such as $\mathcal{O}_{\C P^m}(-k)$ for $k=1,2,\ldots,m$. 
\end{cor}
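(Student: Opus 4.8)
The plan is to deduce this non-vanishing criterion directly from the algebraic machinery set up in Sections 2--4, specifically from \cref{Cor ordW zero iff Wplus zero}, together with the identification of the relevant cohomologies in the exhaustive/direct-limit setting. The statement has two parts: the equivalence $SH^*(Y)\neq 0 \Leftrightarrow E^+SH^*(Y)\neq 0$, and the remark that it applies to the monotone examples $\mathcal{O}_{\C P^m}(-k)$.

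First I would handle the equivalence. The key point is that $SH^*(Y)$ plays the role of $\mathrm{ord}\,W$ and $E^+SH^*(Y)$ plays the role of $W^+$ in the formalism of \cref{Subsection Specialisation to ordinary cohomology}, applied to the directed system of equivariant Floer complexes $C^*_\lambda$ with $W_\lambda = E^-HF^*(H_\lambda)$, $W = \varinjlim W_\lambda = E^-SH^*(Y)$, and $\mathrm{ord}\,W_\lambda = HF^*(H_\lambda)$, $\mathrm{ord}\,W = SH^*(Y)$. I would invoke \cref{Lemma ev0 commutative diagram general case} and \cref{Cor ordW zero iff Wplus zero} at each slope $\lambda$ to get $HF^*(H_\lambda) = 0 \Leftrightarrow E^+HF^*(H_\lambda) = 0$, but more to the point I would pass to the direct limit: since $SH^*(Y) = \varinjlim HF^*(H_\lambda)$ and $E^+SH^*(Y) = \varinjlim E^+HF^*(H_\lambda)$, and direct limits are exact, the long exact sequence of \cref{Lemma choice of basis to get ev0 complex} (or rather its $C^+$-analogue feeding into \cref{Cor ordW zero iff Wplus zero}) persists in the limit. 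Concretely, $SH^*(Y) = 0$ forces, via the $u$-adic spectral sequence of \cref{Lemma vanishing of the E+ version} applied to the limiting complex, that $E^+SH^*(Y) = 0$; conversely $E^+SH^*(Y) = 0$ forces $SH^*(Y) = 0$ by the long exact sequence in the second diagram of \cref{Lemma ev0 commutative diagram general case}, taken in the limit, since that sequence reads $\cdots \to E^+SH^*(Y)[-1] \to SH^*(Y)[-2] \to E^+SH^*(Y)[-2] \to \cdots$ and hence vanishing of the outer terms forces vanishing of $SH^*(Y)$. I should be a little careful that the hypotheses of \cref{Lemma vanishing of the E+ version} (bounded-below, exhausting $u$-adic filtration) survive the direct limit; this is where I would note that the exhaustive model, by construction, is built so the $u$-adic filtration on the limiting complex is still bounded below degreewise, exactly as in \cite[Sec.(8b)]{Sei08}.

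For the monotone examples, I would simply quote that by \cite{R14,R16} one has $SH^*(Y) \cong QH^*(Y)/E_0$ with $E_0 = \ker Q_\Fi^y \subsetneq QH^*(Y)$ whenever $c_1(Y) \in \R_{>0}[\omega]$ and $Q_\Fi$ is not invertible, so $SH^*(Y) \neq 0$; and for $Y = \mathcal{O}_{\C P^m}(-k)$ with $1 \le k \le m$ the explicit computation (already recalled in the excerpt, $QH^*(Y) = \k[x]/(x^k y)$, $SH^*(Y) = \k[x]/(y) \cong \k^{1+m-k} \neq 0$) makes this concrete. Then the equivalence just proved gives $E^+SH^*(Y) \neq 0$ as well (indeed $\cong \mathbb{F}^{1+m-k}$, matching \cref{Cor SH calculation for monotone in coords}).

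The main obstacle I anticipate is purely bookkeeping: making sure the exactness statements of \cref{Subsection Specialisation to ordinary cohomology} and the spectral sequence convergence of \cref{Lemma vanishing of the E+ version} are genuinely stable under $\varinjlim$ over the continuation maps in the exhaustive model — i.e. that taking cohomology and taking the direct limit commute in the way needed, and that the $u$-adic filtration on the limiting $C^+$ complex is still bounded below so the spectral sequence still converges. Once that compatibility is in hand (it follows from exactness of filtered colimits of $\k$-vector spaces and the degreewise-finiteness built into the setup), the rest is immediate from \cref{Cor ordW zero iff Wplus zero} applied in the limit.
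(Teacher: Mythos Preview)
Your proposal is correct and takes essentially the same approach as the paper: the paper's proof is simply ``This follows by \cref{Cor ordW zero iff Wplus zero},'' and what you have written is an unpacking of exactly that citation together with the (legitimate) bookkeeping needed to pass to the direct limit. The only difference is verbosity: the paper leaves the direct-limit compatibility implicit, whereas you spell out that the LES and the $u$-adic spectral sequence survive $\varinjlim$ by exactness of filtered colimits.
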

\begin{proof}
    This follows by \cref{Cor ordW zero iff Wplus zero}.
\end{proof}

\begin{cor}[Vanishing result]\label{cor vanishing of E+ for SH}
If $SH^*(Y)=0$ (e.g.\;this holds if $c_1(Y)=0$), then
    $$E^+SH^*(Y)=0\;\;\;\;\textrm{ and }\;\;\;\;E^-SH^*(Y)\cong E^{\infty} SH^*(Y).$$
In particular $E^-SH^*(Y)$ is a torsion-free $\ku$-module, as $E^{\infty} SH^*(Y)$ is a $\kuu$-module.
\end{cor}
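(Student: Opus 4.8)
The plan is to deduce both claims from the corresponding facts about the quotient complex of non-constant orbits, together with the vanishing spectral sequence machinery already established. First I would recall from \cref{Subsection uadic filtration} and \cref{Lemma vanishing of the E+ version} that $\mathrm{ord}\,W$ for the exhaustive symplectic cochain complex is exactly $SH^*(Y)$: the ordinary complex $\mathrm{ord}\,C = C^*/uC^*$ is the non-equivariant symplectic cochain complex, whose cohomology in the direct limit is $SH^*(Y)$. By hypothesis $SH^*(Y)=0$, so $\mathrm{ord}\,W=0$, and then \cref{Cor ordW zero iff Wplus zero} (equivalently, the spectral sequence of \cref{Lemma vanishing of the E+ version} applied after taking the direct limit, see \cref{Subsection A remark about the telescope versus the exhaustive model} point (2)) gives $W^+ = E^+SH^*(Y) = 0$. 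The only point needing a word of care is that \cref{Lemma vanishing of the E+ version} is stated at finite chain level; here I would invoke the direct-limit version via \cref{Lemma spectral sequence SH to ESH} together with the exhaustiveness of the filtration (which is precisely why we work with the exhaustive rather than the telescope model), so that $\mathrm{ord}\,W = 0$ still forces $W^+=0$.

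Next, with $E^+SH^*(Y)=0$ in hand, I would feed this into the long exact sequence of \cref{Cor LES for ESH abstractly}, namely
$$
\cdots \longrightarrow E^-SH^*(Y)[-2] \stackrel{u\otimes 1}{\longrightarrow} E^{\infty}SH^*(Y) \longrightarrow E^+SH^*(Y) \longrightarrow E^-SH^{*+1}(Y)[-1] \longrightarrow \cdots.
$$
Since the $E^+$ term vanishes, the connecting map and the map out of $E^{\infty}SH^*(Y)$ both become trivial, so $u\otimes 1 : E^-SH^*(Y)[-2] \to E^{\infty}SH^*(Y)$ is an isomorphism. That is exactly the asserted isomorphism $E^-SH^*(Y)\cong E^{\infty}SH^*(Y)$ (up to the grading shift, which we have agreed to suppress).

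Finally, for torsion-freeness: $E^{\infty}SH^*(Y) = \varinjlim E^{\infty}HF^*(H_\lambda)$ is built from the $u$-localised complexes $(C^*_\lambda)_u$, each of which is a module over $\kuu$ on which $u$ acts invertibly; hence $E^{\infty}HF^*(H_\lambda)$ is a $\kuu$-module, and so is the direct limit $E^{\infty}SH^*(Y)$. A $\kuu$-module is automatically $u$-torsion-free, because $u$ is a unit in $\kuu$. Transporting this through the isomorphism just established, $E^-SH^*(Y)$ is torsion-free as well. The main obstacle, such as it is, is purely bookkeeping: making sure the passage to the direct limit in the $E^+$-vanishing step is legitimate (which it is, precisely because of the exhaustive-model conventions and \cref{Cor LES for ESH abstractly}); everything else is a formal consequence of the long exact sequence and the structure of $\kuu$-modules.
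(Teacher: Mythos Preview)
Your proposal is correct and follows essentially the same route as the paper's proof: apply the spectral sequence vanishing criterion (\cref{Lemma vanishing of the E+ version}/\cref{Cor ordW zero iff Wplus zero}) to conclude $E^+SH^*(Y)=0$ from $SH^*(Y)=0$, then read off $E^-SH^*(Y)\cong E^{\infty}SH^*(Y)$ from the long exact sequence of \cref{Cor LES for ESH abstractly}, and finally observe that $\kuu$-modules are automatically $u$-torsion-free. Your extra care about the passage to the direct limit is warranted and handled correctly; the paper simply cites the two ingredients and leaves that step implicit (it also offers two alternative grading-based arguments in the special case $c_1(Y)=0$).
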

\begin{proof}
    This follows by \cref{Lemma vanishing of the E+ version} and \cref{Cor LES for ESH abstractly}.
    If $c_1(Y)=0$, this can be proved more directly as in \cite{RZ1}: the Hamiltonian $kH$ has $1$-orbits of arbitrarily negative grading for large $k$. As the grading of $\mathbb{F}$ is bounded above by $0$, it follows that $E^+HF^*(H_{k^+})$ is supported in arbitrarily negative degrees for large $k$, in particular it vanishes in the direct limit as $k\to \infty.$
    A third proof (although theoretically related to the second) is to use \cref{ESH is direct limit of maps on EQH}:
 $E^+HF^*(H_{N^+})$ lies in the same degrees as $E^+QH^*(Y)[2N\mu]$, but the latter lies in degrees $\leq -2N\mu + \dim_{\R} Y$.
\end{proof}

\begin{cor}\label{Corollary torsion freeness of E- and Einfty}
Assume $c_1(Y)=0$.
Given $q\in \Z$, for sufficiently large slope $k$ we have 
$$E^+HF^m(H_{k^+})=0 \quad \textrm{ and } \quad E^-HF^{m-2}(H_{k^+})\cong E^{\infty} HF^m(H_{k^+}) \qquad \textrm{ for all }m\geq q,$$
 in particular $E^-HF^{\geq q-2}(H_{k^+})$ is torsion-free.
\\[1mm]
\emph{Remark.} 
We will see later that
$E^{\infty} HF^m(H_{k^+}) \cong E^{\infty} SH^m(Y)$ whenever $k>\tfrac{a}{b}$. If $(a,b)$ is a free weight (\cref{Definition free weight}) then $E^{\infty} HF^m(H_{k^+}) \cong E^{\infty} SH^*(Y)\cong E^{\infty}QH^m(Y)$.
\end{cor}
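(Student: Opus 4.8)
\textbf{Proof plan for \cref{Corollary torsion freeness of E- and Einfty}.} The assertion has two mechanisms feeding into it, and the plan is to combine a degree-counting argument for the vanishing of $E^+HF^m(H_{k^+})$ with the long exact sequence relating the three models. First I would recall from \cref{cor vanishing of E+ for SH} (second proof) the key degree estimate: $E^+HF^*(H_{k^+})$ is concentrated in degrees that tend to $-\infty$ as $k\to\infty$. Concretely, using \cref{Corollary uvaluation maximises chain rep} together with the fact that for $c_1(Y)=0$ the Hamiltonian $kH$ has $1$-orbits of grading $\leq -C(k)$ with $C(k)\to\infty$ (as established in \cite{RZ1}, and reiterated in the proof of \cref{cor vanishing of E+ for SH}), and using that the coefficient module $\mathbb{F}=\kuu/u\ku$ is supported in degrees $\leq 0$, we get that $E^+HF^m(H_{k^+})=0$ for all $m\geq q$ once $k$ is large enough (depending on $q$). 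The third proof variant mentioned there — via \cref{ESH is direct limit of maps on EQH}, so $E^+HF^*(H_{N^+})\cong E^+QH^*(Y)[2N\mu]$ up to the continuation direction — would also work and is cleaner: $E^+QH^*(Y)$ is supported in degrees $\leq \dim_\R Y$, so the shift by $2N\mu$ with $\mu>0$ pushes everything below $q$ once $N$ is large. (If $\mu=0$, i.e.\ $c_1(Y)=0$ with trivial Maslov index of $\Fi$, one falls back on the first estimate, which does not rely on $\mu$.)

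Given the vanishing $E^+HF^m(H_{k^+})=0$ for $m\geq q$, the isomorphism $E^-HF^{m-2}(H_{k^+})\cong E^\infty HF^m(H_{k^+})$ for $m\geq q$ is then immediate from the long exact sequence in the ``$u$-torsion'' remark following \cref{Lemma filtrations are compatible with LES} (equivalently \cref{Lemma LES for W modules} applied to $C^*=C^*_\lambda$ with $\lambda=k^+$), namely
$$
\cdots \to E^-HF^{m-2}(H_{k^+}) \xrightarrow{\;u\;} E^\infty HF^m(H_{k^+}) \to E^+HF^m(H_{k^+}) \to E^-HF^{m-1}(H_{k^+}) \to \cdots.
$$
Here I must be slightly careful with the degree bookkeeping: the map $E^+HF^{m}(H_{k^+})\to E^-HF^{m-1}(H_{k^+})[-1]$ is the connecting map, so to kill both the term after $E^\infty HF^m$ and the term before it, I need $E^+HF^m(H_{k^+})=0$ \emph{and} $E^+HF^{m-1}(H_{k^+})=0$; but since the hypothesis is ``for all $m\geq q$'' I actually get $E^+HF^{m'}(H_{k^+})=0$ for every $m'\geq q$, so both neighbouring $E^+$ terms vanish once $m\geq q$, and exactness forces $u\colon E^-HF^{m-2}(H_{k^+})\to E^\infty HF^m(H_{k^+})$ to be an isomorphism. (To be safe, I would state the $E^+$-vanishing for $m\geq q-1$, which is harmless since $q$ was arbitrary, or simply note that enlarging the range is free.) Finally, torsion-freeness of $E^-HF^{\geq q-2}(H_{k^+})$ follows because $u$ acts injectively on it: if $u\cdot x=0$ with $x\in E^-HF^{m}(H_{k^+})$, $m\geq q-2$, then the image of $x$ in $E^\infty HF^{m+2}(H_{k^+})$ is zero, but that image map is the isomorphism just established, so $x=0$. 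Equivalently, $E^\infty HF^*(H_{k^+})$ is a $\kuu$-module hence torsion-free, and the isomorphism with $E^-HF^{*-2}(H_{k^+})$ in degrees $\geq q$ transports this.

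The main obstacle I anticipate is purely one of making the degree bounds quantitative and uniform: I need to know that the degrees in which $E^+HF^*(H_{k^+})$ is supported recede to $-\infty$ \emph{linearly} in $k$ (or at least without bound), and this relies on the structure of the Morse--Bott manifolds $B_{p,\beta}$ of non-constant $1$-orbits and the Conley--Zehnder/Maslov-type index computations from \cite{RZ1,RZ2} for $c_1(Y)=0$. The cleanest route, which I would adopt, is to invoke \cref{ESH is direct limit of maps on EQH} and \cref{Prop equivariant formality for QH}: then $E^+HF^*(H_{N^+})$ is isomorphic (as a graded $\ku$-module) to $H^*(Y)\otimes_\k \mathbb{F}$ shifted down by $2N\mu$, whose top degree is $\dim_\R Y - 2N\mu$, and this is $< q$ as soon as $N > (\dim_\R Y - q)/(2\mu)$ provided $\mu\geq 1$. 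Everything else is then formal diagram-chasing in the long exact sequence, and the remark at the end of the statement is just a pointer forward to the identification $E^\infty HF^m(H_{k^+})\cong E^\infty SH^m(Y)$ for $k>a/b$ (and its strengthening for free weights via \cref{Theorem intro localisation theorem}), which requires no separate argument here.
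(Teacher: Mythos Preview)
Your proposal is correct and follows essentially the same route as the paper's proof: invoke the degree argument from the second proof of \cref{cor vanishing of E+ for SH} to get $E^+HF^{\geq q}(H_{k^+})=0$ for large $k$, then read off the isomorphism and torsion-freeness from the long exact sequence of \cref{Lemma LES for W modules}. Your extra bookkeeping about needing $E^+$ to vanish in degree $m-1$ as well is a valid observation, and your resolution (enlarge the vanishing range, which is free since $q$ is arbitrary) is exactly right.
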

\begin{proof}
By the second proof of \cref{cor vanishing of E+ for SH},
$E^+HF^{\geq q}(H_{k^+})=0$ for all sufficiently large $k$ (as $q$ is fixed).
The rest follows by the long exact sequence, see \cref{Lemma LES for W modules}.
The Remark will be a consequence of \cref{Prop vanishing of column in infinity spectral sequence} and
\cref{Prop vanishing of spectral sequence for infinity page}.
\end{proof}
\subsection{The weak localisation theorem}

\begin{lm}\label{Lemma weak localisation}
$E^{-}SH^*(Y)_u \cong
E^{\infty}SH^*(Y)$, indeed the $u$-localisation of $c^*: E^-QH^*(Y)\to E^{-}SH^*(Y)$ is canonically identifiable with $c^*:E^{\infty}QH^*(Y) \to E^{\infty}SH^*(Y)$.

    If $c_1(Y)=0$, then $E^{-}SH^*(Y)_u \cong E^-SH^*(Y) \cong E^{\infty}SH^*(Y)$.
\end{lm}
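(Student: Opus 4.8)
The plan is to deduce the statement from the general algebraic machinery in Sections 2--3 together with the vanishing results already established for $E^+SH^*(Y)$. First I would prove the first assertion, which is purely algebraic. Recall that $E^-SH^*(Y)=\varinjlim E^-HF^*(H_\lambda)$ is the direct limit of the $W^-_\lambda$ models, and that $u$-localisation is a flat (exact) functor that commutes with direct limits. Hence $\bigl(\varinjlim W^-_\lambda\bigr)_u \cong \varinjlim \bigl(W^-_\lambda\bigr)_u = \varinjlim W^\infty_\lambda = E^{\infty}SH^*(Y)$, using the identification $W^\infty_\lambda \cong (W^-_\lambda)_u$ from \eqref{defn F C^+ and W^+ for Floer case}. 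The compatibility with the $c^*$ maps then follows because the square relating $c^*\colon V^- \to W^-$ to its $u$-localisation $c^*_u\colon V^\infty_u \to W^\infty_u$ commutes at each finite stage (this is exactly the left square in the diagram in the proof of \cref{Proposition when iso after localise get finite invariants}), and passing to the direct limit preserves commutativity. Since $V^\infty = V^-_u = E^{\infty}QH^*(Y)$ by \cref{Prop equivariant formality for QH}, and $(c^*)_u$ on localisations is by definition $c^*\otimes\mathrm{id}$, we obtain the canonical identification of $c^*\colon E^{\infty}QH^*(Y)\to E^{\infty}SH^*(Y)$ with $(c^*\colon E^-QH^*(Y)\to E^-SH^*(Y))_u$.

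For the second assertion, assume $c_1(Y)=0$. By \cref{cor vanishing of E+ for SH} we have $E^+SH^*(Y)=0$, and the long exact sequence of \cref{Cor LES for ESH abstractly} (equivalently \cref{Equation short exact sequence for ESH}) then forces $u\otimes 1\colon E^-SH^*(Y)\to E^{\infty}SH^*(Y)$ to be an isomorphism: its cokernel is $E^+SH^*(Y)=0$ and its kernel is the torsion of $E^-SH^*(Y)$, which vanishes because multiplication by $u$ is injective on $E^-SH^*(Y)$ (indeed $E^-SH^*(Y)$ is a direct limit of the torsion-free modules $E^-HF^*(H_{k^+})$, cf.\;the remark following \cref{Corollary torsion freeness of E- and Einfty}; alternatively, the image of $u\otimes 1$ being all of the $\kuu$-module $E^{\infty}SH^*(Y)$ already shows $u$ acts surjectively, and torsion-freeness gives injectivity). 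Combining this with the first assertion gives $E^-SH^*(Y)\cong E^{\infty}SH^*(Y)\cong E^-SH^*(Y)_u$.

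I expect the only genuine subtlety to be bookkeeping: checking that the identification $W^\infty_\lambda \cong (W^-_\lambda)_u$ is natural in $\lambda$ (so that it survives the direct limit) and that it is compatible with the continuation-induced maps from $V^-, V^\infty$. This is implicitly contained in \cref{Cor LES for ESH abstractly} and the discussion in \cref{Subsection the long and short exact sequences}, so no new argument is needed — one just has to invoke those results carefully. The characteristic-zero hypothesis on $\k$ enters only through \cref{Prop equivariant formality for QH} (to know $E^-QH^*(Y)$ is free, hence $E^{\infty}QH^*(Y)=E^-QH^*(Y)_u$) and through the vanishing $SH^*(Y)=0$ when $c_1(Y)=0$, both of which are already available. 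Thus the proof is a short assembly of cited results rather than a computation.
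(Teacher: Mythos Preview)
Your proposal is correct and follows essentially the same approach as the paper. Both arguments rest on the fact that $u$-localisation commutes with direct limits; the paper packages this via the SES diagram of \cref{Lemma SES for various versions of equiv coh} (localising kills the $E^+$ column since $E^+HF^*(H_\lambda)$ is $u$-torsion), whereas you apply it directly to the identification $(W^-_\lambda)_u\cong W^\infty_\lambda$ --- these are two sides of the same coin. For the $c_1(Y)=0$ part, your argument via the LES and $E^+SH^*(Y)=0$ is exactly the content of \cref{cor vanishing of E+ for SH}, which the paper simply cites; your extra discussion of torsion is correct but unnecessary, since that corollary already delivers $E^-SH^*(Y)\cong E^{\infty}SH^*(Y)$ directly.
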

\begin{proof}
    Localising at $u$ makes the last vertical line in \cref{Lemma SES for various versions of equiv coh} vanish, using the following: localisation commutes with colimits since it is a left adjoint; $E^+HF^*(H_{\lambda})$ is $u$-torsion so $E^+HF^*(H_{\lambda})_u=0$; thus $E^+SH^*(Y)_u=0$ and so its submodule $\varinjlim (E^+HF^*(H_{\lambda})_{\mathbb{F}})_u$ also vanishes. 
    
    The final claim follows from this, by combining with \cref{cor vanishing of E+ for SH}.
\end{proof}

\subsection{The localisation theorem}

Recall $(a,b)$ is the weight from \cref{Subsection precise equivariant action}.

\begin{prop}\label{Prop vanishing of column in infinity spectral sequence}
Consider the $E_1$-page of the spectral sequence for $E^{\infty}SH^*(Y)$.
The zeroth column is $E^{\infty}QH^*(Y)$.
Any higher column arising with $S^1$-period $p\neq \tfrac{a}{b}$ must vanish.
In particular, $E^{\infty} HF^m(H_{k^+}) \cong E^{\infty} SH^m(Y)$ whenever $k>\tfrac{a}{b}$.
\end{prop}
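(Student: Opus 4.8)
The plan is to analyse the $E_1$-page of the \MBF{} spectral sequence computing $E^{\infty}SH^*(Y)$, whose existence and structure we take from the equivariant analogue of \PartII{}. Recall that the columns of the (non-equivariant) \MBF{} spectral sequence are indexed by $S^1$-periods $p$: the zeroth column comes from the constant $1$-orbits at the fixed locus $\F_\a$, and each higher column with $S^1$-period $p$ comes from the \MB{} manifold $B_{p,\beta}$ of non-constant $1$-orbits. Tensoring everything with the equivariant parameter — that is, passing to the $E^{\infty}$-model — the zeroth column becomes a shifted copy of $E^{\infty}QH^*(Y)$ (by \cref{Prop equivariant formality for QH}, or rather its $E^\infty$-version, this is $H^*(Y)\otimes_\k\kuu$), while the $p$-th column, for $p$ a non-generic slope, is built out of the $S^1$-equivariant cohomology of $B_{p,\beta}$, where the $S^1$ in question is the combined action \eqref{Equation action on loops} of weight $(a,b)$.

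The key point is \cref{Lemma fixed points of the mixed action}(2): when $p\neq \tfrac{a}{b}$, the combined $S^1$-action \eqref{Equation action on loops} has \emph{no fixed points} on $B_{p,\beta}$, because $B_{p,\beta}$ consists entirely of non-constant $1$-orbits of period $p$ and these are fixed only when $a=pb$. So I would apply Borel's localisation theorem \eqref{Equation Borel localisation} to the free $S^1$-action on $B_{p,\beta}$: with $\char\,\k=0$, the $u$-localised $S^1$-equivariant cohomology of a free $S^1$-space is the cohomology of the fixed locus, which here is empty, hence zero. But in the $E^{\infty}$-model everything is already $u$-localised — $\kuu$-coefficients — so the $p$-th column of the $E^{\infty}$-spectral sequence is the $u$-localisation of the corresponding column of the $E^-$-spectral sequence, and therefore vanishes identically. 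Thus on the $E_1$-page only the zeroth column survives, and since there is nothing for differentials to hit, the spectral sequence degenerates, giving $E^{\infty}SH^*(Y)$ as a shifted copy of the zeroth column, i.e.\;$E^{\infty}QH^*(Y)$ up to the grading shift, which is exactly \cref{Lemma weak localisation}.

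For the ``In particular'' clause: the partial direct limit $E^{\infty}HF^m(H_{k^+})$ is computed by truncating the spectral sequence to those columns with $S^1$-period $\leq k$. If $k>\tfrac{a}{b}$, then among those columns the only one that is non-generic and would have period \emph{equal to} $\tfrac ab$ is already included (or there is no such column at all if $\tfrac ab$ is itself generic); in any case, every column with period $p\leq k$ and $p\neq\tfrac ab$ vanishes by the argument above, and the column with period exactly $\tfrac ab$ — if it contributes — is handled separately, but note that for $k>\tfrac ab$ the relevant truncation already contains all columns that survive in the full limit. Hence the truncated spectral sequence has the same surviving $E_1$-page as the full one, namely just the zeroth column $E^{\infty}QH^*(Y)$ (shifted), so $E^{\infty}HF^m(H_{k^+})\cong E^{\infty}SH^m(Y)$.

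\textbf{Main obstacle.} The genuine work is in justifying that the $p$-th column of the equivariant \MBF{} spectral sequence really is (a shift of) the $u$-localised $S^1$-equivariant cohomology of $B_{p,\beta}$ for the combined action — i.e.\;correctly setting up the equivariant version of the \MBF{} spectral sequence from \PartII{} and identifying its $E_1$-page. Once that identification is in place, the vanishing is an immediate consequence of \cref{Lemma fixed points of the mixed action} and Borel localisation in characteristic zero. A secondary subtlety is the bookkeeping of the grading shift in the ``In particular'' statement and making sure the Seidel-isomorphism identification $E^{\infty}HF^*(H_{k^+})\cong E^{\infty}QH^*(Y)[2k\mu]$ (from \cref{Cor sequence of r maps equiv}) is consistent with the spectral-sequence computation; but this is routine once the degeneration is established.
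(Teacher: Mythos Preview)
Your approach matches the paper's: Borel localisation applied to $B_{p,\beta}$ with the combined action \eqref{Equation action on loops}, which has empty fixed locus when $p\neq a/b$ by \cref{Lemma fixed points of the mixed action}. The paper resolves your ``main obstacle'' explicitly: it uses that $u$-localisation is flat (so commutes with taking cohomology) and that both the \MBF{} spectral sequence for each $E^-HF^*(H_\lambda)$ and the energy spectral sequence building each column converge at a \emph{finite} page (by the argument of \cite[Prop.~B.6]{RZ2}), so one may $u$-localise page by page without appealing to any general commutation of localisation with spectral sequences.

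Two small corrections. First, your sentence ``on the $E_1$-page only the zeroth column survives'' overshoots this proposition: no free-weight hypothesis is assumed here, so the column at $p=a/b$ may well survive --- that stronger degeneration is \cref{Prop vanishing of spectral sequence for infinity page}, not this one. Second, your ``In particular'' argument is more convoluted than needed (and the Seidel-isomorphism remark is a red herring): the spectral sequence for $E^{\infty}HF^*(H_{k^+})$ is obtained from that for $E^{\infty}SH^*(Y)$ by omitting columns with $p>k$; since $k>a/b$, every omitted column has $p\neq a/b$ and hence already vanishes on $E_1$, so the omission changes nothing.
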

\begin{proof} 
We first recall the Borel localisation theorem: for any $S^1$-action on a smooth compact manifold $B$, the inclusion of the fixed locus $\mathrm{Fix}\hookrightarrow B$ induces an isomorphism: $$\textrm{modulo }H^*_{S^1}(\mathrm{pt})\textrm{-torsion we have }\;\;H^*_{S^1}(B)\cong H^*_{S^1}(\mathrm{Fix})\cong H^*(\mathrm{Fix})\otimes H^*_{S^1}(\mathrm{pt}).$$
Identify $H^*_{S^1}(\mathrm{pt})=\k[u]$, and call $\k(u)$ its $u$-localisation. Then the above becomes
$$H^*_{S^1}(B) \otimes_{\k[u]} \k(u)\cong H^*(\mathrm{Fix})\otimes_{\k} \k(u),$$
i.e.\;the $u$-torsion in $H^*_{S^1}(B)$ gets killed. The $u$-completion is
$H^*_{S^1}(B) \otimes_{\k[u]} \kuu\cong H^*(\mathrm{Fix})\otimes_{\k} \kuu$.

We now apply the Borel localisation theorem to the action \eqref{Equation action on loops} on any (closed) {\MBF} manifold $B_{p,\c}$ of non-constant $S^1$-orbits in $Y$ of period $p=c'(H(B_{p,\c}))$ (see \cite{RZ1} for a detailed discussion of these manifolds).
By the assumption on the slope $p$, \cref{Lemma fixed points of the mixed action} implies that this action has an empty fixed locus. Thus 
$$H^*_{S^1}(B_{p,\c}) \otimes_{\k[u]} \kuu=0.$$
In the remainder of the proof, we will be referring to two tricks:\\
{\bf Trick 1.} $u$-localisation is a flat functor, so the $u$-localisation of a cohomology module is the cohomology of the $u$-localised cochain complex.
\\
{\bf Trick 2.} $u$-localisation commutes with direct limits, since $u$-localisation is a left adjoint, and left adjoint functors always commute with colimits.

By Trick 2, $E^-SH^*(Y)_u\cong \varinjlim E^-HF^*(H_{\lambda})_u$. Observe that the equivariant {\MBF} spectral sequence converging to $E^-HF^*(H_{\lambda})$ collapses at some finite page number since only finitely many columns of the spectral sequence are non-zero -- notice that because we reduced the problem to $H_{\lambda}$, we have an upper bound $p<\lambda$ on the $S^1$-periods $p$ for the Morse-Bott submanifolds $B_{p,\c}$ of Hamiltonian $1$-orbits that are involved in the chain complex $E^-CF^*(H_{\lambda})$. 
As each page is the cohomology of the previous page, we may apply Trick 1 finitely many times to deduce that we can $u$-localise the $E_1$-page to obtain a spectral sequence converging to $E^-HF^*(H_{\lambda})_u$.

As explained in \cite{RZ2}, each column of the $E_1$-page arises from a so-called energy spectral sequence starting with the (equivariant) cohomology of the {\MB}-manifolds of $1$-orbits $B_{p,\c}$ relevant to the given column. In our setup, that means it starts with the $\ku$-module $H^*_{S^1}(B_{p,\c})$ (there is also a degree shift, but we omit this for simplicity, as it does not affect the argument). 
That energy spectral sequence
converges on a finite page number: this is essentially a consequence of the fact that $H^*_{S^1}(B_{p,\c})$ is a finitely generated $\ku$-module, but it also involves an observation arising from Gromov compactness. The detailed proof in the non-equivariant case was carried out in \cite[Prop.B.6]{RZ2}, and the same argument applies here.
By applying Trick $1$ finitely many times, we deduce that we may $u$-localise $H^*_{S^1}(B_{p,\c})$. By the Borel localisation argument above, $H^*_{S^1}(B_{p,\c})_u=0$ for all non-zero slopes $p\neq \tfrac{a}{b}$.

The final statement follows because the spectral sequence for $E^{\infty}HF^*(H_{k^+})$ is the one for $E^{\infty}SH^*(Y)$ after omitting the columns with slopes $p>k$.
If $k>\tfrac{a}{b}$ then we just saw that those new columns vanish on the $E_1$-page, so that omission causes no change.
\end{proof}

\begin{prop}\label{Prop vanishing of spectral sequence for infinity page}
Let $(a,b)$ be a free weight (\cref{Definition free weight}). 
All columns in the $E_1$-page of the spectral sequence for $E^{\infty}SH^*(Y)$ vanish except the zeroth column, $E^{\infty}QH^*(Y)$. 

Thus we have an isomorphism $$c^*:E^{\infty}QH^*(Y)\to E^{\infty}SH^*(Y).$$
\end{prop}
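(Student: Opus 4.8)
The strategy is to reduce \cref{Prop vanishing of spectral sequence for infinity page} to \cref{Prop vanishing of column in infinity spectral sequence} by handling the one column that the latter result leaves open, namely the column of $S^1$-period $p=\tfrac{a}{b}$. First I would observe that if $(a,b)$ is a free weight, then by \cref{Definition free weight} the ratio $\tfrac{a}{b}$ is not of the form $\tfrac{k}{m}$ for any positive weight $m$ of the $\Fi$-action and any $k\geq 1$; in particular, by \cite{RZ1} (see the list of non-generic slopes \eqref{Equation bad slope}), there is no {\MB} manifold $B_{p,\beta}$ of non-constant $1$-orbits with $p=\tfrac{a}{b}$. Hence the only columns in the $E_1$-page of the spectral sequence for $E^{\infty}SH^*(Y)$ that are potentially non-zero are the zeroth column, which is $E^{\infty}QH^*(Y)$ by \cref{Prop vanishing of column in infinity spectral sequence} (or directly, as the $\lambda=0^+$ case), and the columns with $S^1$-period $p\neq \tfrac{a}{b}$ — but \cref{Prop vanishing of column in infinity spectral sequence} already shows those vanish. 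Therefore every higher column vanishes on the $E_1$-page.

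Second, from the collapse of the spectral sequence on its zeroth column, I would conclude that the edge map $c^*:E^{\infty}QH^*(Y)\to E^{\infty}SH^*(Y)$ is an isomorphism. Concretely: the spectral sequence is the $E^{\infty}$-analogue of the {\MBF} spectral sequence of \cite{RZ2}, built from the exhaustive filtration by slope, and it converges to $E^{\infty}SH^*(Y)$; since the only non-zero column on the $E_1$-page is the zeroth one, there is no room for any differential into or out of it on any page, so $E_1=E_{\infty}$ and the associated graded of $E^{\infty}SH^*(Y)$ is concentrated in the zeroth column, equal to $E^{\infty}QH^*(Y)$. The edge homomorphism realizing this identification is precisely the continuation map $c^*=\varinjlim c^*_{\lambda}$, which proves it is an isomorphism. (One should check that the filtration is exhaustive and that convergence holds in the $E^{\infty}$-setting, but this is the content of the construction in \cite{RZ2} adapted to the $u$-localised complexes, exactly as was used in the proof of \cref{Prop vanishing of column in infinity spectral sequence} via Tricks 1 and 2.)

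The main obstacle I anticipate is purely bookkeeping rather than conceptual: one must be careful that the $E^{\infty}$ (i.e.\ $u$-localised) version of the {\MBF} spectral sequence is legitimately obtained from the $E^-$ version by $u$-localising the $E_1$-page, which requires the finiteness and Gromov-compactness input already invoked in the proof of \cref{Prop vanishing of column in infinity spectral sequence} (so that $u$-localisation, being flat and commuting with the finitely many page-passages and with the direct limit, may be applied term by term). Once that is granted, the Borel localisation argument kills every column of positive slope other than the forbidden $p=\tfrac{a}{b}$ column, and freeness of the weight removes the forbidden column too, so nothing is left. A secondary point worth stating explicitly is that the surviving zeroth column really is $E^{\infty}QH^*(Y)$ and not merely $E^{\infty}QH^*(Y)$ up to a grading shift — here the shift is zero because the zeroth column corresponds to slope $0^+$, where Hamiltonian $1$-orbits are the constant ones at the fixed locus and the continuation map $c^*_{0^+}$ is an isomorphism with $E^{\infty}QH^*(Y)$ by the Morse-Bott model discussed in \cref{Subsection Overview of $S^1$-equivariant chain models}. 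Combining these, $c^*:E^{\infty}QH^*(Y)\to E^{\infty}SH^*(Y)$ is the desired isomorphism.
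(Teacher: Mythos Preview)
Your proposal is correct and follows essentially the same route as the paper's proof. The paper's argument is terser: it simply invokes \cref{Lemma fixed points of the mixed action} (whose part (3) says that for a free weight the action \eqref{Equation action on loops} has no non-constant fixed orbits at all) together with \cref{Prop vanishing of column in infinity spectral sequence}, whereas you spell out the equivalent observation that freeness means $\tfrac{a}{b}$ is not a non-generic slope, so there is no {\MB} manifold (and hence no column) at that period; the remaining discussion about collapse and identification of the edge map with $c^*$ is standard and matches the paper's implicit reasoning.
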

\begin{proof} 
By \cref{Lemma fixed points of the mixed action} and \cref{Prop vanishing of column in infinity spectral sequence}, only the zeroth column in the $E_1$-page is non-zero, and it is $E^{\infty}QH^*(Y)$.
Thus $E^{\infty}QH^*(Y)\cong E^{\infty}SH^*(Y)$.
\end{proof}

\begin{rmk}
The proof also implies that in the $E_1$-page of the spectral sequence for $E^-SH^*(Y)$,
each column (except the zeroth, $E^-QH^*(Y)$) is a finite dimensional $\k$-vector space.\footnote{
The $u$-torsion $\k[u]$-module $H^*_{S^1}(B_{p,\beta})$ from the proof is a finite dimensional $\k$-vector space.
Indeed, a Morse-theoretic chain model for $H^*_{S^1}(B_{p,\beta})$ is the free $\k[u]$-module generated by critical points of a Morse function. There are finitely many critical points as $B_{p,\beta}$ is compact. Thus, $H^*_{S^1}(B_{p,\beta})$ is a finitely generated $\k[u]$-module. 
Consider a finite generating set.
As the module is $u$-torsion, multiplying generators by powers of $u$ yields a finite $\k$-linear spanning set.}
However, the $u$-action on the $E_1$-page does not usually agree with the $u$-action on the limit of the spectral sequence.
\end{rmk}

\begin{rmk}[Non-free weights]\label{Remark non-free weight MB contributions}
The case $(a,b)=(0,0)$ is not interesting: this corresponds to applying $\cdot \otimes_{\k}E^{\karo}H^*(\mathrm{point})$ to the non-equivariant Floer cohomologies. The case $(a,0)$ for $a\neq 0$ is free.

Consider a non-free $(a,b)\neq (0,0)$, so $b\neq 0$. Let $p=a/b$. Then there are Morse-Bott manifolds $B_{p,\beta}$ of $1$-orbits, arising for slope $p$. They consist entirely of fixed points of the action \eqref{Equation action on loops}, by \cref{Lemma fixed points of the mixed action}.
By Borel's theorem, their $E^{\infty}$-equivariant cohomology is
$$H^*_{S^1}(B_{p,\beta}) \otimes_{\ku} \kuu\cong H^*(B_{p,\beta})\otimes_{\k} \kuu.$$
These contribute in the column for slope $p$ in the $E_1$-page for $E^{\infty}SH^*(Y)$. Just like in the non-equivariant case explained in \cite{RZ2}, one must actually consider the energy spectral sequence in that column, whose $E_1$-page is $\oplus_{\beta} H^*(B_{p,\beta})(\!(u)\!)$: some of these classes may kill each other due to local Floer trajectories not detected by Morse theory. The surviving classes make up the column for slope $p$ of the $E_1$-page for $E^{\infty}SH^*(Y)$. These unexpected classes could kill something in the zeroth column (on the relevant page of the spectral sequence admitting an edge differential pointing to the zeroth column); otherwise they survive to the limit of the spectral sequence. A priori, the $\kuu$-vector spaces $E^{\infty}SH^*(Y)$ and $E^{\infty}QH^*(Y)$ may therefore differ in dimension, but at most by $\sum_{\beta} \dim_{\k} H^*(B_{p,\beta})$.
\end{rmk}

\begin{thm}[Localisation theorem]\label{Lemma MBF in equiv case and localisation}\label{Theorem localisation theorem}
Let $(a,b)$ be a free weight. The following diagram of isomorphisms commutes, where the top row is the $u$-localisation of $E^-c^*$.
    $$
\begin{tikzcd}[column sep=0.6in]
E^-QH^*(Y)_u 
 \arrow[r, "(E^-c^*)_u","\cong"'] 
 \arrow[d, "\cong","u\otimes 1"']
 &
 E^{-} SH^*(Y)_u  
\arrow[d, "\cong"',"u\otimes 1"]
\\
E^{\infty}QH^*(Y)
\arrow[r, "E^{\infty}c^*","\cong"'] 
& E^{\infty} SH^*(Y)
\end{tikzcd}
$$
\end{thm}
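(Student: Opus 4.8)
The plan is to assemble the commuting square from two previously established facts: the weak localisation theorem (\cref{Lemma weak localisation}) and the localisation computation for the $\infty$-theory (\cref{Prop vanishing of spectral sequence for infinity page}). First I would recall that \cref{Lemma weak localisation} already gives that the $u$-localisation of $E^-c^*: E^-QH^*(Y)\to E^-SH^*(Y)$ is canonically identified with $E^{\infty}c^*: E^{\infty}QH^*(Y)\to E^{\infty}SH^*(Y)$; in other words, the two vertical maps in the diagram are precisely the canonical identifications $E^-QH^*(Y)_u\cong E^{\infty}QH^*(Y)$ (from \cref{Prop equivariant formality for QH}, or equivalently from the flatness of $u$-localisation applied to the chain complex) and $E^-SH^*(Y)_u\cong E^{\infty}SH^*(Y)$ (the first statement of \cref{Lemma weak localisation}, using that $E^+SH^*(Y)$ and all the $E^+HF^*(H_\lambda)$ are $u$-torsion so vanish after $u$-localisation). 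So the square commutes essentially by construction: the top arrow is \emph{defined} to be the $u$-localisation of the bottom-left entry, and the vertical arrows are the canonical identifications witnessing $\bullet_u\cong E^{\infty}\bullet$, so commutativity is just naturality of $u$-localisation.

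The one genuine input needed is that the arrows are isomorphisms, not merely that the square commutes. The bottom arrow $E^{\infty}c^*: E^{\infty}QH^*(Y)\to E^{\infty}SH^*(Y)$ is an isomorphism by \cref{Prop vanishing of spectral sequence for infinity page}, which is exactly where the free-weight hypothesis on $(a,b)$ enters: \cref{Lemma fixed points of the mixed action}(3) guarantees that the mixed $S^1$-action \eqref{Equation action on loops} has no fixed points on any Morse--Bott manifold $B_{p,\beta}$ of non-constant $1$-orbits, hence by Borel localisation every higher column of the $E_1$-page of the spectral sequence for $E^{\infty}SH^*(Y)$ vanishes, leaving only the zeroth column $E^{\infty}QH^*(Y)$. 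Then the top arrow is an isomorphism because it is the $u$-localisation of $E^-c^*$ and it is identified with the bottom arrow; and the vertical arrows are isomorphisms as recalled above. Chasing the square once (all four arrows now invertible) completes the statement.

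The main obstacle, insofar as there is one, is bookkeeping rather than mathematics: one must be careful that the canonical identification $E^-SH^*(Y)_u\cong E^{\infty}SH^*(Y)$ commutes with the maps from the quantum side, i.e.\ that the identification used on the left is the \emph{same} as the one coming from the chain-level localisation $C_{\mathrm{eq}}\to (C_{\mathrm{eq}})_u$, so that the square is honestly the localisation of $E^-c^*$ and not just an abstract square of isomorphisms. This is handled by \cref{Lemma weak localisation}, whose proof localises the right-hand column of \cref{Lemma SES for various versions of equiv coh} termwise and uses that localisation commutes with the direct limit over $\lambda\to\infty$ (a left adjoint preserves colimits). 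Once that naturality is in hand there is nothing left to do; I would state the theorem as an immediate corollary of \cref{Lemma weak localisation} and \cref{Prop vanishing of spectral sequence for infinity page}.

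\begin{proof}
By \cref{Lemma weak localisation}, the $u$-localisation of $E^-c^*: E^-QH^*(Y)\to E^-SH^*(Y)$ is canonically identified with $E^{\infty}c^*: E^{\infty}QH^*(Y)\to E^{\infty}SH^*(Y)$; under this identification the left vertical map is the canonical isomorphism $E^-QH^*(Y)_u\cong E^{\infty}QH^*(Y)$ (flatness of $u$-localisation applied to $C_{\mathrm{eq}}$, cf.\ \cref{Prop equivariant formality for QH}) and the right vertical map is the canonical isomorphism $E^-SH^*(Y)_u\cong E^{\infty}SH^*(Y)$. Both vertical maps are thus isomorphisms, and the square commutes by naturality of $u$-localisation. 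Since $(a,b)$ is a free weight, \cref{Prop vanishing of spectral sequence for infinity page} shows the bottom arrow $E^{\infty}c^*$ is an isomorphism; hence, by the identification above, the top arrow is an isomorphism as well. All four arrows are therefore isomorphisms, as claimed.
\end{proof}
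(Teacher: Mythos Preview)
Your proof is correct and follows essentially the same approach as the paper: both assemble the square from \cref{Lemma weak localisation} (right vertical), \cref{Prop equivariant formality for QH} (left vertical), and \cref{Prop vanishing of spectral sequence for infinity page} (bottom horizontal), with commutativity coming from the natural identification of $u$-localisation with the $\infty$-model. The paper cites \cref{Cor SES over LES} for the commutativity of the square, while you attribute it to naturality of $u$-localisation via \cref{Lemma weak localisation}; these amount to the same thing.
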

\begin{proof}
The commutative diagram is the first square in \cref{Cor SES over LES} in the notation \eqref{Equation explicit notation equivariant quantum and Floer}.
The left-vertical isomorphism follows by \cref{Prop equivariant formality for QH}.
The right-vertical isomorphism follows by \cref{Lemma weak localisation}. 
The lower-horizontal map is an isomorphism by \cref{Prop vanishing of spectral sequence for infinity page}.
The claim follows.
\end{proof}

\subsection{Injectivity, and periodicity}

We emphasize that we do not assume $c_1(Y)=0$. Even in the $c_1(Y)=0$ case (where much is known about the direct limits by \cref{cor vanishing of E+ for SH}), we will get a torsion-freeness result that is stronger than \cref{Corollary torsion freeness of E- and Einfty}.

\begin{thm}[Injectivity theorem]\label{Corollary E minus c star maps are injective}
Let $(a,b)$ be a free weight. The following maps are injective,
$$
c^*_{\lambda}: E^-QH^*(Y) \to E^-HF^*(H_{\lambda}) \quad\textrm{and}\quad
c^*: E^-QH^*(Y) \to E^-SH^*(Y).
$$
\end{thm}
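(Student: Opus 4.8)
The plan is to deduce the injectivity statement directly from the Localisation Theorem (\cref{Lemma MBF in equiv case and localisation}) together with the abstract Injectivity Property (\cref{Proposition when iso after localise get finite invariants}), which is exactly the mechanism anticipated by \cref{Prop injectivity theorem intro}. Concretely, set $V^- = E^-QH^*(Y)$, $W_\lambda^- = E^-HF^*(H_\lambda)$, and $W^- = E^-SH^*(Y) = \varinjlim_\lambda W_\lambda^-$, with the continuation maps $c_\lambda^* : V^- \to W_\lambda^-$ compatible under the directed system and $c^* = \varinjlim c_\lambda^*$. By equivariant formality (\cref{Prop equivariant formality for QH}), $V^-$ is a free $\ku$-module of finite rank, so the hypotheses of \cref{Subsection Filtrations arising from $u$-local isomorphisms} are met.

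The key step is to observe that the $u$-localisation of $c^*$ is an isomorphism: this is precisely the top row of the commutative square in \cref{Lemma MBF in equiv case and localisation}, i.e.\ $(E^-c^*)_u : E^-QH^*(Y)_u \xrightarrow{\cong} E^-SH^*(Y)_u$, valid because $(a,b)$ is a free weight. Now apply \cref{Proposition when iso after localise get finite invariants} with $V = V^-$, $c = c^*$, and $W = W^-$: since $V^-$ is free, the vertical localisation map $V^- \hookrightarrow V^-_u$ is injective, and the bottom horizontal composite in the diagram of that proposition is an isomorphism by the Localisation Theorem, forcing the top horizontal composite — which factors through each $c_\lambda^*$ — to be injective. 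This yields injectivity of $c^* : E^-QH^*(Y) \to E^-SH^*(Y)$ and simultaneously of every $c_\lambda^* : E^-QH^*(Y) \to E^-HF^*(H_\lambda)$, since each $c_\lambda^*$ is a factor of the injective map $c^*$ through $W_\lambda^- \to W^-$ (more precisely, $c_\lambda^*$ composed with $W_\lambda^- \to W^-$ equals $c^*$, so $\ker c_\lambda^* \subseteq \ker c^* = 0$).

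There is essentially no remaining obstacle: all the substantive content has already been established, namely equivariant formality (freeness of $V^-$) and the Localisation Theorem, the latter being where the free-weight hypothesis and the Borel localisation argument for the Morse--Bott manifolds $B_{p,\beta}$ do their work. The only thing to be careful about is the logical packaging: one should note that the statement "$c_\lambda^*$ is a factor of $c^*$" is exactly the compatibility of the directed system, and that injectivity of a composite implies injectivity of the first map, so no freeness of $W_\lambda^-$ is needed. It is worth remarking explicitly that this gives a sharper conclusion than \cref{Corollary torsion freeness of E- and Einfty}, since it holds for all generic $\lambda$ (not just large $\lambda$) and does not require $c_1(Y) = 0$.
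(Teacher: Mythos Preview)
Your proof is correct and follows exactly the same approach as the paper: invoke the Localisation Theorem (\cref{Lemma MBF in equiv case and localisation}) to see that $(E^-c^*)_u$ is an isomorphism, then apply the abstract Injectivity Property (\cref{Proposition when iso after localise get finite invariants}) with $V = E^-QH^*(Y)$ free by equivariant formality. The paper's proof is a single sentence citing precisely these two results; your write-up simply unpacks the mechanism in more detail.
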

\begin{proof}
This follows by \cref{Proposition when iso after localise get finite invariants}, using \cref{Lemma MBF in equiv case and localisation}.
\end{proof}

\begin{rmk}
For other weights,
\cref{Lemma MBF in equiv case and localisation}/\cref{Corollary E minus c star maps are injective}
hold if  $E^{\infty}c^*:E^{\infty}QH^*(Y)\to E^{\infty}SH^*(Y)$ is an isomorphism.
Indeed $\ker E^{\infty}c^*\neq 0 \Rightarrow \ker (E^-c^*)_u\neq 0\Rightarrow \ker c^*\neq 0$ in \cref{Corollary E minus c star maps are injective}.
\end{rmk}

\begin{prop}[Periodicity property]\label{Prop Periodicity property}
Keeping track of the $(a,b)$-weight, for any model $\karo\in \{-,\infty,+\}$, abbreviating by $\mu$ the Maslov index of the $S^1$-action $\Fi$, we have an isomorphism
$$
\mathcal{S}:E_{(a,b)}^{\karo} HF^*(H_{\lambda+1})\cong E_{(a-b,b)}^{\karo} HF^*(H_{\lambda})[2\mu].
$$
These isomorphisms are compatible with continuation maps, see \eqref{Equation compatibility of seidel iso with continuation}, and with the filtrations.
\end{prop}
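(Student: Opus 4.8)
The plan is to construct the isomorphism $\mathcal{S}$ at the chain level as a relabelling of Hamiltonian $1$-orbits, combined with a corresponding relabelling of Floer solutions, and then check that this chain map intertwines differentials, continuation maps, and the three equivariant structures. First I would recall the action functional and Floer data conventions from \cref{Subsection precise equivariant action}: the key observation is that the substitution $x(t)\mapsto (\Fi^*x)(t):=\Fi_{-t}(x(t))$ converts a $1$-orbit of $H_{\lambda+1}$ (of slope $\lambda+1$ at infinity, under the weight $(a,b)$) into a $1$-orbit of $H_{\lambda}$ (of slope $\lambda$, under the weight $(a-b,b)$). Indeed, at infinity $H_{\lambda+1}=(\lambda+1)H$, and conjugating the flow by $\Fi_{-t}$ subtracts exactly one unit of the moment-map slope, turning it into $\lambda H$; meanwhile, tracing through \eqref{Equation action on loops}, pulling back by $\Fi_{-t}$ turns the $(a,b)$-action into the $(a-b,b)$-action on loops. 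The grading shift by $2\mu$ (twice the Maslov index of $\Fi$) accounts for the Conley--Zehnder index change caused by the capping/homotopy twist $\overline{x}\mapsto \Fi^*\overline{x}$; this is the usual Seidel shift \cite{Sei97,R14,R16}.

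Second, I would promote this to the Borel model: the substitution extends to pairs $(w,x)\in S^{\infty}\times\mathcal{L}Y$ compatibly with the identification \eqref{Equation Borel model identification}, because the $S^1$-action on $S^{\infty}$ is untouched and the loop-side action changes from $(a,b)$ to $(a-b,b)$ exactly as above. The equivariant Floer data $H^{\mathrm{eq}}_{w,t}$, $J^{\mathrm{eq}}_{w,t}$ transform under the rule \eqref{Equation equivariant Hamiltonians}, and one checks that the pushforward of $(v,u)$-solution pairs (Morse trajectory in $S^{\infty}$, Floer cylinder in $Y$) by $\Fi_{-t}$ gives a bijection between the moduli spaces defining the $\delta_i$ for $(H_{\lambda+1},(a,b))$ and those for $(H_{\lambda},(a-b,b))$, preserving the count modulo the induced $S^1$-action and the Morse-index bookkeeping that records the power of $u$. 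Hence $\mathcal{S}$ is a $\ku$-linear chain isomorphism on $C^*$, and it descends to all three models $E^-,E^{\infty},E^+$ of \cref{Subsection the Wminus plus and infinity models} since these are built from $C^*$ by tensoring with $\ku$, $\kuu$, $\mathbb{F}$ respectively, and $\mathcal{S}$ is $\ku$-linear. Compatibility with continuation maps (the relation referenced as \eqref{Equation compatibility of seidel iso with continuation}) follows because a homotopy from $H_{\lambda+1}$ to $H_{\lambda'+1}$ is carried by $\Fi_{-t}$ to a homotopy from $H_{\lambda}$ to $H_{\lambda'}$, so the square of Seidel maps and continuation maps commutes at chain level up to chain homotopy, hence on cohomology. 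Compatibility with the filtrations $\FF_{j,(a,b)}^{\karo,\lambda}$ is then immediate from \cref{Lemma valuation on free mod} / \cref{Lemma functoriality Ffiltration}: $\mathcal{S}$ is a $\ku$-module isomorphism, so it preserves the canonical $u$-valuation on each $E^-HF^*$, $E^{\infty}HF^*$, $(E^+HF^*)_{\mathbb{F}}$, and it commutes with the $c^*_{\lambda}$ maps by the continuation-compatibility just established, hence $\mathcal{S}(\FF_{j,(a,b)}^{\karo,\lambda}) = \FF_{j,(a-b,b)}^{\karo,\lambda-1}$.

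The main obstacle I anticipate is not the algebraic bookkeeping but the analytic verification that the $\Fi_{-t}$-substitution genuinely identifies the relevant moduli spaces in the \emph{non-compact} symplectic $\C^*$-manifold setting: one must confirm that the transformed Floer data still satisfies the admissibility/properness conditions at infinity required in \cite{RZ1,RZ2} (e.g.\ that $\mathcal{S}$ sends $\Fi$-admissible equivariant data of slope $\lambda+1$ to $\Fi$-admissible data of slope $\lambda$, respecting the $\Psi$-map structure at infinity), so that the $C^0$-estimates and Gromov compactness arguments underpinning the definition of $E^{\karo}HF^*$ carry over verbatim. Since the substitution is a $\C^*$-equivariant symplectomorphism near infinity (it is the time-$(-t)$ flow of $\Fi$), this should go through, but it is the point that genuinely uses the structure of symplectic $\C^*$-manifolds rather than formal algebra. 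A secondary subtlety is pinning down the Maslov/Conley--Zehnder shift $2\mu$ precisely and consistently with the chosen cappings (cf.\ the ``change of cappings'' discussion referenced earlier); I would handle this exactly as in \cite{R14,R16}, reducing to the computation of the Conley--Zehnder index of the linearised flow conjugated by $\Fi_{-t}$, which contributes $-2\mu$ (equivalently, a grading shift by $2\mu$ on cohomology under our cohomological conventions).
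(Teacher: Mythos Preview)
Your proposal is correct and follows essentially the same approach as the paper: the paper's proof of this Proposition is a one-line citation of \cref{Theorem Seidel iso} (in the Appendix) together with \cref{Cor Seidel isos preserve filtration}, and what you have sketched is precisely the content of that Appendix theorem specialised to $\sigma=\Fi$ --- the pull-back $x\mapsto\Fi^*x$ as a chain-level bijection of orbits and Floer trajectories, the weight shift $(a,b)\to(a-b,b)$ via \eqref{Equation action on loops}, the slope drop $H_{\lambda+1}\mapsto H_{\lambda+1}-H$, the Maslov/capping grading shift, and extension to the Borel model and all three $E^\karo$ --- followed by exactly the filtration argument via \cref{Lemma valuation on free mod} and \cref{Lemma functoriality Ffiltration} that the paper uses. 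The analytic concern you flag (admissibility of the transformed data at infinity in the symplectic $\C^*$-manifold setting) is handled in the paper by appeal to \cite{RZ1,RZ2} and the observation that $\Fi$ is part of the $\C^*$-action defining the structure, so $\Fi^*I=I$ and the $\Psi$-map conditions are preserved.
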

\begin{proof}
This is an immediate consequence of \cref{Theorem Seidel iso}, and \cref{Cor Seidel isos preserve filtration}.
\end{proof}

In \cref{Properties of the filtration polynomial} we will revisit the above isomorphisms in greater detail.

\begin{cor}[Periodicity property 2]\label{Prop Periodicity property 2}
For $k\in \N$, we have an isomorphism of $\ku$-modules
\begin{equation}\label{Equation from HFN to QH shifted}
 \mathcal{S}^{k}: E_{(a,b)}^{\karo}HF^*(H_{k^+}) \cong E_{(a-kb,b)}^{\karo}QH^*(Y)[2k\mu],
\end{equation}
such that the continuation map $\psi_k$ from slope $k^+$ to $(k+1)^+$, for weight $(a,b)$, becomes the map 
\begin{equation}\label{Equation defining rab maps}
 r_{a-kb,b}:=\mathcal{S}^{k+1} \circ \psi_k \circ \mathcal{S}^{-k} : E_{(a-kb,b)}^{\karo} QH^*(Y)[2k\mu]
\to
E_{(a-(k+1)b,b)}^{\karo} QH^*(Y)[2(k+1)\mu].
\end{equation}
This map satisfies $r_{a-kb,b}=\mathcal{S}\circ c_{1^+}^*$ where $c_{1^+}^*: E_{(a-kb,b)}^{\karo} QH^*(Y)\to  E_{(a-kb,b)}^{\karo}HF^*(H_{1^+})$. 
\\ When $b=0$, the map $r_{a-kb,b}=r_{a,0}=\mathcal{S}\circ \psi_0$ is independent of $k$.

The $u^0$-part of \eqref{Equation defining rab maps} is the non-equivariant rotation map $r: QH^*(Y)\to QH^{*}(Y)[2\mu]$ from Ritter \cite{R14}: it is
quantum multiplication by a class $Q_{\Fi}\in QH^{2\mu}(Y)$ described in \cite{RZ1} (cf.\;\cref{Remark QFi class}).
\end{cor}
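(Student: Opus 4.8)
The statement to prove is \cref{Prop Periodicity property 2}, which reshuffles \cref{Prop Periodicity property} (itself already granted) into a statement about rotation maps. The plan is to iterate the Seidel isomorphism and then carefully match up the continuation maps on both sides. First I would take $\mathcal{S}$ from \cref{Prop Periodicity property} applied $k$ times: each application reduces the slope by $1$ and shifts $a$ by $-b$ and the grading by $2\mu$, so
$$
\mathcal{S}^k : E_{(a,b)}^{\karo}HF^*(H_{k^+}) \xrightarrow{\cong} E_{(a-b,b)}^{\karo}HF^*(H_{(k-1)^+})[2\mu] \xrightarrow{\cong} \cdots \xrightarrow{\cong} E_{(a-kb,b)}^{\karo}HF^*(H_{0^+})[2k\mu],
$$
and the final group is $E_{(a-kb,b)}^{\karo}QH^*(Y)[2k\mu]$ by the identification of equivariant quantum cohomology with equivariant Floer cohomology at a small slope $0^+$ (used already in \cref{Subsection The maps from quantum to Floer cohomology}). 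A subtle point to check is that composing the $k$ Seidel isomorphisms requires $(a-jb,b)$ to remain a free weight (or at least that the Seidel isomorphism is defined) for $j=0,1,\ldots,k-1$; this follows from the closure property $(a,b)$ free $\Rightarrow (a-b,b)$ free recorded in \cref{Subsection the choice of S1 action and equiv floer theory}.

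Next I would establish \eqref{Equation defining rab maps}. The continuation map $\psi_k : E_{(a,b)}^{\karo}HF^*(H_{k^+}) \to E_{(a,b)}^{\karo}HF^*(H_{(k+1)^+})$ from slope $k^+$ to slope $(k+1)^+$ is a $\ku$-module homomorphism; conjugating it by $\mathcal{S}^{k+1}$ on the target and $\mathcal{S}^{-k}$ on the source gives a map $r_{a-kb,b}$ between the shifted equivariant quantum cohomologies as in the claim. The compatibility of Seidel isomorphisms with continuation maps, which is part of \cref{Prop Periodicity property} (referenced there as \eqref{Equation compatibility of seidel iso with continuation}), guarantees that this conjugated map does not depend on how we factor the slope increase; in particular, applying one Seidel isomorphism to the source reduces $\psi_k$ (viewed after $\mathcal{S}^k$) to a continuation map from slope $0^+$ to slope $1^+$, i.e.\ to $c_{1^+}^*$, composed with one more $\mathcal{S}$. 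This yields $r_{a-kb,b} = \mathcal{S}\circ c_{1^+}^*$ with $c_{1^+}^* : E_{(a-kb,b)}^{\karo}QH^*(Y)\to E_{(a-kb,b)}^{\karo}HF^*(H_{1^+})$. When $b=0$, the weight $(a-kb,b)=(a,0)$ is independent of $k$, so all these maps coincide with a single map $r_{a,0}=\mathcal{S}\circ\psi_0$.

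For the $u^0$-part, I would use that the equivariant Seidel isomorphism and equivariant continuation maps have well-behaved $u^0$-truncations: setting $u=0$ in the equivariant Floer complexes recovers the non-equivariant Floer complexes (as discussed around \eqref{Equation for the differential expanded} and in \cref{Lemma ev0 commutative diagram}), and the $u^0$-part of the equivariant Seidel isomorphism is the non-equivariant Seidel isomorphism sending a $1$-orbit $x$ to $\Fi^*x$. The $u^0$-part of $\psi_0$ is the non-equivariant continuation map from $QH^*(Y)$ to $HF^*(H_{1^+})$. Composing, the $u^0$-part of $r_{a-kb,b}$ is precisely the non-equivariant rotation map $r: QH^*(Y)\to QH^*(Y)[2\mu]$ of Ritter \cite{R14}, which by \cite{R14,R16} and the discussion in \cite{RZ1} equals quantum product by the class $Q_{\Fi}\in QH^{2\mu}(Y)$ (see \cref{Remark QFi class}). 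The main obstacle I anticipate is purely bookkeeping: tracking the interaction of the grading shifts $[2k\mu]$, the weight changes $a\mapsto a-jb$, and the slope decrements under iterated $\mathcal{S}$, and making sure the compatibility of $\mathcal{S}$ with continuation maps is invoked in exactly the form needed to collapse $\psi_k$ (conjugated) into $\mathcal{S}\circ c_{1^+}^*$; there is no deep new idea beyond iterating \cref{Prop Periodicity property} and taking $u^0$-parts.
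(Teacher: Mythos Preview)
Your proposal is correct and follows essentially the same route as the paper: iterate \cref{Prop Periodicity property} $k$ times to obtain \eqref{Equation from HFN to QH shifted}, then use the compatibility \eqref{Equation compatibility of seidel iso with continuation} to reduce the conjugated continuation map to $\mathcal{S}\circ c_{1^+}^*$, and finally take $u^0$-parts to recover the non-equivariant rotation $r$. One small remark: your ``subtle point'' about $(a-jb,b)$ remaining free is unnecessary, since the Seidel isomorphism in \cref{Prop Periodicity property} (via \cref{Theorem Seidel iso}) holds for arbitrary weights, not just free ones.
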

\begin{proof}
Apply $k$ times \cref{Prop Periodicity property}:
$
S_{\Fi}^k:E_{(a,b)}^{\karo}HF^*(H_{k^{+}})\cong E_{(a-kb,b)}^{\karo}HF^*(H_{0^+})[2k\mu],
$
then use the isomorphism $c^*_{0^+}:E^{\karo}_{(a-kb,b)}QH^*(Y)\to E^{\karo}_{(a-kb,b)}HF^*(H_{0^+})$ (we simplified the notation for $(c^*_{0^+})^{-1}\circ \mathcal{S}_{\Fi}^k$ in \eqref{Equation from HFN to QH shifted}).
Via \eqref{Equation from HFN to QH shifted}, the continuation map $\psi_k:E_{(a,b)}^{\karo}HF^*(H_{k^{+}})\to E_{(a,b)}^{\karo}HF^*(H_{(k+1)^{+}})$ becomes the map \eqref{Equation defining rab maps}.
The compatibility \eqref{Equation compatibility of seidel iso with continuation} implies $\mathcal{S}^{k}\circ \psi_k \circ \mathcal{S}^{-k} = c_{1^+}^*$. The second claim follows. The claim about the case $b=0$ is then immediate. The final claim follows by definition of $r$ \cite{R14,RZ1}, observing that the $u^0$-part of an equivariant chain map is the non-equivariant version of the map.
\end{proof}

\begin{rmk}\label{Remark QFi class}
We note $Q_{\Fi}\in QH^{2\mu}(Y)$, and it satisfies $Q_{\Fi^k}=Q_{\Fi}^k$.
If $c_1(Y)=0$, this class is nilpotent for degree reasons (see \cite{R14}), unlike the maps in \eqref{Equation sequence of maps that give direct lim} which are typically injective (\cref{Theorem injectivity theorem 2}).
We recall that $Q_{\Fi}$ was computed in a family of examples in \cite[Proposition 1.32]{RZ1}: for $Y$ K\"{a}hler with $c_1(Y)\in \R_{\geq 0}[\omega]$, assuming all non-zero weights of $\F_{\min}:=\min H$ are $1$ (equivalently, $\mu=\mathrm{codim}_{\C}\F_{\min}$), 
\begin{equation}\label{Equation QFi class}
Q_{\Fi}=\mathrm{PD}[\F_{\min}]+T^{>0}\textrm{-terms}\in QH^{2\mu}(Y).
\end{equation}
Here, $[\F_{\min}]$ is a locally finite cycle; its Poincar\'{e} dual $\mathrm{PD}[\F_{\min}]$ is the Euler class of the normal bundle of $\F_{\min}\subset Y$.
For example, $Q_{\Fi}\neq 0$ for all Conical Symplectic Resolutions of weight $1$  \cite[Ex.1.33]{RZ1}.
\end{rmk} 

\begin{thm}[Injectivity theorem 2]\label{Theorem injectivity theorem 2}
Recall $(a,b)$ is the weight. 
Let $k\in \N$, let $\lambda\in (0,\infty)$ be a generic slope with $k<\lambda$.
The following continuation maps are injective:\footnote{When $b=0$, $a\neq 0$, the weight $(a,0)$ is free, and all the maps in (1)--(3) are injective, for all $k,\lambda$. When $(a,b)=(0,0)$ no claim is being made: the equivariant theory becomes the non-equivariant theory up to tensoring with $\otimes_{\k} E^{\karo}H^*(\mathrm{point})$.}
\begin{enumerate}
    \item $c_{\lambda,k^+}^*: E^-HF^*(H_{k^+}) \hookrightarrow E^-HF^*(H_{\lambda})$ provided $\tfrac{a}{b}\notin (k,\lambda)$;

    \item $c^*_{\infty,k^+}: E^-HF^*(H_{k^+}) \hookrightarrow E^-SH^*(Y)$
provided $\tfrac{a}{b}\notin (k,\infty)$,

\item $c_{\lambda}^*: E^-QH^*(Y)
\to E^-HF^*(H_{\lambda})$
provided $\tfrac{a}{b}\notin (0,\lambda)$,

    \item if $(a,b)$ is a free weight, then all three maps above are injective.

\item if $E^-HF^*(H_{p^+})$ is torsion-free, then 
(1) and (2) also hold for $k$ replaced by $p\in [0,\infty]$. 
\end{enumerate}
\end{thm}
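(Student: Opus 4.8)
\textbf{Proof plan for Theorem \ref{Theorem injectivity theorem 2}.}
The plan is to reduce everything to a single geometric input: the Borel localisation argument of \cref{Prop vanishing of column in infinity spectral sequence}, which says that a column of the equivariant {\MBF} spectral sequence with $S^1$-period $p\neq \tfrac ab$ contributes nothing after $u$-localisation. First I would treat (3). By \cref{Prop vanishing of spectral sequence for infinity page}'s proof technique, the columns of the $E_1$-page for $E^-HF^*(H_\lambda)$ with periods $p\in(0,\lambda)$ all $u$-localise to zero whenever $\tfrac ab\notin(0,\lambda)$, so the $u$-localisation of the spectral sequence leaves only the zeroth column $E^{\infty}QH^*(Y)$. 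Hence $(c^*_\lambda)_u:E^{\infty}QH^*(Y)\to E^{\infty}HF^*(H_\lambda)$ is an isomorphism, and then \cref{Proposition when iso after localise get finite invariants} (applied with the single map $c^*_\lambda$, viewing $E^-QH^*(Y)$ as the free module $V$) gives injectivity of $c^*_\lambda$ itself. For (1), the same Borel argument shows that the continuation map $c^*_{\lambda,k^+}$ becomes an isomorphism after $u$-localisation provided the only columns ``added'' in passing from slope $k^+$ to $\lambda$ have periods in $(k,\lambda)$, all $\neq \tfrac ab$ by hypothesis; then injectivity of $c^*_{\lambda,k^+}$ follows from the same diagram chase as in \cref{Proposition when iso after localise get finite invariants}, using that $E^-HF^*(H_{k^+})$ is free (it is a shifted copy of $E^-QH^*(Y)$, as stated in the Lemma before \eqref{Equation continuation for integer slope k}). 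Statement (2) is the direct limit of (1) over $\lambda\to\infty$, using that localisation commutes with direct limits (Trick 2 in the proof of \cref{Prop vanishing of column in infinity spectral sequence}) and that a direct limit of injective maps of $\ku$-modules, all factoring through a fixed free module, stays injective.

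Next I would handle (4): if $(a,b)$ is free then $\tfrac ab$ is never an $S^1$-period at all (it is not even an $S^1$-period by \cref{Definition free weight} and \eqref{Equation bad slope}), so the interval conditions in (1)--(3) are vacuously satisfied, and additionally the footnote case $b=0$, $a\neq0$ is free. So (4) is immediate from (1)--(3). For (5) I would revisit the diagram chase in (1)/(2): the only place freeness of the source was used in \cref{Proposition when iso after localise get finite invariants} is to ensure the vertical map $\mathrm{id}\otimes1:\text{source}\hookrightarrow(\text{source})_u$ is injective. Thus if $E^-HF^*(H_{p^+})$ happens to be torsion-free (hence free, being finitely generated over the PID $\ku$), the very same argument applies verbatim with $k$ replaced by $p$: the map $c^*_{\lambda,p^+}$ (resp.\ $c^*_{\infty,p^+}$) $u$-localises to an isomorphism when the relevant periods avoid $\tfrac ab$, and injectivity descends from the injectivity of the $u$-localisation together with the injection $E^-HF^*(H_{p^+})\hookrightarrow E^-HF^*(H_{p^+})_u$. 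One should note here, as in the Remark after \cref{Corollary E minus c star maps are injective}, that many examples have $E^-HF^*(H_{p^+})$ torsion-free because $H^*(Y)$ is concentrated in even degrees.

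The main obstacle I anticipate is \emph{bookkeeping of which columns appear in the relevant slope window} and confirming that each such column genuinely $u$-localises to zero: one must know that between slope $k$ (or $0$) and slope $\lambda$, every Morse-Bott manifold $B_{p,\beta}$ of non-constant $1$-orbits that enters the complex has period $p$ strictly inside the open interval, so that the hypothesis $\tfrac ab\notin(k,\lambda)$ really does rule out $p=\tfrac ab$; this uses the structure of the cofinal family of Hamiltonians $H_\lambda$ from \cite{RZ1,RZ2} and the fact (established in those papers) that the slope $p$ of a Morse-Bott family equals $c'(H)$ evaluated on it, hence $p<\lambda$. A secondary subtlety is the finite-page collapse needed to apply ``Trick 1'' (flatness of $u$-localisation) only finitely many times — exactly as in \cref{Prop vanishing of column in infinity spectral sequence}, this rests on finite generation of $H^*_{S^1}(B_{p,\beta})$ over $\ku$ and on the Gromov-compactness input from \cite[Prop.B.6]{RZ2}; I would simply cite that argument rather than reproduce it. Everything else is the formal diagram chase of \cref{Proposition when iso after localise get finite invariants}, repeated with the source module taken to be whichever free $\ku$-module ($E^-QH^*(Y)$, $E^-HF^*(H_{k^+})$, or $E^-HF^*(H_{p^+})$) is relevant.
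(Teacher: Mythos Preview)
Your proposal is correct and follows essentially the same route as the paper: apply \cref{Proposition when iso after localise get finite invariants} with the appropriate free source module, using \cref{Prop vanishing of column in infinity spectral sequence} to show the $u$-localised continuation map is an isomorphism because all columns in the relevant slope window have period $\neq \tfrac{a}{b}$. One small imprecision: in your treatment of (4) you say the interval conditions are ``vacuously satisfied'' for free weights, but that is not literally true (e.g.\ $\tfrac{a}{b}$ can lie in $(0,\lambda)$ even for a free weight); what is true, and what you clearly intend, is that for free $(a,b)$ the ratio $\tfrac{a}{b}$ is never an $S^1$-period, so no column with that period ever appears and the Borel-localisation argument kills \emph{all} higher columns regardless of the interval condition---this is exactly how the paper phrases it, citing \cref{Prop vanishing of spectral sequence for infinity page}.
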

\begin{proof}
  We prove this by applying \cref{Proposition when iso after localise get finite invariants} to $V=E^-HF^*(H_{k^+})$ (which is free by \cref{Prop Periodicity property 2}) and $W_{\lambda}=E^-HF^*(H_{\lambda})$. Then $V_u\cong E^{\infty}HF^*(H_{k})$ (\cref{Lemma ulocalising W- is Winfty} and \eqref{defn F C^+ and W^+}). The $E_1$-page for $E^{\infty}HF^*(H_{k^+})$ is obtained from the $E_1$-page for $E^{\infty}SH^*(Y)$ after discarding columns involving $S^1$-periods $p>k^+$.
  A similar observation applies to  $E^{\infty}HF^*(H_{\lambda})$ and $p>\lambda$.
  Also, $c_{\lambda,k^+}$ corresponds to the inclusion of columns on $E_1$-pages.
 \cref{Prop vanishing of column in infinity spectral sequence} implies that all columns with period $p\neq \tfrac{a}{b}$ will vanish.
The assumption $\tfrac{a}{b}\notin (k,\lambda)$ ensures that those inclusions of $E_1$-pages become identity maps after $u$-localisation (and edge differentials on $E_1$ and higher pages will agree).
 Therefore the $u$-localisation of $c_{\lambda,k^+}$ is an isomorphism.
 Thus \cref{Proposition when iso after localise get finite invariants} applies, and claim (1) follows. 
 For claim (2) we apply the same argument, but we use $W=E^-SH^*(Y)$.
 For claim (3) we use $V=E^-QH^*(Y)$ (the zeroth column of the $E_1$-page) and $W=E^-HF^*(H_{\lambda})$.
 When $(a,b)$ is a free weight, all higher columns vanish under $u$-localisation by \cref{Prop vanishing of spectral sequence for infinity page}, so no condition on $\tfrac{a}{b}$ is needed.
 Claim (5) follows by the same proof as (1),(2) since it only used that $E^-HF^*(H_{k^+})$ was free.
  \end{proof}

\subsection{Torsion-freeness}

\begin{thm}[Torsion-freeness 1]\label{Theorem torsion freeness of Eminus HF} 
 $E^-HF^*(H_{k^+})$ is torsion-free for any $k\in \N$, and we have a SES:
$$
0 \longrightarrow E^-HF^*(H_{k^+})[-2] \stackrel{u\otimes 1}{\longrightarrow} E^{\infty}HF^*(H_{k^+}) \longrightarrow 
E^+HF^*(H_{k^+}) \longrightarrow 0.
$$
\end{thm}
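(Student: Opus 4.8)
The plan is to deduce both claims from the general machinery of Section~\ref{Subsection the long and short exact sequences} once we know that $E^-HF^*(H_{k^+})$ is a free $\ku$-module. By \cref{Prop Periodicity property 2}, the Seidel isomorphism $\mathcal{S}^k$ identifies $E^-HF^*(H_{k^+})$ with $E^-_{(a-kb,b)}QH^*(Y)[2k\mu]$ as a $\ku$-module, and by \cref{Prop equivariant formality for QH} (equivariant formality) the latter is $H^*(Y)\otimes_{\k}\ku$ up to grading shift, hence a free $\ku$-module of finite rank. In particular it is torsion-free, which is the first assertion. This is the only place where a genuine input is needed; everything else is formal.

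Given freeness, the short exact sequence is immediate: apply \cref{Lemma LES for W modules} with $C^*=C^*_{k^+}$ the equivariant Floer cochain complex for $H_{k^+}$. The long exact sequence \eqref{Equation LES on W modules} has connecting maps $W^+\to W^-[-1]$ which, by the decomposition \eqref{Equation LES on W modules 2}, are detected exactly by the torsion submodule $T\subset W^-$; since $W^-=E^-HF^*(H_{k^+})$ is free, $T=0$, so the connecting maps vanish and the long exact sequence breaks into the short exact sequence
$$
0\longrightarrow E^-HF^*(H_{k^+})[-2]\stackrel{u\otimes 1}{\longrightarrow} E^{\infty}HF^*(H_{k^+})\longrightarrow E^+HF^*(H_{k^+})\longrightarrow 0,
$$
using the identifications \eqref{defn F C^+ and W^+ for Floer case} of $W^{\infty}_{k^+}$ and $W^+_{k^+}$ with the $u$-localisation and the $\mathbb{F}$-reduction of the same complex.

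There is nothing that I expect to be a serious obstacle here: the argument is a direct citation of \cref{Prop Periodicity property 2}, \cref{Prop equivariant formality for QH}, and \cref{Lemma LES for W modules}. The one point worth spelling out carefully is that the Seidel isomorphism $\mathcal{S}^k$ is an isomorphism of $\ku$-modules (not merely of graded $\k$-vector spaces), so that freeness genuinely transfers; this is part of the content of \cref{Prop Periodicity property} and \cref{Theorem Seidel iso}, so it may be invoked. A secondary remark is that one should note the grading shift $[-2]$ is compatible with the convention $|u|=2$ so that $u\otimes 1$ is grading-preserving, exactly as in \cref{Subsection the long and short exact sequences}; this is purely bookkeeping.
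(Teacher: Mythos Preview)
Your proposal is correct and matches the paper's proof essentially verbatim: the paper also cites \cref{Prop Periodicity property 2} combined with \cref{Prop equivariant formality for QH} for freeness, and then invokes the LES in \cref{Lemma LES for W modules} to obtain the SES. Your additional remarks about the $\ku$-module nature of $\mathcal{S}^k$ and the grading conventions are accurate and simply make explicit what the paper leaves implicit.
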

\begin{proof}
The first claim follows from \cref{Prop Periodicity property 2} combined with \cref{Prop equivariant formality for QH}.
The SES follows from the LES in \cref{Lemma LES for W modules}, using that $E^-HF^*(H_{k^+})$ is free.
\end{proof}

  \begin{rmk}
Note the above result is only for a ``full rotation'' slope $k\in \N$. We have not been able to exclude the existence of torsion in $E^-HF^*(H_{p^+})$ for $S^1$-periods $0<p<1$. By \cref{Prop Periodicity property}, if $E^-HF^*(H_{p^+})$ had torsion, then it would repeat in $E^-HF^*(H_{(k+p)^+})$ for all $k\in \N$, and the torsion would map to zero via the continuation $E^-HF^*(H_{(k+p)^+})\to E^-HF^*(H_{(k+1+p)^+})$ since it factors through the torsion-free $HF^*(H_{(k+1)^+})$.
In particular, this torsion lies in $\ker c_{\infty,p^+}^*$.
\end{rmk}

\begin{cor}\label{Corollary torsion freeness 1 free case}
Let $(a,b)$ be a free weight, and $k\in \N$. There is a short exact sequence 
$$
0 \longrightarrow E^-HF^*(H_{k^+})[-2] \longrightarrow E^{\infty}QH^*(Y) \longrightarrow 
E^+HF^*(H_{k^+}) \longrightarrow 0.
$$
So $E^-HF^*(H_{k^+})[-2]$ is a free submodule of $E^{\infty}QH^*(Y)$ (whereas  
$E^+HF^*(H_{k^+})$ is always $u$-torsion).

$E^{+}c^*_{k^+}: E^+QH^*(Y)\to E^+HF^*(H_{k^+})$ is surjective and induces an isomorphism
\begin{equation}\label{Equation description of EplusHF when free}
\begin{split}
& E^+HF^*(H_{k^+})\cong E^+QH^*(Y)/\EE^k_1, \;\textrm{ where} \\
& \EE^k_1:=\ker E^+c^*_{k^+} \cong \mathrm{coker}\left(E^-c^*_{k^+}:E^-QH^*(Y)\hookrightarrow E^-HF^*(H_{k^+})\right)[2].
\end{split}
\end{equation}
\end{cor}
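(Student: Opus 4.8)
\textbf{Proof plan for \cref{Corollary torsion freeness 1 free case}.}
The plan is to combine three ingredients already established: equivariant formality (\cref{Prop equivariant formality for QH}), the periodicity isomorphism $\mathcal{S}^k$ (\cref{Prop Periodicity property 2}), which shows $E^-HF^*(H_{k^+})$ is free, and the localisation theorem (\cref{Lemma MBF in equiv case and localisation}), which for free weights gives $E^{\infty}c^*:E^{\infty}QH^*(Y)\xrightarrow{\cong} E^{\infty}SH^*(Y)$. First I would recall from \cref{Theorem torsion freeness of Eminus HF} the short exact sequence
$$
0 \longrightarrow E^-HF^*(H_{k^+})[-2] \xrightarrow{u\otimes 1} E^{\infty}HF^*(H_{k^+}) \longrightarrow E^+HF^*(H_{k^+}) \longrightarrow 0,
$$
valid because $E^-HF^*(H_{k^+})$ is free. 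Then I would identify the middle term: by \cref{Lemma ulocalising W- is Winfty}, $E^{\infty}HF^*(H_{k^+}) = (E^-HF^*(H_{k^+}))_u$, and by \cref{Theorem injectivity theorem 2}(1)/(4) (free weight case) the continuation maps $E^{\infty}HF^*(H_{k^+})\to E^{\infty}HF^*(H_{\lambda})\to E^{\infty}SH^*(Y)$ are all isomorphisms after $u$-localisation — equivalently, by \cref{Prop vanishing of column in infinity spectral sequence} and \cref{Prop vanishing of spectral sequence for infinity page}, the higher columns of the $E_1$-page vanish, so no new generators appear past slope $k$. Hence $E^{\infty}HF^*(H_{k^+})\cong E^{\infty}SH^*(Y)\cong E^{\infty}QH^*(Y)$ canonically, and substituting this into the SES gives the first displayed exact sequence, with $E^-HF^*(H_{k^+})[-2]$ a free submodule of $E^{\infty}QH^*(Y)$ and $E^+HF^*(H_{k^+})$ its ($u$-torsion) cokernel.

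For the second half, I would apply the commutative diagram from \cref{Cor SES over LES}, instantiated via \eqref{Equation explicit notation equivariant quantum and Floer} with $c_{k^+}:E^{\karo}QH^*(Y)\to E^{\karo}HF^*(H_{k^+})$: this gives a map of short exact sequences
$$
\begin{tikzcd}[column sep=0.3in,row sep=0.2in]
0 \arrow[r] & E^-QH^*(Y)[-2] \arrow[r,"u\otimes 1"] \arrow[d,"c_{k^+}"] & E^{\infty}QH^*(Y) \arrow[r] \arrow[d,"(c_{k^+})_u"] & E^+QH^*(Y) \arrow[r] \arrow[d,"{[(c_{k^+})_u]}"] & 0 \\
0 \arrow[r] & E^-HF^*(H_{k^+})[-2] \arrow[r,"u\otimes 1"] & E^{\infty}HF^*(H_{k^+}) \arrow[r] & E^+HF^*(H_{k^+}) \arrow[r] & 0.
\end{tikzcd}
$$
The middle vertical arrow is an isomorphism by what we just proved (it is the composite $E^{\infty}QH^*(Y)\xrightarrow{\cong}E^{\infty}SH^*(Y)$ and $E^{\infty}HF^*(H_{k^+})\xrightarrow{\cong}E^{\infty}SH^*(Y)$). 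The left vertical arrow $E^-c^*_{k^+}$ is injective by \cref{Theorem injectivity theorem 2}(3)/(4). Now a diagram chase (the snake lemma, or directly): since the middle map is onto and the left SES is exact, $E^+c^*_{k^+}$ is surjective; and applying the snake lemma to this map of SES's (with the middle term having zero kernel and cokernel) identifies $\ker E^+c^*_{k^+}$ with $\mathrm{coker}(E^-c^*_{k^+})[2]$ and shows $\mathrm{coker}(E^+c^*_{k^+})=0$. This yields $E^+HF^*(H_{k^+})\cong E^+QH^*(Y)/\EE^k_1$ with $\EE^k_1 = \ker E^+c^*_{k^+}\cong \mathrm{coker}(E^-c^*_{k^+})[2]$, as claimed.

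The main obstacle — really the only nontrivial point, since everything else is diagram chasing — is justifying that $E^{\infty}HF^*(H_{k^+})\cong E^{\infty}QH^*(Y)$ canonically and compatibly with the continuation maps, i.e.\ that the higher-slope columns of the spectral sequence for $E^{\infty}SH^*(Y)$ contribute nothing for a free weight. This is exactly the content of \cref{Prop vanishing of spectral sequence for infinity page} (via Borel localisation applied to the Morse-Bott manifolds $B_{p,\beta}$, which have no fixed points of the mixed action when $(a,b)$ is free, by \cref{Lemma fixed points of the mixed action}), together with the observation in the proof of \cref{Theorem injectivity theorem 2} that truncating the spectral sequence at slope $k$ does not change the $u$-localised $E_1$-page. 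I would cite these directly rather than reprove them. One should also take care that the grading shifts $[-2]$ and $[2]$ are tracked correctly through the connecting maps; the shift conventions fixed at the start of \cref{Subsection the long and short exact sequences} make $u\otimes 1$ grading-preserving, which is what makes the $[2]$ appear in the identification of $\EE^k_1$ with $\mathrm{coker}(E^-c^*_{k^+})$.
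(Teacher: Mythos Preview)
Your proposal is correct and follows essentially the same approach as the paper: both deduce the SES from \cref{Theorem torsion freeness of Eminus HF} together with the identification $E^{\infty}HF^*(H_{k^+})\cong E^{\infty}QH^*(Y)$ coming from \cref{Prop vanishing of spectral sequence for infinity page}, and both use injectivity of $E^-c^*_{k^+}$ plus the isomorphism of the middle vertical arrow for the second half. The only cosmetic difference is that you invoke the snake lemma on the map of SES's from \cref{Cor SES over LES}, while the paper does the equivalent diagram chase by hand (factorising $q:E^-QH^*(Y)[-2]\hookrightarrow E^{\infty}QH^*(Y)$ through $E^-HF^*(H_{k^+})[-2]$ and quotienting).
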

\begin{proof}
The first claim follows from \cref{Theorem torsion freeness of Eminus HF}, using \cref{Prop vanishing of spectral sequence for infinity page}.
The surjectivity of $E^+c^*$ follows by factorising the surjection $E^{\infty}QH^*(Y)\cong  E^{\infty}HF^*(H_{k^+}) \to E^+HF^*(H_{k^+})$ through $E^+QH^*(Y)$.
We can identify $E^+QH^*(Y)$ with the cokernel of $q: E^-QH^*(Y)[-2] \stackrel{u\otimes 1}{\hookrightarrow}  E^{\infty}QH^*(Y)$, which can be factorised as a composite of injections: 
$$q:E^-QH^*(Y)[-2]\stackrel{\;\;E^-c^*_{k^+}}{\hookrightarrow} E^-HF^*(H_{k^+})[-2] \stackrel{u\otimes 1}{\hookrightarrow} E^{\infty}HF^*(H_{k^+}) \cong E^{\infty}QH^*(Y).$$
Quotienting this sequence of maps by $E^-QH^*(Y)$ (and its injective images), yields an injection
$E^-HF^*(H_{k^+})/E^-QH^*(Y) \hookrightarrow \mathrm{coker}\,q[2] \cong E^+QH^*(Y)[2]$. By the SES in the first claim, the image of that map is precisely $\ker E^+c^*_{k^+}$.
\end{proof}

\begin{thm}[Torsion-freeness 2]\label{Theorem torsion freeness 2}
     In \cref{Lemma SES for various versions of equiv coh}, 
     $$\mathrm{Torsion}(E^-SH^*(Y))=0 \quad \textrm{ and }\quad E^+SH^*(Y)=E^+_{\mathbb{F}\mathrm{lim}}SH^*(Y)=\varinjlim E^+HF^*(H_{\lambda})_{\mathbb{F}},$$ and the LES from
     \cref{Cor LES for ESH abstractly} becomes a short exact sequence:
$$
0 \longrightarrow E^-SH^*(Y)[-2] \stackrel{u\otimes 1}{\longrightarrow} E^{\infty}SH^*(Y) \longrightarrow 
E^+SH^*(Y) \longrightarrow 0.
$$
where $u\otimes 1$ is $u$-localisation followed by $u$-multiplication.
\end{thm}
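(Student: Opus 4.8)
\textbf{Proof proposal for Theorem \ref{Theorem torsion freeness 2}.}

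The plan is to deduce everything from the direct limit of the long exact sequence in \cref{Cor LES for ESH abstractly}, using as the main new input the torsion-freeness of the ``full rotation'' groups $E^-HF^*(H_{k^+})$ from \cref{Theorem torsion freeness of Eminus HF}, together with the fact that every continuation map factors through one of these. First I would observe that, by cofinality, the direct limit defining $E^-SH^*(Y)=\varinjlim E^-HF^*(H_{\lambda})$ can be computed along the subsequence of integer-shifted slopes $\lambda=k^+$, $k\in\N$: for any generic $\lambda$ there is a $k>\lambda$, and the continuation map $E^-HF^*(H_{\lambda})\to E^-HF^*(H_{k^+})$ is defined. Hence $E^-SH^*(Y)\cong\varinjlim_k E^-HF^*(H_{k^+})$, a direct limit of \emph{torsion-free} $\ku$-modules. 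A filtered colimit of torsion-free modules over a domain is torsion-free (if $r\cdot x=0$ in the colimit with $r\neq 0$, lift $x$ to some $E^-HF^*(H_{k^+})$; then $r x$ maps to $0$ further along, so $rx=0$ already at some finite stage since the colimit is filtered, forcing $x=0$ there by torsion-freeness of that group). This gives $\mathrm{Torsion}(E^-SH^*(Y))=0$.

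Next I would feed this into \cref{Cor LES for ESH abstractly}, whose long exact sequence reads
$$
\cdots \longrightarrow E^-SH^*(Y)[-2] \stackrel{u\otimes 1}{\longrightarrow} E^{\infty}SH^*(Y) \longrightarrow E^+SH^*(Y) \stackrel{\textrm{conn.}}{\longrightarrow} E^-SH^{*+1}(Y)[-1] \longrightarrow \cdots.
$$
Since $E^-SH^*(Y)$ is free (torsion-free and finitely generated: its rank is controlled by \cref{Prop Periodicity property 2} and \cref{Prop equivariant formality for QH}, or by the structural description in \cref{Theorem intro ESH computation main}), the map $u\otimes 1:E^-SH^*(Y)[-2]\to E^{\infty}SH^*(Y)\cong E^-SH^*(Y)_u$ is injective. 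Therefore the connecting map $E^+SH^*(Y)\to E^-SH^{*+1}(Y)[-1]$ is the zero map, because its image lies in the torsion submodule of $E^-SH^{*+1}(Y)[-1]$ (this was noted already in \cref{Lemma SES for various versions of equiv coh}: any complement to $E^+_{\mathbb{F}\mathrm{lim}}SH^*(Y)$ inside $E^+SH^*(Y)$ maps isomorphically onto $\mathrm{Torsion}(E^-SH^*(Y))[-1]$), and that torsion now vanishes. Consequently the long exact sequence degenerates into the short exact sequence
$$
0 \longrightarrow E^-SH^*(Y)[-2] \stackrel{u\otimes 1}{\longrightarrow} E^{\infty}SH^*(Y) \longrightarrow E^+SH^*(Y) \longrightarrow 0,
$$
and the same computation identifies $E^+SH^*(Y)=E^+_{\mathbb{F}\mathrm{lim}}SH^*(Y)=\varinjlim E^+HF^*(H_{\lambda})_{\mathbb{F}}$, since the complementary summand (onto which the connecting map was an isomorphism) is now trivial.

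The main obstacle, and the only genuinely substantive point, is the first step: justifying that $E^-SH^*(Y)$ is torsion-free. This rests on two facts that must be cited carefully — that $E^-HF^*(H_{k^+})$ is torsion-free for \emph{integer} $k$ (\cref{Theorem torsion freeness of Eminus HF}, which itself uses the Seidel/periodicity isomorphism \cref{Prop Periodicity property 2} and equivariant formality), and that the family $\{H_{k^+}\}_{k\in\N}$ is cofinal in the directed system of all generic slopes so that the colimit may be computed along it. One must be slightly cautious here because, as the remark after \cref{Theorem torsion freeness of Eminus HF} warns, torsion in $E^-HF^*(H_{p^+})$ for fractional periods $0<p<1$ has not been excluded; the argument deliberately avoids those groups by passing to the cofinal integer-slope subsystem, where torsion is absent, and this is exactly why the colimit is torsion-free even though individual intermediate groups might not be. Everything downstream — the vanishing of the connecting map and the resulting SES — is then a formal consequence of exactness and the freeness of $E^-SH^*(Y)$.
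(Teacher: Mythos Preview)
Your proposal is correct and follows essentially the same route as the paper: first establish torsion-freeness of $E^-SH^*(Y)$ by passing to the cofinal subsystem of integer slopes $k^+$ and invoking \cref{Theorem torsion freeness of Eminus HF}, then observe that the connecting map $E^+SH^*(Y)\to E^-SH^*(Y)[-1]$ must vanish (its source is entirely $u$-torsion, its target is torsion-free), so the LES from \cref{Cor LES for ESH abstractly} collapses to the SES and $E^+SH^*(Y)=E^+_{\mathbb{F}\mathrm{lim}}SH^*(Y)$ by \cref{Lemma SES for various versions of equiv coh}.

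One small inaccuracy: you assert that $E^-SH^*(Y)$ is ``free (torsion-free and finitely generated)''. This is false in general --- e.g.\ in the Calabi--Yau case $E^-SH^*(Y)\cong\kuu^r$, which is neither finitely generated nor free over $\ku$. The references you cite (\cref{Prop Periodicity property 2}, \cref{Prop equivariant formality for QH}) only control the finite-slope groups $E^-HF^*(H_{k^+})$, not their colimit, and \cref{Theorem intro ESH computation main} only covers the CY and monotone cases. Fortunately this does not matter: torsion-freeness alone already gives injectivity of $u\otimes 1$ (the kernel of the localisation map $W\to W_u$ is exactly the torsion submodule), and torsion-freeness alone forces the connecting map from the torsion module $E^+SH^*(Y)$ to be zero. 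Simply drop the parenthetical claim about freeness and finite generation.
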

\begin{proof}
The torsion-freeness of the direct limit $E^-SH^*(Y)$ follows from \cref{Theorem torsion freeness of Eminus HF}, because any given torsion class would have a torsion class representative in  
$E^-HF^*(H_{k^+})$ for sufficiently large $k\in \N$. The rest follows from\footnote{more directly, 
\cref{Lemma LES for W modules} becomes a SES because the connecting map $E^+SH^*(Y) \to E^-SH^*(Y)[-1]$ must vanish since $E^+SH^*(Y)$ is torsion (being a direct limit of torsion modules) whereas $E^-SH^*(Y)$ is free.} \cref{Lemma SES for various versions of equiv coh}.
\end{proof}

\begin{cor}\label{Corollary torsion freeness 2 free case}
Let $(a,b)$ be a free weight. There is a short exact sequence 
$$
0 \longrightarrow E^-SH^*(Y)[-2] \longrightarrow E^{\infty}QH^*(Y) \longrightarrow 
E^+SH^*(Y) \longrightarrow 0.
$$
So $E^-SH^*(Y)[-2]$ is a free submodule of $E^{\infty}QH^*(Y)$ (whereas  
$E^+SH^*(Y)$ is always $u$-torsion).

$E^{+}c^*: E^+QH^*(Y)\to E^+SH^*(Y)$ is surjective and induces an isomorphism
\begin{equation}\label{Equation description of EplusHF when free in direct limit}
\begin{split}
& E^+SH^*(Y)\cong E^+QH^*(Y)/\EE^{\infty}_1, \;\textrm{ where} \\
& \EE^{\infty}_1:=\ker E^+c^* \cong \mathrm{coker}\left(E^-c^*:E^-QH^*(Y)\hookrightarrow E^-SH^*(Y)\right)[2]
\end{split}
\end{equation}
\end{cor}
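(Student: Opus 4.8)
The statement is the ``free weight'' specialisation of \cref{Theorem torsion freeness 2}, so the plan is to run exactly the argument of \cref{Corollary torsion freeness 1 free case} but in the direct limit $\lambda \to \infty$. First I would invoke \cref{Theorem torsion freeness 2} to obtain the short exact sequence
$$
0 \longrightarrow E^-SH^*(Y)[-2] \stackrel{u\otimes 1}{\longrightarrow} E^{\infty}SH^*(Y) \longrightarrow E^+SH^*(Y) \longrightarrow 0,
$$
and then use the freeness hypothesis: since $(a,b)$ is a free weight, \cref{Prop vanishing of spectral sequence for infinity page} (i.e.\ \cref{Theorem localisation theorem}) gives the canonical isomorphism $E^{\infty}c^*:E^{\infty}QH^*(Y)\xrightarrow{\cong} E^{\infty}SH^*(Y)$. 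Substituting $E^{\infty}QH^*(Y)$ for the middle term turns the above into the asserted short exact sequence, and identifies $E^-SH^*(Y)[-2]$ with a $\ku$-submodule of the free (indeed torsion-free, since $E^{\infty}QH^*(Y)$ is a $\kuu$-module) module $E^{\infty}QH^*(Y)$; submodules of free modules over the PID $\ku$ are free, giving the freeness claim. That $E^+SH^*(Y)$ is $u$-torsion is already noted in \cref{Theorem torsion freeness 2} (it is a direct limit of the torsion modules $E^+HF^*(H_\lambda)$).

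For the surjectivity of $E^+c^*$ and the isomorphism \eqref{Equation description of EplusHF when free in direct limit}, I would repeat the factorisation argument from \cref{Corollary torsion freeness 1 free case} verbatim, replacing $H_{k^+}$ by the direct limit. Surjectivity of $E^+c^*:E^+QH^*(Y)\to E^+SH^*(Y)$ follows by factoring the surjection $E^{\infty}QH^*(Y)\cong E^{\infty}SH^*(Y)\to E^+SH^*(Y)$ (from the SES above) through $E^+QH^*(Y)$, using that $E^{\infty}QH^*(Y)\to E^+QH^*(Y)$ is the cokernel of $u\otimes 1$ and that $E^+c^*$ is compatible with the two short exact sequences by \cref{Lemma SES for various versions of equiv coh}. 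For the description of the kernel, identify $E^+QH^*(Y)=\operatorname{coker}(q)[2]$ where $q:E^-QH^*(Y)[-2]\xrightarrow{u\otimes 1}E^{\infty}QH^*(Y)$, and factor $q$ as the composite of injections
$$
q:\; E^-QH^*(Y)[-2]\;\stackrel{E^-c^*}{\hookrightarrow}\; E^-SH^*(Y)[-2]\;\stackrel{u\otimes 1}{\hookrightarrow}\; E^{\infty}SH^*(Y)\;\cong\; E^{\infty}QH^*(Y),
$$
where $E^-c^*$ is injective by \cref{Corollary E minus c star maps are injective} (the free-weight case of \cref{Theorem injectivity theorem 2}). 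Quotienting this chain of maps by $E^-QH^*(Y)$ and its images yields an injection $E^-SH^*(Y)/E^-QH^*(Y)\hookrightarrow \operatorname{coker}(q)[2]\cong E^+QH^*(Y)[2]$, and comparing with the SES above identifies its image with $\ker E^+c^*$; rearranging gives \eqref{Equation description of EplusHF when free in direct limit}.

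I do not expect any serious obstacle here: every ingredient has already been established in the excerpt, and the only mild subtlety is a bookkeeping one — making sure that the identifications $E^{\infty}QH^*(Y)\cong E^{\infty}SH^*(Y)$, $E^{\infty}HF^*(H_{k^+})\cong E^{\infty}SH^*(Y)$, and the maps $E^-c^*$, $E^+c^*$ are all compatible in the commuting ladder of \cref{Lemma SES for various versions of equiv coh}, so that the direct-limit diagram chase is literally the $k=\infty$ case of \cref{Corollary torsion freeness 1 free case}. If one wanted to be maximally economical, one could simply say ``take the direct limit over $k$ of the short exact sequences and isomorphisms in \cref{Corollary torsion freeness 1 free case}, using that $\varinjlim$ is exact and that $E^{\infty}HF^*(H_{k^+})\to E^{\infty}SH^*(Y)$ stabilises for $k>a/b$ by \cref{Prop vanishing of column in infinity spectral sequence}.''
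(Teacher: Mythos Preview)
Your proposal is correct and essentially identical to the paper's own proof: the paper simply says that the result follows from \cref{Theorem torsion freeness 2} by the same argument used to deduce \cref{Corollary torsion freeness 1 free case} from \cref{Theorem torsion freeness of Eminus HF}, replacing $HF^*(H_{k^+})$ by $SH^*(Y)$ and $c^*_{k^+}$ by $c^*$. You have spelled out exactly this substitution, including the same factorisation of $q$ and the same identification of $\ker E^+c^*$.
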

\begin{proof}
This follows from \cref{Theorem torsion freeness 2} by the same argument used to deduce  \cref {Corollary torsion freeness 1 free case} from \cref{Theorem torsion freeness of Eminus HF}, by replacing the symbols $HF^*(H_{k^+})$ by $SH^*(Y)$, and $c^*_{k^+}$ by $c^*$.
\end{proof}

\subsection{Positive equivariant symplectic cohomology}
From the quotient complex of $E^-CF^*(H_{\lambda})$, by quotienting out the subcomplex generated by constant $1$-orbits, and then taking the direct limit in $\lambda$, one obtains {\bf $S^1$-equivariant positive symplectic cohomology}, denoted $E^-SH^*_{\myplus}(Y)$. 
This is analogous to the non-equivariant construction carried out in \cite{RZ1,RZ2}.
That same quotient complex, via \eqref{defn F C^+ and W^+}, defines $E^{\karo}SH^*_{\myplus}(Y)$ for all $\karo\in \{-,\infty,+\}.$
The long exact sequence induced by the inclusion of the subcomplex is 
\begin{equation}\label{Equation LES for ESH plus}
\cdots \to E^{\karo}SH^{*}_{\myplus}(Y)[-1]\to E^{\karo}QH^*(Y) \stackrel{c^*}{\to} E^{\karo}SH^*(Y)\to E^{\karo}SH^{*}_{\myplus}(Y)\to \cdots
\end{equation}

By \cref{Cor LES for ESH abstractly}, we also obtain the long exact sequence between the three models:
\begin{equation}\label{Equation three models SH plus LES}
\cdots \longrightarrow
 E^-SH^*_{\myplus}(Y)[-2]
\stackrel{u\otimes 1}{\longrightarrow}
E^{\infty} SH^*_{\myplus}(Y)
\longrightarrow
 E^{+} SH^*_{\myplus}(Y) 
 \longrightarrow
 E^{-} SH^*_{\myplus}(Y)[-1] 
 \longrightarrow
 \cdots
\end{equation}
We now rephrase \cref{Theorem localisation theorem}. Recall $\EE^{\infty}_1:=\ker ([c_u]:E^+QH^*(Y)\to E^+SH^*(Y))$, see \eqref{Definition of Fjinfty}.

\begin{cor}\label{Cor ESH plus discussion}
    Let $(a,b)$ be a free weight. Then we have $\ku$-module isomorphisms 
\begin{equation}
    \label{Equation positive ESH result}
    E^{\infty}SH^*_{\myplus}(Y)=0
    \quad\textrm{ and }\quad
    E^{+}SH^*_{\myplus}(Y)\cong E^{-}SH^{*}_{\myplus}(Y)[-1]
        \quad\textrm{ and }\quad
  E^{+}SH^*_{\myplus}(Y)\cong
   \EE^{\infty}_1[1].
   \end{equation}
    In particular, $E^-SH^*_{\myplus}(Y)$ is $u$-torsion.
The LES \eqref{Equation LES for ESH plus} becomes the SES of $\ku$-modules: 
\begin{equation}\label{Equation direct sum decompositions of EplusSH}
0\to E^-QH^*(Y)  \stackrel{c^*}{\to} E^-SH^*(Y) \to E^-SH^{*}_{\myplus}(Y) \to 0,
\end{equation}
which as $\k$-vector spaces\footnote{Not as $\ku$-modules: e.g.\;in CY examples, $E^-SH^*(Y)$ is torsion-free, whereas $E^{-}SH^{*}_{\myplus}(Y)$ is torsion.} yields a non-canonical isomorphism $E^-SH^*(Y) \cong H^*(Y)[\![u]\!]\oplus E^-SH^{*}_{\myplus}(Y)$.

If $SH^*(Y)=0$ (e.g.\,if $c_1(Y)=0$), then $E^+SH^*(Y)=0$, $E^-SH^*(Y)\cong E^{\infty}QH^*(Y)\cong E^{\infty}SH^*(Y)$, 
$$E^+SH^*_{\myplus}(Y)\cong E^+QH^{*}(Y)[1] \quad \textrm{ and } \quad  E^-SH^*_{\myplus}(Y)\cong E^+QH^{*}(Y)[2].$$
\end{cor}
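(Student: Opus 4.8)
\textbf{Proof plan for Corollary \ref{Cor ESH plus discussion}.}
The strategy is to feed the Localisation Theorem \ref{Theorem localisation theorem}, together with the long exact sequences \eqref{Equation LES for ESH plus} and \eqref{Equation three models SH plus LES}, into a sequence of short diagram chases. First I would establish $E^{\infty}SH^*_{\myplus}(Y)=0$: by \cref{Prop vanishing of spectral sequence for infinity page} the map $E^{\infty}c^*:E^{\infty}QH^*(Y)\to E^{\infty}SH^*(Y)$ is an isomorphism, so in the $\karo=\infty$ case of \eqref{Equation LES for ESH plus} the connecting maps force $E^{\infty}SH^*_{\myplus}(Y)=0$ (one checks it sits between two halves of an isomorphism). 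Feeding this vanishing into \eqref{Equation three models SH plus LES} immediately collapses that LES into the isomorphism $E^{+}SH^*_{\myplus}(Y)\cong E^{-}SH^{*}_{\myplus}(Y)[-1]$, since the $u\otimes 1$ map has zero source and the term after $E^+SH^*_{\myplus}(Y)$ is $E^-SH^*_{\myplus}(Y)[-1]$ again, up to the grading bookkeeping we have agreed to suppress.

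Next I would identify $\EE^{\infty}_1$. From \cref{Corollary E minus c star maps are injective}, $c^*:E^-QH^*(Y)\hookrightarrow E^-SH^*(Y)$ is injective, so the $\karo=-$ case of \eqref{Equation LES for ESH plus} becomes the short exact sequence \eqref{Equation direct sum decompositions of EplusSH}; the non-canonical $\k$-vector space splitting then follows from \cref{Prop equivariant formality for QH}, noting that as $\ku$-modules the extension need not split (the footnote example). To get $E^{+}SH^*_{\myplus}(Y)\cong \EE^{\infty}_1[1]$ I would run the same diagram chase used in \cref{Corollary torsion freeness 2 free case}: quotient the factorisation
$$E^-QH^*(Y)[-2] \xhookrightarrow{\;c^*\;} E^-SH^*(Y)[-2]\xhookrightarrow{\;u\otimes 1\;} E^{\infty}SH^*(Y)\cong E^{\infty}QH^*(Y)$$
by $E^-QH^*(Y)$ and its injective images; by \cref{Corollary torsion freeness 2 free case} the cokernel of $u\otimes 1$ there is $E^+QH^*(Y)$, the cokernel of the composite is $E^+SH^*(Y)$, and the middle quotient $E^-SH^*(Y)/E^-QH^*(Y)\cong E^-SH^*_{\myplus}(Y)$ injects onto $\ker(E^+c^*)=\EE^{\infty}_1$, with grading shift giving the $[1]$. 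Combined with the previous step this also yields $E^{-}SH^*_{\myplus}(Y)$ is $u$-torsion, since $\EE^{\infty}_1\subset E^+QH^*(Y)$ is $u$-torsion and $u$-torsionness passes across these isomorphisms.

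Finally, for the $SH^*(Y)=0$ case: by \cref{cor vanishing of E+ for SH} we have $E^+SH^*(Y)=0$ and $E^-SH^*(Y)\cong E^{\infty}SH^*(Y)\cong E^{\infty}QH^*(Y)$. Plugging $E^+SH^*(Y)=0$ into \eqref{Equation LES for ESH plus} for $\karo=+$ turns the boundary map $E^+QH^*(Y)\to E^+SH^*(Y)=0$ part of the sequence into an isomorphism $E^+SH^*_{\myplus}(Y)\cong E^+QH^*(Y)[1]$ (the neighbouring $c^*$ is zero because its target vanishes), and the already-established $E^{+}SH^*_{\myplus}(Y)\cong E^{-}SH^{*}_{\myplus}(Y)[-1]$ then gives $E^-SH^*_{\myplus}(Y)\cong E^+QH^*(Y)[2]$. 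I expect the only genuinely delicate point to be keeping the grading shifts ($[-1]$ versus $[-2]$, and where the Maslov/degree conventions enter) consistent across the two long exact sequences and the factorisation chase; the homological algebra itself is routine once the inputs \ref{Theorem localisation theorem}, \ref{Corollary torsion freeness 2 free case}, and \ref{cor vanishing of E+ for SH} are in hand, so I would state at the outset that grading shifts are suppressed and reconcile them only at the very end.
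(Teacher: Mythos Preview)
Your proposal is correct and follows essentially the same architecture as the paper's proof, with two small tactical differences worth noting. For $E^{\infty}SH^*_{\myplus}(Y)=0$, the paper argues directly via the spectral sequence (the positive version omits the zeroth column, and all higher columns vanish by \cref{Prop vanishing of spectral sequence for infinity page}), whereas you deduce it from the $\karo=\infty$ case of \eqref{Equation LES for ESH plus} using that $E^{\infty}c^*$ is an isomorphism; both are immediate. For the identification $E^{+}SH^*_{\myplus}(Y)\cong \EE^{\infty}_1[1]$, the paper runs the $\karo=+$ case of \eqref{Equation LES for ESH plus} directly, obtaining the four-term exact sequence
\[
0 \to E^+QH^*(Y)/\EE^{\infty}_1 \to E^+SH^*(Y) \to E^+SH^*_{\myplus}(Y) \to \EE^{\infty}_1[1] \to 0
\]
and using surjectivity of $E^+c^*$ from \cref{Corollary torsion freeness 2 free case} to collapse it; you instead route through $E^-SH^*_{\myplus}(Y)\cong\mathrm{coker}(E^-c^*)$ and the cokernel description of $\EE^{\infty}_1$ in \cref{Corollary torsion freeness 2 free case}, then apply the second equation of \eqref{Equation positive ESH result}. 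Your route is a touch more indirect but equally valid, and your torsion argument (via $\EE^{\infty}_1\subset E^+QH^*(Y)$) is if anything cleaner than the paper's chain-level observation.
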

\begin{proof}
The spectral sequence for $E^{\infty}SH^*_{\myplus}(Y)$ is obtained by the one for $E^{\infty}SH^*(Y)$ by omitting the zeroth column. Therefore \cref{Prop vanishing of spectral sequence for infinity page} implies $E^{\infty}SH^*_{\myplus}(Y)=0$ in \eqref{Equation positive ESH result}. Then \eqref{Equation three models SH plus LES} implies the second equation  in \eqref{Equation positive ESH result}.
Taking $\karo=+$ in \eqref{Equation LES for ESH plus} yields the exact sequence
$$
0 \to E^+QH^*(Y)/\EE^{\infty}_1 \to E^+SH^*(Y) \to E^+SH^*_{\myplus}(Y) \to \EE^{\infty}_1[1] \to 0.
$$
By \cref{Corollary torsion freeness 2 free case} the map to $E^+SH^*(Y)$ is surjective, hence the third equation in \eqref{Equation positive ESH result} follows.

As $E^+SH^*_{\myplus}(Y)$ arises as a direct limit, any class has a representative in some $E^+HF^*_{\myplus}(H_{k^+})$. The latter arises from a chain complex which is a direct sum of copies of $\mathbb{F}$: so already at chain level any element is $u$-nilpotent. By \eqref{Equation positive ESH result}, also $E^-SH^*_{\myplus}(Y)$ is $u$-torsion.

 The injectivity of $c^*$ from \cref{Corollary E minus c star maps are injective} implies that \eqref{Equation LES for ESH plus} becomes \eqref{Equation direct sum decompositions of EplusSH}.

For the final claim, we combine \cref{cor vanishing of E+ for SH} and \cref{Theorem localisation theorem}, then we use \eqref{Equation LES for ESH plus} for $\karo=+$ to compute $E^+SH^*_{\myplus}(Y)$, which in turn computes $E^-SH^*_{\myplus}(Y)$ by using the second equation in \eqref{Equation positive ESH result}.
 \end{proof}

\section{Properties of the slice series, the invariant factors, and the filtration polynomial}\label{Properties of the filtration polynomial}
\subsection{Rephrasing the direct limit}

\begin{cor}\label{ESH is direct limit of maps on EQH}
$E_{(a,b)}^{\karo}SH^*(Y)=\varinjlim E_{(a,b)}^{\karo}HF^*(H_{k^+})$ is isomorphic to the direct limit of $\ku$-modules 
\begin{equation}\label{Equation sequence of maps that give direct lim}
\begin{tikzcd}[row sep=large, column sep=large]
E_{(a,b)}^{\karo} QH^*(Y)
\arrow[r, "r_{a,b}"] 
&
E_{(a-b,b)}^{\karo} QH^*(Y)[2\mu]
\arrow[r, "{r_{a-b,b}}"] 
&
E_{(a-2b,b)}^{\karo} QH^*(Y)[4\mu]
\arrow[r, "{r_{a-2b,b}}"] 
&
\cdots
\end{tikzcd}
\end{equation}
using the maps from \eqref{Equation defining rab maps}.
Moreover, \eqref{Equation sequence of maps that give direct lim} can be identified non-canonically with a direct limit  
\begin{equation}\label{Equation simplifying the direct limit}
H^*(Y)\otimes_{\k} \ku
\to
H^*(Y)\otimes_{\k} u^{-\mu}\ku
\to
H^*(Y)\otimes_{\k} u^{-2\mu}\ku
\to
\cdots
\end{equation}
where these maps are typically not the naive inclusions of $\ku$-submodules (cf.\;\cref{Remark example of slice dimensions})

When $(a,b)$ is a free weight, the maps in \eqref{Equation sequence of maps that give direct lim} and \eqref{Equation simplifying the direct limit} are injective.
\end{cor}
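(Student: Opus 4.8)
\textbf{Proof plan for \cref{ESH is direct limit of maps on EQH}.}
The plan is to assemble the statement from results already established in the excerpt, in three layers: first the identification of the direct limit as \eqref{Equation sequence of maps that give direct lim}, then the (non-canonical) normal-form identification \eqref{Equation simplifying the direct limit}, and finally injectivity in the free-weight case.

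First I would recall that $E_{(a,b)}^{\karo}SH^*(Y)=\varinjlim_k E_{(a,b)}^{\karo}HF^*(H_{k^+})$ by construction of the exhaustive model (\cref{Subsection A remark about the telescope versus the exhaustive model}), where the connecting maps are the continuation maps $\psi_k$ from slope $k^+$ to slope $(k+1)^+$. By \cref{Prop Periodicity property 2}, the Seidel isomorphisms $\mathcal{S}^k$ give $\ku$-module isomorphisms $E_{(a,b)}^{\karo}HF^*(H_{k^+})\cong E_{(a-kb,b)}^{\karo}QH^*(Y)[2k\mu]$ that carry $\psi_k$ to the map $r_{a-kb,b}$ of \eqref{Equation defining rab maps}. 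Since a direct limit is unchanged under replacing a directed system by an isomorphic one, this identifies $E_{(a,b)}^{\karo}SH^*(Y)$ with the direct limit of \eqref{Equation sequence of maps that give direct lim}. The identification with \eqref{Equation simplifying the direct limit} is then immediate from \cref{Prop equivariant formality for QH} (equivariant formality): each term $E_{(a-kb,b)}^{\karo}QH^*(Y)$ is non-canonically $H^*(Y)\otimes_{\k}E^{\karo}H^*(\mathrm{pt})$, and for $\karo=-$ this reads $H^*(Y)\otimes_{\k}\ku$, with the grading shift $[2k\mu]$ absorbed into a shift of the $\ku$-factor, i.e.\;$H^*(Y)\otimes_{\k}u^{-k\mu}\ku$ (the $\infty$ and $+$ cases are analogous, replacing $\ku$ by $\kuu$ or $\mathbb{F}$). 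One must stress, as the statement already does, that after these identifications the transition maps need not be the naive inclusions of $\ku$-submodules: choosing bases that put $r_{a-kb,b}$ into a convenient form changes as $k$ varies, exactly as in the toy models of \cref{Subsection Localisation and direct limits for $\ku$-modules} (cf.\;\cref{Lemma presentation Rj inverse of V} and \cref{Remark example of slice dimensions}).

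For the injectivity claim in the free-weight case, I would invoke \cref{Theorem injectivity theorem 2}: when $(a,b)$ is free, all continuation maps $c_{\lambda,k^+}^*:E^-HF^*(H_{k^+})\hookrightarrow E^-HF^*(H_{\lambda})$ and $c_{\infty,k^+}^*$ are injective, and by \cref{Corollary E minus c star maps are injective} so is $c_{k^+}^*:E^-QH^*(Y)\hookrightarrow E^-HF^*(H_{k^+})$; via \cref{Prop Periodicity property 2} the maps $r_{a-kb,b}=\mathcal{S}\circ c_{1^+}^*$ are conjugates of such continuation maps by the Seidel isomorphisms, hence injective. The $\karo=\infty$ statement follows because $u$-localisation is exact, and the $\karo=+$ statement follows from \cref{Corollary E minus c star maps are injective} together with the fact that in the free-weight case $E^{\infty}c^*$ is an isomorphism, so there is no kernel to propagate (alternatively one observes directly that each $r_{a-kb,b}$ on the $E^+$ model has the same effect on $\EE_1$-kernels as the injective $E^-$-version).

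The only genuinely delicate point is bookkeeping: one must check that the Seidel isomorphisms are \emph{compatible} with the continuation maps in the precise sense of \eqref{Equation compatibility of seidel iso with continuation}, so that the conjugated directed system $\{r_{a-kb,b}\}$ really is isomorphic to $\{\psi_k\}$ as a diagram (not merely term-by-term); this is exactly the content of \cref{Prop Periodicity property 2}, so it is available, but it is the step where the weight-shift $a\mapsto a-b$ at each stage has to be tracked carefully. Everything else is formal, and no new Floer-theoretic input is required beyond what \cref{Theorem injectivity theorem 2} and \cref{Prop equivariant formality for QH} already supply.
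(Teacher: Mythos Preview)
Your core argument is correct and matches the paper's proof exactly: invoke \cref{Prop Periodicity property 2} to conjugate the directed system $\{\psi_k\}$ to $\{r_{a-kb,b}\}$, use \cref{Prop equivariant formality for QH} for the non-canonical form \eqref{Equation simplifying the direct limit}, and cite \cref{Theorem injectivity theorem 2}.(4) for injectivity when $(a,b)$ is free. The paper's proof is precisely those three citations, in that order.

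However, your attempt to extend injectivity to $\karo=+$ is wrong and should be dropped. The maps $r_{a-kb,b}$ on the $E^+$ model are \emph{not} injective in general, even for free weights: by \cref{Corollary torsion freeness 1 free case} the map $E^+c^*_{k^+}$ has kernel $\EE^k_1$, which is typically nonzero (already for $Y=\C$, cf.\ \cref{Prop ESH of C}, where $E^+c^*_{1^+}$ kills $u^0\cdot[x_0]$). The fact that $E^{\infty}c^*$ is an isomorphism says nothing about kernels in the $E^+$ theory. The injectivity claim in the Corollary is intended for $\karo=-$ (and hence $\karo=\infty$ by exactness of localisation): note that \eqref{Equation simplifying the direct limit} is written with $\ku$-factors, and the paper's proof cites only the $E^-$ statement \cref{Theorem injectivity theorem 2}.(4).
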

\begin{proof}
We apply \cref{Prop Periodicity property 2}.
Pass to non-equivariant quantum cohomology via \cref{Lemma ev0 commutative diagram}, $\mathrm{ev}_0: E_{(a-kb,b)}^{\karo} QH^*(Y) \to QH^*(Y)$. We obtain the non-equivariant analogue of \eqref{Equation sequence of maps that give direct lim} 
from \cite{R14,RZ1}:
\begin{equation}
QH^*(Y)
\stackrel{Q_{\Fi}}{\longrightarrow}
 QH^*(Y)[2\mu]
\stackrel{Q_{\Fi}}{\longrightarrow}
QH^*(Y)[4\mu]
\stackrel{Q_{\Fi}}{\longrightarrow}
\cdots
\end{equation}
To get \eqref{Equation simplifying the direct limit} from \eqref{Equation sequence of maps that give direct lim}, we use the non-canonical identification from \cref{Prop equivariant formality for QH},
 $$E_{(a-kb,b)}^{\karo} QH^*(Y)[2k\mu]\cong H^*(Y)\otimes_{\k} u^{-k}\ku\subset H^*(Y)\otimes_{\k} \kuu.$$
 The final claim follows from \cref{Theorem injectivity theorem 2}.(4).
\end{proof}

\begin{ex}
For $Y=\C$ and a free weight, we will see  $E^-HF^*(H_{k^+})=\ku\cdot [x_k]$, with grading $|x_k|=-2k$ (\cref{Prop ESH of C}).
In \cref{ESH is direct limit of maps on EQH}, the continuation $E^-HF^*(H_{0^+})\to E^-HF^*(H_{1^+})$ is first viewed as a map
$E^-QH^*(\C)\to  E^-QH^*(\C)[-2]$, and then as a new map $\ku \to u^{-1}\ku$ via 
$$\ku\cong \ku\cdot [x_0] \to \ku \cdot [x_1]\cong \ku \cdot [x_0][2]\cong u^{-1}\ku \subset \kuu,$$ 
where $u^{-1}\cdot 1$ corresponds to a multiple of $[x_1]$.
In the $[x_0],[x_1]$ bases, 
$r_{a,b}=n_{a,b} u:\ku \to \ku$, where $n_{a,b}\in \k^{\times}$ (although $r=0$, as $Q_{\Fi}=0$ for degree reasons, we know $r_{a,b}$ is injective by \cref{Theorem injectivity theorem 2}). The new map becomes $u^{-1}r_{a,b}=n_{a,b}: \ku \to u^{-1}\ku$, so it is an inclusion up to $\k^{\times}$-rescaling.
The units $n_{a,b}$ play a non-trivial role when working over $\Z$ \cite[Thm.8.1]{liebenschutz2020intertwining} (cf.\,\cref{Subsection C The general free weight case}).

Similarly, $E^-HF^*(H_{k^+})\cong u^{-k}\ku$, and $\varinjlim u^{-k}\ku \cong \kuu$ recovers $E^-SH^*(\C)\cong E^{\infty}QH^*(\C)$.
\end{ex}

\subsection{The Hamiltonians $k^+H$}
\label{Subsection The Hamiltonians NH}

Recall that $E^-HF^*(H_{k^+})$ depends on a choice of Hamiltonian $H_{k^+}$ of slope $k^+$ at infinity. Changes in this choice correspond to unique continuation isomorphisms, which are grading-preserving $\ku$-module isomorphisms.
We now choose $k^+H$, so the only $1$-orbits are constant orbits at points of the fixed locus $\F=\sqcup \F_{\a}$ (cf.\,\cite{RZ1}).
In the {\MBF} model, the chain level generators are critial points $x_{\a,\beta}$ of auxiliary Morse functions $f_{\a}:\F_{\a} \to \R$ (cf.\,\cite{RZ2}).

The Seidel isomorphisms \eqref{Equation Seidel iso} give an identification
\begin{equation}
 \label{Equation Seidel map for kH Hams}
\mathcal{S}^k:
E^-_{(a,b)}HF^*(k^+H)
\to E^-_{(a-kb,b)}HF^*(0^+H)[2k\mu]\equiv
E^-_{(a-kb,b)}QH^*(Y)[2k\mu],
\end{equation}
which at chain level is the identity map shifted in grading by $2k\mu$: the constant orbits $x_{\a,\beta}$ with their constant caps satisfy \cref{Definition of canonical lift in terms of caps}, noting that in the target their grading has been shifted down by $2k\mu$. 
Unpacking the notation from \cref{ESH is direct limit of maps on EQH}: 
$$
r_{a,b} = \mathcal{S}\circ c_{1^+}:
E^-_{(a,b)}QH^*(Y) \to 
E^-_{(a-b,b)}QH^*(Y)[2\mu], 
$$
where $c_{1^+}$ is the continuation map $E^-_{(a,b)}QH^*(Y)\cong E^-_{(a,b)}HF^*(0^+H) \to E^-_{(a,b)}HF^*(1^+H)$.

We make two basic observations: \begin{enumerate}
    \item The Seidel isomorphism \eqref{Equation Seidel map for kH Hams} 
    is the identity map, shifted in grading, so it does not introduce negative powers of $T,u$.
    \item The continuation map $c_k^+:E^-_{(a,b)}QH^*(Y)\cong E^-_{(a,b)}HF^*(0^+H) \to E^-_{(a,b)}HF^*(k^+H)$ for $k\in \N$ can be constructed using a homotopy $H_s=\lambda_s H$ which is monotone,\footnote{$\partial_sH_s=(\partial_s \lambda_s)H\leq 0$ since we (may) assume $H\geq 0$ and $\partial_s\lambda_s\leq 0.$} so Floer continuation solutions are counted with non-negative powers of $T$.
\end{enumerate}
\begin{de}\label{Definition ERk maps definition rotation}
The continuation map $c_{k^+}:E^-_{(a,b)}QH^*(Y) \to E^-_{(a,b)} HF^*(H_{k^+})$, suitably composed with Seidel isomorphisms $\mathcal{S}:=\mathcal{S}_{\Fi}$ from \eqref{Equation Seidel iso 2}, defines a $\ku$-linear map:
\begin{equation}\label{Equation ERk map}
ER_k:=\mathcal{S}^k\circ c_{k^+} = r_{a-(k-1)b,b} \circ \cdots \circ r_{a,b}
: E^-_{(a,b)}QH^*(Y) \to E^-_{(a-kb,b)}QH^*(Y)[2k\mu].
\end{equation}
We call these {\bf rotation maps}, to distinguish them from the Seidel isomorphisms: they need not be isomorphisms, nor injective in general, since we composed Seidel isomorphisms with continuation maps.

The same definition is used also for the other models $\karo\in \{-,\infty,+\}$: $$ER_k^{\karo}:=\mathcal{S}^k\circ E^{\karo}c_{k^+} = r_{a-(k-1)b,b}^{\karo} \circ \cdots \circ r_{a,b}^{\karo}
: E^{\karo}_{(a,b)}QH^*(Y) \to E^{\karo}_{(a-kb,b)}QH^*(Y)[2k\mu].$$
\end{de}

\begin{cor}\label{Lemma slice poly for ERk is same as for ck}
The following properties hold:
\begin{enumerate}
    \item If $(a,b)$ is a free weight, then $ER_k$ is injective.
    \item The slice series $s_k$ for $E^-c_{k^+}$ is the same as the slice series for $ER_k$.
    \item The $u^0$-part of $ER_k^{\karo}$ is $r^k: QH^*(Y)\to QH^{*}(Y)[2k\mu]$.
\end{enumerate}
\end{cor}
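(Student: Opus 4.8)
The three items are largely formal consequences of the setup in \cref{Subsection The Hamiltonians NH}, so the plan is to unpack the definition $ER_k = \mathcal{S}^k \circ c_{k^+}$ and invoke results already proved. For item (1): since $\mathcal{S}^k$ is an isomorphism (indeed, at chain level it is the identity map shifted in grading, by \eqref{Equation Seidel map for kH Hams}), the injectivity of $ER_k$ is equivalent to the injectivity of $c_{k^+} = E^-c^*_{k^+}: E^-QH^*(Y) \to E^-HF^*(H_{k^+})$. For a free weight $(a,b)$, this is exactly \cref{Theorem injectivity theorem 2}(4) (or \cref{Corollary E minus c star maps are injective}), so item (1) follows immediately.

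For item (2): the slice series $s_k$ is, by definition (\cref{Definition slice series} applied in the form of \cref{Corollary curlyE is zero}), determined by the induced filtration $\FF_j^{k^+}$ on the domain, which in turn depends only on the composite map up to post-composition with a $\ku$-module isomorphism. More precisely, by \cref{Corollary filtration polynomial}, the coefficients $d_j^k$ of $s_k$ record the number of invariant factors $u^{\leq j}$ of the map $c_{k^+}: E^-QH^*(Y) \to E^-HF^*(H_{k^+})/\mathrm{Torsion}$. Invariant factors (Smith normal form) are unchanged under composition with isomorphisms on either the domain or the codomain — this is precisely the automorphism-invariance noted in \cref{Subsection Basic prelimiaries about ku} and used in \cref{Lemma valuation on free mod}. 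Since $\mathcal{S}^k$ is a $\ku$-module isomorphism $E^-HF^*(H_{k^+}) \cong E^-QH^*(Y)[2k\mu]$, the maps $c_{k^+}$ and $ER_k = \mathcal{S}^k \circ c_{k^+}$ have the same invariant factors, hence the same slice series. One should also note that $\mathcal{S}^k$ preserves the canonical filtration (\cref{Lemma valuation on free mod} again, or the compatibility statement in \cref{Prop Periodicity property}), so the induced filtrations on $E^-QH^*(Y)$ agree, giving item (2) directly at the level of the filtration $\FF_j^{k^+}$ itself, not just its numerical invariants.

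For item (3): the $u^0$-part of an equivariant chain map is the underlying non-equivariant map, as observed repeatedly in the excerpt (e.g.\;in the proof of \cref{Prop Periodicity property 2}, final sentence). The map $ER_k^{\karo}$ is the composite $r_{a-(k-1)b,b}^{\karo} \circ \cdots \circ r_{a,b}^{\karo}$, and by \cref{Prop Periodicity property 2} the $u^0$-part of each $r^{\karo}_{\bullet,b}$ is the non-equivariant rotation map $r: QH^*(Y) \to QH^*(Y)[2\mu]$, which is quantum multiplication by $Q_{\Fi}$. Since taking the $u^0$-part is compatible with composition (it is the functor $\mathrm{ev}_0$ or equivalently reduction mod $u$, which is a ring/module map on the relevant complexes, cf.\;\cref{Lemma ev0 commutative diagram}), the $u^0$-part of the $k$-fold composite is $r^k = r \circ \cdots \circ r$, i.e.\;quantum multiplication by $Q_{\Fi}^k = Q_{\Fi^k}$ (\cref{Remark QFi class}).

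\textbf{Main obstacle.} None of the three parts presents a genuine difficulty; the only point requiring care is the precise assertion in item (2) that the Seidel isomorphism $\mathcal{S}^k$ does not merely preserve invariant factors abstractly but actually identifies the induced filtrations compatibly — this rests on $\mathcal{S}^k$ being a \emph{filtered} isomorphism, which is the content of \cref{Cor Seidel isos preserve filtration} invoked in \cref{Prop Periodicity property}. The subtlety to be alert to is that $\mathcal{S}^k$ is only the identity at chain level in the $k^+H$ model with constant cappings (observation (1) of \cref{Subsection The Hamiltonians NH}); using a different Hamiltonian of the same slope would introduce a continuation isomorphism, but that too is a $\ku$-module isomorphism and hence harmless for invariant factors. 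So the claim is robust, and the proof is simply a matter of citing \cref{Theorem injectivity theorem 2}, the automorphism-invariance of Smith normal form, and the compatibility of $u^0$-parts with composition.
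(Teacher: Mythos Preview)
Your proof is correct and follows essentially the same approach as the paper's own proof, which is simply: ``(1) follows from \cref{ESH is direct limit of maps on EQH}; (2) follows immediately because $ER_k,E^-c_{k^+}$ differ by applying an isomorphism $\mathcal{S}^k$; and (3) follows by \cref{Prop Periodicity property 2}.'' You have elaborated each step with the underlying justifications (automorphism-invariance of invariant factors, compatibility of $u^0$-parts with composition), but the logical route is identical.
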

\begin{proof}
(1) follows from \cref{ESH is direct limit of maps on EQH}; (2) follows immediately because $ER_k,E^-c_{k^+}$ differ by applying an isomorphism $\mathcal{S}^k$; and (3) follows by \cref{Prop Periodicity property 2}.
\end{proof}

Consider bases for $E^-_{(a,b)}QH^*(Y)$ as described in \cref{Cor equiv formality via spectral sequence}.
We abusively refer to it as a basis $y_i$ for $H^*(Y)$, even though the $\ku$-module isomorphism $H^*(Y)[\![u]\!]\cong E^-_{(a,b)}QH^*(Y)$ at chain level means that the $y_i$ may have $u^{\geq 1}$-correction terms that depend on $(a,b)$ (in the same grading as $y_i$). The matrix for $ER_k$ in such bases cannot involve negative powers of $T,u$. 

\begin{rmk}[Calabi-Yau and Montone cases]\label{Remark CY and monotone cases ERk matrix} 
The $2k\mu$ grading often imposes further constraints:
\begin{enumerate}
    \item When $c_1(Y)=0$, the matrix for $ER_k$ has $(i,j)$ entry $k_{ij} u^{m}$, where
    $m=(2k\mu-|y_i|+|y_j|)/2$ is required to be in $\N$ (otherwise the entry is zero), and $k_{ij}\in \k$ involves only $T^{\geq 0}$-terms. Note that for $c_1(Y)=0$ the Novikov variable $T$ lies in grading $|T|=0$, so $\k$ lies in grading zero.
    \item When $c_1(Y)=a [\omega]$ for $a>0\in \R$, we place the Novikov variable $T$ in grading $|T|=2a$. Abbreviate $t:=T^{1/a},$ so $|t|=2$. This time, the $(i,j)$ entry is a $\mathbb{B}$-linear combination of the monomials $t^m,t^{m-1}u,\ldots,u^m$ where $m=(2k\mu-|y_i|+|y_j|)/2$, again requiring $m\in \N$.
\end{enumerate}
\end{rmk}

\subsection{Growth rate of the slice polynomial (free weight case)}\label{Subsection Growth rate of the filtration polynomial}

Assume $(a,b)$ is a free weight.
Then
we are in the setting of \cref{Corollary E minus c star maps are injective} and \cref{Corollary torsion freeness 2 free case}, so \cref{Prop equivariant formality for QH} yields:
\begin{equation}\label{Equation succession of injections}
\ku^r \cong E^-QH^*(Y) 
\!\hookrightarrow\! 
E^-HF^*(H_{k^+})
\!\hookrightarrow\! 
E^-SH^*(Y) 
\!\hookrightarrow\! 
E^{\infty}SH^*(Y)\cong E^{\infty}QH^*(Y)\cong \kuu^r,
\end{equation}
where $r=\dim_{\k} H^*(Y)$.
By \cref{Cor structure theorem 2}, for $k\in \N$,
\begin{equation}\label{Equation SH case of direct limit structure theorem 2}
\begin{split}
E^-HF^*(H_{k^+}) &\cong u^{-j_1(k)}\ku \oplus \cdots \oplus u^{-j_{r}(k)} \ku,
\\
E^-SH^*(Y) &\cong u^{-j_1}\ku \oplus \cdots \oplus u^{-j_{r-s}} \ku
\oplus \kuu^s,
    \end{split}
\end{equation}
so that $E^-QH^*(Y)\cong \ku^r$ is identified with the submodules $\ku^r$ up to isomorphism, and where 
$$0\leq j_1(k)\leq \cdots \leq j_r(k),\quad 
j_i(k)\leq j_i(k+1),
\quad 
\lim_{k\to \infty} j_i(k)=j_i \textrm{ for }i\leq r-s,
\quad 
\lim_{k\to \infty} j_i(k)=\infty \textrm{ for }i> r-s.
$$
The invariant factors of the continuation map $c_{k^+}:E^-QH^*(Y) \hookrightarrow E^-HF^*(H_{k^+})$ are $(u^{j_1(k)},\ldots,u^{j_r(k)})$. These are well-defined invariants as $c_{k^+}$ is a homomorphism of free $\ku$-modules (cf.\;\cref{Subsection Basic prelimiaries about ku}).

The slice polynomial $s_k$ determined by the continuation map $c_{k^+}$ satisfies
$$
s_k = \sum d_j(k)t^j, 
\qquad 
d_j(k)=\#\{i:-j_i \leq -j\},
\qquad
0\leq d_j(k)\leq d_j(k+1) \leq r,
\qquad
\lim_{k\to \infty} d_j(k) = d_j.
$$
As $k\to \infty$, the $s_k$ limit to a series $\sum d_j t^j$ with tail end $s(t^{j_{r-s}+1}+t^{j_{r-s}+2}+\cdots)$. 
This series is a polynomial precisely when $s=0$, i.e.\;when $E^-SH^*(Y)$ is a free $\ku$-module.

In the notation from \cref{Definition slice dimensions} and \cref{Lemma dimension calculation for sum of ds},
\begin{equation}\label{Equation SH slice dims}
\begin{split} 
d_j(k) & = \dim_{-j} E^-HF^*(H_{k^+}) = \dim_{\k} \FF_j^{k}/u\FF_{j-1}^{k},
\\
d_j &= \dim_{-j} E^-SH^*(Y), \qquad (\textrm{in particular }d_0=r)
\\
s &=\dim_{-\infty} E^-SH^*(Y),
    \end{split}
\end{equation}
By \cref{Corollary torsion freeness 2 free case},
\begin{equation}
\EE^k_1 = \ker (E^+c_{k^+}:E^+QH^*(Y)\to E^+HF^*(H_{k^+})) \cong \mathrm{im}(u\otimes 1)/u\ku^r,
\end{equation}
where $u\otimes 1: E^-HF^*(H_{k^+})\hookrightarrow E^{\infty}HF^*(H_{k^+})\cong E^{\infty}QH^*(Y)$.

By \eqref{Equation description of EplusHF when free}, \cref{Cor sum of coeffs of filtration poly} and \cref{Lemma slice dimensions give coeffs of filtration polynomial},
\begin{equation}\label{Equation formula for Ek1}
\begin{split}
\dim_{\k} \EE^k_1 &= \dim_{\k} \, E^-HF^*(H_{k^+})/E^-QH^*(Y)
\\
&=
s_k(0)
\\
&=
d_1(k)+d_2(k)+\cdots 
\\ &=
j_1(k) + j_2(k) + \cdots + j_r(k). 
\end{split}
\end{equation}
In greater detail, \eqref{Equation description of EplusHF when free} yields 
\begin{equation}
\begin{split}
 \EE^k_1 & \cong
\left( E^-HF^*(H_{k^+})/ E^-QH^*(Y)\right)[2] 
\\
&
 \cong u\cdot E^-HF^*(H_{k^+})/ u\cdot E^-QH^*(Y)
 \\
&\cong
 (u^{1-j_1(k)}\ku \oplus \cdots \oplus u^{1-j_{r}(k)} \ku)/u\ku^r,
 \\
 E^+HF^*(H_{k^+})
 &\cong E^+QH^*(Y)/\EE^k_1 
  \\
 &\cong 
 \mathbb{F}^r/ (u^{1-j_1(k)}\ku \oplus \cdots \oplus u^{1-j_{r}}(k))
\\
&\cong
\mathbb{F}[2j_1(k)]
\oplus
\mathbb{F}[2j_2(k)]
\oplus 
\cdots
\oplus
\mathbb{F}[2j_r(k)].
\end{split}
\end{equation}
and more precisely it yields isomorphisms
\begin{equation}
\begin{split}
 \EE^k_1 & \cong
\left( E^-HF^*(H_{k^+})/ E^-QH^*(Y)\right)[2] 
\\
&
 \cong u\cdot E^-HF^*(H_{k^+})/ u\cdot E^-QH^*(Y)
 \\
&\cong
 (u^{1-j_1(k)}\ku \oplus \cdots \oplus u^{1-j_{r}(k)} \ku)/u\ku^r,
 \\
 E^+HF^*(H_{k^+})
 &\cong E^+QH^*(Y)/\EE^k_1 
  \\
 &\cong 
 \mathbb{F}^r/ (u^{1-j_1(k)}\ku \oplus \cdots \oplus u^{1-j_{r}}(k))
\\
&\cong
\mathbb{F}[2j_1(k)]
\oplus
\mathbb{F}[2j_2(k)]
\oplus 
\cdots
\oplus
\mathbb{F}[2j_r(k)].
\end{split}
\end{equation}

Recall $\EE^{\infty}_1=\ker (E^+QH^*(Y)\to E^+SH^*(Y))$, so $\dim_{\k} \EE^{\infty}_1=\lim \dim_{\k} \EE^k_1=d_1+d_2+\cdots$. Thus, $\dim_{\k} \EE^{\infty}_1=\infty$ for $s\neq 0$, whereas for $s=0$:
$$
\dim_{\k} \EE^{\infty}_1= 
d_1+d_2+\cdots = j_1+j_2+\cdots + j_r.
$$
Finally, observe that \eqref{Equation SH case of direct limit structure theorem 2} and the SES in \cref{Corollary torsion freeness 2 free case} imply
$$
E^+SH^*(Y) \cong E^{\infty}SH^*(Y)/(u\otimes 1)(E^-SH^*(Y)) \cong \mathbb{F}[2j_1]\oplus \mathbb{F}[2j_2]\oplus \cdots \oplus \mathbb{F}[2j_{r-s}],
$$
where the latter is the $\ku$-module $\mathbb{F}^{r-s}$ but the $u^0\in \mathbb{F}$ in the $m$-th coordinate of $\mathbb{F}^{r-s}$ arises from $[u^{-j_m}]\in \kuu/u\cdot u^{-j_m}\ku$, so actually lies in grading $-2j_m$, yielding the shifted summand $\mathbb{F}[2j_m]$.
\begin{rmk}\label{Remark example of slice dimensions}
$E^-HF^*(H_{k^+})\cong H^*(Y)\otimes_{\k} u^{-2k\mu}\ku$ as $\ku$-modules (cf.\,\cref{ESH is direct limit of maps on EQH}), but the continuation from $H_{k^+}$ to $H_{(k+1)^+}$ may not be an inclusion $H^*(Y)\otimes_{\k} u^{-2k\mu}\ku\hookrightarrow H^*(Y)\otimes_{\k} u^{-2(k+1)\mu}\ku$,
so $d_n(k)=\dim_{-n}\,E^-HF^*(H_{k^+})$ are not just a naive Betti number count. 
The following picture is the $\ku$-module $A:=E^-HF^*(H_{0^+})\cong E^-QH^*(T^*\C P^1)\cong \ku\oplus \ku[-2]$: the fat dots generate $H^*(T^*\C P^1)$ as a $\k$-vector space, the dots above are their images under the $u$-action, and the dots below are the two new generators needed for $B:=E^-HF^*(H_{1^+})\cong E^-QH^*(T^*\C P^1)[2]\cong \ku[2]\oplus \ku$.
The numbers are the $\Z$-grading. Depending on how $A$ injects into $B$, we either get invariant factors $(1,u^2)$ (so $j_1=0$, $j_2=2$) or $(u,u)$ (so $j_1=j_2=1$).
The $(u,u)$ case is the naive summand-preserving inclusion $\ku\oplus \ku[-2] \subset
\ku[2]\oplus \ku$. 
We will show that $(1,u^2)$ occurs for all free weights $(a,b)$. 
\begin{center}
\input{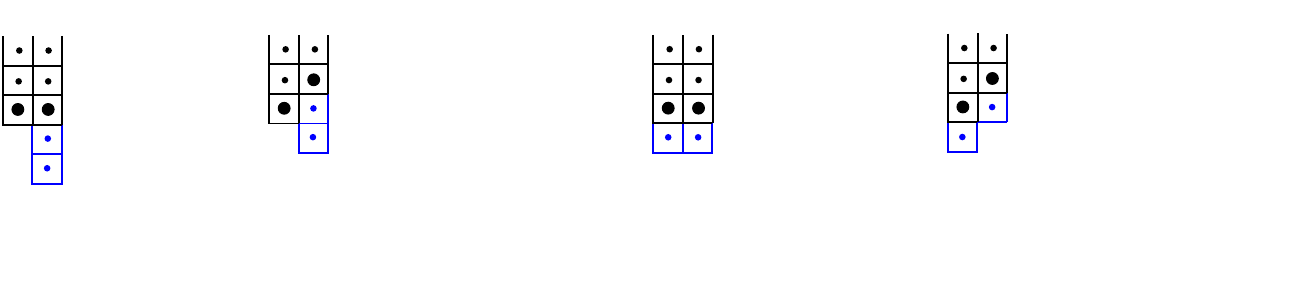_t}
\end{center}
\end{rmk}

\subsection{Growth rate of the slice series (non-free weight case)}\label{Subsection Growth rate of the filtration polynomial nonfree weight case}

Consider the case discussed in \cref{Remark non-free weight MB contributions}: $(a,b)\neq (0,0)$ which is not free, so $p=a/b$ causes contributions from Morse-Bott manifolds $B_{p,\beta}$ at slope $p$ which can cause $E^{\infty}c^*:E^{\infty}QH^*(Y)\to E^{\infty}SH^*(Y)$ to fail being injective and/or surjective. We apply the machinery from \cref{Subsection Non-injective homomorphisms}.
Only one of the continuation maps 
$$
E^-QH^*(Y)[2(k-1)\mu] \cong E^- HF^*(H_{(k-1)^+}) \to E^-HF^*(H_{k^+}) \cong E^-QH^*(Y)[2k\mu]
$$
could fail to be injective: the one for $k=\wp:=\lfloor p \rfloor$, and it plays the role of the map $Q_p$ in \cref{Lemma presentation Rj inverse of V noninjective case direct limit} (in addition, we will see grading shifts of $2k\mu=2\wp \mu$ appearing below because of the grading shifts of the $E^-QH^*(Y)$ above). We deduce from \cref{Lemma presentation Rj inverse of V noninjective case direct limit} that, for $k\geq p$,
\begin{equation*}
\begin{split}
& E^-HF^*(H_{k^+}) \cong \left(u^{-j_1(k)}\ku \oplus \cdots \oplus u^{-j_{r-n}(k)} \ku\right)
\oplus \left(u^{-\ell_1(k)}\ku \oplus \cdots \oplus u^{-\ell_{n}(k)} \ku\right)[2\wp\mu],
\\
& E^-\!SH^*(Y) \!\cong\! \left( u^{-j_1}\ku \!\oplus\! \cdots \!\oplus\! u^{-j_{r-n-s}} \ku
\!\oplus\! \kuu^s\right)
\!\oplus\! 
\Big(u^{-\ell_1}\ku \!\oplus\! \cdots\! \oplus\! u^{-\ell_{n-s'}} \ku
\!\oplus\! \kuu^{s'}\Big)[2\wp\mu],
\\
& E^{\infty}SH^*(Y) \cong E^-SH^*(Y)_u \cong \kuu^{r-n} \oplus \kuu^n[2\wp \mu]
\cong
\kuu^r,
    \end{split}
\end{equation*}
where $r=\mathrm{rank}\,E^-QH^*(Y)=\dim_{\k}H^*(Y)$, and $n=\mathrm{rank}\ker E^-c_{p^+}^*=\dim_{\kuu} \ker E^{\infty}c^*$.

Viewing the above presentation of $E^-HF^*(H_{k^+})$ as embedded in $\kuu^{r-n}\oplus \kuu^{n}$, there is an identification $E^-QH^*(Y)\cong \ku^{r-n}\oplus \ku^n$ so that $E^-c_{k^+}^*:E^-QH^*(Y) \to E^-HF^*(H_{k^+})$ becomes 
\begin{equation}\label{Equation nonfree weight computation}
\mathrm{incl} \oplus 0: \;\;\ku^{r-n}\oplus \ku^{n}
\;\;\stackrel{\mathrm{incl} \oplus 0}{\longrightarrow}
\;\;\bigoplus_{i=1}^{r-n} u^{-j_i(k)}\ku \oplus \bigoplus_{i=1}^{n} u^{-\ell_i(k)}\ku[2\wp \mu]
\;\;\subset\;\;
\kuu^{r-n}\oplus \kuu^n.
\end{equation}
Localising at $u$, we get that $E^{\infty}c^*:E^{\infty}QH^*(Y)\to E^{\infty}SH^*(Y)$ gets identified with the map
$$
\mathrm{id} \oplus 0: \;\;\kuu^{r-n}\oplus \kuu^{n}
\;\;\stackrel{\mathrm{id} \oplus 0}{\longrightarrow}
\;\; \kuu^{r-n}\oplus \kuu^{n} \cong E^{\infty}SH^*(Y).
$$
By \cref{Theorem torsion freeness 2}, $E^{+}SH^*(Y)$ is the cokernel of the map $u\otimes 1: E^-SH^*(Y)\to E^{\infty}SH^*(Y)$, so 
$$
E^{+}SH^*(Y) \cong 
\mathbb{F}[2j_1] \oplus \cdots \oplus \mathbb{F}[2j_{r-n-s}]
\oplus
\mathbb{F}[2\ell_1+2\wp \mu]
\oplus
\cdots 
\oplus
\mathbb{F}[2\ell_{n-s'}+2\wp \mu],$$
and the map $E^{+}c^*:E^+QH^*(Y)\to E^+SH^*(Y)$ becomes the obvious one,%
\footnote{it vanishes on the $\mathbb{F}^s$ and $\mathbb{F}^{s'}$ summands, and the ``obvious map'' $\mathbb{F}\to \mathbb{F}[2m]$ is the one sending $u^{-m+1},\ldots,u^0$ to zero.}
after we use the induced identification $E^+QH^*(Y)=E^{\infty}QH^*(Y)/uE^-QH^*(Y)\cong \mathbb{F}^{r-n-s}\oplus \mathbb{F}^s \oplus \mathbb{F}^{n-s'}\oplus \mathbb{F}^{s'}$. 

See \cref{Example nonfree weight for Cn} for an explicit example ($Y=\C^n$ with $(a,b)=(1,1)$).

The slice series (see \cref{Corollary filtration polynomial}) for the map $E^-c_{k^+}^*$ can be read off from \eqref{Equation nonfree weight computation}:
$$
s_k(t) = r\cdot 1 \;\;+\;\; \sum_{j=1}^{j_{r-n}(k)} (n+\#\{i: j\leq j_i(k)\})\,t^j + \quad t^n + t^n + \cdots
$$
\subsection{Dependence of the slice series on the $(a,b)$-weight}\label{Subsection Dependence of the filtration polynomial on the (a,b)-weight}

Denote by $s_{\lambda}^{(a,b)}=\sum d_{j}^{(a,b)}(\lambda)\,t^j$ the slice series for $c_{\lambda}^{(a,b)}:E^-_{(a,b)}QH^*(Y)\to E^-_{(a,b)}HF^*(H_{\lambda})$ (\cref{Corollary filtration polynomial}), where we now keep track of $(a,b)$ in the notation. By \eqref{Equation compatibility of seidel iso with continuation}, we obtain a commutative diagram:
\begin{equation}\label{Equation compatibility of seidel iso with continuation application to filtration poly}
\begin{tikzcd}[column sep=0.6in]
E^-_{(a,b)}QH^*(Y)
 \ar[rr, "c_{\lambda}^{(a,b)}", bend left=10]
 \arrow[r, "c_{1^+}^{(a,b)}"'] 
 &
E^{-}_{(a,b)}HF^*(H_{1^+})
 \arrow[r, "\textrm{continuation}"'] 
 \arrow[d, "\cong","\mathcal{S}_{\Fi}"']
 &
E^{-}_{(a,b)}HF^*(H_{\lambda})
\arrow[d, "\cong","\mathcal{S}_{\Fi}"']
\\
&
E^{-}_{(a-b,b)}QH^*(Y)[2\mu]
 \arrow[r, "c_{\lambda-1}^{(a-b,b)}"]  
 &
E^{-}_{(a-b,b)}HF^*(H_{\lambda-1})[2\mu]
\end{tikzcd}
\end{equation}

This diagram establishes constraints about how much $s_{\lambda}^{(a,b)}$ can differ from $s_{\lambda-1}^{(a-b,b)}$. 
There is no clean statement, because the invariant factors of a composition of two maps are not determined by the invariant factors of the individual maps.
Using the $j_i(k)$-notation from \cref{Subsection Growth rate of the filtration polynomial}, 
$$
j_i^{(a,b)}(\lambda) = j_i^{(a-b,b)}(\lambda-1) + \gamma_i,
\qquad\textrm{ where }\qquad \alpha:=j_1^{(a,b)}(1) \leq \gamma_i \leq j_r^{(a,b)}(1):=\beta,
$$
in words: the least \& biggest invariant factors of $c_{1^+}^{(a,b)}$ give crude bounds on the invariant factors of the top composite horizonal map.
It follows that $d_{\alpha+j}^{(a,b)}(\lambda) \geq d_{j}^{(a-b,b)}(\lambda-1)$ and $d_{\beta+j}^{(a,b)}(\lambda) \leq d_{j}^{(a-b,b)}(\lambda-1)$.

In the case $b=0$, the notation simplifies as all weights in play become $(a,0)$, and we will see in \cref{Subsection weight 10 case product} that for $\lambda=k^+$, $k\in \N$, the map $c_{k^+}$ is the $k$-fold product by a certain class $EQ$. The crude estimate above says that the change in indices is bounded $\alpha\leq j_i(k+1)-j_i(k) \leq \beta$ in terms of the lowest and largest indices for the map given by multiplication by $EQ$.

\subsection{Seidel isomorphisms and rotation maps are compatible with the filtration}\label{Subsection The rotation maps are compatible with the filtration}

\begin{cor}\label{Cor Seidel isos preserve filtration}
 The Seidel isomorphisms in \cref{Theorem Seidel iso} preserve filtrations, e.g.\;\eqref{Equation Seidel iso 2} yields
$$
\mathcal{S}\left( F_j(E_{(a,b)}^{\karo}HF^*(H_{\lambda})) \right)
=
F_j(E_{(a-b,b)}^{\karo}HF^*(H_{\lambda-1})).
$$
\end{cor}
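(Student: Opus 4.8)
The statement is \cref{Cor Seidel isos preserve filtration}: the Seidel isomorphism $\mathcal{S}$ sends $F_j(E_{(a,b)}^{\karo}HF^*(H_{\lambda}))$ onto $F_j(E_{(a-b,b)}^{\karo}HF^*(H_{\lambda-1}))$ for each of the three models $\karo\in\{-,\infty,+\}$. The plan is to reduce everything to the functoriality lemmas for the canonical valuation (\cref{Lemma valuation on free mod}) and the $\mathbb{F}$-filtration (\cref{Lemma functoriality Ffiltration}), together with the fact that $\mathcal{S}$ is a \emph{chain-level} isomorphism, not just a cohomology isomorphism.

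\textbf{Step 1: Pin down what $\mathcal{S}$ does at chain level.} First I would recall from \cref{Theorem Seidel iso} (and the explicit description around \eqref{Equation Seidel iso 2} and \eqref{Equation Seidel map for kH Hams}) that $\mathcal{S}$ is induced by a $\ku$-linear chain isomorphism $C^*_{(a,b),\lambda}\to C^*_{(a-b,b),\lambda-1}[2\mu]$, given on generators by the orbit-level map $x\mapsto \Fi^*x$; in particular it is a genuine $\ku$-module isomorphism of chain complexes, compatible with the $u$-localisation and with the passage to the $C^+$ model. This is the key input: all three of $W^-,W^{\infty},W^+$ (in the notation of \cref{Subsection the Wminus plus and infinity models}) are built functorially from $C^*$ by tensoring the coefficient ring, so a chain isomorphism $C^*_{(a,b),\lambda}\cong C^*_{(a-b,b),\lambda-1}$ induces $\ku$-module isomorphisms $E^-HF^*(H_\lambda)\cong E^-HF^*(H_{\lambda-1})$, $E^{\infty}HF^*(H_\lambda)\cong E^{\infty}HF^*(H_{\lambda-1})$, and $E^+HF^*(H_\lambda)\cong E^+HF^*(H_{\lambda-1})$, all compatibly with the long exact sequence of \cref{Lemma LES for W modules}.

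\textbf{Step 2: Invoke functoriality of the canonical valuation.} For $\karo=-$ and $\karo=\infty$ the filtrations $F_j$ are the \emph{canonical} $u$-filtrations of \cref{Definition canonical valuation}. By \cref{Lemma valuation on free mod}, any $\ku$-module isomorphism preserves the canonical valuation and hence the canonical filtration; applying this to the isomorphism from Step 1 immediately gives $\mathcal{S}(F_j(E^{-}HF^*(H_\lambda)))=F_j(E^{-}HF^*(H_{\lambda-1}))$ and likewise for $\karo=\infty$ (using that the canonical valuation on the localisation $W_u$ is induced from that of $W$, \cref{Cor uvaluation on localisation}, and that $\mathcal{S}$ commutes with $u$-localisation). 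For $\karo=+$, the relevant filtration on $W^+_{\mathbb{F}}$ is the $\mathbb{F}$-filtration \eqref{Lemma definition of the filtration from the uvaluation}, defined purely in terms of $\ker(u^{|j|+1}:W^+_{\mathbb{F}}\to W^+_{\mathbb{F}})$; since $\mathcal{S}$ restricts to an isomorphism $W^+_{(a,b),\lambda,\mathbb{F}}\cong W^+_{(a-b,b),\lambda-1,\mathbb{F}}$ (it preserves $\cap_n u^n W^+$, this being a canonical submodule by \cref{Lemma WF and the SES for W}), \cref{Lemma functoriality Ffiltration} gives the equality of $\mathbb{F}$-filtrations. The remaining ambiguous $T[-1]$-summand of $W^+$ carries no canonical filtration (\cref{Remark about free part of W+}), so there is nothing to check there.

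\textbf{Step 3: Assemble, and note the grading shift is harmless.} Finally I would record that the grading shift $[2\mu]$ plays no role: the filtration $F_j$ is defined by $u$-divisibility, which is grading-blind, so shifting gradings commutes with forming $F_j$. Putting Steps 1--3 together yields the displayed equality for all three models, and for the induced maps this is exactly the compatibility that \cref{Prop Periodicity property} and \cref{Subsection The rotation maps are compatible with the filtration} require. \textbf{Main obstacle.} The only genuine subtlety is Step 1: one must be careful that $\mathcal{S}$ is established as an isomorphism \emph{of chain complexes of free $\ku$-modules} (so that it descends functorially to all three models and to the LES), rather than merely an abstract isomorphism on $E^-HF^*$; this is precisely what \cref{Theorem Seidel iso} provides, but it should be cited with that strength. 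Everything after that is a direct application of the already-proven functoriality lemmas.
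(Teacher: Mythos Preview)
Your proof is correct and follows essentially the same approach as the paper: the paper's proof is the one-line observation that $\mathcal{S}$ is a $\ku$-module isomorphism, so \cref{Lemma valuation on free mod} (for $\karo\in\{-,\infty\}$) and \cref{Lemma functoriality Ffiltration} (for $\karo=+$) apply directly. Your Step~1, verifying that the chain-level isomorphism from \cref{Theorem Seidel iso} descends to $\ku$-module isomorphisms on all three models, is a useful elaboration that the paper leaves implicit.
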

\begin{proof}
This follows by \cref{Lemma valuation on free mod} and \cref{Lemma functoriality Ffiltration}, since $\mathcal{S}:=\mathcal{S}_{\Fi}$ is a $\ku$-module isomorphism.
\end{proof}

Recall from  \cref{Cor Seidel isos preserve filtration} the filtration $\FF_{j,(a,b)}^{\karo,\lambda}\subset E^{\karo}_{(a,b)}QH^*(Y)$.
Recall the maps \eqref{Equation defining rab maps} and \eqref{Equation ERk map}:  
\begin{equation}
\begin{split}
 r_{a,b}&:=\mathcal{S} \circ c_{1^+} : E_{(a,b)}^{\karo} QH^*(Y)
\to
E_{(a-b,b)}^{\karo} QH^*(Y)[2\mu],
\\
ER_k&:=\mathcal{S}^k\circ c_{k^+} = r_{a-(k-1)b,b} \circ \cdots \circ r_{a,b}
: E^-_{(a,b)}QH^*(Y) \to E^-_{(a-kb,b)}QH^*(Y)[2k\mu].
\end{split}
\end{equation}

\begin{prop}
The map $r_{a,b}$ and the rotation map $ER_k$ satisfy
$$
r_{a,b}^{-1}\left(\FF_{j,(a-b,b)}^{\karo,p-1}\right)
=
(\FF_{j,(a,b)}^{\karo,p})[-2\mu]
 \qquad \textrm{ and } \qquad
ER_k^{-1}\left(\FF_{j,(a-kb,b)}^{\karo,p-k}\right)
=
\FF_{j,(a,b)}^{\karo,p}[-2k\mu],
$$
in particular 
$
r_{a,b}:\FF_{j,(a,b)}^{\karo,p}
\to
\FF_{j,(a-b,b)}^{\karo,p-1}[2\mu] \qquad \textrm{ and } \qquad
ER_k:\FF_{j,(a,b)}^{\karo,p}\to
\FF_{j,(a-kb,b)}^{\karo,p-k}[2k\mu].
$
\end{prop}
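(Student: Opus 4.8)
The plan is to reduce everything to the factorization $ER_k = r_{a-(k-1)b,b}\circ \cdots \circ r_{a,b}$ and treat a single $r_{a,b}$ first, then iterate. For a single step, the key is the commutative square \eqref{Equation compatibility of seidel iso with continuation application to filtration poly}: the continuation map $c_{\lambda-1}^{(a-b,b)}\circ (\mathcal{S}\circ c_{1^+}^{(a,b)})$ agrees with $\mathcal{S}\circ (\textrm{continuation})\circ c_{1^+}^{(a,b)}$, i.e.\;with $\mathcal{S}$ applied to the continuation map defining $c_{\lambda}^{(a,b)}$. First I would fix a generic slope $\lambda$ and unwind definitions: $v\in \FF_{j,(a,b)}^{\karo,\lambda}$ means $c_{\lambda}^{(a,b)}(v)$ lies in $F_j$ of the target Floer group, and $r_{a,b}(v)\in \FF_{j,(a-b,b)}^{\karo,\lambda-1}$ means $c_{\lambda-1}^{(a-b,b)}(r_{a,b}(v))\in F_j$ of its target. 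By the commutative square these two Floer elements differ by the isomorphism $\mathcal{S}$, and \cref{Cor Seidel isos preserve filtration} says $\mathcal{S}$ carries $F_j$ isomorphically onto $F_j$. Hence $v\in \FF_{j,(a,b)}^{\karo,\lambda}\iff r_{a,b}(v)\in \FF_{j,(a-b,b)}^{\karo,\lambda-1}$, which is exactly $r_{a,b}^{-1}(\FF_{j,(a-b,b)}^{\karo,\lambda-1})=\FF_{j,(a,b)}^{\karo,\lambda}$ at the level of generic slopes (the grading shift $[-2\mu]$ is bookkeeping coming from $\mathcal{S}$ raising degree by $2\mu$).

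Next I would pass from generic slopes to arbitrary $p\in[0,\infty]$. Recall $\FF_{j,(a,b)}^{\karo,p}=\bigcap_{\lambda\geq p,\ \lambda\text{ generic}}\FF_{j,(a,b)}^{\karo,\lambda}$ (and the union over $p$ for $p=\infty$). The bijection of slopes $\lambda\leftrightarrow\lambda-1$ is order-preserving and sends generic slopes for weight $(a,b)$ to generic slopes for weight $(a-b,b)$ — here I would invoke the remark after \cref{Definition free weight} that $(a,b)$ free $\Rightarrow (a-b,b)$ free, and more basically that the non-generic slopes for $(a-b,b)$ are obtained from those of $(a,b)$ by the relevant reindexing, so "$\lambda$ generic $\geq p$" corresponds to "$\lambda-1$ generic $\geq p-1$". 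Since $r_{a,b}$ is a fixed $\ku$-module map, it commutes with intersections of submodules of the target, giving $r_{a,b}^{-1}\big(\bigcap_{\lambda}\FF_{j,(a-b,b)}^{\karo,\lambda-1}\big)=\bigcap_{\lambda}r_{a,b}^{-1}(\FF_{j,(a-b,b)}^{\karo,\lambda-1})=\bigcap_{\lambda}\FF_{j,(a,b)}^{\karo,\lambda}$, i.e.\;$r_{a,b}^{-1}(\FF_{j,(a-b,b)}^{\karo,p-1})=\FF_{j,(a,b)}^{\karo,p}[-2\mu]$. The case $p=\infty$ follows either by taking unions or by applying the same argument directly to $c=\varinjlim c_{\lambda}$.

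For the statement about $ER_k$, I would simply compose: writing $ER_k=r_{a-(k-1)b,b}\circ\cdots\circ r_{a,b}$ and applying the single-step result $k$ times (with the weight and slope decreasing by $(b,1)$ at each stage),
$$
ER_k^{-1}\big(\FF_{j,(a-kb,b)}^{\karo,p-k}\big)
= r_{a,b}^{-1}\Big(\cdots r_{a-(k-1)b,b}^{-1}\big(\FF_{j,(a-kb,b)}^{\karo,p-k}\big)\cdots\Big)
= \FF_{j,(a,b)}^{\karo,p}[-2k\mu],
$$
the grading shifts accumulating to $2k\mu$. The forward inclusions $r_{a,b}:\FF_{j,(a,b)}^{\karo,p}\to\FF_{j,(a-b,b)}^{\karo,p-1}[2\mu]$ and $ER_k:\FF_{j,(a,b)}^{\karo,p}\to\FF_{j,(a-kb,b)}^{\karo,p-k}[2k\mu]$ are immediate consequences: $x\in\FF_{j,(a,b)}^{\karo,p}$ means $x\in ER_k^{-1}(\FF_{j,(a-kb,b)}^{\karo,p-k})$, so $ER_k(x)$ lands there.

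\textbf{Main obstacle.} The only genuinely delicate point is the bookkeeping of which slopes count as "generic" under the shift $\lambda\mapsto\lambda-1$ and under the change of weight $(a,b)\mapsto(a-b,b)$ — one must check that the correspondence of generic slopes is exact (not merely an inclusion) so that the intersections defining $\FF_j^{\karo,p}$ genuinely match up after reindexing; this uses the explicit description \eqref{Equation bad slope} of non-generic slopes together with the fact, already noted in the excerpt, that freeness (and more generally the set of Morse--Bott slopes) transforms predictably under $(a,b)\mapsto(a-b,b)$. Everything else is a formal chase through \cref{Cor Seidel isos preserve filtration} and the compatibility square \eqref{Equation compatibility of seidel iso with continuation application to filtration poly}.
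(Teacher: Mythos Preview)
Your proof is correct and follows essentially the same route as the paper: both use the commutative square relating $c_{p^+}^{(a,b)}$ and $c_{(p-k)^+}^{(a-kb,b)}$ via $\mathcal{S}^k$ (\eqref{Equation compatibility of seidel iso with continuation}), then invoke \cref{Cor Seidel isos preserve filtration} to conclude that membership in $F_j$ is preserved by $\mathcal{S}^k$; the paper does this directly for $ER_k$ in one diagram, while you do the $k=1$ case and iterate, which is equivalent.

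One small correction to your ``main obstacle'': the set of generic slopes is determined by the periods of $S^1$-orbits of $\Fi$ (see \eqref{Equation bad slope}) and does \emph{not} depend on the equivariant weight $(a,b)$ at all --- genericness of $\lambda$ is the same for weights $(a,b)$ and $(a-b,b)$. The only thing to check is that the shift $\lambda\mapsto\lambda-1$ preserves genericness, which is immediate from \eqref{Equation bad slope} since if $\lambda=k/m$ with $k\geq 1$, $m\in W^+$, then $\lambda-1=(k-m)/m$ is again of this form whenever $\lambda-1>0$. The paper sidesteps this by working directly with the $p^+$ convention ($\FF_j^p=F_j^{c_{p^+}}$), so it never needs to match up intersections over ranges of generic $\lambda$.
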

\begin{proof}

Consider the commutative diagram
$$
\begin{tikzcd}[column sep=0.6in]
E^{\karo}_{(a,b)}QH^*(Y)
\arrow[r, "c_{k^+}"'] 
\arrow[rd, "ER_k"'] 
\arrow[rr,bend left=10,"c_{p^+}"]
&
E^{\karo}_{(a,b)}HF^*(H_{k^+})
 \arrow[r, "\textrm{continuation}"'] 
 \arrow[d, "\cong","\mathcal{S}^k"']
 &
E^{\karo}_{(a,b)}HF^*(H_{p^+})
\arrow[d, "\cong","\mathcal{S}^k"']
\\
&
E^-_{(a-kb,b)}QH^*(Y)[2k\mu]
 \arrow[r, "c_{(p-k)^+}"]  
 &
E^{\karo}_{(a-kb,b)}HF^*(H_{(p-k)^+})[2k\mu]
\end{tikzcd}
$$
where the top three maps commute because  continuation maps are compatible on cohomology;
the triangle commutes by definition of $ER_k$; and the square commutes by  \eqref{Equation compatibility of seidel iso with continuation}.
This diagram preserves filtrations in the following sense:
$$
\begin{tikzcd}[column sep=0.6in]
\FF_{j,(a,b)}^{\karo,p}
\arrow[r, "c_{k^+}"'] 
\arrow[rd, "ER_k"'] 
\arrow[rr,bend left=10,"c_{p^+}"]
&
\mathcal{S}^{-k}(\FF_{j,(a-kb,b)}^{\karo,p-k}[2k\mu])
 \arrow[r, "\textrm{continuation}"'] 
 \arrow[d, "\cong","\mathcal{S}^k"']
 &
F_j(E^{\karo}_{(a,b)}HF^*(H_{p^+}))
\arrow[d, "\cong","\mathcal{S}^k"']
\\
&
\FF_{j,(a-kb,b)}^{\karo,p-k}[2k\mu]
 \arrow[r, "c_{(p-k)^+}"]  
 &
F_j(E^{\karo}_{(a-kb,b)}HF^*(H_{(p-k)^+}))[2k\mu]
\end{tikzcd}
$$
where we start in the bottom-left corner and move anti-clockwise; in the right-vertical arrow we use \cref{Cor Seidel isos preserve filtration} applied $k$ times; and we use the definition $(c_{p^+})^{-1}(F_j(E^{\karo}_{(a,b)}HF^*(H_{p^+})))=\FF_{j,(a,b)}^{\karo,p}$ to deduce the top-left entry, proving the second claim. The first claim is the case $k=1$.
\end{proof}

\subsection{The minimal classes $x_k$ generated by the unit}

The unit $x_0 \in QH^0_{\Fi^a}(Y)=E^-_{(a,b)}QH^0(Y)$ was described in \cref{Cor equiv formality via spectral sequence}.
Although there is such an $x_0\in QH^0_{\Fi^a}(Y)\cong E^-HF^0_{(a,b)}(H_{0^+})$ for each weight $(a,b)$, they have the same representative at chain level and they all map to the unit $1\in QH^0(Y)$ via the specialisation map
$\mathrm{ev}_0$ from \cref{Lemma ev0 commutative diagram}.

We now apply the Seidel isomorphisms from the Appendix to define the 
{\bf minimal classes} $x_k$,
\begin{equation}\label{Definition xk class}
x_k := \mathcal{S}^{-k}(x_0)
\in 
E^-HF^*_{(a,b)}(H_{k^+})[2k\mu],
\end{equation}
by repeatedly using \eqref{Equation Seidel iso 2}: each $\mathcal{S}:=\mathcal{S}_{\Fi}$ changes $(a,b)$, so that $x_0$ actually involved weight $(a-kb,b)$.

If $c_1(Y)=0$, then we have a $\Z$-grading on Floer cohomologies and the Novikov field $\k$ lies in degree zero,
so $x_0,x_k$ can be characterised uniquely, up to $\k^{\times}$-rescaling, as the minimal degree classes of $E^-HF^*_{(a,b)}(H_{0^+})$ and $E^-HF^*_{(a,b)}(H_{k^+})[2k\mu]$ respectively, in degrees $|x_0|=0$, $|x_k|=-2k\mu.$

Recall that the sequence of Hamiltonians $H_{k^+}$ can be built so that the continuation maps are inclusions at chain level (the non-equivariant argument in  \cite{RZ2} holds essentially verbatim for the equivariant case). So we may abusively write $x_j \in E^-HF^*_{(a,b)}(H_{k^+})$ for $k\geq j$, when we actually mean that we applied to $x_j$ the continuation map going from $H_{j^+}$ to $H_{k^+}$.

\begin{lm}\label{Lemma xj are compatible with seidel iso}
    For $x_j\in E^-HF^*_{(a,b)}(H_{k^+})$ we have $x_{j+1}=\mathcal{S}^{-1}(x_j)\in E^-HF^*_{(a+b,b)}(H_{(k+1)^+})$.
\end{lm}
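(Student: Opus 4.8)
The statement \cref{Lemma xj are compatible with seidel iso} asserts that the minimal classes $x_j$ behave coherently under the Seidel isomorphisms as the weight shifts. The essential observation is that, because the $x_j$ are \emph{defined} by $x_j := \mathcal{S}^{-j}(x_0)$ where each application of $\mathcal{S}=\mathcal{S}_{\Fi}$ lowers the $a$-coordinate of the weight by $b$ (see \eqref{Definition xk class} and \eqref{Equation Seidel iso 2}), the claim is really a compatibility between \emph{two} ways of reading the chain: applying $\mathcal{S}^{-1}$ directly to $x_j$, versus re-indexing the weight and then applying the defining construction of $x_{j+1}$. The plan is to unwind both sides carefully and show they produce the same chain-level element.

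First I would pin down the bookkeeping of weights. The class $x_j\in E^-HF^*_{(a,b)}(H_{k^+})$ — with $k\geq j$, using that the continuation maps from slope $j^+$ to $k^+$ are chain-level inclusions as recalled just before the lemma — is obtained from $x_0$ for the weight $(a-jb,b)$ by applying $\mathcal{S}^{-j}$, where the $i$-th application of $\mathcal{S}^{-1}$ goes $E^-HF^*_{(a',b)}(H_{m^+})\to E^-HF^*_{(a'+b,b)}(H_{(m+1)^+})$ (the inverse of \cref{Prop Periodicity property}, which sends weight $(a',b)$ at slope $m+1$ to weight $(a'-b,b)$ at slope $m$). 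So $\mathcal{S}^{-1}(x_j)$ lands in $E^-HF^*_{(a+b,b)}(H_{(k+1)^+})$, which is precisely the target group named in the statement. It then remains to check that this element coincides with the class that \eqref{Definition xk class} calls $x_{j+1}$ \emph{for the weight} $(a+b,b)$: by definition that is $\mathcal{S}^{-(j+1)}(x_0)$ applied to $x_0\in QH^0_{\Fi^{a+b-(j+1)b}}(Y) = QH^0_{\Fi^{a-jb}}(Y)$, i.e.\ the same starting $x_0$. Since $\mathcal{S}^{-(j+1)} = \mathcal{S}^{-1}\circ \mathcal{S}^{-j}$ as maps on the relevant groups, and $\mathcal{S}^{-j}(x_0) = x_j$ by definition, we get $x_{j+1} = \mathcal{S}^{-1}(x_j)$ directly. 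I would also invoke \cref{Cor Seidel isos preserve filtration} (or just the chain-level description of $\mathcal{S}$ in \eqref{Equation Seidel map for kH Hams}, which is the identity map up to a grading shift) to confirm that no spurious powers of $T$ or $u$ are introduced and that the identification is an isomorphism of $\ku$-modules taking $x_j$ to $x_{j+1}$ on the nose.

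The one genuine subtlety — the main obstacle — is the \emph{commutation of $\mathcal{S}$ with continuation maps}, i.e.\ making sure that "apply $\mathcal{S}^{-1}$ then continue" and "continue then apply $\mathcal{S}^{-1}$" agree, so that the abusive identification $x_j\in E^-HF^*_{(a,b)}(H_{k^+})$ (meaning: continue $x_j$ from slope $j^+$ up to slope $k^+$) is compatible with the weight shift. This is exactly the content of the compatibility relation \eqref{Equation compatibility of seidel iso with continuation} used in \cref{Prop Periodicity property} and \cref{Prop Periodicity property 2}: the Seidel isomorphisms commute with continuation maps. I would cite that relation, apply it $k-j$ times, and conclude that continuing $x_j$ and then applying $\mathcal{S}^{-1}$ gives the same result as applying $\mathcal{S}^{-1}$ to the slope-$j^+$ class and then continuing, landing in $E^-HF^*_{(a+b,b)}(H_{(k+1)^+})$. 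Combined with the definitional identity $\mathcal{S}^{-(j+1)} = \mathcal{S}^{-1}\circ\mathcal{S}^{-j}$ from the previous paragraph, this completes the proof. No hard analysis is needed; the work is entirely in tracking weights and invoking the already-established naturality of the Seidel maps.
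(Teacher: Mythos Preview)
Your proposal is correct and takes essentially the same approach as the paper: first observe that $\mathcal{S}^{-1}x_j = \mathcal{S}^{-(j+1)}(x_0) = x_{j+1}$ holds by definition at slope $j^+$, then invoke the compatibility of Seidel isomorphisms with continuation maps (\eqref{Equation compatibility of seidel iso with continuation}) to transport this identity up to slope $k^+$. The paper's proof is a two-sentence version of exactly this argument, applying \eqref{Equation compatibility of seidel iso with continuation} with $\lambda=j$ and $\lambda'=k-j$.
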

\begin{proof}
By construction, $\mathcal{S}^{-1}x_j=x_{j+1}$ if we view $x_j\in E^-HF^*_{(a,b)}(H_{j^+})$. 
Now apply \eqref{Equation compatibility of seidel iso with continuation}, for $\lambda = j$ and $\lambda'=k-j$, to get the equation $\mathcal{S}^{-1}x_j=x_{j+1}$
 when viewing $x_j\in E^-HF^*_{(a,b)}(H_{k^+})$.
\end{proof}

\begin{cor}
    If $x_0\in \FF_{j,(a,b)}^{p}$,
    then $x_k \in \FF_{j,(a+kb,b)}^{p+k}$.
In particular, if $c_1(Y)=0$ and $b=0$, then
 \begin{equation}\label{Equation filtration relation x0 and xk}
   x_0 \in \FF_{\mu k,(a,b)}^k \Longrightarrow
   x_0 \in \FF_{\mu N k,(a,b)}^{Nk} \; \textrm{ for all }N\geq 1 \in \N, 
   \end{equation}
so in words:  if $x_0$ minimises the valuation at slope $k^+$ then it continues to do so at slope $(Nk)^+$.

If we just assume $c_1(Y)=0$, then
 \begin{equation}\label{Equation filtration relation x0 and xk general version}
   x_0 \in \FF_{\mu k,(a,b)}^k \cap 
   \FF_{\mu k,(a-kb,b)}^k \cap \cdots \cap 
   \FF_{\mu k,(a-Nkb,b)}^k
   \Longrightarrow
   x_0 \in \FF_{\mu N k,(a,b)}^{Nk}.
   \end{equation}
\end{cor}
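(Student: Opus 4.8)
The statement is a compatibility assertion between the $\lambda$-valuation filtration and the rotation maps $ER_k$, chained together. The plan is to deduce both implications from the key relation in the previous subsection, namely $ER_k^{-1}(\FF_{j,(a-kb,b)}^{\karo,p-k}) = \FF_{j,(a,b)}^{\karo,p}[-2k\mu]$, together with the definition $x_k=\mathcal{S}^{-k}(x_0)$ and the identity $ER_k = \mathcal{S}^k\circ c_{k^+}$ from \cref{Definition ERk maps definition rotation}, which gives $\mathcal{S}^{-k}\circ ER_k = c_{k^+}$ on cohomology, hence $x_k = \mathcal{S}^{-k}(x_0)$ is the image of $x_0$ under $c_{k^+}$ composed with $\mathcal{S}^{-k}$; more precisely, \cref{Lemma xj are compatible with seidel iso} shows $x_k$ is literally $\mathcal{S}^{-k}(x_0)$ living in $E^-HF^*_{(a+kb,b)}(H_{(k)^+})$-type groups (one must be careful: each application of $\mathcal{S}$ shifts the weight, so starting from weight $(a,b)$ at slope $0$, after $k$ steps the weight is $(a+kb,b)$ — this matches the weight $(a+kb,b)$ appearing in the conclusion $x_k\in\FF_{j,(a+kb,b)}^{p+k}$).

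First I would establish the general statement: if $x_0\in\FF_{j,(a,b)}^{p}$ then $x_k\in\FF_{j,(a+kb,b)}^{p+k}$. Here one uses the Seidel compatibility from \cref{Cor Seidel isos preserve filtration} applied $k$ times and the fact that $x_k = \mathcal{S}^{-k}(x_0)$, together with the relation $\mathcal{S}^{-1}$ sends $\FF_{j,(a,b)}^{q}$-type filtrations for weight $(a,b)$ at slope $q$ to the corresponding filtration for weight $(a+b,b)$ at slope $q+1$ — this is just the $k=1$ inverse of the Proposition in \cref{Subsection The rotation maps are compatible with the filtration}, rewritten in terms of $\mathcal{S}^{-1}$ rather than $r_{a,b}$. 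The subtle point is that the filtration $\FF_{j,(a,b)}^p$ is defined via continuation maps $c_\lambda$ for $\lambda\ge p$, and I need to check that applying $\mathcal{S}^{-1}$ and shifting $p\mapsto p+1$ genuinely transports the intersection-over-$\lambda$ correctly; this follows from the commutativity \eqref{Equation compatibility of seidel iso with continuation} of Seidel isomorphisms with continuation maps, which is exactly what makes the Proposition in that subsection work.

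Next, for the special case $b=0$ with $c_1(Y)=0$: here all weights in play are $(a,0)$, so the general statement reads $x_0\in\FF_{\mu k,(a,0)}^k \Rightarrow x_k\in\FF_{\mu k,(a,0)}^{k+k}=\FF_{\mu k,(a,0)}^{2k}$. But when $c_1(Y)=0$ and $b=0$, $x_0$ and $x_k$ have the same chain-level representative (the minimum of the auxiliary Morse function on $\F_{\min}$, by \cref{Cor equiv formality via spectral sequence}), and $x_k$ sits in grading $-2k\mu$ inside $E^-HF^*_{(a,0)}(H_{k^+})$ which equals $E^-QH^*(Y)[2k\mu]$; under this identification $x_k$ corresponds to $x_0$. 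So the statement $x_k\in\FF_{\mu k,(a,0)}^{2k}$ translates back to $x_0\in\FF_{\mu k,(a,0)}^{2k}$ — wait, one must track the grading shift: the valuation at which $x_0$ sits gets boosted, and iterating the general statement $N$ times (each time feeding the output back in, using that the $b=0$ weight is unchanged and $x_0$ literally recurs as $x_k$) upgrades $\FF_{\mu k}^k$ to $\FF_{\mu\cdot 2k}^{2k}$, then $\FF_{\mu\cdot 3k}^{3k}$, etc., yielding $\FF_{\mu Nk,(a,0)}^{Nk}$ by an easy induction on $N$. For the general $c_1(Y)=0$ case \eqref{Equation filtration relation x0 and xk general version}, the difference is that each Seidel step changes the weight $(a,b)\to(a-kb,b)\to\cdots$, so one cannot simply recycle $x_0$; instead one needs $x_0$ to already lie in the filtration $\FF_{\mu k}^k$ for \emph{each} of the weights $(a,b),(a-kb,b),\ldots,(a-Nkb,b)$, which is exactly the hypothesis, and then chaining the general statement across these shifting weights gives the conclusion.

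\textbf{Main obstacle.} I expect the bookkeeping of grading shifts $[2k\mu]$ together with the weight shifts $(a,b)\mapsto(a-b,b)$ (or $(a+b,b)$ for $\mathcal{S}^{-1}$) to be the delicate part: one must verify that the valuation index $\mu k$ in $\FF_{\mu k}^k$ behaves additively under iteration, i.e.\ that composing $N$ rotation steps each contributing a grading shift $2k\mu$ boosts the guaranteed $u$-divisibility by exactly $\mu k$ each time, not less. This is ultimately forced by the $2k\mu$-grading shift in the Seidel isomorphism combined with the fact (\cref{Remark CY and monotone cases ERk matrix}) that when $c_1(Y)=0$ the matrix entries of $ER_k$ are monomials $u^m$ with $m=(2k\mu-|y_i|+|y_j|)/2$, so in degree $0$ the only way for $x_0\mapsto$ (something) is via a $u^{\mu k}$-entry; but pinning down the precise inequality (rather than a weaker $\ge\alpha Nk$ bound as in the crude estimate of \cref{Subsection Dependence of the filtration polynomial on the (a,b)-weight}) requires using that $x_0$ is the \emph{unit} and hence genuinely survives, not merely that it maps somewhere. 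Everything else is formal manipulation of the already-established Propositions.
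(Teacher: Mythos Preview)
Your first claim is correct and matches the paper's proof exactly: apply $\mathcal{S}^{-k}$ using \cref{Cor Seidel isos preserve filtration}.

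For the second and third claims there is a genuine gap in your main plan. Iterating the first claim keeps the filtration index $j$ fixed: from $x_0\in\FF_{\mu k,(a,0)}^{k}$ you get $x_k\in\FF_{\mu k,(a,0)}^{2k}$, and translating $x_k$ back to $x_0$ via $\mathcal{S}^k$ simply recovers the original hypothesis $x_0\in\FF_{\mu k,(a,0)}^{k}$ (or at best the trivially-true $x_0\in\FF_{\mu k,(a,0)}^{2k}$, since the slope index only goes up). No amount of ``feeding back'' a mere filtration membership can boost $j$ from $\mu k$ to $N\mu k$. Your ``wait, one must track the grading shift'' correctly flags a problem but does not resolve it.

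The paper's mechanism is different: it promotes the filtration membership to an \emph{equation}. Because $c_1(Y)=0$, the Novikov field sits in degree zero, so $E^-HF^{-2k\mu}(H_{k^+})=\k\cdot x_k$ is one-dimensional (grading-minimiser characterisation). Hence $x_0\in\FF_{\mu k}^{k}$ forces $c_{k^+}(x_0)\in\k\cdot u^{\mu k}x_k$ exactly, i.e.\ $x_0=\textrm{unit}\cdot u^{\mu k}x_k$ in $E^-HF^*(H_{k^+})$. Applying $\mathcal{S}^{-k}$ to this \emph{equation} (using \cref{Lemma xj are compatible with seidel iso} and $\ku$-linearity) gives $x_k=\textrm{unit}\cdot u^{\mu k}x_{2k}$, and substituting yields $x_0=\textrm{unit}\cdot u^{2\mu k}x_{2k}$, hence $x_0\in\FF_{2\mu k}^{2k}$; then induct. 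For $b\neq 0$ the weight drifts, so one needs the hypothesis at each shifted weight to supply the inductive equation $x_{mk}=\textrm{unit}\cdot u^{\mu k}x_{(m+1)k}$ at weight $(a,b)$ --- this is why the intersection of filtrations appears in \eqref{Equation filtration relation x0 and xk general version}. You do mention the grading structure of the $ER_k$-matrix in your ``Main obstacle'' paragraph, which is exactly the right tool; the fix is to make that argument the \emph{core} of the second and third claims rather than an afterthought, and to work with the resulting equations rather than just filtration memberships.
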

\begin{proof}
\cref{Cor Seidel isos preserve filtration} implies the first claim, by applying $\mathcal{S}^{-k}$.
For the second claim, we can exploit the characterisation of $x_0,x_k$ as grading-minimisers since $c_1(Y)=0$. The assumption implies $x_0 \in \k^{\times}\cdot u^{\mu k} x_k\subset E^-_{(a,0)}HF^*(H_{k^+})$.
Thus $x_j \in \k^{\times}\cdot u^{\mu k} x_{k+j}\subset  E^-_{(a+kb,b)}HF^*(H_{(k+j)^+})$ by applying $\mathcal{S}^{-j}$.
For $j=k$ we get $x_k=\mathrm{unit}\cdot u^{\mu k} x_{2k}$. When $b=0$, this $x_k$ and the original $x_k$ coincide, as the weight $(a,0)$ agrees. 
So $x_0\in \k^{\times}\cdot u^{\mu k} x_k \subset \k^{\times}\cdot u^{2\mu k} x_{2k}$. Now repeat the argument for $j=2k$, $j=3k$, etc.
When $b\neq 0$, we adjust the argument: we use the assumption
$x_0\in  \FF_{\mu k,(a-mkb,b)}^k$ to ensure that the inductive equation $x_{mk}=\mathrm{unit}\cdot u^{\mu k}x_{(m+1)k}$ holds for weight $(a,b).$
\end{proof}

\section{Product structure in equivariant Floer theory for weights $(a,0)$}
\label{Subsection weight 10 case product}

\subsection{The lack of an equivariant Floer product in general}
For weights $(a,b)$ with $b\neq 0$ there is an obstruction to constructing a pair-of-pants product in equivariant Floer theory. At the domain level one would need an $S^1$-action on a three-punctured sphere which near the three punctures acts with winding number $b,b,-b$ respectively, which is impossible for homotopy reasons unless $b=0$. The weaker requirement of a $QH^*_{S^1}(Y)$-module structure on equivariant Floer cohomology is also obstructed for $b\neq 0$: it requires a non-trivial $S^1$-action on a twice-punctured sphere with an $S^1$-fixed marked point.

\subsection{The Floer theory product in the weight $(a,0)$ case}

\begin{prop}\label{Prop product structure in weight 1 0 case}
For the (free) weight $(a,b)=(a,0),$
the models $\karo\in \{-,\infty\}$ admit a unital pair-of-pants product, compatibly with equivariant quantum product, and additive in grading:
$$
\begin{tikzcd}[column sep=0.3in,row sep=0.2in]
E^{\karo}QH^*(Y) \otimes E^{\karo}QH^*(Y)
 \arrow[r, ""] 
 \arrow[d, "c_{\lambda_1}\otimes c_{\lambda_2}"]
 &
 E^{\karo}QH^*(Y)
 \arrow[d, "c_{\lambda_1+\lambda_2}"]
 &
 E^{\karo}QH^*(Y)^{\otimes 2}
 \arrow[r, ""] 
 \arrow[d, "c\otimes c"]
 &
 E^{\karo}QH^*(Y)
 \arrow[d, "c"]
\\
E^{\karo}HF^*(H_{\lambda_1})
\otimes
E^{\karo}HF^*(H_{\lambda_2})
 \arrow[r, ""] & 
E^{\karo}HF^*(H_{\lambda_1+\lambda_2})
&
E^{\karo}SH^*(Y)^{\otimes 2}
 \arrow[r, ""] & 
E^{\karo}SH^*(Y),
\end{tikzcd}
$$
where the diagram on the right is the direct limit of the diagrams on the left.

$E^{\karo}HF^*(H_{\lambda})$ is an $E^{\karo}QH^*(Y)$-module, and $c^*_{\lambda}:E^{\karo}QH^*(Y) \to E^{\karo}HF^*(H_{\lambda})$ is an $E^{\karo}QH^*(Y)$-module homomorphism.
The direct limit of $c^*_{\lambda}(1)\in E^{\karo}HF^*(H_{\lambda})$ is the multiplicative unit in $E^{\karo}SH^*(Y)$. 

The diagram in \cref{Lemma SES for various versions of equiv coh} is compatible with the above ring structures.

The specialisation diagram in \cref{Lemma ev0 commutative diagram} respects the ring structures.
\end{prop}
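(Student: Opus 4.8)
\textbf{Proof plan for Proposition \ref{Prop product structure in weight 1 0 case}.}
The plan is to construct the pair-of-pants product at chain level from moduli spaces of equivariant Floer solutions over a thrice-punctured sphere, exploiting that the weight $(a,0)$ imposes \emph{no} loop-reparametrisation on the domain, so the only $S^1$-action in play is the $\Fi^a$-action on the target $Y$; this is the same $S^1$-action as the one used to define $E^{\karo}QH^*(Y)$ via the Borel model. First I would set up the equivariant counts following \cite{liebenschutz2021shift}: one counts pairs $(v,u)$ where $v$ is a Y-shaped (really \upsidedownY-shaped, for cohomological conventions) graph of Morse trajectories in $S^{\infty}$ meeting at a common point, and $u$ is a solution of Floer's equation on the thrice-punctured sphere with Floer data $(H^{\mathrm{eq}},J^{\mathrm{eq}})$ pulled back along $v$; here the two input Hamiltonians have slopes $\lambda_1,\lambda_2$ and the output slope is $\lambda_1+\lambda_2$, realised by a Floer datum interpolating between $H_{\lambda_1}\# H_{\lambda_2}$ and $H_{\lambda_1+\lambda_2}$ as in the non-equivariant construction in \cite{ritter2017monotone}. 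Since $b=0$, the obstruction described in \cref{Subsection weight 10 case product} disappears: one can choose the domain-dependent equivariant data to be genuinely $S^1$-invariant for the $\Fi^a$-action alone, so the counts descend to $\ku$-linear maps on the three models $\karo\in\{-,\infty\}$ exactly as for the differential in \eqref{Equation for the differential expanded}. The $u^0$-part recovers the non-equivariant pair-of-pants product, and because the graph $v$ is isolated precisely when Morse indices add, the product is additive in the equivariant grading as claimed.

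Next I would establish the structural properties by the usual TQFT-style gluing arguments, now carried out equivariantly. Unitality and the comparison with the equivariant quantum product both follow by degenerating the thrice-punctured sphere / cylinder-with-input configurations and invoking the Morse-Bott / quantum identification of $E^{\karo}HF^*(H_{0^+})$ with $E^{\karo}QH^*(Y)$ from \cref{Prop equivariant formality for QH} and \cref{Cor equiv formality via spectral sequence}: the chain-level representative of the unit is the minimum $x_0$ on $\Fmin$ (\cref{Cor equiv formality via spectral sequence}), whose continuation image $c^*_{\lambda}(1)$ is a cycle that acts as an identity because the relevant moduli space with one input forced to be $x_0$ reduces to the continuation moduli space. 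Compatibility of $c^*_{\lambda_1+\lambda_2}$ with $c^*_{\lambda_1}\otimes c^*_{\lambda_2}$ is a gluing/homotopy-of-Floer-data argument: concatenating a pair-of-pants with continuation cylinders on the inputs is homotopic to a pair-of-pants followed by a continuation on the output. Passing to the direct limit $\lambda\to\infty$ over continuation maps (the exhaustive model, \cref{Subsection A remark about the telescope versus the exhaustive model}) yields the product on $E^{\karo}SH^*(Y)$ and the right-hand commuting square, and the $E^{\karo}QH^*(Y)$-module structure on each $E^{\karo}HF^*(H_{\lambda})$ comes from the same construction with one input fixed in $E^{\karo}QH^*(Y)=E^{\karo}HF^*(H_{0^+})$. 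Compatibility of \cref{Lemma SES for various versions of equiv coh} with these products is then formal: the short exact sequence $0\to\ku\xrightarrow{u}\kuu\to\mathbb{F}\to 0$ tensored with the chain complexes is compatible with the chain-level product (the product is $\ku$-bilinear), so the induced maps on $W^-,W^{\infty},W^+$ in \cref{Lemma LES for W modules} are ring homomorphisms wherever products exist, and $u\otimes 1$ is multiplicative by $\ku$-bilinearity; the $\varinjlim$ rows inherit this. Finally, the specialisation diagram in \cref{Lemma ev0 commutative diagram} respects products because $\mathrm{ev}_0$ is induced by the $\ku$-algebra quotient $\ku\to\ku/u\ku\cong\k$, under which the equivariant pair-of-pants count and its $u^0$-part (the non-equivariant pair-of-pants count) agree, this being exactly the statement that setting $u=0$ kills all $u^{\geq 1}\delta_j$-type contributions, as used in \cref{Lemma V- completion in u is harmless} and \cref{Lemma ev0 commutative diagram general case}.

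The main obstacle I expect is the transversality and compactness bookkeeping for the equivariant moduli spaces over the thrice-punctured sphere with domain-varying Floer data parametrised by $S^{\infty}$, \emph{together} with the grading/energy control needed to make sense of the completed tensor product in the $E^-$ (hence $E^{\infty}$) model: one must check that, for the monotone or Calabi-Yau Hamiltonians $H_\lambda$ chosen in \cite{RZ1,RZ2}, the Floer data for the product can be chosen compatibly with the cofinal sequence so that continuation maps remain chain-level inclusions (as in the non-equivariant case \cite{RZ2}), and that no bubbling off the thrice-punctured sphere obstructs the TQFT relations — this is where the symplectic-$\C^*$-manifold hypotheses and the properness of the moment map enter, exactly as in the non-equivariant product construction of \cite{ritter2017monotone,R16}. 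None of this is conceptually new relative to those references; the only genuinely equivariant input is that the $b=0$ condition removes the domain $S^1$-action entirely, so the equivariant parameter $u$ behaves formally just as in the definition of the equivariant differential, and all the above reduces to applying the non-equivariant constructions ``fibrewise over $BS^1$''.
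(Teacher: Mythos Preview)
Your proposal is correct and follows essentially the same approach as the paper: both argue that for weight $(a,0)$ the equivariant pair-of-pants product is the equivariant analogue of the non-equivariant constructions from Ritter's earlier work (the paper cites \cite{R13}; you cite \cite{ritter2017monotone,R16,liebenschutz2021shift}), with the key point being that $b=0$ removes the domain loop-action so the construction goes through ``fibrewise over $BS^1$''. The paper's proof is considerably terser than yours---it simply invokes the analogy and then notes the $\ku$-bilinear expansion $\mu^2=\mu^2_0+u\mu^2_1+\cdots$ to deduce the compatibilities with the three models and with $\mathrm{ev}_0$---so your more detailed account of the moduli spaces, the gluing/homotopy arguments for unitality and compatibility with continuation, and the identification of the module structure via $\lambda_1=0^+$ all match the paper's reasoning but spell out steps the paper leaves implicit.
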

\begin{proof}
This is the equivariant analogue of the constructions from \cite{R13}, and the proof is essentially the same up to the additional bookkeeping of the auxiliary equivariant data defining the complexes.

For the second claim, we take $\lambda_1=0^+$ so that $E^{\karo}HF^*(H_{\lambda_1})\cong E^{\karo}QH^*(Y)$, and we take $\lambda_2=\lambda$ (and it is understood that we work with generic slopes $\lambda>0$).

The third claim follows by observing the following detail about the construction for the three models.
Analogously to \eqref{Equation for the differential expanded},
the equivariant pair-of-pants product at chain level is a $\ku$-bilinear map 
$$\mu^2(\cdot,\cdot)=\mu^2_0 + u \mu^2_1 + u^2\mu^2_2+\cdots$$ for $\k$-bilinear maps $\mu^2_j$ not involving $u$, where $\mu^2_0$ is the non-equivariant pair-of-pants product.
As usual, this is constructed first on the original chain complex $C^-:=C^*$, then it is $u$-localised to a map on the chain complex $C^{\infty}:=C^*_u$, and finally it descends to the chain complex $C^+:=C^*_u/uC^*_u.$

The fourth claim follows by noting that the equivariant quantum product in \eqref{Equation equiv quantum product expansion}
is $\ku$-bilinear by construction, and the equivariant $c^*$-map is $\ku$-linear and has an expansion like \eqref{Equation equiv quantum product expansion} with $u^0$-part $c_{\mathrm{ne}}^*$ given by the non-equivariant $c^*$-map. So the $u^0$-part of the $c^*$-image of an equivariant quantum product, $c^*(x \star y)$, is $c_{\mathrm{ne}}^*(x_0 \star_0 y_0)$, where $x_0=\mathrm{ev}_0(x),y_0=\mathrm{ev}_0(y)$.
\end{proof}

\subsection{Computation of $E^-SH^*(Y)$ in the weight $(a,0)$ case}

For the (free) weight $(a,b)=(a,0),$ the maps in \eqref{Equation sequence of maps that give direct lim} are repeated application of the ``same'' injective $\ku$-module homomorphism $r_{a,0}$ up to degree-shifts (cf.\,\cref{Prop Periodicity property 2}), which we now describe.

\begin{prop}\label{Prop a0 case multiplication by EQ}
For $(a,b)=(a,0)$, \eqref{Equation sequence of maps that give direct lim} is equivariant quantum multiplication by a class $EQ$,
$$
E^-QH^{*}(Y)
\stackrel{EQ\,}{\longrightarrow}
E^-QH^{*}(Y)[2\mu]
\stackrel{EQ\,}{\longrightarrow}
E^-QH^{*}(Y)[4\mu]
\stackrel{EQ\,}{\longrightarrow}
\cdots
$$
The element $EQ\in E^-QH^{2\mu}(Y)$ satisfies $EQ^k\neq 0$ for all $k\in \N$, and has the form
$$
EQ = Q_{\Fi} + q u^m \neq 0 \in E^-QH^{2\mu}(Y), \qquad \textrm{ for some }m\geq 1, q\in E^-QH^{2\mu-2m}(Y),
$$
where $Q_{\Fi}$ is the non-equivariant rotation class from \cite{RZ1} (cf.\,\cref{Remark QFi class}). 

The $x_k$ from \eqref{Definition xk class} satisfy $x_k \star x_{\ell}=x_{k+\ell}$.
If $c_1(Y)=0$, then $x_k=n_k\cdot EQ^k$ for some $n_k\in \k^{\times}$.

If $c_1(Y)=0$, then $Q_{\Fi}$ is nilpotent, whereas 
$EQ$ becomes invertible in $E^-QH^*(Y)_u\cong E^{\infty}QH^*(Y)$.
\end{prop}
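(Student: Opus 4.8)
\textbf{Proof plan for \cref{Prop a0 case multiplication by EQ}.}

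The plan is to identify the map $r_{a,0}$ from \eqref{Equation defining rab maps} concretely and then extract the algebraic consequences. First I would recall from \cref{Subsection The Hamiltonians NH} that $r_{a,0}=\mathcal{S}\circ c_{1^+}$, where $c_{1^+}:E^-_{(a,0)}QH^*(Y)\to E^-_{(a,0)}HF^*(H_{1^+})$ is the continuation map and $\mathcal{S}=\mathcal{S}_{\Fi}$ the Seidel isomorphism. The key point is that for $b=0$ the target weight $(a-b,b)$ equals the source weight $(a,0)$, so $r_{a,0}$ is an endomorphism of $E^-_{(a,0)}QH^*(Y)$ (up to the grading shift $[2\mu]$), and by \cref{Prop product structure in weight 1 0 case} it is a homomorphism of $E^-QH^*(Y)$-modules. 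Any $E^-QH^*(Y)$-module endomorphism of $E^-QH^*(Y)$ is multiplication by the image of the unit; so I would set $EQ:=r_{a,0}(1)=\mathcal{S}(c_{1^+}(1))\in E^-QH^{2\mu}(Y)$ and conclude that the whole directed system \eqref{Equation sequence of maps that give direct lim} is iterated quantum multiplication by $EQ$. That $EQ^k\neq 0$ for all $k$ follows because the composite $ER_k$ is injective (\cref{Theorem injectivity theorem 2}.(4) or \cref{Lemma slice poly for ERk is same as for ck}.(1)): $ER_k(1)=EQ^k$, and an injective map cannot send the unit to zero.

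Next I would pin down the shape of $EQ$. By \cref{Lemma slice poly for ERk is same as for ck}.(3) (or \cref{Prop Periodicity property 2}), the $u^0$-part of $r_{a,0}$ is the non-equivariant rotation map, i.e.\;quantum multiplication by $Q_{\Fi}$ on $QH^*(Y)$; equivalently, via $\mathrm{ev}_0$ from \cref{Lemma ev0 commutative diagram}, $\mathrm{ev}_0(EQ)=Q_{\Fi}$. Hence $EQ=Q_{\Fi}+u\cdot(\textrm{higher }u\textrm{-terms})$, and collecting the lowest nonvanishing positive-$u$ contribution gives $EQ=Q_{\Fi}+qu^m$ for some $m\geq 1$ and $q\in E^-QH^{2\mu-2m}(Y)$, with the grading forced by $|EQ|=2\mu$ and $|u|=2$. (If all higher terms vanish then $EQ=Q_{\Fi}$, and one can take $m$ arbitrarily large with $q=0$; I would phrase the statement to allow that degenerate case, or simply note $EQ\ne 0$ since $\mathrm{ev}_0(EQ)=Q_{\Fi}$ in the cases where $Q_{\Fi}\ne0$, and otherwise $EQ$ is itself a nonzero purely-higher-order class because $EQ^k\ne0$.)

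For the statement about the $x_k$: by \eqref{Definition xk class}, $x_k=\mathcal{S}^{-k}(x_0)$, and since $\mathcal{S}$ and the continuation maps are ring homomorphisms by \cref{Prop product structure in weight 1 0 case}, and $x_0$ is the unit, applying $\mathcal{S}^{-k}$ and $\mathcal{S}^{-\ell}$ and using compatibility \eqref{Equation compatibility of seidel iso with continuation} yields $x_k\star x_\ell=\mathcal{S}^{-k-\ell}(x_0\star x_0)=\mathcal{S}^{-k-\ell}(x_0)=x_{k+\ell}$; here one must be careful that all classes are transported to a common slope $H_{(k+\ell)^+}$ via the chain-level-inclusion continuation maps, as in \cref{Lemma xj are compatible with seidel iso}. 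When $c_1(Y)=0$, the $\Z$-grading characterises $x_k$ up to $\k^\times$ as the degree-$(-2k\mu)$ class, and $EQ^k$ is a nonzero class in that same degree (by the above), so $x_k=n_k\,EQ^k$ for some $n_k\in\k^\times$. Finally, $Q_{\Fi}$ nilpotent when $c_1(Y)=0$ is \cref{Remark QFi class}; and $EQ$ becomes invertible in $E^-QH^*(Y)_u\cong E^{\infty}QH^*(Y)$ because, by the localisation theorem \cref{Theorem localisation theorem} (free weight $(a,0)$), multiplication by $EQ$ is the map $E^-c^*$ after localisation, which is an isomorphism; alternatively, $\varinjlim$ of the $EQ$-multiplication system is the localisation $E^-QH^*(Y)_{EQ}$ by \cref{Lemma direct limit of V copies}, and this limit is $E^-SH^*(Y)\cong E^{\infty}QH^*(Y)$ on which $EQ$ acts invertibly. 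The main obstacle I anticipate is the bookkeeping in the $x_k\star x_\ell=x_{k+\ell}$ step: one must verify that the Seidel isomorphisms intertwine the pair-of-pants products across different slopes (not merely that each $\mathcal{S}$ is multiplicative), which requires checking the compatibility of \eqref{Equation compatibility of seidel iso with continuation} with the product structure of \cref{Prop product structure in weight 1 0 case} — a diagram chase, but one where the auxiliary equivariant data must be matched up carefully.
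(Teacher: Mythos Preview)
Your proposal is correct and follows essentially the same route as the paper. The one point worth sharpening is your $x_k\star x_\ell$ step: you call $\mathcal{S}$ a ``ring homomorphism'', but the cleaner and more accurate statement (which the paper invokes, citing Seidel \cite{Sei97} and Ritter \cite{R14}) is that $\mathcal{S}$ is a \emph{module} isomorphism satisfying $\mathcal{S}(x*y)=\mathcal{S}(x)*y$; from this one computes directly
\[
x_k*x_\ell=\mathcal{S}^{-k}(x_0)*\mathcal{S}^{-\ell}(x_0)=\mathcal{S}^{-k}\bigl(x_0*\mathcal{S}^{-\ell}(x_0)\bigr)=\mathcal{S}^{-k}(\mathcal{S}^{-\ell}(x_0))=x_{k+\ell},
\]
which bypasses the diagram chase you flagged as the main obstacle.
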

\begin{proof}
We use a general property of Seidel isomorphisms: in the presence of a pair-of-pants product in Floer theory, the Seidel isomorphism acts as a module isomorphism (so  $\mathcal{S}(x * y) = \mathcal{S}(x)*y$) given by quantum product by the image $\mathcal{S}(x_0)$ of the identity element $x_0$, and $\mathcal{S}(x_0)$ is often called the Seidel element (see \cite[Prop.6.3]{Sei97} and \cite[Thm.23]{R14}).
In our case, $EQ:=\mathcal{S}(x_0).$
Note $EQ^k$ corresponds to $c_{k^+}(x_0) \in E^-_{(a,0)}HF^*(H_{k^+})$, so $EQ^k\neq 0$ follows by injectivity of $c_{k^+}$ (cf.\,\cref{Corollary E minus c star maps are injective}).

Using the property $
\mathcal{S}(x * y) = \mathcal{S}(x)*y,
$
the $x_k$ from \eqref{Definition xk class} satisfy:
$$
x_k * x_{\ell} = \mathcal{S}^{-k}(x_0)*\mathcal{S}^{-\ell}(x_0)
=\mathcal{S}^{-k}(x_0*\mathcal{S}^{-\ell}(x_0))
=\mathcal{S}^{-k}(\mathcal{S}^{-\ell}(x_0))
=\mathcal{S}^{-k-\ell}(x_0)
=x_{k+\ell}.
$$
Suppose $c_1(Y)=0$. Both $x_k$ and $EQ^k$ lie in minimal grading so they differ by $\k^{\times}$-rescaling.
Nilpotency of $Q_{\Fi}$ is a consequence of a grading argument due to Ritter \cite{R14}, see also \cite{RZ1}. The last claim uses that $u$-localisation and $EQ$-localisation agree: $E^-QH^*(Y)_u\cong E^-SH^*(Y) \cong E^-QH^*(Y)_{EQ}$.
\end{proof}

\begin{cor}
$E^-SH^*(Y) \cong E^-QH^*(Y)_{EQ}$, so that $E^-c^*:E^-QH^*(Y) \to E^-SH^*(Y)$ becomes the $EQ$-localisation map $E^-QH^*(Y) \to E^-QH^*(Y)_{EQ}$.
\end{cor}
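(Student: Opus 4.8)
\textbf{Proof plan for the corollary $E^-SH^*(Y) \cong E^-QH^*(Y)_{EQ}$.} The statement is essentially a repackaging of \cref{Prop a0 case multiplication by EQ} together with the abstract localisation lemma for $\ku$-algebras, so the plan is to assemble existing pieces rather than prove anything new. First I would recall that $E^-SH^*(Y) = \varinjlim E^-HF^*(H_{k^+})$ by definition (exhaustive model), and that by \cref{ESH is direct limit of maps on EQH} this direct limit is computed by the directed system \eqref{Equation sequence of maps that give direct lim}, i.e.\;repeated application of the rotation maps $r_{a-kb,b}$. Specialising to the weight $(a,b)=(a,0)$, \cref{Prop a0 case multiplication by EQ} identifies each map in that system with quantum multiplication by the fixed class $EQ \in E^-QH^{2\mu}(Y)$ (up to the harmless grading shifts $[2k\mu]$, which do not affect the colimit as $\ku$-modules). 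Thus the directed system underlying $E^-SH^*(Y)$ is exactly \eqref{Equation V to V to V multiplication by Q} with $V := E^-QH^*(Y)$ and $Q := EQ$.

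Next I would invoke \cref{Lemma direct limit of V copies}, which says precisely that the direct limit of such a system is the localisation $V_{EQ}$ of the $\ku$-algebra $V$ at $EQ$, and moreover that the canonical map $V_0 \to \varinjlim V_k$ is the natural localisation map $V \to V_{EQ}$. Here $V = E^-QH^*(Y)$ is a unital $\ku$-algebra by \cref{Prop product structure in weight 1 0 case} (this is where we use that the weight is $(a,0)$, so that a pair-of-pants product and a $QH^*_{S^1}(Y)$-module structure exist), so the hypothesis of \cref{Lemma direct limit of V copies} is met. This immediately gives the isomorphism $E^-SH^*(Y) \cong E^-QH^*(Y)_{EQ}$, and identifies $E^-c^*$ with the localisation map: indeed $E^-c^* = \varinjlim c_{k^+}^*$ and, tracing through \cref{Prop Periodicity property 2} and \cref{Definition ERk maps definition rotation}, the composite $c_{k^+}^*$ corresponds under the Seidel isomorphisms to $ER_k = EQ^{\star k} = $ the $k$-th structure map of the directed system, so $E^-c^*$ is exactly the map $v \mapsto \tfrac{v}{EQ^0}$ into the colimit, which is the localisation map.

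The only genuinely routine point to check is that the grading shifts $[2k\mu]$ appearing in \eqref{Equation sequence of maps that give direct lim}, which are absent from the ungraded statement of \cref{Lemma direct limit of V copies}, cause no trouble: one passes from the graded system to the ungraded one by forgetting the grading, applies the lemma, and then observes that the resulting $\ku$-module isomorphism is automatically compatible with the gradings because $EQ$ has well-defined degree $2\mu$ and $EQ$-localisation shifts degrees in the expected way. I do not anticipate a serious obstacle here; the main thing to get right is the bookkeeping verifying that the structure maps of the colimit are literally quantum multiplication by $EQ$ (not merely chain-homotopic to it), which is guaranteed by the module-isomorphism property of Seidel isomorphisms stated in the proof of \cref{Prop a0 case multiplication by EQ}, i.e.\;$\mathcal{S}(x \star y) = \mathcal{S}(x) \star y$ with $\mathcal{S}(x_0) = EQ$, combined with the compatibility \eqref{Equation compatibility of seidel iso with continuation} of Seidel isomorphisms with continuation maps.
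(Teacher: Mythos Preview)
Your proposal is correct and follows exactly the paper's approach: the paper's proof is a single sentence citing \cref{Prop a0 case multiplication by EQ} and \cref{Lemma direct limit of V copies}, and you have correctly identified these as the two key ingredients and fleshed out the details of how they fit together. Your additional discussion of the grading shifts and the bookkeeping around the Seidel module-isomorphism property is more than the paper provides, but it is all accurate and serves only to expand what the paper leaves implicit.
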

\begin{proof}
This follows from \cref{Prop a0 case multiplication by EQ} by using \cref{Lemma direct limit of V copies}.
\end{proof}
\subsection{The filtration by ideals in the weight $(a,0)$ case}

\begin{cor}
For $\karo\in \{-,\infty\}$ the filtration
$\FF_{j,(a,0)}^{\karo,p} \subset E^-_{(a,0)}QH^*(Y)$ consists of ideals for equivariant quantum product. It also defines ideals $\mathrm{ev}_0(\FF_{j,(a,0)}^{-,p})\subset QH^*(Y)$ for ordinary quantum product.
\end{cor}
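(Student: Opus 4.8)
The plan is to exploit that the filtration $\FF_{j,(a,0)}^{\karo,p}$ is, by definition, the induced filtration $c_{\lambda}^{-1}(F_j(W_{\lambda}))$ for the canonical $u$-filtration on $E^{\karo}HF^*(H_{\lambda})$ (intersected over generic $\lambda\geq p$), and then to invoke the multiplicative machinery from \cref{Subsection Multiplicative properties}. First I would recall from \cref{Prop product structure in weight 1 0 case} that for the weight $(a,0)$ the map $c_{\lambda}^*:E^{\karo}QH^*(Y)\to E^{\karo}HF^*(H_{\lambda})$ is a homomorphism of $\ku$-algebras (for $\karo\in\{-,\infty\}$): the pair-of-pants product makes $E^{\karo}HF^*(H_{\lambda})$ into an $E^{\karo}QH^*(Y)$-module, and more is true — the continuation maps respect the unital products, so $c_{\lambda}^*$ is genuinely a ring map. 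Then the Lemma in \cref{Subsection Multiplicative properties} says the canonical $u$-filtration $F_j(W_{\lambda})$ on the $\ku$-algebra $E^{\karo}HF^*(H_{\lambda})$ is multiplicative, and the Corollary there says that the induced filtration $F_j^{c_{\lambda}}$ on $E^{\karo}QH^*(Y)$ is multiplicative; since $F_0^{c_{\lambda}}=E^{\karo}QH^*(Y)$ (the whole module is in filtration level $0$), the $F_j^{c_{\lambda}}$ are ideals.

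Next I would pass to the intersection: $\FF_{j,(a,0)}^{\karo,p}=\bigcap_{\lambda\geq p} F_j^{c_{\lambda}}$ is an intersection of ideals, hence an ideal. (For generic $p$ one can take $\lambda=p^+$ and there is nothing to intersect.) This handles $\karo\in\{-,\infty\}$, in particular $\karo=-$.

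For the second sentence I would use the specialisation map $\mathrm{ev}_0:E^-QH^*(Y)\to QH^*(Y)$ from \cref{Lemma ev0 commutative diagram}, which by \cref{Prop product structure in weight 1 0 case} (final claim) is a ring homomorphism, and it is surjective by \cref{Lemma ev0 commutative diagram general case}/\cref{Cor equiv formality via spectral sequence}. The image of an ideal under a surjective ring homomorphism is an ideal, so $\mathrm{ev}_0(\FF_{j,(a,0)}^{-,p})\subset QH^*(Y)$ is an ideal for the ordinary quantum product.

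\textbf{Main obstacle.} The only genuinely substantive point is confirming that $c_{\lambda}^*$ is a ring homomorphism and not merely a module map — i.e.\ that it sends the unit to the unit and respects products — but this is precisely the content of \cref{Prop product structure in weight 1 0 case} (the unit statement and the commuting square with $c_{\lambda_1}\otimes c_{\lambda_2}$), so no new work is needed; everything else is a direct application of the algebra in \cref{Subsection Multiplicative properties}.

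\begin{proof}
By \cref{Prop product structure in weight 1 0 case}, for the weight $(a,0)$ and $\karo\in\{-,\infty\}$ the continuation map $c_{\lambda}^*:E^{\karo}QH^*(Y)\to E^{\karo}HF^*(H_{\lambda})$ is a unital $\ku$-algebra homomorphism (it is an $E^{\karo}QH^*(Y)$-module map, and the direct limit of $c^*_\lambda(1)$ is the unit; at each finite stage $c^*_\lambda$ sends the unit to the unit since it factors as $E^\karo QH^*(Y)\cong E^\karo HF^*(H_{0^+})\to E^\karo HF^*(H_\lambda)$ and continuation maps are unital). By the Lemma in \cref{Subsection Multiplicative properties}, the canonical $u$-filtration $F_j(E^{\karo}HF^*(H_{\lambda}))$ is multiplicative, and by the subsequent Corollary the induced filtration $F_j^{c_{\lambda}}:=(c_{\lambda}^*)^{-1}(F_j(E^{\karo}HF^*(H_{\lambda})))$ on $E^{\karo}QH^*(Y)$ is multiplicative; since $F_0^{c_{\lambda}}=E^{\karo}QH^*(Y)$, these are ideals. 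Hence
$$
\FF_{j,(a,0)}^{\karo,p}=\bigcap_{\textrm{generic }\lambda\geq p} F_j^{c_{\lambda}}
$$
is an intersection of ideals, hence an ideal of $E^{\karo}QH^*(Y)$. In particular this holds for $\karo=-$.

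For the ordinary quantum product: by \cref{Lemma ev0 commutative diagram}, $\mathrm{ev}_0:E^-QH^*(Y)\to QH^*(Y)$ is surjective, and by \cref{Prop product structure in weight 1 0 case} it is a ring homomorphism. The image of an ideal under a surjective ring homomorphism is an ideal, so $\mathrm{ev}_0(\FF_{j,(a,0)}^{-,p})\subset QH^*(Y)$ is an ideal for the ordinary quantum product.
\end{proof}
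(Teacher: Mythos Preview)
Your argument for the second claim (about $\mathrm{ev}_0$) matches the paper's and is correct.

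For the first claim there is a technical gap. You invoke the Corollary in \cref{Subsection Multiplicative properties}, which requires $c:V\to W$ to be a $\ku$-algebra homomorphism between $\ku$-algebras. But $E^{\karo}HF^*(H_{\lambda})$ is \emph{not} a $\ku$-algebra: the pair-of-pants product in \cref{Prop product structure in weight 1 0 case} has the form $E^{\karo}HF^*(H_{\lambda_1})\otimes E^{\karo}HF^*(H_{\lambda_2})\to E^{\karo}HF^*(H_{\lambda_1+\lambda_2})$, so it does not land back in $E^{\karo}HF^*(H_{\lambda})$. Consequently ``$c_{\lambda}^*$ is a unital $\ku$-algebra homomorphism'' is not well-posed, and the Lemma/Corollary you cite do not literally apply.

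The paper's proof sidesteps this by using the module structure rather than a ring structure on the target: \cref{Prop product structure in weight 1 0 case} gives that $E^{\karo}HF^*(H_{\lambda})$ is an $E^{\karo}QH^*(Y)$-module and $c_{\lambda}^*$ is an $E^{\karo}QH^*(Y)$-module homomorphism. One then checks directly that each canonical filtration level $F_j(E^{\karo}HF^*(H_{\lambda}))$ is an $E^{\karo}QH^*(Y)$-submodule (the action is $\ku$-bilinear, so $v\cdot(u^jx+t)=u^j(v\cdot x)+v\cdot t$ with $v\cdot t$ still torsion), whence $F_j^{c_\lambda}=(c_\lambda^*)^{-1}(F_j)$ is an $E^{\karo}QH^*(Y)$-submodule of $E^{\karo}QH^*(Y)$, i.e.\ an ideal. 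Your intersection step then finishes as you wrote. The fix is minor, but the module-homomorphism framing is essential.
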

\begin{proof}
    The first claim holds because $c_{\lambda}$ is a $\ku$-linear $E^{-}QH^*(Y)$-module homomorphism by \cref{Prop product structure in weight 1 0 case}.
    The second claim follows, as $\mathrm{ev}_0: E^-QH^*(Y) \to QH^*(Y)$ is a surjective ring homomorphism by combining \cref{Lemma ev0 commutative diagram} and \cref{Prop product structure in weight 1 0 case}.
\end{proof}

\section{Properties of the equivariant spectral sequences}

\subsection{Conventions and notation}\label{Subsection Spectral sequence conventions}
We assume the reader has familiarity with the discussion of {\MBF } spectral sequences from \cite{RZ2}, which readily adapts to the equivariant setting.

Abbreviate {\bf $E_r^-$-page} to mean the $E_r$-page of the {\MBF } spectral sequence converging to $E^-SH^*(Y)$ (or $E^-HF^*(H_{k^+})$, if we ignore columns above the $k$-th one). Similarly, define the $E_r^{\infty}$- and $E_r^{+}$-page.
The zeroth column is the one involving the generators of equivariant quantum cohomology.
We call {\bf higher columns} all columns after (and excluding) the zeroth column. 

Observe \cref{Eq sp seq for C all together} in \cref{Example intro equivariant Floer theory for C}: when we draw equivariant spectral sequences, a bold dot will denote a $\k$-summand with $u^0$-factor. A smaller dot denotes a $\k$-summand with a $u^{\neq 0}$-factor.

\subsection{Structural results for the {\MBF} spectral sequences}\label{Subsection Spectral sequence trick}
\begin{prop}\label{Prop spectral seq tricks}
Let $(a,b)$ be a free weight.\footnote{Many of the claims extend to general weights when considering columns of period $p\neq \tfrac{a}{b}$; compare the proof of \cref{Theorem injectivity theorem 2}. We omit these statements.} 
The following apply to the {\MBF} spectral sequences for equivariant symplectic cohomology as well as for equivariant Floer cohomology of $H_{k^+}$.
\strut
\begin{enumerate}
\item On any $E_r^-,E_r^{\infty}$ page, for $r\geq 1$, all edge differentials landing in the zeroth column must vanish.

More generally, for any $k\in \N,$ on the $E_r^-$ page for $r\geq 1$ all edge differentials from columns with slopes $\lambda>k$ landing in the columns with slopes $\leq k$ must vanish.

\item
The edge differential on the $E_0^{\infty}$-page kills all higher columns, and its arrows cannot decrease $u$-orders
(this can fail on higher pages, \cref{Subsection Spectral sequence trick 2}).
Thus, the higher columns of $E_1^{\infty}$ are zero. Therefore $E_r^{\infty}$ has already converged to its zeroth column: $E^{\infty}_1\cong E^{\infty}QH^*(Y)\cong E^{\infty}SH^*(Y)$.

\item 
Forgetting smaller dots in $E_0^{\infty}$, the page of bold dots with all arrows between them is the $E_0$-page of the non-equivariant spectral sequence converging to $SH^*(Y)$ $($which is $0$ if $c_1(Y)=0)$.

\item The differential on higher columns for the $E_0^{-}$, $E_0^+$ pages is determined by the $E_0^{\infty}$-page.

\item The following commutative diagram is induced by $E_0$-pages. The rows are long exact sequences. The numbers on the arrows indicate the degree of the map when it is a non-zero degree.
$$
\xymatrix{
\cdots \ar@{->}[r]^-{} 
& 
E_1^+ \ar@{->}[r]^-{-1}
\ar@{->}[d]_-{d_1}
& 
E_1^- \ar@{->}[r]^-{+2}_{u\otimes 1}  
\ar@{->}[d]_-{d_1}
& 
E_1^{\infty} \ar@{->}[r]^-{}
\ar@{->}[d]_-{d_1=0}
& 
E_1^+ \ar@{->}[r]^-{-1} 
\ar@{->}[d]_-{d_1}
& 
\cdots 
\\
\cdots \ar@{->}[r]^-{} 
& 
E_1^+ \ar@{->}[r]^-{-1} 
& 
E_1^- \ar@{->}[r]^-{+2}_{u\otimes 1}  
& 
E_1^{\infty} \ar@{->}[r]^-{}
& 
E_1^+ \ar@{->}[r]^-{-1} 
& 
\cdots 
}
$$
Warning: it is a LES of $\ku$-modules, but the $\ku$-module structure on $E_1$-pages often does not agree with that on the modules that the spectral sequence converges to (see \cref{Example intro equivariant Floer theory for C}). 
\item The zeroth column of $E_1^+$ is identifiable with  $\mathrm{coker}\,(u\otimes 1: E_1^- \to E_1^{\infty})$. 

The higher columns of $E_1^+$ are identifiable with the higher columns of $E_1^-$ shifted up by $1$ degree, via the explicit $\k$-linear isomorphism $u^{-1}d_0^{\infty}$
using the edge differential $d_0^{\infty}$ of $E_0^{\infty}$.

The higher columns of $E_1^-$ are precisely the $u$-torsion part of the $\ku$-module $E_1^-$.
\end{enumerate}
\end{prop}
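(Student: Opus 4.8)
\textbf{Proof plan for Proposition~\ref{Prop spectral seq tricks}.}
The plan is to treat the seven items more or less independently, but in an order that lets later items reuse earlier ones. First I would dispatch item~(1): the zeroth column of the $E_1^-$- (resp.~$E_1^\infty$-) page is $E^-QH^*(Y)$ (resp.~$E^\infty QH^*(Y)$), and an edge differential landing there would produce a nontrivial kernel of $c^*_{k^+}$ (resp.~$c^*$); but \cref{Corollary E minus c star maps are injective} (equivalently \cref{Theorem injectivity theorem 2}.(3)--(4)) says those maps are injective for free weights, contradiction. For the more general statement about differentials from columns of slope $>k$ into columns of slope $\leq k$, I would truncate the spectral sequence at slope $k$, observe that $E^-HF^*(H_{k^+})$ injects into $E^-HF^*(H_{\lambda})$ for $\lambda>k$ by \cref{Theorem injectivity theorem 2}.(1) (again using freeness, or the weaker hypothesis $\tfrac ab\notin(k,\lambda)$), and note that an edge differential killing something in the $\leq k$ part of the truncated page would obstruct that injectivity.

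Item~(2) is the heart of the matter and, I expect, the main obstacle, though most of the work is already done upstream. The $E_0^\infty$-page is the $u$-localised Morse--Bott--Floer $E_0$-page, whose columns for slope $p$ start from $H^*_{S^1}(B_{p,\beta})\otimes_{\ku}\kuu$; by \cref{Lemma fixed points of the mixed action} the action \eqref{Equation action on loops} on $B_{p,\beta}$ is fixed-point-free for every $p\neq\tfrac ab$, hence for every $p$ when $(a,b)$ is free, so Borel localisation forces those modules to vanish. The subtlety --- and this is the ``kills all higher columns, arrows cannot decrease $u$-orders'' part --- is that the $E_0^\infty$ differential is the $u$-localisation of the $E_0^-$ differential, which has the shape $\delta_0+u\delta_1+u^2\delta_2+\cdots$ from \eqref{Equation for the differential expanded}: $u$-localisation does not introduce negative powers of $u$, so on $E_0^\infty$ the differential still cannot lower $u$-order, which is what forces the higher columns to be exact already on the $E_0^\infty$-page (rather than only after passing to $E_1$). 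I would phrase this as: the higher-column part of $E_0^\infty$ is an acyclic complex because it computes $\varinjlim H^*_{S^1}(B_{p,\beta})_u$ through the energy spectral sequence of \cite[Prop.~B.6]{RZ2}, all of whose inputs vanish by Borel. Then $E_1^\infty$ has only its zeroth column, which is $E^\infty QH^*(Y)$, and since there are no further columns there can be no further differentials, giving the stated collapse and the identification with $E^\infty SH^*(Y)$ (this last step is exactly \cref{Prop vanishing of spectral sequence for infinity page}).

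Items~(3)--(7) are then bookkeeping on top of the long exact sequence of $\ku$-modules $0\to C^-\xrightarrow{u}C^\infty\to C^+\to 0$ of \cref{Lemma LES for W modules}, applied columnwise at the $E_0$-page level. For (3): forgetting the $u^{\neq 0}$ dots in $E_0^\infty$ amounts to restricting to the $u^0$-part of the differential, which by construction is $\delta_0$, the non-equivariant Floer differential, so the bold-dot sub-object with its induced arrows is literally the non-equivariant $E_0$-page converging to $SH^*(Y)$; and $SH^*(Y)=0$ when $c_1(Y)=0$ by \cite{RZ1}. For (4): the $E_0^-$ and $E_0^+$ differentials on higher columns are obtained from the $E_0^\infty$ differential by, respectively, restricting to the $\ku$-lattice and reducing mod $u\ku$ --- both are determined by the $E_0^\infty$ data because $C^-$ and $C^+$ are built from $C^\infty$ by $\ku$-linear algebra (this is just \eqref{defn F C^+ and W^+}). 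For (5): the short exact sequence of $E_0$-pages induces the displayed long exact sequence of $E_1$-pages with the indicated degree shifts ($u\otimes 1$ raises degree by $2$, the connecting map lowers by $1$), exactly as in \cref{Lemma LES for W modules} read termwise; the $d_1=0$ on $E_1^\infty$ is item~(2); the warning about the $\ku$-module structure is the same phenomenon noted in \cref{Remark example of slice dimensions} and after \cref{Cor SES over LES}, so I would just cross-reference it. For (6): the zeroth column statement is $\mathrm{coker}(u\otimes 1)$ by exactness of the top row of (5) restricted to the zeroth column together with $d_1^\infty=0$; the higher-column identification is the explicit connecting-map formula ``$u^{-1}d$'' from \cref{Lemma LES for W modules}, which is an isomorphism on the higher columns precisely because the corresponding $E_1^\infty$ higher columns vanish (item~(2)), so the connecting map $E_1^{+,\mathrm{higher}}\to E_1^{-,\mathrm{higher}}[-1]$ has zero kernel and cokernel in that range; and ``$E_1^{-,\mathrm{higher}}$ is the $u$-torsion part of $E_1^-$'' follows because, in the LES of (5), the image of the connecting map $E_1^+\to E_1^-$ is exactly the kernel of $u\otimes 1:E_1^-\to E_1^\infty$, i.e.\ the $u$-torsion of $E_1^-$, and that image is supported on higher columns since the zeroth column of $E_1^-=E^-QH^*(Y)$ is free by \cref{Prop equivariant formality for QH}. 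The only genuinely delicate point throughout is making sure the ``arrows cannot decrease $u$-order'' claim in (2) is used correctly: it is what guarantees the energy/edge differential on the higher columns of $E_0^\infty$ is honestly acyclic rather than merely eventually acyclic, and it is also the reason the analogous statement \emph{fails} on higher pages, which I would flag by pointing to \cref{Subsection Spectral sequence trick 2}.
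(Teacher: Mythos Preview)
Your proposal is correct and follows essentially the same approach as the paper. The paper's proof invokes the same ingredients in the same order: \cref{Corollary E minus c star maps are injective} and \cref{Theorem injectivity theorem 2} for item~(1), \cref{Prop vanishing of spectral sequence for infinity page} together with the $u$-order property of \eqref{Equation for the differential expanded} for item~(2), \cref{Lemma ev0 commutative diagram general case} for item~(3), the coefficient description \eqref{defn F C^+ and W^+} for item~(4), and \cref{Lemma LES for W modules} applied to the $E_0$-pages for items~(5)--(6).

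One place where you are terser than the paper deserves mention. For the general part of item~(1) you write that ``an edge differential killing something in the $\leq k$ part of the truncated page would obstruct that injectivity,'' but this is not immediate: the target $x$ of such a differential might not survive to $E_\infty$ of the truncated spectral sequence (so it need not directly represent a kernel element of the continuation map). The paper handles this with a short case analysis, distinguishing whether $x$ would have survived in the truncated spectral sequence (giving a kernel element, contradicting injectivity) or would have supported a later outgoing differential $d_{r'}(x)=z$ (forcing a contradiction with $d_r^2=0$ when $r'=r$, or with $[x]=0$ on page $r'$ of the full spectral sequence when $r'>r$). Your plan would need this refinement to be complete; alternatively one can argue directly at chain level that if $d_r([y])=[x]\neq 0$ then $d\tilde y$ is a cycle in $F_k$ which bounds in the full complex, hence (by injectivity) bounds some $\tilde y'\in F_k$, whence $\tilde y-\tilde y'$ is a genuine cycle representing $[y]$, forcing $d_r([y])=0$.
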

\begin{proof}
The first part of Claim 1 follows by \cref{Corollary E minus c star maps are injective}, because inclusion of the zeroth column corresponds (after taking the limit of the spectral sequence) to the continuation map.
The second part of Claim 1 essentially follows by \cref{Theorem injectivity theorem 2} parts (1) and (4). The columns with slopes $\leq k$ compute $HF^*(H_{k^+})$, and the columns with slopes $<\lambda$ compute $HF^*(H_{\lambda})$. Just like in the non-equivariant setup from \cite{RZ2}, the continuation map $HF^*(H_{k^+})\to HF^*(H_{\lambda})$ can be built as the inclusion of the subcomplex of the first set of columns into the second set of columns (on the zero-page, but using the original differential of the chain complex).
Suppose there is a chain $y$ in a column with slope $>k$ on the $E_r^-$ page, with edge differential $d_r(y)=x$ landing in a column with slope $\leq k$.
There are two cases: (1) either $x$ was going to survive to the limit $E_{\infty}^-\cong HF^*(H_{k^+})$ in the spectral sequence where we ignore columns with slopes $>k$; or (2) there was going to be a non-trivial edge differential $d_{r'}(x)=z\neq 0$ on a later page $r'\geq r$ (where $z$ also lies in a column of slope $\leq k$). In case (1), $x$ represents a class of $HF^*(H_{k^+})$ that is killed in $HF^*(H_{\lambda})$, contradicting \cref{Theorem injectivity theorem 2}.(1). In case (2), we get $d_{r'}(d_r(y))=d_{r'}(x)=z\neq 0$: if $r'=r$ then this contradicts that $d_r\circ d_r=0$; if $r'>r$ then this contradicts the well-definedness of $d_{r'}$ on the full spectral sequence page $E_{r'}^-$ since $[x]$ represents the zero class on $E_{r'}^-$ (in view of $x$ having been killed by $y$ on the earlier page $E_r^-$).

Claim 2 follows by \cref{Prop vanishing of spectral sequence for infinity page},
using that \cref{Equation for the differential expanded} does not increase the $u$-order, thus the same holds on quotients when building the $E_0^{\infty}$-edge differential.
For claim 3, we use \cref{Lemma ev0 commutative diagram general case}.
Claim 4 follows from naturality of the construction of the $E_0$ page, and the fact that at the chain complex level the equivariant differential is the ``same'' in all three models except for the fact that different coefficients are used, respectively: $\ku,\kuu,\mathbb{F}$ (and over $\kuu$ all information about the differential is known), see \eqref{defn F C^+ and W^+}.
Claim 5 follows by naturality of the construction of spectral sequences, as follows. We consider the long exact sequence in \cref{Lemma LES for W modules} in the case where $C^*,C^*_u,C^+$ are the $E_0$-pages with their differentials (thus the $W^-,W^{\infty},W^+$ are the $E_1$-pages). 

To prove Claim 6, observe that the map $E_1^-\to E_1^{\infty}$ is induced by the natural inclusion on $E_0$-pages after multiplying by $u$ (using the natural $\ku$-module action on the $E_1^-$-page, viewed as the cohomology of the $\ku$-module $E_0^-$, so $u$ acts as a degree $2$ vertical map in the spectral sequence). 
Note that the higher columns of $E_1^-$ are $u$-torsion, because in high degrees the $E_0^-$-page coincides with the $E_0^{\infty}$-page, and we showed that the higher columns of the $E_1^{\infty}$-page vanish. Whereas the zeroth column of $E_1^-$, which is $E^-QH^*(Y)$, is a free $\ku$-module by \cref{Prop equivariant formality for QH}.
Therefore, via $E_1^-\to E_1^{\infty}$ the zeroth column will inject (after $u$-shifting), whereas higher columns of $E_1^-$ map to zero in $E_1^{\infty}$ as the higher columns of $E_1^{\infty}$ vanish.
By exactness of the LES, those (torsion) higher columns of $E_1^-$ that map to zero must come from the higher columns of $E_1^+$ (shifted down in degree by $1$).
Note this is consistent with the general discussion of torsion in \cref{Equation LES on W modules 2}.
The claim about $u^{-1}d_0^{\infty}$ comes from the description of the connecting map in \cref{Lemma LES for W modules}.
\end{proof}

\subsection{Spectral sequence edge differentials viewed as zig-zags}\label{Subsection Spectral sequence trick 2}
\strut\\[1mm]
We briefly remind the reader how edge differentials arise as {\bf zig-zags} in the spectral sequence associated to a filtration $F_0\subset F_1 \subset \cdots \subset C^*$ by subcomplexes. We work with Massey's exact couples as in Bott-Tu \cite[Chp.III.Sec.14]{BottTu}. Bott and Tu describe the case of a bicomplex, using very intuitive zig-zags
 \cite[Equation (14.12) p.164]{BottTu}. Our setting is more general, so we explain what we mean.
We use the usual convention for spectral sequences that $p$ denotes the column number. To avoid confusing $p$ with the period of $S^1$-orbits, we instead let $\lambda_0=0<\lambda_1<\lambda_2<\ldots$ denote the slopes for which $1$-orbits appear, corresponding to $\lambda_p$-periodic $S^1$-orbits of the $\Fi$ action.  
In our notation (compare \cref{Eq sp seq for C all together} in \cref{Example intro equivariant Floer theory for C}), $F_p$ corresponds to the subcomplex of $1$-orbits arising with slopes $\leq \lambda_p$, since those are the ones which appear as representatives for classes in columns $0,1,\ldots,p$ of the $E_0$-page.

For $r\geq 1$, the edge differential $D_r$ on a class $[f_p]$ in the $p$-column of the $E_r$-page is
$$
D_r [f_p] = [f_{p-r}] \;\;\textrm{ if }\;df_p \in f_{p-r} + dF_{p} \subset F_p,
$$
where $d: F_p \to F_p$ is the original differential of $C^*$.
In the case we are most interested in, when $p=r$ so $D_r[f_p]$ hits the zeroth column,
this can be thought of as a zig-zag of arrows, because we seek classes $[g_0],[g_1],[g_2],\ldots,[g_{r}]$ in columns $0,1,2,\ldots,r$ of the $E_0$-page, such that 
$$
df_r = f_0 + dg_0 + dg_1 + dg_2 + \cdots + dg_{r} \qquad \textrm{ as an equation in }F_r.
$$
If we abusively draw all arrows $\delta_i$ on the $E_0$-page caused by $d=\delta_0+u\delta_1+\cdots$, and not just the vertical edge differential $D_0=[\delta_0]$, then the above equation is a zig-zag of arrows on the $E_0$-page from the classes $[g_0],\ldots,[g_r],[f_r]$ which all cancel out except for the output $f_0$.
(The more general case, when $r<p$, is similar, with the $[g_0],\ldots,[g_{p-r-1}]$ playing the role of cancelling out any error terms arising in the columns to the left of the $(p-r)$-column, in which the desired output $[f_{p-r}]$ lives).

It is crucial in the above description to use the $E_0$-page, rather than the $E_1$-page because classes that die when passing from $E_0$ to $E_1$ may be crucial to link up a zig-zag as above.

\begin{ex}\label{Example zigzag}
The following picture is the piece of the $E_0^+$-page from \cref{Eq sp seq for T^CP^1 all together} in \cref{Example intro equivariant Floer theory for C} arising in rows $-2$ and $-3$ (which is the total degree value $p+q$ of $(E_0^+)^{p,q}$, in our convention). Each dot corresponds to a copy of the base field $\k$, and we label a basis by letters as shown.
\begin{center}
\input{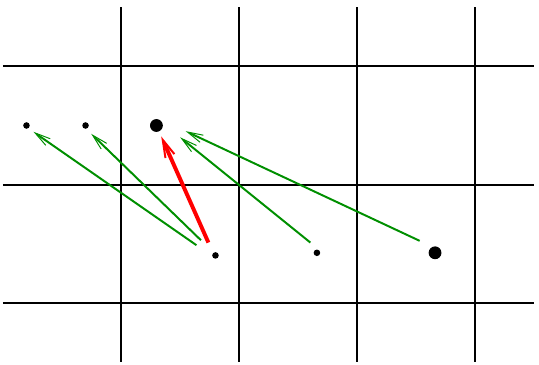_t}
\end{center}
The bold red arrow is the $D_0=[\delta_0]$ differential of the $E_0^+$-page, the green arrows are all the potentially-nonzero higher order $(\delta_i)_{i\geq 1}$ contributions to $d$ in the setting of \cref{Eq sp seq for T^CP^1 all together}. In general we do not know which of these arrows are non-zero, although a posteriori one finds that most are non-zero: we later prove that $g_1$ kills $h_1$ on $E_0^+$, and then since $E^+\Rightarrow E^+SH^*(T^*\C P^1)=0$, we deduce that $g_2$ must kill some $1$-dimensional subspace $S\subset \k [h_0]\oplus \k [f_0]$ on $E_2^+$, and finally $f_3$ must kill the $1$-dimensional space $(\k [h_0]\oplus \k [f_0])/S$ on $E_3^+$. We explain how $D_3[f_3]=(\textrm{non-zero})\cdot [f_0]$ is possible, even though there are no arrows $f_3\to f_0$.
To illustrate a simple example, let us assume that $\k=\Z/2$ and that all arrows are isomorphisms, except $g_2 \to h_1$ is zero (so $dg_2=h_0$ and $D_2[g_2]=[h_0]$). Then:
$$
D_3[f_3] = [f_0], \qquad \textrm{ since } df_3 = f_0 + dg_1 + dg_2.
$$
Note: on the $E_1$-page, only $h_0,f_0,g_2,f_3$ survive: in particular $h_1$ is killed, so the $E_1$-page would not detect the existence of that zig-zag anymore.

Over a general base field $\k$ it is more complicated. Write $\sim$ to mean equality up to a non-zero constant. We will see later that $D_0[g_1]\sim [h_1]$, and we know the spectral sequence converges to zero. 
If $D_2[g_2]\sim [h_0]$, then $D_3[f_3]\sim [f_0]$ and it would follow that the arrow $g_2 \to h_1$ is zero, the arrow $g_1 \to h_0$ can be anything, and the remaining arrows are isomorphisms. Conversely, if $g_2 \to h_1$ is zero, then $D_2[g_2]\sim [h_0].$
However, if $g_2 \to h_1$ is non-zero, a zig-zag with $g_2$ shows $D_3[f_3]\sim [h_0]$, and $D_2[g_2]=\alpha [h_0] + \beta [f_0]$ for some $\alpha,\beta \in \k$ with $\beta \neq 0$, due to a zig-zag involving $g_1.$
\end{ex}
\section{Example 1: $\C$}\label{Example intro equivariant Floer theory for C}
\subsection{Notation}
\label{Example Y is C Notation}
Let $Y=\C$, with the standard $S^1$-action. Recall $\mathrm{char}\,\k=0$.
One can construct $H_{\lambda}=c(H)$ to have a critical point $x_0$ at $0$, and {\MB} manifolds $B_k\cong S^1$ of non-constant $1$-orbits arising at $c'(H)=k\in \{1,2,\ldots,\lfloor \lambda \rfloor\}$. 
Let $x_k$ and $y_k$ denote the generators of $H^*(B_k)[2k]$, in degrees $|x_k|=-2k$, $|y_k|=-2k+1$.
As $c_1(Y)=0$, we have a $\Z$-grading on Floer complexes.

\begin{lm}\label{Lemma equiv floer diff in C case}
The Floer differential on $E^{\infty}FC^*(H_{\lambda})$ is a $\ku$-module homomorphism
with
$$
d(x_k)\in \k y_k
\;\; \textrm{ and } \;\; d(y_k)\in \k x_{k-1} \oplus \k ux_k.
$$
In particular, $\delta_i=0 \textrm{ for } i\geq 2, \textrm{ so } d=\delta_0+u\delta_1$ in \eqref{Equation for the differential expanded}, with $\delta_1(x_k)=0.$
\end{lm}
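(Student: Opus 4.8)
The statement is a grading-and-period bookkeeping computation about the equivariant Floer differential on $\C$, so the plan is to argue degree by degree, using what is already known non-equivariantly about $\C$ and the fact that only finitely many $\delta_i$ can contribute for dimension reasons.

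First I would recall the chain model: $E^{\infty}FC^*(H_\lambda)$ is freely generated over $\kuu$ by the critical point $x_0$ of an auxiliary Morse function together with the pairs $x_k,y_k$ coming from the Morse-Bott circles $B_k\cong S^1$, with gradings $|x_k|=-2k$, $|y_k|=-2k+1$, $|x_0|=0$ and $|u|=2$. Since $c_1(Y)=0$, the grading is a genuine $\Z$-grading, $\k$ sits in degree $0$, and the differential $d=\delta_0+u\delta_1+u^2\delta_2+\cdots$ from \eqref{Equation for the differential expanded} is grading-preserving of degree $+1$, with $\delta_i$ of degree $1-2i$ (before multiplying by $u^i$). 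I would then observe that the only generators in each degree are very constrained: degree $-2k+1$ is spanned (over the powers of $u$) by $uy_{k+1}, y_k, u^2 y_{k+2},\ldots$ and degree $-2k$ by $x_k, ux_{k+1}, u^2 x_{k+2},\ldots$; crucially, $x_0$ is the \emph{only} generator in non-negative even degrees up to $u$-multiples, and there is nothing in positive odd degree. So $dx_k$ must be a $\k$-linear combination of $y_k, uy_{k+1}, u^2 y_{k+2},\ldots$ and $dy_k$ a $\k$-linear combination of $x_{k-1}, ux_k, u^2 x_{k+1},\ldots$ purely for grading reasons.

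Next I would cut this down using two further inputs. (i) Action/energy filtration: Floer trajectories (and their equivariant corrections, which are parametrised Floer solutions as in Liebenschutz-Jones) strictly decrease the action functional, hence cannot raise the underlying $S^1$-period; the period of the orbit underlying $x_k$ or $y_k$ is $k$, so the target orbit must have period $\le k$. This kills all the $u^{\ge 1}$-terms involving higher $x_{k+j}, y_{k+j}$ in $dx_k$ and $dy_k$, leaving $dx_k\in \k y_k$ and $dy_k\in \k x_{k-1}\oplus \k u x_k$ as claimed. (ii) For the statement $\delta_1(x_k)=0$: the $u^1$-part of $dx_k$ would have to live in degree $|x_k|+1-2=-2k-1$, i.e.\ be a multiple of $y_{k+1}$, but $y_{k+1}$ has period $k+1>k$, so it is excluded by (i); and all $\delta_{i\ge 2}$ vanish because $\delta_i$ has degree $1-2i\le -3$, so $u^i\delta_i(x_k)$ would land in odd degree $\le -2k-3$ which is again a higher-period $y$-generator, impossible by (i) (alternatively, one can quote the general fact that only finitely many $\delta_i$ are non-zero on any given generator, combined with the local model near an isolated Morse-Bott $S^1$-family as in \cref{Example S1 equiv cohomologies calculation}, to see the equivariant correction beyond $\delta_0$ on $x_k$ must be trivial).

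The main obstacle, and the only place one has to be slightly careful, is justifying that the equivariant corrections $u^i\delta_i$ respect the same action/period filtration as the ordinary Floer differential $\delta_0$ — i.e.\ that introducing the Borel parameter $z\in S^\infty$ and counting parametrised solutions $\partial_s u + J_{v(s)}(\partial_t u - X_{H_{v(s)}})=0$ does not create trajectories that raise the action. This is standard (the energy estimate is uniform in $z$ because the Hamiltonians $H^{\mathrm{eq}}_{w,t}$ and almost complex structures $J^{\mathrm{eq}}_{w,t}$ are a compact family, all of slope $\lambda$ at infinity, and our chosen $H_\lambda=c(H)$ is a monotone-type function of the moment map), but it is the one point where one should say a sentence rather than wave hands. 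Everything else is forced by the $\Z$-grading and the explicit list of generators, so once the filtration statement is in place the Lemma follows immediately.
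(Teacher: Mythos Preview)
Your proof is correct and uses the same two constraints as the paper's proof: the grading together with the $H$-filtration (equivalently, the period filtration) from \cite{RZ1}, and the fact that $d=\delta_0+u\delta_1+\cdots$ cannot lower $u$-powers. The only cosmetic difference is the order: the paper first applies grading + filtration to obtain $d(x_k)\in\mathrm{span}_{\ku}(u^{-(k-1)}y_1,\ldots,u^{-1}y_{k-1},y_k)$ and $d(y_k)\in\mathrm{span}_{\ku}(u^{-(k-1)}x_0,\ldots,x_{k-1},ux_k)$, and then uses $u$-nonnegativity to discard the negative-$u$ terms; you instead build in $u$-nonnegativity from the start (via the expansion of $d$), obtain the list $y_k,uy_{k+1},\ldots$, and then use the filtration to discard the higher-period terms. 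One small quibble: when you write that the list $y_k,uy_{k+1},u^2y_{k+2},\ldots$ arises ``purely for grading reasons'' you are also implicitly using the $u$-nonnegativity of $d$ (grading alone would allow $u^{-1}y_{k-1}$ etc.), but since you set up $d=\sum u^i\delta_i$ beforehand this is fine.
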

\begin{proof}
The Floer differential increases grading by $1$ and cannot increase $H$ by the filtration results from \cite{RZ1}, so $d(x_k)\in \mathrm{span}_{\ku}(u^{-(k-1)}y_1,\ldots,u^{-1}y_{k-1},y_k)$, $d(y_k)\in \mathrm{span}_{\ku}(u^{-(k-1)}x_0,\ldots,x_{k-1},ux_k)$.
However, $d$ cannot decrease $u$-powers by \eqref{Equation for the differential expanded}, so the claim follows.
\end{proof}
 
\subsection{The case $\mathbf{(a,b)=(0,1)}$}
This case was studied in detail by Zhao \cite[Sec.8.1]{zhang2017multiplicativity}:
\begin{equation}\label{Equation d by Zhao}
d(x_{k})=0 \quad\textrm{ and }\quad d(y_k)=x_{k-1}+kux_{k}.
\end{equation}
The non-equivariant case (equivalent to setting $u=0$) gives $HF^*(H_{k^+})=\k[x_{k}]$, so $SH^*(\C)=0$.
For $\diamondsuit \in \{+,-,\infty\}$, the continuation map
$E^{\karo}HF^*(H_{k^+})\to E^{\karo}HF^*(H_{(k+1)^+})$ is therefore
\begin{equation}\label{Equation C cont map}
[x_{k}]\mapsto [x_{k}]=-(k+1)u[x_{k+1}].
\end{equation}
Thus, for $\karo=-,\infty,+$, the map \eqref{Equation C cont map} is respectively: an inclusion, an isomorphism, and zero (using $\char\,\k=0$). Thus the
direct limit is $E^-\!SH^*(\C)=\kuu$, 
$E^{\infty}\!SH^*(\C)=\kuu$, and
$E^+\!SH^*(\C)=0$ respectively. 
By contrast, 
$E^{\diamondsuit}_{\textrm{telescope}}SH^*(\C)=0$ for all $\diamondsuit$: this follows from $SH^*(\C)=0$ for homological reasons \cite[Cor.2.4]{zhao2019periodic};
e.g.\;$x_0$ is a boundary as follows: $x_0=d(y_1-uy_2+2!u^2y_3-3!u^3y_4+\cdots)$.

The same calculation by {\MBF} spectral sequence methods is as follows. The $E_1$-page in \cref{Eq sp seq for C all together}  arose from the ``vertical differential'': $d_0(x_k)=0$, $d_0(y_{k})=kux_{k}$ (compare \cref{Example S1 equiv cohomologies calculation}). For $E^{\infty}\!SH^*(\C)$, all columns in $E_1$ vanish except the $0$-th column $H^*(Y)\otimes \kuu $ (using $\mathrm{char}\,\k=0$).
For $E^{-}\!SH^*(\C)$, the $k$-th column is $\k\cdot x_{k}$, so the spectral sequence collapsed to $E_1=E_{\infty}=\k[\![u]\!]\cdot [x_0] \oplus \k\cdot [x_{1}] \oplus \k\cdot [x_{2}]\oplus \cdots$. The $u$-action is $u\cdot [x_{k}]=[ux_{k}]=-\tfrac{1}{k}[x_{k-1}]$  (for $k\geq 1$), so we obtain the $\ku$-module $E^-\!SH^*(\C)\cong \kuu$. 

\subsection{The general free weight case $(a,b)$}\label{Subsection C The general free weight case}
We will assume $(a,b)$ is free; for $Y=\C$ this means $a \notin b\Z_{\geq 1}$. We recall that the Maslov index of the $S^1$-action on $Y=\C$ is $\mu=1$.

Note that Zhao's local Floer computation \eqref{Equation d by Zhao} was only for the case  $(a,b)=(0,1)$.
For weights $(-a,1)$ with $-a\leq 0$, Liebenschutz-Jones \cite[Sec.8.1]{liebenschutz2020intertwining}, working over $\Z$-coefficients, combined Zhao's result with properties of the Seidel isomorphisms to obtain a result for $(-a,1)$. It turns out that $r_{-a,1}:E^-_{(-a,1)}QH^*(Y)\to E^-_{(-a-1,1)}QH^*(Y)$, as a map $\Z[u] \to \Z[u]$, is multiplication by $(a+1)u$ (similarly, for $Y=\C^n$ one gets multiplication by $[(a+1)u]^n$).

We wish to instead discuss the general free weight case, over characteristic zero, without appealing to local computations, as a means to illustrate the general theory.

\begin{prop}\label{Prop ESH of C}
Let $(a,b)$ be any weight with $a \notin b\Z_{\geq 1}$. 
Then
\begin{equation*}
\begin{array}{ccccl}
\!\!\!E^-\!SH^*(\C)\cong \kuu, 
&
\strut\;
&
\!\!\!\!\!\!E^{\infty}SH^*(\C)\cong \kuu, 
&
\strut\;
&
E^{+}\!SH^*(\C) = 0,
\\
E^-HF^*(H_{k^+})\cong \ku\!\cdot\! [x_k],
&
\strut\;
&
\!\!E^{+}HF^*(H_{k^+})= \kuu\!\cdot\! [x_0],
&
\strut\;
&
E^{+}HF^*(H_{k^+}) =
(\kuu/u^{1-k}\ku) \!\cdot\! [x_0]
\end{array}
\end{equation*}
so $E^{\infty}HF^*(H_{k^+})  \cong \mathbb{F}[2k].$
The $x_k$ agree up to $\k^{\times}$-rescaling with the $x_k$ from  \eqref{Definition xk class}, for degree reasons.
\end{prop}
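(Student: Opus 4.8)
The plan is to deduce everything from the general machinery already developed, specialising to $Y=\C$ where $H^*(\C)\cong \k$ is one-dimensional, so all the free $\ku$-modules in play have rank $r=1$. First I would record the basic inputs: by \cref{Prop equivariant formality for QH} we have $E^-QH^*(\C)\cong \ku$, $E^{\infty}QH^*(\C)\cong \kuu$, $E^+QH^*(\C)\cong \mathbb{F}$, all with the unit $x_0\in E^-QH^0(\C)$ as a $\ku$-module generator (the unit has no higher $u$-corrections by \cref{Cor equiv formality via spectral sequence}, since it sits in degree $0$). Since $c_1(\C)=0$, \cref{cor vanishing of E+ for SH} gives $SH^*(\C)=0$, hence $E^+SH^*(\C)=0$ and $E^-SH^*(\C)\cong E^{\infty}SH^*(\C)$; and since $(a,b)$ is free, \cref{Prop vanishing of spectral sequence for infinity page} gives $E^{\infty}c^*:E^{\infty}QH^*(\C)\xrightarrow{\cong} E^{\infty}SH^*(\C)\cong\kuu$, which together with $E^-SH^*(\C)\cong E^{\infty}SH^*(\C)$ yields $E^-SH^*(\C)\cong\kuu$.

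Next I would handle the finite-slope groups. By \cref{Prop Periodicity property 2} (Periodicity 2), $E^-HF^*(H_{k^+})\cong E^-_{(a-kb,b)}QH^*(\C)[2k\mu]\cong \ku$, so it is free of rank $1$, generated in degree $-2k$ (using $\mu=1$) by $x_k:=\mathcal{S}^{-k}(x_0)$ from \eqref{Definition xk class}; this is exactly the minimal-degree class, which pins it down up to $\k^\times$ and identifies it with the $x_k$ of \cref{Example Y is C Notation} by the $\Z$-grading argument (valid as $c_1=0$). Then $E^{\infty}HF^*(H_{k^+})\cong E^-HF^*(H_{k^+})_u\cong\kuu$, graded so the generator $[x_k]$ sits in degree $-2k$, i.e. $E^{\infty}HF^*(H_{k^+})\cong\mathbb{F}[2k]$ once we rewrite in terms of the ambient valuation. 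For $E^+HF^*(H_{k^+})$ I would invoke \cref{Corollary torsion freeness 1 free case}: $E^+HF^*(H_{k^+})\cong E^+QH^*(\C)/\EE^k_1$ where $\EE^k_1\cong\mathrm{coker}(E^-c^*_{k^+}:E^-QH^*(\C)\hookrightarrow E^-HF^*(H_{k^+}))[2]$. The only thing that remains is to compute the single invariant factor $u^{j_1(k)}$ of $c^*_{k^+}:\ku\to\ku$.

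The main obstacle is therefore identifying $j_1(k)$, i.e. how $u$-divisible $x_0$ becomes inside $E^-HF^*(H_{k^+})$. By \cref{Lemma slice poly for ERk is same as for ck}, this equals the invariant factor of $ER_k=\mathcal{S}^k\circ c_{k^+}=r_{a-(k-1)b,b}\circ\cdots\circ r_{a,b}$, a composite of maps $\ku\to\ku$. Each $r_{a-jb,b}$ has $u^0$-part equal to the non-equivariant rotation map $r$ on $QH^*(\C)\cong\k$, which is multiplication by $Q_\Fi\in QH^2(\C)$; for degree reasons $Q_\Fi=0$, so each $r_{a-jb,b}$ is multiplication by a nonzero element of $u\ku$, in fact of the form $n_{j}\,u$ with $n_j\in\k^\times$ (its $u^1$-coefficient is a unit because the next composite eventually becomes $u$-divisible of the right order—or, more robustly, because by \cref{Theorem injectivity theorem 2}.(4) each $r_{a-jb,b}$ is injective, hence nonzero, and it lies in $u\ku$, and a grading/degree count forces it to be exactly order $1$). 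Composing $k$ of them gives $ER_k=(\prod n_j)u^k$, so $j_1(k)=k$, whence $\EE^k_1\cong(u^{1-k}\ku/u\ku)$, giving $E^+HF^*(H_{k^+})\cong\mathbb{F}/(u^{1-k}\ku/u\ku)\cong(\kuu/u^{1-k}\ku)\cdot[x_0]$ in the stated grading; and $E^+SH^*(\C)=\varinjlim=0$ is consistent (the connecting/continuation maps $E^+HF^*(H_{k^+})\to E^+HF^*(H_{(k+1)^+})$ kill the bottom class). I would close by cross-checking against \cref{cor vanishing of E+ for SH}'s grading argument and against \cref{ESH is direct limit of maps on EQH}, which exhibits $\varinjlim(\ku\xrightarrow{u}\ku\xrightarrow{u}\cdots)\cong\kuu$ directly, confirming $E^-SH^*(\C)\cong E^{\infty}SH^*(\C)\cong\kuu$.
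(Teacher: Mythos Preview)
Your proposal is correct and follows essentially the same route as the paper, which simply says the result ``follows immediately from the general theory in \cref{Subsection Growth rate of the filtration polynomial}''; you have unpacked exactly that general theory (equivariant formality for rank $1$, the CY/free-weight vanishing and localisation results, Periodicity~2, and the invariant-factor computation via $ER_k$). One small tightening: your first justification that each $r_{a-jb,b}$ has $u$-order exactly $1$ (``because the next composite eventually becomes $u$-divisible of the right order'') is circular; the correct argument is your second one, namely injectivity together with the grading constraint from \cref{Remark CY and monotone cases ERk matrix} (since $c_1=0$ forces $|T|=0$, the degree-preserving map $\ku\to\ku[2]$ sends $x_0$ to $\alpha u$ with $\alpha\in\k^\times$).
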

\begin{proof}
This follows immediately from the general theory in \cref{Subsection Growth rate of the filtration polynomial}.
\end{proof}

We deduced \cref{Prop ESH of C} from the general theory. We now show how it can also be read-off from the {\MBF} spectral sequence, in \cref{Eq sp seq for C all together}.
\begin{figure}[ht]%
				\centering
				{
					\includegraphics[scale=1.2]{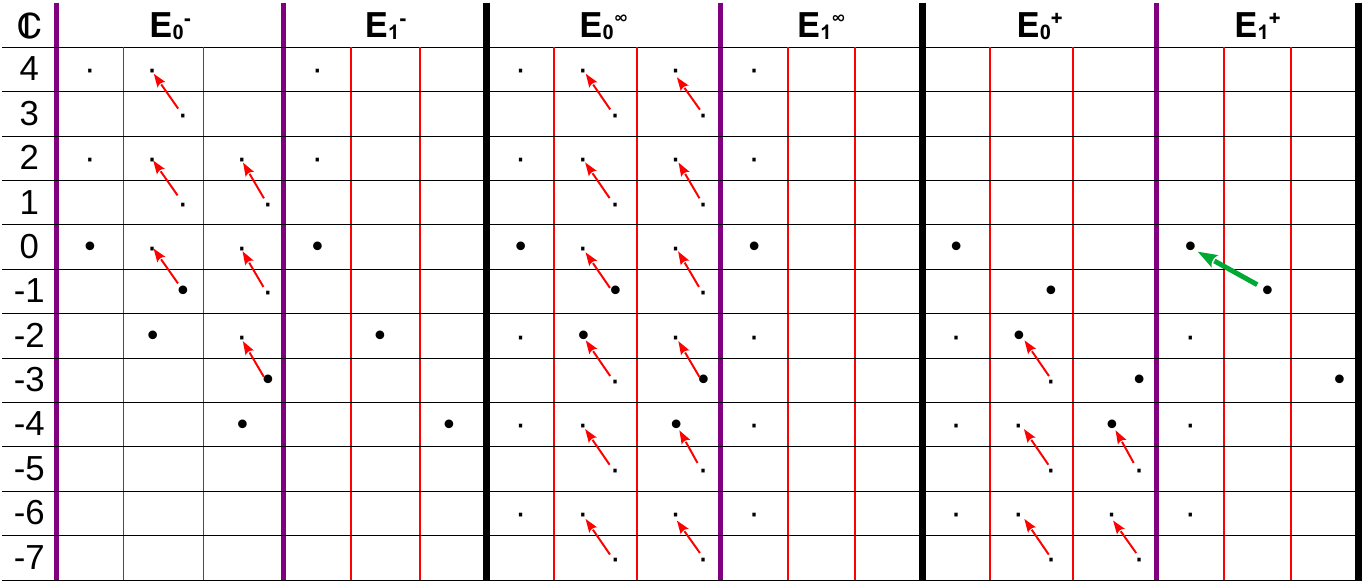}
					\caption{\label{Eq sp seq for C all together}
     $E_0$-page and $E_1$-page for the $S^1$-equivariant spectral sequence for $E^{\karo}SH^*(\C)$ for the three models $\karo=-,\infty,+$, and $Y=\C$. The smaller dots indicate that the $1$-orbit generator comes with a factor involving a non-zero power of $u$.
     }
     }
\end{figure}
\begin{lm}\label{Lemma direction of d0 edge on infty page for C}
 The edge differential on higher columns of the $E_0^{\infty}$ page goes from odd to even degrees, making higher columns acyclic, as drawn in \cref{Eq sp seq for C all together}. 
\end{lm}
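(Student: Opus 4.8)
\textbf{Plan for the proof of \cref{Lemma direction of d0 edge on infty page for C}.}
The statement claims that on the $E_0^{\infty}$-page, the zero-page edge differential $d_0^{\infty}$ restricted to each higher column ($k\geq 1$) maps the odd-degree generator $u^{j}y_k$ to a non-zero multiple of the even-degree generator $u^{j+1}x_k$, so that each higher column becomes acyclic on the $E_1^{\infty}$-page. The plan is to reduce this entirely to the explicit description of the equivariant Floer differential from \cref{Lemma equiv floer diff in C case}, together with the convergence statement already established in \cref{Prop vanishing of spectral sequence for infinity page} (equivalently \cref{Prop spectral seq tricks}(2)).

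First I would recall from \cref{Lemma equiv floer diff in C case} that $d = \delta_0 + u\delta_1$ with $\delta_1(x_k)=0$, that $d(x_k)\in \k y_k$, and that $d(y_k) \in \k x_{k-1}\oplus \k u x_k$. Passing to the $E_0^{\infty}$-page means keeping only the part of $d$ that preserves the $H$-filtration column, i.e.\ the ``vertical'' part $d_0^{\infty}$: the term $\k x_{k-1}$ in $d(y_k)$ lives in the column of slope $k-1$, so it is the horizontal (off-diagonal) part that contributes to higher edge differentials $d_r^{\infty}$ for $r\geq 1$, whereas the term $\k u x_k$ stays in the $k$-th column and is precisely $d_0^{\infty}(y_k)$ up to the $\ku$-module structure; concretely $d_0^{\infty}(u^j y_k) = c_k\, u^{j+1} x_k$ for some constant $c_k\in\k$, and $d_0^{\infty}(u^j x_k)=0$ since $d(x_k)\in \k y_k$ lies in odd degree but $x_k$ itself generates the relevant even row and $y_k$ its own — more carefully, the vertical component of $d(x_k)$ vanishes for degree/$u$-order reasons as in \cref{Lemma equiv floer diff in C case}. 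So the higher-column $E_0^{\infty}$-complex is, column by column, a two-term complex $\cdots \to (\text{odd row})\xrightarrow{d_0^{\infty}}(\text{even row})\to \cdots$ with $d_0^{\infty}$ multiplication by $c_k u$ on a free $\kuu$-module of rank one in each parity.

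Next I would argue $c_k\neq 0$. If $c_k=0$, then the $k$-th higher column of the $E_1^{\infty}$-page would be a non-zero $\kuu$-vector space ($\cong \mathbb{F}\otim\cdots$, in any case non-zero). But \cref{Prop vanishing of spectral sequence for infinity page} (free weight, which holds here since $a\notin b\Z_{\geq 1}$) says all higher columns of the $E_1^{\infty}$-page vanish. This contradiction forces $c_k\neq 0$, hence $d_0^{\infty}$ is an isomorphism from the odd row onto the even row in each higher column, which is exactly the asserted acyclicity and the asserted odd-to-even direction. The one point requiring a little care — and the main (minor) obstacle — is bookkeeping: checking that the $u$-order and $\Z$-grading constraints of \cref{Lemma equiv floer diff in C case} really do confine $d_0^{\infty}(x_k)$ to zero and pin the vertical part of $d(y_k)$ to the $\k u x_k$ summand, so that the $E_0^{\infty}$ higher-column complex is genuinely the rank-one two-term complex described above rather than something with extra contributions; this is purely the same grading/$u$-order argument used to prove \cref{Lemma equiv floer diff in C case} itself, applied after $u$-localisation. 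Once that is in place the result is immediate.
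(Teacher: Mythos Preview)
Your argument has a real gap at the step where you assert $d_0^{\infty}(u^j x_k) = 0$. \cref{Lemma equiv floer diff in C case} only tells you $d(x_k) \in \k\, y_k$; since $y_k$ lies in the \emph{same} column $k$ as $x_k$, the vertical (column-preserving) component of $d(x_k)$ is all of $d(x_k)$, and no degree or $u$-order constraint forces it to vanish. Writing $d_0^{\infty}(x_k) = \alpha_k\, y_k$ and $d_0^{\infty}(y_k) = \beta_k\, u\, x_k$, the relation $(d_0^{\infty})^2 = 0$ only gives $\alpha_k \beta_k = 0$. Your appeal to \cref{Prop vanishing of spectral sequence for infinity page} correctly forces one of $\alpha_k,\beta_k$ to be nonzero, but it cannot decide which: over $\kuu$ the even-to-odd complex ($\alpha_k \neq 0$, $\beta_k = 0$) and the odd-to-even complex ($\alpha_k = 0$, $\beta_k \neq 0$) are \emph{both} acyclic, because $u$ is a unit. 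The content of the lemma is precisely to exclude $\alpha_k \neq 0$, and that is what your plan does not accomplish.

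The paper closes this gap by bringing in the non-equivariant theory via \cref{Prop spectral seq tricks}(3). If $\alpha_N \neq 0$ for some minimal $N \geq 1$, that arrow preserves the $u$-power and hence is a $\delta_0$-arrow between the two fat dots $x_N \to y_N$; then $[y_N] = 0$ on the non-equivariant $E_1$-page, so nothing from column $N$ can kill $x_{N-1}$, and nothing else can for degree reasons (minimality of $N$ ensures $x_{N-1}$ is itself a $d_0$-cycle). Hence $x_{N-1}$ survives to the limit, contradicting $SH^*(\C) = 0$. This use of the non-equivariant vanishing is the ingredient your plan is missing.
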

\begin{proof}
By \cref{Prop spectral seq tricks}(2), each higher column of $E_0^{\infty}$ is acyclic, so one possibility is that the arrows are as drawn in \cref{Eq sp seq for C all together}: each arrow denotes an isomorphism $\k \to \k$.
However, we need to exclude another possibility: the arrows increase the degree by 1, but could a priori go from even to odd classes within a column.
Consider, by contradiction, the minimal $N\geq 1$ such that column $N$ has the arrows going from even to odd degrees. It follows that the arrows in column $N$ preserve the power of $u$, so they arise from $\delta_0$ in \eqref{Equation for the differential expanded}. Apply \cref{Prop spectral seq tricks}(3): remove all smaller dots, and consider just the resulting $E_0$-page of fat dots. By assumption, there is a (non-trivial) arrow between the two fat dots in column $N$. Therefore column $N$ does not kill the lowest degree fat dot in column $N-1$, which therefore will survive to the limit as nothing else can cancel it for degree reasons (here the minimality assumption on $N$ ensures that there is no cancellation occurring within column $N-1$ on the $E_0$-page). This contradicts that the $E_0$-page of fat dots converges to zero: $SH^*(Y)=0$.
\end{proof}

By \cref{Prop spectral seq tricks}, from the $E_0^{\infty}$ page we deduce the rest of the information about $E_1^{\infty},E_0^{\pm},E_1^{\pm}$ shown in \cref{Eq sp seq for C all together}. 
The columns up to column number $k\in \N$ give the spectral sequence for the Hamiltonian $H_{k^+}$.
Because all generators in the $E_1^-$, $E_1^{\infty}$ pages are in the same parity, those spectral sequences have already converged. By the vanishing theorem, \cref{cor vanishing of E+ for SH}, we know that everything in the $E^+$-spectral sequence will die eventually in the limit, and we know that $E^-SH^*(Y)\cong E^{\infty}SH^*(Y)$. The $E_1^-$-page has a different $u$-module structure than $E^-SH^*(Y)$, nevertheless we can reconstruct the correct $u$-module structure a posteriori: we know that the $\k$-vector space generators must organise themselves so that $E^-SH^*(Y)\cong E^{\infty}SH^*(Y)\cong E^{\infty}QH^*(Y)$ is a $\ku$-module isomorphism.
From this observation, \cref{Prop ESH of C} follows, noting that $E^+HF^*(H_{k^+})$ is the cokernel of $u\otimes 1:E^-HF^*(H_{k^+}) \to E^{\infty}HF^*(H_{k^+})$ by \cref{Theorem torsion freeness of Eminus HF}.

We now show that, a posteriori, we can  partly reconstruct the Floer differential, analogously to \eqref{Equation d by Zhao}.

\begin{cor}
   The equivariant Floer differential in \cref{Lemma equiv floer diff in C case} satisfies
    \begin{equation}\label{Equation equiv diff in C case}
d(x_{k})=0 \quad\textrm{ and }\quad d(y_k)=\alpha_k x_{k-1}+\beta_k ux_{k},
\end{equation}
where $\alpha_k,\beta_k \in \k$ are non-zero.
\end{cor}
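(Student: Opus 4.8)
The statement to prove is that for $Y=\C$ with a free weight $(a,b)$, the equivariant Floer differential from \cref{Lemma equiv floer diff in C case} takes the form $d(x_k)=0$ and $d(y_k)=\alpha_k x_{k-1}+\beta_k u x_k$ with $\alpha_k,\beta_k\neq 0$. By \cref{Lemma equiv floer diff in C case} we already know $d(x_k)\in\k y_k$, $d(y_k)\in\k x_{k-1}\oplus \k u x_k$, and $\delta_1(x_k)=0$; so it remains to (i) show $d(x_k)=0$, i.e.\ the coefficient of $y_k$ in $d(x_k)$ vanishes, and (ii) show the two coefficients $\alpha_k,\beta_k$ in $d(y_k)$ are nonzero. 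The approach is to read all of this off from the already-computed cohomology groups in \cref{Prop ESH of C}, together with the structure of the $E_0$/$E_1$-pages in \cref{Eq sp seq for C all together} and the zig-zag description of edge differentials from \cref{Subsection Spectral sequence trick 2}.

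First I would handle $d(x_k)=0$. Suppose the coefficient of $y_k$ in $d(x_k)$ were nonzero. Since $d^2=0$ and $d(y_k)=\alpha_k x_{k-1}+\beta_k ux_k$ with $x_{k-1}$ and $x_k$ linearly independent over $\ku$ in $E^{\infty}FC^*(H_\lambda)$, the relation $d(d(x_k))=0$ would force $\alpha_k=\beta_k=0$, hence $d(y_k)=0$ as well. Then in $E^{\infty}HF^*(H_{k^+})$ the class $[x_k]$ would be a nonzero (indeed, $\ku$-module-generating) cocycle that is not a coboundary, contributing to $E^{\infty}HF^{-2k}(H_{k^+})$; combined with the surviving $[x_0]$ this would make $E^{\infty}HF^*(H_{k^+})$ have rank strictly bigger than $1$ as a $\kuu$-vector space, contradicting $E^{\infty}HF^*(H_{k^+})\cong\mathbb F[2k]$ (rank one) from \cref{Prop ESH of C}. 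Alternatively and more cleanly: if $d(x_k)=0$ and $d(y_k)=0$, the entire column $k$ of the $E_1$-page would be nonzero in both even and odd degrees, contradicting that the $E_1^-$ and $E_1^\infty$ pages are concentrated in a single parity (which is what forces the convergence argument in \cref{Prop spectral seq tricks}(2) and the identification $E^{\infty}SH^*(\C)\cong\kuu$). Either way, $d(x_k)=0$.

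Next, for the nonvanishing of $\alpha_k$ and $\beta_k$: now that $d(x_k)=0$, the page $E_0^\infty$ in column $k$ has the single arrow $y_k\mapsto d(y_k)$, and by \cref{Prop spectral seq tricks}(2) and \cref{Lemma direction of d0 edge on infty page for C} every higher column of $E_0^\infty$ must be acyclic with the arrow going from the odd class to the even class. Acyclicity of column $k$ over $\kuu$ says exactly that $d(y_k)=\alpha_k x_{k-1}+\beta_k u x_k$ is a nonzero element, i.e.\ $(\alpha_k,\beta_k)\neq(0,0)$; but this alone is not enough. To get $\beta_k\neq 0$: if $\beta_k=0$ then $d(y_k)=\alpha_k x_{k-1}$ preserves the power of $u$, so this arrow already appears in the non-equivariant (fat-dot) $E_0$-page of \cref{Prop spectral seq tricks}(3). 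Running the parity/survival argument from the proof of \cref{Lemma direction of d0 edge on infty page for C} for the minimal such $k$ shows the lowest fat dot in column $k-1$ cannot be cancelled, contradicting $SH^*(\C)=0$. To get $\alpha_k\neq 0$: this is the statement that $[x_{k-1}]$, which represents a nonzero class of $E^-HF^*(H_{(k-1)^+})\cong\ku\cdot[x_{k-1}]$, gets killed after passing to slope $k^+$ — equivalently, the continuation map $E^-HF^*(H_{(k-1)^+})\to E^-HF^*(H_{k^+})$ is not injective on the class $[x_{k-1}]$. But $E^-HF^*(H_{k^+})\cong\ku\cdot[x_k]$ is rank one with generator in degree $-2k$, while $[x_{k-1}]$ has degree $-2(k-1)$; inside $E^-HF^*(H_{k^+})$ the image of $[x_{k-1}]$ must be $u\cdot(\text{unit})\cdot[x_k]$ by the localisation/structure results (\cref{Prop ESH of C}, $E^-SH^*(\C)\cong\kuu$), so in particular $[x_{k-1}]$ becomes $u$-divisible, which at chain level means precisely that $x_{k-1}$ acquires a $u^{\ge 1}$-expression, forcing $x_{k-1}\in\mathrm{im}\,d$ in $E^{\infty}FC^*$, i.e.\ $\alpha_k\neq 0$. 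I would phrase this last point using the $u$-valuation: $\mathrm{val}^{k}(x_0)=k$ from the general free-weight computation forces each successive $[x_{j-1}]$ to be a genuine $u$-multiple of $[x_j]$, which is exactly $\alpha_j\neq 0$ for all $j$.

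The main obstacle I anticipate is separating the two coefficients cleanly: the abstract cohomology computation in \cref{Prop ESH of C} pins down $d(y_k)$ only up to the statement ``$d(y_k)\neq 0$ in the $\kuu$-span of $x_{k-1},ux_k$,'' and one must feed in both pieces of geometric input — that the non-equivariant spectral sequence converges to $SH^*(\C)=0$ (this kills the $\beta_k=0$ possibility, via the parity argument of \cref{Lemma direction of d0 edge on infty page for C}) and that $E^-SH^*(\C)\cong\kuu$ is $u$-local with the grading-minimal generator ``sliding down'' through the $[x_k]$ (this kills the $\alpha_k=0$ possibility). Both inputs are available from earlier in the paper, so the proof is a matter of assembling them in the right order; I would write it as: (1) $d(x_k)=0$ by parity; (2) acyclicity of column $k$ of $E_0^\infty$ gives $(\alpha_k,\beta_k)\neq(0,0)$ and in fact $\mathrm{val}_u(d(y_k))=\nu(\alpha_k)$ controls divisibility; (3) $\beta_k\neq 0$ from $SH^*(\C)=0$; (4) $\alpha_k\neq 0$ from $E^-SH^*(\C)\cong\kuu$ together with the degree count.
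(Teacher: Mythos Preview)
There are two genuine gaps.

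\textbf{The argument for $d(x_k)=0$ is backwards.} You assume $d(x_k)=cy_k$ with $c\neq 0$ and correctly deduce $d(y_k)=0$ from $d^2=0$. But then you claim ``$[x_k]$ would be a nonzero cocycle'': if $d(x_k)\neq 0$ then $x_k$ is \emph{not} a cocycle, so there is no class $[x_k]$. Your alternative (``if $d(x_k)=0$ and $d(y_k)=0$ the column survives in both parities'') has the wrong hypothesis for the contradiction you are running, and with the intended hypothesis $d(x_k)\neq 0$ the column is in fact acyclic on $E_0$, not surviving in both parities. The paper's route is simpler: since $d(x_k)\in\k y_k$ lies entirely within column $k$, it \emph{equals} the $E_0^\infty$-edge differential on $x_k$; by \cref{Lemma direction of d0 edge on infty page for C} the arrows go from odd to even degree, so the even class $x_k$ has no outgoing arrow, hence $d(x_k)=0$.

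\textbf{The roles of $\alpha_k$ and $\beta_k$ are swapped.} The term $\alpha_k x_{k-1}$ is the $u^0$-part, i.e.\ the non-equivariant $\delta_0$; the term $\beta_k u x_k$ is the within-column part, i.e.\ the $E_0^\infty$-edge differential $d_0(y_k)$. So:
\begin{itemize}
\item $\beta_k\neq 0$ follows from acyclicity of column $k$ on $E_0^\infty$ (\cref{Lemma direction of d0 edge on infty page for C} or \cref{Prop spectral seq tricks}(2)): given $d(x_k)=0$, the only possible within-column arrow is $y_k\mapsto\beta_k u x_k$, and it must be an isomorphism.
\item $\alpha_k\neq 0$ follows from $SH^*(\C)=0$: on the non-equivariant complex $\delta_0(y_k)=\alpha_k x_{k-1}$, and only $y_k$ can kill $x_{k-1}$.
\end{itemize}
Your proposed argument for $\beta_k\neq 0$ (``if $\beta_k=0$ then $d(y_k)=\alpha_k x_{k-1}$ \ldots the lowest fat dot in column $k-1$ cannot be cancelled, contradicting $SH^*(\C)=0$'') does not work: if $\beta_k=0$ but $\alpha_k\neq 0$, then on the fat-dot page $y_k$ still hits $x_{k-1}$ and there is no contradiction with $SH^*(\C)=0$. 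What $\beta_k=0$ actually contradicts is the \emph{equivariant} acyclicity of column $k$ on $E_0^\infty$, not the non-equivariant vanishing.

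Your argument for $\alpha_k\neq 0$ via the $\ku$-module structure of $E^-HF^*(H_{k^+})$ (forcing $[x_{k-1}]\in\k^\times u[x_k]$, hence a boundary relation involving $\alpha_k$) is valid and appears in the paper as a Remark immediately after the Corollary; it in fact yields both $\alpha_k\neq 0$ and $\beta_k\neq 0$ simultaneously, so if you repair step (1) you could bypass the separate $\beta_k$ argument entirely.
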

\begin{proof}
Recall that the $E_0^{\infty}$-page differential is $\delta_0$.
By \cref{Lemma equiv floer diff in C case},
$d(x_{k})=\delta_0(x_k)\in \k y_k$, and there are no red arrows out of $[x_k]$ on the $E_0^{\infty}$-page, so $d(x_k)=0$.
By \cref{Lemma equiv floer diff in C case},
$\delta_0(y_k)=\alpha_k x_{k-1}$ and $\delta_1(x_k)=\beta_k x_k$ (the latter is also known as the BV operator $\Delta(x_k)$).
The $E_0^{\infty}$-page differential $d_0$ is induced by $\delta_0$, and in \cref{Eq sp seq for C all together} we have a red arrow that implies $d_0([y_k])=\beta_k u [x_k]$ for $\beta_k\neq 0$ (since the arrow is an isomorphism).
Finally, $\delta_0(y_k)=\alpha_k x_{k-1}\neq 0$ because $\delta_0$ is the differential from the non-equivariant Floer complex, and only $y_k$ can kill $x_{k-1}$, which it must since $SH^*(Y)=0$.
\end{proof}

\begin{rmk}
A second proof of $\alpha_k\neq 0$ follows from the $\ku$-module structure.
The $\ku$-module structure on the equivariant spectral sequence does not agree with the one on the limit. For  $k\geq 1$,
\begin{equation}\label{Equation u torsion equation for C case}
u\cdot [x_k]=[ux_k]=[d_0(\b_k^{-1}y_k)],
\end{equation}
so $[x_k]$ is $u$-torsion when viewed in $E_1^-$ with its natural $u$-action. However, by \cref{cor vanishing of E+ for SH}, we know that 
$E^-SH^*(\C)$ has no $u$-torsion, and the only way to prevent the analogue of \eqref{Equation u torsion equation for C case} in $E^-SH^*(\C)$ is to ensure that $d(\b_k^{-1}y_k)$ has a non-zero term involving $\alpha_k\neq 0$. It follows that the columns
``link up'' via the $u$-action, when we instead consider the $u$-action on $E^-SH^*(\C)$:
$$u\cdot [x_k]=[ux_k]=[d(\beta_k^{-1}y_k)-\alpha_k \beta_k^{-1}x_{k-1}]= -\alpha_k \beta_k^{-1}\cdot [x_{k-1}] \textrm{ where }\alpha_k \beta_k^{-1}\neq 0.$$
\end{rmk}

\begin{rmk}
A third proof of $\alpha_k\neq 0$ uses the $E^+$-spectral sequence.
 On the $E_1^+$-page, by \eqref{Equation equiv diff in C case},
$$
[y_1] \mapsto d_1[y_1]=[d(y_1)]=
[\alpha_1 x_{0}+\beta_1 ux_{1}]
=
\alpha_1 [x_{0}],
$$
observing that $ux_1=0$ over $\mathbb{F}$-coefficients. So the fact that $[y_1],[x_{0}]$ kill each other off on the $E_1^+$-page is equivalent to the non-vanishing result $\alpha_1\neq 0$.
For $y_2$, the class $[y_2]$ survives to the $E_2^+$-page, and one might be tempted by \eqref{Equation equiv diff in C case} to think that it will continue to survive. However, on the $E_2^+$ page, $[y_2]$ will kill $[u^{-1}x_0]$ because the cohomology relations allow us to build a ``zig-zag'' map (\cref{Subsection Spectral sequence trick 2}):
$$
[y_2]\mapsto [d(y_2)]
=
[\alpha_2 x_1 + \beta u x_2]
= 
[\alpha_2 x_1]
=
[d(\alpha_2\beta_1^{-1}u^{-1}y_1)-
\alpha_2\alpha_1\beta_1^{-1}u^{-1}x_0
]
=
-\alpha_2\alpha_1\beta_1^{-1} \cdot [u^{-1}x_0].
$$
A similar argument will build a zig-zag map from $[y_k]$ to $[u^{1-k}x_0]$ (up to $\k^{\times}$-rescaling) contributing to the differential on the $E_k^+$-page.
Using $\beta_k\neq 0$, this discussion yields:
\begin{equation}\label{Equation killing E+ page for C}
\left([u^{-k}x_0]\textrm{ gets killed on the }E_{k+1}^+\textrm{-page for each }k\geq 0\right) \Longleftrightarrow \left(\alpha_k\neq 0 \textrm{ for all }k\geq 0\right).
\end{equation}
If the left-hand side of \eqref{Equation killing E+ page for C} failed, then for grading reasons that $[u^{-k}x_0]$ would survive to $E_{\infty}^+\cong E^+SH^*(Y)$, contradicting the vanishing in \cref{cor vanishing of E+ for SH}.

\end{rmk}

\subsection{The filtration polynomial for $Y=\C$}

\begin{prop}\label{Prop ESH of C filtration poly}
Let $(a,b)$ be any weight with $a \notin b\Z_{\geq 1}$. 
Let $\lambda \in [k,k+1)$, for $k\geq 0 \in \N.$

Then $\FF_j^{\lambda}=E^-QH^*(\C)=\ku\cdot x_0$ for $j\leq k$, and $\FF_j^{\lambda}=\ku \cdot u^{j-k}x_0$ for $j>k.$

The $\lambda$-valuation  of $x_0$ is
$
\mathrm{val}^{\lambda}(x_0) = k =\lfloor \lambda \rfloor
$
(cf.\;\cref{Definition lambda valuation}).

The slice polynomial is:
$
s_{\lambda}(t) = 1+t+\cdots+t^k
$
(cf.\;\cref{Corollary filtration polynomial}).
\end{prop}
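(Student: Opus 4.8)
The statement is a direct consequence of the general machinery in \cref{Subsection Growth rate of the filtration polynomial} together with the computation of $E^-HF^*(H_{k^+})$ for $Y=\C$ in \cref{Prop ESH of C}. First I would recall that, by \cref{Prop ESH of C}, the continuation map $c_{\lambda}=E^-c_{\lambda}^*: E^-QH^*(\C)\to E^-HF^*(H_{\lambda})$ is, after choosing the bases $x_0$ and $x_k$ and using the periodicity identification $E^-HF^*(H_{k^+})\cong E^-QH^*(\C)[2k]$, the $\ku$-module map $\ku\cdot x_0 \to \ku\cdot x_k$ sending $x_0\mapsto u^k\cdot(\mathrm{unit})\cdot x_k$; indeed $E^-HF^*(H_{\lambda})\cong E^-HF^*(H_{k^+})\cong \ku\cdot x_k$ for $\lambda\in[k,k+1)$ since no new $1$-orbits appear in this slope range (this is where $a\notin b\Z_{\geq1}$, i.e.\ freeness, is used, via \cref{Lemma fixed points of the mixed action}), and by \cref{Definition xk class} and \cref{Prop Periodicity property 2} the class $x_0$ gets carried to $u^k x_k$ up to a unit. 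The invariant factor of $c_{\lambda}$ is therefore $u^k$, so the single $j$-index is $j_1(\lambda)=k=\lfloor\lambda\rfloor$.

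From there, the filtration description is immediate from the definitions: $\FF_j^{\lambda}:=c_{\lambda}^{-1}(F_j(E^-HF^*(H_{\lambda})))$, and since $c_{\lambda}(x_0)=u^k\cdot(\mathrm{unit})\cdot x_k$ is $u^j$-divisible in the torsion-free module $\ku\cdot x_k$ precisely when $j\leq k$, we get $\FF_j^{\lambda}=\ku\cdot x_0$ for $j\leq k$. For $j>k$ one has $\FF_j^{\lambda}=\{v\in\ku\cdot x_0: c_{\lambda}(v)\ \text{is}\ u^j\text{-divisible}\}=\ker(\ku\cdot x_0\xrightarrow{u^k}\ku\cdot x_k/u^jE^-HF^*(H_{\lambda}))^{c}$, which unwinds to $u^{j-k}\ku\cdot x_0$; equivalently this follows from \cref{Lemma presentation Rj inverse of V} applied to the rank-one map with invariant factor $u^k$ (see also \cref{Example Toy model 0 of invariant factors}). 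The $\lambda$-valuation statement $\mathrm{val}^{\lambda}(x_0)=k=\lfloor\lambda\rfloor$ is then just the reading-off of $\mathrm{val}^{\lambda}$ from $\FF_j^{\lambda}$ via \eqref{Definition lambda valuation}.

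Finally, the slice polynomial: by \cref{Corollary filtration polynomial}(2), $d_j^{\lambda}=\dim_{\k}\FF_j^{\lambda}/u\FF_{j-1}^{\lambda}$ equals the number of invariant factors $u^{\leq j}$ of $c_{\lambda}$, which here is $1$ for $0\leq j\leq k$ and $0$ for $j>k$ (the rank of $\ker c_{\lambda}$ is $0$ since $c_{\lambda}$ is injective by \cref{Corollary E minus c star maps are injective}, so there is no constant tail). Hence $s_{\lambda}(t)=\sum_{j\geq0}d_j^{\lambda}t^j=1+t+\cdots+t^k$. One can also verify $d_j^{\lambda}$ directly: $\FF_j^{\lambda}/u\FF_{j-1}^{\lambda}=\ku x_0/u\ku x_0\cong\k$ for $j\leq k$, and $\FF_j^{\lambda}/u\FF_{j-1}^{\lambda}=u^{j-k}\ku x_0/u\cdot u^{j-1-k}\ku x_0=0$ for $j>k$.

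\textbf{Main obstacle.} There is essentially no hard step here; the only thing requiring care is the bookkeeping identification of $x_0$ with a unit multiple of $u^k x_k$ inside $E^-HF^*(H_{k^+})$. This uses the chain-level description of the Seidel isomorphisms from \cref{Subsection The Hamiltonians NH} (the map \eqref{Equation Seidel map for kH Hams} is the identity shifted in grading, so introduces no negative powers of $u$ or $T$) combined with the fact that $c_{k^+}$ is modeled by equivariant quantum multiplication by $EQ$ (\cref{Prop a0 case multiplication by EQ}) when $b=0$; for general $b$ one instead invokes \cref{ESH is direct limit of maps on EQH} and the grading/injectivity constraints, but since $E^-HF^*(H_{k^+})$ is rank one the invariant factor is forced to be $u^{\nu(\det c_{k^+})}$, and a degree count ($|x_0|=0$, $|x_k|=-2k\mu=-2k$) pins this down to $u^k$.
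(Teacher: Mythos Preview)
Your proposal is correct and follows essentially the same route as the paper: identify $E^-HF^*(H_{k^+})\cong\ku\cdot x_k$ via \cref{Prop ESH of C}, pin down the single invariant factor of $c_\lambda$ as $u^k$ by grading and injectivity, and read off $\FF_j^\lambda$, $\mathrm{val}^\lambda(x_0)$, and $s_\lambda$ from the rank-one Smith normal form (the paper phrases the filtration computation as the intersection $\ku\cdot x_0\cap F_j(E^-HF^*(H_{k^+}))$ using $\ku\cdot u^j x_k=\ku\cdot x_{k-j}$ for degree reasons, but this is the same content). One small phrasing slip: when you invoke \cref{Corollary filtration polynomial}(2), $d_j$ counts invariant factors $u^{\geq j}$, not $u^{\leq j}$; your actual computation ($d_j=1$ for $j\le k$, $d_j=0$ for $j>k$) is correct and your direct verification confirms it.
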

\begin{proof}
By \cref{Prop ESH of C},
$\FF_j(E^-HF^*(H_{k^+}))=(u^j$-divisible elements of $\ku\cdot [x_k])$, so $\ku\cdot u^j x_k$.
For degree reasons, inside $E^-HF^*(H_{k^+})$ we have:
$$\ku\cdot u^j x_k = \ku \cdot x_{k-j} \supset \ku \cdot x_0 \;\textrm{ for }j\leq k, \quad \textrm{ and } \quad \ku\cdot u^j x_k = \ku \cdot u^{j-k} x_0 \;\textrm{ for }j\geq k.$$
By definition, $\FF_j^{\lambda}=\FF_j^{c_{\lambda}}(E^-QH^*(\C))$
viewed inside $E^-HF^*(H_{k^+})$
is the intersection between $\ku\cdot x_0$ and $F_j(E^-HF^*(H_{k^+}))$, so the claim follows.

The slice polynomial also follows. It can also be computed directly using \cref{Corollary filtration polynomial}: 
for grading reasons, $c_{\lambda}: \ku\cdot x_0 \to \ku\cdot x_k$, $x_0 \mapsto \gamma u^k x_k$ for some $\gamma\in \k$, and $\gamma\neq 0$ by \cref{Corollary E minus c star maps are injective}, so $c_{\lambda}$ has precisely one invariant factor: $u^{k}$.
\end{proof}
\section{Example 2: $T^*\CP^1$}
\subsection{The general free weight case for $Y=T^*\C P^1$}
Let $Y=T^*\CP^1 \cong \mathrm{Tot}(\mathcal{O}_{\CP^1}(-2))$ with the standard $\C^*$-action on fibres. Note $c_1(Y) = 0$ (cf.\,\cite{R14}).
The weight $(a,b)$ is free $\Leftrightarrow a \notin b\Z_{\geq 1}$. 

The {\MBF} manifolds of non-constant $1$-orbits are $B_k\cong \R P^3$, in columns $k=1,2,\ldots$ of the $E_1$-page, and they involve a degree shift of $2k\mu=2k$. Denote $[x_k]$ the minimum of $B_k$ (for an auxiliary Morse function), with grading $|x_k|=-2k$. The zeroth column has generators $[x_0],[x_{-1}]$ in degrees $0,2$, generating the cohomology of the zero section, $\C P^1$. 

\begin{prop}\label{Prop TCP1 calculation}
Let $(a,b)$ be any weight with $a \notin b\Z_{\geq 1}$. 
Then
\begin{equation*}
\strut\hspace{-5ex}\begin{array}{ccccc}
\strut\hspace{-16ex}E^-\!SH^*(T^*\CP^1)\cong \kuu^2, 
&
\strut\quad\!\!\!\!\!\!\!\!\!\!\!\!\!\!\!
&
\hspace{3ex}E^{\infty}\!SH^*(T^*\CP^1)\cong \kuu^2, 
&
\strut\quad\!\!\!
&
\hspace{-13ex}E^{+}\!SH^*(T^*\CP^1) = 0,
\\
E^-\!HF^*(H_{k^+})\cong \ku\cdot [x_k]\oplus \ku\cdot [x_{k-1}],
&
\strut\quad\!\!\!\!\!\!\!\!\!\!\!\!\!\!\!
&
E^{\infty}\!HF^*(H_{k^+})\cong \kuu^2,
&
\strut\quad\!\!\!
&
E^{\infty}\!HF^*(H_{k^+}) \cong \mathbb{F}[2k]\oplus \mathbb{F}[2k-2]
\end{array}
\end{equation*}
The $x_k$ agree up to $\k^{\times}$-rescaling with the $x_k$ from \eqref{Definition xk class}, for degree reasons.
\end{prop}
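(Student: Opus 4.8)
The plan is to deduce \cref{Prop TCP1 calculation} entirely from the general machinery of \cref{Subsection Growth rate of the filtration polynomial}, exactly as was done for $Y = \C$ in \cref{Prop ESH of C}, rather than from any local Floer computation. Since $(a,b)$ is free, \cref{Corollary E minus c star maps are injective} and \cref{Corollary torsion freeness 2 free case} apply, giving the chain of injections \eqref{Equation succession of injections}
$$
\ku^2 \cong E^-QH^*(T^*\CP^1) \hookrightarrow E^-HF^*(H_{k^+}) \hookrightarrow E^-SH^*(T^*\CP^1) \hookrightarrow E^{\infty}SH^*(T^*\CP^1) \cong E^{\infty}QH^*(T^*\CP^1) \cong \kuu^2,
$$
where $r = \dim_{\k}H^*(\CP^1) = 2$, using \cref{Prop equivariant formality for QH}. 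The localisation theorem \cref{Theorem localisation theorem} gives the middle isomorphism $E^{\infty}QH^* \cong E^{\infty}SH^*$, hence $E^{\infty}SH^*(T^*\CP^1) \cong \kuu^2$. Since $c_1(Y) = 0$, \cref{cor vanishing of E+ for SH} gives $SH^*(T^*\CP^1) = 0$, so $E^+SH^*(T^*\CP^1) = 0$ and $E^-SH^*(T^*\CP^1) \cong E^{\infty}SH^*(T^*\CP^1) \cong \kuu^2$. This settles the top row.

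For the groups $E^-HF^*(H_{k^+})$, I would invoke \cref{Prop Periodicity property 2}, which gives a $\ku$-module isomorphism $E^-HF^*(H_{k^+}) \cong E^-QH^*(T^*\CP^1)[2k\mu]$ with $\mu = 1$ (the Maslov index of the fibrewise $\C^*$-action on $\mathcal{O}(-2)$, cf.\;\cite{R14}); by \cref{Prop equivariant formality for QH} this is $H^*(\CP^1)\otimes_{\k}\ku$ shifted in grading by $2k$, which is the free rank-two $\ku$-module $\ku\cdot[x_k]\oplus\ku\cdot[x_{k-1}]$ with the indicated gradings $|x_k| = -2k$, $|x_{k-1}| = -2k+2$. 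The identification of $[x_k]$ (up to $\k^{\times}$) with the class from \eqref{Definition xk class} follows for degree reasons, since $c_1(Y) = 0$ forces a genuine $\Z$-grading and $x_k = \mathcal{S}^{-k}(x_0)$ is characterised as the minimal-degree generator; here one uses \cref{Cor equiv formality via spectral sequence} to see $x_0$ is the minimum of the auxiliary Morse function on $\F_{\min}$. Then $E^{\infty}HF^*(H_{k^+}) = E^-HF^*(H_{k^+})_u \cong \kuu^2$ by \cref{Lemma ulocalising W- is Winfty}, and $E^+HF^*(H_{k^+})$ is computed from the short exact sequence of \cref{Theorem torsion freeness of Eminus HF} (or \cref{Corollary torsion freeness 1 free case}) as the cokernel of $u\otimes 1 : E^-HF^*(H_{k^+})[-2] \hookrightarrow E^{\infty}HF^*(H_{k^+})$; since each free summand $\ku\cdot[x_k]$, $\ku\cdot[x_{k-1}]$ contributes $\kuu/u\ku = \mathbb{F}$ in the appropriate shifted grading, this yields $E^+HF^*(H_{k^+}) \cong \mathbb{F}[2k]\oplus\mathbb{F}[2k-2]$ (matching the entry in the statement, which I read as a typo for $E^+HF^*$).

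I expect no serious obstacle here: every step is a direct application of a theorem already in place, and the parallel with the $\C$ case is exact. The one point requiring a little care is the bookkeeping of grading shifts through $\mathcal{S}^k$ and through $u\otimes 1$ — in particular making sure the two $\mathbb{F}$-summands land in gradings $2k$ and $2k-2$ rather than, say, $-2k$ and $-2k+2$ — which comes down to tracking the convention that $u\otimes 1 : W^-[-2] \to W^{\infty}$ is grading-preserving (as fixed in \cref{Subsection the long and short exact sequences}) together with $|x_k| = -2k$. As an optional complement, one can cross-check the answer against the {\MBF} spectral sequence as in \cref{Eq sp seq for C all together}: the higher columns are copies of $H^*(\R P^3)[\![u]\!]$ (using $\mathrm{char}\,\k = 0$), the $E_1^{\infty}$-page collapses to its zeroth column by \cref{Prop vanishing of spectral sequence for infinity page}, and reconstructing the $\ku$-module structure a posteriori (exactly as in the discussion following \cref{Eq sp seq for C all together}) reproduces $E^-SH^* \cong \kuu^2$; but this is only a sanity check, not needed for the proof.
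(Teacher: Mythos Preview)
Your proposal is correct and follows essentially the same approach as the paper: the paper's proof is the single line ``This follows immediately from the general theory in \cref{Subsection Growth rate of the filtration polynomial}'', and your write-up is precisely a detailed unpacking of what that general theory says in this instance. Your optional spectral-sequence cross-check also mirrors the paper's remark immediately following the proof that the result ``can also be read off directly from the {\MBF} spectral sequence''.
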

\begin{proof}
This follows immediately from the general theory in \cref{Subsection Growth rate of the filtration polynomial}.
\end{proof}
\begin{figure}[H]%
				\centering
				{
					\includegraphics[scale=1]{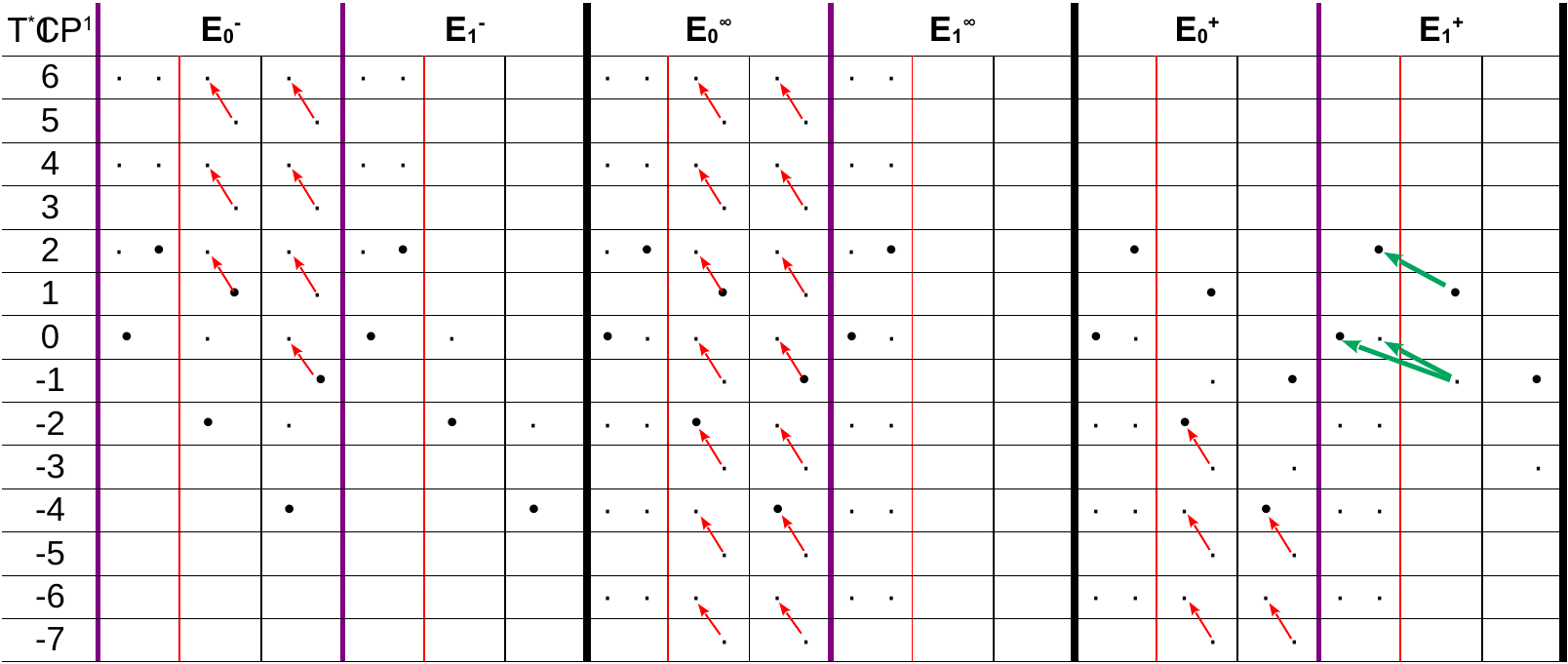}
					\caption{Pages $E_0$ and $E_1$ of equivariant spectral sequences for $\karo=-,\infty,+$ on $T^*\CP^1$    
     }
\label{Eq sp seq for T^CP^1 all together}
				}
\end{figure}
\cref{Prop TCP1 calculation} can also be read off directly from the {\MBF} spectral sequence, in \cref{Eq sp seq for T^CP^1 all together}, just like in the discussion for $Y=\C$ below \cref{Prop ESH of C}.
For $Y=T^*\C P^1$, the analogue of \cref{Lemma direction of d0 edge on infty page for C} is immediate: the arrows on higher columns of $E_0^{\infty}$ cannot go from even to odd classes, because that would decrease the $u$-power, contradicting \cref{Prop spectral seq tricks}(2).
The zeroth column bold dots are the two generators of $H^*(\C P^1)\cong H^*(T^*\C P^1)$ (with coefficients in $\k$), and each of the other columns has bold dots that correspond to a copy of $H^*(\R P^3)$, shifted in degrees appropriately (down by $2k\mu$, where $k$ is the column number, and $\mu=1$ is the Maslov number of the $S^1$-action). 
All columns (except the zeroth) on the $E_1^{\infty}$-page must vanish by \cref{Prop spectral seq tricks}(2). Thus, the red arrows on the $E_0^{\infty}$-page in \cref{Eq sp seq for T^CP^1 all together} are isomorphisms.
This determines the differentials on the $E_0^-$, $E_0^+$ pages. As the $E_1^-$, $E_1^{\infty}$ pages lie in even degree, they have converged. In the $E^+$-spectral sequence, everything eventually dies by \cref{cor vanishing of E+ for SH}. The edge differentials are not entirely determined, 
for example the double arrow shown on the $E_1^+$-page is an injective map $\k \to \k^2$, but its $1$-dimensional image is not determined.

The $E_1^-$-page has a different $u$-module structure than the module $E^-SH^*(Y)$ it has converged to; if we just consider columns up to column $k$, it has converged to $E^-HF^*(H_{k^+})$.
As $E^-HF^*(H_{k^+})$ is torsion-free (\cref{Theorem torsion freeness of Eminus HF}), or using that $E^-SH^*(Y)\cong E^{\infty}QH^*(Y)$ is free, we deduce a posteriori much of the $u$-action by degree considerations: $[x_k],[x_{k-1}]$ must generate $E^-HF^*(H_{k^+})$ as a $\ku$-module. 
However, how precisely the $u$-action relates $[x_k],\ldots,[x_0]$ is unclear, due to boundary cycle relations, with the exception of\footnote{no $\k$-linear combination of $x_{k-1}$ and $ux_k$ arises as a boundary cycle in $E^-CF^*(H_{k^+})$ for degree reasons.} $u\cdot [x_k] = [ux_k]$ in $E^-HF^*(H_{k^+})$.

\subsection{The rotation elements for $Y=T^*\C P^1$}\label{Subsection The rotation elements for TCP1}

We now describe $r_{a,b}$ from \cref{ESH is direct limit of maps on EQH}.

We use the Hamiltonian $k^+ H$, see \cref{Subsection The Hamiltonians NH}. The $1$-orbits are the constant orbits at points of $\F_{\min}=\C P^1$ (zero section of $\mathcal{O}_{\C P^1}(-2)$). We pick an auxiliary Morse function on $\F_{\min}$ with two critical points, the minimum $x_{\min}$ and the maximum $x_{\max}$. 
Their grading is $|x_{\min}|=-2k\mu$ and $|x_{\max}|=-2k\mu+2$, where $\mu=1$ for $Y=T^*\C P^1$ with the standard $\C^*$-action on fibres.
The $\ku$-module basis $x_{\min},x_{\max}$ of $E^-HF^*(k^+H)$ plays a role analogous to the basis $x_k,x_{k-1}$ of $E^-HF^*(H_{k^+})$ (also in degrees $-2k$, $-2k+2$), but need not coincide.\footnote{via the continuation isomorphism $E^-HF^*(k^+H)\cong E^-HF^*(H_{k^+})$, $x_k$ is a $\k^{\times}$-rescaling of $x_{\min}$ due to being in minimal grading, but $x_{k-1}$ could be a non-obvious $\k$-linear combination of $ux_{\min}$ and $x_{\max}$.}
The critical points $x_{\min},x_{\max}$ with those gradings can also be used as chain level generators for $E^-QH^*(Y)[2k\mu]$.

\begin{lm}
In the bases $x_{\min},x_{\max}$ (with respective gradings), $r_{a,b}$ is a matrix
$$
r_{a,b}=
\left(\begin{smallmatrix}
Au & Bu^2\\
-2(1+\lambda_+) & Cu
\end{smallmatrix}\right)
$$
for some $A,B,C\in \k$,
where $\lambda_+\in \k$ involves only $T^{>0}$-terms and arises as $\mathcal{Q}_{\Fi}=(1+\lambda_+)\mathrm{PD}[\C P^1]$ in \eqref{Equation QFi class}, and $\mathrm{PD}[\C P^1]=-2x_{\max}$.
The invariant factors of $r_{a,b}$ are $(1,u^2)$, cf.\,the first case in \cref{Remark example of slice dimensions}, in particular the first map in \eqref{Equation simplifying the direct limit} is not a naive inclusion since $r_{a,b}(x_{\min})= \textrm{unit}\cdot x_{\max} + Au\cdot x_{\min}$.
\end{lm}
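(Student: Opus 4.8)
\textbf{Proof proposal for the final Lemma (the $r_{a,b}$ matrix for $T^*\C P^1$).}

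The plan is to combine three ingredients: the grading constraints from \cref{Remark CY and monotone cases ERk matrix}, the identification of the $u^0$-part with the non-equivariant rotation class from \cref{Prop Periodicity property 2} (together with \cref{Prop a0 case multiplication by EQ} in the $b=0$ subcase, but really just \eqref{Equation QFi class}), and the invariant-factors computation using \cref{Example invariant factors in the 2x2 case}. First I would set up the chain-level picture exactly as in \cref{Subsection The Hamiltonians NH}: using the Hamiltonian $k^+H$, the generators $x_{\min},x_{\max}$ of $E^-HF^*(k^+H)\equiv E^-QH^*(Y)[2k\mu]$ sit in gradings $-2k\mu$ and $-2k\mu+2$ respectively, and $r_{a,b}=\mathcal{S}\circ c_{1^+}$ shifts grading by $2\mu=2$. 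Since $c_1(Y)=0$, the Novikov variable $T$ has $|T|=0$, and by \cref{Remark CY and monotone cases ERk matrix}(1) the $(i,j)$-entry of the matrix is $k_{ij}u^{m}$ with $m=(2\mu - |y_i| + |y_j|)/2 = (2 - |y_i| + |y_j|)/2 \in \N$, else zero. Running through the four entries with $(|x_{\min}|,|x_{\max}|)=(0,2)$ after grading-shift normalisation: the $(x_{\min},x_{\min})$-entry has $m=1$, the $(x_{\min},x_{\max})$-entry has $m=2$, the $(x_{\max},x_{\min})$-entry has $m=0$, and the $(x_{\max},x_{\max})$-entry has $m=1$. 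This forces precisely the shape displayed in the statement, with some $A,B,C\in\k$ and the bottom-left entry a genuine scalar (no $u$).

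Next I would pin down that bottom-left scalar. Apply $\mathrm{ev}_0$ (\cref{Lemma ev0 commutative diagram}) to reduce $r_{a,b}$ to its $u^0$-part, which by \cref{Prop Periodicity property 2} is quantum multiplication by $Q_{\Fi}$ on $QH^*(Y)=QH^*(T^*\C P^1)$. By \eqref{Equation QFi class}, for $T^*\C P^1$ with the standard fibrewise $\C^*$-action (where all non-zero weights of $\F_{\min}=\C P^1$ are $1$, so $\mu=\mathrm{codim}_\C \F_{\min}=1$), we have $Q_{\Fi}=\mathrm{PD}[\C P^1]+T^{>0}\text{-terms}$, and $\mathrm{PD}[\C P^1]$ is the Euler class of the normal bundle of the zero section, namely $\mathcal{O}_{\C P^1}(-2)$, so $\mathrm{PD}[\C P^1]=-2x_{\max}$ in the chosen basis (the class $x_{\max}$ being the generator of $H^2(\C P^1)$ that integrates to $1$ over the fibre class). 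Writing $Q_{\Fi}=(1+\lambda_+)\mathrm{PD}[\C P^1]$ with $\lambda_+$ collecting the $T^{>0}$-corrections, and recalling $x_{\min}$ is the unit, the $u^0$-part of $r_{a,b}$ sends $x_{\min}\mapsto (1+\lambda_+)\mathrm{PD}[\C P^1]=-2(1+\lambda_+)x_{\max}$. Hence the $(x_{\max},x_{\min})$-entry is exactly $-2(1+\lambda_+)$, as claimed. (The $u^0$-image of $x_{\max}$ is irrelevant for this, since the remaining entries carry positive powers of $u$ and hence vanish at $u=0$.)

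Finally I would read off the invariant factors. By \cref{Example invariant factors in the 2x2 case}, for the $2\times 2$ matrix $r_{a,b}$ the invariant factors are $q:=\gcd$ of the entries and $q^{-1}\det r_{a,b}$. Here $q=\gcd(Au,\,Bu^2,\,-2(1+\lambda_+),\,Cu)$; since $-2(1+\lambda_+)\in\k^\times$ (as $\lambda_+$ has only $T^{>0}$-terms, so $1+\lambda_+$ is a unit in $\k$), this gcd is a unit, i.e. $q\sim 1$. The determinant is $ACu^2 + 2(1+\lambda_+)Bu^2 = (AC+2(1+\lambda_+)B)u^2$, so $q^{-1}\det r_{a,b}\sim u^2$ provided the coefficient $AC+2(1+\lambda_+)B$ is non-zero; but $r_{a,b}$ is injective by \cref{Theorem injectivity theorem 2}(4) (the weight is free), equivalently $\det r_{a,b}\in\ku\setminus\{0\}$, which forces exactly this non-vanishing. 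Thus the invariant factors are $(1,u^2)$, matching the first case of \cref{Remark example of slice dimensions}. The last sentence of the Lemma is then immediate from the explicit first column: $r_{a,b}(x_{\min})=-2(1+\lambda_+)x_{\max}+Au\cdot x_{\min}$ with $-2(1+\lambda_+)$ a unit, so the map is not the naive inclusion of $\ku$-submodules.

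\textbf{Main obstacle.} The only genuinely delicate point is identifying the bottom-left entry as a \emph{unit} rather than merely a scalar: this rests on \eqref{Equation QFi class} applying to $T^*\C P^1$ (the weight-one hypothesis on $\F_{\min}$, and the identification of $\mathrm{PD}[\F_{\min}]$ with the Euler class $-2x_{\max}$), and on the observation that the $T^{>0}$-corrections $\lambda_+$ cannot destroy invertibility in the Novikov field. Everything else — the matrix shape, the invariant factors — is then forced by grading bookkeeping and the $2\times 2$ gcd formula, with injectivity supplied by the free-weight hypothesis via \cref{Theorem injectivity theorem 2}.
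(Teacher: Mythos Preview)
Your proof is correct and follows essentially the same approach as the paper: both use the grading constraints from \cref{Remark CY and monotone cases ERk matrix} to fix the $u$-powers, identify the $u^0$-part with quantum multiplication by $Q_{\Fi}$ (the paper simply cites \cite{R14} rather than unpacking \eqref{Equation QFi class} as you do), and then apply \cref{Example invariant factors in the 2x2 case} together with injectivity of $r_{a,b}$ in the free-weight case to conclude the invariant factors are $(1,u^2)$. Your treatment is somewhat more detailed than the paper's terse version, but the structure is the same.
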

\begin{proof}
On the domain of $r_{a,b}$, the basis $x_{\min},x_{\max}$ lies respectively in grading $0,2$, whereas in the codomain it lies in grading $-2,0$. The powers of $u$ are therefore dictated by the grading (see the $c_1(Y)=0$ discussion in \cref{Remark CY and monotone cases ERk matrix}).
The $Q_{\Fi}$ class is known from \cite{R14}.
Using \cref{Example invariant factors in the 2x2 case}: the lower left entry of $r_{a,b}$ implies $1$ is an invariant factor; the other invariant factor is $u^2$ as the determinant is of order $u^2$ and is non-zero since $r_{a,b}$ is injective by \cref{ESH is direct limit of maps on EQH}.
\end{proof}

\subsection{The slice polynomial for $Y=T^*\C P^1$}

The slice polynomial $s_{\lambda}=s_k$ for $T^*\C P^1$ only depends on $k$ for $\lambda \in [k,k+1)$, where $k\geq 0 \in \N.$ We will narrow it down to two  possible cases:
\begin{equation*}
\begin{split}
N_k(t) &= 2+2t+\cdots+2t^{k-1}+t^k+t^{k+1},
\qquad (\textrm{so }c_{k^+}\textrm{ has invariant factors }(u^{k-1},u^{k+1})),\\
Z_k(t) &=  2+2t+\cdots+2t^{k-1}+2t^k,
\qquad\qquad\;\; (\textrm{so }c_{k^+}\textrm{ has invariant factors }(u^{k},u^{k})).
\end{split}
\end{equation*}
The two cases are not distinguished by the dimension $\dim_{\k} \EE^k_1=2k$ (see \cref{Subsection Growth rate of the filtration polynomial}).
 
\begin{prop}\label{Prop ESH of TCP1 filtration poly}
Let $(a,b)$ be any weight with $a \notin b\Z_{\geq 1}$. 
The slice polynomial is
$$
s_k=Z_k \Leftrightarrow x_0 \in \FF_{k}^{\lambda} \Leftrightarrow [u^{-k+1}x_0] = 0 \in E^+HF^*(H_{k^+}),
\textrm{ otherwise: } s_k=N_k \textrm{ and } x_0 \in \FF_{k-1}^{\lambda} .
$$
If $s_k=Z_k$ then $s_{k+1}=N_{k+1}$. 
So $s_k=N_k$ holds for infinitely many $k\in \N.$

If $b=0$ and $s_k=Z_k$, then $s_{mk}=Z_{mk}$ for all $m\in \N.$
\end{prop}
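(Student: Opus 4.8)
\textbf{Proof plan for the final statement of \cref{Prop ESH of TCP1 filtration poly}.}
The claim to prove is: if $b=0$ and $s_k = Z_k$, then $s_{mk}=Z_{mk}$ for all $m\in \N$. Recall that $s_k = Z_k$ is equivalent to $x_0 \in \FF_k^{\lambda}$ for $\lambda \in [k,k+1)$, i.e.\ to the statement that $x_0$ is $u^k$-divisible (equivalently, a $\k^{\times}$-multiple of $u^k x_k$) inside $E^-HF^*(H_{k^+})$. Translated via the Seidel isomorphisms and \cref{Corollary E minus c star maps are injective}, this says $x_0 \in \FF_{\mu k,(a,0)}^k = \FF_{k,(a,0)}^k$ (using $\mu=1$). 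The plan is to feed this into the periodicity machinery: apply \eqref{Equation filtration relation x0 and xk}, namely $x_0 \in \FF_{\mu k,(a,b)}^k \Rightarrow x_0 \in \FF_{\mu Nk,(a,b)}^{Nk}$ for all $N\geq 1$, which is exactly the statement that was proved for the case $b=0$ in the Corollary preceding \eqref{Equation filtration relation x0 and xk}. With $\mu=1$ this gives $x_0 \in \FF_{Nk,(a,0)}^{Nk}$ for all $N$, i.e.\ $x_0 \in \FF_{mk}^{\lambda}$ for $\lambda \in [mk,mk+1)$, which by the first part of \cref{Prop ESH of TCP1 filtration poly} is precisely $s_{mk}=Z_{mk}$.

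First I would set up the translation dictionary carefully: establish that ``$s_k = Z_k$'' $\iff$ ``$x_0$ is $u^k$-divisible modulo torsion in $E^-HF^*(H_{k^+})$'' $\iff$ ``$x_0 \in \FF_{k}^{k}$'' (using that $E^-HF^*(H_{k^+})$ is torsion-free by \cref{Theorem torsion freeness of Eminus HF}, so ``modulo torsion'' is vacuous, and that by \cref{Prop TCP1 calculation} the two invariant-factor cases are $(u^{k-1},u^{k+1})$ versus $(u^k,u^k)$, with the latter forcing both generators $x_k,x_{k-1}$ to be ``balanced'' in $u$-divisibility). The $Z_k$ case is exactly the one where $x_0 = \mathrm{unit}\cdot u^k x_k$, since in the $N_k$ case the invariant factor $u^{k-1}$ means $x_0$ (being in minimal grading, hence a rescaling of a generator) can be divided by at most $u^{k-1}$. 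Here I should double-check the grading bookkeeping: $x_0$ has grading $0$, $x_k$ has grading $-2k$, and $u^k x_k$ has grading $0$, so the only way $x_0$ can be a nonzero multiple of a power of $u$ times a $\ku$-module generator of $E^-HF^*(H_{k^+})$ consistent with the invariant factors is the $Z_k$ possibility.

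Next I would invoke the second part of the Corollary surrounding \eqref{Equation filtration relation x0 and xk}, which is the key input: when $b=0$, the weights in play all equal $(a,0)$, the Seidel element is $\mathcal{S}(x_0) = EQ$ (see \cref{Prop a0 case multiplication by EQ}), and one has $x_k = \mathrm{unit}\cdot EQ^k$ for all $k$ since $c_1(Y)=0$ puts $x_k$ in minimal grading. The hypothesis $x_0 \in \FF_{k}^{k}$ means $x_0 = \mathrm{unit}\cdot u^k x_k = \mathrm{unit}\cdot u^k EQ^k$ inside $E^-QH^*(Y)_{EQ}\cong E^-SH^*(Y)$. Multiplying this identity by $u^k EQ^k$ and using the ring-homomorphism property of the continuation/localisation maps (\cref{Prop product structure in weight 1 0 case}), one gets $x_0 = \mathrm{unit}\cdot u^{2k} EQ^{2k} = \mathrm{unit}\cdot u^{2k} x_{2k}$ at slope $2k$, hence $x_0 \in \FF_{2k}^{2k}$, and iterating gives $x_0 \in \FF_{mk}^{mk}$ for all $m$. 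Concretely this is just the displayed induction ``$x_k = \mathrm{unit}\cdot u^{\mu k}x_{2k}$, so $x_0 \in \k^{\times}u^{\mu k}x_k\subset \k^{\times}u^{2\mu k}x_{2k}$, now repeat for $j=2k,3k,\ldots$'' from the proof of that Corollary, specialised to $T^*\C P^1$ where $\mu = 1$.

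The main obstacle is not conceptual but bookkeeping: making sure the multiplicativity used in the induction step is genuinely available, i.e.\ that for $b=0$ the continuation map $c_{k^+}$ is multiplication by $EQ^k$ in the ring $E^-QH^*(Y)$ (this is \cref{Prop a0 case multiplication by EQ}, valid precisely because the pair-of-pants product exists only for $b=0$), and that the grading shifts $[2k\mu]$ introduced by the Seidel isomorphisms compose correctly so that ``$x_0$ is $u^k$-divisible at slope $k$'' really does propagate to ``$x_0$ is $u^{mk}$-divisible at slope $mk$.'' The statement is flagged as $b=0$-only exactly because for $b\neq 0$ the weight changes under $\mathcal{S}$, and one would instead need the weaker hypothesis \eqref{Equation filtration relation x0 and xk general version} involving an intersection of filtration conditions across the weights $(a,0),(a-kb,0),\ldots$; I would remark on this but not pursue it. A secondary point to verify is that $s_k = Z_k$ at one slope $k$ does not a priori say anything about slopes $< k$ or slopes strictly between multiples of $k$ — the claim is only about $s_{mk}$ — so no monotonicity-type argument is needed beyond the direct divisibility propagation.
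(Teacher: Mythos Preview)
Your proposal is correct and follows the same approach as the paper: translate $s_k=Z_k$ to $x_0\in\FF_k^{k}$ via the first part of the proposition, then apply \eqref{Equation filtration relation x0 and xk} (with $\mu=1$ and $b=0$) to get $x_0\in\FF_{mk}^{mk}$, and translate back. The paper's proof is the one-line chain $s_k=Z_k \Leftrightarrow x_0 \in \FF_{k}^{k^+} \Rightarrow x_0 \in \FF_{mk}^{(mk)^+} \Leftrightarrow s_{mk}=Z_{mk}$; your additional unpacking of the $EQ$-multiplicativity mechanism is essentially re-deriving the proof of the Corollary containing \eqref{Equation filtration relation x0 and xk}, which the paper just cites.
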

\begin{proof}
The (injective) image $c_{\lambda}(\FF_j^{\lambda})\subset E^-HF^*(H_{k^+})$ cannot be easily determined as the $u$-action can relate $x_{-1},x_0,x_{k-1},x_k$ in a complicated way (for $k\geq 1$):
\begin{equation}\label{Equation x0xminus1 formula}
\begin{array}{rclccc}
x_{0} & = & A_k u^{k} x_{k} & + & B_k u^{k-1} x_{k-1}\\
x_{-1} & = & C_k u^{k+1} x_{k} & + & D_k u^{k} x_{k-1},
\end{array}
\end{equation}
where $A_k,B_k,C_k,D_k$ are in the underlying field $\mathbb{B}$ (so do not involve $u$).
This identifies $c_{\lambda}$ with 
\begin{equation}\label{Equation clambda matrix in TP1 case}
u^{k-1}\cdot \left(\begin{smallmatrix} A_k u  & C_k u^2  \\ B_k    & D_k u \end{smallmatrix}\right) : \ku^2 \to \ku^2, \;\; \textrm{ with } A_kD_k-B_kC_k\neq 0,
\end{equation}
using injectivity of $c_{\lambda}$ (\cref{Corollary E minus c star maps are injective}).
By \cref{Example invariant factors in the 2x2 case}, the invariant factors of
$c_{\lambda}$ are
$(u^{k-1},u^{k+1})$ when $B_k\neq 0$, or $(u^k,u^k)$ when $B_k=0$.

Using \cref{Corollary filtration polynomial}, we deduce the two possible slice polynomials:
$$
B_k=0\Rightarrow s_k = Z_k, \textrm{ and } B_k\neq 0 \Rightarrow s_k = N_K.
$$

There are $\alpha_{k+1},\beta_{k+1}$ in the underlying field determined by the linear dependency relation
\begin{equation}\label{Equation xkplus1 formula}
u^2 x_{k+1} = \alpha_{k+1} u x_k + \beta_{k+1} x_{k-1},
\end{equation}
and $\beta_{k+1}\neq 0$ since $x_k,x_{k+1}$ are $\ku$-linearly independent.
Thus, the continuation map $E^-HF^*(H_{k^+})\to E^-HF^*(H_{k+1}^+)$ in the bases $x_k,x_{k-1}$ and $x_{k+1},x_k$ can be viewed as the matrix $M_k$:
\begin{equation}\label{Equation continuation from k to kplus1 in TCP1 case}
M_k = \left(\begin{smallmatrix} 0  & \;\;\;\;\;\tfrac{1}{\beta_{k+1}} u^2  \\  1   & \;\;- \tfrac{\alpha_{k+1}}{\beta_{k+1}} u
\end{smallmatrix}\right) : \ku^2 \to \ku^2,
\textrm{ so }
c_{\lambda+1}=M_k\cdot \eqref{Equation clambda matrix in TP1 case} = u^{k} \left(\begin{smallmatrix} \tfrac{B_k}{\beta_{k+1}} u   & \;\;\tfrac{D_k}{\beta_{k+1}} u^2 \\  A_k- \tfrac{\alpha_{k+1}B_k}{\beta_{k+1}}    &  \;\;(C_k-\tfrac{\alpha_{k+1}D_k}{\beta_{k+1}}) u
\end{smallmatrix}\right).
\end{equation}
Thus, $B_{k+1}=A_k- \tfrac{\alpha_{k+1}B_k}{\beta_{k+1}}$.
The second claim follows:%
\footnote{Another proof is that $s_k=Z_k$, $s_{k+1}=Z_{k+1}$ would imply that $x_0\in \k^{\times} u^k x_k \cap \k^{\times} u^{k+1}x_{k+1}$, causing $x_k,x_{k+1}$ to be $\ku$-linearly dependent, contradiction.}
if $B_k=0$ then $B_{k+1}=A_k\neq 0$, using \eqref{Equation clambda matrix in TP1 case}.

Now the first claim. Note $u^{k+1}x_{k+1}=\alpha_{k+1} u^k x_k + \beta_{k+1} u^{k-1} x_{k-1}$ by \eqref{Equation xkplus1 formula}. By the previous paragraph, $B_{k+1}=0$ precisely if $(A_k,B_k)$ and $(\alpha_{k+1},\beta_{k+1})$ are proportional, and so, by \eqref{Equation x0xminus1 formula}, precisely if $x_0$ and $u^{k+1}x_{k+1}$ are proportional (meaning $x_0\in \FF_{k+1}^{\lambda+1}$). So $s_k=Z_k \Leftrightarrow B_k=0 \Leftrightarrow x_0 \in \FF_{k}^{\lambda}$. The last condition is equivalent to $[u^{-k+1}x_0] = 0 \in E^+HF^*(H_{k^+})$ by \eqref{Equation relating valuation and plus theory}.

The third claim uses \eqref{Equation filtration relation x0 and xk}:
$s_k=Z_k \Leftrightarrow x_0 \in \FF_{k}^{k^+}
\Rightarrow x_0 \in \FF_{mk}^{(mk)^+}
\Leftrightarrow
s_{mk}=Z_{mk}$.
\end{proof}

\begin{rmk}
    For small $k$: 
$s_0=Z_0=2$ and $s_1=N_1=2+t+t^2$. The first is because $c_{0^+}$ is an isomorphism, the second uses $B_1=1$ from \eqref{Equation continuation from k to kplus1 in TCP1 case}. We do not know $s_2$, as we do not know how $u^{-1}x_0$ dies in the spectral sequence for $E^+SH^*(Y)$. This involves precisely the zig-zags discussed in \cref{Example zigzag}: if the arrow $g_2 \to h_1$ in that example is zero then $u^{-1}x_0$ is killed by $u^{-1}x_2$ (via the arrow $g_2 \to h_0$) so we are in the case $B_2=0$ and $s_2=Z_2$, otherwise we are in the case $B_2\neq 0, s_2=N_2.$

More generally, $s_k=Z_k$ requires a very special circumstance: $x_0 \in \ku^{\times}\cdot u^k x_k$; intuitively $x_0$ is ``more divisibile'' than expected.
The $u$-divisibility of the image of $y\in E^-QH^{2d}(Y)$ in $E^-HF^*(H_{k^+})$ is at least $u^{d+k-1}$ since $y$ lies in the $\ku$-linear span of $x_{k-1},ux_k$. However, some $y$ have one extra degree of $u$-divisibility: they lie in $\ku\cdot x_k$.
Viewed in the $2$-plane $A:=\mathrm{span}_{\k}(x_0,u^{-1}x_{-1})\subset E^{\infty}QH^0(Y),$
we get a line $M_k:=\ku\cdot x_k \cap A$ through $0$, consisting of classes that are ``more divisible'' than expected.
So $s_k=Z_k$ (so $B_k=0$ above) is equivalent to $x_0\in M_k$.
Note that $M_k \cap M_{k+1}=\{0\}$, otherwise we get a relation $u^kx_k = \textrm{unit}\cdot u^{k+1}x_{k+1}$ contradicting the $\ku$-linear independence of $x_k,x_{k+1}$. So the result ``$s_k=Z_k\Rightarrow s_{k+1}=N_k$'' is equivalent to the observation: $x_0\notin M_k\cap M_{k+1}$.
We are curious to know what configuration of lines $M_k$ in the plane one obtains as $k$ varies.
\end{rmk}

\section{Example $2'$: $T^*\CP^1$ with a twisted action}

Consider now $Y=T^*\C P^1$ with a $\C^*$-action which acts both on the fibres and on the base. This twisted action was described for $T^*\C P^{n-1}$ in \cite[Example 7.6]{RZ2}.
This was mostly already discussed in \cref{Example twisted TCP1}, but we add some additional explanations here.
In our case, the $\C^*$-fixed locus  $\F=\F_0 \sqcup \F_1$ consists of two fixed points lying in the zero section: $x_0=[1,0]$ and $y_0=[0,1]$.
The weights associated to $\F_{\min}=\F_0=\{x_0\}$ are $(1,1)$, and to $\F_1=\{y_0\}$ are $(-1,3).$
Those two points correspond to generators for the Morse and the Floer complexes, in grading $|x_0|=0$ and $|y_0|=2$, and they generate the $\ku$-module $E^-QH^*(Y)$ (see the two fat dots in the zeroth column in \cref{Eq sp seq for T^CP^1 twisted}).
The Maslov index is $\mu=2$ (the sum of the weights of any fixed component).
In particular, $Y$ is a Conical Symplectic Resolution of weight $s=2$.
The fiber over $y_0$ is a $\Z/3$-torsion line bundle $\mathcal{H}\to \F_1$ inside $T^*\C P^1$, giving rise to {\MB}-manifolds $B_{1/3},B_{2/3},B_{4/3},\ldots$ diffeomorphic to $S^1$.
The only other {\MB}-manifolds of orbits, are the ones corresponding to full rotations: $B_1,B_2,B_3,\ldots$ which are diffeomorphic to a sphere subbundle of $T^*\C P^1$, thus $\R P^3$ (cohomologically $S^3$, as $\mathrm{char}\,\k=0$).

We consider the equivariant Floer theory for any free weight $(a,b)$.
The {\MBF} spectral sequence pages $E_0^-$ and $E_1^-$ are drawn in \cref{Eq sp seq for T^CP^1 twisted} up to the slope value $4/3$.
As in the previous Section, the fat dots are the generators arising on the non-equivariant spectral sequence so these must kill each other out as $SH^*(Y)=0$; from this the red arrows all follow by using the fact that the higher columns of the $E^{\infty}_0$ spectral sequence page are acyclic (as we are in the free weight case).

By the general theory, $E^-SH^*(Y)\cong E^{\infty}SH^*(Y)\cong E^{\infty}QH^*(Y)$ and $E^+SH^*(Y)=0$, because $c_1(Y)=0$ and $(a,b)$ is free.
Abbreviate $E_{\lambda}:=E^-HF^*(H_{\lambda^+})$.
Using the injectivity of the continuation maps, the {\MBF} spectral sequence implies that
$$
E_{\frac{1}{3}}\cong \ku^2,\quad
E_{\frac{2}{3}}\cong \ku \oplus \ku[2],\quad E_{1}\cong \ku[2]\oplus \ku[4],\quad E_{\frac{4}{3}}\cong (\ku^2)[4], \;\textrm{ etc.}
$$
For slope values $p=k+\frac{1}{3}$, for $k\in \N,$ the filtration $\FF^p$ on $E^-QH^*(Y)$ is essentially the filtration by grading (up to a translation by $4\lfloor p \rfloor$ in the integer labelling). 
Whereas for slopes $p=k+\frac{2}{3}$ and $p\geq 1 \in \N$ it is not a filtration by grading: the $u$-valuation on $E^-QH^*(Y)$, compared to what it was for slope value $p^-$, has gone up by $1$ on a specific rank one $\ku$-submodule of $E^-QH^*(Y)$. 

We now explain the proof of \eqref{Equation slice poly twisted TCP1}.
Call $x_0,y_0$ the two fixed points, where $x_0$ is the minimum of the moment map, so the gradings are $|x_0|=0$, $|y_0|=2$ (the two fat dots in the zeroth column of 
\cref{Eq sp seq for T^CP^1 twisted}). Ignoring gradings, $\ku^2 \cong E^-QH^*(Y)\hookrightarrow E^-HF^*(H_{\lambda})\hookrightarrow E^-SH^*(Y)\cong \kuu^2$, so $c_{\lambda}^*:E^-QH^*(Y)\to E^-HF^*(H_{\lambda})$ corresponds to a $\ku$-module homomorphism $\ku^2\to \ku^2$.
If we now take into account gradings again, and observe \cref{Eq sp seq for T^CP^1 twisted}, we see that
$c_{(1/3)^+}^*$ must have invariant factors $(1,u).$
For $k\in \N$, the continuation map 
$$
\psi: \ku^2 \cong E^-HF^*(H_{(\frac{1}{3})^+})\to E^-HF^*(H_{(k+\frac{1}{3})^+}) \cong (\ku^2)[4k]
$$
must have invariant factors $(u^{2k},u^{2k})$ for grading reasons.
Since $c_{(k+\frac{1}{3})^+}=\psi\circ c_{(1/3)^+}$, it must have invariant factors $(u^{2k},u^{2k+1})$.
The continuation map 
$$
\psi': (\ku^2)[4k] \cong E^-HF^*(H_{(k+\frac{1}{3})^+})\to E^-HF^*(H_{(k+\frac{2}{3})^+}) \cong \ku[4k]\oplus \ku[4k+2] 
$$
must have invariant factors $(1,u)$, for grading reasons. Since $c_{(k+\frac{2}{3})^+} = \psi'\circ c_{(k+\frac{1}{3})^+}$, the invariant factors can be $(u^{2k},u^{2k+2})$ or $(u^{2k+1},u^{2k+1})$ (these give rise respectively to the slice polynomials $N_{2k+1}$ and $Z_{2k+1}$ in \eqref{Equation slice poly twisted TCP1}).
The map $ER_k:E^-QH^*(Y)\to E^-QH^*(Y)[2k\mu]$ has grading $2k\mu=4k$, so in the basis $x_0,y_0$ it has the form
$$
ER_k = \left(\begin{smallmatrix} 
A_k u^{2k} & C_k u^{2k+1} \\
B_k u^{2k-1} & D_k u^{2k}
\end{smallmatrix}\right),
$$
where $A_k,B_k,C_k,D_k\in \mathbb{B}.$
As it is an injective map, $\det\, ER_k\neq 0$, so: if $B_k\neq 0$ then the invariant factors are $(u^{2k-1},u^{2k+1})$, if $B_k=0$ then the invariant factors are $(u^{2k},u^{2k}).$
It follows that $s_k$ is either $N_{2k}$ or $Z_{2k},$ which proves the options for $s_{k+1}$ in \eqref{Equation slice poly twisted TCP1}.

Finally, we explain the ``either ... or ...'' in \eqref{Equation slice poly twisted TCP1}: the column with integer slope $k+1$ on the $E_1$-page in \cref{Eq sp seq for T^CP^1 twisted} has two generators: $x_{\min}$ and $u\cdot x_{\min}$ (in gradings $-4k-4$, $-4k-2$), where $x_{\min}$ is the minimum of an auxiliary Morse function on the {\MB} manifold $B_{k+1}\cong \R P^3.$ This implies that $u^2\cdot x_{\min}$ in $E^-HF^*(H_{(k+1)^+})$ is an element in $E^-HF^*(H_{(k+\frac{2}{3})^+})$ whose $u$-valuation has dropped by $2$. It also shows that the minimum of an auxiliary Morse function on the {\MB} manifold $B_{(k+\frac{2}{3})^+}\cong S^1$ has not changed $u$-valuation value.
This proves that only one invariant factor changes, and it changes by a $u^2$ factor, when  passing from slope $(k+\frac{2}{3})^+$ to $(k+1)^+$, so \eqref{Equation slice poly twisted TCP1} follows.

\section{Example 3: $\mathcal{O}_{\CP^1}(-1)$}
\label{Section Example 3 O of minus 1}

Let $Y=\mathrm{Tot}(\mathcal{O}_{\CP^1}(-1))$ be the total space  of the negative complex line bundle with Chern number $-1$, equivalently it is the blow-up of $\CP^1$ at one point. 
As explained in Ritter \cite{R14}, $Y$ is monotone with: 
$$c_1(Y)=[\omega]\neq 0, \;\; QH^*(Y)\cong \k[\omega]/(\omega^2 + T \omega), \;\; \textrm{ and } \;\; SH^*(Y)\cong QH^*(Y)/\ker [\omega] \cong \k.$$ 
Here $QH^*$ and $SH^*$ are $\Z$-graded, the Novikov variable $T\in \k$ has grading $|T|=2$, so $\k$ is graded. As $H^*(Y)\cong \k \oplus \k[-2]$, we have $SH^*_{\myplus}(Y)\cong \k[1]$ by the non-equivariant version of the LES \eqref{Equation LES for ESH plus}.
We use the standard weight one $\C^*$-action $\Fi$ on fibres. The weight $(a,b)$ is free precisely if $a\notin b\Z_{\geq 1}.$ 

For $\mathcal{O}(-1)\to \C P^n$, weights $(-a,1)$ with $-a\leq 0$, and working over $\Z$-coefficients, Liebenschutz-Jones \cite[Sec.8.3]{liebenschutz2020intertwining} gave a very explicit description of the $r_{(-a,1)}$ maps on equivariant quantum cohomology, and of $E^-SH^*(Y)$.
The key ingredient was Liebenschutz-Jones's intertwining formula.

We again wish to instead illustrate how the general theory applies, over charactersitic zero, for free weights.
Consider the first page of the {\MBF} spectral sequence for $E^-SH^*(Y)$. Column $0$ involves generators  $x_0,y_0\in H^*(\C P^1)$; in columns $k\in \{1,2,\ldots\}$ we have the cohomology generators $x_k,y_k$ for the {\MB} manifolds $B_k\cong S^3$ with grading shift $2k\mu=2k$.
Thus the $\Z$-grading is:
$$|x_0|=0,\; |y_0|=2,\; |x_k|=-2k,\; |y_k|=-2k+3,\; |T|=2,\; |u|=2.$$

\begin{lm}\label{Lemma TstarCP1 computation of ESH modules}
Let $(a,b)$ be any weight with $a\notin b\Z_{\geq 1}$.
\begin{equation}
\begin{array}{lcccc}
E^-\!SH^*(Y)\cong \ku \oplus \kuu, 
&
\strut
&
E^{\infty}\!SH^*(Y)\cong \kuu^2, 
&
\strut
&
E^{+}\!SH^*(Y) \cong \mathbb{F},
\end{array}
\end{equation}
where the $\kuu$-summand in $E^-SH^*(Y)$ is generated as a $\kuu$-module by $[\omega+T\cdot 1]\in E^-QH^*(Y)$, whereas the $\ku$ summand in $E^-SH^*(Y)$ is generated as a $\ku$-module by $1\in E^-QH^*(Y)$. The latter generator also spans the $\k u^0$-summand of $\mathbb{F}$.
\end{lm}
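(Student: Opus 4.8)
\textbf{Proof plan for \cref{Lemma TstarCP1 computation of ESH modules}.}
The plan is to deduce all three equivariant symplectic cohomology modules from the general structural results already established, specialised to the monotone case with $c_1(Y)=[\omega]$. First I would recall that $(a,b)$ is a free weight, so \cref{Corollary E minus c star maps are injective} gives an injection $c^*:E^-QH^*(Y)\hookrightarrow E^-SH^*(Y)$, and \cref{Theorem localisation theorem} gives $E^{\infty}c^*:E^{\infty}QH^*(Y)\stackrel{\cong}{\to}E^{\infty}SH^*(Y)$. Combined with \cref{Prop equivariant formality for QH}, which gives $E^{\infty}QH^*(Y)\cong H^*(Y)(\!(u)\!)\cong \kuu^2$, this immediately yields $E^{\infty}SH^*(Y)\cong \kuu^2$. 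Next I would invoke the monotone computation from \cref{Theorem intro ESH computation main}: here $QH^*(Y)=\k[\omega]/(\omega^2+T\omega)$, the rotation class $Q_{\Fi}$ acts by quantum multiplication by $-\omega$ (up to convention), $E_0=\ker Q_{\Fi}^y=\langle \omega+T\cdot 1\rangle$ where $y=\dim_{\k}H^*(Y)=2$, and $SH^*(Y)\cong QH^*(Y)/E_0\cong \k\cdot 1$. Plugging into \cref{Theorem intro ESH computation main}, $E^-SH^*(Y)\cong SH^*(Y)[\![u]\!]\oplus E_0(\!(u)\!)\cong \ku\oplus\kuu$, and $E^+SH^*(Y)\cong SH^*(Y)\otimes_{\k}\mathbb{F}\cong \mathbb{F}$. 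This establishes the three isomorphisms.

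For the identification of the generators, I would use the ``Further explanations'' remark following \cref{Theorem intro ESH computation main}: the injection $QH^*(Y)\hookrightarrow E^-QH^*(Y)$ from \cref{Prop equivariant formality for QH} (equivalently \cref{Cor equiv formality via spectral sequence}) freely generates $E^-QH^*(Y)$ as a $\ku$-module, and the canonical copy of $E_0(\!(u)\!)$ inside $E^-SH^*(Y)$ is characterised as the largest $\ku$-submodule on which multiplication by $u$ is invertible. Since $E_0$ is spanned by $\omega+T\cdot 1$, and $c^*$ is the $EQ$-localisation map (here $EQ=Q_{\Fi}+qu^m$ by \cref{Prop a0 case multiplication by EQ} when $b=0$, or more generally the direct limit of rotation maps by \cref{ESH is direct limit of maps on EQH}), the class $[\omega+T\cdot 1]\in E^-QH^*(Y)$ becomes $u$-invertible in $E^-SH^*(Y)$ and hence generates the $\kuu$-summand; whereas the unit $1\in QH^0(Y)\hookrightarrow E^-QH^0(Y)$, which maps to the generator of $SH^*(Y)\cong\k\cdot 1$, generates the complementary $\ku$-summand. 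For the last sentence, I would note that under the surjection $E^-SH^*(Y)\twoheadrightarrow E^+SH^*(Y)$ from the SES in \cref{Theorem torsion freeness 2}, the $\ku$-summand generated by $1$ maps onto $E^+SH^*(Y)\cong\mathbb{F}$, and its image in degree zero is exactly the $\k u^0$-summand of $\mathbb{F}$; alternatively this follows from $E^+SH^*(Y)\cong SH^*(Y)\otimes_{\k}\mathbb{F}$ with $SH^*(Y)=\k\cdot 1$ concentrated in degree zero.

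The main obstacle I anticipate is not the module-level computation, which is essentially bookkeeping with \cref{Theorem intro ESH computation main}, but pinning down precisely which chain-level representative spans $E_0$ and verifying that the $u^{\geq 1}$-corrections to $\omega+T\cdot 1$ (which exist a priori, since the isomorphism $H^*(Y)\otimes\ku\cong E^-QH^*(Y)$ is only non-canonical) do not obstruct the stated identification. Here I would lean on the fact that $E_0(\!(u)\!)$ is \emph{canonical} inside $E^-SH^*(Y)$ as the maximal $u$-invertible submodule (per the remark after \cref{Theorem intro ESH computation main}), so even if the representative must be corrected to $\omega+T\cdot 1+u^{\geq 1}$-terms, the resulting $\kuu$-submodule is unambiguous and its ``$u^0$-part'' is $\omega+T\cdot 1$, which is what the statement asserts. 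A secondary check is that $1$ is genuinely \emph{not} in $E_0$, i.e.\ $Q_{\Fi}^2\cdot 1\neq 0$, equivalently $SH^*(Y)\neq 0$; this holds since $c_1(Y)\neq 0$, by \cite{R14} (or by \cref{Cor ordW zero iff Wplus zero} together with $E^+SH^*(Y)\cong\mathbb{F}\neq 0$).
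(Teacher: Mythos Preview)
Your proposal is correct. The approach differs from the paper's in one key respect: you invoke the general monotone theorem (\cref{Theorem intro ESH computation main}, proved later as \cref{Theorem u localisation for monotone}) to read off the answer, whereas the paper gives a direct, self-contained computation at this stage---choosing the eigenbasis $v_1=\omega$, $v_2=\omega+T\cdot 1$ for the non-equivariant rotation $r$, writing $r_{a,b}=r+uA^{a,b}$, and computing the invariant factors of $ER_k$ as $(1,u^{m_k})$ with $m_k\to\infty$ via \cref{Example invariant factors in the 2x2 case}. Section~\ref{Section Example 3 O of minus 1} is placed before \cref{Theorem u localisation for monotone} precisely as a concrete warm-up for that general argument, so your shortcut forward-references it; this is not logically circular (the proof of \cref{Theorem u localisation for monotone} does not use this example), but it reverses the expository flow. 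Your approach is quicker; the paper's buys an explicit $2\times 2$ matrix description of $r_{a,b}$ that feeds into the subsequent results (\cref{Cor Tstar CP1 slice poly}, \cref{Prop ESH for O-1 over P1}). Your discussion of the $u^{\geq 1}$-corrections and the canonicity of the $E_0(\!(u)\!)$-summand is on point---the paper itself defers the sharper statement about the generator to \cref{Prop ESH for O-1 over P1} and \cref{Remark E0 basis clarification}.
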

\begin{proof}
By \cref{Prop Periodicity property 2}, 
the $u^0$-part of $r_{a,b}$ is the non-equivariant $r: QH^*(Y)\to QH^*(Y)[2]$, which  by 
 \cite{R14} is multiplication by $Q=-[\omega]$.
We pick a basis of eigenvectors for $r$: 
$v_1=\omega$ (eigenvalue $T$), $v_2=\omega+T\cdot 1$ (eigenvalue $0$), thus  
$r=\left(\begin{smallmatrix} T & 0 \\ 0 & 0 \end{smallmatrix}\right)$.
Since $r_{a,b}$ has degree two, \cref{Remark CY and monotone cases ERk matrix}.(2) yields:
$$r_{a,b}=r+uA^{a,b}, \textrm{ for a }2\times 2\textrm{ matrix }A^{a,b}\textrm{ with entries }A_{ij}^{a,b}\in \mathbb{B} \textrm{ (so not involving }T).$$
By \cref{Corollary E minus c star maps are injective}, $r_{a,b}$ is injective, so $0\neq \det\, r_{a,b} \in A_{22}^{a,b} T u + \mathbb{B} u^2$.

By \cref{Example invariant factors in the 2x2 case}, $r_{a,b}$ either has invariant factors $(1,u)$ (when $A_{22}^{a,b}\neq 0$) or $(1,u^2)$ (when $A_{22}^{a,b}=0$).
The composition $ER_k$ from \eqref{Equation ERk map} of $k$ such maps $r_{a,b}$, with varying $a$, therefore has $u^0$-part $r^k$ causing an invariant factor $1$. By \cref{Example invariant factors in the 2x2 case}, the invariant factors of $ER_k$ are therefore $(1,u^{m_k})$ where $$m_k:=\nu(\det\, ER_k)=\nu(\det\,r_{a-(k-1)b,b})\cdot \ldots \cdot \nu(\det\,r_{a-b,b}) \cdot \nu(\det\,r_{a,b}).$$ We deduce that $m_{k+1}-m_k:=\nu(r_{a-kb,b})$ is either 1 or 2, depending on whether $A_{22}^{a-kb,b}$ is non-zero or zero, respectively. 
The claim follows from the general theory in \cref{Subsection Growth rate of the filtration polynomial}: in our case, 
$j_1(k)=0\to 0$ and $j_2(k)=m_k\to \infty$, as $k\to \infty$.
\end{proof}

\begin{lm}\label{Lemma TstarCP1 MBF calculation}
Let $(a,b)$ be any weight with $a\notin b\Z_{\geq 1}$.
Then as $\k$-vector spaces we have:
\begin{equation}\label{Equation Eminus SH for O minus 1}
\begin{split}
    E^-SH^*(Y) & \cong \ku x_0 \oplus \ku y_0 \oplus \bigoplus_{i\geq 1} \k x_i\\
     E^-HF^*(H_{k^+}) & \cong \ku x_0 \oplus \ku y_0 \oplus \bigoplus_{ 1 \leq i \leq k} \k x_i
\end{split}
\end{equation}
which inject into $E^{\infty}SH^*(Y) \cong E^{\infty}QH^*(Y)= \kuu x_0 \oplus \kuu y_0$, and these injections are equal to an inclusion on the subspace $\ku x_0 \oplus \ku y_0$.
\end{lm}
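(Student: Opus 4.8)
The plan is to extract everything directly from the $\MBF$ spectral sequence and the general structural results proved earlier. First I would recall that by \cref{Prop TCP1 calculation}--style reasoning (here in the monotone setting, \cref{Lemma TstarCP1 computation of ESH modules}), the $E_1^-$-page of the $\MBF$ spectral sequence for $E^-SH^*(Y)$ has zeroth column $E^-QH^*(Y)$ with the two fat generators $x_0,y_0$ (in gradings $0,2$) that freely generate it as a $\ku$-module, and each higher column $k\ge 1$ contributes the $\k$-linear span of $x_k$ (the minimum of an auxiliary Morse function on $B_k\cong S^3$, in grading $-2k$), the class $y_k$ having been cancelled against $ux_k$ on the $E_0^\infty$-page exactly as in the argument below \cref{Prop ESH of C} and in the $T^*\C P^1$ discussion. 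Concretely: since $(a,b)$ is free and $c_1(Y)\ne 0$, the higher columns of $E_1^\infty$ vanish by \cref{Prop spectral seq tricks}(2), which forces the red edge differentials on $E_0^\infty$ to be isomorphisms $\k\to\k$ from the $y_k$-dot to the $ux_k$-dot; by \cref{Prop spectral seq tricks}(4) this determines the $E_0^-$-differential on higher columns too, leaving precisely $\k x_k$ surviving in column $k$. Because all surviving generators of $E_1^-$ lie in even grading, the spectral sequence has converged at $E_1$, giving the $\k$-vector space identifications in \eqref{Equation Eminus SH for O minus 1} (and the $H_{k^+}$-version is the same, truncated to columns $\le k$).

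\textbf{Key steps in order.} (1) Read off the $E_1^-$ and $E_1^{\infty}$ pages as above, using \cref{Prop spectral seq tricks}(2),(3),(4) and $SH^*(Y)=0$-type cancellation replaced here by the monotone computation $\ker[\omega]$, together with \cref{Prop equivariant formality for QH} to know $E^-QH^*(Y)$ is free of rank $2$. (2) Conclude convergence at $E_1$ from the parity observation, yielding the two displayed isomorphisms of $\k$-vector spaces; the truncation statement for $E^-HF^*(H_{k^+})$ follows since its spectral sequence is obtained by discarding columns of slope $>k$. (3) Identify the localisations: by \cref{Lemma weak localisation} and \cref{Prop vanishing of spectral sequence for infinity page}, $E^{\infty}SH^*(Y)\cong E^{\infty}QH^*(Y)\cong\kuu^2 = \kuu x_0\oplus\kuu y_0$, and the maps $E^-SH^*(Y)\hookrightarrow E^{\infty}SH^*(Y)$, $E^-HF^*(H_{k^+})\hookrightarrow E^{\infty}SH^*(Y)$ are injective by \cref{Corollary E minus c star maps are injective} / \cref{Theorem injectivity theorem 2}. (4) Check that on the $\ku$-submodule $\ku x_0\oplus\ku y_0$ these injections are literally the inclusion: this is exactly the statement that $c^*$ restricted to the zeroth column is the identity on $E^-QH^*(Y)\hookrightarrow E^{\infty}QH^*(Y)$, which holds because the zeroth column of the $E_1$-page maps isomorphically (after the $u$-localisation identification) to $E^{\infty}QH^*(Y)$ — i.e.\ the free generators $x_0,y_0$ of $E^-QH^*(Y)$ are sent to the free generators of $E^{\infty}QH^*(Y)$ under $u\otimes 1$. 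The remaining $\bigoplus_{i\ge1}\k x_i$ part maps into $\kuu x_0\oplus\kuu y_0$ via negative powers of $u$ (since $x_i$ lies in grading $-2i$), consistently with \cref{Lemma TstarCP1 computation of ESH modules}, but that is not asserted to be an inclusion of $\ku$-modules.

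\textbf{Main obstacle.} The delicate point is step (4): asserting that the $\ku$-submodule $\ku x_0\oplus\ku y_0\subset E^-SH^*(Y)$ maps \emph{by inclusion} into $E^{\infty}SH^*(Y)$ requires care because the $\ku$-module structure on the $E_1^-$-page does \emph{not} agree with the one on the limit $E^-SH^*(Y)$ — as emphasised repeatedly in \cref{Prop spectral seq tricks} and the $T^*\C P^1$ examples, the columns "link up" under the genuine $u$-action via boundary relations. So one cannot simply quote the $E_1$-page description; instead I would argue that $\ku x_0\oplus\ku y_0$ is the image of $E^-QH^*(Y)$ under the injective continuation map $c^*$, that $c^*$ is $\ku$-linear, and that its $u$-localisation is the canonical identification $E^-QH^*(Y)_u\cong E^{\infty}QH^*(Y)$ of \cref{Theorem localisation theorem} (equivalently the first square of \cref{Cor SES over LES}); hence on the free submodule $E^-QH^*(Y)$ the composite $E^-QH^*(Y)\xrightarrow{c^*}E^-SH^*(Y)\xrightarrow{u\otimes1}E^{\infty}SH^*(Y)$ is exactly $u\otimes1$ followed by the localisation iso, i.e.\ an inclusion of free $\ku$-modules onto the span of $x_0,y_0$. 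The grading bookkeeping ($x_i$ in degree $-2i$, forcing its image to involve $u^{-i}$) then pins down the rest of the picture and matches \eqref{Equation Eminus SH for O minus 1}.
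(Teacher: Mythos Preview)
Your overall approach matches the paper's: read off the $E_1^-$-page from the MBF spectral sequence, use \cref{Prop spectral seq tricks}(2),(4) to determine the column-$k$ survivors, conclude convergence by parity, and handle the inclusion on $\ku x_0\oplus\ku y_0$ via the factorisation of $E^-QH^*(Y)\hookrightarrow E^{\infty}QH^*(Y)$ through $E^-SH^*(Y)$. Steps (2)--(4) are fine and essentially what the paper does.

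The gap is in step (1). You assert that the $E_0^\infty$-page cancellation happens ``exactly as in the argument below \cref{Prop ESH of C} and in the $T^*\C P^1$ discussion'', with $y_k$ cancelling against $ux_k$. But those were Calabi--Yau examples where $\k$ lies in grading zero. Here $Y=\mathcal{O}_{\C P^1}(-1)$ is monotone with $|T|=2$, so $\k$ is graded, and the paper explicitly flags that ``this requires care because $\k$ is now graded''. Neither CY argument transfers: the $\C$ argument relied on $SH^*(\C)=0$, which fails here since $SH^*(Y)\cong\k$; and the $T^*\C P^1$ ``cannot decrease $u$-power'' argument is no longer sufficient on its own, because in the graded Novikov field a negative $T$-power could in principle compensate a nonnegative $u$-power to hit the right degree. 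Concretely, $|y_k|=-2k+3$, so $d_0(y_k)$ has degree $-2k+4$, which is \emph{not} the degree $-2k+2$ of $ux_k$; the actual target is $Tux_k$.

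What the paper supplies, and you need to add, is an energy-positivity argument: the equivariant corrections $u^i\delta_i$ for $i\geq 1$ count nonconstant Floer trajectories, hence contribute only with $T^{>0}$; combined with the grading constraint (and the vanishing of the Morse differential on $S^3$ for index reasons), this forces $u\delta_1(y_1)\in \mathbb{B}\,Tux_1$ and $u\delta_1(x_1)=0$. Acyclicity of the higher $E_0^\infty$-columns (\cref{Prop spectral seq tricks}(2)) then forces the coefficient to be nonzero, and \cref{Prop spectral seq tricks}(4) gives the $E_0^-$-differential, leaving exactly $\k x_k$ in column $k$. Your parenthetical ``$SH^*(Y)=0$-type cancellation replaced here by the monotone computation $\ker[\omega]$'' does not supply this argument and is not how the paper proceeds.
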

\begin{proof}
We prove this from the {\MBF} spectral sequence. This requires care because $\k$ is now graded, $|T|=2$, $|T^{-1}|=-2$ (this complication is paid off by ensuring a $\Z$-grading on Floer complexes).
In column zero, $x_0$ represents $1$, and $y_0$ represents $[\omega]$. 
This gives rise to $\ku$-modules $E^-QH^*(Y)= \ku x_0 \oplus \ku y_0$ and $E^{\infty}QH^*(Y)= \kuu x_0 \oplus \kuu y_0$. The claims about the inclusions of $\ku x_0 \oplus \ku y_0$ follow, because 
the natural inclusion $E^-QH^*(Y)\hookrightarrow E^{\infty}QH^*(Y)$ is the composite of the injections $E^-QH^*(Y)\hookrightarrow E^-HF^*(H_{k^+}) \hookrightarrow E^-SH^*(Y) \hookrightarrow E^{\infty}SH^*(Y)\cong E^{\infty}QH^*(Y)$, cf.\,the proofs of \cref{Corollary torsion freeness 1 free case} and \cref{Corollary torsion freeness 2 free case}.

In column one, we have generators $x_1,y_1$ coming from the non-equivariant spectral sequence. Thus, $\kuu x_1 \oplus \kuu y_1$ is the first column in $E_0^{\infty}$. We want to find the contributions to the edge differential $d_0$ for $E_0^{\infty}$ occurring within column 1 and caused by the equivariant differentials $u^i\delta_i$ for $i\geq 1$, see \eqref{Equation for the differential expanded}.
Note that the contributions to $u\delta_1(x_1),u\delta_1(y_1)$ within column 1 must land in $\mathbb{B}T^{>0}u x_1 \oplus \mathbb{B}T^{>0}u y_1$ because $\delta_1$ does not involve $u$, and because non-constant Floer trajectories arise with strictly positive energy $E>0$, so they contribute with a strictly positive power $T^E$ to $\delta_1$.
Note the grading $|\mathbb{B}T^{>0}u x_1|>0$ and $|\mathbb{B}T^{>0}u y_1|>3$.
The $\Z$-grading also implies $|d_0(y_1)|=|y_1|+1=2$ and $|d_0(x_1)|=|x_1|+1=-1$. Thus $u\delta_1 (y_1)\in \mathbb{B}Tux_1$ and $u\delta_1 (x_1)=0$.
By \cref{Prop spectral seq tricks}(2), column 1 in $E_0^{\infty}$ should be acyclic, therefore $u\delta_1 (y_1) =bTux_1$ with $b\neq 0\in \mathbb{B}$, so that a non-trivial arrow $y_1 \to ux_1$ and its $u^{\pm 1}$-translates make the column acyclic. Using \cref{Prop spectral seq tricks}.(4),
it follows that in column 1 of $E_0^-$ everything cancels except $x_1$ (note $u^{-1}T^{-1}y_1$ exists in $E_0^{\infty}$ and kills $x_1=u\delta_1(b^{-1}u^{-1}T^{-1}y_1)$, but it does not exist in $E_0^-$).
A similar argument applies to higher columns: copies $\k x_i$ survive to $E_1^-$. 
By \cref{Prop spectral seq tricks}.(1),\footnote{in this case the second part of \cref{Prop spectral seq tricks}.(1) is obvious: the $x_i$ cannot kill each other off because they are in in even grading.} $\oplus \k x_i$ survives to the limit, thus the $E_r^-$ spectral sequence has converged on page $E_1^-$. The first equation in \eqref{Lemma TstarCP1 MBF calculation} follows. The second equation follows because the spectral sequence, after omitting columns with slopes $>k$, converges to $E^-HF^*(H_{k^+}).$ 
\end{proof}

\begin{cor}\label{Cor Tstar CP1 slice poly}
Let $(a,b)$ be any weight with $a\notin b\Z_{\geq 1}$. The slice polynomial is $s_k = 2+t+\cdots+t^k$, $ER_k$ has invariant factors $(1,u^k)$, and the filtration polynomial is $f_k=1+t^k$.
\end{cor}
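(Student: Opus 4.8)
The statement is really just a repackaging of \cref{Lemma TstarCP1 computation of ESH modules} and \cref{Lemma TstarCP1 MBF calculation} into the language of slice and filtration polynomials, so the plan is to extract the invariant factors of $ER_k$ (equivalently, by \cref{Lemma slice poly for ERk is same as for ck}, of $c_{k^+}^*$) and then apply \cref{Corollary filtration polynomial}. First I would recall from the proof of \cref{Lemma TstarCP1 computation of ESH modules} that $r_{a,b}$ has $u^0$-part equal to the non-equivariant rotation $r=\,$multiplication by $-[\omega]$, which in the eigenbasis $v_1=\omega$, $v_2=\omega+T\cdot 1$ is $\mathrm{diag}(T,0)$; hence, by \cref{Example invariant factors in the 2x2 case}, $ER_k=r_{a-(k-1)b,b}\circ\cdots\circ r_{a,b}$ has $u^0$-part $r^k$ (itself of the form $\mathrm{diag}(T^k,0)$), forcing the smallest invariant factor to be $u^0=1$, while the second invariant factor is $u^{m_k}$ with $m_k=\nu(\det ER_k)$.

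The key step is to pin down $m_k$. Rather than compute $\det ER_k$ directly, I would read it off from \cref{Lemma TstarCP1 MBF calculation}: that lemma identifies $E^-HF^*(H_{k^+})\cong \ku x_0\oplus\ku y_0\oplus\bigoplus_{1\le i\le k}\k x_i$ as a $\k$-vector space, and exhibits it sitting inside $E^{\infty}QH^*(Y)\cong\kuu x_0\oplus\kuu y_0$ with the subspace $\ku x_0\oplus\ku y_0$ included in the obvious way. Counting slice dimensions (in the sense of \cref{Definition slice dimensions}), the $x_0$-generator contributes an untruncated $\ku$ (so it never leaves, giving $\dim_{-j}\ge 1$ for all $j$), while the image of the $v_2=\omega+T\cdot 1$ generator is exactly a copy of $u^{-m_k}\ku$ for some $m_k$; comparing with \cref{Lemma TstarCP1 MBF calculation}, the total $\k$-dimension of $E^-HF^*(H_{k^+})/E^-QH^*(Y)$ equals the number of extra $x_i$'s, namely $k$. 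Since one invariant factor is $1$ (contributing nothing) and the other is $u^{m_k}$, \cref{Equation formula for Ek1} gives $m_k=\dim_{\k}\EE_1^k=k$. Therefore $ER_k$ has invariant factors $(1,u^k)$.

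With the invariant factors $(1,u^k)$ in hand, \cref{Corollary filtration polynomial} computes the slice polynomial directly: $s_k(t)=d_0+d_1t+\cdots$ where $d_j=\#\{\text{invariant factors }u^{\ge j}\}$, so $d_0=2$ (both $u^0$ and $u^k$ are $u^{\ge 0}$) and $d_j=1$ for $1\le j\le k$ (only $u^k$ survives) and $d_j=0$ for $j>k$; that is $s_k=2+t+\cdots+t^k$. The filtration polynomial then follows from $f_j=d_j-d_{j+1}$, giving $f_0=2-1=1$, $f_j=1-1=0$ for $1\le j\le k-1$, and $f_k=1-0=1$, so $f_k=1+t^k$. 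Finally, since $s_k$ is genuinely a polynomial, \cref{Corollary filtration polynomial} confirms $c_{k^+}$ is injective, consistent with \cref{Corollary E minus c star maps are injective}.

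\textbf{Main obstacle.} The only real subtlety is justifying that the second invariant factor of $ER_k$ is exactly $u^k$ rather than merely ``$u^{\ge k}$ with equality for infinitely many $k$'': one must make sure the $\k$-dimension count in \cref{Lemma TstarCP1 MBF calculation} is tight and matches $\nu(\det ER_k)$ on the nose. This is where I would lean on \cref{Equation formula for Ek1} (which equates $\dim_{\k}\EE_1^k$ with $j_1(k)+\cdots+j_r(k)$, here $0+m_k$) together with the explicit spectral-sequence count $\dim_{\k}\bigl(E^-HF^*(H_{k^+})/E^-QH^*(Y)\bigr)=k$ from \cref{Lemma TstarCP1 MBF calculation}; everything else is the bookkeeping of \cref{Corollary filtration polynomial}.
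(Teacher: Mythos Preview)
Your proposal is correct and follows essentially the same approach as the paper's proof: both extract the invariant factors $(1,u^{m_k})$ from the proof of \cref{Lemma TstarCP1 computation of ESH modules}, then pin down $m_k=k$ by combining \eqref{Equation formula for Ek1} with the $\k$-dimension count $\dim_{\k}\bigl(E^-HF^*(H_{k^+})/E^-QH^*(Y)\bigr)=k$ from \cref{Lemma TstarCP1 MBF calculation}, and finally read off $s_k$ and $f_k$. Your write-up is simply more explicit about the bookkeeping in \cref{Corollary filtration polynomial}.
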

\begin{proof}
By the proof of \cref{Lemma TstarCP1 computation of ESH modules},
 $s_k = 2+t+\cdots+t^{m_k}$, using \cref{Lemma slice poly for ERk is same as for ck}. By \eqref{Equation formula for Ek1},
$$
m_k = s_k(0)=\dim_{\k} \EE^k_1= \dim_{\k} E^-HF^*(H_{k^+})/E^-QH^*(Y)=k,
$$
using \eqref{Equation Eminus SH for O minus 1} in the last equality. The rest follows.
\end{proof}

\begin{prop}\label{Prop ESH for O-1 over P1}
Let $(a,b)$ be any weight with $a\notin b\Z_{\geq 1}$.
\begin{equation}\label{Equation O-1 over P1 ESH computation}
\begin{array}{lcccc}
E^-\!SH^*(Y)\cong \ku \oplus \kuu, 
&
\strut%
&
\hspace{-3ex}E^{\infty}\!SH^*(Y)\cong \kuu^2, 
&
\strut\!\!\!
&
\hspace{-9ex}E^{+}\!SH^*(Y) \cong \mathbb{F},
\\
E^-\!HF^*(H_{k^+}\!)= \ku\!\cdot\! [x_0]\oplus \ku\!\cdot\! [x_k],
&
\strut%
&
E^{\infty}\!HF^*(H_{k^+}\!)\cong \kuu^2,
&
\strut\!\!\!
&
\hspace{1ex}E^{\infty}\!HF^*(H_{k^+}\!) \cong \mathbb{F} \oplus \mathbb{F}[2k]
\end{array}
\end{equation}
The $\ku$ summand in $E^-SH^*(Y)$ is generated as a $\ku$-module by $1$ 
$($or $[\omega]$, or indeed any element whose $u^0$-part is not in $\k^{\times}\cdot[\omega+T\cdot 1])$. That generator also spans the $\k u^0$-summand of $\mathbb{F}$.

The $\kuu$-summand in $E^-SH^*(Y)$ is generated as a $\kuu$-module by an element in $E^-QH^*(Y)$ with $u^0$-part $[\omega+T\cdot 1]$, and such an element is unique up to $\k^{\times}$-rescaling. 
\end{prop}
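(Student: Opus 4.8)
The plan is to deduce everything from the computations already carried out for this example, i.e.\ from \cref{Lemma TstarCP1 computation of ESH modules}, \cref{Lemma TstarCP1 MBF calculation}, \cref{Cor Tstar CP1 slice poly}, together with the general structural results. First, the three direct-limit modules $E^-SH^*(Y)\cong \ku\oplus\kuu$, $E^{\infty}SH^*(Y)\cong\kuu^2$, and $E^+SH^*(Y)\cong\mathbb{F}$ are exactly the content of \cref{Lemma TstarCP1 computation of ESH modules}. The statements about which elements of $E^-QH^*(Y)$ generate the $\ku$- and $\kuu$-summands of $E^-SH^*(Y)$ are also contained there, but I would re-derive them cleanly: by \cref{Lemma ev0 commutative diagram}, $\mathrm{ev}_0:E^-QH^*(Y)\to QH^*(Y)$ is a surjective ring map with kernel $uE^-QH^*(Y)$, and by \cref{ESH is direct limit of maps on EQH} the direct limit is built from the maps $r_{a,b}$ whose $u^0$-part is multiplication by $Q_\Fi=-[\omega]$ (\cref{Prop Periodicity property 2}, \cref{Remark QFi class}). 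Since $[\omega]$ has eigenvalues $T$ and $0$ on $QH^*(Y)=\k[\omega]/(\omega^2+T\omega)$, with the $0$-eigenline spanned by $[\omega+T\cdot 1]$, the submodule of $E^-SH^*(Y)$ on which $u$ acts invertibly corresponds (via \cref{Theorem intro ESH computation main} / \cref{Corollary torsion freeness 2 free case} and the canonical-submodule statement there) exactly to the $u$-corrected lift of $E_0=\ker Q_\Fi=\k[\omega+T\cdot 1]$, which is canonical and hence unique up to $\k^\times$-scaling; while any class whose $u^0$-part lies outside $\k^\times[\omega+T\cdot 1]$ projects to a generator of the free $\ku$-summand $SH^*(Y)[\![u]\!]\cong\ku$.

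Next I would pin down the Floer groups at integer slope. By \cref{Prop Periodicity property 2}, $E^-HF^*(H_{k^+})\cong E^-_{(a-kb,b)}QH^*(Y)[2k\mu]$ is free of rank $2$; \cref{Cor Tstar CP1 slice poly} says $ER_k$, hence $c_{k^+}$ up to the isomorphism $\mathcal{S}^k$, has invariant factors $(1,u^k)$. Choosing a $\ku$-basis of $E^-HF^*(H_{k^+})$ adapted to the Smith normal form of $c_{k^+}$, and using \cref{Cor equiv formality via spectral sequence} so that $1\in E^-QH^0(Y)$ is represented in minimal grading, one gets $E^-HF^*(H_{k^+})=\ku\cdot[x_0]\oplus\ku\cdot[x_k]$ with $[x_k]=\mathcal{S}^{-k}(x_0)$ the minimal class of \eqref{Definition xk class} (the degree bookkeeping $|x_k|=-2k$ matches $2k\mu=2k$ since $\mu=1$), and $c_{k^+}$ sends the generator of the $\ku$-summand whose $u^0$-part is $[\omega+T\cdot 1]$ to $u^k$ times a basis vector, i.e.\ $[x_0]$ is the image of $1$ and is the ``not $u$-divisible'' generator. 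Then the remaining entries of \eqref{Equation O-1 over P1 ESH computation} follow mechanically: $E^{\infty}HF^*(H_{k^+})=E^-HF^*(H_{k^+})_u\cong\kuu^2$ by \cref{Lemma ulocalising W- is Winfty}; and the $E^+HF^*(H_{k^+})\cong\mathbb{F}\oplus\mathbb{F}[2k]$ computation is exactly \eqref{Equation description of EplusHF when free} applied with invariant factors $(u^{j_1(k)},u^{j_2(k)})=(1,u^k)$, using \cref{Corollary torsion freeness 1 free case} and the formula $E^+HF^*(H_{k^+})\cong\bigoplus\mathbb{F}[2j_i(k)]$ from \cref{Subsection Growth rate of the filtration polynomial} (so $\mathbb{F}[0]\oplus\mathbb{F}[2k]=\mathbb{F}\oplus\mathbb{F}[2k]$). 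The claim that the $u^0$-summand of this $\mathbb{F}$ is spanned by $1$ then follows because $j_1(k)=0$ corresponds precisely to the invariant factor $1$, which is the one attached to $[x_0]=c_{k^+}(1)$.

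Finally I would assemble the statement. The $\ku$- and $\kuu$-summand descriptions of $E^-HF^*(H_{k^+})$ pass to the direct limit: the $\kuu$-generator is the image of the canonical $E_0$-lift (stable under the continuation maps because $\mathcal{S}$ preserves filtrations by \cref{Cor Seidel isos preserve filtration} and by \cref{Theorem localisation theorem} the $\kuu$-part is detected by $u$-localisation, which is compatible with the limit), and its uniqueness up to $\k^\times$ is the canonicity clause of \cref{Theorem intro ESH computation main}; the $\ku$-generator $1$ survives because its $u^0$-part is not in the $0$-eigenline. The statement about the $\k u^0$-summand of $\mathbb{F}=E^+SH^*(Y)$ being spanned by $1$ follows from \eqref{Equation description of EplusHF when free in direct limit}: $E^+SH^*(Y)\cong E^+QH^*(Y)/\EE^\infty_1$, and $\EE^\infty_1$ contains $u^{-\infty}$-worth of the $E_0$-direction but not $1\otimes u^0$. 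The main obstacle I anticipate is purely bookkeeping: reconciling the several bases in play — the eigenbasis $\{\omega,\omega+T\cdot 1\}$ of $QH^*(Y)$, the adapted Smith-normal-form basis of $E^-HF^*(H_{k^+})$, and the minimal classes $x_k$ — and checking that the ``$u^0$-part'' identifications survive the grading shifts $[2k\mu]$ and the Seidel isomorphisms $\mathcal{S}^k$; but since $\mathcal{S}^k$ is, at chain level, the identity shifted in grading (\cref{Equation Seidel map for kH Hams}) and introduces no negative powers of $T$ or $u$, this is routine rather than deep, and no genuinely new argument beyond the cited general theory is required.
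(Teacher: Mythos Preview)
Your overall approach—the first row from \cref{Lemma TstarCP1 computation of ESH modules}, the generator statements via \cref{Theorem intro ESH computation main}/\cref{Theorem u localisation for monotone}, the invariant factors from \cref{Cor Tstar CP1 slice poly}, and the $E^{\infty}/E^+$ computations from the general formulas in \cref{Subsection Growth rate of the filtration polynomial}—is correct and matches the paper's strategy for everything \emph{except} the identification $E^-HF^*(H_{k^+})=\ku[x_0]\oplus\ku[x_k]$.

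Here you have a genuine gap. You take $[x_k]:=\mathcal{S}^{-k}(x_0)$, the Seidel minimal class from \eqref{Definition xk class}. But in this section the symbol $x_k$ denotes the Morse--Bott--Floer generator coming from the minimum of $B_k\cong S^3$ (introduced just before \cref{Lemma TstarCP1 computation of ESH modules}), and the paper's Remark immediately after the proof states explicitly that it is \emph{unclear} whether these two classes agree, because here $|T|=2$ so degree arguments no longer pin things down. Your argument therefore proves a different (and also true) statement: that $c_{k^+}(1)$ and $\mathcal{S}^{-k}(1)$ form a $\ku$-basis, which reduces via $\mathcal{S}^k$ to checking that $1$ and $ER_k(1)$ have $\k$-independent $u^0$-parts $1$ and $\pm T^{k-1}\omega$ in $QH^*(Y)$.

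The paper's proof of this particular claim is quite different and genuinely uses the MBF description. It works in the $\k$-vector space presentation of \cref{Lemma TstarCP1 MBF calculation}: knowing the invariant factors are $(1,u^k)$, it picks a generator $w_{-k}$ of the $u^{-k}\ku$-summand, observes from the codimension-one inclusion $E^-HF^*(H_{(k-1)^+})\hookrightarrow E^-HF^*(H_{k^+})$ that $w_{-k}=x_k+u^{\geq 1}$-terms, then shows $u\cdot w_{-k}\in\k^{\times}w_{-k+1}$ and hence $u\cdot x_k\in\k^{\times}x_{k-1}+u^{\geq 1}$-terms, so that $\ku x_0\oplus\ku x_k$ contains all of $x_0,x_1,\ldots,x_k$ and therefore equals the whole module by the dimension count in \cref{Lemma TstarCP1 MBF calculation}. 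Your proposal does not reproduce this chain of inclusions, and without it (or a separate argument identifying the MBF $x_k$ with $\mathcal{S}^{-k}(x_0)$) you have not established the claim as stated.
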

\begin{proof}
By the proof of \cref{Lemma TstarCP1 computation of ESH modules}, and using the notation $v_1,v_2$ from that proof, we know 
$ER_k(v_1)=T^kv_1 + u^{\geq 1}$-terms. So,
to build a Smith normal form for $ER_k$, we may use $v_1$ as first basis vector on the domain, and $T^{-k}ER_k(v_1)$ as first basis vector on the codomain: this puts the invariant factor $1$ in the first diagonal entry. \cref{Cor structure theorem 2} then implies that $v_1$ generates the $\ku$-summand in $E^-SH^*(Y)$.

Since $ER_k(v_2)$ only involves $u^{\geq 1}$-terms, the same argument shows that any $v_1'=\alpha v_1 + \beta v_2$ with $\alpha \in \ku^{\times}$, $\beta\in \ku$ can be used as first basis vector on the domain, by using $\alpha^{-1}T^{-k}ER_k(v_1')$ on the codomain. This proves the claim about the choice of generator for the $\ku$-summand in $E^-SH^*(Y)$.

A posteriori, we know there exists a $w\in E^-SH^*(Y)$ that generates the $\kuu$-summand as a $\kuu$-module. By $u^{\pm}$-rescaling, we may assume $w$ lies in $E^-QH^*(Y)$, indeed we may also ensure it has non-zero $u^0$-part (so $\mathrm{ev}_0(w)\neq 0\in QH^*(Y)=\k x_0 \oplus \k y_0$). With those requirements, $w$ corresponds to an element in $\{0\}\times \ku^{\times} \subset \ku \oplus \kuu \cong E^-SH^*(Y).$
Also, such a $w\in E^-QH^*(Y)$ is not $u$-divisible in $E^-QH^*(Y)=\ku x_0 \oplus \ku y_0$, whereas $ER_k(w)$ is $u^k$-divisible for all $k\geq 1$. 
We write $w=\alpha v_1 + \beta v_2$ in the basis $v_1,v_2$, where $\alpha,\beta\in \ku$. By the proof of \cref{Lemma TstarCP1 computation of ESH modules}, $ER_k(\alpha v_1 + \beta v_2)=\alpha T^k v_1 + u^{\geq 1}$-terms. Since $ER_k(w)=u^{\geq k}$-terms, we deduce $\alpha\in u\ku$ and $\beta\in \ku^{\times}.$ Thus $w$ lies in $E^-QH^*(Y)$ with $u^0$-part $v_2$ up to $\k^{\times}$-rescaling, as claimed.
Our construction of $w$ also ensures that
$$
\ku x_0 \oplus \ku w = \ku x_0 \oplus \ku y_0 =E^-QH^*(Y).
$$

We now prove $E^-HF^*(H_{k^+})=\ku x_0\oplus \ku x_k$.
The inclusion $E^-HF^*(H_{k^+})\hookrightarrow E^-HF^*(H_{k+1}^+)$ has%
\footnote{If we were working over $c_1(Y)=0$, so $\k$ lies in grading zero, then we would have expected a codimension two map, because of the Seidel isomorphism $E^-HF^*(H_{(k+1)^+})\cong E^-HF^*(H_{k^+})[2]$. This is not a contradiction here, because in the monotone case the Novikov field $\k$ is supported in all even integer gradings due to $|T^{\pm 1}|=\pm 2$.}
$\mathrm{codim}_{\k}=1$ in view of the new class $x_{k+1}$, see \eqref{Equation Eminus SH for O minus 1}.

By \cref{Cor Tstar CP1 slice poly}, the invariant factors $(1,u^k)$ of $ER_k$ imply that $E^-HF^*(H_{k^+})\cong \ku \oplus u^{-k}\ku$ (see the general theory in \cref{Subsection Growth rate of the filtration polynomial}).
So there is a generator of the $u^{-k}\ku$ summand of the form
$$
w_{-k}:=x_k + u^{\geq 1}\textrm{-terms, \qquad where }k\geq 1.
$$ 
Thus, $\k^{\times}w_{-k}=\{$all generators of the $u^{-k}\ku$-summand$\}$, where $\k^{\times}=\k\setminus \{0\}$. So, for $k\geq 2$:
$$
u\cdot w_{-k} \in \k^{\times} w_{-k+1}.
$$
A posteriori, all these $w_{-k}$ generate the $\kuu$-summand of $E^-SH^*(Y)$, therefore $uw_{-1}$ equals the above $w$ up to $\k^{\times}$-rescaling,
thus $u^k w_{-k} \in \k^{\times} w$.
We also deduce that $$u\cdot x_k \in \k^{\times} x_{k-1} + u^{\geq 1}\textrm{-terms, for }k\geq 2.$$
So $\ku x_0 \oplus \ku x_k$ contains $x_0,x_1,\ldots,x_k$.
The claim about $E^-HF^*(H_{k^+})$ follows, using \eqref{Equation Eminus SH for O minus 1}.
\end{proof}
\begin{rmk}
It is still unclear how the $x_k$ are related to the classes $x_k=\mathcal{S}^{-k}(x_0)$ from \eqref{Definition xk class}: they are both in degree $-2k$, and it is reasonable to expect that they agree up to $u^{\geq 1}$-corrections and up to $\k^{\times}$-rescaling. We know $\mathcal{S}^{-k}(x_0),\mathcal{S}^{-k}(y_0)$ generate the $\ku$-module $E^-HF^*(H_{k^+})$, but the grading does not imply any conclusions because $|T^{\pm 1}|=\pm 2$ can shift gradings arbitrarily.
\end{rmk}

\section{Equivariant symplectic cohomology in the monotone case}

\subsection{The CY case}

For symplectic $\C^*$-manifolds with $c_1(Y)=0$, we showed in \cref{cor vanishing of E+ for SH} that $E^-SH^*(Y)\cong E^{\infty}SH^*(Y)$ and $E^+SH^*(Y)=0$, for any weight $(a,b)$.

\subsection{The monotone case, for free weights}\label{Subsection The monotone case for free weights}

Now consider the monotone setting: $c_1(Y)\in \R_{>0}[\omega]$.
Let $(a,b)$ be a free weight, and $m:=\dim_{\k}\, H^*(Y)$. The Novikov field $\k$ is now graded.
Consider the $\k$-linear map $r$ given by (non-equivariant) quantum product by $Q_{\Fi}$ (\cref{Remark QFi class}):
$$
r:QH^*(Y)\to QH^*(Y)[2\mu], \qquad E_0:=\ker r^m, \qquad W:=r^m(QH^*(Y)).
$$
By \cite{RZ1}, leaning on \cite{R14,R16}, the following commutative diagram of $\k$-linear maps holds,
$$
\begin{tikzcd}[column sep=0.6in]
0
 \arrow[r, ""]
&
E_0
 \arrow[d, "\cong",""']
 \arrow[r,  "\mathrm{incl}"] 
 &
QH^*(Y)
 \arrow[d, "\cong",""']
 \arrow[r, "c^*"] 
&
SH^*(Y)
 \arrow[d, "\cong",""']
 \arrow[r, ""]
& 
 0
 \\
0
 \arrow[r, ""]
&
E_0
 \arrow[r, "\mathrm{incl}"] 
 &
W \oplus E_0
 \arrow[r, "\textrm{quotient}"] 
&
W
  \arrow[r, ""]
& 
 0
\end{tikzcd}
$$
where the rows are exact. Indeed $E_0:=\ker r^m$ is the generalised $0$-eigenspace of $r$; and the image $W:=r^m(QH^*(Y))$ has stabilised: $r(W)=W$, in other words $r: W \to W$ is an isomorphism.

\begin{thm}\label{Theorem u localisation for monotone}
Let $Y$ be a monotone symplectic $\C^*$-manifold, and $(a,b)$ a free weight. Then we have the following $\ku$-module isomorphisms, compatibly with the short exact sequence in \cref{Corollary torsion freeness 2 free case},
\begin{equation}\label{Equation ESH for monotone}
\begin{split}
E^-SH^*(Y)
&\cong 
SH^*(Y)[\![u]\!] \oplus E_0(\!(u)\!),
\\
E^{\infty}SH^*(Y)
&\cong
SH^*(Y)(\!(u)\!) \oplus E_0(\!(u)\!)
\cong
E^{\infty}QH^*(Y),
\\
E^{+}SH^*(Y)
&\cong SH^*(Y)(\!(u)\!)/u\cdot SH^*(Y)[\![u]\!] \cong 
SH^*(Y)\otimes_{\k}\mathbb{F}.
\end{split} 
\end{equation}
Thus, letting $t:=\dim_{\k}\,SH^*(Y)$, and $m:=t+s=\dim_{\k}\,H^*(Y),$
\begin{equation}\label{Equation iso for Eminus infty plus monotone}
E^-SH^*(Y)\cong \ku^t \oplus \kuu^s,
\quad
E^{\infty}SH^*(Y) \cong E^{\infty}QH^*(Y)\cong \kuu^{m},
\quad
E^+SH^*(Y) \cong \mathbb{F}^{t},
\end{equation}
and the map $E^-c^*:E^-QH^*(Y)\to E^-SH^*(Y)$ becomes the inclusion $\ku^t \oplus \ku^s\hookrightarrow \ku^t \oplus \kuu^s$. 
\end{thm}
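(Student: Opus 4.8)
The strategy is to combine the rephrasing of $E^{\karo}SH^*(Y)$ as a direct limit of \emph{rotation maps} (\cref{ESH is direct limit of maps on EQH}) with the algebraic machinery for direct limits of injective $\ku$-module maps whose $u^0$-part is a fixed nilpotent-plus-isomorphism operator. First, I would fix the bases from \cref{Cor equiv formality via spectral sequence} so that $E^-QH^*(Y)\cong H^*(Y)[\![u]\!]$ and $E^{\karo}_{(a-kb,b)}QH^*(Y)[2k\mu]$ is identified with $H^*(Y)\otimes u^{-k}\ku$, and the direct system \eqref{Equation sequence of maps that give direct lim} becomes a sequence of maps $R_k=r_{a-(k-1)b,b}\circ\cdots\circ r_{a,b}$ on $H^*(Y)[\![u]\!]$. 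By \cref{Lemma slice poly for ERk is same as for ck} the $u^0$-part of $R_k$ is $r^k$, where $r$ is quantum multiplication by $Q_{\Fi}$, so in the decomposition $QH^*(Y)=W\oplus E_0$ (with $r|_W$ an isomorphism, $r|_{E_0}$ nilpotent) the $u^0$-part of each $r_{a-jb,b}$ restricted to the $E_0$-block is nilpotent, and restricted to the $W$-block is an isomorphism.

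Next I would split the analysis into the two blocks. On the $W$-block, the $u^0$-parts are isomorphisms, so each $r_{a-jb,b}$ restricted there is invertible over $\ku$; hence the direct limit of that block is $W[\![u]\!]$ (the maps are $\ku$-module isomorphisms, up to grading shift, so nothing is gained in the limit), yielding the $SH^*(Y)[\![u]\!]$ summand since $W\cong SH^*(Y)$ as $\k$-vector spaces. On the $E_0$-block, the $u^0$-parts are nilpotent and $r_{a-jb,b}$ is still injective (by \cref{Theorem injectivity theorem 2}, as $(a,b)$ is free), so \cref{Prop Qj injective get direct lim} applies: the direct limit of a sequence of injective maps with the same nilpotent $u^0$-part is exactly $u$-localisation, giving $E_0(\!(u)\!)$. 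The subtlety here is that the blocks are not literally preserved by each $r_{a-jb,b}$ — only the $u^0$-parts respect the splitting — so the off-block entries are divisible by $u$. This is precisely where \cref{Lemma invariant factors of id block uorder 1 blocks} is designed to be used: after a change of basis adapted to the splitting, each composite $R_k$ has block form $\left(\begin{smallmatrix} \text{iso on }W\text{-block} & u(\cdots)\\ u(\cdots) & u^{\geq 1}\text{ on }E_0\text{-block}\end{smallmatrix}\right)$, and the change-of-basis trick in that lemma's proof lets me triangularise, so the $E_0$-block of $R_k$ genuinely behaves like a $\ku$-linear map with nilpotent $u^0$-part and the $W$-block like an isomorphism. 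I would then conclude that $\varinjlim R_k\cong W[\![u]\!]\oplus E_0(\!(u)\!)$, which is the first line of \eqref{Equation ESH for monotone}; the $E^{\infty}$ line follows by $u$-localising (\cref{Lemma weak localisation}, \cref{Theorem localisation theorem}), and the $E^+$ line from the short exact sequence in \cref{Corollary torsion freeness 2 free case}.

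For \eqref{Equation iso for Eminus infty plus monotone} I just count $\ku$-ranks and $\k$-dimensions: $\dim_{\k}W=t$, $\dim_{\k}E_0=s$, $t+s=m=\dim_{\k}H^*(Y)$, so $E^-SH^*(Y)\cong \ku^t\oplus\kuu^s$, $E^{\infty}SH^*(Y)\cong\kuu^m\cong E^{\infty}QH^*(Y)$ (the latter by \cref{Prop vanishing of spectral sequence for infinity page}), and $E^+SH^*(Y)\cong(\kuu^s\oplus\kuu^t)/(\kuu^s\oplus u\ku^t)\cong\mathbb{F}^t$ from the SES. Finally, the identification of $E^-c^*:E^-QH^*(Y)\to E^-SH^*(Y)$ with $\ku^t\oplus\ku^s\hookrightarrow\ku^t\oplus\kuu^s$ is the statement that the $k=0$ term of the direct system sits inside the limit as the image of $\mathrm{id}\otimes 1:V_0\to\varinjlim V_k$, which is exactly the canonical map described in \cref{Prop Qj injective get direct lim} and \cref{Cor structure theorem for V in kuu power}: on the $W$-block it is the identity, on the $E_0$-block it is the inclusion $\ku^s\hookrightarrow\kuu^s$. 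The main obstacle I anticipate is bookkeeping the non-block-diagonal corrections carefully enough to legitimately invoke \cref{Lemma invariant factors of id block uorder 1 blocks} — in particular checking that the $W$-block of the composite stays an honest $\ku$-isomorphism (not merely invertible after localisation) and that the grading shifts $[2k\mu]$, which interact with the Novikov grading $|T|\neq 0$ in the monotone case, do not disturb the argument; but \cref{Remark CY and monotone cases ERk matrix}(2) already pins down the shape of the matrices, so this should be a matter of care rather than a genuine difficulty.
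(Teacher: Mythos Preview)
Your approach is essentially the paper's: rotation maps $ER_k$ via \cref{ESH is direct limit of maps on EQH}, the splitting $QH^*(Y)=W\oplus E_0$, equivariant formality, \cref{Lemma invariant factors of id block uorder 1 blocks}, and then the general theory of \cref{Subsection Growth rate of the filtration polynomial}. The one substantive point you miss is the paper's key maneuver of \emph{grouping into blocks of $m=\dim_{\k}H^*(Y)$}. Your claimed block shape for $R_k$, with ``$u^{\geq 1}$ on the $E_0$-block'', is false for $k<m$: the $u^0$-part of $R_k$ on $E_0$ is $r^k|_{E_0}$, which is nilpotent but typically nonzero. \cref{Lemma invariant factors of id block uorder 1 blocks} requires the bottom-right block to lie in $u\ku$, so it does not apply to the individual $r_{a-jb,b}$ or to $R_k$ for small $k$. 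The paper instead sets $f_j:=r_{a-(jm-1)b,b}\circ\cdots\circ r_{a-((j-1)m)b,b}$, a composite of $m$ rotation maps; then the $u^0$-part of each $f_j$ is $r^m$, which \emph{vanishes} on $E_0$, and now $f_j$ genuinely has the shape \eqref{Equation id block uorder 1 blocks} (up to replacing $I$ by the isomorphism $r^m|_W$, which the lemma's proof accommodates). Applying \cref{Lemma invariant factors of id block uorder 1 blocks} to the $f_j$ gives invariant factors $(1,\ldots,1,u^{\geq k},\ldots,u^{\geq k})$ for $ER_{km}$, and then \cref{Cor structure theorem 2} finishes.

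Your alternative route via \cref{Prop Qj injective get direct lim} is viable but needs more care than you indicate. After the triangularisation (which does work: replace the $W$-basis of the codomain of $Q_j$ by $Q_j(w_1),\ldots,Q_j(w_t)$, exactly as in the proof of \cref{Lemma invariant factors of id block uorder 1 blocks}), the bottom-right blocks $D'_j$ are injective with common nilpotent $u^0$-part $r|_{E_0}$, so \cref{Prop Qj injective get direct lim} gives $\varinjlim D'_j=E_0(\!(u)\!)$. But to pass from the triangular form to the direct sum $W[\![u]\!]\oplus E_0(\!(u)\!)$ you still need to argue via invariant factors (in the triangular form $R_k$ has invariant factors $1,\ldots,1$ together with those of $D'_k\cdots D'_1$, and the latter all go to $\infty$), then invoke \cref{Cor structure theorem 2}; the short exact sequence $0\to W[\![u]\!]\to\varinjlim V\to E_0(\!(u)\!)\to 0$ does not split for free, as $\kuu$ is not projective over $\ku$.
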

\begin{proof}
To prove the claim, we construct the following commutative diagram of $\ku$-module homomorphisms (we omitted in the left column the grading shift $[-2]$):
$$
\begin{tikzcd}%
0
 \arrow[r, ""]
&
E^-SH^*(Y)
 \arrow[r, ""] 
 \arrow[d, "\cong",""']
 &
E^{\infty}SH^*(Y)
 \arrow[r, ""] 
\arrow[d, "\cong",""']
&
E^{+}SH^*(Y)
\arrow[d, "\cong",""']
 \arrow[r, ""]
& 
 0
\\
0
 \arrow[r, ""]
&
W[\![u]\!] \oplus E_0(\!(u)\!)
 \arrow[r, "u"] 
 &
W(\!(u)\!) \oplus E_0(\!(u)\!)
 \arrow[r, ""] 
&
W(\!(u)\!)/uW[\![u]\!]
 \arrow[r, ""]
& 
 0
 \\
0
 \arrow[r, ""]
&
W[\![u]\!] \oplus E_0[\![u]\!]
 \arrow[u, "\mathrm{id}\otimes \mathrm{localise}",""']
 \arrow[r, "u"] 
 &
W(\!(u)\!) \oplus E_0(\!(u)\!)
 \arrow[u, "\mathrm{id}",""']
 \arrow[r, ""] 
&
(W\oplus E_0)(\!(u)\!)/u(W\oplus E_0)[\![u]\!]
 \arrow[u, "\mathrm{quotient}",""']
 \arrow[r, ""]
& 
 0
  \\
0
 \arrow[r, ""]
&
E^-QH^*(Y)
 \ar[uuu, "c^*", bend left=102]
 \arrow[u, "\cong"']
 \arrow[r, ""] 
 &
E^{\infty}QH^*(Y)
\ar[uuu, "c^*_u", bend left=102]
 \arrow[u, "\cong"']
 \arrow[r, ""] 
&
E^+QH^*(Y)
\ar[uuu, "{[c^*_u]}", bend left=102]
 \arrow[u, "\cong"']
 \arrow[r, ""]
& 
 0
\end{tikzcd}
$$
where all rows are exact and involve the obvious maps (see \cref{Corollary torsion freeness 2 free case}).

We can pick a $\k$-vector space basis $w_1,\ldots,w_t,v_1,\ldots,v_s$ for $QH^*(Y)$, where $w_j$ are a basis for $W$, and $v_j$ is a basis for $E_0\subset QH^*(Y)$. This induces a $\k$-linear isomorphism $QH^*(Y)\cong W \oplus E_0$.
By \cref{Prop equivariant formality for QH}, we also obtain a non-canonical isomorphism $$E^{\karo}QH^*(Y) \cong QH^*(Y)\otimes_{\k} E^{\karo}(\mathrm{point}) \cong (W \oplus E_0) \otimes_{\k} E^{\karo}(\mathrm{point}),$$ 
for $\karo\in \{-,\infty,+\}$, giving rise to the bottom two rows of the above commutative diagram. 

In the basis $w_1,\ldots,w_t,v_1,\ldots,v_s$, the matrix for $r^m$ is a block-diagonal matrix: the first $t\times t$ block corresponds to the isomorphism $r^m: W \to W$, the second $s\times s$ block is the zero matrix corresponding to $r^m=0:E_0 \to E_0$.
It follows that $ER_m$ has the form described in \eqref{Equation id block plus higher u}. Similarly, any composite of $m$ maps of type $r_{a,b}$, for varying $a$, will have the form described in \eqref{Equation id block plus higher u}.
Since $ER_{km}$ is a composite of $k$ such composites, it has the form $f_k\circ \cdots \circ f_1$ described in \cref{Lemma invariant factors of id block uorder 1 blocks}. It follows that $ER_k$ has invariant factors $1,\ldots,1,u^{\geq k},\ldots,u^{\geq k}$, with $t$ copies of $1$. None of the $u^{\geq k}$ invariant factors are $u^{\infty}:=0$ because all $ER_k$ maps are injective by \cref{Lemma slice poly for ERk is same as for ck}, using that $(a,b)$ is a free weight.

Thus, we are in the setting of the general theory described in \cref{Subsection Growth rate of the filtration polynomial}, with: 
$$
j_1(km)=\cdots=j_t(km)=0, 
\qquad \textrm{ and } \qquad
k \leq j_{t+1}(km)\leq \cdots \leq j_{t+s}(km).
$$
The claim now easily follows from that general theory. 
\end{proof}

\begin{rmk}\label{Remark E0 basis clarification}
A basis of $E^-QH^*(Y)$ which (for some basis on the codomain) ensures that $E^-c^*:E^-QH^*(Y)\to E^-SH^*(Y)$ is in Smith normal form can be chosen to be $$w_1,\ldots,w_t,v_1+(u^{\geq 1}\textrm{-terms}),\ldots,v_s+(u^{\geq 1}\textrm{-terms}).$$
The vectors $v_i\in E_0$ typically require such $u^{\geq 1}$-corrections. This is the same reason why in \cref{Prop ESH for O-1 over P1} the generator of the $\kuu$-summand in $E^-SH^*(Y)$ is much trickier than the $\ku$-summand.
\end{rmk}

\subsection{The monotone case, for non-free weights}\label{Subsection The monotone case for non-free weights}

\begin{thm}\label{Theorem u localisation for monotone any weight case}
Let $Y$ be a monotone symplectic $\C^*$-manifold, and $(a,b)\neq (0,0)$ any non-free weight. 
Then \eqref{Equation ESH for monotone} and \eqref{Equation iso for Eminus infty plus monotone} hold.
However, $E^-c^*:E^-QH^*(Y)\to E^-SH^*(Y)$ may have kernel: 
it corresponds to a map $\ku^t \oplus \ku^s \to \ku^t \oplus \kuu^s$ with $\ker E^-c^*\subset \ku^s$. 
\end{thm}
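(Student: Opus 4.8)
The plan is to reduce to the free-weight computation in \cref{Theorem u localisation for monotone} by isolating the single continuation map that can fail to be injective. Recall from \cref{Remark non-free weight MB contributions} that for non-free $(a,b)\neq(0,0)$ we have $b\neq 0$, and the only ``bad'' slope is $p=a/b$, with associated Morse-Bott manifolds $B_{p,\beta}$ of $1$-orbits that are entirely fixed by \eqref{Equation action on loops}. By \cref{Prop Periodicity property 2} the continuation maps from slope $k^+$ to $(k+1)^+$ are the maps $r_{a-kb,b}$ (up to grading shift), and by \cref{Lemma fixed points of the mixed action} the only one of these whose target/source column involves period $p$ is the one for $k=\wp:=\lfloor p\rfloor$. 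So in the sequence \eqref{Equation sequence of maps that give direct lim} every map is injective (it is a free-weight-type rotation map) except possibly the $\wp$-th one. This is exactly the hypothesis of \cref{Lemma presentation Rj inverse of V noninjective case direct limit}, which I already applied in \cref{Subsection Growth rate of the filtration polynomial nonfree weight case} to obtain the presentations of $E^-HF^*(H_{k^+})$, $E^-SH^*(Y)$, $E^{\infty}SH^*(Y)$ and $E^+SH^*(Y)$ displayed there.

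First I would recall those presentations. With $r=\dim_{\k}H^*(Y)$ and $n=\mathrm{rank}\,\ker E^-c^*_{p^+}=\dim_{\kuu}\ker E^{\infty}c^*$, the computations in \cref{Subsection Growth rate of the filtration polynomial nonfree weight case} give
$$
E^{\infty}SH^*(Y)\cong \kuu^{r-n}\oplus \kuu^{n}\cong \kuu^r \cong E^{\infty}QH^*(Y),
$$
so the middle isomorphism of \eqref{Equation iso for Eminus infty plus monotone} holds. For the $E^-$ statement, the same discussion exhibits
$$
E^-SH^*(Y)\cong\Big(u^{-j_1}\ku\oplus\cdots\oplus u^{-j_{r-n-s}}\ku\oplus\kuu^{s}\Big)\oplus\Big(u^{-\ell_1}\ku\oplus\cdots\oplus u^{-\ell_{n-s'}}\ku\oplus\kuu^{s'}\Big),
$$
and the point is that because $Y$ is monotone the $u^0$-part of \eqref{Equation sequence of maps that give direct lim} is quantum multiplication by $Q_{\Fi}$ (\cref{Prop Periodicity property 2}), whose image $W=r^m(QH^*(Y))$ stabilises with $r:W\to W$ an isomorphism; exactly as in the proof of \cref{Theorem u localisation for monotone}, this forces the $u^0$-part of each $ER_k$ to carry $t=\dim_{\k}SH^*(Y)$ invariant factors equal to $1$ which persist in the limit, so that $r-n-s$ of the $j_i$ stay bounded while the rest, together with all the $\ell_i$ (the ones coming through the non-injective map) grow without bound, i.e.\ $s+s'=\dim_{\k}H^*(Y)-t=:e$ many summands become $\kuu$. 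This yields $E^-SH^*(Y)\cong\ku^t\oplus\kuu^e=\ku^t\oplus\kuu^s$ in the notation $s=\dim_{\k}E_0$ (note $s$ here is $e$ in the coordinate-heavy statement; I must be careful with the clashing uses of the letter $s$), and the $E^+$ statement then follows from the SES in \cref{Theorem torsion freeness 2} as $E^{\infty}/(u\otimes 1)E^-\cong \mathbb{F}^{t}$. I would present \eqref{Equation ESH for monotone} by matching the $\ku^t$ with $SH^*(Y)[\![u]\!]$ and the $\kuu^e$ with $E_0(\!(u)\!)$ via \cref{Lemma equivariant formality}, exactly as in \cref{Theorem u localisation for monotone}.

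Finally, the new content relative to the free case is the last sentence: $E^-c^*$ may have a kernel, and $\ker E^-c^*\subset\ku^e$ (the $E_0$-summand). For this I would invoke \eqref{Equation nonfree weight computation}, which identifies $E^-c^*_{k^+}:E^-QH^*(Y)\to E^-HF^*(H_{k^+})$ with the block map $\mathrm{incl}\oplus 0:\ku^{r-n}\oplus\ku^n\to \bigoplus u^{-j_i(k)}\ku\oplus\bigoplus u^{-\ell_i(k)}\ku[2\wp\mu]$; taking the direct limit, the first block stays an inclusion while the second block lands in the ``$\ell$-part'' of $E^-SH^*(Y)$, and any element of $\ker E^-c^*$ must lie in the $\ku^n$-summand. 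Since $\ker E^-c^*_{p^+}=\ker E^-c^*$ (the kernels stabilise once $k\geq p$, as the later maps are injective) and by construction the $\ku^n$-summand maps isomorphically under $\mathrm{ev}_0$ into $E_0\subset QH^*(Y)$ (the kernel of $Q_{\Fi}^m$, which is the $u^0$-shadow of the non-injective block), one gets $\ker E^-c^*\subset\ku^e$ in the displayed coordinates. The main obstacle I anticipate is purely bookkeeping: keeping the identification $n\le e$ straight (the non-injective block sits inside the generalised $0$-eigenspace, so its rank is at most $\dim E_0$), and making sure the grading shifts $[2\wp\mu]$ appearing in \cref{Subsection Growth rate of the filtration polynomial nonfree weight case} do not interfere with the ungraded isomorphism statements \eqref{Equation ESH for monotone}--\eqref{Equation iso for Eminus infty plus monotone}, which hold because $Y$ monotone means $\k$ is supported in all even gradings and so grading shifts are harmless at the level of $\ku$-module isomorphism type. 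Once that is set up, the theorem follows from \cref{Lemma presentation Rj inverse of V noninjective case direct limit}, \cref{Lemma invariant factors of id block uorder 1 blocks} (applied to the blocks coming from $r^m$), and \cref{Theorem torsion freeness 2} with no further computation.
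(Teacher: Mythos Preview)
Your approach is essentially the same as the paper's: reduce to the free-weight argument of \cref{Theorem u localisation for monotone}, isolate the single potentially non-injective rotation map, and invoke \cref{Lemma presentation Rj inverse of V noninjective case direct limit} together with the block structure from \cref{Lemma invariant factors of id block uorder 1 blocks}. The skeleton is correct, but two steps are under-argued.

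First, you assert that ``all the $\ell_i$ grow without bound'' by analogy with the free case, but this is precisely the new content. The $\ell_i(k)$ are the invariant factors of the induced maps $R_k':C_p\to C_k$ on cokernels, and nothing in \cref{Theorem u localisation for monotone} addresses these. The paper fills this in by observing that the continuation maps from $ER_{mk}$ to $ER_{m(k+\ell)}$ still have the form \eqref{Equation id block plus higher u}, and after the change-of-basis trick \eqref{Equation change of basis trick} the action on the $v_i$ sub-basis modulo the (corrected) $w_j$ sub-basis has all invariant factors $u^{\geq \ell}$, none of them $u^{\infty}$ by \cref{Theorem injectivity theorem 2}.(1). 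This is the ``reborn cokernel'' step: the $\ku^n$ killed by the kernel reappears as $\mathrm{coker}\,ER_{mk}$ and then $u$-localises under the subsequent (injective) continuation maps. Without this, you have not shown $s'=n$.

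Second, your argument that $\ker E^-c^*\subset \ku^s$ via $\mathrm{ev}_0$ is incomplete as stated. Showing $\mathrm{ev}_0(\ker)\subset E_0$ is easy (the $u^0$-part of $ER_k$ is $r^k$, an isomorphism on $W$), but you then need that the saturated submodule $\ker E^-c^*$ can be placed inside a $\ku^s$ complement whose $u^0$-part is $E_0$; this follows because $\ker$ is saturated (if $uv\in\ker$ then $v\in\ker$), so one can extend a $\ku$-basis of $\ker$ through $E_0$ and then to all of $\ku^r$. The paper instead invokes \cref{Remark basis for ker in Smith argument id plus ugeqone} directly, which packages this into the Smith-form computation.
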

\begin{proof}
This follows by the same proof as \cref{Theorem u localisation for monotone}, except that $ER_{km}$ may have kernel, which corresponds to $u^{\geq k}$ invariant factors being $u^{\infty}:=0$.
As $(a,b)\neq (0,0)$ is non-free, we have $b\neq 0$. 
Let $k:=\lceil \tfrac{a}{b}\rceil$.
\cref{Theorem injectivity theorem 2} implies that $ER_{k-1}$ is injective, since $k-1<\tfrac{a}{b}$.
$ER_{k}$ may have kernel, as the $E^{\infty}$-page for $E^{\infty}SH^*(Y)$ may have a non-trivial contribution in the column for slope $\tfrac{a}{b}$, as explained in \cref{Remark non-free weight MB contributions}.
From then on, $\ker ER_{k}=\ker ER_{k+\ell}$ for all $\ell \geq 1$, i.e.\;no new kernel is created, because of \cref{Theorem injectivity theorem 2}.(1).
Call $n$ the rank of that stabilised kernel.
We now assume that $km>\lceil \tfrac{a}{b}\rceil$.
We apply \cref{Remark basis for ker in Smith argument id plus ugeqone} to the map $ER_{km}$: part of the $v_1,\ldots,v_s$ basis for the $\ku^s$ summand, namely $v_{t+s-n+1},\ldots,v_{t+s}$, can be chosen to be a basis for the kernel.
To pass from $ER_{mk}$ to $ER_{m(k+\ell)}$ we compose with a continuation map that still has the form \eqref{Equation id block plus higher u}, and that can be turned into \eqref{Equation change of basis trick} by basis adjustments without changing the $v_i$ (so only adjusting the $w_j$ by adding higher corrections). In particular, the continuation map acting on the $v_i$ sub-basis, modulo the $w_j+$(higher correction) sub-basis, gives rise to invariant factors $u^{\geq \ell},\ldots,u^{\geq \ell}$, and none of these are $u^{\infty}$ because of \cref{Theorem injectivity theorem 2}.(1).  
Thus, although the kernel caused the death of $0\oplus \ku^n\subset \ku^s$, that summand $0\oplus \ku^n$ is reborn as the cokernel of $ER_{mk}$ inside $E_{(a-mkb,b)}^-QH^*(Y)[2mk\mu]\cong \ku^t \oplus \ku^s$. This cokernel contributes generators to the direct limit. The subsequent continuation maps (for slopes $\lambda>mk>\tfrac{a}{b}$) cause the $u$-localisation of the $\ku^s$-summand including the $u$-localisation of the reborn piece $0\oplus \ku^n\subset 0\oplus \ku^s.$
The claim follows.
\end{proof}

\appendix
\section{Equivariant Seidel isomorphisms}

\subsection{The general statement}
\label{Subsection Equivariant Seidel isomorphism} 
Recall our ongoing notation: $Y$ is any
symplectic $\C^*$-manifold with almost complex structure $I$, the $\C^*$-action $\Fi$ has moment map $H:Y \to \R$, and $\mathrm{Fix}(\Fi)=\F =\sqcup \F_{\a}$ denotes the connected components of the fixed locus, in particular $\F_{\min}:=\min H.$
 We use the time-parameter $t\in S^1=[0,1]/(0\sim 1)$, and abbreviate $\Fi_t$ to mean the $S^1$-flow by $\Fi$, whereas when we use the $\R_+$-action we will say so explicitly.
 Following \cite{RZ1}, we use Hamiltonians $H_{\lambda}=c(H):Y \to \R$ of slope $\lambda$ at infinity ($c'(H)=\lambda$ for large $H$).
 
Write $E^{\karo}_{(a,b)}FC^*(H_{\lambda},I)$ for the equivariant Floer complex for weight $(a,b)$ using the equivariant model of type $\karo\in \{-,\infty,+\}$. Recall, from \eqref{Equation action on loops}, this means Floer theory is defined equivariantly with respect to the $S^1$-action $(\theta \cdot_{(a,b)} x)(t) = \Fi_{a\theta}(x(t-b\theta))$ on free loops $x(t)$ in $Y$.
In particular, $\Fi$ is acting as $\Fi^a$ on the target of the reparametrised loop.
In this section, we will also use the symbol $E^{\karo}_{(\psi,b)}$: it means we change $\Fi$ to another $S^1$-action $\psi$ acting on $Y$, and the $S^1$-action on the free loopspace is
$$
(\theta \cdot_{(\psi,b)} x)(t) = \psi_{\theta}(x(t-b\theta)).
$$
Let $\sigma$ be a Hamiltonian $S^1$-action on $Y$, with moment map $K:Y \to \R$.
When we say $\sigma$ {\bf commutes with $\Fi$} we mean that it commutes with the $S^1$-part of the action $\Fi$.
This ensures that arbitrary products of $\sigma,\Fi$ are Hamiltonian $S^1$-actions.
We will prove that the following point exists,
\begin{equation}\label{Equation xmin definition}
x_{\min} 
\in 
 \mathrm{Min}\, K|_{\F_{\min}}
\subset
\F_{\min}\cap \mathrm{Fix}(\sigma).
\end{equation}
Here $\mathrm{Min}$ is the minimal locus in $Y$ where $K$ attains its minimal value $\min K|_{\F_{\min}}\in \R$.
Let $\mu_{\sigma}$ be the {\bf Maslov index} of the loop of symplectomorphisms $\sigma_t \in \mathrm{Aut}(T_{x_{\min}} Y).$
If $\sigma$ is {\ph} ($I \circ d\sigma_t = d\sigma_t \circ I$), then 
$\mu_{\sigma}$ is the sum of the weights of the linearised $S^1$-action on $T_{x_{\min}} Y$ (cf.\,\cite[Sec.5.1]{RZ1}). 
We define the ``pull-back'' data as follows:
$$
(\sigma^*x)(t) = \sigma_{-t}x(t), \qquad 
\sigma^*H_{\lambda} := 
H_{\lambda} - K, \qquad  \sigma^*I = d\sigma_t^{-1} \circ I \circ d\sigma_t.
$$
If $\sigma$ is {\ph} (e.g.\,the  $S^1$-part of a {\ph} $\C^*$-action) then $\sigma^*I=I$.

\begin{thm}\label{Theorem Seidel iso}\!\!\footnote{The left hand side of \eqref{Theorem Seidel iso} is well-defined, and it is a general feature of the construction of \eqref{Theorem Seidel iso} that the Floer complex on right hand side is automatically well-defined because there is a canonical bijection between $1$-orbits and between Floer trajectories, for the two respective choices of Floer data.
There is in fact a way to make both sides of the equation $\Z$-graded, and the map $\Z$-graded, by the capping discussion in \cref{Remark capping discussion Comparison with the literature}.
}
For any Hamiltonian $S^1$-action $\sigma$ that commutes with $\Fi$,
there is a chain isomorphism
    \begin{equation}\label{Equation Seidel iso}
    \mathcal{S}_{\sigma}: E^{\karo}_{(\Fi^a,b)}FC^*(H_{\lambda},I) \cong E^{\karo}_{(\sigma^{-b}\Fi^a,b)}FC^{*}(\sigma^*H_{\lambda},\sigma^*I)[2\mu_{\sigma}], \qquad (\textrm{1-orbit }x) \mapsto \sigma^*x,
    \end{equation}
    where $\mu_{\sigma}$ is the Maslov index of $\sigma$. These isomorphisms commute with continuation maps
in the sense of \cite[Cor.4.8]{Sei97} and \cite[Thm.18]{R14}.
In the case $\sigma=\Fi$, we get an isomorphism on cohomology,
    \begin{equation}\label{Equation Seidel iso 2}
    \mathcal{S}_{\Fi}: E^{\karo}_{(a,b)}HF^*(H_{\lambda},I) \cong E^{\karo}_{(a-b,b)}HF^{*}(H_{\lambda-1},I)[2\mu], \qquad (\textrm{1-orbit }x) \mapsto \Fi^*x,
    \end{equation}    
    where $\mu$ is the Maslov index of $\Fi$.
    At chain level, \eqref{Equation Seidel iso 2} holds if we replace $H_{\lambda-1}$ by $\Fi^*H_{\lambda} = H_{\lambda}-H$.
In this case, compatibility with continuation maps is the commutative diagram:
\begin{equation}\label{Equation compatibility of seidel iso with continuation}
\begin{tikzcd}[column sep=0.6in]
E^{\karo}_{(a,b)}HF^*(H_{\lambda})
 \arrow[r, "\textrm{continuation}"] 
 \arrow[d, "\cong","\mathcal{S}_{\Fi}"']
 &
E^{\karo}_{(a,b)}HF^*(H_{\lambda+\lambda'})
\arrow[d, "\cong","\mathcal{S}_{\Fi}"']
\\
E^{\karo}_{(a-b,b)}HF^*(H_{\lambda-1})[2\mu]
 \arrow[r, "\textrm{continuation}"]  
 &
E^{\karo}_{(a-b,b)}HF^*(H_{\lambda+\lambda'-1})[2\mu]
\end{tikzcd}
\end{equation}
    \end{thm}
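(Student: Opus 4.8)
\textbf{Proof plan for the equivariant Seidel isomorphism (Theorem \ref{Theorem Seidel iso}).}

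The plan is to construct $\mathcal{S}_{\sigma}$ by the standard ``pull-back by a loop of symplectomorphisms'' trick of Seidel \cite{Sei97}, carried out at the level of the Borel construction so that the equivariant data is respected. First I would set up the bijection on $1$-orbits: if $x(t)$ is a $1$-periodic orbit of $X_{H_{\lambda}}$, then $(\sigma^*x)(t):=\sigma_{-t}(x(t))$ is a $1$-periodic orbit of $X_{\sigma^*H_{\lambda}}$ where $\sigma^*H_{\lambda}=H_{\lambda}-K$; this is the usual computation using that $\sigma$ is Hamiltonian with moment map $K$, and that $\sigma$ commutes with $\Fi$ (so that $\sigma^*H_{\lambda}$ is again a legitimate Hamiltonian of slope $\lambda$ at infinity for the $\Fi$-action, since $K$ is bounded and $\sigma$ preserves the conical structure away from a compact set). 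Next I would check that $u\mapsto \sigma^*u$, $(\sigma^*u)(s,t):=\sigma_{-t}(u(s,t))$, sets up a bijection between Floer cylinders for $(H_{\lambda},I)$ and for $(\sigma^*H_{\lambda},\sigma^*I)$ with $\sigma^*I=d\sigma_t^{-1}\circ I\circ d\sigma_t$; this is because Floer's equation is natural under the time-dependent change of gauge $u\mapsto \sigma_{-t}u$. Hence the chain-level map is a \emph{tautological} isomorphism of the underlying (non-equivariant) complexes once the gradings are matched: the index shift is $2\mu_{\sigma}$, coming from the fact that $\sigma^*$ changes the homotopy class of a capping by precisely the class of the loop $\sigma_t$ acting on $T_{x_{\min}}Y$, whose Maslov index is $\mu_{\sigma}$ (I would invoke \cref{Remark capping discussion Comparison with the literature} and the capping normalisation from \cref{Definition of canonical lift in terms of caps} to pin this down, and to verify $x_{\min}$ in \eqref{Equation xmin definition} exists — the minimum of $K$ restricted to the compact fixed component $\F_{\min}$ is attained and is $\sigma$-fixed since $\sigma$ commutes with $\Fi$).

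The key point that is genuinely equivariant, and which I expect to be the main obstacle, is keeping track of how the \emph{circle action on the free loopspace} transforms under $x\mapsto \sigma^*x$. By \eqref{Equation action on loops}, the source complex carries the action $\theta\cdot_{(\Fi^a,b)}x:t\mapsto \Fi_{a\theta}(x(t-b\theta))$. Computing $\sigma^*(\theta\cdot_{(\Fi^a,b)}x)(t)=\sigma_{-t}\Fi_{a\theta}x(t-b\theta)$ and comparing it with $\theta\cdot_{(\rho,b)}(\sigma^*x)(t)=\rho_{\theta}\sigma_{-(t-b\theta)}x(t-b\theta)=\rho_{\theta}\sigma_{b\theta}\sigma_{-t}x(t-b\theta)$ (using that $\sigma$ is an $S^1$-action, so $\sigma_{-t+b\theta}=\sigma_{b\theta}\sigma_{-t}$, and that $\sigma,\Fi$ commute so $\sigma_{-t}$ commutes with $\Fi_{a\theta}$), one reads off that these agree precisely when $\rho_{\theta}=\Fi_{a\theta}\sigma_{-b\theta}$, i.e.\ $\rho=\sigma^{-b}\Fi^a$. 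This is exactly the weight appearing on the right-hand side of \eqref{Equation Seidel iso}. The subtlety is that this identification must hold equivariantly, i.e.\ $\sigma^*$ must intertwine the chosen free $S^1$-actions on $S^{\infty}$ in the Borel model \eqref{Equation Borel model identification}: since the Borel-model identification $(\theta\cdot w,x)\sim(w,\theta\cdot_{(a,b)}x)$ only depends on the \emph{loopspace} action, and the computation above shows $\sigma^*$ carries the $(\Fi^a,b)$-loopspace action to the $(\sigma^{-b}\Fi^a,b)$-loopspace action, the map $(w,x)\mapsto(w,\sigma^*x)$ descends to a well-defined map of Borel models, and hence of equivariant Floer data $H^{\mathrm{eq}}_{w,t},J^{\mathrm{eq}}_{w,t}$ satisfying \eqref{Equation equivariant Hamiltonians} for the two respective weights. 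Consequently the equivariant differential $d=\delta_0+u\delta_1+\cdots$ transforms correctly, and because $\mathcal{S}_{\sigma}$ is $\ku$-linear on the nose it descends to all three models $\karo\in\{-,\infty,+\}$ obtained by tensoring the coefficient system as in \eqref{defn F C^+ and W^+}.

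Finally I would deduce \eqref{Equation Seidel iso 2} by specialising $\sigma=\Fi$: then $K=H$, $\mu_{\sigma}=\mu$, $\sigma^*I=I$ (as $\Fi$ is $I$-holomorphic), the weight becomes $(\Fi^{-b}\Fi^a,b)=(\Fi^{a-b},b)$, and $\sigma^*H_{\lambda}=H_{\lambda}-H$ which has slope $\lambda-1$ at infinity, so a continuation map identifies $E^{\karo}_{(a-b,b)}FC^*(H_{\lambda}-H)$ with $E^{\karo}_{(a-b,b)}FC^*(H_{\lambda-1})$ on cohomology. Compatibility with continuation maps, \eqref{Equation compatibility of seidel iso with continuation}, is then the assertion that pull-back by $\Fi_{-t}$ commutes with Floer continuation cylinders for a monotone homotopy of slopes; this follows verbatim from the naturality argument of \cite[Cor.4.8]{Sei97} and \cite[Thm.18]{R14}, applied cylinder-by-cylinder, since the continuation data can be chosen $\sigma$-pulled-back on the target side. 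The main work, as indicated, is the bookkeeping of the two $S^1$-actions and the grading shift; the Floer-analytic content is entirely inherited from the non-equivariant Seidel construction.
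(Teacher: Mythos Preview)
Your proposal is correct and follows essentially the same approach as the paper: the pull-back bijection on orbits and cylinders from \cite{Sei97,R14}, the explicit computation that $\sigma^*$ intertwines the $(\Fi^a,b)$-loopspace action with the $(\sigma^{-b}\Fi^a,b)$-action (your computation matches the paper's Lemma in the appendix verbatim), the check that the pulled-back equivariant Hamiltonian $\sigma^*H^{\mathrm{eq}}_{w,t}$ satisfies the required equivariance condition \eqref{Equation equivariant Hamiltonians}, and the specialisation to $\sigma=\Fi$. One minor inaccuracy: for general $\sigma$ the pulled-back Hamiltonian $\sigma^*H_{\lambda}=H_{\lambda}-K$ need not have slope $\lambda$ at infinity (nor need $K$ be bounded); the paper's footnote instead observes that the right-hand complex is well-defined \emph{by construction} via the bijections, without independently verifying it is a standard Floer complex for an admissible Hamiltonian --- only in the special case $\sigma=\Fi$ does one get slope $\lambda-1$ and hence an identification with $HF^*(H_{\lambda-1})$ after a continuation map, exactly as you say.
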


\subsection{Comparison with the literature.}
\label{Remark capping discussion Comparison with the literature}
\label{Definition canonical lift}
The non-equivariant version of \eqref{Equation Seidel iso} for closed symplectic manifolds is due to Seidel \cite{Sei97}; the version for symplectic cohomology of convex symplectic manifolds is due to Ritter \cite{R14,R16}. These were adapted to the $E^-$-equivariant setup by Liebenschutz-Jones \cite{liebenschutz2020intertwining,liebenschutz2021shift}.
We are adapting it to symplectic $\C^*$-manifolds, for all three equivariant versions $E^{\karo}$, using the {\MBF} model setup from \cite[Appendix]{RZ2}.

Seidel \cite{Sei97} considers more generally loops of Hamiltonian diffeomorphisms. In that case $K_t$ is time-dependent, and $\sigma^*H_{\lambda}=H_{\lambda}\circ \sigma_t - K_{t}\circ \sigma_t$. In our setup, $K=K_t$ is time-independent, and the commutativity of $\sigma,\Fi$ ensures $\F_{\min}\cap \mathrm{Fix}(\sigma)\neq \emptyset$ (proved later); it follows that\footnote{e.g.\,see \cite[Proof of (5.2)]{liebenschutz2020intertwining}.} $\omega(X_H,X_K)=0$, so 
\begin{equation}\label{Equation Hams preserved by action}
H\circ \sigma_t=H \qquad \textrm{ and }\qquad  K\circ \Fi_t=K.
\end{equation}
In Seidel \cite{Sei97}, the Floer chain complex only allows contractible Hamiltonian $1$-orbits, and in fact the generators of the chain complex require a choice of {\bf capping}, a smooth map $\widetilde{x}:\mathbb{D}\to Y$ bounding the $1$-orbit $x: S^1 \to Y$.
One then identifies caps if their difference is a spherical classes in 
$$
G:= 
\pi_2(Y)/(\ker [\omega] \cap \ker c_1(Y)).
$$
More precisely, $\widetilde{x}$ lives in a (connected) covering space
\begin{equation}\label{Equation cover of loopspace}
\widetilde{\mathcal{L}_0Y} \to \mathcal{L}_0Y,\quad \widetilde{x} \mapsto x,
\end{equation}
of the space of contractible free loops in $Y$, with abelian deck group $G$. The Novikov ring used by Seidel is more complicated than in this paper, because it involves using $G$ \cite[Definition 3.10]{Sei97}. 
Seidel shows that $\sigma: \mathcal{L}_0 Y \to \mathcal{L}_0 Y$, $x\mapsto \sigma^*x$, admits a lift $\widetilde{\sigma}$ to $\widetilde{\mathcal{L}_0Y} \to \widetilde{\mathcal{L}_0Y}$, $\widetilde{x} \mapsto \widetilde{\sigma}(\widetilde{x})$. This requires showing that $\sigma$ preserves $\mathcal{L}_0 Y$, using the Arnol'd conjecture
for closed symplectic manifolds  \cite[Lemma 2.2]{Sei97}. In these conventions, the chain isomorphism \eqref{Equation Seidel iso} arises by linearly extending $\widetilde{x}\mapsto \widetilde{\sigma}(\widetilde{x}).$ 

In our setup, we do not need to appeal to the Arnol'd conjecture: $\sigma$ has a fixed point by \eqref{Equation xmin definition}, so $\sigma$ fixes the {\bf constant cap} $c_{x_{\min}}$ at $x_{\min}$, therefore $\sigma^*(\mathcal{L}_0Y)$ cannot land in any other component of $\mathcal{L}Y$ other than $\mathcal{L}_0 Y$. 
We choose a specific lift $\widetilde{\sigma}$, called {\bf canonical lift}, determined by the condition
\begin{equation}\label{Definition of canonical lift in terms of caps}
\widetilde{\sigma}(c_{x_{\min}})=c_{x_{\min}}.
\end{equation}
This lift does not depend on the choice of $
x_{\min}$,  because we will see that $\mathrm{Min}\, K|_{\F_{\min}}$ is path-connected.

In our case, the $1$-orbits of $H_{\lambda}$ correspond to orbits (of various periods $p\in \Q_{\geq 0}$) of the $S^1$-flow $\Fi_t$, and such orbits admit {\bf canonical caps} $\widetilde{x}_{\mathrm{can}}$ given by applying the $\C^*$-action $\Fi$ (see \eqref{Equation Cstar action cap}).
However, for orbits on the right in \eqref{Equation Seidel iso} there are no canonical caps because $\sigma^{-b}\Fi^a$ is just a Hamiltonian $S^1$-action, not a contracting $\C^*$-action.
We also use a simpler Novikov ring: as in \cite[Sec.2A,5B]{R16} we have a formal variable $T$ to keep track of $[\omega]$-values; when $c_1(Y)=0$ or $Y$ is monotone ($c_1(Y)\in \R_{>0}[\omega]$, placing $T$ in a suitable positive degree) this ensures a $\Z$-grading; beyond that, one would need another formal variable to keep track of $c_1(Y)$-values or one makes do with a $\Z/2$-grading. We comment further on these technical issues in the two Remarks below.

We then declare that, on the left in \eqref{Equation Seidel iso}, $x$ comes with its canonical cap $\widetilde{x}_{\mathrm{can}}$, whereas on the right $\sigma^*x$ comes with the {\bf induced cap} $\widetilde{\sigma}(\widetilde{x}_{\mathrm{can}})$ determined by the canonical lift $\widetilde{\sigma}$.
From \eqref{Equation Seidel iso}, one can recover the version for Seidel's Novikov ring (see \cref{Remark passing between two Novikov rings}), and the bijection on generators that induces \eqref{Equation Seidel iso} is correctly $\Z$-graded if we take into account these capping conventions.

\begin{rmk}\label{Remark change of cappings}
Suppose we wish to change cappings on the right in \eqref{Equation Seidel iso}. Abbreviate $y=\sigma^*x$, suppose we change induced caps $\widetilde{y}_{\mathrm{ind}}$ to new caps $\widetilde{y}_{\mathrm{new}}$. We get a spherical class $g_y:=\widetilde{y}_{\mathrm{ind}}\#(-\widetilde{y}_{\mathrm{new}})\in G$, with deck group action $\widetilde{y}_{\mathrm{ind}}=g_y(\widetilde{y}_{\mathrm{new}})$. Suppose we use the Novikov ring
$$\mathbb{K} (\!(S)\!)=\Big\{\textstyle\sum k_{n_i} S^{n_i}: k_{n_i}\in \mathbb{K}, n_i\in \Z, n_i\to \infty\Big\},$$
where $\mathbb{K}=\{\sum n_j T^{a_j}: a_j\in \R, a_j \to \infty, n_j\in \mathbb{B}\}$, and $\mathbb{B}$ is a choice of base field with $\mathrm{char}(\mathbb{B})=0$.
We place the formal variables $T,S$ in degrees $|T|=0$, $|S|=2$. The above corresponds to a change of basis on the right in \eqref{Equation Seidel iso},%
\begin{equation}\label{Equation change of cappings}
y\mapsto S^{c_1(Y)[g_y]}T^{[\omega](g_y)} y.
\end{equation}
This correctly keeps track of the $\Z$-grading, because the grading of $y$ thought of as $\widetilde{y}_{\mathrm{new}}$ has increased by $2c_1(Y)(g_y)$ compared to $\widetilde{y}_{\mathrm{ind}}$ (using a general property of Conley-Zehnder indices when one glues in a sphere).
When $c_1(Y)=0$, one does not need $S$.
When $c_1(Y)\in \R_{>0}[\omega]$, say $c_1(Y)=k[\omega]$,
one can omit $S$ by placing $T$ in degree $|T|=2k$.
In all other cases, one can omit $S$ at the cost of downgrading the $\Z$-grading to a $\Z/2$-grading.
\end{rmk}

\begin{rmk}\label{Remark passing between two Novikov rings}
We briefly explain the passage between Seidel's Novikov ring and ours. 
Suppose $u$ is a Floer trajectory from $x$ to $y$, for a Hamiltonian $H_{\lambda}$.
Its energy is $E(u)=\int u^*\omega + H_{\lambda}(x)-H_{\lambda}(y)$.
Omitting the orientation sign, it contributes 
$T^{E(u)}x$ to the Floer differential $d(y)$. Now consider cappings:
gluing a capping $\widetilde{x}$ with $u$ yields a capping of $y$: the one obtained by lifting $u$ to the cover starting from $\widetilde{x}$. 
If we already prescribed a capping $\widetilde{y}$, then over the Novikov ring using cappings the contribution to $d(\widetilde{y})$ is $g^{-1}\cdot \widetilde{x}$, where $g=\widetilde{x}\# u \# (-\widetilde{y})\in G.$ 
By construction, $g$ is determined by the $[\omega],c_1(Y)$ values on the spherical class $g$. The $c_1(Y)$ value is determined by grading considerations. The $[\omega]$ value is determined by $\int u^*\omega$, and $E(u)$ recovers $\int u^*\omega$. In conclusion, we recover $g$.
The Hamiltonian contributions in $E(u)$ are harmless as they are $u$-independent:
the change of basis $x\mapsto T^{H_{\lambda}(x)}x$ induces a chain isomorphism, after which the above contribution $T^{E(u)}x$ changes to $T^{\int u^*\omega}x$.

In conclusion, the passage from Seidel's Novikov ring to our Novikov ring, means the following replacement of generators on the two respective sides of \eqref{Equation Seidel iso}:
\begin{equation}\label{Equation cap to capless}
\widetilde{x}_{\mathrm{can}}
\mapsto
T^{-H_{\lambda}(x)}x
\qquad
\textrm{ and }
\qquad
\widetilde{\sigma}(\widetilde{x}_{\mathrm{can}}) \mapsto
T^{-(\sigma^*H_{\lambda})(\sigma^*x)}\sigma^*x.
\end{equation}
Therefore, the more ``natural'' way to write \eqref{Equation Seidel iso} (which corresponds to $\widetilde{x}_{\mathrm{can}}\mapsto \widetilde{y}_{\mathrm{ind}}$), taking into account those basis changes, is:
\begin{equation}\label{Equation Seidel iso with Ham corrections}
x\mapsto 
T^{H_{\lambda}(x)-(\sigma^*H_{\lambda})(\sigma^*x)}\,\sigma^*x.
\end{equation}
\end{rmk}
\subsection{Explanation of twist in the $S^1$-actions}

Recall the $S^1$-action on free contractible loops, 
\begin{equation}\label{Equation free loop space action}
\sigma^*: \mathcal{L}_0Y \to \mathcal{L}_0Y,\;
(\sigma^*x)(t) = (\sigma^{-1}x)(t)=\sigma_{-t} x(t).
\end{equation}

\begin{lm}\label{Lemma sigma permutes fixed components Falpha}
Let $\sigma$ be a Hamiltonian $S^1$-action commuting with $\Fi.$
Then 
\begin{enumerate}
    \item $\sigma_t(\F_{\a})=\F_{\a}$ for all $t$, for all connected components $\F_{\a}$ of $\F=\mathrm{Fix}(\Fi)$,
    \item $\sigma$ is a Hamiltonian $S^1$-action on the connected closed symplectic submanifold $\F_{\a}\subset Y$,
    \item  
    $\sigma^*: \{$constant $\Fi$-orbits in $\F_{\a}\}\to \{$possibly non-constant 1-orbits for $\sigma^{-1}$ in $\F_{\a}\}$ is surjective,
    \item $ \F_{\a}\cap \mathrm{Fix}(\sigma) = \sqcup ($connected components of $\mathrm{Crit}(K|_{\F_{\a}}))$, one of which is $\mathrm{Min}\, K|_{\F_{\a}}$,
    \item in particular $ \F_{\a}\cap \mathrm{Fix}(\sigma)\neq \emptyset$, and these are the constant orbits arising on the right in (3).
\end{enumerate}
\end{lm}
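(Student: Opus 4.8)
The plan is to prove the five assertions of \cref{Lemma sigma permutes fixed components Falpha} essentially by exploiting the commutativity of $\sigma$ and $\Fi$ and the properness/boundedness of the moment map $H$. I would work entirely with the $S^1$-parts of the two actions, which by hypothesis commute, so that $\Fi_s\circ\sigma_t=\sigma_t\circ\Fi_s$ for all $s,t\in S^1$.

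First I would prove (1). If $p\in\F_{\a}$, then $\Fi_s(\sigma_t(p))=\sigma_t(\Fi_s(p))=\sigma_t(p)$ for all $s$, so $\sigma_t(p)\in\F=\mathrm{Fix}(\Fi)$. Since $t\mapsto\sigma_t(p)$ is a continuous path in $\F$ starting at $p\in\F_{\a}$, and the $\F_{\a}$ are the connected components of $\F$, the whole path stays in $\F_{\a}$; hence $\sigma_t(\F_{\a})\subset\F_{\a}$, and applying this to $\sigma_{-t}$ gives equality. For (2): by \cite{RZ1}, each $\F_{\a}$ is a closed connected symplectic submanifold of $Y$ (components of the fixed locus of a Hamiltonian circle action on a symplectic manifold are symplectic, and properness of $H$ gives closedness). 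By (1), $\sigma$ restricts to an $S^1$-action on $\F_{\a}$, and it is Hamiltonian there with moment map $K|_{\F_{\a}}$ (restricting a moment map to an invariant symplectic submanifold gives a moment map for the restricted action); note also \eqref{Equation Hams preserved by action}, $K\circ\Fi_s=K$, so this restriction is consistent. For (4): the fixed locus of a Hamiltonian $S^1$-action on a closed symplectic manifold equals the critical set of its moment map, so $\F_{\a}\cap\mathrm{Fix}(\sigma)=\mathrm{Fix}(\sigma|_{\F_{\a}})=\mathrm{Crit}(K|_{\F_{\a}})$, a disjoint union of connected closed submanifolds (the critical manifolds of a moment map are Morse--Bott nondegenerate), one of which is the minimal locus $\mathrm{Min}\,K|_{\F_{\a}}$, which is connected since for a moment map on a closed connected symplectic manifold the extremal level sets are connected (indeed $K|_{\F_{\a}}$ is a perfect Morse--Bott function and its minimum locus is connected). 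Assertion (5) is then immediate: $\F_{\a}$ is closed (compact) so $K|_{\F_{\a}}$ attains its minimum, hence $\mathrm{Min}\,K|_{\F_{\a}}\neq\emptyset$, and its points are exactly the $\sigma$-fixed points in $\F_{\a}$, which are constant $1$-orbits for $\sigma^{-1}$.

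It remains to prove (3), which I expect to be the only mildly subtle point. Let $p\in\F_{\a}$, so $p$ is a constant $\Fi$-orbit. Consider $\sigma^*(c_p)$ where $c_p$ is the constant loop at $p$: by \eqref{Equation free loop space action}, $(\sigma^*c_p)(t)=\sigma_{-t}(p)$, which by (1) is a loop lying entirely in $\F_{\a}$, and it is precisely a $1$-periodic orbit of the flow generated by the vector field of $\sigma^{-1}$, i.e.\ of $\sigma|_{\F_{\a}}^{-1}$. Conversely, any $1$-orbit $y$ of $\sigma^{-1}$ contained in $\F_{\a}$ is of the form $y(t)=\sigma_{-t}(y(0))$ with $y(0)\in\F_{\a}$ a constant $\Fi$-orbit, and then $y=\sigma^*(c_{y(0)})$; this shows surjectivity of the map in (3). (The only thing to check carefully is that the map $\sigma^*$ indeed lands in the loops contained in $\F_{\a}$ and that every $\sigma^{-1}$-orbit in $\F_{\a}$ arises this way, both of which follow from (1).) The main obstacle, such as it is, is making sure the standard facts invoked for (2) and (4)---that components of $\mathrm{Fix}$ of a Hamiltonian circle action are symplectic submanifolds, that $\mathrm{Fix}=\mathrm{Crit}(K)$, and that the minimal critical manifold is connected---are legitimately available in the non-compact symplectic $\C^*$-manifold setting; this is handled by restricting to the closed submanifold $\F_{\a}$ (which is genuinely compact by properness and boundedness-below of $H$, as recalled in the proof of \cref{Prop equivariant formality for QH}), so all compact-case results apply verbatim on $\F_{\a}$.
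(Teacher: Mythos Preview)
Your proof is correct and follows the same approach as the paper: the argument for (1) via commutativity and the connectedness trapping is identical, and the remaining items are deduced from standard facts about Hamiltonian $S^1$-actions on the compact symplectic manifold $\F_{\a}$. The paper's proof is considerably terser, explicitly writing out only (1) and the non-emptiness/connectedness of $\mathrm{Min}\,K|_{\F_{\a}}$ (citing Frankel for the latter), while leaving (2), (3), (4) implicit; your version simply fills in those details.
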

\begin{proof}
By \cite{RZ1}, $\F_\a\subset Y$ is a closed symplectic submanifold. Commutativity of $\sigma,\Fi$ implies that, $\sigma_t(\F_{\a})=\F_\a$ for each $t$: indeed, if $x\in \F_{\a}$, then $\sigma_t(x)=\sigma_t(\varphi_s(x))=\varphi_s(\sigma_t(x))$, so $\sigma_t(x)\in \mathrm{Fix}(\varphi)=\F$. Since $\sigma_0=\mathrm{id},$ it follows that $\sigma_t(x)$ is trapped in $\F_\a$. Thus $\sigma_t(\F_{\a})\subset \F_\a$. By considering $\sigma_{-t}$ we also obtain the reverse inclusion, so it is an equality.
Finally, $\mathrm{Min}\, K|_{\F_{\a}}$ is non-empty as $\F_{\a}$ is compact, and (as we assume $Y$ is connected) it is connected e.g.\,by \cite{Frankel59} (cf.\,\cite[Sec.1]{RZ1}).
\end{proof}

With $\sigma$ as in the Lemma, we get an action on the Borel model by $(w,x)\mapsto (w,\sigma^*x)$. This action does not in general preserve the relation \eqref{Equation Borel model identification}:
\begin{lm}
The following diagram commutes
$$
\xymatrix@C=50pt{
 (w,x)
\ar@{->}[r]^-{\sigma\textrm{-action}} 
\ar@{->}[d]_-{\theta\textrm{-action}}  
&
(w,\sigma^*x)
\ar@{->}^-{\theta\textrm{-action}}[d] 
\\
(\theta^{-1}\cdot w,\theta\cdot_{(a,b)}x)
\ar@{->}[r]^-{\sigma\textrm{-action}} 
& (\theta^{-1}w,\sigma^*(\theta\cdot_{(a,b)}x)) = (\theta^{-1}w,\theta\cdot_{(a-b,b)}\sigma^*x)
}
$$
Note that in the target instead of using the $\Fi^a$ action on $Y$, we are using the $\Fi^{a-b}$ action on $Y$.

Suppose either $a>b\geq 0$, or $a<b\leq 0$. Then the weight $(a,b)$ and the target weight $(a-b,b)$ are either both free, or both not free.
\end{lm}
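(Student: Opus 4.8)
The statement has two parts: a commutativity of actions (verifying the identity $\sigma^*(\theta\cdot_{(a,b)}x) = \theta\cdot_{(a-b,b)}\sigma^*x$ in the case $\sigma=\Fi$), and a freeness comparison for the weights $(a,b)$ and $(a-b,b)$. For the first part, I would simply unwind both sides using the definitions \eqref{Equation action on loops} and \eqref{Equation free loop space action}: on the left, $(\theta\cdot_{(a,b)}x)(t)=\Fi_{a\theta}(x(t-b\theta))$, so $\sigma^*(\theta\cdot_{(a,b)}x)(t)=\Fi_{-t}\Fi_{a\theta}(x(t-b\theta))=\Fi_{a\theta - t}(x(t-b\theta))$; on the right, $(\sigma^*x)(s)=\Fi_{-s}x(s)$, so $(\theta\cdot_{(a-b,b)}\sigma^*x)(t)=\Fi_{(a-b)\theta}(\sigma^*x)(t-b\theta)=\Fi_{(a-b)\theta}\Fi_{-(t-b\theta)}x(t-b\theta)=\Fi_{(a-b)\theta - t + b\theta}(x(t-b\theta))=\Fi_{a\theta - t}(x(t-b\theta))$, which matches. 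The $S^1$-part of $\Fi$ commutes with itself, so composing the flows in either order is unambiguous; this is where the commutativity hypothesis enters. The rest of the diagram (the $\theta$-action on $w\in S^{\infty}$, and the $\sigma$-action on $(w,x)$) is formal and just records the definitions of the two Borel-model actions.

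For the freeness comparison, recall from \cref{Definition free weight} that $(a,b)$ is free iff $ma\notin b\Z_{\geq 1}$ for every positive integer weight $m\geq 1$ of the $\Fi$-action, equivalently $aW^+\cap b\Z_{\geq 1}=\emptyset$ where $W^+$ is the set of strictly positive weights. The plan is to show that under the hypothesis ($a>b\geq 0$ or $a<b\leq 0$), the condition $ma\in b\Z_{\geq 1}$ holds for some $m\in W^+$ iff $m(a-b)\in b\Z_{\geq 1}$ holds for some $m\in W^+$. The key algebraic observation: for $m\geq 1$ and $b\neq 0$, $ma\in b\Z_{\geq 1}$ means $ma/b$ is a positive integer, and $m(a-b)/b = ma/b - m$, which is an integer iff $ma/b$ is an integer; and $ma/b - m \geq 1$ versus $ma/b\geq 1$ are both automatically positive-or-not depending on sign constraints. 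When $a>b\geq 0$: if $b=0$ then the freeness condition is vacuously true for both weights (and indeed $(a,0)$ is always free by \cref{Definition free weight}), so assume $b\geq 1$; then $a>b$ forces $ma/b > m$, so $ma/b\in\Z_{\geq 1}\Leftrightarrow ma/b - m = m(a-b)/b\in\Z$, and $m(a-b)/b = ma/b - m\geq 1$ as soon as $ma/b \geq m+1$, which holds since $ma/b$ is an integer strictly bigger than $m$. Conversely if $m(a-b)/b\in\Z_{\geq 1}$ then $ma/b = m(a-b)/b + m\in\Z_{\geq m+1}\subset\Z_{\geq 1}$. The case $a<b\leq 0$ is the mirror image: replace $(a,b)$ by $(-a,-b)$, which has $-a>-b\geq 0$ and the same freeness status (since $W^+$ and $\Z_{\geq 1}$ are unchanged and $(-a)W^+\cap(-b)\Z_{\geq 1} = -(aW^+\cap b\Z_{\geq 1})$), and note $(-(a-b), -b) = (-a-(-b),-b)$, reducing to the previous case. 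So $(a,b)$ free $\Leftrightarrow$ $(a-b,b)$ free.

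I expect the first part (the commutativity square) to be entirely routine — it is just chasing the definitions through, and the excerpt essentially already sets it up. The slightly more delicate point is making sure the sign/inequality bookkeeping in the freeness comparison is airtight across the $b=0$ edge case and the two sign regimes; the hypothesis "$a>b\geq 0$ or $a<b\leq 0$" is exactly what is needed to guarantee that the shift by $m$ (which is what distinguishes $m(a-b)/b$ from $ma/b$) does not flip an integer out of $\Z_{\geq 1}$ or into it, and I would make this precise by the observation that $a/b > 1$ (when $b>0$) forces $ma/b > m$, so the integrality of $ma/b$ is equivalent to that of $m(a-b)/b$ and in that case both are automatically $\geq 1$. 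The main (minor) obstacle is thus purely the case analysis on signs, which I would organize by reducing the $a<b\leq 0$ case to the $a>b\geq 0$ case via $(a,b)\mapsto(-a,-b)$, and handle $b=0$ separately by appealing directly to \cref{Definition free weight} which declares $(a,0)$ free for $a\neq 0$.
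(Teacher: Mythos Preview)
Your proposal is correct and follows the same approach as the paper. For the commutativity, both you and the paper simply unwind the definitions to verify $\Fi_{-t}\Fi_{a\theta}(x(t-b\theta)) = \Fi_{(a-b)\theta}\Fi_{-(t-b\theta)}(x(t-b\theta))$; for the freeness comparison, both arguments dispose of $b=0$ first and then use that $ma/b$ and $m(a-b)/b$ differ by the integer $m$, with the sign hypothesis ensuring the $\Z_{\geq 1}$ condition is preserved. The only organizational difference is that the paper treats both sign regimes at once by noting that either hypothesis gives $a/b>1$ (so $k=ma/b>m$ forces $k-m\geq 1$ directly), whereas you handle $a>b\geq 0$ explicitly and then reduce $a<b\leq 0$ to it via $(a,b)\mapsto(-a,-b)$; this is a cosmetic difference, not a substantive one.
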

\begin{proof}
This follows from 
$\sigma_t^{-1}\Fi_{a\theta}(x(t-b\theta))
=
\Fi_{(a-b)\theta}(\sigma_{t-b\theta}^{-1}x(t-b\theta))
$, using $\sigma_t = \Fi_t.$
The final claim is immediate if $b=0$, so assume $b\neq 0$.
If $(a,b)$ is not free, then $\tfrac{a}{b}=\tfrac{k}{m}$, some $k\geq 1$ and some weight $m\geq 1$ of $\Fi$, hence $\tfrac{a-b}{b}=\tfrac{k-m}{m}$, and $k-m>0$ (so $\geq 1$) by the assumptions on $a,b$, so $(a-b,b)$ is not free. If $(a-b,b)$ is not free, then $\tfrac{a-b}{b}=\tfrac{k'}{m}$, so $\tfrac{a}{b}=\tfrac{k'+m}{m}$, and $k'+m> k' \geq 1$, so $(a,b)$ is not free.
\end{proof}

Using the notation from \cite{liebenschutz2020intertwining} that we already used in \eqref{Equation equivariant Hamiltonians}, the key idea from \cite{Sei97} is that the pull-back data 
$
\sigma^*H_{w,t}^{\mathrm{eq}} := 
H_{w,t}^{\mathrm{eq}} \circ \sigma_t - K\circ \sigma_t$ and $\sigma^*I = d\sigma_t^{-1} \circ I \circ d\sigma_t,
$
relates the Floer action $1$-forms
$$
(dA_{H_{w,t}^{\mathrm{eq}}})|_x  = (dA_{\sigma^*H_{w,t}^{\mathrm{eq}}})|_{\sigma^*x}\circ d\sigma^*|_x: T_x\mathcal{L}Y \to \R.
$$
Thus there is a bijection of $1$-orbits, $\mathrm{Zeros}(dA_{H_{w,t}^{\mathrm{eq}}}) \to \mathrm{Zeros}(dA_{\sigma^*H_{w,t}^{\mathrm{eq}}})$, $x\mapsto \sigma^*x$. Similarly, the definition of $\sigma^*I$ ensures a bijection of Floer trajectories via $u\mapsto \sigma^*u$, with $(\sigma^*u)(s,t)=\sigma_{-t} u(s,t)$.

\begin{lm}
Let $\sigma$ be any Hamiltonian $S^1$-action $\sigma$ that commutes with $\Fi$.
Suppose we use the weight $b$ loop-action, and the action on $Y$ by $\sigma^*\Fi^a=\sigma^{-b}\circ \Fi^a$ (when $\sigma=\Fi$, this is $\Fi^{a-b}$).
Then the time-dependent Hamiltonian function $\sigma^*H^{\mathrm{eq}}_{w,t}: S^{\infty} \times Y \to \R$ is $S^1$-equivariant.
\end{lm}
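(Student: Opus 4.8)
The plan is to verify directly that the pull-back data $\sigma^*H^{\mathrm{eq}}_{w,t}$ satisfies the equivariance relation \eqref{Equation equivariant Hamiltonians} with respect to the new loop-action, namely the action on $Y$ by $\sigma^{-b}\circ\Fi^a$ together with the weight-$b$ loop reparametrisation. Concretely, one must show that for all $y\in Y$, $w\in S^\infty$, $\theta\in S^1$,
$$
(\sigma^*H^{\mathrm{eq}})_{w,t}(y) = (\sigma^*H^{\mathrm{eq}})_{\theta^{-1}\cdot w,\,t+b\theta}\bigl((\sigma^{-b\theta}\circ\Fi_{a\theta})(y)\bigr),
$$
unwinding the definition $(\sigma^*H^{\mathrm{eq}})_{w,t}(y)=H^{\mathrm{eq}}_{w,t}(\sigma_t(y))-K(\sigma_t(y))$. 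First I would substitute the definition on both sides and reduce to two separate identities: one for the $H^{\mathrm{eq}}_{w,t}\circ\sigma_t$ term, and one for the $K\circ\sigma_t$ term.

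For the first identity, I would use the already-assumed equivariance of $H^{\mathrm{eq}}_{w,t}$ for the original $\Fi^a$-action (equation \eqref{Equation equivariant Hamiltonians}, with $a$ as the weight), applied at the point $\sigma_t(y)$, together with the commutativity of $\sigma$ and $\Fi$ (so $\sigma_t\circ\Fi_{a\theta}=\Fi_{a\theta}\circ\sigma_t$ and $\sigma_{t+b\theta}=\sigma_t\circ\sigma_{b\theta}$). The key computation is
$$
\sigma_{t+b\theta}\bigl((\sigma^{-b\theta}\Fi_{a\theta})(y)\bigr)
= \sigma_t\sigma_{b\theta}\sigma_{-b\theta}\Fi_{a\theta}(y)
= \sigma_t\Fi_{a\theta}(y)
= \Fi_{a\theta}(\sigma_t(y)),
$$
so that $H^{\mathrm{eq}}_{\theta^{-1}w,\,t+b\theta}$ evaluated at $\sigma_{t+b\theta}((\sigma^{-b\theta}\Fi_{a\theta})(y))$ equals $H^{\mathrm{eq}}_{\theta^{-1}w,\,t+b\theta}(\Fi_{a\theta}(\sigma_t(y)))$, which by \eqref{Equation equivariant Hamiltonians} is $H^{\mathrm{eq}}_{w,t}(\sigma_t(y))$, as desired. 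For the second identity, one uses that $K\circ\Fi_s=K$ (equation \eqref{Equation Hams preserved by action}, a consequence of the commutativity via $\omega(X_H,X_K)=0$), and $K\circ\sigma_s=K$ since $K$ is the moment map of $\sigma$: chaining these through the same point-rearrangement shows $K(\sigma_{t+b\theta}((\sigma^{-b\theta}\Fi_{a\theta})(y)))=K(\Fi_{a\theta}(\sigma_t(y)))=K(\sigma_t(y))$. Subtracting the two identities gives the claimed equivariance.

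The case $\sigma=\Fi$ is then the special case where the new action on $Y$ is $\Fi^{a-b}$, recovering the twist already explained. I would also note that the analogous computation for $\sigma^*I$ is immediate from the definition $\sigma^*I=d\sigma_t^{-1}\circ I\circ d\sigma_t$, since conjugating an equivariant almost complex structure by the commuting family $d\sigma_t$ preserves the compatibility with the new action.

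The main obstacle is not any single step — each is a short manipulation — but keeping the bookkeeping of the two simultaneous $S^1$-actions (the Borel-model action $w\mapsto\theta^{-1}\cdot w$, the $\Fi$-rotation $\Fi_{a\theta}$, and the extra $\sigma^{-b\theta}$ twist) completely consistent, so that the point at which each Hamiltonian is evaluated lines up exactly. In particular one must be careful that the sign conventions in $(\theta\cdot x)(t)=\Fi_{a\theta}(x(t-b\theta))$ (footnote to \eqref{Equation equivariant Hamiltonians}: $\theta\cdot t=t+b\theta$ corresponds to $\theta\cdot x=\Fi_{a\theta}\circ x\circ\theta^{-1}$) are applied identically when passing to $\sigma^{-b}\Fi^a$, and that the commutativity hypothesis is invoked each time the order of $\sigma_s$ and $\Fi_s$ is swapped. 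Once the point-rearrangement lemma $\sigma_{t+b\theta}\circ\sigma^{-b\theta}\circ\Fi_{a\theta}=\Fi_{a\theta}\circ\sigma_t$ is recorded, the rest follows formally.
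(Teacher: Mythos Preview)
Your proposal is correct and follows essentially the same route as the paper: both verify the equivariance identity by unwinding the definition of $\sigma^*H^{\mathrm{eq}}_{w,t}$, applying the key point-rearrangement $\sigma_{t+b\theta}\circ\sigma_{-b\theta}\circ\Fi_{a\theta}=\Fi_{a\theta}\circ\sigma_t$ from commutativity, and then invoking \eqref{Equation equivariant Hamiltonians} for $H^{\mathrm{eq}}$ and \eqref{Equation Hams preserved by action} for $K$. The paper's proof is a four-line display doing exactly this computation; your write-up is just a more verbose narration of the same manipulation.
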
 
\begin{proof} By definition of all $\theta$-actions in play, using commutativity of $\sigma,\Fi$, \eqref{Equation equivariant Hamiltonians} and  \eqref{Equation Hams preserved by action}:
   $$
   \begin{array}{rcl}
   \sigma^*H^{\mathrm{eq}}_{\theta^{-1}\cdot w,t+b\theta} \circ (\sigma^*\Fi^a)|_{\theta} & = & H^{\mathrm{eq}}_{\theta^{-1}\cdot w,t+b\theta} \circ \sigma_{t+b\theta} \circ \sigma_{-b\theta}\circ\Fi_{a\theta} - K \circ \sigma_{t+b\theta} \circ 
   \sigma_{-b\theta}\circ\Fi_{a\theta} \\
   &=&  
   H^{\mathrm{eq}}_{\theta^{-1}\cdot w,t+b\theta} \circ \Fi_{a\theta} \circ \sigma_{t}  - K \circ  
   \Fi_{a\theta} \circ \sigma_{t} \\
   & = & H^{\mathrm{eq}}_{w,t} \circ \sigma_{t} - K \circ \sigma_{t}\\
   &=& \sigma^*H^{\mathrm{eq}}_{w,t}.\qedhere
   \end{array}
   $$
\end{proof}

\subsection{Further discussions of cappings.}
\label{Remark A discussion of cappings}
\label{Remark how to compute tildesigma}
We now continue the discussion from \cref{Remark capping discussion Comparison with the literature}. 
The cover \eqref{Equation cover of loopspace} consists of all classes of cappings $\widetilde{x}$, with deck group action given by gluing in spherical classes from $G$.
Since \eqref{Equation cover of loopspace} is a covering map, a ($G$-equivariant) lift 
$\widehat{\sigma}:\widetilde{\mathcal{L}_0Y}\to \widetilde{\mathcal{L}_0Y}$ of the action \eqref{Equation free loop space action} is uniquely determined by where it sends the constant cap $c_{x_{\min}}$ at the $x_{\min}$ from \eqref{Equation free loop space action}. So it is determined by a choice of capping $\widehat{\sigma}(c_{x_{\min}})=\widetilde{x_{\min}}$ of $x_{\min}$.
Explicitly, if $\widetilde{\nu}:[0,1]\to \widetilde{\mathcal{L}_0Y}$ is a path starting at $\widetilde{\nu}(0)=\widetilde{x_{\min}}$, then $\widehat{\sigma}(\widetilde{\nu}(s))=\widetilde{\nu}(s)$ for all $s$.
The {\bf canonical lift} $\widetilde{\sigma}$ is defined by requiring that it fixes the constant cap $c_{x_{\min}}$.
Since $\widetilde{x_{\min}}=gc_{x_{\min}}$ for some $g\in G$, we see that the lifts are classified as the $G$-images of the canonical lift: $\widehat{\sigma}=g\widetilde{\sigma}$ for $g\in G.$

\begin{center}
\input{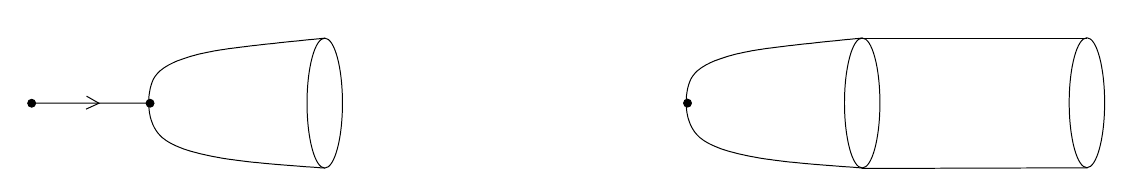_t}
\end{center}

\begin{lm}\label{Lemma capping lemma 1}
Let $\widetilde{x}:\mathbb{D}\to Y$ be a capping of any loop $x$ in $Y$. 
Let $\gamma:[0,1]\to Y$ be a path from $x_{\min}$ to the centre of the cap, $z:=\widetilde{x}(0)$.
Let $\sigma^*\gamma$ be the capping of the loop $\sigma^*z=\sigma_{-t}z$ given by $\sigma_{-t}\gamma(s).$
Let $\sigma^*\widetilde{x} \in\widetilde{\mathcal{L}_0Y}$ be the obvious cylinder $\sigma_{-t}\widetilde{x}(re^{2\pi i t})$ joining $\sigma^*z$ and $\sigma^*x$.
Then the canonical cap satisfies
$$
\widetilde{\sigma}(\widetilde{x})=
(\sigma^*\gamma)\# \sigma^* \widetilde{x}.
$$
Also, $\sigma^*\gamma$ depends on $z$,$\sigma$, not on $x_{\min},\gamma$.
If $\widetilde{x}=c_x$ is the constant disc, then $\widetilde{\sigma}(c_x)=\sigma^*\gamma.$

If $x\in \mathrm{Fix}(\sigma)$, then $\widetilde{\sigma}(c_x)=A^{\sigma}_x\, c_x$ where $A^{\sigma}_x=\sigma^*\gamma\in G$ is a spherical class dependent only on $\sigma,x$.
\end{lm}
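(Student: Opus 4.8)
The statement to prove is \cref{Lemma capping lemma 1}, which asserts a concrete formula for the canonical lift $\widetilde{\sigma}$ applied to an arbitrary capping, together with several easy consequences. The plan is to reduce everything to the defining property \eqref{Definition of canonical lift in terms of caps}, namely $\widetilde{\sigma}(c_{x_{\min}})=c_{x_{\min}}$, plus the fact that a lift of a covering-space map is determined by its value at one point and is equivariant for the deck transformation group $G$.

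First I would check well-definedness of the objects in the statement: $\sigma^*\gamma(s,t):=\sigma_{-t}\gamma(s)$ is a genuine capping of $\sigma^*z$ because $\gamma(0)=x_{\min}\in\mathrm{Fix}(\sigma)$ (by \eqref{Equation xmin definition} and \cref{Lemma sigma permutes fixed components Falpha}), so $\sigma_{-t}\gamma(0)=x_{\min}$ is the constant loop, i.e.\;$\sigma^*\gamma$ contracts $\sigma^*z$ down to the constant cap $c_{x_{\min}}$; hence $(\sigma^*\gamma)\#\sigma^*\widetilde{x}$ is a legitimate element of $\widetilde{\mathcal{L}_0 Y}$ joining $c_{x_{\min}}$-data to $\sigma^*x$. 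Next, the core argument: consider the path $s\mapsto \widetilde{\nu}(s)$ in $\widetilde{\mathcal{L}_0Y}$ obtained by first sliding along $\gamma$ (so $\widetilde{\nu}(0)=c_{x_{\min}}$, ending at the constant cap $c_z$ composed with $\gamma$, i.e.\;at $\gamma$ viewed as a capping of $z$) and then sliding the radial parameter of $\widetilde{x}$ from $0$ to $1$, arriving at $\gamma\#\widetilde{x}$ as a capping of $x$. Apply $\sigma^*$ pointwise to this path: $\sigma^*$ descends from the action \eqref{Equation free loop space action} on $\mathcal{L}_0Y$, and its lift $\widetilde{\sigma}$ maps the lifted path $\widetilde{\nu}$ to a lifted path of $\sigma^*\circ(\text{the downstairs path})$. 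Since $\widetilde{\sigma}$ fixes the starting point $c_{x_{\min}}=\widetilde{\nu}(0)$ by definition, the lift of the $\sigma^*$-image path starting at $c_{x_{\min}}$ must be exactly $s\mapsto \sigma^*(\widetilde{\nu}(s))$, whose endpoint is $\sigma^*(\gamma\#\widetilde{x})=(\sigma^*\gamma)\#(\sigma^*\widetilde{x})$. By uniqueness of lifts along paths, $\widetilde{\sigma}(\gamma\#\widetilde{x})=(\sigma^*\gamma)\#(\sigma^*\widetilde{x})$; but $\gamma\#\widetilde{x}$ represents the same class in $\widetilde{\mathcal{L}_0Y}$ as $\widetilde{x}$ after absorbing $\gamma$ into the radial direction (they differ only in how we present the same homotopy-with-cap), giving the claimed formula. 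The assertion that $\sigma^*\gamma$ depends only on $z,\sigma$ and not on $x_{\min},\gamma$ follows because any two choices of $(x_{\min},\gamma)$ differ by a loop in $\mathcal{L}_0Y$-based homotopies lying in $\mathrm{Min}\,K|_{\F_{\min}}$, which is path-connected (\cref{Lemma sigma permutes fixed components Falpha}(4) together with Frankel's theorem cited there), so the resulting caps of $\sigma^*z$ differ by a spherical class that is seen to be trivial; alternatively one reuses the $G$-equivariance of $\widetilde{\sigma}$ and the independence statement in \eqref{Definition of canonical lift in terms of caps}.

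The remaining consequences are immediate corollaries: taking $\widetilde{x}=c_x$ the constant disc, the cylinder $\sigma^*\widetilde{x}$ is degenerate, so $\widetilde{\sigma}(c_x)=\sigma^*\gamma$. If moreover $x\in\mathrm{Fix}(\sigma)$, then $\sigma^*x=x$ as a loop, so $\widetilde{\sigma}(c_x)$ is another capping of the same loop $x$, hence $\widetilde{\sigma}(c_x)=A^{\sigma}_x\,c_x$ for a unique $A^{\sigma}_x\in G$; the formula $\widetilde{\sigma}(c_x)=\sigma^*\gamma$ exhibits that class, and the previous paragraph shows it depends only on $\sigma,x$ (not on $x_{\min}$ or $\gamma$).

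\textbf{Main obstacle.} The genuinely delicate point is the independence of $\sigma^*\gamma$ (hence of $A^{\sigma}_x$) from the auxiliary choices, because it is exactly the place where one must invoke path-connectedness of $\mathrm{Min}\,K|_{\F_{\min}}$ and the deck-group bookkeeping; the rest is a formal manipulation of lifts along paths in a covering space. One must be a little careful that when $(x_{\min},\gamma)$ is replaced by $(x'_{\min},\gamma')$ the change in $\widetilde{\sigma}$ coming from $\eqref{Definition of canonical lift in terms of caps}$ being imposed at a different basepoint is compensated exactly by the change in $\gamma$, so that the endpoint formula is unchanged — this is where the explicit statement "$\sigma^*\gamma$ depends on $z,\sigma$, not on $x_{\min},\gamma$" does real work and should be proved before, or in tandem with, the main formula.
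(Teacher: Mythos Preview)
Your approach is essentially the same as the paper's: both construct a path in $\widetilde{\mathcal{L}_0Y}$ from $c_{x_{\min}}$ to $\gamma\#\widetilde{x}$, apply $\sigma^*$, and use uniqueness of lifts together with the defining property $\widetilde{\sigma}(c_{x_{\min}})=c_{x_{\min}}$; the paper first proves $\widetilde{x}=\gamma\#\widetilde{x}$ in $\widetilde{\mathcal{L}_0Y}$ by the same ``shrink $\gamma$'' covering-space trick you indicate.

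The one place your proposal diverges is the independence claim, which you flag as the main obstacle and propose to handle directly (comparing two choices $(x_{\min},\gamma)$ and $(x'_{\min},\gamma')$). The paper instead argues \emph{a posteriori}: once the formula $\widetilde{\sigma}(\widetilde{x})=(\sigma^*\gamma)\#\sigma^*\widetilde{x}$ is established, the left-hand side and $\sigma^*\widetilde{x}$ are both manifestly independent of $\gamma$, so the class $\sigma^*\gamma\in\widetilde{\mathcal{L}_0Y}$ must be as well. Independence from $x_{\min}$ then reduces to path-connectedness of $\mathrm{Min}\,K|_{\F_{\min}}$ (which also ensures $\widetilde{\sigma}$ itself is well-defined, as noted just before the Lemma). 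This a posteriori argument is shorter and avoids the bookkeeping you anticipate; you may want to adopt it.
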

\begin{proof}
We first prove that $\widetilde{x} = \gamma \# \widetilde{x}$ in $\widetilde{\mathcal{L}_0Y}$.
The path $s_0 \mapsto (\gamma)|_{[s_0,1]} \# \widetilde{x}$ in $\widetilde{\mathcal{L}_0Y}$, from 
$\gamma \# \widetilde{x}$ to $\widetilde{x}$,
consists of cappings of $x$.
It must be a constant path because its projection via the covering map \eqref{Equation cover of loopspace} is a constant path at $x$. Thus $\widetilde{x} = \gamma \# \widetilde{x} \in \widetilde{\mathcal{L}_0Y}$.

Concatenate the path $s_0\mapsto \sigma^*(\gamma|_{[0,s_0]})$ with the path $r_0 \mapsto \sigma_{-t}\widetilde{x}(re^{2\pi it})|_{r\in [0,r_0],t\in S^1}$ to get a path $\widetilde{\nu}:[0,1]\to \widetilde{\mathcal{L}_0Y}$ starting at $\sigma^*(\gamma|_{s=0})=c_{x_{\min}}$. By the previous discussion, $\widetilde{\sigma}(\widetilde{x})=\widetilde{\sigma}(\gamma\#\widetilde{x})=\widetilde{\nu}(1)$, and the equation in the claim follows. It follows, a posteriori, from the equation that $\sigma^*\gamma$ does not depend on $\gamma$. Independence from $x_{\min}$ holds because $\mathrm{Min}\, K|_{\F_{\min}}$ is connected by \cref{Lemma sigma permutes fixed components Falpha}.
\end{proof}

The following picture should be helpful while reading the subsequent discussion.
\begin{center}
\includegraphics[scale=0.20]{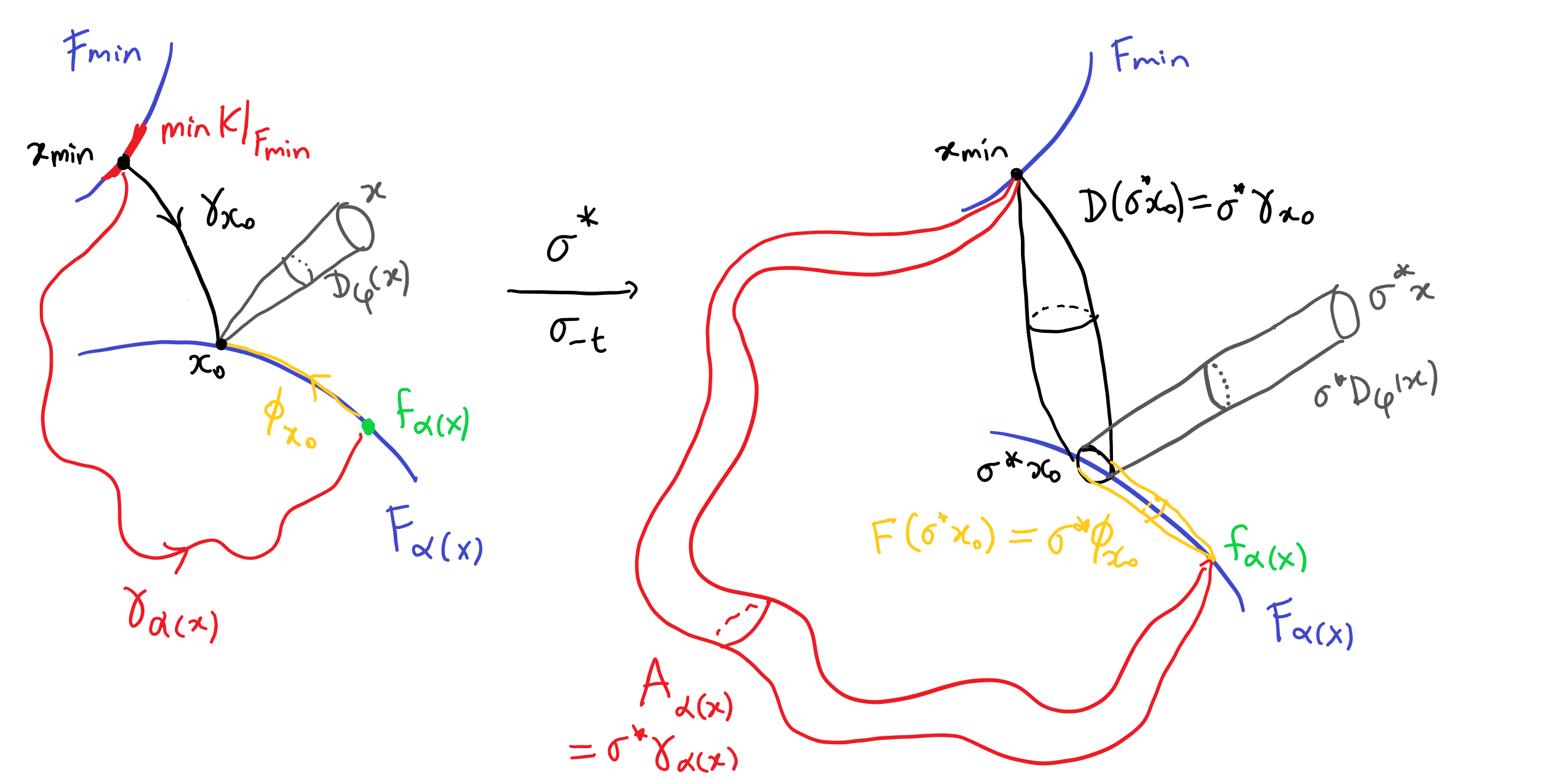}
\end{center}

Let $x$ be a $1$-orbit of $H_{\lambda}$, so a $1$-orbit of $\Fi_{pt}$ where $p=c'(H(x))$.
Following \cite[Sec.4.3]{RZ1}, the {\bf canonical capping} $D_{\Fi}(x)$ of $x:[0,1]\to Y$ is represented by the pseudo-holomorphic disc $\widetilde{x}_{\mathrm{can}}$ obtained by applying the $\R_{\leq 0}$-action; explicitly: we apply the $\C^*$-action by $\Fi$, 
\begin{equation}\label{Equation Cstar action cap}
\widetilde{x}_{\mathrm{can}}:\R_{\leq 0} \times [0,1] \to Y, \; (s,t)\mapsto e^{2\pi  p(s+it)}\cdot x(0),
\end{equation}
and we add in the {\bf convergence point} $x_0$ of $x$, which is the $\Fi$-fixed point obtained as $s\to -\infty$.
The convergence point $x_0$ lies in some component $\F_{\a}\subset \F=\mathrm{Fix}(\Fi)$. We denote this $\a$ by $\a(x)$.
Let $\gamma_{x_0}:[0,1]\to Y$ be a path from $x_{\min}$ to $x_0$. By \cref{Lemma capping lemma 1}, for 
$D(\sigma^*x_0):=\sigma^*\gamma_{x_0}$ and $\sigma^*D_{\Fi}(x)=\sigma^*\widetilde{x}_{\mathrm{can}}$: 

\begin{cor}\label{Corollary to Lemma capping lemma 1}
The cap $D(\sigma^*x_0)\in \widetilde{\mathcal{L}_0Y}$ only depends on $x_0$, $\sigma$, not on the choices $x_{\min},\gamma_{x_0}$, and
$$
\widetilde{\sigma}(D_{\Fi}(x)) =
D(\sigma^*x_0)\# \sigma^*D_{\Fi}(x).
$$
\end{cor}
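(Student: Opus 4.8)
The statement is Corollary~\ref{Corollary to Lemma capping lemma 1}, and the plan is to deduce it directly from \cref{Lemma capping lemma 1} applied to the canonical capping disc $\widetilde{x}_{\mathrm{can}} = D_{\Fi}(x)$ of the $1$-orbit $x$. First I would recall the setup: the canonical cap is the pseudo-holomorphic disc \eqref{Equation Cstar action cap} obtained by flowing $x$ under the $\R_{\leq 0}$-part of the $\C^*$-action $\Fi$, and adding in the convergence point $x_0 \in \F_{\a(x)}$ as the limit $s\to -\infty$. So $\widetilde{x}_{\mathrm{can}}$ is precisely a capping of $x$ whose ``centre'' $z := \widetilde{x}_{\mathrm{can}}(0)$ equals the convergence point $x_0$. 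Thus the hypothesis of \cref{Lemma capping lemma 1} is met with $z = x_0$.

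Next I would choose a path $\gamma_{x_0} : [0,1] \to Y$ from $x_{\min}$ to $x_0$ — this exists because $Y$ is connected — and set $D(\sigma^*x_0) := \sigma^*\gamma_{x_0}$, the capping of the loop $\sigma^*x_0 = \sigma_{-t}(x_0)$ given by $s \mapsto \sigma_{-t}(\gamma_{x_0}(s))$. By \cref{Lemma capping lemma 1}, applied with $\widetilde{x} = \widetilde{x}_{\mathrm{can}}$, $z = x_0$, $\gamma = \gamma_{x_0}$, we get $\widetilde{\sigma}(D_{\Fi}(x)) = (\sigma^*\gamma_{x_0}) \# (\sigma^* \widetilde{x}_{\mathrm{can}}) = D(\sigma^*x_0) \# \sigma^*D_{\Fi}(x)$, which is exactly the asserted identity. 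For the independence claim, \cref{Lemma capping lemma 1} already states that $\sigma^*\gamma$ depends only on $z = x_0$ and $\sigma$, not on the choice of $x_{\min}$ or $\gamma$; the independence from $x_{\min}$ uses that $\mathrm{Min}\,K|_{\F_{\min}}$ is path-connected, which is supplied by \cref{Lemma sigma permutes fixed components Falpha}. So the independence statement for $D(\sigma^*x_0)$ transfers verbatim.

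The only genuine content to check is that $\widetilde{x}_{\mathrm{can}}$, as constructed in \eqref{Equation Cstar action cap} together with its convergence point, is a bona fide element of the covering space $\widetilde{\mathcal{L}_0 Y}$ in the sense required by \cref{Lemma capping lemma 1} — i.e.\ that it really is a disc bounding $x$ with a well-defined centre — and that the loop $x$ is contractible so that it lies in $\mathcal{L}_0 Y$; but this is exactly the content of \cite[Sec.4.3]{RZ1}, which I would cite. I expect the main (minor) obstacle to be purely bookkeeping: matching the parametrisation conventions of \eqref{Equation Cstar action cap} (a half-cylinder $\R_{\leq 0}\times[0,1]$ compactified by adding $x_0$) with the disc-with-marked-centre picture in \cref{Lemma capping lemma 1}, so that ``the centre $z$ of the cap'' is unambiguously identified with the convergence point $x_0$. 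Once that identification is made explicit, the corollary is an immediate specialisation of \cref{Lemma capping lemma 1}, and no further computation is needed.
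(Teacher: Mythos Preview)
Your proposal is correct and matches the paper's approach: the corollary is stated immediately after \cref{Lemma capping lemma 1} as a direct specialisation with $\widetilde{x}=D_{\Fi}(x)$, $z=x_0$, and $\gamma=\gamma_{x_0}$, and the paper gives no further proof beyond the sentence introducing these identifications. Your remarks about the bookkeeping (identifying the compactified half-cylinder with a disc centred at $x_0$) are the only content not made explicit in the paper, and they are straightforward.
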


\begin{rmk}
    If the $S^1$-action $\sigma$ commutes with the $\C^*$-action $\Fi$ (e.g.\,for commuting $\C^*$-actions $\sigma,\Fi$), then $\sigma^*D_{\Fi}(x)$ is the capping of $\sigma^*x$ obtained by applying the $\R_{\leq 0}$-action by $\Fi$ analogous to \eqref{Equation Cstar action cap}. 
\end{rmk}

By \cref{Lemma sigma permutes fixed components Falpha}, $\mathrm{Min}\, K|_{\F_{\a}}\subset \F_{\a}\cap \mathrm{Fix}(\sigma)$ is connected. 
Pick any $f_{\a}\in \mathrm{Min}\, K|_{\F_{\a}}$.
Pick a path $\gamma_{\a}:[0,1]\to Y$ from $x_{\min}$ to $f_{\a}$. By \cref{Lemma capping lemma 1} we get a spherical class, 
$$
A_{\a}:=A^{\sigma}_{f_\a}=\sigma^*\gamma_{\a} \in G.
$$
Pick a path $\phi_{x_0}:[0,1]\to \F_{\a(x)}$ from $f_{\a(x)}$ to $x_0$, to get a cap $F(\sigma^*x_0):=\sigma^*\phi_{x_0}\subset \F_{\a(x)}$ for $\sigma^*x_0$. 
It lies in $\F_{\a(x)}$, by \cref{Lemma sigma permutes fixed components Falpha}.(1).

\begin{cor}\label{Cor another formula for spherical class}
The spherical class $A_{\a}\in G$ only depends on $\a$, $\sigma$, not on the choices $x_{\min},f_{\a},\gamma_{\a}$.
The cap $F(\sigma^*x_0)\in \widetilde{\mathcal{L}_0Y}$ only depends on $x_0$, $\sigma$, not on the choices $f_{\a(x)},\phi_{\a(x)}$, and
\begin{equation}\label{Equation capping lemma 2}
\widetilde{\sigma}(D_{\Fi}(x)) =
A_{\a(x)}\# F(\sigma^*x_0) \# \sigma^*D_{\Fi}(x).
\end{equation}
\end{cor}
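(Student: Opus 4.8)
The plan is to deduce \eqref{Equation capping lemma 2} by combining \cref{Corollary to Lemma capping lemma 1} with a suitable ``concatenation'' identity in the cover $\widetilde{\mathcal{L}_0Y}$. First I would apply \cref{Corollary to Lemma capping lemma 1} to get $\widetilde{\sigma}(D_{\Fi}(x)) = D(\sigma^*x_0)\# \sigma^*D_{\Fi}(x)$, where $D(\sigma^*x_0) = \sigma^*\gamma_{x_0}$ and $\gamma_{x_0}$ is any path from $x_{\min}$ to $x_0$. The key point is then to choose $\gamma_{x_0}$ cleverly: take $\gamma_{x_0}$ to be the concatenation of a path $\gamma_{\a}$ from $x_{\min}$ to $f_{\a(x)}$ with a path $\phi_{x_0}$ from $f_{\a(x)}$ to $x_0$ lying inside $\F_{\a(x)}$. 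Since the capping $\sigma^*\gamma_{x_0}$ of $\sigma^*x_0$ is insensitive to the choice of path (by \cref{Corollary to Lemma capping lemma 1}), this is a legitimate choice, and functoriality of the ``$\sigma^*(\cdot)$'' construction for concatenations of paths gives $\sigma^*\gamma_{x_0} = (\sigma^*\gamma_{\a})\# (\sigma^*\phi_{x_0}) = A_{\a(x)} \# F(\sigma^*x_0)$, where the first equality is the concatenation property and the second is just the definitions of $A_{\a(x)}$ and $F(\sigma^*x_0)$.

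Next I would verify the well-definedness (independence of choices) claims. That $A_{\a}$ depends only on $\a,\sigma$ follows from \cref{Lemma capping lemma 1} (which already states $\sigma^*\gamma$ is independent of $\gamma$) together with the connectedness of $\mathrm{Min}\,K|_{\F_{\a}}$ established in \cref{Lemma sigma permutes fixed components Falpha}, which makes the choice of $f_{\a}\in \mathrm{Min}\,K|_{\F_{\a}}$ irrelevant; independence from $x_{\min}$ is again connectedness of $\mathrm{Min}\,K|_{\F_{\min}}$. That $F(\sigma^*x_0)$ depends only on $x_0,\sigma$ follows because any two choices of $\phi_{x_0}$ (inside $\F_{\a(x)}$) differ by a loop in $\F_{\a(x)}$ based at $x_0$, and $\sigma^*(\cdot)$ of such a loop, concatenated appropriately, projects to a constant path under the covering map \eqref{Equation cover of loopspace}, so it changes nothing in $\widetilde{\mathcal{L}_0Y}$; combined with the already-proven independence from $f_{\a(x)}$ (which is absorbed into $A_{\a(x)}$) this gives the claim. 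Here I should be slightly careful that the splitting $D(\sigma^*x_0) = A_{\a(x)}\#F(\sigma^*x_0)$ is a genuine identity in $\widetilde{\mathcal{L}_0Y}$ and not merely up to the deck action: since both sides are cappings of the same loop $\sigma^*x_0$, and the concatenation construction produces a definite element of the cover once the paths are fixed, the identity is exact.

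Substituting $D(\sigma^*x_0) = A_{\a(x)}\# F(\sigma^*x_0)$ into the formula from \cref{Corollary to Lemma capping lemma 1} then yields \eqref{Equation capping lemma 2} directly. The main obstacle I anticipate is purely bookkeeping: making sure the concatenation operation $\#$ on cappings/cylinders is associative and compatible with $\sigma^*(\cdot)$ in the precise sense needed (i.e.\ $\sigma^*$ of a concatenation of a path and a disc/cylinder equals the concatenation of the $\sigma^*$-images), and tracking that $A_{\a(x)}$, which a priori is a spherical class arising from a \emph{path} $\gamma_{\a}$ rather than a disc, genuinely lands in $G$ and can be commuted past $F(\sigma^*x_0)$ and $\sigma^*D_{\Fi}(x)$ as a deck transformation — this is where the abelian structure of $G$ and the $G$-equivariance of $\widetilde{\sigma}$ are used. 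None of these steps is deep; they are consequences of the covering space formalism set up in \cref{Remark A discussion of cappings} and \cref{Lemma capping lemma 1}, but they require writing the concatenations in the right order.
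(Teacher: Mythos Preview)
Your derivation of \eqref{Equation capping lemma 2} is correct and matches the paper's approach: both use the path-independence of $D(\sigma^*x_0)$ from \cref{Corollary to Lemma capping lemma 1} and the concatenation $\gamma_{x_0}=\gamma_{\a}\#\phi_{x_0}$ (the paper writes the inverse identity $A_{\a(x)}=D(\sigma^*x_0)\#(-F(\sigma^*x_0))$, obtained from $\gamma_{\a(x)}=\gamma_{x_0}\#(-\phi_{x_0})$, but this is the same manipulation). Your independence argument for $A_{\a}$ is also fine.

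The one genuine gap is in your argument that $F(\sigma^*x_0)$ is independent of $\phi_{x_0}$. You claim that $\sigma^*$ of the difference loop $\ell$ (based at $x_0$) ``projects to a constant path under the covering map'', but this is not true: the projection at parameter $s_0$ is the loop $t\mapsto\sigma_{-t}\ell(s_0)$, which varies with $s_0$ since $x_0$ is not in general a fixed point of $\sigma$. What you would need is that the resulting spherical class $\sigma^*\ell'\in G$ vanishes for a loop $\ell'$ based at the \emph{fixed point} $f_{\a(x)}$ (write $\phi'_{x_0}=\ell'\#\phi_{x_0}$ instead); this does follow from the last line of \cref{Lemma capping lemma 1}, but you have not invoked it. The paper bypasses this entirely: having established the identity $D(\sigma^*x_0)=A_{\a(x)}\#F(\sigma^*x_0)$ in $\widetilde{\mathcal{L}_0Y}$, and knowing that $D(\sigma^*x_0)$ depends only on $(x_0,\sigma)$ while $A_{\a(x)}\in G$ depends only on $(\a(x),\sigma)$, the independence of $F(\sigma^*x_0)$ from both $\phi_{x_0}$ and $f_{\a(x)}$ follows immediately by solving $F(\sigma^*x_0)=(-A_{\a(x)})\cdot D(\sigma^*x_0)$. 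You already have all the ingredients for this cleaner deduction in your third paragraph; use it instead of the direct loop argument.
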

\begin{proof}
By \cref{Lemma capping lemma 1},
$\widetilde{\sigma}(c_{f_{\a}})=A_{\a}c_{f_{\a}}$ and $A_{\a}$ only depends on $\a,\sigma$. As $A_{\a}$ does not depend on the choice of path, we can replace $\gamma_{\a(x)}$ by $\gamma_{x_0}\#-\phi_{x_0}$ to deduce $A_{\a(x)}=D(\sigma^*x_0)\# -F(\sigma^*x_0)$ in $G.$
By \cref{Lemma capping lemma 1}, 
$D(\sigma^*x_0)$ does not depend on the choice of path, so we can replace $\gamma_{x_0}$ by $\gamma_{x_0}\#-\phi_{x_0}\#\phi_{x_0}$ to deduce 
$D(\sigma^*x_0) = D(\sigma^*x_0)\# (-F(\sigma^*x_0)) \#F(\sigma^*x_0)\in \widetilde{\mathcal{L}_0Y}$.
The claim follows by \cref{Corollary to Lemma capping lemma 1}.
\end{proof}

\subsection{Beyond cappings.}
\label{Subsection Beyond cappings}
We continue the discussion in Remarks \ref{Remark change of cappings}-\ref{Remark passing between two Novikov rings}.
Let $c$ be a closed differential $2$-form representing $2c_1(Y)$.
Let $v:\mathbb{D}\to Y$ be a smooth disc $v:\mathbb{D}\to Y$.
Abbreviate 
$$\textstyle
\omega(v):=\int v^*\omega, \qquad c(v):=\int v^*c, \qquad 
\intercal(v):=S^{c(v)} T^{\omega(v)}.
$$
If $\widetilde{x}$ is a capping of a $1$-orbit $x$, we can declare that 
$\intercal(\widetilde{x})x$ (over our Novikov ring) correponds to the generator $\widetilde{x}$ over Seidel's Novikov ring.
In this convention, $x$ is a generator over our Novikov ring but it typically does not correspond to a capping (it corresponds to the constant cap $c_x$ when $x$ is constant).
Under this convention,
abbreviating $\widetilde{x}_{\mathrm{can}}:=D_{\Fi}(x)$ and $\widetilde{y}_{\mathrm{ind}}:=\widetilde{\sigma}(\widetilde{x}_{\mathrm{can}})$,
the map \eqref{Equation Seidel iso} becomes
$x \mapsto \intercal(\widetilde{x}_{\mathrm{can}})^{-1} \intercal\!(\widetilde{y}_{\mathrm{ind}})\, y$.
For example, \eqref{Equation change of cappings} would arise from the observation 
$\intercal(\widetilde{y}_{\mathrm{ind}})\, y = 
\intercal(g_y)
\intercal\!(\widetilde{y}_{\mathrm{new}})
\,y.
$
If we wish to apply the convention \eqref{Equation Seidel iso with Ham corrections}, then  \eqref{Equation Seidel iso} becomes
\begin{equation}\label{Equation capless seidel map}
x \mapsto 
T^{H_{\lambda}(x)-(\sigma^*H_{\lambda})(\sigma^*x)}\,
\intercal(\widetilde{x}_{\mathrm{can}})^{-1} \intercal\!(\widetilde{y}_{\mathrm{ind}})\, y.
\end{equation}
This formula seems complicated, but has the advantage of being completely independent of cappings. Another motivation comes from applications where $c_1(Y)=0$ or $c_1(Y)\in \R_{>0}[\omega]$: there we can omit the $S$-variable, and it is the messy \eqref{Equation capless seidel map} which will in fact yield the neatest expression, in 
\eqref{Equation Seidel iso neat formula}.
To obtain this, we first require formulas for the $[\omega]$-values of cappings. 

\begin{lm}\label{Lemma omega area formulas}
For any $1$-orbit $x$ of $H_{\lambda}$, arising at slope $p=c'(H(x)),$ we have:
$$
\begin{array}{rcl}
\omega(D_{\Fi}(x)) &=&
H_{\lambda}(x)-pH(\F_{\a(x)}),
\\[1.5mm]
\omega(\sigma^*D_{\Fi}(x)) &=& 
(\sigma^*H_{\lambda})(\sigma^*x)-
pH(\F_{\a(x)})+K(x_0),
\\[1.5mm]
\omega(D(\sigma^*x_0))
&=&
\min K|_{\F_{\min}}-K(x_0),
\\[1.5mm]
\omega(\widetilde{\sigma}(D_{\Fi}(x))) &=&
\min K|_{\F_{\min}}+(\sigma^*H_{\lambda})(\sigma^*x)-
pH(\F_{\a(x)}).
\end{array}
$$
Moreover, 
$
\omega(A_{\a(x)})=
\min K|_{\F_{\min}}
-
\min K|_{\F_{\a(x)}}
$ 
and\;
$
\omega(F(\sigma^*x_0))=
\min K|_{\F_{\a(x)}}
-
K(x_0)
$.
\end{lm}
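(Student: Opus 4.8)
The strategy is to compute each $[\omega]$-area by writing the relevant capping as a concatenation of explicitly described pieces and integrating $\omega$ over each piece separately, using the $\C^*$-action geometry from \cite{RZ1} together with the commutativity relations \eqref{Equation Hams preserved by action}. First I would establish the formula for $\omega(D_{\Fi}(x))$: by \eqref{Equation Cstar action cap} the canonical cap is the pseudo-holomorphic disc $(s,t)\mapsto e^{2\pi p(s+it)}\cdot x(0)$, capped off at $s=-\infty$ by the convergence point $x_0\in \F_{\a(x)}$. The key computation is the standard one (as in \cite[Sec.4.3]{RZ1}): the Floer action functional evaluated on $x$ with its canonical cap recovers $H_{\lambda}(x)$ on the one hand, and equals $-\omega(D_{\Fi}(x)) + \int_0^1 H_{\lambda}(x(t))\,dt$ up to the normalisation, where the $S^1$-orbit $x$ has $H_{\lambda}$ constant along it; combined with the fact that, since $\Fi$ acts holomorphically and contracts towards $\F_{\a(x)}$, the symplectic area of the disc $D_{\Fi}(x)$ equals $H_{\lambda}(x)-pH(\F_{\a(x)})$ (the slope $p=c'(H(x))$ being the coefficient that turns the linear Hamiltonian $pH$ near infinity into the correct geometric period). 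This first identity is really the base case; everything else is bookkeeping relative to it.

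For the remaining areas I would use the additivity of $\omega$ under concatenation of caps and the descriptions from \cref{Corollary to Lemma capping lemma 1} and \cref{Cor another formula for spherical class}. For $\omega(\sigma^*D_{\Fi}(x))$: the cylinder $\sigma^*D_{\Fi}(x)$ is $(s,t)\mapsto \sigma_{-t}\widetilde{x}_{\mathrm{can}}(re^{2\pi i t})$, and one computes that replacing $\widetilde{x}_{\mathrm{can}}$ by $\sigma^*\widetilde{x}_{\mathrm{can}}$ changes the area by the term $K(x_0) + (\sigma^*H_\lambda)(\sigma^*x) - H_\lambda(x)$, which follows by integrating $d(\iota_{X_K}\omega) = dK$ along the $\sigma$-direction and using $\sigma^*H_\lambda = H_\lambda\circ\sigma_t - K\circ\sigma_t$ together with \eqref{Equation Hams preserved by action} (so $K$ is $\Fi$-invariant and takes value $K(x_0)$ at the limit point). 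Then $\omega(D(\sigma^*x_0))$: by \cref{Lemma capping lemma 1}, $D(\sigma^*x_0)=\sigma^*\gamma_{x_0}$ with $\gamma_{x_0}$ a path from $x_{\min}$ to $x_0$, so its area is the area swept by the loops $\sigma_{-t}\gamma_{x_0}(s)$; by the same $\iota_{X_K}\omega = dK$ computation this equals $K(x_{\min})-K(x_0) = \min K|_{\F_{\min}} - K(x_0)$, using $x_{\min}\in\mathrm{Min}\,K|_{\F_{\min}}$. The formula for $\omega(\widetilde{\sigma}(D_{\Fi}(x)))$ is then immediate by adding $\omega(D(\sigma^*x_0)) + \omega(\sigma^*D_{\Fi}(x))$ via \cref{Corollary to Lemma capping lemma 1} and simplifying. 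Finally $\omega(A_{\a(x)})$ and $\omega(F(\sigma^*x_0))$: by \cref{Cor another formula for spherical class}, $A_\a = \sigma^*\gamma_\a$ with $\gamma_\a$ a path from $x_{\min}$ to $f_\a\in\mathrm{Min}\,K|_{\F_\a}$, so the same swept-area argument gives $\omega(A_{\a(x)}) = K(x_{\min}) - K(f_{\a(x)}) = \min K|_{\F_{\min}} - \min K|_{\F_{\a(x)}}$; and $F(\sigma^*x_0) = \sigma^*\phi_{x_0}$ with $\phi_{x_0}$ a path inside $\F_{\a(x)}$ from $f_{\a(x)}$ to $x_0$, giving $\omega(F(\sigma^*x_0)) = K(f_{\a(x)}) - K(x_0) = \min K|_{\F_{\a(x)}} - K(x_0)$. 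As a consistency check one verifies $\omega(A_{\a(x)}) + \omega(F(\sigma^*x_0)) + \omega(\sigma^*D_{\Fi}(x)) = \omega(\widetilde\sigma(D_{\Fi}(x)))$, matching \eqref{Equation capping lemma 2}.

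The main obstacle is the very first step: pinning down precisely the normalisation constants and sign conventions so that the identity $\omega(D_{\Fi}(x)) = H_\lambda(x) - pH(\F_{\a(x)})$ comes out exactly right, since it depends on the chosen conventions for the action functional, for the moment map $H$ (normalised with $\min H \geq 0$ as in \cite{RZ1}), and for the slope parametrisation $H_\lambda = c(H)$ with $c'(H(x)) = p$. Once that anchor is fixed, the other five formulas follow by the uniform ``integrate $dK$ along the $\sigma$-flow'' computation, which is genuinely routine given \eqref{Equation Hams preserved by action} and the connectedness of $\mathrm{Min}\,K|_{\F_\a}$ from \cref{Lemma sigma permutes fixed components Falpha}. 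I would also remark that all six quantities are manifestly independent of the auxiliary choices ($x_{\min}$, $f_\a$, the connecting paths) because they are expressed purely in terms of values of $H$, $K$ and $H_\lambda$ at canonically defined points, consistent with the independence statements already recorded in \cref{Corollary to Lemma capping lemma 1} and \cref{Cor another formula for spherical class}.
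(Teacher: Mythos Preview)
Your proposal is correct and follows essentially the same approach as the paper. The paper's proof is slightly more streamlined: rather than treating $\omega(D_{\Fi}(x))$ separately via the action functional and then computing $\omega(\sigma^*D_{\Fi}(x))$ as a \emph{difference} from the first, the paper establishes a single general formula upfront --- for any Hamiltonian flow $\psi_t$ with moment map $K_\psi$ and any curve $\gamma$ from $y_-$ to $y_+$, the swept cylinder $u(s,t)=\psi_{-t}\gamma(s)$ has $\omega(u)=K_\psi(y_-)-K_\psi(y_+)$ --- and then applies it uniformly to all four caps by choosing $(\psi,K_\psi,\gamma)$ appropriately: $(\Fi_{pt},-pH,\text{radial curve})$ for $D_\Fi(x)$, $(\sigma_{-t}\Fi_{pt},K-pH,\text{radial curve})$ for $\sigma^*D_\Fi(x)$, and $(\sigma_t,K,\gamma_{x_0})$ for $D(\sigma^*x_0)$. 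Your ``integrate $dK$ along the $\sigma$-direction'' is exactly this computation, just not abstracted out. The paper's uniform treatment also makes the first formula a direct consequence rather than needing the action-functional detour you sketch, and avoids any worry about normalisation conventions for $\mathcal{A}$.
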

\begin{proof}
Let $\psi_t$ be the Hamiltonian flow for some time-independent Hamiltonian $K_{\psi}:Y \to \R$.
   Let $u(s,t)=(\psi^*\gamma)(s,t):=\psi_{-t}\gamma(s),$ 
   for a smooth curve $\gamma$ in $Y$ from $y_-$ to $y_+$.
Then $\partial_t u = - X_{K_{\psi}}|_{\gamma(s)}$, so
$$\textstyle
\omega(u)
:= \int u^*\omega 
= \int \omega(\partial_s u, \partial_t u)\, ds\,dt
= -\int  dK_{\psi}(\partial_s u)\,ds \,dt
= -\int \partial_s (K_{\psi}\circ u)\,ds\,dt
= K_{\psi}(y_-)-K_{\psi}(y_+).
$$
The case $D_{\Fi}(x)$ from \eqref{Equation Cstar action cap} corresponds to $\psi_{-t} = \Fi_{pt}$. We take $K_{\psi}=-pH$, the curve $\gamma(s)=e^{2\pi ps}\cdot x(0)$ for $s\in (-\infty,0]$, with $y_-=x_0$ (the convergence point as $s\to -\infty$), and $y_+=x(0)$. Then $\omega(D_{\Fi}(x)) =
pH(x(0))-pH(x_0)$, and the claim follows (the first equation).

The case $\sigma^*D_{\Fi}(x)$ corresponds to taking $\psi_{-t} = \sigma_{-t} \Fi_{pt}$, so we take $K_{\psi}=K-pH$, and the same data $\gamma$, $y_-$, $y_+$ as above. Then $\omega(\sigma^*D_{\Fi}(x)) =(pH-K)(x(0))-(pH-K)(x_0)$, which yields the claim.

The case $\omega(D(\sigma^*x_0))$ uses $\psi_t=\sigma_t$, $K_{\psi}=K$, $\gamma=\gamma_{x_0}.$ So $\omega(D(\sigma^*x_0))
=
K(x_{\min})-K(x_0).$

By \cref{Corollary to Lemma capping lemma 1}, $\omega(\widetilde{\sigma}(D_{\Fi}(x))) =\omega(\sigma^*D_{\Fi}(x))+\omega(D(\sigma^*x_0))$. The last claim uses \cref{Cor another formula for spherical class}.
\end{proof}

\begin{cor}
In the convention \eqref{Equation capless seidel map}, but ignoring the formal $S$ variable, \eqref{Equation Seidel iso} becomes
\begin{equation}\label{Equation Seidel iso neat formula}
    x \mapsto 
    T^{\min K|_{\F_{\min}}}\,\sigma^*x.
    \end{equation}
In particular, \eqref{Equation Seidel iso 2} becomes 
$x\mapsto T^{H(\F_{\min})}\,\Fi^*x$.
\end{cor}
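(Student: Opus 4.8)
The statement asserts that, under the capping-free convention \eqref{Equation capless seidel map} with the $S$-variable suppressed, the Seidel isomorphism \eqref{Equation Seidel iso} simplifies to the clean formula \eqref{Equation Seidel iso neat formula}, namely $x \mapsto T^{\min K|_{\F_{\min}}}\,\sigma^*x$. The approach is to substitute the area formulas from \cref{Lemma omega area formulas} into the messy expression \eqref{Equation capless seidel map} and watch everything except $T^{\min K|_{\F_{\min}}}$ cancel. Concretely, ignoring the formal $S$-variable means $\intercal(v) = T^{\omega(v)}$, so the map \eqref{Equation capless seidel map} reads
$$
x \mapsto T^{\,H_{\lambda}(x)-(\sigma^*H_{\lambda})(\sigma^*x)}\;T^{-\omega(D_{\Fi}(x))}\;T^{\omega(\widetilde{\sigma}(D_{\Fi}(x)))}\,\sigma^*x,
$$
using $\widetilde{x}_{\mathrm{can}}=D_{\Fi}(x)$ and $\widetilde{y}_{\mathrm{ind}}=\widetilde{\sigma}(D_{\Fi}(x))$.

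\textbf{Key step.} Now I would plug in the two relevant lines of \cref{Lemma omega area formulas}: $\omega(D_{\Fi}(x)) = H_{\lambda}(x)-pH(\F_{\a(x)})$ and $\omega(\widetilde{\sigma}(D_{\Fi}(x))) = \min K|_{\F_{\min}}+(\sigma^*H_{\lambda})(\sigma^*x)-pH(\F_{\a(x)})$. The total exponent of $T$ then becomes
$$
\bigl(H_{\lambda}(x)-(\sigma^*H_{\lambda})(\sigma^*x)\bigr) - \bigl(H_{\lambda}(x)-pH(\F_{\a(x)})\bigr) + \bigl(\min K|_{\F_{\min}}+(\sigma^*H_{\lambda})(\sigma^*x)-pH(\F_{\a(x)})\bigr),
$$
and every term cancels pairwise except $\min K|_{\F_{\min}}$. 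This yields \eqref{Equation Seidel iso neat formula}. For the specialisation to \eqref{Equation Seidel iso 2}, one takes $\sigma = \Fi$, so $K = H$ is the moment map of $\Fi$ itself, and $\F_{\min}$ is by definition the minimum locus of $H$, whence $\min K|_{\F_{\min}} = \min H|_{\F_{\min}} = H(\F_{\min})$; thus the formula becomes $x \mapsto T^{H(\F_{\min})}\,\Fi^*x$, with $(\Fi^*x)(t)=\Fi_{-t}(x(t))$ as in the earlier statement.

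\textbf{The main obstacle.} There is essentially no hard computational step here — the proof is purely bookkeeping, substituting \cref{Lemma omega area formulas} into the definition. The one genuine subtlety, which must be checked rather than computed, is that the two conventions are being correctly matched: \eqref{Equation capless seidel map} is defined as \eqref{Equation Seidel iso} followed by the Hamiltonian-correction change of basis \eqref{Equation Seidel iso with Ham corrections} and the cap-to-Novikov dictionary $\widetilde{x}\leftrightarrow \intercal(\widetilde{x})x$ from \cref{Subsection Beyond cappings}. So I would first take a sentence to recall that $\widetilde{x}_{\mathrm{can}} \mapsto \widetilde{y}_{\mathrm{ind}} := \widetilde{\sigma}(\widetilde{x}_{\mathrm{can}})$ is the ``natural'' form of the Seidel map (the $\widetilde{x}_{\mathrm{can}}\mapsto \widetilde{\sigma}(\widetilde{x}_{\mathrm{can}})$ of \cref{Corollary to Lemma capping lemma 1}), so that \eqref{Equation capless seidel map} is exactly the displayed three-factor expression above; then the cancellation is immediate. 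It is also worth noting explicitly that the result is genuinely independent of all auxiliary choices ($x_{\min}$, the paths $\gamma$, $\phi$), which is guaranteed by the independence statements already proved in \cref{Lemma capping lemma 1}, \cref{Corollary to Lemma capping lemma 1}, and \cref{Cor another formula for spherical class}, together with the connectedness of $\mathrm{Min}\,K|_{\F_{\min}}$ from \cref{Lemma sigma permutes fixed components Falpha}.
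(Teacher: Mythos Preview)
Your proposal is correct and follows exactly the same route as the paper: substitute the area formulas from \cref{Lemma omega area formulas} into \eqref{Equation capless seidel map} (with $S$ suppressed, so $\intercal(v)=T^{\omega(v)}$) and observe that all terms cancel except $\min K|_{\F_{\min}}$, then specialise $\sigma=\Fi$ so $K=H$. Your write-up is more detailed than the paper's terse version, but the argument is identical.
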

\begin{proof}
    Omitting $S$,
    $\intercal(\widetilde{x}_{\mathrm{can}}) =
T^{H_{\lambda}(x)-pH(\F_{\a(x)})}$,
and
    $\intercal(\widetilde{y}_{\mathrm{ind}}) =
T^{\min K|_{\F_{\min}}+(\sigma^*H_{\lambda})(\sigma^*x)-
pH(\F_{\a(x)})}$, by \cref{Lemma omega area formulas}.
Then
$\intercal(\widetilde{x}_{\mathrm{can}})^{-1}\intercal(\widetilde{y}_{\mathrm{ind}}) =
T^{(\sigma^*H_{\lambda})(\sigma^*x)-H_{\lambda}(x)
    +\min K|_{\F_{\min}}}$.
Thus in
\eqref{Equation capless seidel map}, all $T$-factors clear except the $T^{\min K|_{\F_{\min}}}$ factor.
\end{proof}

\subsection{Three commuting actions and compositions.}
\label{Subsection Three commuting actions and compositions.}
Let $\sigma,\tau$ be commuting Hamiltonian $S^1$-actions, commuting with $\Fi$.
Let $K_{\sigma},K_{\tau}$ be their moment maps. Then $K_{\sigma\tau}:=K_{\sigma}+K_{\tau}$ is a moment map for $\sigma\tau$.
The restriction of these moment maps to $\F_{\min}$ may have different minimal loci in general.
What we used to denote $x_{\min},c_{x_{\min}}$ will now be denoted $x_{\sigma},c_{\sigma}$ to emphasize the dependence on $\sigma$.
By the same argument as in the proof of \cref{Lemma sigma permutes fixed components Falpha}, $\tau_t$ preserves
$M_{\sigma}:=\mathrm{Min}\, K_{\sigma}|_{\F_{\min}}$ for each $t$, and $\mathrm{Min}\, K_{\tau}|_{M_{\sigma}}\subset  \F_{\min}  \cap \mathrm{Fix}(\sigma)\cap \mathrm{Fix}(\tau) \subset \mathrm{Fix}(\sigma\tau)$ is a closed connected symplectic submanifold.
We pick:
$$
x_{\sigma}\in \mathrm{Min}\, K_{\tau}|_{M_{\sigma}}, 
\qquad
x_{\tau}\in \mathrm{Min}\, K_{\sigma}|_{M_{\tau}}, 
\qquad
x_{\sigma\tau}\in \mathrm{Min}\, K_{\sigma}|_{M_{\sigma\tau}}.
$$
All three points lie in $\F_{\min}$ and are fixed by $\sigma,\tau,\sigma\tau$.
Pick a path $\gamma^{\sigma}_{\sigma\tau}$ from $x_{\sigma}$ to $x_{\sigma\tau}$, and a path 
$\gamma^{\tau}_{\sigma\tau}$ from $x_{\tau}$ to $x_{\sigma\tau}$.
By \cref{Lemma capping lemma 1} we obtain the following spherical classes in $G$, independent of choices:
$$
A^{\sigma}_{\sigma\tau}:=\sigma^*\gamma^{\sigma}_{\sigma\tau},
\qquad 
A^{\tau}_{\sigma\tau}:=\tau^*\gamma^{\tau}_{\sigma\tau},
\qquad
A_{\sigma,\tau}:=A^{\sigma}_{\sigma\tau}+
A^{\tau}_{\sigma\tau}.
$$
\begin{lm}\label{Lemma sigma tau lemma}
The canonical lifts are related by the $G$-action as follows,$$
\widetilde{\sigma}\widetilde{\tau}=
A_{\sigma,\tau}\cdot  
\widetilde{\sigma\tau}.
$$
In particular, the maps from \eqref{Theorem Seidel iso} satisfy:
$$
\mathcal{S}_{\sigma}\mathcal{S}_{\tau}=
T^{[\omega](A_{\sigma,\tau})}
\mathcal{S}_{\sigma\tau}[a_{\sigma,\tau}],
$$
where we shift the image of the right map down in grading by $a_{\sigma,\tau}:=2c_1(Y)(A_{\sigma,\tau}).$

From the case $\tau=\sigma$ we deduce that, for any $N\in \N$,
$$
\widetilde{\sigma}^N=
\widetilde{\sigma^N}
\qquad\textrm{ and }\qquad
\mathcal{S}_{\sigma}^N
=
\mathcal{S}_{\sigma^N}.
$$
For $\tau=\sigma^{-1}$, we get $\widetilde{\sigma^{-1}} = A \widetilde{\sigma}^{-1}$ and $\mathcal{S}_{\sigma^{-1}}=T^{[\omega](A)} \mathcal{S}_{\sigma}^{-1}[a]$, where $A:=A^{\sigma}_{\sigma^{-1}}$, $a:=2c_1(Y)(A)$.
\end{lm}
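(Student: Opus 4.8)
The plan is to prove \cref{Lemma sigma tau lemma} by first establishing the statement about canonical lifts, $\widetilde{\sigma}\widetilde{\tau}=A_{\sigma,\tau}\cdot\widetilde{\sigma\tau}$, and then deducing the consequences for the Seidel maps via the capping conventions. The key observation is that both $\widetilde{\sigma}\widetilde{\tau}$ and $\widetilde{\sigma\tau}$ are $G$-equivariant lifts of the same self-map $(\sigma\tau)^*: \mathcal{L}_0Y \to \mathcal{L}_0Y$ of the free loopspace; indeed $(\sigma\tau)^*x = \sigma^*(\tau^*x)$ because $\sigma,\tau$ commute (so $(\sigma\tau)_{-t} = \sigma_{-t}\tau_{-t}$), and composing equivariant lifts of commuting actions gives an equivariant lift of the composite. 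Since $\widetilde{\mathcal{L}_0Y}\to \mathcal{L}_0Y$ is a covering map with deck group $G$, any two lifts of the same map differ by a unique deck transformation, so $\widetilde{\sigma}\widetilde{\tau} = g\cdot \widetilde{\sigma\tau}$ for some $g\in G$; it remains to identify $g = A_{\sigma,\tau}$ by evaluating both sides on the constant cap $c_{\sigma\tau}$ at $x_{\sigma\tau}$, which is the cap fixed by $\widetilde{\sigma\tau}$ by definition.

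First I would compute $\widetilde{\sigma}\widetilde{\tau}(c_{\sigma\tau})$. Since $x_{\sigma\tau}\in \mathrm{Fix}(\tau)$, \cref{Lemma capping lemma 1} (applied with $x = x_{\sigma\tau}$, noting $\widetilde{\tau}(c_{x}) = A^{\tau}_{x}c_x$ for a fixed point $x$ of $\tau$, where the relevant path runs from $x_{\tau}$ to $x_{\sigma\tau}$) gives $\widetilde{\tau}(c_{\sigma\tau}) = (\tau^*\gamma^{\tau}_{\sigma\tau})\,c_{\sigma\tau} = A^{\tau}_{\sigma\tau}\,c_{\sigma\tau}$. Then applying $\widetilde{\sigma}$ and using $G$-equivariance of $\widetilde{\sigma}$ together with $x_{\sigma\tau}\in \mathrm{Fix}(\sigma)$ and \cref{Lemma capping lemma 1} again (path from $x_{\sigma}$ to $x_{\sigma\tau}$), we get $\widetilde{\sigma}\widetilde{\tau}(c_{\sigma\tau}) = A^{\tau}_{\sigma\tau}\cdot\widetilde{\sigma}(c_{\sigma\tau}) = A^{\tau}_{\sigma\tau}\cdot A^{\sigma}_{\sigma\tau}\cdot c_{\sigma\tau} = A_{\sigma,\tau}\cdot c_{\sigma\tau}$, since $G$ is abelian and $A_{\sigma,\tau} = A^{\sigma}_{\sigma\tau} + A^{\tau}_{\sigma\tau}$. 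On the other hand $\widetilde{\sigma\tau}(c_{\sigma\tau}) = c_{\sigma\tau}$ by the defining property of the canonical lift, so $g = A_{\sigma,\tau}$ as claimed. A small technical point to check is that these computations are independent of the choices of $x_{\sigma},x_{\tau},x_{\sigma\tau}$ and the connecting paths; this follows from the connectedness of the minimal loci $\mathrm{Min}\,K_{\tau}|_{M_{\sigma}}$ etc.\ (established by the Frankel-type argument referenced after \cref{Lemma sigma permutes fixed components Falpha}) and the choice-independence statements already recorded in \cref{Lemma capping lemma 1} and \cref{Cor another formula for spherical class}.

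Next I would pass to the Seidel maps. The map $\mathcal{S}_{\sigma}$ is, by construction (see \cref{Remark capping discussion Comparison with the literature} and \cref{Subsection Beyond cappings}), the linear extension of $\widetilde{x}_{\mathrm{can}}\mapsto \widetilde{\sigma}(\widetilde{x}_{\mathrm{can}})$, reinterpreted over the paper's Novikov ring via $\intercal(\widetilde{x})x \leftrightarrow \widetilde{x}$. Composing, $\mathcal{S}_{\sigma}\mathcal{S}_{\tau}$ is the linear extension of $\widetilde{x}_{\mathrm{can}}\mapsto \widetilde{\sigma}\widetilde{\tau}(\widetilde{x}_{\mathrm{can}}) = A_{\sigma,\tau}\cdot\widetilde{\sigma\tau}(\widetilde{x}_{\mathrm{can}})$, and acting by the spherical class $A_{\sigma,\tau}\in G$ multiplies the generator by $S^{c_1(Y)(A_{\sigma,\tau})}T^{[\omega](A_{\sigma,\tau})}$ and shifts the Conley--Zehnder grading by $2c_1(Y)(A_{\sigma,\tau}) =: a_{\sigma,\tau}$ (the standard index-gluing property quoted in \cref{Remark change of cappings}). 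Omitting $S$ under the running convention, this yields $\mathcal{S}_{\sigma}\mathcal{S}_{\tau} = T^{[\omega](A_{\sigma,\tau})}\mathcal{S}_{\sigma\tau}[a_{\sigma,\tau}]$. The case $\tau=\sigma$ gives $A_{\sigma,\sigma}=0$ (the path from $x_\sigma$ to $x_{\sigma^2}=x_\sigma$ can be taken constant, as $x_\sigma$ already lies in $\mathrm{Min}\,K_\sigma|_{M_\sigma}$ after a choice-independent argument), hence $\widetilde{\sigma}^2=\widetilde{\sigma^2}$, $\mathcal{S}_{\sigma}^2=\mathcal{S}_{\sigma^2}$, and by induction $\widetilde{\sigma}^N=\widetilde{\sigma^N}$, $\mathcal{S}_{\sigma}^N=\mathcal{S}_{\sigma^N}$ for all $N\in\N$; the case $\tau=\sigma^{-1}$ gives $\widetilde{\sigma^{-1}}=A\widetilde{\sigma}^{-1}$ with $A=A^{\sigma}_{\sigma^{-1}}$ (using $\widetilde{\mathrm{id}}=\mathrm{id}$, which is $\widetilde{\sigma^0}$) and correspondingly $\mathcal{S}_{\sigma^{-1}}=T^{[\omega](A)}\mathcal{S}_{\sigma}^{-1}[a]$ with $a=2c_1(Y)(A)$.

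The main obstacle I anticipate is bookkeeping rather than conceptual: making sure the identification ``$\widetilde{\sigma}\widetilde{\tau}$ is a lift of $(\sigma\tau)^*$'' is correct with the sign/orientation conventions for the loopspace actions in \eqref{Equation free loop space action} and \eqref{Equation action on loops}, i.e.\ that $(\sigma^*\circ\tau^*)(x) = ((\sigma\tau)^{-1})^*$-type expressions match $((\sigma\tau)^*)(x)$ on the nose (they do, since $\sigma_{-t}\tau_{-t}=(\sigma\tau)_{-t}$ by commutativity), and likewise that the induced action on caps really is the composite of the two induced cap actions without an extra correction term. Once that is pinned down, the covering-space uniqueness argument and the evaluation on $c_{\sigma\tau}$ are routine, and the translation into Novikov/grading data is exactly as in \cref{Remark change of cappings}.
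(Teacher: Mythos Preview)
Your proposal is correct and follows essentially the same approach as the paper: both arguments identify the two lifts $\widetilde{\sigma}\widetilde{\tau}$ and $\widetilde{\sigma\tau}$ by evaluating on the constant cap $c_{\sigma\tau}$ via \cref{Lemma capping lemma 1} and $G$-equivariance, then translate to the $\mathcal{S}$-maps using the Maslov-index/gluing property, and finally specialise to $\tau=\sigma$ (choosing $x_\sigma=x_{\sigma\tau}$ so the $A$-classes vanish) and $\tau=\sigma^{-1}$ (choosing $x_{\sigma\tau}=x_{\sigma^{-1}}$ so $A^{\tau}_{\sigma\tau}=0$). The only cosmetic difference is that the paper explicitly quotes Seidel's formula $\mathrm{Maslov}(g\cdot\widehat{\sigma})=c_1(Y)(g)+\mathrm{Maslov}(\widehat{\sigma})$ for the grading shift, whereas you invoke it implicitly through \cref{Remark change of cappings}.
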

\begin{proof}
By \cref{Lemma capping lemma 1},
$\widetilde{\tau}(c_{\sigma\tau})=
A^{\tau}_{\sigma\tau}\cdot c_{\sigma\tau}$,
$\widetilde{\sigma}(c_{\sigma\tau})=
A^{\sigma}_{\sigma\tau}\cdot c_{\sigma\tau},$
and
$\widetilde{\sigma\tau}(c_{\sigma\tau})=
c_{\sigma\tau}.$
By $G$-equivariance, 
$$\widetilde{\sigma}\widetilde{\tau}(c_{\sigma\tau})=
\widetilde{\sigma}(A^{\tau}_{\sigma\tau}\cdot c_{\sigma\tau})=
A^{\tau}_{\sigma\tau}\cdot \widetilde{\sigma}(c_{\sigma\tau})
=
A^{\tau}_{\sigma\tau}\cdot (
A^{\sigma}_{\sigma\tau}\cdot c_{\sigma\tau} )
=
A_{\sigma,\tau}\cdot  c_{\sigma\tau}
=
A_{\sigma,\tau}\cdot 
\widetilde{\sigma\tau}(c_{\sigma\tau}).$$
Since $A_{\sigma,\tau}\cdot 
\widetilde{\sigma\tau}$ and $\widetilde{\sigma}\widetilde{\tau}$ agree on $c_{\sigma\tau}$ and are both lifts of $\sigma\tau$, it follows that
$\widetilde{\sigma}\widetilde{\tau}=A_{\sigma,\tau}\cdot \widetilde{\sigma\tau}$, as required.

The grading shift in \eqref{Equation Seidel iso} is by twice the Maslov index $\mu_{\sigma}=\mathrm{Maslov}(\widetilde{\sigma})$ \cite{Sei97}. We will need a general property from \cite{Sei97} about these indices: for any $g\in G$ and any lift $\widehat{\sigma}$,
$$\mathrm{Maslov}(g\cdot \widehat{\sigma}) = c_1(Y)(g) + \mathrm{Maslov}(\widehat{\sigma}).$$
The maps $\mathcal{S}_{\sigma}\mathcal{S}_{\tau}$ and $\mathcal{S}_{\sigma\tau}$ are defined by the actions respectively of $\widetilde{\sigma}\circ \widetilde{\tau}$ and $\widetilde{\sigma\tau}$ on the covering \eqref{Equation cover of loopspace}.
So although \eqref{Equation Seidel iso} seems to suggest that both maps are just $x\mapsto (\sigma\tau)^*x=\sigma^*\tau^*x$ on chain level generators, this need not hold for $\mathcal{S}_{\sigma}\mathcal{S}_{\tau}$ because $\mathcal{S}_{\tau}$ outputs induced caps, not canonical caps.
The equation $\widetilde{\sigma}\widetilde{\tau}=
A_{\sigma,\tau}\cdot \widetilde{\sigma\tau}$ shows that the discrepancy between cappings is the action of an element $A_{\sigma,\tau}\in G$ independent of the input $1$-orbit $x$. Over Seidel's Novikov ring, this proves the equation $\mathcal{S}_{\sigma}\mathcal{S}_{\tau}=
A_{\sigma,\tau}\cdot
\mathcal{S}_{\sigma\tau}$.
Over our Novikov ring, if we use both $T,S$ variables, this $G$-action would be encoded by:
$\mathcal{S}_{\sigma}\mathcal{S}_{\tau}=
S^{c_1(Y)(A_{\sigma,\tau})}T^{[\omega](A_{\sigma,\tau})} 
\mathcal{S}_{\sigma\tau}$.
The $S$ factor keeps track of gradings, and since that factor is independent of the input $x$, we can omit the $S$-factor at the cost of applying a grading shift. We apply the above Maslov index property to the case $g=A_{\sigma,\tau}$ and $\widehat{\sigma}=\widetilde{\sigma\tau}$ to deduce that the difference between the grading shifts caused by $\mathcal{S}_{\sigma}\mathcal{S}_{\tau}$ and $\mathcal{S}_{\sigma\tau}$ is $2 c_1(Y)(A_{\sigma,\tau})$, as required.

For $\tau=\sigma$ we can pick $x_{\sigma}=x_{\tau}=x_{\sigma\tau}$, so the $A$-classes vanish, and by induction we obtain the statement about $\widetilde{\sigma}^N$.
For $\tau=\sigma^{-1}$, the $A$-shift is caused because $x_{\sigma^{-1}}\in  \mathrm{min}(-K_{\sigma})|_{\F_{\min}}= \mathrm{max}K_{\sigma}|_{\F_{\min}}$
whereas $x_{\sigma}\in \mathrm{min}K_{\sigma}|_{\F_{\min}}$.
We pick $x_{\sigma\tau}=x_{\mathrm{id}}=x_{\tau}$. So $A^{\tau}_{\sigma\tau}=0$, thus $A_{\sigma,\tau}=A^{\sigma}_{\sigma\tau}=A^{\sigma}_{\tau}$. The final claim now follows from the general one, using $\mathcal{S}_{\mathrm{id}}=\mathrm{id}$.
\end{proof}

\subsection{Adapting the Seidel isomorphism to the {\MBF} setup}

Let $\sigma$ be a Hamiltonian $S^1$-action commuting with $\Fi$. Using terminology and conventions from \cite{RZ2}, let $B=B_{p,\beta}$ be a {\MBF} manifold of $1$-orbits of $H_{\lambda}=c(H)$, where $p=c'(H)$ (called the $S^1$-period, or slope, of $B$). Recall that we can view $B$ in two ways: as a collection of $1$-orbits $x(t)$, or we can identify $B$ with the collection of initial points $B_0=(B_{p,\beta})_0:=\{x(0): x\in B\}\subset Y$. The reverse map is $B_0\to B$, $y\mapsto x_y$, where $x_y(t)=\Fi_{p\cdot t}(y)$.
We assume the reader is familiar with the {\MBF} model for Floer cohomology from the Appendix in \cite{RZ2}.
When we construct auxiliary Morse data for $B$, what we actually mean is Morse data for $B_0$: a generic Morse function $f=f_{p,\beta}:B_0\to \R$ and we consider the negative gradient flowlines of $f$ (using the ambient Riemannian metric $g:=\omega(\cdot,I\cdot)$ restricted to $B_0$). Equivalently, this means that we are working with $\Fi$-invariant constructions on $B$ (e.g.\,the only sections of $TB$ allowed are $\Fi$-invariant vector fields along $1$-orbits).

\begin{lm}\label{Lemma MBF trick about pullback Morse functions}
The obvious identification of {\MBF} manifolds
    $$
    B \to \sigma^*B:=\{\sigma^*x: x\in B\}, \;\; x\mapsto \sigma^*x,
    $$
corresponds to the identity map, once we identify these manifolds with the collections of initial points: 
$$\{y=x(0)\in Y :x\in B\}=:B_0=(\sigma^*B)_0:=\{ (\sigma^*x)(0)=x(0)\in Y: \sigma^*x\in \sigma^*B\}.$$ Under this identification, all auxiliary data (Riemannian metrics, Morse functions, negative gradient trajectories) used for $B,\sigma^*B$ can be chosen to be the same, by viewing it as data on $B_0=(\sigma^*B)_0$.
\end{lm}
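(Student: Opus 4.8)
The plan is to unwind the definitions on both sides of the proposed identification and observe that the equivalence relation ``being the same $1$-orbit'' becomes, after passing to initial points, literally the identity. First I would recall from \cite{RZ2} that a {\MBF} manifold $B=B_{p,\beta}$ of $1$-orbits of $H_\lambda$ is canonically in bijection with the submanifold $B_0\subset Y$ of initial points, via $B\ni x \mapsto x(0)\in B_0$, with inverse $B_0\ni y \mapsto x_y$ where $x_y(t)=\Fi_{p\,t}(y)$. This bijection is how auxiliary Morse data is actually defined: a generic $f_{p,\beta}:B_0\to\R$, together with the negative gradient flow of $f_{p,\beta}$ for the ambient metric $g=\omega(\cdot,I\cdot)|_{B_0}$. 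The key computation is then: given $x\in B$, the loop $\sigma^*x$ has $(\sigma^*x)(t)=\sigma_{-t}x(t)$, so its initial point is $(\sigma^*x)(0)=\sigma_0 x(0)=x(0)$. Hence the map $B\to\sigma^*B$, $x\mapsto\sigma^*x$, intertwines the two ``initial point'' bijections $B\to B_0$ and $\sigma^*B\to(\sigma^*B)_0$ with the identity map $B_0\to B_0$; in particular $(\sigma^*B)_0=B_0$ as subsets of $Y$.

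Second, I would check that $\sigma^*B$ is indeed again a {\MBF} manifold of $1$-orbits of the pulled-back data $(\sigma^*H_\lambda,\sigma^*I)$, of the same $S^1$-period $p$. This is the already-established correspondence on $1$-orbits and Floer data underlying \cref{Theorem Seidel iso}: the equality of Floer action $1$-forms $(dA_{H^{\mathrm{eq}}_{w,t}})|_x=(dA_{\sigma^*H^{\mathrm{eq}}_{w,t}})|_{\sigma^*x}\circ d\sigma^*|_x$ gives a bijection of $1$-orbits, and this bijection is $\Fi$-equivariant because $\sigma$ commutes with $\Fi$ by hypothesis, so it carries $\Fi$-families to $\Fi$-families, i.e.\ entire {\MBF} manifolds to {\MBF} manifolds. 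Combined with the first step, $\sigma^*B$ regarded via its initial points is literally $B_0$.

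Third, for the statement about auxiliary data, I would argue that since all constructions for $B$ and for $\sigma^*B$ (metric, Morse function, gradient trajectories) are by definition $\Fi$-invariant constructions on $B$, resp.\ $\sigma^*B$, and both descend to the common set $B_0=(\sigma^*B)_0$, one may simply \emph{choose} the Morse function $f_{p,\beta}:B_0\to\R$ once and use it on both sides; the only point requiring a remark is the metric. A priori the metric used on $\sigma^*B$ comes from $g_\sigma:=\omega(\cdot,\sigma^*I\,\cdot)$ rather than $g=\omega(\cdot,I\,\cdot)$, but when $\sigma$ is {\ph} (in particular for the $S^1$-part of a {\ph} $\C^*$-action, which is the case relevant to the applications) we have $\sigma^*I=I$, so the two metrics agree on $B_0$; in the general case one is free to make a choice of metric on $B_0$ regardless, and genericity is preserved. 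Hence negative gradient trajectories for the two choices coincide as well.

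The main obstacle, such as it is, is purely bookkeeping: being careful that the ``obvious identification'' $x\mapsto\sigma^*x$ is compatible with the identification of each {\MBF} manifold with its set of initial points, and that the $\Fi$-equivariance of the orbit correspondence is exactly what upgrades the orbit bijection from \cref{Theorem Seidel iso} to a bijection of {\MBF} manifolds. There is no hard analysis here; everything reduces to the observation $(\sigma^*x)(0)=x(0)$ together with the conventions of \cite{RZ2}. I would present it as a short verification, emphasizing the $t=0$ evaluation and the fact that auxiliary data lives on $B_0$.
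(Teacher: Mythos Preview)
Your proposal is correct and follows essentially the same approach as the paper: the key observation $(\sigma^*x)(0)=\sigma_0\,x(0)=x(0)$ is identical, and the rest is bookkeeping about auxiliary data living on $B_0$. One small sharpening: in your third step you split into the cases ``$\sigma$ {\ph} so $\sigma^*I=I$'' versus ``otherwise choose freely'', but the paper's footnote makes the cleaner point that no case distinction is needed---since the auxiliary metric and Morse function are used only on $B_0$, i.e.\ at $t=0$, and $\sigma_0=\mathrm{id}$, $d\sigma_0=\mathrm{id}$, the restriction of $g=\omega(\cdot,I\cdot)$ to $B_0$ agrees with that of $\omega(\cdot,(\sigma^*I)_0\,\cdot)$ automatically, regardless of whether $\sigma$ is {\ph}.
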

\begin{proof}
As $\sigma$ is a Hamiltonian $S^1$-action, 
$\sigma_t$ satisfies $\sigma_0=\mathrm{id}$, so $(\sigma^*x)(0)=\sigma_{-0}x(0)=x(0).$ 
The claim is therefore an obvious formal consequence of the above discussion.\footnote{
A point of confusion may arise when viewing this as an equivariant construction along $1$-orbits.
The flow defining the {\MBF} manifold $\sigma^*B$ is $\psi:=\sigma^*\mathrm{Flow}^t_{X_{H_{\lambda}}}$, which restricts to $\psi_{p\cdot t}:=\sigma_{-t}\Fi_{p\cdot t}$ on $B_0$.
A negative gradient flowline $\gamma(s)$ in $B_0$ corresponds to a $\Fi$-invariant flowline $\varphi_{pt}\gamma(s)$ of $1$-orbits in $B$ (with $s$-derivative $-d\varphi_{pt} \nabla f$).
This maps to a $\psi$-invariant flowline $\psi_{p\cdot t}\gamma(s)$ of $1$-orbits in $\sigma^*B$ (with $s$-derivative $-d\psi_{pt} \nabla f$).
We do not need $\sigma^*I=I$ (i.e.\,$\sigma$-invariance of $g=\omega(\cdot,I\cdot)$), we only need $I,g$ to be invariant at $t=0$, which is clear: $\sigma_0=\mathrm{id}$ and $d\sigma_0=\mathrm{id}$.
If, at a given time $t$, one wishes $-d \psi_{pt} \nabla f$ to be the negative gradient of $(\psi^*f)_t:=f\circ \psi_{-p\cdot t}$, then in general one must use the $t$-dependent Riemannian metric 
$(\psi^*g)_t:=\omega(\cdot,(\sigma^*I)_t\cdot)$ which is time-independent only when $\sigma^*I=I$.
}
\end{proof}

\begin{cor}
    Assuming we use the same auxiliary {\MBF} data for the {\MBF} manifolds, in the sense of \cref{Lemma MBF trick about pullback Morse functions}, the map \eqref{Equation Seidel iso} is more precisely defined by linearly extending the map on generators determined by the identity maps $y \mapsto y$ for $y\in \mathrm{Crit}(f:B_0\to \R)$, equivalently:
    $$
    (\textrm{critical 1-orbit }x(t)=\Fi_{p\cdot t}(y)\textrm{ in }B_{p,\beta}) \mapsto
       (\textrm{critical 1-orbit }(\sigma^*x)(t)=\sigma_{-t}\Fi_{p\cdot t}(y)\textrm{ in }\sigma^*B_{p,\beta}),
    $$
    where we recall from \cite[Sec.A.1]{RZ2} that critical $1$-orbit $x(t)$ means $y=x(0)\in \mathrm{Crit}(f_{p,\beta}:B_{p,\beta}\to \R).$
\end{cor}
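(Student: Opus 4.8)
The plan is to combine \cref{Theorem Seidel iso} with \cref{Lemma MBF trick about pullback Morse functions}: once both are in hand, the corollary is merely a matter of reading off how the chain isomorphism $\mathcal{S}_\sigma$ acts on the distinguished $\ku$-module basis of the {\MBF} complex. First I would recall that, in the {\MBF} model of \cite[Appendix]{RZ2}, the chain-level generators of $E^{\karo}_{(\Fi^a,b)}FC^*(H_\lambda,I)$ are the critical $1$-orbits: for each {\MBF} manifold $B_{p,\beta}$ of $1$-orbits of $H_\lambda$ at slope $p=c'(H)$, and each $y\in \mathrm{Crit}(f_{p,\beta}\colon (B_{p,\beta})_0\to \R)$, the orbit $x_y(t)=\Fi_{p\cdot t}(y)$ is a generator. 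By \cref{Theorem Seidel iso} (and the footnote there), the canonical bijection of $1$-orbits underlying $\mathcal{S}_\sigma$ is $x\mapsto \sigma^*x$, and it carries the {\MBF} manifold $B_{p,\beta}$ of $H_\lambda$ to the {\MBF} manifold $\sigma^*B_{p,\beta}$ of $\sigma^*H_\lambda$, with the same $p$ and $\beta$.

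Next I would invoke \cref{Lemma MBF trick about pullback Morse functions}: under the identification $(B_{p,\beta})_0=(\sigma^*B_{p,\beta})_0$ of initial points, the map $x\mapsto \sigma^*x$ is literally the identity, since $(\sigma^*x)(0)=\sigma_0 x(0)=x(0)$, and all the auxiliary {\MBF} data --- the Riemannian metric, the Morse function $f_{p,\beta}$, the negative gradient trajectories --- may be taken to be the same data on this common manifold of initial points. Hence $y\in \mathrm{Crit}(f_{p,\beta})$ if and only if the associated $1$-orbit of $\sigma^*B_{p,\beta}$ is critical, and the distinguished basis of the target complex $E^{\karo}_{(\sigma^{-b}\Fi^a,b)}FC^*(\sigma^*H_\lambda,\sigma^*I)$ is indexed by exactly the same critical set. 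Therefore $\mathcal{S}_\sigma$ is the linear extension of the map on generators $y\mapsto y$, i.e.\ of
$$
(x(t)=\Fi_{p\cdot t}(y)\ \text{in}\ B_{p,\beta}) \longmapsto \bigl((\sigma^*x)(t)=\sigma_{-t}\Fi_{p\cdot t}(y)\ \text{in}\ \sigma^*B_{p,\beta}\bigr),
$$
which is the assertion. The grading shift $[2\mu_\sigma]$ and any Novikov-coefficient factors are already absorbed into $\mathcal{S}_\sigma$ by the capping conventions of \cref{Subsection Beyond cappings}, so nothing further is needed on that front.

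The step I expect to require the most care --- though it is essentially immediate --- is checking that \cref{Lemma MBF trick about pullback Morse functions} is compatible with the $\Fi$-equivariant (that is, $B_0$-level) description of the auxiliary data, and in particular that the decomposition of $1$-orbits into {\MBF} manifolds is preserved by $x\mapsto \sigma^*x$, so that a generator attached to $B_{p,\beta}$ really does map to one attached to $\sigma^*B_{p,\beta}$ with unchanged $p$ and $\beta$. Both facts follow from $\sigma_0=\mathrm{id}$, $d\sigma_0=\mathrm{id}$, and the commutativity of $\sigma$ with $\Fi$, but they are precisely what makes the ``more precise'' reformulation legitimate rather than merely formal.
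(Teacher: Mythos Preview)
Your proposal is correct and matches the paper's approach: the paper treats this corollary as immediate from \cref{Theorem Seidel iso} and \cref{Lemma MBF trick about pullback Morse functions} and gives no explicit proof, so your argument simply spells out the intended reasoning.
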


\bibliography{FZ}

\providecommand{\bysame}{\leavevmode\hbox to3em{\hrulefill}\thinspace}
\providecommand{\MR}{\relax\ifhmode\unskip\space\fi MR }
\providecommand{\MRhref}[2]{%
  \href{http://www.ams.org/mathscinet-getitem?mr=#1}{#2}
}
\providecommand{\href}[2]{#2}
\begin{thebibliography}{Zha19}

\bibitem[BO17]{Bourgeois-Oancea-S1}
F.~Bourgeois and A.~Oancea, \emph{{$S^1$}-equivariant symplectic homology and
  linearized contact homology}, Int. Math. Res. Not. IMRN (2017), no.~13,
  3849--3937.

\bibitem[BT82]{BottTu}
R.~Bott and L.W. Tu, \emph{Differential forms in algebraic topology}, vol.~82,
  Springer, 1982.

\bibitem[Fra59]{Frankel59}
T.~Frankel, \emph{{Fixed Points and Torsion on K{\"a}hler Manifolds}}, Ann. of
  Math. \textbf{70} (1959), no.~1, 1--8.

\bibitem[Kir84]{Ki84}
F.~C. Kirwan, \emph{Cohomology of quotients in symplectic and algebraic
  geometry}, Math. Notes 31, Princeton Univ. Press, Princeton, 1984.

\bibitem[Lee24]{JaeHeeLee2024}
J.H. Lee, \emph{{Quantum Steenrod operations of symplectic resolutions}},
  arXiv:2312.02100 (2024), 1--35.

\bibitem[LJ20]{liebenschutz2020intertwining}
T.~Liebenschutz-Jones, \emph{{An intertwining relation for equivariant Seidel
  maps}}, arXiv:2010.03342 (2020), 1--60.

\bibitem[LJ21]{liebenschutz2021shift}
\bysame, \emph{Shift operators and connections on equivariant symplectic
  cohomology}, arXiv:2104.01891 (2021), 1--64.

\bibitem[MR23]{McLR18}
M.~McLean and A.~F. Ritter, \emph{{The McKay correspondence for isolated
  singularities via Floer theory}}, J. Differential Geom. \textbf{124} (2023),
  no.~1, 113 -- 168.

\bibitem[Rit13]{R13}
A.~F. Ritter, \emph{Topological quantum field theory structure on symplectic
  cohomology}, J. Topol. \textbf{6} (2013), no.~2, 391--489.

\bibitem[Rit14]{R14}
\bysame, \emph{{Floer theory for negative line bundles via Gromov-Witten
  invariants}}, Adv. Math. \textbf{262} (2014), 1035--1106.

\bibitem[Rit16]{R16}
\bysame, \emph{{Circle actions, quantum cohomology, and the {F}ukaya category
  of {F}ano toric varieties}}, Geom. Topol. \textbf{20} (2016), no.~4,
  1941--2052.

\bibitem[RS17]{ritter2017monotone}
A.~F. Ritter and I.~Smith, \emph{{The monotone wrapped Fukaya category and the
  open-closed string map}}, Selecta Math. (N.S.) \textbf{23} (2017), 533--642.

\bibitem[R{\v{Z}}23]{RZ1}
A.~F. Ritter and F.~{\v{Z}}ivanović, \emph{{Filtrations on quantum cohomology
  from the Floer theory of $\C^*$-actions}}, arXiv:2304.13026 (2023), 1--70.

\bibitem[R{\v{Z}}24]{RZ2}
\bysame, \emph{{Filtrations on quantum cohomology via Morse--Bott--Floer
  Spectral Sequences}}, arXiv:2304.14384 (2024), 1--90.

\bibitem[Sei97]{Sei97}
P.~Seidel, \emph{{$\pi_1$} of symplectic automorphism groups and invertibles in
  quantum homology rings}, Geom. Funct. Anal. \textbf{7} (1997), no.~6,
  1046--1095.

\bibitem[Sei08]{Sei08}
\bysame, \emph{A biased view of symplectic cohomology}, Current developments in
  mathematics, 2006, Int. Press, Somerville, MA, 2008, pp.~211--253.

\bibitem[Zha17]{zhang2017multiplicativity}
Z.~Zhang, \emph{{Multiplicativity of perverse filtration for Hilbert schemes of
  fibered surfaces}}, Adv. Math. \textbf{312} (2017), 636--679.

\bibitem[Zha19]{zhao2019periodic}
J.~Zhao, \emph{Periodic symplectic cohomologies}, J. Symplectic Geom.
  \textbf{17} (2019), no.~5, 1513--1578.

\end{thebibliography}
\bibliographystyle{amsalpha}
\end{document}